\tikzstyle{edge} = [fill,opacity=.5,fill opacity=.5,line cap=round, line join=round, line width=50pt]
\theoremstyle{plain}
\newtheorem{theorem}{Theorem}
\newtheorem{lemma}[theorem]{Lemma}
\newtheorem{prop}[theorem]{Proposition}
\newtheorem{defn}[theorem]{Definition}
\theoremstyle{definition}
\newtheorem{example}[theorem]{Example}
\newtheorem*{remark*}{Remark}
\newcommand{\indep}{\perp \!\!\! \perp}
\DeclareMathOperator*{\sargmin}{sargmin}
\DeclareMathOperator*{\sargmax}{sargmax}
\DeclareMathOperator*{\argmin}{argmin}
\DeclareMathOperator*{\tr}{tr}
\DeclareMathOperator*{\Var}{Var}
\DeclareMathOperator*{\Cov}{Cov}
\newcommand{\ostar}{\mathbin{\mathpalette\make@circled\star}}
\newcommand{\make@circled}[2]{%
  \ooalign{$\m@th#1\smallbigcirc{#1}$\cr\hidewidth$\m@th#1#2$\hidewidth\cr}%
}
\newcommand{\smallbigcirc}[1]{%
  \vcenter{\hbox{\scalebox{0.77778}{$\m@th#1\bigcirc$}}}%
}
\newlength{\widebarargwidth}
\newlength{\widebarargheight}
\newlength{\widebarargdepth}
\DeclareRobustCommand{\widebar}[1]{%
	\settowidth{\widebarargwidth}{\ensuremath{#1}}%
	\settoheight{\widebarargheight}{\ensuremath{#1}}%
	\settodepth{\widebarargdepth}{\ensuremath{#1}}%
	\addtolength{\widebarargwidth}{-0.3\widebarargheight}%
	\addtolength{\widebarargwidth}{-0.3\widebarargdepth}%
	\makebox[0pt][l]{\hspace{0.3\widebarargheight}%
		\hspace{0.3\widebarargdepth}%
		\addtolength{\widebarargheight}{0.3ex}%
		\rule[\widebarargheight]{0.95\widebarargwidth}{0.1ex}}%
	{#1}}
\newcommand{\overbar}[1]{\mkern 1.5mu\overline{\mkern-1.5mu#1\mkern-1.5mu}\mkern 1.5mu}
\newcommand\numberthis{\addtocounter{equation}{1}\tag{\theequation}}
\def\hat{\widehat}
\def\tilde{\widetilde}
\def\bar{\widebar}
\begin{document}

\begin{frontmatter}
\title{Estimation beyond Missing (Completely) at Random}
\runtitle{Estimation beyond Missing (Completely) at Random}

\begin{aug}
\author[A]{\fnms{Tianyi}~\snm{Ma}\ead[label=e1]{tm681@cam.ac.uk}}
\author[B]{\fnms{Kabir A.}~\snm{Verchand}\ead[label=e2]{verchand@usc.edu}}
\author[C]{\fnms{Thomas B.}~\snm{Berrett}\ead[label=e3]{tom.berrett@warwick.ac.uk}}
\author[D]{\fnms{Tengyao}~\snm{Wang}\ead[label=e4]{t.wang59@lse.ac.uk}}
\author[A]{\fnms{Richard
J.}~\snm{Samworth}\ead[label=e5]{r.samworth@statslab.cam.ac.uk}}
\address[A]{Statistical Laboratory, University of
Cambridge\printead[presep={,\ }]{e1,e5}}

\address[B]{Department of Data Sciences and Operations, University of Southern California\printead[presep={,\ }]{e2}}

\address[C]{Department of Statistics, University of Warwick\printead[presep={,\ }]{e3}}

\address[D]{Department of Statistics, London School of Economics and Political Science\printead[presep={,\ }]{e4}}
\end{aug}

\begin{abstract}
We study the effects of missingness on the estimation of population parameters.  Moving beyond restrictive missing completely at random (MCAR) assumptions, we first formulate a missing data analogue of Huber's arbitrary $\epsilon$-contamination model.  For mean estimation with respect to squared Euclidean error loss, we show that the minimax quantiles decompose as a sum of the corresponding minimax quantiles under a heterogeneous, MCAR assumption, and a robust error term, depending on $\epsilon$, that reflects the additional error incurred by departure from MCAR.  

We next introduce natural classes of \emph{realisable $\epsilon$-contamination models}, where an MCAR version of a base distribution $P$ is contaminated by an arbitrary missing not at random (MNAR) version of $P$.  These classes are rich enough to capture various notions of biased sampling and sensitivity conditions, yet we show that they enjoy improved minimax performance relative to our earlier arbitrary contamination classes for both parametric and nonparametric classes of base distributions.  For instance, with a univariate Gaussian base distribution, consistent mean estimation over realisable $\epsilon$-contamination classes is possible even when $\epsilon$ and the proportion of missingness converge (slowly) to~1.  We extend our results to the setting of departures from missing at random (MAR) in normal linear regression with a realisable missing response, and also demonstrate that our methods can be made adaptive to the case of unknown $\epsilon$.
\end{abstract}
\end{frontmatter}

\section{Introduction} \label{sec:intro}

A major theme of modern statistical research concerns problems where we wish to make inference about (some aspect of) a target population, but do not have access to an independent sample of size $n$ from this distribution.  Departures from this idealised scenario may take many different forms:  spatial, temporal or some other form of dependence may be present \citep{cressie2015statistics,brockwell1991time}, or (some of) our data may be drawn from a source distribution that is different from, but related to, our target population, as in transfer learning \citep{cai2021transfer,reeve2021adaptive}.  In a similar vein, the field of robust statistics aims to draw reliable inference when some of our data may be contaminated~\citep{huber1964robust}. 

One of the most common ways in which observed data may fail to represent a sample from a target population is when components may be missing or unobserved.  Even the relatively benign setting where data are missing completely at random (MCAR)---that is, when the data generating and missingness mechanisms are independent---presents substantial challenges for practitioners and theoreticians alike.  A significant, ongoing research effort has therefore sought to introduce appropriate methodology under the MCAR hypothesis in several contemporary statistical problems, including sparse linear regression \citep{loh2012high,belloni2017linear}, classification \citep{tony2019high,sell2024nonparametric}, sparse or high-dimensional principal component analysis \citep{elsener2019sparse,zhu2022high,yan2024inference}, covariance and precision matrix estimation~\citep{lounici2014high,loh2018high} and high-dimensional changepoint estimation \citep{xie2012change,follain2022high}. 

Despite this progress, it is frequently argued that MCAR should be regarded very much as the exception rather than the rule in applications.  For instance, supporters of one political party may be less likely than other voters to respond to survey requests \citep{kennedy2018evaluation}, while in education, efforts to model the value added by teachers may be hindered by large numbers of students with incomplete records and the tendency for those students to be lower achieving \citep{mccaffrey2011missing}.  Likewise, in epidemiology, individuals with depression may be less likely to participate in a survey than those without depression \citep{prince2012core}, while metabolomic data are typically subject to a high proportion of non-MCAR missingness due to a metabolite-specific missingness mechanism in which more abundant analytes are more likely to be observed \citep{do2018characterization,mckennan2020estimation}.

The most well-studied alternative to MCAR is the missing at random (MAR) hypothesis~\citep{little2014statistical,seaman2013what,farewell2022missing}.  The main virtue of this assumption is that, in well-specified, identifiable parametric models, likelihood-based methods may retain parametric rates of convergence to population estimands.  On the other hand, it also has several drawbacks: first, it may well still be too restrictive as an appropriate missingness model for practical data sets (e.g.~in the examples of the previous paragraph).  Second, its links to likelihood-based methods and simple missingness patterns limit its applicability; third, even in simple parametric models, population parameters may be unidentifiable under MAR (see Section~\ref{sec:impossibility-MAR-mean}); and finally, it fails to measure proximity to the MCAR class in an appropriate, continuous fashion, and may therefore be unable to capture the essence of a given statistical challenge.

Our goal in this paper is to commence a line of work that seeks to understand the extent to which (non-MCAR) missingness affects our ability to estimate population parameters.  We primarily focus here on the most basic statistical problem of mean estimation, though we extend our results to regression settings where the response variable may be missing in Section~\ref{sec:regression-missing-response}.  In order to address the fundamental difficulty of the challenge, we introduce Huber-style models that interpolate between MCAR and larger classes that allow much more general dependence relationships between the data generating and missingness mechanisms.  We measure performance of estimators via their squared Euclidean error, but since this loss function is unbounded and we have a positive probability under our models of observing no data, the minimax risk is infinite (so uninformative for the purposes of comparing estimators).  Instead, we work with the recently-developed minimax quantile framework; see Section~\ref{sec:minimax-quantile}.

We begin in Section~\ref{sec:setup} by introducing a formal framework for studying missing data via extended measurable spaces, which allow for missing components.  Our main statistical models are what we refer to as \emph{arbitrary $\epsilon$-contamination} and \emph{realisable $\epsilon$-contamination} models.  In the former, we perturb a distribution $P$ that is subject to MCAR missingness by an additional mixture component (having corresponding mixture proportion $\epsilon$) that may be an arbitrary distribution on our extended measurable space; in particular, this latter mixture component may be viewed as an MNAR version of an arbitrary distribution $P'$.  On the other hand, in our realisable classes, although we again allow mixture perturbations of a base distribution $P$ subject to MCAR missingness, we now require the contamination component to be an MNAR version of $P$ itself.  Although we are not aware of previous studies of these realisable classes, we believe that in many practical settings, it is appropriate to regard our data (whether observed or not) as arising from a particular base distribution, and the missingness mechanism only playing a role thereafter (even if it is potentially dependent on the data).  Such a setting would result in our observed data as being from a realisable model, and these classes therefore form a natural way to restrict the vast array of different possible dependence relationships between data generating and missingness mechanisms.  A real data illustration of realisability from the income survey of \citet{bollinger2019trouble} is provided after Proposition~\ref{prop:univariate-realisability}.  As we establish in this work, realisable classes also offer the potential for improved performance guarantees relative to those available for arbitrary contamination models.

In Section~\ref{sec:mean-estimation-arbitrary-contamination}, we study the minimax quantiles of our squared Euclidean error loss function over arbitrary $\epsilon$-contamination models.  We introduce an \textproc{Iterative\_Robust\_Mean} algorithm that employs iterative imputation to convert a robust mean estimator designed for complete data into an estimator of the mean of the MCAR mixture component.   Theorem~\ref{thm:robust-descent-iterative-imputation-ub} provides an upper bound on the performance of this algorithm; this  decomposes as a sum of an MCAR term and a term quantifying the effect of contamination from MCAR.  A corresponding lower bound on the minimax quantile given in Theorem~\ref{thm:arbitrary-contamination-lb} reveals that, at least when the covariance matrix of our base distribution is diagonal, the upper bound in Theorem~\ref{thm:robust-descent-iterative-imputation-ub} is optimal up to multiplicative constants in terms of its behaviour under departures from MCAR, and is optimal up to logarithmic factors in the dimension and quantile level in the MCAR term. 

We turn our attention in Section~\ref{sec:gaussian-realisable-model} to realisable contamination of a Gaussian base distribution.  Focusing for now on the univariate case for simplicity of exposition, we introduce a minimum Kolmogorov distance estimator, and show in Theorems~\ref{thm:univariate-realisable-lb} and~\ref{thm:one-dim-kolmogorov-estimator} that it achieves the minimax optimal rate for both the MCAR and MCAR departure terms, except for a possible logarithmic dependence in an intermediate effective contamination level regime that vanishes with the effective sample size.  This latter result also reveals the surprising fact that consistent mean estimation is possible in this model even in settings where the proportion of missingness and the proportion of MNAR contamination converge (slowly) to 1.  Section~\ref{sec:nonparametric-realisable} concerns more general realisable models, where our base distribution is only required to satisfy moment or a $\psi_r$-Orlicz norm condition with $r \geq 1$.  Theorems~\ref{thm:one-dim-realisable-sample-mean-ub} and~\ref{thm:nonparametric-realisable-model-lb} provide upper and lower bounds on the minimax quantiles that match up to universal constants under both conditions.  For both our Gaussian and our nonparametric classes of base distributions, we also discuss multivariate extensions of these results.  

Table~\ref{table:summary} presents a selection of our findings for univariate mean estimation problems.  These illustrate the benefits in terms of improved worst-case performance of working with realisable, as opposed to arbitrary,  contamination.  It is interesting to see, for instance, that when the effective contamination level is small, the minimax quantile rate for a Gaussian distribution under arbitrary contamination agrees with the corresponding rate for a general distribution with finite variance under realisable contamination.  It is also notable that, while under arbitrary contamination the minimax quantiles are infinite as soon as $\epsilon \geq q/(1+q)$, where~$q$ denotes the MCAR observation proportion, under realisable contamination this threshold converges to 1 with the sample size.

{\renewcommand{\arraystretch}{1.3}
\setlength{\tabcolsep}{4pt}
\begin{table}[ht]
\small
\centering
\caption{\raggedright A comparison of minimax rates under arbitrary and realisable $\epsilon$-contamination for different univariate base distribution classes. Here, $\mathscr{M}_0 \coloneqq \frac{\sigma^2 \log(1/\delta)}{nq(1 - \epsilon)}$ denotes the MCAR minimax $(1-\delta)$th quantile rate for estimating the mean of a base distribution $P$ having variance (or squared sub-Gaussian norm) $\sigma^2$ based on $Z_1,\ldots,Z_n \overset{\mathrm{iid}}{\sim}\mathsf{MCAR}_{(q(1-\epsilon),P)}$ (see~\eqref{eq:MCAR-law} below), and $\kappa \coloneqq \frac{\epsilon}{q(1-\epsilon)}$ denotes the effective contamination level.  In the Gaussian realisable rate, we have ignored a potential logarithmic multiplicative factor in a regime where the overall rate remains polynomial in the effective sample size $nq(1-\epsilon)$. The results for arbitrary contamination are provided in Section~\ref{sec:univariate-arbitrary-contamination-lb} and Theorem~\ref{thm:arbitrary-contamination-lb} , while the results for realisable contamination are given in Section~\ref{sec:realisable-mean-est}.} \label{table:summary}
\begin{tabular}{|c|cc|cc|}
\hline
    & \multicolumn{2}{c|}{\bf{Arbitrary contamination}}                & \multicolumn{2}{c|}{\bf{Realisable contamination}}\\ \hline
    Base distribution &  \multicolumn{1}{c|}{Minimax rate} & $\epsilon$ condition & \multicolumn{1}{c|}{Minimax rate} & $\epsilon$ condition \\ \hline
    \rule{0pt}{4.3ex}\rule[-2.7ex]{0pt}{0pt} Gaussian & \multicolumn{1}{c|}{$\displaystyle \mathscr{M}_0 + \sigma^2\kappa^2$} & $\displaystyle \epsilon < \frac{q}{1+q}$ & \multicolumn{1}{c|}{$\displaystyle \mathscr{M}_0 + \frac{\sigma^2 \log^2(1 + \kappa)}{\log\{n q(1 - \epsilon)\}}$} & $\epsilon < 1-o_n(1)$ \\ \hline
    \rule{0pt}{4.3ex}\rule[-2.7ex]{0pt}{0pt} Sub-Gaussian & \multicolumn{1}{c|}{$\mathscr{M}_0 + \sigma^2\kappa^2\log\bigl(\frac{1}{\kappa}\bigr)$} & $\displaystyle \epsilon < \frac{q}{1+q}$ & \multicolumn{1}{c|}{ $\mathscr{M}_0 + \sigma^2\bigl(\kappa^2 \wedge \log(1 + \kappa)\bigr)$} & $\epsilon < 1-o_n(1)$ \\ \hline
    \rule{0pt}{4.3ex}\rule[-2.7ex]{0pt}{0pt} Finite variance & \multicolumn{1}{c|}{$\displaystyle \mathscr{M}_0 + \sigma^2\kappa$} & $\displaystyle \epsilon < \frac{q}{1+q}$ & \multicolumn{1}{c|}{$\displaystyle \mathscr{M}_0 + \sigma^2(\kappa^2 \wedge \kappa)$} & $\epsilon < 1-o_n(1)$ \\ \hline
\end{tabular} 
\end{table}}

Extensions of our results to normal linear regression models with realisable missing responses are discussed in Section~\ref{sec:regression-missing-response}.  Here, we show that even when the contamination proportion $\epsilon$ is allowed to grow slowly to 1, consistent estimation of the vector of regression coefficients remains achievable under a mild regularity assumption on the design.  Finally, in Section~\ref{sec:adaptation}, we show how our methodological proposals can be broadened to settings where the contamination proportion $\epsilon$ is unknown.  The price for adaptation turns out to be very small: for instance, in the setting of Theorem~\ref{thm:robust-descent-iterative-imputation-ub}, in one regime of the failure probability, we merely inflate the universal constant in the bound on the performance of the \textproc{Iterative\_Robust\_Mean} algorithm, while in the other regime, one term in our bound is inflated by a multiplicative factor of $O(\log \log n)$. 

All of our proofs are deferred to the supplement \citep{ma2024supplementary}, and results appearing in the supplement are prefaced with an `S'.

\subsection{Related work}

The $\epsilon$-contamination models that form the bedrock of our framework for the analysis of the effects of missing data are inspired by related models in the robust statistics literature \citep{huber1964robust}.  Recently, there has been a concentration of research effort attempting to argue that statistical procedures achieve the optimal dependence on $\epsilon$ in different statistical problems, thereby providing evidence of their robustness.  For instance, for fully observed data, \citet{chen2018robust} demonstrate the optimality in this sense of the Tukey median~\citep{tukey1975mathematics} for mean estimation, as well as a matrix depth estimator of a covariance matrix.  Since the Tukey median is computationally intractable, various alternatives have been considered in the both the statistics and theoretical computer science literature~\citep[see, e.g.,][and references therein]{diakonikolas_kane_2023}.  Other problems studied within this framework include linear regression \citep{bakshi2021robust,pensia2024robust}, nonparametric regression \citep{gao2020robust} and robust clustering \citep{liu2023robustly,jana2024general}.

Our realisable contamination models are related to several previous attempts to study restricted forms of missing not at random and biased sampling.  For instance, \citet{vardi1985empirical} introduced a biased sampling model and, under the assumption that the sampling mechanism is known, studied nonparametric estimation of the distribution function.  Under this oracle model,~\citet{gill1988large} studied classical asymptotics and efficiency guarantees for the nonparametric maximum likelihood estimator; see also~\citet{bickel1991large} for similar guarantees in a linear regression setting.  Later, \citet{aronow2013interval} and \citet{sahoo2022learning} introduced likelihood ratio constraints to perform estimation in situations where the sampling mechanism may be unknown.  In the causal inference literature, efforts to restrict unobserved confounding have led to the introduction of similar restrictions known as sensitivity conditions~\citep{rosenbaum87sensitivity,zhao2019sensitivity}.  As we show in our discussion following Proposition~\ref{prop:univariate-realisability}, our realisable contamination classes can be understood as generalisations of these notions.  In a different direction, and with a view towards computational efficiency,~\cite{daskalakis2018efficient} considered estimating population parameters in a biased sampling model induced by truncation to a known set;  \citet{kontonis2019efficient} and \citet{diakonikolas2024statistical} studied the computational and statistical consequences of the absence of knowledge of this truncation set.  Distributions obtained by truncation are missing not at random and as such can be captured when $\epsilon = 1$ by our realisable $\epsilon$-contamination classes.

\subsection{Notation} \label{sec:notation}
For $d\in\mathbb{N}$, we let $[d] \coloneqq \{1,\ldots,d\}$ and write $2^{[d]}$ for the power set of $[d]$. For $a,b\in\mathbb{R}$, we let $a \vee b \coloneqq \max\{a, b\}$ and $a \wedge b \coloneqq \min\{a, b\}$.  We also define $\log_{+}(x) \coloneqq \log(x) \vee 1$ for $x > 0$. If $I$ is an arbitrary index set, then for functions $f,g:I \rightarrow \mathbb{R}$, we write $f \gtrsim g$ if there exists a universal constant $c>0$ such that $f(i) \geq cg(i)$ for all $i \in I$, and write $f \lesssim g$ if there exists a universal constant $C>0$ such that $f(i) \leq Cg(i)$ for all $i \in I$.

For $S\subseteq [d]$, we define $\bm{1}_S \in \{0,1\}^d$ by $(\bm{1}_S)_j \coloneqq \mathbbm{1}_{\{j \in S\}}$; for $j\in[d]$, we write $e_j\in\mathbb{R}^d$ for the $j$th standard basis vector. We denote the unit Euclidean sphere in $\mathbb{R}^d$ by $\mathbb{S}^{d-1}$. 
The sets $\mathcal{S}^{d \times d}$, $\mathcal{S}^{d \times d}_+$ and $\mathcal{S}^{d \times d}_{++}$ denote the set of symmetric, symmetric positive semidefinite and symmetric positive definite matrices in $\mathbb{R}^{d\times d}$ respectively. For $A \in \mathbb{R}^{d \times d}$, we write $\| A \|_{\mathrm{op}}$ for its operator (spectral) norm and $\| A \|_{\infty}$ for its maximum absolute entry. Further, for $A\in\mathcal{S}_+^{d\times d}$, we let $\mathbf{r}(A) \coloneqq \tr(A)/\|A\|_{\mathrm{op}}$ denote the effective rank of $A$, with the convention that $0/0 \coloneqq 0$.
Given $(a_1, \ldots, a_d)^\top \in\mathbb{R}^d$, let $\mathrm{diag}(a_1, \ldots, a_d) \in \mathbb{R}^{d \times d}$ denote the diagonal matrix with entries $a_1, \ldots a_d$, and let $I_{d} \coloneqq \mathrm{diag}(1, \ldots, 1)$ denote the identity matrix in $\mathbb{R}^{d \times d}$. 

For a topological space $(\mathcal{X},\tau)$, we let $\mathcal{B}(\mathcal{X})$ denote the Borel $\sigma$-algebra of $\mathcal{X}$, and let $\mathcal{P}(\mathcal{X})$ denote the set of all probability measures on $\bigl(\mathcal{X},\mathcal{B}(\mathcal{X})\bigr)$.  For two measures $\mu_1,\mu_2$ on $\bigl(\mathcal{X},\mathcal{B}(\mathcal{X})\bigr)$, we write $\mu_1 \ll \mu_2$ if $\mu_1$ is absolutely continuous with respect to $\mu_2$.  We write $\lambda\in\mathcal{P}(\mathbb{R})$ for Lebesgue measure on $\mathbb{R}$. Given a collection $\mathcal{Q}$ of distributions, we define $\mathcal{Q}^{\otimes n} \coloneqq \{Q^{\otimes n}:\; Q \in \mathcal{Q}\}$. For a random variable $X$ taking values in $\mathcal{X}$, we let $\mathsf{Law}(X) \in \mathcal{P}(\mathcal{X})$ denote the distribution of~$X$, and $\mathrm{supp}(X) \subseteq \mathcal{X}$ denote the support of $X$, i.e.~the intersection of all closed sets $C \subseteq \mathcal{X}$ with $\mathbb{P}(X \in C) = 1$.  Given a topological space $(\mathcal{Z},\tau_\mathcal{Z})$, we write $C_{\mathrm{b}}(\mathcal{Z})$ for the set of bounded, continuous functions on $\mathcal{Z}$. 

For $\theta\in\mathbb{R}$ and $\sigma\in[0,\infty)$, we let $\Phi_{(\theta,\sigma)}(\cdot)$ and $\phi_{(\theta,\sigma)}(\cdot)$ denote the distribution and density functions of the $\mathsf{N}(\theta,\sigma^2)$ distribution respectively, with the shorthand that $\Phi \coloneqq \Phi_{(0,1)}$ and $\phi \coloneqq \phi_{(0,1)}$.

\section{Statistical setting} \label{sec:setup}

\subsection{The extended space \texorpdfstring{$\mathcal{X}_{\star}$}{X\_star} and classical models of missing data} \label{sec:extended-space-properties}

In this section, we introduce spaces that are convenient for models of missing data.  Let $d \in \mathbb{N}$ and, for $j \in [d]$, let $(\mathcal{X}_j, \tau_j)$ denote a topological space equipped with its Borel $\sigma$-algebra $\mathcal{B}(\mathcal{X}_j)$.  We use the symbol $\star$ to denote a missing element\footnote{When $\mathcal{X}_j = \mathbb{R}$, we adopt the conventions that $\star \cdot 0 \coloneqq 0 \eqqcolon 0 \cdot \star$, that $x \cdot \star \coloneqq \star \eqqcolon \star \cdot x$ for $x\in\mathcal{X}_\star \setminus \{0\}$ and that $\star + x = \star$ for $x \in \mathcal{X}_\star$.} and define, for each $j \in [d]$, the \emph{extended space} $\mathcal{X}_{j, \star} \coloneqq \mathcal{X}_j \cup \{\star\}$, equipped with the topology $\tau_{j,\star} \coloneqq \tau_j \cup \{A \cup \{\star\}:A \in \tau_j\}$ and corresponding Borel $\sigma$-algebra $\mathcal{B}(\mathcal{X}_{j, \star}) = \mathcal{B}(\mathcal{X}_j) \cup \{A \cup \{\star\} : A \in \mathcal{B}(\mathcal{X}_j)\}$.  Given a measure $\mu_j$ on $\bigl(\mathcal{X}_j,\mathcal{B}(\mathcal{X}_j)\bigr)$, we define the \emph{extended measure} $\mu_{j, \star}$ on $\bigl(\mathcal{X}_{j, \star},\mathcal{B}(\mathcal{X}_{j, \star})\bigr)$ by 
\[
\mu_{j, \star} (A) \coloneqq \mu_{j} (A) \quad \text{and} \quad \mu_{j, \star} (A \cup \{\star\}) \coloneqq \mu_j(A) + 1
\]
for $A \in \mathcal{B}(\mathcal{X}_j)$.  It is also convenient to define the product spaces $\mathcal{X} \coloneqq \prod_{j=1}^{d} \mathcal{X}_{j}$ and $\mathcal{X}_{\star} \coloneqq \prod_{j=1}^{d} \mathcal{X}_{j, \star}$, equipped with their product $\sigma$-algebras $\mathcal{B}(\mathcal{X}) \coloneqq \otimes_{j \in [d]} \mathcal{B}(\mathcal{X}_j)$ and $\mathcal{B}(\mathcal{X}_{\star}) \coloneqq \otimes_{j \in [d]} \mathcal{B}(\mathcal{X}_{j,\star})$ respectively.  

We will often reason about missing data via \emph{revelation vectors} $\omega \in \{0, 1\}^d$, which together with an element $x \in \mathcal{X}$ induce an element of the extended space $\mathcal{X}_{\star}$ through the binary operator $\ostar: \mathcal{X} \times \{0, 1\}^d \rightarrow \mathcal{X}_{\star}$, where the $j$th component of $x\ostar\omega$ is defined by
\begin{align*}
    (x\ostar\omega)_j \coloneqq \begin{cases}
        x_j \quad &\text{if } \omega_j = 1\\
        \star &\text{if } \omega_j = 0,
    \end{cases}
\end{align*}
for $j \in [d]$.  The following example gives a concrete illustration of the abstract notation.
\begin{example}
    Let $X \sim \mathsf{N}(0,1)$ and let $\Omega$ be a binary random variable satisfying $\mathbb{P}(\Omega = 1 \,|\, X = x) = g(x)$ for some Borel measurable function $g: \mathbb{R} \to [0,1]$. Then the $\mathbb{R}_\star$-valued random variable $X \ostar \Omega$ admits a density $f_\star: \mathbb{R}_{\star} \to [0,\infty)$ with respect to the extended Lebesgue measure $\lambda_{\star}$, where 
    \begin{flalign*}
        &&& f_\star(z) \coloneqq \begin{cases}
            g(z) \phi(z) &\text{if } z \in \mathbb{R}\\
            1 - \int_{\mathbb{R}} g(x) \phi(x) \, \mathrm{d}x \quad\qquad &\text{if } z = \star. 
        \end{cases} &&& \diamondsuit
    \end{flalign*}
\end{example}
An advantage of working with the extended measurable space $\mathcal{X}_{\star}$ is that it allows us to give succinct definitions of three classical models of missingness: missing completely at random (MCAR), missing at random (MAR) and missing not at random (MNAR).  For each definition, we will let $X \sim P \in \mathcal{P}(\mathcal{X})$ and $\pi \in \mathcal{P}(2^{[d]})$.  To define the \emph{MCAR distribution}, let~$\Omega$ be a random vector in $\{0,1\}^d$, independent of $X$, such that $\mathbb{P}(\Omega = \bm{1}_{S}) = \pi(S)$ for $S \subseteq [d]$, and define\footnote{When $d=1$, we may identify $\pi$ with $q \coloneqq \pi(\{1\})$, and write $\mathsf{MCAR}_{(q,P)}$ in place of $\mathsf{MCAR}_{(\pi,P)}$.}
\begin{align}\label{eq:MCAR-law}
    \mathsf{MCAR}_{(\pi, P)} \coloneqq \mathsf{Law}(X \ostar \Omega) \in \mathcal{P}(\mathcal{X}_{\star}).
\end{align}
Next, the \emph{family of MAR distributions} is the subset of $\mathcal{P}(\mathcal{X}_\star)$ given by\footnote{A formal definition of the conditional probabilities in~\eqref{eq:MAR-law} can be provided through the notion of disintegrations, whose existence is assumed here (and is guaranteed when $\mathcal{X}_j$ is a Polish space for each $j \in [d]$); see Section~\ref{sec:disintegration}.}
\begin{align}\label{eq:MAR-law}
    \mathsf{MAR}_{(\pi, P)} &\coloneqq \bigl\{\mathsf{Law}(X \ostar \Omega'): \; X \sim P, \, \mathbb{P}(\Omega' = \bm{1}_S) = \pi(S) \; \forall S\subseteq [d],\\
    &\quad \mathbb{P}(\Omega' = \omega\,|\, X=x) = \mathbb{P}(\Omega' = \omega\,|\, X\ostar\omega=x\ostar\omega) \;\forall \omega\in\{0,1\}^d, \text{$P$-a.e.~} x\in\mathcal{X}\bigr\}. \nonumber
\end{align}
The missing at random definition captures the intuitive idea that `missingness depends only on the observed variables' and is tailored toward likelihood-based methods for which it implies the so-called \emph{ignorability} of the missingness mechanism~\citep[][Section~5]{seaman2013what}.  Finally, we define the corresponding \emph{family of MNAR distributions} $\mathsf{MNAR}_{(\pi, P)} \subseteq \mathcal{P}(\mathcal{X}_{\star})$ as
\begin{align}\label{eq:MNAR-law}
        \mathsf{MNAR}_{(\pi, P)} &\coloneqq \bigl\{\mathsf{Law}(X \ostar \Omega'): X \sim P, \mathbb{P}(\Omega' = \bm{1}_S) = \pi(S) \; \forall S\subseteq [d]\bigr\} \subseteq \mathcal{P}(\mathcal{X}_{\star}),
\end{align}
According to these definitions, $\mathsf{MCAR}_{(\pi, P)} \in \mathsf{MAR}_{(\pi, P)} \subseteq \mathsf{MNAR}_{(\pi, P)}$.  When the distribution~$\pi$ is not fixed, we let 
\begin{align} \label{eq:def-MAR-MNAR}
\mathsf{MAR}_P \coloneqq \bigcup_{\rho \in \mathcal{P}(2^{[d]})} \mathsf{MAR}_{(\rho, P)} \quad \text{ and } \quad \mathsf{MNAR}_P \coloneqq \bigcup_{\rho \in \mathcal{P}(2^{[d]})} \mathsf{MNAR}_{(\rho, P)}.
\end{align}

\subsection{Models of departures from M(C)AR} \label{section:modeling-departures}

\subsubsection{Identifiability issues under the missing at random assumption} \label{sec:impossibility-MAR-mean}
The MAR assumption~\eqref{eq:MAR-law} is arguably the most widely adopted form of departure from the restrictive MCAR assumption in statistical practice.  However, in Example~\ref{Ex:Identifiability} below, we show that even in simple parametric scenarios, this can lead to identifiability issues that preclude consistent estimation; in particular, even though the MAR assumption holds, it is nonetheless impossible to identify the population mean. 

\begin{example}
\label{Ex:Identifiability} For $\theta = (\theta_1, \theta_2)^{\top} \in [0,1]^2$, define $P_{\theta} \in \mathcal{P}\bigl(\{0,1\}^3\bigr)$ such that for $X \coloneqq (X_1, X_2, X_3)^{\top} \sim P_{\theta}$, we have $X_1 \sim \mathsf{Ber}(\theta_1)$, $X_2 \sim \mathsf{Ber}(\theta_2)$ independently, and $X_3 = X_1 + X_2 \, (\mathrm{mod} \ 2)$.  We next specify a missingness mechanism via 
\[
\Omega \mid X = \begin{cases}
    (0, 0, 1) & \text{ if } X_3 = 1,\\
    (0, 1, 1) & \text{ with probability } 1/2 \text{ if } X_3 = 0, \\
    (1, 0, 1) & \text{ with probability } 1/2 \text{ if } X_3 = 0,
\end{cases}
\]
and note that if $X \sim P_{\theta}$, then $R_{\theta} \coloneqq \mathsf{Law}(X \ostar \Omega) \in \mathsf{MAR}_{P_\theta}$.  We have
\[
    R_{\theta}\bigl(\{(\star,\star,1)\}\bigr) = \theta_1(1-\theta_2) + (1-\theta_1)\theta_2,\quad R_{\theta}\bigl(\{(\star,1,0)\}\bigr) = R_{\theta}\bigl(\{(1,\star,0)\}\bigr) = \frac{\theta_1\theta_2}{2}
\]
and
\[
    R_{\theta}\bigl(\{(\star,0,0)\}\bigr) = R_{\theta}\bigl(\{(0,\star,0)\}\bigr) = \frac{(1-\theta_1)(1-\theta_2)}{2}.
\]
Thus, $R_{(\theta_1,\theta_2)} = R_{(\theta_2,\theta_1)}$ for all $(\theta_1,\theta_2)^\top \in [0,1]^2$, so that it is impossible to identify the parameter. This symmetry additionally implies that the population log-likelihood may not admit a unique global maximiser.  Indeed, letting $\theta^* \in [0,1]^2$ denote the true parameter, the population log-likelihood is given by $\mathcal{L}(\theta) \coloneqq \log\mathbb{E}_{\theta^*} \bigl\{R_{\theta}(\{X\ostar\omega\})\bigr\}$, which is symmetric in the components of $\theta$. 
\hfill $\diamondsuit$
\end{example}

\subsubsection{Huber-style models of departure from MCAR}\label{sec:departures-mcar}

Given the failure of the MAR assumption to ensure the tractability of the mean estimation problem, and in light of dual representation of the incompatibility index given by \citet[][Theorem~2]{berrett2023optimal}, it is natural to model departures from MCAR via a nonparametric, Huber-style contamination model.  In particular, given $P \in \mathcal{P}(\mathcal{X})$, $\epsilon \in [0,1]$ and $\pi \in \mathcal{P}(2^{[d]})$, we define the \emph{arbitrary $\epsilon$-contamination model}
\begin{align} \label{eq:P-arbitrary-contamination-model}
\mathcal{P}^{\mathrm{arb}}(P, \epsilon, \pi) \coloneqq \Bigl\{(1 - \epsilon)\mathsf{MCAR}_{(\pi, P)} +\epsilon Q: Q \in \mathcal{P}(\mathcal{X}_{\star}) \Bigr\}.
\end{align}
This family comprises mixture distributions in which one of the mixture components can be an arbitrary distribution on $\mathcal{X}_\star$.  One way to think about such distributions is via the following algorithm for drawing an observation $Z \sim (1 - \epsilon)\mathsf{MCAR}_{(\pi, P)} +\epsilon Q$, where $Q \in \mathcal{P}(\mathcal{X}_\star)$.  We first generate $W \sim \mathsf{Ber}(\epsilon)$; if $W=0$, we then draw $\Omega$ and $X$ independently, according to $\mathbb{P}(\Omega=\bm{1}_S \, | \, W=0) = \pi(S)$ for $S \subseteq [d]$ and $X \, | \,  \{W=0\} \sim P$, and finally set $Z \coloneqq X \ostar \Omega$.  On the other hand, if $W=1$, then we draw $Z\,|\, \{W=1\} \sim Q$.  The arbitrary $\epsilon$-contamination model allows us to interpolate in a continuous way between $\mathcal{P}^{\mathrm{arb}}(P, 0, \pi) = \mathsf{MCAR}_{(\pi, P)}$ and $\mathcal{P}^{\mathrm{arb}}(P, 1, \pi) = \mathcal{P}(\mathcal{X}_\star)$; see Figure~\ref{fig:arbitrary-contamination}.  One can also regard the model as a generalisation of the Huber contamination model to the missing data setting, where both missingness and outliers are present. This is common in many applications, including air quality data \citep{ottosen2019outlier,hua2024impact} and clinical data \citep{kwak2017statistical}, where missingness arises from communication failures or patient dropout respectively, while outliers occur due to sensor or equipment malfunction.

\definecolor{tikzblue}{RGB}{196, 211, 221}
\definecolor{tikzgreen}{RGB}{179, 255, 179}
\definecolor{tikzred}{RGB}{255, 120, 120}
\definecolor{tikzorange}{RGB}{255, 255, 120}
\begin{figure} 
      \centering
      \begin{tikzpicture}[scale=0.25]
        \draw[black] (0,0) ellipse (18cm and 12cm);
        \draw[black, dashed] (0,0) ellipse (9cm and 6cm);
        \draw[black, dashed] (0,0) ellipse (12cm and 8cm);
        \draw[black, dashed] (0,0) ellipse (15cm and 10cm);
        \draw[black] (6,0) ellipse (12cm and 5.7cm);
        \draw[black] (3,0) ellipse (15cm and 10cm);
        \draw[black] (0,0) ellipse (6cm and 4cm);
        \node[black,fill=white, inner sep=2pt] at (0,-5.8) {\footnotesize $\epsilon=0.25$};
        \node[black,fill=white, inner sep=2pt] at (0,-7.8) {\footnotesize $\epsilon=0.5$};
        \node[black,fill=white, inner sep=2pt] at (0,-9.8) {\footnotesize $\epsilon=0.75$};
        \node[black, fill=white, inner sep=2pt] at (0,-11.8) {\footnotesize $\epsilon=1$};


        \fill[black, tikzred, even odd rule, opacity=0.4] 
        (3,0) ellipse (15cm and 10cm) 
        (6,0) ellipse (12cm and 5.7cm);
        \fill[black, tikzorange, even odd rule, opacity=0.4] 
        (6,0) ellipse (12cm and 5.7cm) 
        (0,0) ellipse (6cm and 4cm);
        \draw[black, fill=tikzgreen, opacity=0.5] (0,0) ellipse (6cm and 4cm);
        \node[black] at (0,-0.3) {$\epsilon=0$};
        \shade[inner color=blue, outer color=white, even odd rule, opacity=0.5] ellipse (18cm and 12cm) ellipse (21.6cm and 14.4cm);
        \draw[-Latex] (12,12) -- (0,0.5) node[at start, above right] {$\mathsf{MCAR}_{(\pi, P)}$};
        \draw[-Latex] (19,6) -- (8,0.5) node[at start, above right] {$\mathsf{MAR}_P$};
        \draw[-Latex] (17.5,-10) -- (7,-8.5) node[at start, right] {$\mathsf{MNAR}_P$};
        \draw[-Latex] (-14,-10) -- (-10.5,-8.5) node[at start, left] {$\mathcal{P}(\mathcal{X}_\star)$};
       \end{tikzpicture}
       \caption{An illustration of the arbitrary $\epsilon$-contamination model $\mathcal{P}^{\mathrm{arb}}(P, \epsilon, \pi)$, which interpolates between $\mathsf{MCAR}_{(\pi,P)}$ and $\mathcal{P}(\mathcal{X}_\star)$.}
       \label{fig:arbitrary-contamination}
    \end{figure}

    \begin{figure} 
      \centering
      \begin{tikzpicture}[scale=0.25]
        \draw[black] (0,0) ellipse (18cm and 12cm);
        \draw[black, dashed] (0.75,0) ellipse (8.25cm and 5.5cm);
        \draw[black, dashed] (1.5,0) ellipse (10.5cm and 7cm);
        \draw[black, dashed] (2.25,0) ellipse (12.75cm and 8.5cm);
        \draw[black] (6,0) ellipse (12cm and 5.7cm);
        \draw[black] (3,0) ellipse (15cm and 10cm);
        \draw[black] (0,0) ellipse (6cm and 4cm);
        \node[black,fill=white, inner sep=2pt] at (0,-5.2) {\footnotesize $\epsilon=0.25$};
        \node[black,fill=white, inner sep=2pt] at (0,-6.7) {\footnotesize $\epsilon=0.5$};
        \node[black,fill=white, inner sep=2pt] at (0,-8.2) {\footnotesize $\epsilon=0.75$};
        \node[black, fill=white, inner sep=2pt] at (0,-9.7) {\footnotesize $\epsilon=1$};

        \fill[black, tikzred, even odd rule, opacity=0.4] 
        (3,0) ellipse (15cm and 10cm) 
        (6,0) ellipse (12cm and 5.7cm);
        \fill[black, tikzorange, even odd rule, opacity=0.4] 
        (6,0) ellipse (12cm and 5.7cm) 
        (0,0) ellipse (6cm and 4cm);
        \draw[black, fill=tikzgreen, opacity=0.5] (0,0) ellipse (6cm and 4cm);
        \node[black] at (0,-0.3) {$\epsilon=0$};
        \shade[inner color=blue, outer color=white, even odd rule, opacity=0.5] ellipse (18cm and 12cm) ellipse (21.6cm and 14.4cm);
        \draw[-Latex] (12,12) -- (0,0.5) node[at start, above right] {$\mathsf{MCAR}_{(\pi, P)}$};
        \draw[-Latex] (19,6) -- (8,0.5) node[at start, above right] {$\mathsf{MAR}_P$};
        \draw[-Latex] (17.5,-10) -- (7,-8.5) node[at start, right] {$\mathsf{MNAR}_P$};
        \draw[-Latex] (-14,-10) -- (-10.5,-8.5) node[at start, left] {$\mathcal{P}(\mathcal{X}_\star)$};
       \end{tikzpicture}
       \caption{An illustration of the realisable $\epsilon$-contamination model $\mathcal{R}(P, \epsilon, \pi)$, which interpolates between $\mathsf{MCAR}_{(\pi,P)}$ and $\mathsf{MNAR}_P$.}
       \label{fig:realisable-contamination}
    \end{figure}

An attraction of the arbitrary contamination model is its generality.  Nevertheless, in many practical settings where one considers data arising from a particular distribution that are then subjected to some form of missingness, it may be preferable to seek classes to interpolate between $\mathsf{MCAR}_{(\pi, P)}$ and $\mathsf{MNAR}_P$.  To this end, a key definition in our framework is 
that of the \emph{realisable $\epsilon$-contamination model} 
\begin{align}
\label{eq:P-realisable-contamination-model}
 \mathcal{R}(P, \epsilon, \pi) \coloneqq \Bigl\{(1 - \epsilon)\mathsf{MCAR}_{(\pi, P)} +\epsilon \,Q : Q \in \mathsf{MNAR}_{P} \Bigr\};
\end{align}
see Figure~\ref{fig:realisable-contamination}.  In this model, the contamination mixture component is restricted to being a partially-observed version of $X \sim P$, where nevertheless the observation pattern may both be different from that in the uncontaminated component, and dependent on $X$.  Thus, the realisable contamination model $\mathcal{R}(P, \epsilon, \pi)$ represents a (still nonparametric) subclass of $\mathcal{P}^{\mathrm{arb}}(P, \epsilon, \pi)$, with the potential to yield improved rates of mean estimation.  On the other hand, noting that $\mathcal{R}(P, 1, \pi) = \mathsf{MNAR}_P$, the distribution $R_\theta$ in Example~\ref{Ex:Identifiability} belongs to $\mathsf{MNAR}_P$ but not to $\mathcal{R}(P, \epsilon, \pi)$ for any $\epsilon \in [0,1)$, so the inability to estimate its mean consistently does not contradict our minimax upper bounds established in Theorem~\ref{thm:one-dim-realisable-sample-mean-ub}.

\subsubsection{Regression with missing response}\label{sec:departures-mar}

A practical application of our framework for mean estimation with missingness is to regression problems where the response variable may be missing.  Here, we consider a $d$-dimensional (random) covariate vector and a real-valued response variable $Y$.  Given a disintegration $(P_{Y|x})_{x \in \mathbb{R}^d}$ of the joint distribution of $(X,Y)$ into conditional distributions on $\mathbb{R}$, and $q \in (0,1]$, we define the collection of \emph{missing at random (MAR) response distributions} as
\begin{align}
\label{Eq:MARRes}
\mathsf{MAR}^{\mathrm{Res}}_{(q, P_{Y|X})} \coloneqq \Bigl\{ \mathsf{Law}(Y\ostar\Omega\,|\,X) :\; Y\mid X \sim P_{Y \mid X},\, &\mathrm{supp}(\Omega) = \{0, 1\},\nonumber\\
& \Omega \indep Y \,|\, X \text{ and } \mathbb{P}(\Omega = 1 \,|\, X) \geq q \Bigr\},
\end{align}
and the corresponding collection of \emph{missing not at random (MNAR) response distributions} as
\begin{align}
\label{Eq:MNARRes}
\mathsf{MNAR}^{\mathrm{Res}}_{(P_{Y|X})} \coloneqq \Bigl\{ \mathsf{Law}(Y\ostar\Omega \, | \, X) :\;& Y \mid X \sim P_{Y \mid X} \text{ and }\,  \mathrm{supp}(\Omega) = \{0, 1\}\Bigr\}.
\end{align}
We note two crucial differences between the classes defined in~\eqref{Eq:MARRes} and~\eqref{Eq:MNARRes} above.  First, in the missing at random setting, we require that the missingness mechanism $\Omega$ is conditionally independent of the response $Y$ given the covariate vector $X$, whereas in the latter setting, the mechanism may depend arbitrarily on the response as well as the covariate.  Second, under the missing at random setting, we  require a lower bound on the probability of observing a particular response given its corresponding covariate vector $X$.  This latter condition implies a one-sided version of a so-called \emph{strict overlap condition}~\citep[see, e.g.,][Assumption 4(ii)]{hirano2003efficient}.  By contrast,  we impose no such assumption in the missing not at random setting.  
Given $\epsilon\in[0,1]$, we define our \emph{realisable $\epsilon$-contamination model with a missing response} as
\begin{align} \label{eq:realisable-response}
    \mathcal{R}^{\mathrm{Res}}(P_{Y \mid X},  \epsilon, q) \coloneqq \Bigl\{ (1 - \epsilon)\mathsf{MAR}^{\mathrm{Res}}_{(q, P_{Y \mid X})} + \epsilon Q :\; Q \in \mathsf{MNAR}^{\mathrm{Res}}_{(P_{Y \mid X})} \Bigr\}.
\end{align}

\subsection{Characterisation of realisability} \label{sec:realisability}

As we have argued previously, the realisable $\epsilon$-contamination model is often very natural in settings where our data are observed subject to missingness.  It is therefore of great interest to characterise distributions $R \in \mathcal{R}(P,\epsilon,\pi)$, and this is achieved in Theorem~\ref{cor:P-epsilon-pi-S-realisability-Farkas-form} below through integrals of bounded, continuous functions with respect to $R$.  For $f \in C_{\mathrm{b}}(\mathcal{X}_\star)$, we define $f_{\max}:\mathcal{X} \rightarrow \mathbb{R}$ by $f_{\max}(x) \coloneqq \max_{\omega\in \{0,1\}^d} f(x\ostar \omega)$.  Theorem~\ref{cor:P-epsilon-pi-S-realisability-Farkas-form} is stated for simplicity of exposition in the setting of Euclidean spaces, though the result in fact follows from the more general (and abstract) Theorem~\ref{Thm:AbstractVersion}.  

\begin{theorem}\label{cor:P-epsilon-pi-S-realisability-Farkas-form}
Fix $P \in \mathcal{P}(\mathbb{R}^d)$, $\epsilon\in (0,1]$, $\pi \in \mathcal{P}(2^{[d]})$. Let $R \in \mathcal{P}(\mathbb{R}^d_{\star})$, and define a signed measure on $\mathbb{R}^d_\star$ by $Q \coloneqq \epsilon^{-1}\{R - (1-\epsilon)\mathsf{MCAR}_{(\pi, P)}\}$. Then $R\in \mathcal{R}(P,\epsilon,\pi)$ if and only if $Q\in\mathcal{P}(\mathbb{R}^d_{\star})$ and 
    \begin{equation}
    \label{Eq:fmax}
    P(f_{\max}) \geq Q(f)
    \end{equation}
    for all $f \in C_{\mathrm{b}}(\mathbb{R}^d_\star)$.
\end{theorem}
Theorem~\ref{cor:P-epsilon-pi-S-realisability-Farkas-form} highlights the important fact, which inspires our minimum Kolmogorov distance estimator methodology in Section~\ref{sec:gaussian-realisable-model} below, that the tail behaviour of realisable distributions is controlled by the tails of $P$. Indeed, if we take $f(x \ostar \omega) = \mathbbm{1}_{\{\max_{j \in [d]} \omega_j |x_j| \geq K\}}$ for some $K \geq 0$, then $f_{\max}(x)=\mathbbm{1}_{\{\|x\|_\infty \geq K\}}$, and we see that
\begin{align*}
    \sum_{S \in 2^{[d]}} \mathbb{P}_R(\Omega=\bm{1}_S, \|X_S\|_\infty \geq K) &= (1-\epsilon) \sum_{S \in 2^{[d]}} \pi_S \mathbb{P}_P(\|X_S\|_\infty \geq K) + \epsilon Q(f)  \\
    &\leq (1-\epsilon) \sum_{S \in 2^{[d]}} \pi_S \mathbb{P}_P(\|X_S\|_\infty \geq K) + \epsilon P(f_{\max}) \\
    &\leq \mathbb{P}_P(\|X\|_\infty \geq K).
\end{align*}
(Although Theorem~\ref{cor:P-epsilon-pi-S-realisability-Farkas-form} is stated in terms of bounded, continuous functions, it is clear from the proof that the necessity of~\eqref{Eq:fmax} holds for any integrable function.)  

An important special case of Theorem~\ref{cor:P-epsilon-pi-S-realisability-Farkas-form}, and indeed the main content of its proof, concerns the setting where $\epsilon = 1$.  Here, the result states that a distribution $Q$ belongs to $\mathsf{MNAR}_P$ if and only if~\eqref{Eq:fmax} holds, and is a consequence of a generalised version of Farkas's lemma \citep{farkas1902theorie}, due to \citet{craven1977generalizations}.  An explanation of the relevance of this seemingly-unrelated lemma, which amounts to a proof of the theorem in the case where the underlying product space is finite, is provided before the proof of the full result in Section~\ref{sec:proof-general-realisable}. 

In the case where $d=1$, Proposition~\ref{prop:univariate-realisability} below provides a more explicit characterisation of realisability.  It is also convenient here to write $\mathcal{R}(P,\epsilon,q)$ in place of $\mathcal{R}(P,\epsilon,\pi)$ when $q \coloneqq \pi(\{1\})$. 
\begin{prop}\label{prop:univariate-realisability}
    Let $P \in \mathcal{P}(\mathbb{R})$ and assume that $P$ has density $p$ with respect to a Borel measure~$\mu$. Let $\epsilon \in [0,1]$, $\pi \in \mathcal{P}\bigl(\bigl\{\emptyset, \{1\}\bigr\}\bigr)$, and define $q \coloneqq \pi(\{1\})$. Then $R \in \mathcal{R}(P,\epsilon,q)$ if and only if $R \ll \mu_\star$ and there exists a Borel measurable function $m:\mathbb{R} \to [0,1]$ such that 
    \begin{align}\label{eq:radon-nikodym-realisable}
        \frac{\mathrm{d}R}{\mathrm{d}\mu_\star}(z) = \begin{cases}
            q(1-\epsilon) \cdot p(z) + \epsilon\cdot  m(z)p(z) \quad&\text{if }z\in\mathbb{R}\\
            1- q(1-\epsilon) - \epsilon\int_{\mathbb{R}} m(x)p(x) \,\mathrm{d}\mu(x) &\text{if }z=\star.
        \end{cases}
    \end{align}
\end{prop}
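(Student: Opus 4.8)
The plan is to argue directly from the definition~\eqref{eq:P-realisable-contamination-model} of $\mathcal{R}(P,\epsilon,q)$, the crux being an explicit description of the univariate class $\mathsf{MNAR}_P$ in terms of $\mu_\star$-densities. First I would dispose of the degenerate case $\epsilon = 0$: then $\mathcal{R}(P,0,q) = \{\mathsf{MCAR}_{(q,P)}\}$ and~\eqref{eq:radon-nikodym-realisable} (with the $m$-term carrying coefficient $0$) asserts exactly that $R$ has $\mu_\star$-density $z \mapsto q\,p(z)$ on $\mathbb{R}$ and mass $1-q$ at $\star$, which is $\mathsf{MCAR}_{(q,P)}$ by~\eqref{eq:MCAR-law}. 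For $\epsilon \in (0,1]$, since $\mathsf{MCAR}_{(q,P)} \ll \mu_\star$ (with density $q\,p$ on $\mathbb{R}$ and $1-q$ at $\star$), the signed measure $Q \coloneqq \epsilon^{-1}\{R - (1-\epsilon)\mathsf{MCAR}_{(q,P)}\}$ satisfies $Q \ll \mu_\star$ iff $R \ll \mu_\star$, and by construction $R \in \mathcal{R}(P,\epsilon,q)$ precisely when this $Q$ is a bona fide member of $\mathsf{MNAR}_P$. So it suffices to show that, for a probability measure $Q$ on $\mathbb{R}_\star$, one has $Q \in \mathsf{MNAR}_P$ iff $Q \ll \mu_\star$ with $\mathrm{d}Q/\mathrm{d}\mu_\star$ of the form $z \mapsto m(z)p(z)$ on $\mathbb{R}$ for some Borel $m : \mathbb{R}\to[0,1]$ (the mass at $\star$ being then forced to equal $1 - \int_{\mathbb{R}} mp\,\mathrm{d}\mu$), and then translate this into the asserted form of $\mathrm{d}R/\mathrm{d}\mu_\star$.

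Second, I would prove this description of $\mathsf{MNAR}_P$ in the case $d=1$. For the forward implication, write $Q = \mathsf{Law}(X\ostar\Omega')$ with $X \sim P$ and $\Omega'$ taking values in $\{0,1\}$; the Doob--Dynkin lemma yields a Borel $m:\mathbb{R}\to[0,1]$ with $\mathbb{E}[\mathbbm{1}_{\{\Omega'=1\}}\mid X] = m(X)$ almost surely, whence $Q(A) = \mathbb{P}(X\in A,\,\Omega'=1) = \int_A mp\,\mathrm{d}\mu$ for Borel $A\subseteq\mathbb{R}$ and $Q(\{\star\}) = 1 - \int_{\mathbb{R}} mp\,\mathrm{d}\mu$; since every element of $\mathcal{B}(\mathbb{R}_\star)$ is either a Borel subset of $\mathbb{R}$ or such a set together with $\{\star\}$, this pins down $Q$ and gives the stated density. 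For the converse, given Borel $m:\mathbb{R}\to[0,1]$, take independent $X\sim P$ and $U\sim\mathsf{Unif}[0,1]$, set $\Omega'\coloneqq\mathbbm{1}_{\{U\le m(X)\}}$ so that $\mathbb{P}(\Omega'=1\mid X=x) = m(x)$, and check (again on Borel subsets of $\mathbb{R}$ and at $\star$) that $\mathsf{Law}(X\ostar\Omega')$ has $\mu_\star$-density $mp$ on $\mathbb{R}$; hence it equals $Q$, so $Q \in \mathsf{MNAR}_{(\rho,P)}\subseteq\mathsf{MNAR}_P$ with $\rho(\{1\}) = \int_\mathbb{R} mp\,\mathrm{d}\mu$.

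Finally I would assemble the pieces. If $R \in \mathcal{R}(P,\epsilon,q)$ with $\epsilon>0$, then $Q$ above lies in $\mathsf{MNAR}_P$, so $R = (1-\epsilon)\mathsf{MCAR}_{(q,P)} + \epsilon Q \ll \mu_\star$, and summing densities gives $\mathrm{d}R/\mathrm{d}\mu_\star(z) = q(1-\epsilon)p(z) + \epsilon m(z)p(z)$ for $z\in\mathbb{R}$ and, at $\star$, $(1-\epsilon)(1-q) + \epsilon\{1 - \int_\mathbb{R} mp\,\mathrm{d}\mu\} = 1 - q(1-\epsilon) - \epsilon\int_\mathbb{R} mp\,\mathrm{d}\mu$, which is~\eqref{eq:radon-nikodym-realisable}. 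Conversely, if~\eqref{eq:radon-nikodym-realisable} holds, then a short computation shows $Q \coloneqq \epsilon^{-1}\{R - (1-\epsilon)\mathsf{MCAR}_{(q,P)}\}$ has $\mu_\star$-density $mp\ge 0$ on $\mathbb{R}$ and mass $1 - \int_\mathbb{R} mp\,\mathrm{d}\mu \ge 0$ at $\star$ (using $\int p\,\mathrm{d}\mu = 1$ and $m\le 1$), so $Q\in\mathcal{P}(\mathbb{R}_\star)$ has a density of the required form; hence $Q\in\mathsf{MNAR}_P$ and $R\in\mathcal{R}(P,\epsilon,q)$. I do not anticipate a serious obstacle here: the argument is essentially bookkeeping with $\mu_\star$-densities, and the only points needing care are the Borel measurability of $m$ (handled by Doob--Dynkin, and immaterial on $\{p=0\}$, where every choice of $m$ yields the same $R$), the legitimacy of identifying a measure on $\mathbb{R}_\star$ from its values on Borel subsets of $\mathbb{R}$ and at $\{\star\}$, and the nonnegativity checks in the converse that certify $Q$ as a genuine probability measure.
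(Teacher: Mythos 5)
Your proposal is correct and follows essentially the same route as the paper: both directions are handled by the generative representation $Z=(1-B)\cdot(X\ostar\Omega^{(1)})+B\cdot(X\ostar\Omega^{(2)})$, with $m$ identified as the conditional observation probability of the MNAR component (your Doob--Dynkin step makes explicit what the paper assumes when it defines $m(x)=\mathbb{P}(\Omega^{(2)}=1\,|\,X=x)$), and the converse proved by explicitly constructing $\Omega^{(2)}$ from $m$. The paper additionally records a second proof of the converse via Theorem~\ref{cor:P-epsilon-pi-S-realisability-Farkas-form}, but its primary argument is the same bookkeeping with $\mu_\star$-densities that you describe.
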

Proposition~\ref{prop:univariate-realisability} may be seen as a consequence of Theorem~\ref{cor:P-epsilon-pi-S-realisability-Farkas-form}, though we also give an independent proof in Section~\ref{sec:proof-prop-4}.  The function $m: \mathbb{R} \rightarrow [0,1]$ admits an interpretation as a missingness mechanism for the MNAR component.  More generally, Proposition~\ref{prop:univariate-realisability} reveals that univariate realisability is characterised via rejection sampling.  To see this in the extreme case when $\epsilon = 1$, consider a distribution $R \in \mathcal{P}(\mathbb{R}_{\star})$ such that $R \ll \mu_{\star}$, and for $Z \sim R$, let $g$ denote the conditional density with respect to $\mu$ of $Z$ given that $\{Z \neq \star\}$.  By Proposition~\ref{prop:univariate-realisability}, $R \in \mathsf{MNAR}_P$ if and only if $g(x)/p(x) \leq 1/R\bigl(\{\star\}\bigr)$ for $\mu$-almost all $x \in \mathbb{R}$.  Thus any $R \in \mathsf{MNAR}_P$ can be obtained via rejection sampling from~$P$.

From Proposition~\ref{prop:univariate-realisability}, we see that for $0\leq c_1 \leq c_2 \leq 1$, we have
\begin{align}
    c_1 \leq \mathbb{P}(Z\neq \star \,|\, X) \leq c_2 \quad \text{if and only if} \quad \mathsf{Law}(Z) \in \mathcal{R}\Bigl(P,\, c_2-c_1,\, \frac{c_1}{1+c_1-c_2}\Bigr). \label{Eq:KeyDisplay}
\end{align}
Thus, as long as $\mathbb{P}(Z\neq \star \,|\, X) > 0$ almost surely, the distribution of $Z$ belongs to a realisable contamination model for some $\epsilon<1$. As an example,~\eqref{Eq:KeyDisplay} can be used to provide a real data illustration of realisability in the context of income survey data, where non-responses typically exhibit U-shaped behaviour, i.e.~people with low or high income are less likely to respond to the survey \citep{rubin1983imputing, greenlees1982imputation,bollinger2019trouble}. In particular, \citet[][Figure~2]{bollinger2019trouble} plot the survey non-response rate against the income percentile, where social security administrative data provide true earnings even for survey non-respondents.  We can see from their figure that $0.6 \leq \mathbb{P}(Z\neq \star \,|\, X) \leq 0.85$, which implies from~\eqref{Eq:KeyDisplay} that their income data belong to a realisable contamination model with $\epsilon=0.25$ and $q=0.8$.  

Let us now compare our realisable class in~\eqref{eq:P-realisable-contamination-model} with related notions in the (primarily causal inference) literature, in the univariate case.  Let $Z \coloneqq X \ostar \Omega$, where $X \sim P \ll \mu$ and where $\Omega$ is a random variable taking values in $\{0,1\}$ that need not be independent of $X$.  Define $h: \mathbb{R} \rightarrow [0, 1]$ by
\[
h(x) \coloneqq \mathbb{P}(Z \neq \star \, \vert \, X = x).
\]
Proposition~\ref{prop:univariate-realisability} yields that $\mathsf{Law}(Z) \in \mathcal{R}(P,\epsilon,q)$ if and only if $q(1-\epsilon) \leq h(x) \leq q(1-\epsilon) + \epsilon$ for $\mu$-almost all $x\in\mathbb{R}$. The notion of \emph{$\Gamma$-biased sampling} of~\citet{sahoo2022learning} (see also~\citet{aronow2013interval}) can be stated as the condition on $h$ and $\overline{q} \coloneqq \mathbb{P}(Z \neq \star)$ that $\Gamma^{-1} \leq h(x)/\overline{q} \leq \Gamma$ for some $\Gamma \geq 1$ and $\mu$-almost all $x \in \mathbb{R}$.  In a similar spirit, the \emph{marginal sensitivity condition} of~\citet[Definition 1]{zhao2019sensitivity} asks that there exists $\Lambda \geq 1$ such that 
\[
\frac{1}{\Lambda} \leq \frac{h(x)}{1 - h(x)} \cdot \frac{1 - \overline{q}}{\overline{q}} \leq \Lambda
\]
for $\mu$-almost all $x \in \mathbb{R}$, while the classical \emph{sensitivity condition} of~\citet{rosenbaum87sensitivity} reads as 
\[
\frac{1}{\Lambda} \leq \frac{h(x_1)}{1 - h(x_1)} \cdot \frac{1 - h(x_2)}{h(x_2)} \leq \Lambda
\]
for $(\mu \otimes \mu)$-almost all $(x_1, x_2) \in \mathbb{R}^2$.  These classes all belong to $\mathcal{R}(P,\epsilon,q)$ for some $\epsilon \in [0,1)$ and $q \in (0,1]$.  Here, for simplicity of exposition, we have presented versions of these conditions without covariates.  Nevertheless the comparison remains valid when covariates are included; see Section~\ref{sec:regression-missing-response}.

\subsection{Minimax quantile framework} \label{sec:minimax-quantile}
In a traditional minimax analysis, the randomness in the loss function evaluated at our data is handled via a reduction to its expectation, namely the minimax risk.  As mentioned in the introduction, this minimax risk is infinite in the problems that we consider, so does not provide a meaningful way of comparing different statistical procedures.  We therefore adopt the minimax quantile framework of \citet{ma2024high}, which also offers the benefit of retaining all of the distributional information, e.g.~regarding tail behaviour, in the loss function.   

To introduce this paradigm in generality, we let $(\Theta, d)$ be a non-empty pseudo-metric space and for $\theta \in \Theta$, let $\mathcal{P}_{\theta}$ denote a family of probability measures on a measurable space $(\mathcal{Z}, \mathcal{C})$.  Further, let $g: [0, \infty) \rightarrow [0, \infty)$ denote an increasing function and define the loss $L: \Theta \times \Theta \rightarrow [0, \infty)$ by $L(\theta, \theta') \coloneqq g\bigl(d(\theta, \theta')\bigr)$.  Write $\widehat{\Theta}$ for the set of estimators of $\theta$, i.e.~the set of measurable functions from $\mathcal{Z}$ to $\Theta$. 
 For $\hat{\theta} \in \hat{\Theta}$, $P_\theta \in \mathcal{P}_\theta$ 
and a quantile level $\delta \in (0, 1]$, we write
\[
\mathrm{Quantile}\bigl(1 - \delta; P_{\theta}, L(\hat{\theta}, \theta)\bigr) \coloneqq \inf \Bigl\{ r \in [0, \infty):\; P_{\theta}\bigl\{L(\hat{\theta}, \theta) \leq r \bigr\} \geq 1 - \delta \Bigr\}, 
\]
and consider the \emph{minimax $(1 - \delta)$th quantile}, defined as
\begin{align}
    \label{eq:def-minimax-quantile}
    \mathcal{M}(\delta, \mathcal{P}_{\Theta}, L) \coloneqq \inf_{\hat{\theta} \in \widehat{\Theta}} \sup_{\theta \in \Theta} \sup_{P_{\theta} \in \mathcal{P}_{\theta}} \mathrm{Quantile}\bigl(1 - \delta; P_{\theta}, L(\hat{\theta}, \theta)\bigr),
\end{align}
where $\mathcal{P}_\Theta \coloneqq \{\mathcal{P}_\theta:\theta \in \Theta\}$. 
 If there exists $\hat{\theta} \in \hat{\Theta}$ such that with $P_{\theta}$-probability at least $1-\delta$, we have $L(\hat{\theta}, \theta) \leq \mathrm{UB}(\delta)$ for all $\theta\in\Theta$ and $P_{\theta}\in\mathcal{P}_{\theta}$, then $\mathcal{M}(\delta, \mathcal{P}_{\Theta}, L) \leq \mathrm{UB}(\delta)$.   For the squared Euclidean error loss $L(\hat{\theta},\theta) = \|\hat{\theta} - \theta\|_2^2$, we will slightly abuse notation by writing $\mathcal{M}\bigl(\delta, \mathcal{P}_{\Theta}, \| \cdot \|_2^2\bigr)$ in place of $\mathcal{M}(\delta, \mathcal{P}_{\Theta}, L)$.

\section{Mean estimation under arbitrary contamination} \label{sec:mean-estimation-arbitrary-contamination}

Throughout this section, we take $\mathcal{X} = \mathbb{R}^d$.  The set of distributions on $\mathbb{R}^d$ with mean vector $\theta \in \mathbb{R}^d$ and covariance matrix $\Sigma \in \mathcal{S}_+^{d \times d}$ is denoted 
 \begin{align}\label{eq:finite-covariance-set-of-distributions}
     \mathcal{P}(\theta, \Sigma) \coloneqq \Bigl\{ P \in \mathcal{P}(\mathbb{R}^d):\; \mathbb{E}_P(X) = \theta,\, \mathrm{Cov}_P (X) = \Sigma \Bigr\}.
 \end{align}
Given $\epsilon \in [0,1]$ and $\pi \in \mathcal{P}(2^{[d]})$, it is  convenient to define a specialised version of our arbitrary contamination model by
\begin{align}
\label{eq:arbitrary-contamination-model}
\mathcal{P}^{\mathrm{arb}}(\theta, \Sigma, \epsilon, \pi) \coloneqq \bigcup_{P \in \mathcal{P}(\theta, \Sigma)} \mathcal{P}^{\mathrm{arb}}(P,\epsilon, \pi).
\end{align}

The goal of this section is to provide upper and lower bounds on the minimax quantiles for multivariate mean estimation with respect to squared Euclidean error loss over the classes $\mathcal{P}^{\mathrm{arb}}(\theta, \Sigma, \epsilon, \pi)$.  With a view to the upper bound, the \textproc{Iterative\_Robust\_Mean} estimator of Algorithm~\ref{alg:robust-iterative-imputation} converts a robust mean estimation algorithm ALG for complete data into a robust mean estimator for data with missingness via iterative imputation.  In our missing data context, the behaviour of the \textproc{Iterative\_Robust\_Mean} estimator is governed by the matrix $\Sigma^{\mathrm{IPW}} \in \mathcal{S}^{d \times d}_{+}$, with entries 
\begin{align*}
    \bigl(\Sigma^{\mathrm{IPW}}\bigr)_{jk} \coloneqq  \frac{q_{jk}}{q_jq_k} \cdot \Sigma_{jk}, 
\end{align*}
for $j,k \in [d]$, where $q_{jk} \coloneqq \sum_{S \subseteq [d]: \{j, k\} \subseteq S} \pi(S)$ and $q_j \coloneqq q_{jj}$. Further define $q_{\min} \coloneqq \min_{j\in [d]} q_j$.  

Our performance guarantee for the \textproc{Iterative\_Robust\_Mean} estimator requires ALG to satisfy the following property: let $X_1,\ldots,X_n \overset{\mathrm{iid}}{\sim} (1-\epsilon)P+\epsilon Q$, where $P\in \mathcal{P}(\theta_0,\Sigma)$ and $Q\in \mathcal{P}(\mathbb{R}^d)$. Then there exist $\epsilon_{\max} \in (0,1/2)$, $a \in (0,1]$ and $C>0$ such that for any $\epsilon\in[0,\epsilon_{\max}]$, $\delta\in [e^{-an},1]$ and $n,d \in \mathbb{N}$, we have with probability at least $1-\delta$ that
\begin{align}
    \bigl\| \mathrm{ALG}(X_1,\ldots,X_n;\epsilon,\delta) - \theta_0\bigr\|_2^2 \leq C\biggl( \frac{\tr(\Sigma)}{n} + \frac{\|\Sigma\|_{\mathrm{op}}\log(1/\delta)}{n} + \|\Sigma\|_{\mathrm{op}}\epsilon\biggr). \label{eq:assumption-on-alg}
\end{align}
The above rate of convergence is known to be minimax optimal \citep[e.g.][]{lugosi21robust}.  The guarantee~\eqref{eq:assumption-on-alg} is satisfied, for instance, by the polynomial-time robust descent algorithm of \citet{depersin2022robust}, with $\epsilon_{\max} = 1/300$, $a = 1/180{,}000$ and $C = 3\times 10^{14}$; see also \citet{diakonikolas2020outlier}.

\begin{algorithm}[t]
\caption{\textproc{Iterative\_Robust\_Mean} for robust mean estimation with iterative imputation}\label{alg:robust-iterative-imputation}
\raggedright \textbf{Input:} Data $z_1,\ldots,z_n \in \mathbb{R}_\star^d$, contamination parameter $\epsilon \geq 0$, tolerance parameter $\delta > 0$, an algorithm ALG satisfying~\eqref{eq:assumption-on-alg} for some $\epsilon_{\max}\in(0,1/2)$, $a\in(0,1]$ and $C>0$\\
\textbf{Output:} An estimator $\hat{\theta}_{n}$ of $\theta_0$
\begin{algorithmic}[1]
\Function{Iterative\_Robust\_Mean}{$z_1,\ldots,z_n; \epsilon, \delta$}
\State $T \gets 1+\bigl\lceil \log_2\bigl\{\frac{C}{48C+2}\bigl(\mathbf{r}(\Sigma^{\mathrm{IPW}}) + 2\log(2ed/\delta) \bigr)\bigr\} \bigr\rceil$ and $M \gets \big\lceil \frac{2\lfloor n/T\rfloor\epsilon}{-\log(1-\epsilon_{\max})} \vee \log(2T/\delta) \big\rceil$
\For{$i \in [n]$ and $j \in [d]$}
   \State $\omega_{ij} \gets \mathbbm{1}_{\{z_{ij} \neq \star\}}$
\EndFor
\State Randomly select $T$ disjoint sets $(S^{(t)})_{t\in[T]} \subseteq [n]$ such that $|S^{(t)}| = \lfloor n/T \rfloor$ for $t\in[T]$
\For{$j \in [d]$}
    \State $I_j \gets \{i \in S^{(1)}:\omega_{ij} =1\}$
    \State $\hat{\theta}^{(1)}_j  \gets \mathrm{ALG}\bigl((z_{ij})_{i \in I_j};\frac{\epsilon}{q_j(1-\epsilon)},\, \frac{\delta}{4d}\bigr)$
\EndFor
\For{$t \in \{2,\ldots,T\}$}
\State Randomly select $M$ disjoint sets $(B_m^{(t)})_{m\in[M]} \subseteq S^{(t)}$ such that $|B_m^{(t)}| = \bigl\lfloor|S^{(t)}|/M\bigr\rfloor$
\For{$(m, j) \in [M] \times [d]$}
\State $\overbar{\omega}^{(t)}_{mj} \leftarrow \mathbbm{1}_{\{\sum_{i \in B_m^{(t)}} \omega_{ij} > 0\}}$
\vspace{0.5em}
\State $\begin{aligned}
    \bar{z}_{mj}^{(t)} \gets \overbar{\omega}^{(t)}_{mj} \cdot \frac{\sum_{i \in B_m^{(t)}} \omega_{ij} z_{ij}}{\sum_{i \in B_m^{(t)}} \omega_{ij}}  + (1 - \overbar{\omega}^{(t)}_{mj}) \cdot \hat{\theta}^{(t-1)}_j
\end{aligned}$
\EndFor
\State $\hat{\theta}^{(t)} \gets \mathrm{ALG}\bigl(\bar{z}^{(t)}_1,\ldots,\bar{z}^{(t)}_M; \epsilon_{\max}, \frac{\delta}{2T}\bigr)$
\EndFor
\State \textbf{return} $\hat{\theta}_n \gets \hat{\theta}^{(T)}$
\EndFunction
\end{algorithmic}
\end{algorithm}

\begin{theorem}\label{thm:robust-descent-iterative-imputation-ub}
    Let $\mathrm{ALG}$ satisfy~\eqref{eq:assumption-on-alg} for some $\epsilon_{\max} \in (0,1/2)$, $a \in (0,1]$, $C > 0$.
    Let $n\geq4$, $\epsilon \in [0,1/2)$, $\delta\in(0,1]$, $\pi\in\mathcal{P}(2^{[d]})$.  Let $Z_1, \ldots, Z_{n} \stackrel{\mathrm{iid}}{\sim} P \in \mathcal{P}^{\mathrm{arb}} \big(\theta_0, \Sigma, \epsilon, \pi \big)$ and define $\hat{\theta}_n \coloneqq \textproc{Iterative\_Robust\_Mean}(Z_1,\ldots,Z_n;\epsilon,\delta)$ from Algorithm~\ref{alg:robust-iterative-imputation}.  Let $T\coloneqq 1+\bigl\lceil \log_2\bigl\{\frac{C}{48C+2}\bigl(\mathbf{r}(\Sigma^{\mathrm{IPW}}) + 2\log(2ed/\delta) \bigr)\bigr\} \bigr\rceil$ as in the algorithm, and suppose that 
    \begin{align}
    \label{Eq:qmin}
        q_{\min} \geq \frac{4(192C+8)}{-\log(1-\epsilon_{\max})} \cdot \epsilon +  \Bigl(192C+\frac{16}{a}\Bigr) \frac{T\log\bigl(2e(T\vee d)/\delta\bigr)}{n}. 
    \end{align}
    Then, with probability at least $1 - \delta$, 
    \begin{align*}
        \|\hat{\theta}_n - \theta_0\|_2^2 
        \leq (288C+12)\biggl(\frac{T \tr(\Sigma^{\mathrm{IPW}})}{n} + \frac{T\|\Sigma^{\mathrm{IPW}}\|_{\mathrm{op}} \log(2T/\delta)}{n} + \|\Sigma^{\mathrm{IPW}}\|_{\mathrm{op}} \epsilon\biggr).
    \end{align*}
\end{theorem}
Although the choice of $T$ in Theorem~\ref{thm:robust-descent-iterative-imputation-ub} depends on the effective rank of $\Sigma^{\mathrm{IPW}}$, which will typically be unknown, a straightforward adaptive choice is available by replacing $\mathbf{r}(\Sigma^{\mathrm{IPW}})$ in the definition of $T$ with its upper bound of $d$.  Importantly, the dependence of $T$ on $\mathbf{r}(\Sigma^{\mathrm{IPW}})$ is only logarithmic, so the effect on the final upper bound is small.     

Since 
\[
T \lesssim \log d + \log_+ \log(1/\delta),
\]
Theorem~\ref{thm:robust-descent-iterative-imputation-ub} yields that, with $\mathcal{P}_{\Theta}^{\mathrm{arb}} \coloneqq \bigl\{ \mathcal{P}^{\mathrm{arb}}(\theta, \Sigma, \epsilon, \pi) : \theta\in\Theta \bigr\}$,
\begin{align}
\label{Eq:TwoTermDecomoposition}
\mathcal{M}\bigl(\delta, &\mathcal{P}_{\Theta}^{\mathrm{arb}}, \| \cdot \|_2^2\bigr) \nonumber \\
&\lesssim \bigl\{ \log{d} + \log_+\log(1/\delta)\bigr\} \biggl\{ \underbrace{ \frac{\tr(\Sigma^{\mathrm{IPW}})}{n}  + \frac{\| \Sigma^{\mathrm{IPW}} \|_{\mathrm{op}} \log(1/\delta)}{n}}_{\text{MCAR term}}\biggr\} + \underbrace{ \| \Sigma^{\mathrm{IPW}} \|_{\mathrm{op}} \epsilon}_{\text{MCAR departure}}. 
\end{align}
As indicated in~\eqref{Eq:TwoTermDecomoposition}, our upper bound decomposes into a sum of two distinct components: an MCAR term and an $\epsilon$-dependent term that captures the effect of departure from MCAR.  The first of these terms further decomposes as the sum of a risk component\footnote{Strictly speaking, this is a slight abuse of terminology, since the MCAR risk is infinite whenever $q_{\min} < 1$; however, it is the risk when $q_{\min} = 1$, and the terminology reflects the fact that the term does not depend on the quantile level $\delta$.} and a term that captures the dependence on the quantile level $\delta$.

In the strong contamination model, \citet[][Theorem~2]{hu2021robust} provide an estimator $\hat{\theta}^{\mathrm{HR}}$ satisfying the upper bound 
    \begin{align*}
        \|\hat{\theta}^{\mathrm{HR}} - \theta_0\|_2^2 \lesssim \frac{d\|\Sigma\|_{\mathrm{op}}\log d}{nq_{\min}} + \frac{d\|\Sigma\|_{\mathrm{op}}\log (1/\delta)}{nq_{\min}} + \frac{\|\Sigma\|_{\mathrm{op}}\epsilon}{q_{\min}}
    \end{align*}
with probability at least $1-\delta$, provided that $\delta\gtrsim de^{-cnq_{\min}}$ for an appropriately small $c > 0$.  If ALG satisfies~\eqref{eq:assumption-on-alg} under the strong contamination model (as the algorithms cited prior to the statement of Theorem~\ref{thm:robust-descent-iterative-imputation-ub} do), then our bound in Theorem~\ref{thm:robust-descent-iterative-imputation-ub} also holds in the strong contamination model, and this facilitates a comparison of our conclusion with that of \citet{hu2021robust}.  The improvements of our bound when $\delta \geq \exp(-e^d)$ arise from the facts that 
\[
\tr(\Sigma^{\mathrm{IPW}}) \leq d\|\Sigma^{\mathrm{IPW}}\|_{\mathrm{op}}, \quad \|\Sigma^{\mathrm{IPW}}\|_{\mathrm{op}} \leq \frac{\|\Sigma\|_{\mathrm{op}}}{q_{\min}}, 
\]
and 
\begin{align*}
\log\bigl(d\log(1/\delta)\bigr)\biggl\{\frac{\tr(\Sigma^{\mathrm{IPW}})}{n} &+ \frac{\| \Sigma^{\mathrm{IPW}} \|_{\mathrm{op}} \log(1/\delta)}{n}\biggr\} \\
&\leq 2\biggl\{\frac{d\|\Sigma\|_{\mathrm{op}}\log d}{nq_{\min}} + \frac{d\| \Sigma\|_{\mathrm{op}} \log(1/\delta)}{nq_{\min}}\biggr\}.
\end{align*}
These gains may be significant: for instance, when $\delta = e^{-d}$, we obtain that with probability at least $1-\delta$, 
\[
\| \hat{\theta}_n - \theta_0 \|_2^2 \lesssim  \frac{d\|\Sigma^{\mathrm{IPW}}\|_{\mathrm{op}}\log d}{n} + \| \Sigma^{\mathrm{IPW}} \|_{\mathrm{op}}\cdot \epsilon.
\]
By contrast, \citet{hu2021robust} obtain that with probability at least $1-\delta$,
\[
\|\hat{\theta}^{\mathrm{HR}} - \theta_0\|_2^2 \lesssim \frac{d^2\|\Sigma\|_{\mathrm{op}}}{nq_{\min}} + \frac{\| \Sigma \|_{\mathrm{op}}}{q_{\min}} \cdot \epsilon.
\]
As another example, to illustrate the effect of heterogeneous missingness across coordinates, if $d \geq 2$, $\Sigma = I_d + \bm{1}_{[d]} \bm{1}_{[d]}^\top$, $q_1 = 1/d$ and $q_j = 1$ for $j \geq 2$, then
\begin{align*}
\|\Sigma^{\mathrm{IPW}}\|_{\mathrm{op}} = \|I_d + \bm{1}_{[d]} \bm{1}_{[d]}^\top + (2d-2)e_1e_1^\top\|_{\mathrm{op}} &\leq 1 + \tr\bigl(\bm{1}_{[d]} \bm{1}_{[d]}^\top + (2d-2)e_1e_1^\top\bigr) \\
&= 3d-1 \leq d(d+1) = \frac{\|\Sigma\|_{\mathrm{op}}}{q_{\min}}.
\end{align*}
On the other hand, due to the sample splitting in Algorithm~\ref{alg:robust-iterative-imputation}, our condition on $\delta$ may be slightly stronger than that of \citet{hu2021robust}.  

The optimality of our procedure can be deduced from the following minimax lower bound. 
\begin{theorem}\label{thm:arbitrary-contamination-lb}
    Let $\Sigma \in \mathcal{S}^{d \times d}_{++}$ be diagonal, $\pi \in \mathcal{P}(2^{[d]}), \epsilon \in [0, 1]$, $\delta \in (0,1/4]$, $\Theta \coloneqq \mathbb{R}^d$ and $\mathcal{P}_\theta \coloneqq \mathcal{P}^{\mathrm{arb}}(\theta, \Sigma, \epsilon, \pi)^{\otimes n}$ for $\theta\in\Theta$. Then 
    \begin{align*}
        \mathcal{M}\bigl(\delta, \mathcal{P}_{\Theta} , \| \cdot \|_2^2\bigr) \begin{cases}
            \gtrsim \dfrac{\tr(\Sigma^{\mathrm{IPW}})}{n} + \dfrac{\| \Sigma^{\mathrm{IPW}}\|_{\mathrm{op}} \log(1/\delta)}{n} + \epsilon \| \Sigma^{\mathrm{IPW}} \|_{\mathrm{op}} \quad&\text{if } \epsilon < \frac{q_{\min}}{1+q_{\min}}\\
            = \infty \quad&\text{if } \epsilon \geq \frac{q_{\min}}{1+q_{\min}}.
        \end{cases}
    \end{align*}
\end{theorem}
From Theorem~\ref{thm:arbitrary-contamination-lb}, we see that up to multiplicative universal constants and when $\Sigma$ is diagonal, Algorithm~\ref{alg:robust-iterative-imputation} has optimal behaviour under departures from MCAR and, up to a logarithmic factor in $d$ and an iterated logarithmic factor in $1/\delta$, it adapts to the MCAR minimax quantile rate in settings where the MCAR term dominates.  In Theorem~\ref{thm:robust-descent-iterative-imputation-ub}, we imposed a condition of the form $q_{\min} \gtrsim \epsilon + T\log(d/\delta)/n$ for our upper bound, where the second part of this condition asks for the expected number of MCAR observations per coordinate to be at least poly-logarithmic in $d$ and $1/\delta$.  On the other hand, from Theorem~\ref{thm:arbitrary-contamination-lb}, we see that $q_{\min} > \epsilon/(1-\epsilon)$ is necessary to ensure finite error with high probability.

\section{Mean estimation under realisable contamination}\label{sec:realisable-mean-est}
  
In the arbitrary contamination setting of Section~\ref{sec:mean-estimation-arbitrary-contamination}, we saw that the contamination fraction $\epsilon$ has a severe effect on our ability to estimate a population mean.  The aim of this section, then, is to explore the potential benefits of restricting the form of contamination to MNAR observations from the same base distribution as our uncontaminated observations.  

\subsection{Gaussian realisable model} \label{sec:gaussian-realisable-model}

\subsubsection{Univariate case}

In this subsection, we consider Gaussian base distributions, and for $\theta \in \mathbb{R}$, as well as fixed $\sigma > 0$, $\epsilon \in [0,1)$ and $q \in (0,1]$, we write $\mathcal{R}(\theta) \coloneqq \mathcal{R}\bigl(\mathsf{N}(\theta, \sigma^2), \epsilon, q\bigr)$ as shorthand.  To gain intuition, recall the characterisation of univariate realisable distributions in Proposition~\ref{prop:univariate-realisability}: $R \in \mathcal{R}(\theta)$ if and only if both $R \ll \lambda_\star$ and the restriction $h: \mathbb{R} \rightarrow [0, \infty)$ of $\mathrm{d}R/\mathrm{d}\lambda_\star$ to $\mathbb{R}$ satisfies
\begin{align} 
\label{eq:definition-h-restriction}
h(x) \in \bigl[q(1 - \epsilon) \phi_{(\theta, \sigma)}(x),\, \{q (1 - \epsilon) + \epsilon\} \cdot \phi_{(\theta, \sigma)}(x)\bigr],
\end{align}
for $\lambda$-almost all $x$.  In Figure~\ref{fig:realisable-example-epsilon<1}(a), we plot a $\mathsf{N}(0,1)$-realisable $h$; on the other hand, in Figure~\ref{fig:realisable-example-epsilon<1}(b), we consider the same function $h$ and demonstrate that $h$ is not $\mathsf{N}(1/2, 1)$-realisable.  This suggests that it may be possible to identify the mean by checking whether the condition in~\eqref{eq:definition-h-restriction} is verified.  Indeed, if $R \in \mathcal{R}(\theta_0)$, then for any $\theta \neq \theta_0$ and for $|x-\theta_0|$ sufficiently large, we have
\begin{align*}
    h(x) \notin \bigl[q(1 - \epsilon) \phi_{(\theta, \sigma)}(x),\, \{q (1 - \epsilon) + \epsilon\} \cdot \phi_{(\theta, \sigma)}(x)\bigr].
\end{align*}

\begin{figure}[t]
\centering
\subfigure[\centering]
{{\includegraphics[width=0.49\textwidth]{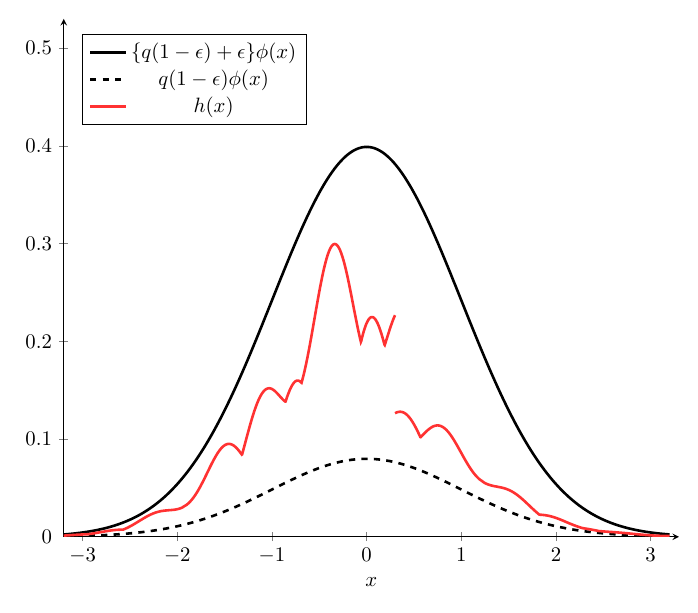}}}
\subfigure[\centering]{{\includegraphics[width=0.49\textwidth]{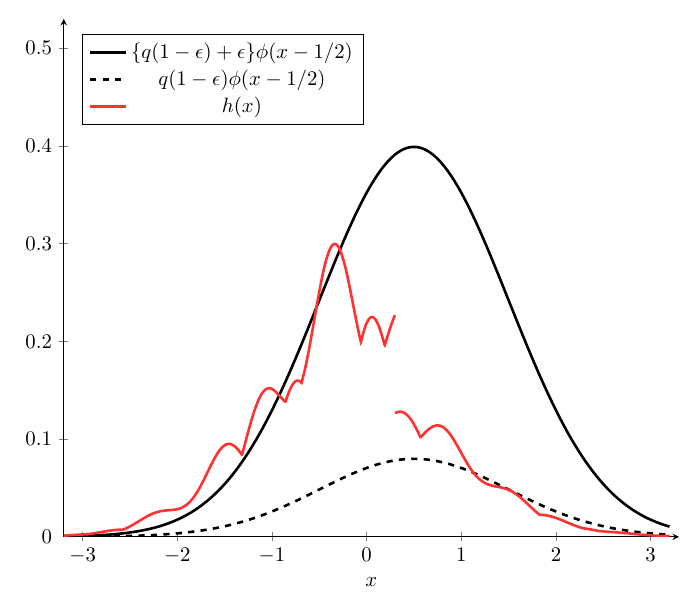}}}
\caption{An example of a Gaussian-realisable distribution.  Let $q = 1$ and $\epsilon = 0.8$.  Panel (a) plots (i) $\{q(1 - \epsilon) + \epsilon\} \cdot \phi(x)$ as a solid black curve, (ii) $q(1-\epsilon) \cdot \phi(x)$ as a dashed black curve and (iii) $\{q (1 - \epsilon) + m(x)\} \cdot \phi(x)$ as a solid red curve, for some $m:\mathbb{R} \to [0,1]$.  Note that the red curve is realisable by $\mathsf{N}(0, 1)$.  By contrast, panel (b) plots the red curve with no changes and uses $\phi(x - 1/2)$ in place of $\phi(x)$ for the two black curves.  In this case, the red curve is not realisable by $\mathsf{N}(1/2, 1)$.} 
\label{fig:realisable-example-epsilon<1}
\end{figure}
Motivated by this observation, and given data $Z_1, \ldots, Z_n \in \mathbb{R}_{\star}$, we define $\mathcal{D} \coloneqq \{i\in[n] : Z_i \neq \star\}$ and define an estimator $\hat{\theta}^{\mathrm{AE}}_n$ as
\begin{align}
\label{eq:def-theta-AE}
    \hat{\theta}^{\mathrm{AE}}_n(Z_1, \ldots, Z_n) \coloneqq \frac{1}{2} \cdot \Bigl(\max_{i\in\mathcal{D}} Z_i + \min_{i\in\mathcal{D}} Z_i\Bigr),
\end{align}
where we adopt the convention that $\hat{\theta}^{\mathrm{AE}}_n \coloneqq 0$ when $\mathcal{D} = \emptyset$.
Thus, $\hat{\theta}^{\mathrm{AE}}$ simply outputs the average of the extreme observed values.  In the realisable model $\mathcal{R}(\theta_0)$, its performance as an estimator of $\theta_0$ is summarised in the following theorem. 
\begin{theorem} 
\label{thm:univariate-gaussian-realisable-maxmin}
   Let $\theta_0\in\mathbb{R}$, $\epsilon \in [0,1)$, $q\in (0, 1]$, $\sigma > 0$, $n \in \mathbb{N}$, $\delta \in [4e^{-nq(1 -\epsilon)/8},1]$,  and consider $Z_1, \ldots, Z_n \overset{\mathrm{iid}}{\sim} R \in \mathcal{R}(\theta_0)$.   Then with probability at least $1 - \delta$, 
   \[
    \bigl( \hat{\theta}^{\mathrm{AE}}_n - \theta_0 \bigr )^2 \lesssim \frac{ \sigma^2 \log^2(8/\delta)}{\log{\bigl( nq(1-\epsilon) \bigr)}} + \frac{ \sigma^2 \log^2\bigl(1 + \frac{6\epsilon}{q(1 - \epsilon)}\bigr)}{\log{\bigl( nq(1-\epsilon) \bigr)}}.
   \]
\end{theorem}
We can interpret $nq(1-\epsilon)$ as the effective sample size from the MCAR component of~$R$: on average, a proportion $1-\epsilon$ of our observations come from this MCAR component, and a proportion $q$ of these are not missing.  The condition $\delta \geq 4e^{-nq(1 -\epsilon)/8}$ is therefore an effective sample size condition that asks for more MCAR observations for a higher confidence guarantee.   This condition is essentially necessary for the finiteness of the minimax quantile; see Theorem~\ref{thm:univariate-realisable-lb} below.  One of the interesting features of the conclusion of Theorem~\ref{thm:univariate-gaussian-realisable-maxmin} is that, if we consider $\epsilon$ and $q$ as fixed, then consistent mean estimation in the Gaussian realisable model $\mathcal{R}(\theta_0)$ is possible.  In fact, we can even achieve consistency when $\epsilon$ converges slowly to~1 and $q$ converges slowly to zero, a stark contrast with the conclusions drawn in the arbitrary contamination model of Section~\ref{sec:mean-estimation-arbitrary-contamination}.  Moreover, these results are achievable via a very simple estimator that does not require knowledge of $\epsilon$ (or $q$ or $\delta$).  On the other hand, the rate of convergence is only guaranteed to be logarithmic in the effective sample size (with the other problem parameters held fixed).  Nevertheless, our high-probability minimax lower bound in Theorem~\ref{thm:univariate-realisable-lb} below shows that this is the best that one can hope for within this model, at least when $\epsilon$ is a positive constant.
\begin{theorem}\label{thm:univariate-realisable-lb}
Let $\epsilon\in[0,1)$, $q\in(0,1]$, $\sigma>0$, $\delta \in (0,1/4]$ and $n\in\mathbb{N}$.
Suppose further that
\begin{align}
\label{eq:asm-eps-gaussian-realisable-lb}
    \log\biggl(1 + \frac{\epsilon}{q (1 - \epsilon)}\biggr) \leq \log\bigl( nq(1-\epsilon) \bigr).
\end{align}
Then, writing $\Theta \coloneqq \mathbb{R}$, as well as $\mathcal{P}_{\theta} \coloneqq \bigl\{R^{\otimes n}: \; R \in \mathcal{R}(\theta)\bigr\}$ for $\theta\in\Theta$, we have
\[
\mathcal{M}\bigl(\delta, \mathcal{P}_{\Theta}, | \cdot |^2\bigr)\begin{cases}
    \geq \dfrac{\sigma^2 \log(1/\delta)}{40nq(1-\epsilon)} + \dfrac{\sigma^2\log^2\bigl(1 + \frac{\epsilon}{q (1 - \epsilon)}\bigr)}{32 \log \bigl(nq(1-\epsilon)\bigr)} \quad&\text{if }\delta\geq\dfrac{\{1-q(1-\epsilon)\}^n}{2}\\
    = \infty \quad&\text{if }\delta<\dfrac{\{1-q(1-\epsilon)\}^n}{2}.
\end{cases} 
\]
\end{theorem}
Condition~\eqref{eq:asm-eps-gaussian-realisable-lb} is a mild effective sample size assumption.  When $q(1-\epsilon) \geq 1/2$, we have $\{1-q(1-\epsilon)\}^n \in [e^{-2nq(1-\epsilon)},e^{-nq(1-\epsilon)}]$, so the range of $\delta$ for which we have a finite minimax $(1-\delta)$th quantile guarantee in Theorem~\ref{thm:univariate-gaussian-realisable-maxmin} is almost optimal.  Comparing the bounds in Theorem~\ref{thm:univariate-realisable-lb} with those in  Theorem~\ref{thm:univariate-gaussian-realisable-maxmin}, we see that the second terms match up to a universal constant multiplicative factor.  On the other hand, the first term in the lower bound in Theorem~\ref{thm:univariate-realisable-lb} may be much smaller than the corresponding term in Theorem~\ref{thm:univariate-gaussian-realisable-maxmin}, both in terms of its dependence on the effective sample size and on the quantile level. 

To address the potential deficiency of the average of extremes estimator highlighted in the previous paragraph, we now introduce a minimum Kolmogorov distance estimator.  Let~$\hat{R}_n\in\mathcal{P}(\mathbb{R}_{\star})$ denote the empirical distribution of $Z_1,\ldots,Z_n \in \mathbb{R}_{\star}$, so that  
\begin{align*}
    \hat{R}_n(B) \coloneqq \frac{1}{n}\sum_{i=1}^n \mathbbm{1}_{\{Z_i \in B\}} \quad\text{for } B \in \mathcal{B}(\mathbb{R}_\star).
\end{align*}
Let $\mathcal{A} \coloneqq \{(-\infty,t] : t\in\mathbb{R}\}$ denote the set of all closed lower half intervals on $\mathbb{R}$. For $R_1,R_2 \in\mathcal{P}(\mathbb{R}_\star)$ and $\mathcal{Q} \subseteq \mathcal{P}(\mathbb{R}_\star)$, define
\begin{align*}
d_{\mathrm{K}}(R_1, R_2) \coloneqq \sup_{A \in \mathcal{A}}\; \bigl \lvert R_1(A) - R_2(A) \bigr \rvert \quad\text{and}\quad d_{\mathrm{K}}(R_1, \mathcal{Q}) \coloneqq \inf_{Q\in\mathcal{Q}}\; d_{\mathrm{K}}(R_1, Q)
\end{align*}
to be the Kolmogorov distance between $R_1$ and $R_2$, and the Kolmogorov distance between~$R_1$ and the set $\mathcal{Q}$ respectively. Then, the minimum Kolmogorov distance estimator $\hat{\theta}_n^{\mathrm{K}}$ for the Gaussian realisable class is defined as 
\begin{align*}
    \hat{\theta}^{\mathrm{K}}_n \coloneqq \sargmin_{\theta \in \mathbb{R}}\; d_{\mathrm{K}}\bigl(\hat{R}_n, \mathcal{R}(\theta)\bigr),
\end{align*}
where $\sargmin$ denotes the smallest element of the argmin set; this is well-defined since the function $\theta \mapsto d_{\mathrm{K}}\bigl(\hat{R}_n, \mathcal{R}(\theta)\bigr)$ is continuous with $d_{\mathrm{K}}\bigl(\hat{R}_n, \mathcal{R}(\theta)\bigr) \rightarrow 1$ as $|\theta| \rightarrow \infty$ and $d_{\mathrm{K}}\bigl(\hat{R}_n, \mathcal{R}(0)\bigr) < 1$.  We illustrate the Kolmogorov projection in Figure~\ref{fig:kolmogorov-projection}, and discuss its computation via a linear program in Section~\ref{sec:computation-of-kolmogorov-distance}.

\begin{figure}
    \begin{center}
        \begin{tikzpicture}
    \begin{scope}
        \clip (0,0) rectangle (4,4);
        \draw[draw=none, fill=gray!20] (0,0) circle(4);
    \end{scope}
    \begin{scope}
        \clip (4.1,0) rectangle (8.1,4);
        \draw[draw=none, fill=gray!30] (8.1,0) circle(4);
    \end{scope}

    \filldraw[black] (3,4) circle (2pt) node[anchor=south]{$\hat{R}_n$};
    \draw[dashed] (3,4) -- (2.398,3.2012);
    \draw[dashed] (3,4) -- (4.95285,2.4688995);
    \filldraw[black] (2.398,3.2012) circle (2pt);
    \filldraw[black] (4.95285,2.4688995) circle (2pt);
    \node at (1.7,1) {$\mathcal{R}(\theta_1)$};
    \node at (6.5,1) {$\mathcal{R}(\theta_2)$};
\end{tikzpicture}
    \end{center}
    \caption{\label{fig:kolmogorov-projection} Illustration of the Kolmogorov projection onto two distinct realisable sets.  The realisable sets are disjoint when $\theta_1 \neq \theta_2$, by Lemma~\ref{lemma:one-dim-kolmogorov-distance-realisable-sets}.}
\end{figure}

\begin{theorem}\label{thm:one-dim-kolmogorov-estimator}
    Let $\theta_0\in\mathbb{R}$, $\epsilon \in[0,1)$, $q\in(0,1]$, $\sigma > 0$, $\delta\in(0,1]$, $n\geq \frac{e}{q(1-\epsilon)}$ and consider $Z_1,\ldots, Z_n \overset{\mathrm{iid}}{\sim} R \in \mathcal{R}(\theta_0)$.  Let $\xi\in(0,1)$ be arbitrary and suppose that 
    \begin{align}
    \label{Eq:deltalowerbound}
        \delta \geq 4\exp\biggl\{ -\frac{\bigl\{ nq(1-\epsilon) \bigr\}^{1-\xi}}{6400} \biggr\},
    \end{align}
    and
    \begin{align}
    \label{Eq:bupperbound}
        \log\biggl( 1 + \frac{4\epsilon}{q(1-\epsilon)} \biggr) \leq \frac{7\xi}{128} \log\bigl( nq(1-\epsilon) \bigr).
    \end{align}
    Then with probability at least $1-\delta$,
    \begin{align}
    \label{Eq:MKDBound}
        (\hat{\theta}_n^{\mathrm{K}} - \theta_0)^2 \lesssim C_{n,q,\epsilon,\xi,\delta} \Biggl\{ \frac{\sigma^2\log(4/\delta)}{nq(1-\epsilon)} + \frac{\sigma^2\log^2\bigl( 1+\frac{4\epsilon}{q(1-\epsilon)} \bigr)}{\log\bigl(nq(1-\epsilon)\bigr)} \Biggr\},
    \end{align}
    where
    \begin{align*}
        C_{n,q,\epsilon,\xi,\delta} \coloneqq \begin{cases}
            1 \quad&\text{if } \log\bigl( 1 + \frac{4\epsilon}{q(1-\epsilon)} \bigr) \leq 2\sqrt{\frac{ \log(4/\delta)}{nq(1-\epsilon)}}\\
            \log\bigl( nq(1-\epsilon) \bigr) \quad&\text{if } 2\sqrt{\frac{ \log(4/\delta)}{nq(1-\epsilon)}} < \log\bigl( 1 + \frac{4\epsilon}{q(1-\epsilon)} \bigr) \leq 4\sqrt{\frac{\log(4/\delta)}{\{nq(1-\epsilon)\}^{1-\xi}}}\\
            1/\xi \quad&\text{if } \log\bigl( 1 + \frac{4\epsilon}{q(1-\epsilon)} \bigr) > 4\sqrt{\frac{\log(4/\delta)}{\{nq(1-\epsilon)\}^{1-\xi}}}.
        \end{cases}
    \end{align*}
\end{theorem}
The lower bound~\eqref{Eq:deltalowerbound} on $\delta$ and the effective sample size condition~\eqref{Eq:bupperbound} are both similar to those seen in Theorems~\ref{thm:univariate-gaussian-realisable-maxmin} and~\ref{thm:univariate-realisable-lb}.  Regarding $\xi$ as fixed, the main benefit of the minimum Kolmogorov distance estimator is that it is able to match both terms in the high-probability minimax lower bound of Theorem~\ref{thm:univariate-realisable-lb} up to a multiplicative universal constant, except in an intermediate parameter regime, where it may incur a multiplicative factor that is logarithmic in the effective sample size.  Even in this middle regime, which covers the phase transition where the two terms in the bound~\eqref{Eq:MKDBound} are equal, the rate remains polynomial in the effective sample size.

\subsubsection{Multivariate extension}\label{sec:kolmogorov-depth}
We now consider a simple multivariate extension of the Gaussian realisable model from the previous subsection, where for each observation, we either observe all coordinates simultaneously or none of them.  Thus, for $P \in \mathcal{P}(\mathbb{R}^d)$, $\epsilon \in [0,1)$ and $\pi \in \mathcal{P}(\{\emptyset,[d]\})$, we define 
\begin{align} \label{eq:realisable-all-or-nothing}
    \mathcal{R}_{\emptyset,[d]}(P,\epsilon,\pi) \coloneqq \Bigl\{ (1-\epsilon)\mathsf{MCAR}_{(\pi,P)} + \epsilon Q : Q\in\mathsf{MNAR}_{(\rho,P)},\, \rho \in\mathcal{P}\bigl(\{\emptyset,[d]\}\bigr) \Bigr\}.
\end{align}
For $\theta\in\mathbb{R}^d$, $\Sigma\in\mathcal{S}_{++}^{d\times d}$, we write $\mathcal{R}_{\emptyset,[d]}(\theta) \coloneqq \mathcal{R}_{\emptyset,[d]}\bigl(\mathsf{N}_d(\theta,\Sigma),\epsilon,\pi\bigr)$.  Given $Z_1,\ldots,Z_n \in \mathbb{R}_{\star}^d$ and $v\in\mathbb{S}^{d-1}$, let $\hat{\theta}_n^{\mathrm{K}}(v)$ denote the one-dimensional minimum Kolmogorov distance estimator based on $Z_1^{(v)},\ldots,Z_n^{(v)}$, where $Z_i^{(v)} \coloneqq v^\top Z_i \cdot \mathbbm{1}_{\{Z_i \in\mathbb{R}^d\}} + \star \cdot \mathbbm{1}_{\{Z_i \notin \mathbb{R}^d\}}$ for $i \in [n]$.  Let~$\mathcal{N}$ denote a $(1/4)$-net in Euclidean norm of $\mathbb{S}^{d-1}$ with $|\mathcal{N}|\leq 9^d$, which exists by, e.g.,~\citet[][Corollary~4.2.13]{vershynin2018high}.  We define the multivariate minimum Kolmogorov distance estimator $\hat{\theta}_n^{\mathrm{MK}}$ as 
\begin{align*}
    \hat{\theta}_n^{\mathrm{MK}} \coloneqq \sargmin_{\theta\in\mathbb{R}^d} \max_{v\in\mathcal{N}} \big( v^\top\theta - \hat{\theta}_n^{\mathrm{K}}(v) \big)^2,
\end{align*}
where sargmin here denotes the smallest element of the argmin set in the lexicographic ordering.
\begin{theorem}\label{thm:multivariate-kolmogorov-estimator}
    Fix $d, n \in \mathbb{N}$, $\theta_0 \in\mathbb{R}^d$, $\Sigma\in\mathcal{S}_{++}^{d\times d}$, $\epsilon \in [0,1)$, $\delta \in (0,1]$, $\pi \in \mathcal{P}(\{\emptyset,[d]\})$ and let $Z_1,\ldots,Z_n \stackrel{\mathrm{iid}}{\sim} R \in \mathcal{R}_{\emptyset,[d]}(\theta_0)$.  Let $\xi\in(0,1)$ be arbitrary, let $q\coloneqq \pi([d])$, and suppose that $nq(1-\epsilon) \geq e$,
    \begin{align*}
        \delta \geq 4\exp\biggl\{ d\log9 - \frac{\bigl\{nq(1-\epsilon)\bigr\}^{1-\xi}}{6400} \biggr\},
    \end{align*}
    and
    \begin{align*}
        \log\biggl( 1+\frac{4\epsilon}{q(1-\epsilon)} \biggr) \leq \frac{7\xi}{128}\log\bigl(nq(1-\epsilon)\bigr).
    \end{align*}
    Then with probability at least $1-\delta$,
    \begin{align*}
        \| \hat{\theta}_n^{\mathrm{MK}} - \theta_0\|_2^2 \lesssim C_{n,q,\epsilon,\xi,\delta/9^d}\biggl\{ \frac{\|\Sigma\|_{\mathrm{op}}\bigl(d+\log(4/\delta)\bigr)}{nq(1-\epsilon)} +  \frac{\|\Sigma\|_{\mathrm{op}} \log^2\bigl( 1+\frac{4\epsilon}{q(1-\epsilon)} \bigr)}{\log\bigl(nq(1-\epsilon)\bigr)} \biggr\},
    \end{align*}
    where $C_{n,q,\epsilon,\xi,\delta} > 0$ was defined in Theorem~\ref{thm:one-dim-kolmogorov-estimator}. 
\end{theorem}
Theorem~\ref{thm:multivariate-kolmogorov-estimator} reveals in particular that if we treat $q,\epsilon,\xi,\delta,\|\Sigma\|_{\mathrm{op}}$ as constants and if $\frac{d}{n^{1-\xi}} \to 0$ as $n \rightarrow \infty$, then $\hat{\theta}_n^{\mathrm{MK}}$ is a consistent estimator of $\theta_0$.  To facilitate comparisons with alternative estimators, we take $\Sigma = \sigma^2 I_d$ for simplicity.  A naive application of the univariate minimum Kolmogorov distance estimator in each coordinate would, via Theorem~\ref{thm:one-dim-kolmogorov-estimator} and a union bound, only yield a squared Euclidean error bound of order
\begin{align*}
    C_{n,q,\epsilon,\xi,\delta/d}\biggl\{\frac{d\sigma^2\log(4d/\delta)}{nq(1-\epsilon)} + \frac{d\sigma^2\log^2\bigl( 1+\frac{4\epsilon}{q(1-\epsilon)} \bigr)}{\log\bigl(nq(1-\epsilon)\bigr)}\biggr\}
\end{align*}
with probability at least $1-\delta$.  Thus, in the first term, the dimension and quantile terms would appear in a multiplicative as opposed to additive way, and the second term would be inflated by a factor of $d$.  Similarly, if we were to apply the average of extremes estimator in each coordinate, then Theorem~\ref{thm:univariate-gaussian-realisable-maxmin} and a union bound would give an upper bound on the minimax $(1-\delta)$th quantile for squared Euclidean error loss of order 
   \[
     \frac{ d \sigma^2 \log^2(8d/\delta)}{\log{\bigl( nq(1-\epsilon) \bigr)}} + \frac{ d \sigma^2\log^2\bigl(1 + \frac{6\epsilon}{q(1 - \epsilon)}\bigr)}{\log{\bigl( nq(1-\epsilon) \bigr)}}.
   \]
In fact, a high-probability minimax lower bound is available in this setting: letting $\mathcal{P}_{\theta} \coloneqq \bigl\{R^{\otimes n}:\; R \in \mathcal{R}_{\emptyset,[d]}\bigl(\mathsf{N}_d(\theta,\sigma^2 I_d),\epsilon,\pi\bigr)\bigr\}$, we have by combining Proposition~\ref{prop:arb-mean-MCAR-lb} and Theorem~\ref{thm:univariate-realisable-lb} that
\[
\mathcal{M}(\delta, \mathcal{P}_{\Theta}, \| \cdot \|_2^2) \gtrsim \frac{\sigma^2 \bigl(d + \log(1/\delta) \bigr)}{nq(1 - \epsilon)} + \frac{\sigma^2\log^2\bigl(1 + \frac{\epsilon}{q (1 - \epsilon)}\bigr)}{\log \bigl(nq(1-\epsilon)\bigr)}.
\]
Thus, up to the logarithmic factor in the effective sample size in the middle regime of the bound in Theorem~\ref{thm:multivariate-kolmogorov-estimator}, the multivariate minimum Kolmogorov distance estimator is minimax rate optimal in this setting.   

\subsubsection{Computing the Kolmogorov distance between the empirical distribution and a realisable set} \label{sec:computation-of-kolmogorov-distance}
   
Let $Z_1,\ldots,Z_n \in \mathbb{R}_{\star}$, let $m \coloneqq \sum_{i=1}^n \mathbbm{1}_{\{Z_i \neq \star\}}$ and let $-\infty\eqqcolon Z_{(0)} < Z_{(1)} \leq \cdots \leq Z_{(m)} < Z_{(m+1)} \coloneqq \infty$ denote the ordered observed data.  Further let $\hat{R}_n$ be the empirical distribution of $Z_1,\ldots,Z_n$ and let $P \in \mathcal{P}(\mathbb{R})$ be such that $P \ll \lambda$.  The following lemma and subsequent discussion provide an efficient way of computing the Kolmogorov distance between $\hat{R}_n$ and the realisable set $\mathcal{R}(P,\epsilon,q)$ via linear programming.

\begin{lemma} \label{lemma:compute-kolmogorov-distance}
Let $\epsilon \in [0,1)$ and $q \in [0,1]$.   Writing $V_0 \coloneqq 0$ and letting $\mathcal{V}$ denote the set of $(V_1,\ldots,V_{m+1})^\top \in [0,1]^{m+1}$ such that
    \begin{align}
        q(1-\epsilon)\cdot P\bigl((Z_{(i)},Z_{(i+1)})\bigr) \leq V_{i+1} - V_{i} \leq \{q(1-\epsilon)+\epsilon\} \cdot  P\bigl((Z_{(i)},Z_{(i+1)})\bigr) \label{eq:kolmogorov-projection-constraints}
    \end{align}
    for all $i\in \{0\} \cup [m]$, we have
    \begin{align} \label{eq:kolmogorov-distance-optimisation-prob}
    d_{\mathrm{K}}\bigl(\hat{R}_n,\mathcal{R}(P,\epsilon,q)\bigr) = \inf_{(V_1, \ldots, V_{m+1})^{\top} \in \mathcal{V}}\, \max_{i \in \{0\} \cup [m]}\; \Bigl\{\Bigl \lvert \frac{i}{n} - V_i \Bigr \rvert \vee \Bigl \lvert \frac{i}{n} - V_{i+1} \Bigr \rvert \Bigr\}.
    \end{align}
\end{lemma}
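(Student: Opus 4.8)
The plan is to parametrise the realisable set by the vector of values that the candidate distribution functions take at the ordered observations, thereby turning the computation of the Kolmogorov distance into the finite-dimensional optimisation on the right-hand side of~\eqref{eq:kolmogorov-distance-optimisation-prob}.

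\textbf{Step 1 (parametrisation of $\mathcal{R}(P,\epsilon,q)$).} Writing $Z_{(m+1)} \coloneqq \infty$, I would first show that $\mathcal{V}$ is exactly the image of $\mathcal{R}(P,\epsilon,q)$ under the map $R \mapsto \bigl(R((-\infty, Z_{(i)}])\bigr)_{i \in [m+1]}$. For the forward inclusion, Proposition~\ref{prop:univariate-realisability} gives that any $R \in \mathcal{R}(P,\epsilon,q)$ has, on $\mathbb{R}$, a density $h = \{q(1-\epsilon)+\epsilon m\}p$ with $q(1-\epsilon)p \leq h \leq \{q(1-\epsilon)+\epsilon\}p$ $\lambda$-almost everywhere; integrating over each $(Z_{(i)},Z_{(i+1)})$ yields~\eqref{eq:kolmogorov-projection-constraints}, while $V_i \coloneqq R((-\infty,Z_{(i)}]) \in [0,1]$ trivially. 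For the reverse inclusion (the case $\epsilon = 0$ being immediate, since then $\mathcal{R}(P,\epsilon,q) = \{\mathsf{MCAR}_{(q,P)}\}$ and $\mathcal{V}$ is the corresponding singleton), given $(V_1,\ldots,V_{m+1})^\top \in \mathcal{V}$ I would set $m\colon\mathbb{R}\to[0,1]$ equal to the constant $c_i \coloneqq \{(V_{i+1}-V_i) - q(1-\epsilon)P((Z_{(i)},Z_{(i+1)}))\}/\{\epsilon P((Z_{(i)},Z_{(i+1)}))\}$ on each interval $(Z_{(i)},Z_{(i+1)})$ of positive $P$-measure (the constraints~\eqref{eq:kolmogorov-projection-constraints} force $c_i \in [0,1]$, and on intervals of zero $P$-measure they force $V_{i+1} = V_i$), and $m \coloneqq 0$ on the remaining $P$-null set; I would then let $R$ be the law defined through~\eqref{eq:radon-nikodym-realisable}. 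A short computation, using $P \ll \lambda$ to discard the finite set of observed values, shows that $R(\{\star\}) = 1 - V_{m+1} \in [0,1]$, that $R$ is a bona fide element of $\mathcal{R}(P,\epsilon,q)$, and that $R((-\infty,Z_{(i)}]) = \sum_{j=0}^{i-1}(V_{j+1}-V_j) = V_i$.

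\textbf{Step 2 (Kolmogorov distance to a fixed realisable law).} For $R \in \mathcal{R}(P,\epsilon,q)$ with $V_i \coloneqq R((-\infty,Z_{(i)}])$, I would compute $d_{\mathrm{K}}(\hat{R}_n, R) = \sup_{t\in\mathbb{R}}|F_n(t) - G(t)|$, where $F_n \coloneqq \hat{R}_n((-\infty,\cdot])$ and $G \coloneqq R((-\infty,\cdot])$. Since the restriction of $R$ to $\mathbb{R}$ is absolutely continuous with respect to $\lambda$ and the atom at $\star$ does not affect $G$, the function $G$ is continuous and non-decreasing on $\mathbb{R}$, with $G(Z_{(i)}) = V_i$, $G(-\infty) = V_0 = 0$ and $G(\infty) = V_{m+1}$, while $F_n$ is constant equal to $i/n$ on each interval $[Z_{(i)},Z_{(i+1)})$ with $Z_{(i)} < Z_{(i+1)}$. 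By monotonicity and continuity of $G$, the supremum of $|F_n - G|$ over such an interval equals $|i/n - V_i| \vee |i/n - V_{i+1}|$; since these intervals together with the finitely many points $Z_{(1)},\ldots,Z_{(m)}$ cover $\mathbb{R}$, and at each such point the values of $F_n$ and $G$ are already among the $j/n$ and the $V_j$, I would conclude $d_{\mathrm{K}}(\hat{R}_n,R) = \max_{i\in\{0\}\cup[m]}\{|i/n - V_i| \vee |i/n - V_{i+1}|\}$, which in particular depends on $R$ only through $(V_1,\ldots,V_{m+1})$. Combining this with Step 1, as $R$ ranges over $\mathcal{R}(P,\epsilon,q)$ the vector $(V_1,\ldots,V_{m+1})$ ranges over all of $\mathcal{V}$ while $d_{\mathrm{K}}(\hat{R}_n,R)$ is a fixed function of this vector, so taking the infimum over $R$ gives~\eqref{eq:kolmogorov-distance-optimisation-prob}.

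I expect the main obstacle to be the bookkeeping in Step 2 around tied observations: one must check that the decomposition of $\mathbb{R}$ into intervals between consecutive order statistics, together with the handling of the jump points of $F_n$, misses no value attained by $|F_n - G|$, and that repeated order statistics contribute only the vacuous constraints $V_{i+1} = V_i$ in~\eqref{eq:kolmogorov-projection-constraints}. A secondary point requiring care is the surjectivity claim in Step 1, namely verifying that the law reconstructed from an arbitrary feasible vector via~\eqref{eq:radon-nikodym-realisable} has a nonnegative, measurable density and in particular a $\star$-mass lying in $[0,1]$.
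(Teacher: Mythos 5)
Your proposal is correct and follows essentially the same route as the paper's proof: first express $d_{\mathrm{K}}(\hat{R}_n, R)$ for a fixed realisable $R$ as the finite maximum over the order-statistic intervals, then use Proposition~\ref{prop:univariate-realisability} to identify the set of achievable vectors $(V_1,\ldots,V_{m+1})$ with $\mathcal{V}$. The only difference is one of detail rather than method: the paper leaves the surjectivity direction of your Step~1 implicit in its appeal to Proposition~\ref{prop:univariate-realisability}, whereas you spell out the piecewise-constant construction of the missingness function $m$.
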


\noindent We can now rewrite the optimisation problem~\eqref{eq:kolmogorov-distance-optimisation-prob} as the following linear program:
\begin{align*}
    &\text{minimise} && \;\;t &&&\\
    &\text{subject to} && -t \leq \frac{i}{n} - V_i \leq t, \quad &&&i \in \{0\} \cup [m] \phantom{.}\\
    & &&-t \leq \frac{i}{n} - V_{i+1} \leq t, \quad &&&i \in \{0\} \cup [m] \phantom{.}\\
    & && q(1-\epsilon)\cdot P\bigl((Z_{(i)},Z_{(i+1)})\bigr) \leq V_{i+1} - V_{i} &&&\\
    & && \qquad \qquad \qquad \leq \{q(1-\epsilon)+\epsilon\} \cdot  P\bigl((Z_{(i)},Z_{(i+1)})\bigr), \quad &&&i\in\{0\}\cup [m].
\end{align*}
This can be solved efficiently using standard software, e.g.~\texttt{lpSolve} \citep{lpsolve} in \texttt{R}.

\subsection{Nonparametric realisable models} \label{sec:nonparametric-realisable}

\subsubsection{Univariate case}\label{sec:one-dim-nonparametric-realisable-model}

We now broaden our scope from the Gaussian realisable setting of Section~\ref{sec:gaussian-realisable-model} and seek to determine the minimax quantiles for mean estimation, again over realisable classes, but now with nonparametric families of base distributions, subject only to moment or tail decay conditions.   To this end, for $\theta \in \mathbb{R}$, $\sigma > 0$ and $r \geq 2$, we define the class of distributions $\mathcal{P}_{L^r}(\theta, \sigma^2)$ with a finite $r$th moment:
\begin{align}\label{eq:finite-moment-distribution}
    \mathcal{P}_{L^r}(\theta, \sigma^2) \coloneqq \Bigl\{ P\in \mathcal{P}(\mathbb{R}) : \mathbb{E}_P(X) = \theta,\,  \mathbb{E}_P \bigl(|X-\theta|^r\bigr) \leq \sigma^r \Bigr\}.
\end{align}
Similarly, for $r \geq 1$, we consider tail decay conditions specified by Orlicz norms with Orlicz functions $\psi_r: t \mapsto e^{t^r} - 1$, and define  
\begin{align}\label{eq:sub-weibull-distribution}
    \mathcal{P}_{\psi_r}(\theta, \sigma^2) \coloneqq \Bigl\{ P\in \mathcal{P}(\mathbb{R}) : \mathbb{E}_P(X) = \theta,\, \mathbb{E}_P \bigl\{\psi_r( |X-\theta|/\sigma)\bigr\} \leq 1 \Bigr\}.
\end{align}
Thus, if $X \sim P$ and we write $\| X \|_{\psi_r} \coloneqq \inf \bigl\{t > 0: \mathbb{E} \bigl( \psi_r(\lvert X \rvert/t)\bigr) \leq 1\bigr\}$, then $P \in \mathcal{P}_{\psi_r}(\theta, \sigma^2)$ if and only if $\mathbb{E}(X) = \theta$ and $\|X-\theta\|_{\psi_r} \leq \sigma$.  We also remark that $\mathcal{P}_{\psi_1}(\theta, \sigma^2)$ and $\mathcal{P}_{\psi_2}(\theta, \sigma^2)$ correspond to classes of sub-exponential and sub-Gaussian distributions with mean $\theta$ respectively.

Upper bounds on the minimax quantiles for mean estimation over realisable classes with base distributions belonging to the classes in~\eqref{eq:finite-moment-distribution} and~\eqref{eq:sub-weibull-distribution} are provided in Theorem~\ref{thm:one-dim-realisable-sample-mean-ub} below.  In the general setting of Theorem~\ref{thm:one-dim-realisable-sample-mean-ub}(a) where we only have a moment bound on the base distribution, and analogously to Section~\ref{sec:mean-estimation-arbitrary-contamination}, we assume the existence of an algorithm ALG2 for univariate mean estimation having the property that if $X_1,\ldots,X_n \overset{\mathrm{iid}}{\sim} P\in \mathcal{P}_{L^2}(\theta_0,\sigma^2)$, then there exist $a \in (0,1]$, $c \geq 1$ and $C>0$ such that for any $n \in \mathbb{N}$ and $\delta\in [ce^{-an},1]$, we have with probability at least $1-\delta$ that
\begin{align}
    \bigl(\mathrm{ALG2}(X_1,\ldots,X_n;\delta) - \theta_0\bigr)^2 \leq C\frac{\sigma^2\log(e/\delta)}{n}. \label{eq:assumption-on-alg-heavy-tail-univariate}
\end{align} 
For instance, when the median-of-means estimator is employed with $\lceil \log(1/\delta) \rceil$ blocks,~\eqref{eq:assumption-on-alg-heavy-tail-univariate} is satisfied with $a = 1/2$, $c=e$ and $C = 24e$ \citep[Proposition~1]{lerasle2011robust}.   Moreover, if $\sigma \in [\sigma_1,\sigma_2]$, for some $\sigma_2 \geq \sigma_1 > 0$, where $\sigma_2$ is known, then a $\delta$-independent Lepski-type version of the median-of-means estimator satisfies~\eqref{eq:assumption-on-alg-heavy-tail-univariate} with $a=1/2$, $c=4e$ and $C=600\sigma_2^2/\sigma_1^2$ \citep[Theorem~3.2(1)]{devroye2016subgaussian}; see also~\citet{minsker2021robust} for an alternative, asymptotically efficient estimator.  On the other hand, in the more specialised setting of Theorem~\ref{thm:one-dim-realisable-sample-mean-ub}(b), the sample mean of the observed data suffices to match the minimax lower bound in Theorem~\ref{thm:nonparametric-realisable-model-lb}(b) below, and this estimator is $\delta$-independent.


\begin{theorem}\label{thm:one-dim-realisable-sample-mean-ub}
    Let $\theta_0 \in \mathbb{R}$, $\epsilon\in[0,1)$, $q\in(0,1]$ and $\sigma > 0$.
    \begin{itemize}
        \item[(a)] Let $r \geq 2$, $P\in \mathcal{P}_{L^r}(\theta_0, \sigma^2)$ and $Z_1,\ldots,Z_n \overset{\mathrm{iid}}{\sim} R \in \mathcal{R}(P, \epsilon, q)$. 
        Let $\mathcal{D} \coloneqq \{i\in[n] : Z_i \neq \star\}$ and $\hat{\theta}_n \coloneqq \mathrm{ALG2}\bigl((Z_i)_{i\in\mathcal{D}};\delta\bigr)$, where $\mathrm{ALG2}$ satisfies~\eqref{eq:assumption-on-alg-heavy-tail-univariate}. Then, for any $n \in \mathbb{N}$ and $\delta \geq 2c\exp\bigl(-anq(1-\epsilon)/8\bigr)$, we have with probability at least $1-\delta$ that 
        \begin{align}
        \label{eq:MoM}
        \bigl(\hat{\theta}_n \!-\! \theta_0 \bigr)^2 \leq 4C \cdot \frac{\sigma^2 \log(2e/\delta) }{nq(1-\epsilon)} + (C\!+\!2)\sigma^2 \cdot \biggl\{\biggl( \frac{\epsilon}{q(1-\epsilon)} \biggr)^2 \wedge \biggl( \frac{\epsilon}{q(1-\epsilon)} \biggr)^{2/r} \biggr\}.
        \end{align}
        
        \item[(b)] Let $r\geq 1$, $P\in \mathcal{P}_{\psi_r}(\theta_0, \sigma^2)$, $R \in \mathcal{R}(P, \epsilon, q)$ and $Z_1,\ldots, Z_n \overset{\mathrm{iid}}{\sim} R$.  Let $\mathcal{D} \coloneqq \{i\in[n] : Z_i\neq\star\}$ and $\hat{\theta}_n \coloneqq |\mathcal{D}|^{-1}\sum_{i\in\mathcal{D}}Z_i$. Then, for any $n \in \mathbb{N}$ and $\delta\geq 8\exp\bigl(-nq(1-\epsilon)/8\bigr)$, we have with probability at least $1-\delta$ that
        \begin{align*}
        (\hat{\theta}_n - \theta_0)^2 \lesssim \frac{\sigma^2\log(8/\delta)}{nq(1-\epsilon)} + 
        \sigma^2 \cdot \biggl\{ \biggl( \frac{\epsilon}{q(1-\epsilon)} \biggr)^2 \;\wedge\; \log^{2/r} \biggl( 2 + \frac{2\epsilon}{q(1-\epsilon)} \biggr) \biggr\}.
        \end{align*}
    \end{itemize}
\end{theorem}
In both parts of Theorem~\ref{thm:one-dim-realisable-sample-mean-ub}, the first term in the bound reflects the error incurred in estimating the mean of a distribution $P$ belonging to either of the classes in~\eqref{eq:finite-moment-distribution} or~\eqref{eq:sub-weibull-distribution} based on a sample of size $\lceil n(1-\epsilon) \rceil$ from $\mathsf{MCAR}_{(q,P)}$.  The second terms arise from the contamination present in distributions $R \in \mathcal{R}(P,\epsilon,q)$.  When the \emph{effective contamination level} $\kappa \coloneqq \frac{\epsilon}{q(1-\epsilon)}$ is small in the sense that $\kappa \leq 1$, in both parts of the theorem, the error incurred from the contamination is at most of order $\sigma^2 \kappa^2$, which turns out to be minimax optimal; see Theorem~\ref{thm:nonparametric-realisable-model-lb} below.  Moreover, this rate is a substantial improvement on the corresponding term in the lower bound in Theorem~\ref{thm:arbitrary-contamination-lb} over arbitrary (non-realisable) contaminations of $\mathcal{P}_{L^2}(\theta_0,\sigma)$, which is of order $\sigma^2\kappa$.  On the other hand, when $\kappa > 1$, the contribution to the error from the contamination term depends on the tail behaviour of $P$: when $P \in \mathcal{P}_{L^r}(\theta_0,\sigma^2)$, it is at most of order $\sigma^2 \kappa^{2/r}$, whereas when $P \in \mathcal{P}_{\psi_r}(\theta_0,\sigma^2)$ the bound can be improved to order $\sigma^2\log^{2/r}(2+2\kappa)$.  Again, these bounds represent a stark contrast with the lower bound in the arbitrary contamination model setting of Theorem~\ref{thm:arbitrary-contamination-lb}, which is infinite in this regime.  Finally, we remark that an attractive feature of the methods employed in Theorem~\ref{thm:one-dim-realisable-sample-mean-ub} is that they do not require knowledge of $\kappa$. 


Theorem~\ref{thm:nonparametric-realisable-model-lb} provides a complementary lower bound on the minimax quantiles in this realisable setting:
\begin{theorem} \label{thm:nonparametric-realisable-model-lb}
    Let $\epsilon\in[0,1)$, $q\in(0,1]$, $\sigma>0$ and $\delta\in(0,1/4]$. 
    \begin{enumerate}
        \item[(a)] Let $r\geq 2$, $\Theta\coloneqq\mathbb{R}$ and $\mathcal{P}_{\theta} \coloneqq \bigl\{ R^{\otimes n} : R\in\mathcal{R}(P,\epsilon,q),\, P\in\mathcal{P}_{L^r}(\theta,\sigma^2) \bigr\}$ for $\theta\in\Theta$. Then
        \begin{align*}
            \mathcal{M}(\delta,\mathcal{P}_{\Theta},|\cdot|^2) \begin{cases}
                \gtrsim \dfrac{\sigma^2\log(1/\delta)}{nq(1-\epsilon)} + \sigma^2 \cdot \biggl\{ \biggl(\dfrac{ \epsilon}{q(1-\epsilon)} \biggr)^2 \;\wedge\; \biggl(\dfrac{ \epsilon}{q(1-\epsilon)} \biggr)^{2/r} \biggr\}\\
                \hspace{7.7cm}\text{if }\delta\geq e^{-nq(1-\epsilon)/2}\\
                =\infty \qquad\text{if }\delta<\frac{(1-q(1-\epsilon))^n}{2}.
            \end{cases}
        \end{align*}
        \item[(b)] Let $r\geq 1$, $\Theta\coloneqq\mathbb{R}$ and $\mathcal{P}_{\theta} \coloneqq \bigl\{ R^{\otimes n} : R\in\mathcal{R}(P,\epsilon,q),\, P\in\mathcal{P}_{\psi_r}(\theta,\sigma^2) \bigr\}$ for $\theta\in\Theta$. Then
        \begin{align*}
            \mathcal{M}(\delta,\mathcal{P}_{\Theta},|\cdot|^2) \begin{cases}
                \gtrsim \dfrac{\sigma^2\log(1/\delta)}{nq(1-\epsilon)} + \sigma^2 \cdot \biggl\{ \biggl(\dfrac{ \epsilon}{q(1-\epsilon)} \biggr)^2 \;\wedge\; \log^{2/r} \biggl( 2 + \dfrac{2\epsilon}{q(1-\epsilon)} \biggr) \biggr\}\\
                \hspace{7.7cm}\text{if }\delta\geq e^{-nq(1-\epsilon)/2}\\
                =\infty \qquad\text{if }\delta<\frac{(1-q(1-\epsilon))^n}{2}.
            \end{cases}
        \end{align*}
    \end{enumerate}
\end{theorem}
When $q(1-\epsilon) \leq c < 1$, we have $\{1 - q(1-\epsilon)\}^n > e^{-c'nq(1-\epsilon)}$ for some $c'$ depending only on~$c$, which reveals that the lower bound on $\delta$ in Theorem~\ref{thm:one-dim-realisable-sample-mean-ub} for which can control the $(1-\delta)$th minimax quantile is essentially optimal.  Thus, taken together, Theorems~\ref{thm:one-dim-realisable-sample-mean-ub} and~\ref{thm:nonparametric-realisable-model-lb} determine the minimax quantiles of the quadratic loss function for mean estimation over our realisable classes up to universal constants.  As a special case, when the effective contamination level $\kappa$ is less than 1, we see that the minimax quantile over realisable classes with base distribution $P \in \mathcal{P}_{L^2}(\theta_0,\sigma)$, which scales as $\sigma^2 \kappa^2$, coincides with the minimax rate of mean estimation over Gaussian classes in the arbitrary contamination model; see Section~\ref{sec:univariate-arbitrary-contamination-lb}.  Theorem~\ref{thm:nonparametric-realisable-model-lb}(b) further reveals that, while careful examination of the tails enabled consistent estimation in the Gaussian realisable setting of Section~\ref{sec:gaussian-realisable-model}, no such strategy can yield consistent estimation when the class is broadened to include all sub-Gaussian distributions with a fixed sub-Gaussian norm.  More generally, the optimal rates of Theorems~\ref{thm:one-dim-realisable-sample-mean-ub} and~\ref{thm:nonparametric-realisable-model-lb} provide a quantification of the benefits of realisable classes in terms of improved rates of mean estimation compared with the arbitrary contamination models of Section~\ref{sec:mean-estimation-arbitrary-contamination}.

\subsubsection{Multivariate extension} \label{sec:multivariate-nonparametric-realisable}

Here we show that the univariate results of Section~\ref{sec:one-dim-nonparametric-realisable-model} extend to the problem of estimating a multivariate mean, under the same simplifying assumption on the set of possible observation patterns as that considered in Section~\ref{sec:kolmogorov-depth}.  To this end, and by analogy with~\eqref{eq:finite-moment-distribution} and~\eqref{eq:sub-weibull-distribution}, for $d\in\mathbb{N}$, $\theta\in\mathbb{R}^d$, $\Sigma\in\mathcal{S}_{++}^{d\times d}$ and $r>0$, define
\begin{align*}
    \mathcal{P}_{L^r}(\theta,\Sigma) \coloneqq \bigl\{ P\in\mathcal{P}(\mathbb{R}^d) : &\;\mathbb{E}_P(X)=\theta, \mathsf{Law}_{X \sim P}(v^\top X) \in \mathcal{P}_{L^r}(v^\top\theta, v^\top\Sigma v) \, \forall v\in\mathbb{S}^{d-1} \bigr\}
\end{align*}
and
\begin{align*}
    \mathcal{P}_{\psi_r}(\theta,\Sigma) \coloneqq \bigl\{ P\in\mathcal{P}(\mathbb{R}^d) : &\;\mathbb{E}_P(X)=\theta, \mathsf{Law}_{X \sim P}(v^\top X) \in \mathcal{P}_{\psi_r}(v^\top\theta, v^\top\Sigma v) \, \forall  v\in\mathbb{S}^{d-1} \bigr\}.
\end{align*}
As for Theorem~\ref{thm:one-dim-realisable-sample-mean-ub}(a), we assume that we have access to an algorithm $\mathrm{ALG3}$ with property that if $X_1,\ldots,X_n \overset{\mathrm{iid}}{\sim} P \in \mathcal{P}_{L^2}(\theta_0,\Sigma)$, then there exist $a \in (0,1]$ and $C>0$ such that for $n\in\mathbb{N}$ and $\delta\in [e^{-an},1]$, we have with probability at least $1-\delta$ that
\begin{align}
    \bigl\| \mathrm{ALG3}(X_1,\ldots,X_n;\delta) - \theta_0\bigr\|_2^2 \leq C\biggl( \frac{\tr(\Sigma)}{n} + \frac{\|\Sigma\|_{\mathrm{op}}\log(1/\delta)}{n} \biggr). \label{eq:assumption-on-alg-heavy-tail-multivariate}
\end{align}
This is satisfied by, for example, the algorithms mentioned below~\eqref{eq:assumption-on-alg}, with the same values of $a$ and $C$ for the algorithm of \citet{depersin2022robust} (here, $\epsilon_{\max}$ is not needed since we do not require ALG3 to be robust to contamination).  Recall the definition of the realisable classes $\mathcal{R}_{\emptyset,[d]}(P, \epsilon, \pi)$ from~\eqref{eq:realisable-all-or-nothing}.

\begin{theorem} \label{thm:nonparametric-multivariate-realisable-mean-ub}
    Let $\theta_0 \in \mathbb{R}^d$, $\Sigma \in \mathcal{S}_{++}^{d\times d}$, $\epsilon\in[0,1)$, $\pi\in\mathcal{P}(\{\emptyset,[d]\})$ and $q\coloneqq \pi([d])$.
    \begin{itemize}
        \item[(a)] Let $r \geq 2$, let $P\in \mathcal{P}_{L^r}(\theta_0, \Sigma)$ and let $Z_1,\ldots,Z_n \overset{\mathrm{iid}}{\sim} R \in \mathcal{R}_{\emptyset,[d]}(P, \epsilon, q)$.  Further, let $\mathcal{D} \coloneqq \{i \in [n]:Z_i \in \mathbb{R}^d\}$ and let $\hat{\theta}_n \coloneqq \mathrm{ALG3}\bigl( (Z_i)_{i\in\mathcal{D}}; \delta \bigr)$, where $\mathrm{ALG3}$ satisfies~\eqref{eq:assumption-on-alg-heavy-tail-multivariate}.  If $nq(1-\epsilon) \geq \mathbf{r}(\Sigma)$, then for any $\delta\geq 2\exp\bigl(-anq(1-\epsilon)/8\bigr)$, we have with probability at least $1-\delta$ that
        \begin{align*}
        \|\hat{\theta}_n - \theta_0\|_2^2 \lesssim \frac{\tr(\Sigma) + \|\Sigma\|_{\mathrm{op}}\log(2/\delta)}{nq(1-\epsilon)} + \|\Sigma\|_{\mathrm{op}} \biggl\{\biggl( \frac{\epsilon}{q(1-\epsilon)} \biggr)^2 \wedge \biggl( \frac{\epsilon}{q(1-\epsilon)} \biggr)^{2/r} \biggr\}.
        \end{align*} 
        \item[(b)] Let $r\geq 1$, $P\in \mathcal{P}_{\psi_r}(\theta_0, \Sigma)$, $R \in \mathcal{R}_{\emptyset,[d]}(P, \epsilon, q)$ and $Z_1,\ldots, Z_n \overset{\mathrm{iid}}{\sim} R$. Let $\mathcal{D} \coloneqq \{i\in[n] : Z_i\neq\star\}$ and $\hat{\theta}_n \coloneqq |\mathcal{D}|^{-1}\sum_{i\in\mathcal{D}}Z_i$. If $nq(1-\epsilon) \geq \mathbf{r}(\Sigma)$, then for any $\delta\geq 8\exp\bigl(-nq(1-\epsilon)/8\bigr)$, we have with probability at least $1-\delta$ that
        \begin{align*}
        \|\hat{\theta}_n \!-\! \theta_0\|_2^2 \lesssim \frac{\tr(\Sigma) \!+\! \|\Sigma\|_{\mathrm{op}}\log(2/\delta)}{nq(1-\epsilon)} + \|\Sigma\|_{\mathrm{op}} \biggl\{\biggl( \frac{\epsilon}{q(1\!-\!\epsilon)} \biggr)^2 \wedge \log^{2/r}\biggl( 2 \!+\! \frac{2\epsilon}{q(1-\epsilon)} \biggr) \biggr\}.
        \end{align*}
    \end{itemize}
\end{theorem}
Thus, even in the multivariate setting considered in Theorem~\ref{thm:nonparametric-multivariate-realisable-mean-ub}, the realisable classes permit the same improvements in performance over fully-observed arbitrary contamination models as we saw in the univariate case in Theorem~\ref{thm:one-dim-realisable-sample-mean-ub}.  Moreover, the second (contamination) terms retain this improvement in a dimension-free manner.  By a small modification of the two-point construction of the proof of Theorem~\ref{thm:nonparametric-realisable-model-lb} so that the difference in means is in the direction of the leading eigenvector of $\Sigma$, together with \citet[][Proposition~10]{ma2024high} or \citet[][Theorem~4]{depersin2022optimal}, we see that both bounds in Theorem~\ref{thm:nonparametric-multivariate-realisable-mean-ub} are minimax rate-optimal.


\section{Extension: Linear regression with realisable missing response} 
\label{sec:regression-missing-response} 

The ideas of mean estimation with realisable missing observations developed in Section~\ref{sec:realisable-mean-est} extend to linear regression with a realisable missing response, as we now demonstrate.  Given $\theta_0 \in \mathbb{R}^d$ and $\sigma > 0$, consider a random design normal linear model 
\[
Y_i = X_i^\top \theta_0 + \zeta_i,
\]
where $(X_1,\zeta_1),\ldots,(X_n,\zeta_n)$ are independent with $\zeta_1,\ldots,\zeta_n \stackrel{\mathrm{iid}}{\sim} \mathsf{N}(0,\sigma^2)$.  In this section, we suppose that, instead of the desired responses $Y_1,\ldots,Y_n$, we are only able to observe corrupted versions that are subject to realisable missingness.  Thus, more precisely, we observe $Z_i \coloneqq (1 - B_i) \cdot Y_i \ostar \Omega^{(1)}_i + B_i \cdot Y_i \ostar \Omega^{(2)}_i$ for $i \in [n]$, where $B_1,\ldots,B_n \stackrel{\mathrm{iid}}{\sim} \mathsf{Ber}(\epsilon)$ are independent of the independent quadruples  $(X_i,\zeta_i,\Omega^{(1)}_i,\Omega^{(2)}_i)_{i \in [n]}$, where $\Omega^{(1)}_i|\{X_i=x\} \sim \mathsf{Ber}(q_x)$ with $\inf_{x \in \mathbb{R}^d} q_x \geq q > 0$ and where $\Omega^{(1)}_i \indep \zeta_i \,|\, X_i$.  We impose no restriction on the dependence between $\Omega^{(2)}_i$ and $(X_i,\zeta_i)$.  Thus, when $\epsilon = 0$, the pairs $(X_1,Z_1),\ldots,(X_n,Z_n)$ are missing at random (MAR).  We summarise the conditional distribution of the observed responses by writing $Z_1|\{X_1=x\}  \sim R_x \in \mathcal{R}^{\mathrm{Res}}\bigl(\mathsf{N}(x^{\top} \theta_0, \sigma^2), \epsilon, q\bigr)$.

Our main result in this section relies on what we will call a $(\beta,\gamma)$-regular design assumption, for $\beta \in (0, 1/2], \gamma > 0$.
\begin{defn}[$(\beta,\gamma)$-regular design]\label{asm:fixed-design-regularity}
    We say $(x_1,\ldots,x_n) \in (\mathbb{R}^d)^n$ is a $(\beta,\gamma)$-regular design if for all $v \in \mathbb{R}^d$, there exists a set $\mathcal{T} \subseteq [n]$ such that $\lvert \mathcal{T} \rvert \geq 2\beta n$ and $\lvert x_{i}^{\top} v \rvert > \gamma \|v\|_2$ for all $i \in \mathcal{T}$.  We write $\mathcal{C}_{(\beta,\gamma)}$ for the set of all $(\beta,\gamma)$-regular designs. 
\end{defn}
Thus, Definition~\ref{asm:fixed-design-regularity} asks for a relatively regular dispersion among $x_1,\ldots,x_n$ in the sense that for every hyperplane $H$, a non-trivial proportion of observations have distance greater than $\gamma$ from $H$.  The following lemma shows that a random design where the distribution is not supported on a hyperplane is $(\beta,\gamma)$-regular for some $\beta\in(0,1/2]$ and $\gamma>0$, with high probability when the sample size is sufficiently large.  
\begin{lemma} \label{lemma:beta-gamma-regular}
    Let $\delta\in(0,1]$, $\gamma>0$ and $X_1,\ldots,X_n \overset{\mathrm{iid}}{\sim} P \in \mathcal{P}(\mathbb{R}^d)$. Further define $\beta \coloneqq \frac{1}{3}\inf_{v \in \mathbb{S}^{d-1}} \mathbb{P}\bigl(|X_1^\top v| > \gamma\bigr)$.
    \begin{enumerate}[(a)]
    \item The infimum in the definition of $\beta$ is attained, and if $P(H) < 1$ for every hyperplane $H \subseteq \mathbb{R}^d$, then $\beta > 0$ for sufficiently small $\gamma > 0$.
    \item There exists a universal constant $c>0$ such that if $\frac{d + \log(1/\delta)}{n} \leq c\beta^2$, then, with probability at least $1-\delta$, the $n \times d$ matrix with $i$th row $X_i^\top$ is a $(\beta,\gamma)$-regular design.
    \end{enumerate}
\end{lemma}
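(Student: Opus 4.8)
\emph{Part (a).} My plan is to exploit lower semicontinuity and compactness. Writing $f(v) \coloneqq \mathbb{P}\bigl(|X_1^\top v| > \gamma\bigr)$ for $v \in \mathbb{S}^{d-1}$, I would first note that if $v_k \to v$ then, for each fixed $x$, the set $\{v : |x^\top v| > \gamma\}$ is open, so $\mathbbm{1}_{\{|X_1^\top v| > \gamma\}} \le \liminf_k \mathbbm{1}_{\{|X_1^\top v_k| > \gamma\}}$ pointwise; Fatou's lemma then gives $f(v) \le \liminf_k f(v_k)$, i.e.\ $f$ is lower semicontinuous on the compact set $\mathbb{S}^{d-1}$ and hence attains its infimum. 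For the positivity claim I would argue by contradiction: if $\inf_{v \in \mathbb{S}^{d-1}} f(v) = 0$ for every $\gamma > 0$, then by the attainment just shown there exist $v_k \in \mathbb{S}^{d-1}$ with $\mathbb{P}\bigl(|X_1^\top v_k| \le 1/k\bigr) = 1$; passing to a subsequence with $v_k \to v_\ast \in \mathbb{S}^{d-1}$, on the almost-sure event $\bigcap_k \{|X_1^\top v_k| \le 1/k\}$ we get $X_1^\top v_\ast = \lim_k X_1^\top v_k = 0$, so $P(H_\ast) = 1$ for the hyperplane $H_\ast \coloneqq \{x : x^\top v_\ast = 0\}$, contradicting the hypothesis. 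Since $\gamma \mapsto f(v)$ is non-increasing, $\inf_v f(v) > 0$ at one $\gamma_0 > 0$ forces the same for all $\gamma \in (0, \gamma_0]$, and hence $\beta > 0$ there.

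\emph{Part (b).} Here I would first reduce to a uniform empirical-process statement. By positive homogeneity in $v$ of both sides of $|x_i^\top v| > \gamma\|v\|_2$, the matrix with rows $X_i^\top$ is a $(\beta,\gamma)$-regular design exactly when $\inf_{v \in \mathbb{S}^{d-1}} n^{-1} \sum_{i=1}^n \mathbbm{1}_{\{|X_i^\top v| > \gamma\}} \ge 2\beta$. Since $\mathbb{P}\bigl(|X_1^\top v| > \gamma\bigr) \ge 3\beta$ for all $v \in \mathbb{S}^{d-1}$ by the definition of $\beta$, this is implied by the event $\{G_n \le \beta\}$, where
\[
G_n \coloneqq \sup_{v \in \mathbb{S}^{d-1}} \Bigl\{ \mathbb{P}\bigl(|X_1^\top v| > \gamma\bigr) - \tfrac{1}{n}\textstyle\sum_{i=1}^n \mathbbm{1}_{\{|X_i^\top v| > \gamma\}} \Bigr\},
\]
so it suffices to show $\mathbb{P}(G_n > \beta) \le \delta$ under the stated condition on $(d + \log(1/\delta))/n$. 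To bound $G_n$ I would invoke VC theory. The sets $\{x : |x^\top v| > \gamma\}$, $v \in \mathbb{S}^{d-1}$, are complements of symmetric slabs, hence lie in the class of complements of intersections of two affine halfspaces in $\mathbb{R}^d$; since halfspaces form a VC class of dimension $d+1$, standard VC calculus bounds the VC dimension of this class — and therefore of $\mathcal{C} \coloneqq \bigl\{\{x : |x^\top v| > \gamma\} : v \in \mathbb{S}^{d-1}\bigr\}$ — by a universal constant times $d$. I would then combine (i) symmetrisation and Dudley's entropy integral for polynomial-discrimination classes to get $\mathbb{E} G_n \le \mathbb{E}\sup_{C \in \mathcal{C}}|\hat{P}_n(C) - P(C)| \lesssim \sqrt{d/n}$, with (ii) McDiarmid's bounded-differences inequality, using that changing a single $X_i$ alters $G_n$ by at most $1/n$, to conclude that $G_n \le C_0\sqrt{(d + \log(1/\delta))/n}$ with probability at least $1 - \delta$ for a universal constant $C_0 > 0$. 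Choosing $c \coloneqq C_0^{-2}$ then makes the right-hand side at most $\beta$ whenever $(d + \log(1/\delta))/n \le c\beta^2$, which finishes the proof.

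\emph{Main obstacle.} Part (a) is routine soft analysis. In part (b) the only substantive step is the uniform deviation bound for $G_n$ at the sharp rate $\sqrt{(d + \log(1/\delta))/n}$: a naive $\varepsilon$-net over $\mathbb{S}^{d-1}$ does not suffice, because $v \mapsto \mathbbm{1}_{\{|x^\top v| > \gamma\}}$ is discontinuous and no moment bound on $\|X_1\|_2$ is available to control the discretisation error, so one genuinely needs the combinatorial (VC-dimension) route — the key input being that (complements of) slabs in $\mathbb{R}^d$ form a VC class of dimension $O(d)$ rather than $O(d\log d)$, which is what keeps the bound additive in $d$ and $\log(1/\delta)$.
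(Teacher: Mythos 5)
Your proposal is correct and follows essentially the same route as the paper: part (a) via lower semicontinuity of $v \mapsto \mathbb{P}(|X_1^\top v| > \gamma)$ plus compactness of $\mathbb{S}^{d-1}$ and a convergent-subsequence contradiction for positivity, and part (b) via the reduction to a one-sided uniform deviation bound, an $O(d)$ VC-dimension bound for (complements of) slabs obtained by VC calculus on pairs of halfspaces, the resulting $\sqrt{d/n}$ bound on the expected supremum, and bounded differences. The only differences are cosmetic (Fatou in place of the paper's portmanteau-style citation for lower semicontinuity, and Dudley's entropy integral in place of the paper's citation of the equivalent VC empirical-process theorem).
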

To illustrate Lemma~\ref{lemma:beta-gamma-regular}(b) with a specific example, suppose that $X_1,\ldots,X_n \overset{\mathrm{iid}}{\sim} \mathsf{N}_d(0,\Sigma)$ for some $\Sigma\in\mathcal{S}_{++}^{d\times d}$, and let $\lambda_{\min}(\Sigma) > 0$ denote the minimum eigenvalue of $\Sigma$.  Then by Lemma~\ref{lemma:beta-gamma-regular}(b), there exists a universal constant $c_1>0$ such that if $\frac{d + \log(1/\delta)}{n} \leq c_1$, then with probability at least $1-\delta$, the $n \times d$ matrix with $i$th row $X_i^\top$ is a $\bigl( 2\Phi(-1)/3, \lambda_{\min}^{1/2}(\Sigma)\bigr)$-regular design.

Given $R_1,R_2 \in \mathcal{P}(\mathbb{R}_\star)$, we define their \emph{symmetrised Kolmogorov distance} by
\[
d_{\mathrm{K}}^{\mathrm{sym}}(R_1,R_2) \coloneqq \sup_{A \in \mathcal{A}^{\mathrm{sym}}} \lvert R_1(A) - R_2(A)\rvert,
\]
where $\mathcal{A}^{\mathrm{sym}} \coloneqq \bigl\{ (-\infty,t] : t\in\mathbb{R} \bigr\} \cup \bigl\{ [t,\infty) : t\in\mathbb{R} \bigr\}$.  This may be larger than $d_\mathrm{K}(R_1,R_2)$ when $R_1(\{\star\}) \neq R_2(\{\star\})$.  Now, for $\theta\in\mathbb{R}^d$, we define the empirical distribution $\hat{R}_{n,\theta}$ by $\hat{R}_{n,\theta}(B) \coloneqq n^{-1} \sum_{i=1}^n \mathbbm{1}_{\{Z_i-X_i^\top \theta \in B\}}$ for $B\in\mathcal{B}(\mathbb{R}_{\star})$.  
We let  
\[
\mathcal{R}_{0}^{\mathrm{Lin}} \coloneqq \mathcal{R}\bigl(\mathsf{N}(0, \sigma^2),1 - q(1 - \epsilon), 1\bigr),
\]
and define the Kolmogorov distance estimator as\begin{align}
    \hat{\theta}^{\mathrm{K}}_n \coloneqq \argmin_{\theta\in \mathbb{R}^d}\; d_{\mathrm{K}}^{\mathrm{sym}}\bigl(\hat{R}_{n,\theta} , \mathcal{R}_0^{\mathrm{Lin}}\bigr).
\end{align}
The realisable set $\mathcal{R}_0^{\mathrm{Lin}}$ ensures that the selection probability $h_x \coloneqq \mathbb{P}(Z_1 \neq \star \mid X_1 = x)$ 
satisfies the sandwich relation $q(1 - \epsilon) \leq q_x(1-\epsilon) \leq h_x \leq 1$ for all $x \in \mathbb{R}^d$.  Writing $\tilde{R}_{i,\theta} \coloneqq \mathsf{Law}(Z_i - x_i^\top \theta)$ for $i \in [n]$ and $\theta \in \mathbb{R}^d$, as well as $R_{n,\theta} \coloneqq \frac{1}{n} \sum_{i=1}^n \tilde{R}_{i,\theta}$, it then follows from Proposition~\ref{prop:univariate-realisability} that $R_{n,\theta_0} \in \mathcal{R}_{0}^{\mathrm{Lin}}.$
\begin{theorem} \label{thm:gaussian-realisable-response}
    Let $n ,d \in \mathbb{N}, \epsilon \in [0, 1), q \in (0, 1], \xi \in (0,1), \delta\in(0,1]$ and $\theta_0 \in \mathbb{R}^d$.  Suppose that $(x_1, \ldots, x_n) \in \mathcal{C}_{(\beta,\gamma)}$ for some $\beta \in (0,1/2]$ and $\gamma > 0$, and let $Z_1,\ldots,Z_n$ be independent with $Z_i\,|\,\{X_i=x_i\} \sim R_{x_i} \in \mathcal{R}^{\mathrm{Res}}\bigl(\mathsf{N}(x_i^\top\theta_0,\sigma^2), \epsilon, q\bigr)$.  There exists a universal constant $C_1>0$ such that if 
    \begin{align} \label{Eq:SampleSizeCondition}
        n^{1-\xi} \geq C_1\bigl\{d+\log(1/\delta)\bigr\} \quad\text{and}\quad \log\biggl( 1+ \frac{4(1-\beta q(1-\epsilon))}{\beta q(1-\epsilon)} \biggr) \leq \frac{\xi\log n}{15},
    \end{align}
    then with probability at least $1 - \delta$, conditional on $X_1=x_1,\ldots,X_n=x_n$, 
    \begin{align*}
    \bigl\| \hat{\theta}^{\mathrm{K}}_n - \theta_0 \bigr\|_2^2 \lesssim \sigma^2\cdot \frac{\log^2\bigl( 1+ \frac{4(1-\beta q(1-\epsilon))}{\beta q(1-\epsilon)} \bigr)}{\gamma^2 \xi \log \bigl( nq(1-\epsilon)\bigr)}.
    \end{align*}
\end{theorem}
One of the main consequences of Theorem~\ref{thm:gaussian-realisable-response} is that if we consider $\beta,\gamma,q, \epsilon, \xi$ and $\delta$ as constants, but allow $d$ to grow subject to $\frac{d}{n^{1-\xi}} \to 0$ as $n\to\infty$, then under the assumptions of Theorem~\ref{thm:gaussian-realisable-response}, we have that $\hat{\theta}_n^{\mathrm{K}}$ is a consistent estimator of $\theta_0$ in squared Euclidean norm as $n \to \infty$.  In fact, similar to Section~\ref{sec:gaussian-realisable-model}, this conclusion continues to hold even if we allow~$\epsilon$ to converge slowly to 1 and $q$ to converge slowly to zero.  This lies in stark contrast to the complete-case arbitrary contamination setting in which for any constant $\epsilon > 0$, consistent estimation is impossible~\citep[see, e.g., the discussion following][Theorem 3.2]{gao2020robust}.  Moreover, when the parameters $\beta, \gamma, q, \epsilon$ and $\delta$ are positive constants, the optimality of the rate $1/\log n$ follows from our mean estimation lower bound (Theorem~\ref{thm:univariate-realisable-lb}).

\section{Adaptation to unknown contamination proportion} \label{sec:adaptation}

The aim of this section is to provide a general strategy for extending the algorithms and theory presented in Sections~\ref{sec:mean-estimation-arbitrary-contamination}, and~\ref{sec:realisable-mean-est} to cases where the contamination proportion $\epsilon$ is unknown.  This is facilitated via a Lepski-type argument \citep[similar to][\S 6.2]{dalalyan2022all}.  

To describe our setting and algorithm, suppose that the true contamination fraction $\epsilon_{\star}$ lies in an interval $[0,\epsilon_{\max}]$, where $\epsilon_{\max}\in(0,1)$ is a known upper bound.  Let $\Delta\subseteq[0,1]$, let $\hat{\theta}_n(\epsilon,\delta)$ be an estimator of $\theta_0 \in \Theta \subseteq \mathbb{R}^d$ and let $\phi:[0,\epsilon_{\max}]\times\Delta \to (0,\infty)$ be such that $\epsilon \mapsto \phi(\epsilon,\delta)$ is decreasing in $\epsilon$ for every $\delta \in \Delta$.  Assume further that for each $\delta\in\Delta$ and $\epsilon \in [\epsilon_{\star},\epsilon_{\max}]$, we have with probability at least $1-\delta$ that
\begin{align}
    \|\hat{\theta}_n(\epsilon,\delta) - \theta_0\|_2 \leq \phi(\epsilon,\delta). \label{eq:phi-def}
\end{align}
For $x \in \mathbb{R}^d$ and $r \geq 0$, define the closed Euclidean ball of radius $r$ about $x$ by $B_2(x,r) \coloneqq \{y \in \mathbb{R}^d:\|y-x\|_2 \leq r\}$.  Define $\delta'\coloneqq \frac{\delta}{1+\lceil\log_2 n\rceil}$, 
\begin{align*}
    \mathcal{C} \coloneqq \bigl\{ 2^{-\ell}\epsilon_{\max} : \ell=0,1,\ldots,\lceil\log_2 n\rceil \bigr\}
\end{align*}
and 
\begin{align*}
    \epsilon_0 \coloneqq \min\biggl\{ \epsilon\in \mathcal{C} : \bigcap_{\epsilon'\in\mathcal{C}:\epsilon'\geq\epsilon} B_2\bigl(\hat{\theta}_n(\epsilon',\delta'), \phi(\epsilon',\delta')\bigr) \neq \emptyset \biggr\}.
\end{align*}
The definition of $\epsilon_0$ does not involve $\epsilon_{\star}$, and our final estimator is $\tilde{\theta}_n(\delta)\coloneqq \hat{\theta}_n(\epsilon_0,\delta')$.

\begin{prop}\label{prop:adaptation}
Suppose that $\delta'\in\Delta$ and let $C \geq 1$ be such that
\begin{align*}
    \phi(\epsilon_{\max}/n,\delta') \leq C\phi(0,\delta').
\end{align*}
Then with probability at least $1-\delta$, we have
    \begin{align*}
        \|\tilde{\theta}_n(\delta) - \theta_0\|_2 \leq 3C\phi(2\epsilon_{\star}\wedge\epsilon_{\max},\delta').
    \end{align*}
\end{prop}
The conditions on $\phi$ in Proposition~\ref{prop:adaptation} should be regarded as mild, and are satisfied by the \textproc{Iterative\_Robust\_Mean} estimator in  Theorem~\ref{thm:robust-descent-iterative-imputation-ub} (with $C=2$), the minimum Kolmogorov distance estimator in Theorem~\ref{thm:one-dim-kolmogorov-estimator} (with $C=3$) and its multivariate analogue in Theorem~\ref{thm:multivariate-kolmogorov-estimator} (with $C=3$).  The conclusion of Proposition~\ref{prop:adaptation} is essentially that, up to universal constant factors, the price of adaptation is the replacement of $\delta$ in the original, non-adaptive bound, with $\delta' = \delta/(1 + \lceil \log_2 n \rceil)$.  For instance, applying Proposition~\ref{prop:adaptation} to the \textproc{Iterative\_Robust\_Mean} estimator in Theorem~\ref{thm:robust-descent-iterative-imputation-ub} yields that, if the conditions of that result hold with $\delta$ replaced with $\delta'$, then with probability at least $1 - \delta$, 
    \begin{align*}
        \|\hat{\theta}_n - \theta_0\|_2^2 
        \lesssim \frac{T \tr(\Sigma^{\mathrm{IPW}})}{n} + \frac{T\|\Sigma^{\mathrm{IPW}}\|_{\mathrm{op}} \bigl\{\log(2T/\delta) + \log \log n\bigr\}}{n} + \|\Sigma^{\mathrm{IPW}}\|_{\mathrm{op}} \epsilon_\star.
    \end{align*}
In other words, if $\delta \leq 2 T/\log n$, then we can adapt to the unknown contamination fraction $\epsilon_\star \in [0,\epsilon_{\max}]$ at the cost only of an increased universal constant in the bound.  Moreover, even when $\delta > 2 T/\log n$, the price for adaptation is at most an inflation of the middle term in the bound by a multiplicative factor of order $O(\log\log n)$.

\medskip

\textbf{Acknowledgements:} The research of TM, KAV and RJS was supported by RJS's European Research Council (ERC) Advanced Grant 101019498.  The work of KAV was supported in part by National Science Foundation grant DMS-2210734. The work of TBB was supported by Engineering and Physical Sciences Research Council (EPSRC) New Investigator Award EP/W016117/1 and ERC Starting Grant 101163546.  The research of TW was supported by EPSRC New Investigator Award EP/T02772X/1.

{
\bibliographystyle{imsart-nameyear.bst}
\bibliography{bibliography}
}

\clearpage
\setcounter{section}{0}
\setcounter{equation}{0}
\setcounter{theorem}{0}
\def\theequation{S\arabic{equation}}
\def\thesection{S\arabic{section}}
\def\thetheorem{S\arabic{theorem}}
\def\thefigure{S\arabic{figure}}
\def\thealgorithm{S\arabic{algorithm}}

This is the supplementary material for \cite{ma2024estimation}.

\section{Notation used in proofs}\label{sec:notation-proofs}

For a measurable space $(\mathcal{Z}, \mathcal{C})$ and probability measures $P, Q \in \mathcal{P}(\mathcal{Z})$, we write $P \perp Q$ if $P$ and $Q$ are singular.  The Lebesgue decomposition theorem yields the unique decomposition $P = P_{\mathrm{ac}} + P_{\mathrm{sing}}$ where $P_{\mathrm{ac}} \ll Q$ and where $P_{\mathrm{sing}} \perp Q$.  For a convex function $f: (0, \infty) \rightarrow \mathbb{R}$, we let $M_f \coloneqq \lim_{x \rightarrow \infty} f(x)/x \in (-\infty, \infty]$ denote its \emph{maximal slope}.   We then define the \emph{$f$-divergence} between $P$ and $Q$ to be 
\begin{align} \label{eq:f-divergence}
\mathrm{Div}_f(P, Q) \coloneqq \int_{\mathcal{Z}} f\biggl(\frac{\mathrm{d} P_{\mathrm{ac}}}{\mathrm{d}Q}\biggr) \, \mathrm{d}Q + M_f \cdot P_{\mathrm{sing}}(\mathcal{Z}).
\end{align}
As important examples, if $f(x) = |x-1|/2$, then we obtain the total variation distance $\mathrm{TV}(P, Q) \coloneqq \sup_{A \in \mathcal{C}} \lvert P(A) - Q(A) \rvert$, while if $f(x) = x \log x$, then the resulting $f$-divergence is the Kullback--Leibler divergence
\[
\mathrm{KL}(P,Q) \coloneqq \begin{cases} 
\int_{\mathcal{Z}} \log\bigl(\frac{\mathrm{d} P}{\mathrm{d} Q}\bigr) \, \mathrm{d} Q & \text{ if } P \ll Q\\
\infty & \text { otherwise}.
\end{cases}
\]
Finally, if $f(x) = (x - 1)^2$, then we obtain the $\chi^2$-divergence
\[
\chi^2(P,Q) \coloneqq \begin{cases} 
\int_{\mathcal{Z}} \bigl(\frac{dP}{dQ} - 1\bigr)^2 \, dQ & \text{ if } P \ll Q \\
\infty & \text{ otherwise}.
\end{cases}
\]
Recalling the spaces $\mathcal{X}_1,\ldots,\mathcal{X}_d$ from Section~\ref{sec:extended-space-properties}, for a set $S \in 2^{[d]} \setminus \{\emptyset\}$, let $\mathcal{X}_{S} \coloneqq \prod_{j \in S} \mathcal{X}_j$, and also define $\mathcal{X}_\emptyset \coloneqq \{\star\}$ and $\mathcal{X} \coloneqq \prod_{j=1}^d \mathcal{X}_j$.  Given $x = (x_1,\ldots,x_d) \in \mathcal{X}$ and $S \in 2^{[d]} \setminus \{\emptyset\}$, we define $x_S \coloneqq (x_j)_{j \in S}$, with $x_\emptyset \coloneqq \star$.  For $S \subseteq [d]$, we define $\mathcal{X}_j^{(S)} \coloneqq \mathcal{X}_j$ if $j\in S$ and $\mathcal{X}_j^{(S)} \coloneqq \{\star\}$ if $j \notin S$, and also set $\mathcal{X}^{(S)} \coloneqq \prod_{j=1}^d \mathcal{X}_j^{(S)}$.  Next, we let
\[
\mathcal{B}^{(S)}(\mathcal{X}_\star) \coloneqq \bigl\{A \in \mathcal{B}(\mathcal{X}_\star): \forall z = (z_1,\ldots,z_d) \in A,\, z_j \neq \star, \, \forall j \in S \ \text{and} \ z_k = \star,\, \forall k \notin S \bigr\}.
\]
Given $S \subseteq [d]$, we write $\mathcal{G}_S$ for the set of real-valued functions on $\mathcal{X}_S$, and also write $\mathcal{G}_\star$ for the set of real-valued functions on $\mathcal{X}_\star$.  A function $f \in \mathcal{G}_\star$ may be identified with the sequence of functions $(f_S:S \subseteq [d])$, where $f_S \in \mathcal{G}_S$ for each $S$.  Formally, this identification is via the bijection $\psi: \prod_{S \subseteq [d]} \mathcal{G}_S \rightarrow \mathcal{G}_\star$ given by $\psi\bigl((f_{S'}:S' \subseteq [d])\bigr)(z) \coloneqq f_S(z_S)$ for $z \in \mathcal{X}^{(S)}$ and $S \subseteq [d]$.  In other words, we evaluate $f \in \mathcal{G}_\star$ at $z \in \mathcal{X}_\star$ by setting $S$ to be the coordinates in $z$ that are not equal to $\star$, and then computing $f_S(z_S)$.

\section{Proofs from Section~\ref{sec:setup}}\label{sec:proofs-setup}

\subsection{Proof of Theorem~\ref{cor:P-epsilon-pi-S-realisability-Farkas-form}}\label{sec:proof-general-realisable}

Theorem~\ref{cor:P-epsilon-pi-S-realisability-Farkas-form} follows immediately from Theorem~\ref{Thm:AbstractVersion} below, which is stated in greater generality, encompassing both continuous and discrete spaces.  In fact, we begin with a sketch of the proof of this general result in the setting where $\mathcal{X}$ is finite, both to explain the relevance of (a generalisation of) Farkas's lemma in this context, and to provide intuition for the more technical arguments that follow.  Let $X \sim P \in \mathcal{P}(\mathcal{X})$ and let $Q \coloneqq \mathsf{Law}(X \ostar \Omega)$ for some random vector $\Omega$ taking values in $\{0,1\}^d$.  We write $M = (M_{S,x})_{S\subseteq [d],x\in\mathcal{X}} \coloneqq \bigl(\mathbb{P}(\Omega = \bm{1}_S \, | \, X = x)\bigr)_{S \subseteq [d], x \in \mathcal{X}} \in [0,1]^{2^{[d]} \times \mathcal{X}}$ to summarise the missingness mechanism.  Now write $\mathbb{A} \in [0,1]^{\mathcal{X}_\star \times (2^{[d]} \times \mathcal{X})}$ for the matrix with
\[
    \mathbb{A}_{z,(S,x)} \coloneqq P(\{x\}) \mathbbm{1}_{\{z_S=x_S\}} \prod_{j \in S^{c}}\mathbbm{1}_{\{ z_{j} = \star \}},
\]
so that each column of $\mathbb{A}$ has at most one non-zero entry.  Then
\[
    (\mathbb{A} M)_{z} = \sum_{S \subseteq [d]} \sum_{x \in \mathcal{X}} P(\{x\}) M_{S, x} \mathbbm{1}_{\{z_S=x_S\}}\prod_{j \in S^{c}}\mathbbm{1}_{\{ z_{j} = \star \}} = Q(\{z\}).
\]
Now, for $x \in \mathcal{X}$, write $\sigma_x \in \{0,1\}^{2^{[d]} \times \mathcal{X}}$ for the vector with $(\sigma_x)_{(S,x')} \coloneqq \mathbbm{1}_{\{x=x'\}}$, so that $\sigma_x^\top M = \sum_{S \in 2^{[d]}} M_{S,x}$, and form the matrix $\mathbb{B} \coloneqq (\sigma_x^\top)_{x \in \mathcal{X}} \in \{0,1\}^{\mathcal{X} \times (2^{[d]} \times \mathcal{X})}$.  We can then define $\mathcal{J} \coloneqq \{M \in [0,1]^{2^{[d]} \times \mathcal{X}}:\mathbb{B} M = \bm{1}_{\mathcal{X}}\}$ to denote the set of valid mechanisms.  We deduce that $Q \in \mathsf{MNAR}_P$ if and only if there exists $M \in \mathcal{J}$ such that $\mathbb{A} M = Q$.  By Farkas's lemma, this latter condition is equivalent to the statement that there does not exist $(y,w) = \bigl((y_z)_{z \in \mathcal{X}_\star},(w_x)_{x \in \mathcal{X}}\bigr) \in \mathbb{R}^{\mathcal{X}_\star} \times \mathbb{R}^{\mathcal{X}}$ such that $\sum_{z \in \mathcal{X}_\star} Q(\{z\}) y_z + \sum_{x \in \mathcal{X}} w_x < 0$ and $0 \leq (\mathbb{A}^\top y + \mathbb{B}^\top w)_{(S,x)} = P(\{x\}) y_{x \ostar \bm{1}_{S}} + w_x$ for each $S \subseteq [d]$ and $x \in \mathcal{X}$. 
The search for such a pair $(y,w)$ amounts to a constrained optimisation problem, whose solution for each fixed~$y$ is to take $w_x = -P(\{x\}) \min_{S \subseteq [d]} y_{x \ostar \bm{1}_S}$ for $x \in \mathcal{X}$.  Then 
\[
\sum_{z \in \mathcal{X}_\star} Q(\{z\}) y_z + \sum_{x \in \mathcal{X}} w_x = \sum_{z \in \mathcal{X}_\star} Q(\{z\}) y_z - \sum_{x \in \mathcal{X}} P(\{x\}) \min_{S \subseteq [d]} y_{x \ostar \bm{1}_S},
\]
so the condition that there does not exist $(y,w)$ for which this quantity is negative corresponds to~\eqref{Eq:fmax} after identifying $y$ with~$-f$.

Moving now to the proof of the full theorem, we require several preliminary topological results that are stated and proved in Section~\ref{sec:auxiliary}.  We will also use the generalisation of Farkas's lemma below.  Recall that if $X$ is a real vector space, then the \emph{algebraic dual} of $X$, denoted $X^*$, is the vector space of linear functions $f:X \rightarrow \mathbb{R}$.  Whenever $X'$ is a subspace of this algebraic dual, we say $X'$ \emph{separates points} if for every $x_1,x_2 \in X$ with $x_1 \neq x_2$, there exists $f \in X'$ with $f(x_1) \neq f(x_2)$.  The \emph{weak topology} on $X$ generated by $X'$ is the coarsest topology such that $f^{-1}(U)$ is open in $X$ for every $f\in X'$ and open set $U\subseteq \mathbb{R}$.  Now let $Y$ be another real vector space and let $Y'$ be a subspace of its algebraic dual.  A linear map $T:X \rightarrow Y$ is \emph{$(X',Y')$-weakly continuous} if it is continuous when $X$ and $Y$ are equipped with the weak topologies generated by $X'$ and $Y'$ respectively.  Where $X'$ and $Y'$ are clear from context, we will abbreviate this terminology by simply referring to $T$ as weakly continuous.
\begin{theorem}[{\citealp[Theorem~2]{craven1977generalizations}}]
\label{Thm:GeneralisedFarkas}
    Let $X$ and $Y$ be real vector spaces, and let $X'$ and $Y'$ be subspaces of the algebraic duals of $X$ and $Y$, respectively, that separate points.  Given $y\in Y$, a weakly continuous linear map $T:X\to Y$, and a convex cone $K\subseteq X$ such that $T(K)$ is weakly closed in $Y$, the following are equivalent:
    \begin{enumerate}
        \item[(a)] $Tx=y$ has a solution $x\in K$;
        \item[(b)] If $g \in Y'$ satisfies $g(Tx)\geq 0$ for all $x\in K$, then $g(y)\geq 0$.
    \end{enumerate}
\end{theorem}

For any topological space $\mathcal{Z}$, we write $C_{\mathrm{b}}(\mathcal{Z})$ for the space of bounded continuous real-valued functions on $\mathcal{Z}$.  Let $\mathcal{M}(\mathcal{Z})$ denote the space of finite, signed Borel measures on $\mathcal{Z}$ and let $\mathcal{M}_+(\mathcal{Z})$ be the subspace of (non-negative) finite Borel measures.  We call $\mathcal{Z}$ a \emph{Hausdorff space} if, given any distinct $z_1,z_2 \in \mathcal{Z}$, we can find disjoint open subsets $V_1,V_2$ such that $z_1 \in V_1$, $z_2 \in V_2$.  The space $\mathcal{Z}$ is \emph{locally compact} if every point in $\mathcal{Z}$ has a compact neighbourhood, i.e.~if for every $z \in \mathcal{Z}$, we can find an open set $U \subseteq \mathcal{Z}$ and a compact set $K \subseteq \mathcal{Z}$ such that $z \in U \subseteq K$.

The main content of the proof of Theorem~\ref{cor:P-epsilon-pi-S-realisability-Farkas-form} is Proposition~\ref{thm:general-realisable} below.  Observe that the restriction of the bijection $\psi$ in Section~\ref{sec:notation-proofs} to the set $\{(f_S:S \subseteq [d]) : f_S \in C_{\mathrm{b}}(\mathcal{X}_S) \;\forall S\subseteq[d]\}$ has image $C_{\mathrm{b}}(\mathcal{X}_\star)$.  This identifies $C_{\mathrm{b}}(\mathcal{X}_\star)$ with $\bigl(C_{\mathrm{b}}(\mathcal{X}_S) : S \in 2^{[d]}\bigr)$, but henceforth we will not be explicit about this identification, and will simply write $f = (f_S:S \in 2^{[d]}) \in C_{\mathrm{b}}(\mathcal{X}_\star)$.  Given such an $f = (f_S:S \in 2^{[d]}) \in C_{\mathrm{b}}(\mathcal{X}_\star)$, we can express the function $f_{\max}$ from Section~\ref{sec:realisability} as $f_{\max}(x) \coloneqq \max_{S\in 2^{[d]}} f_S(x_S)$ for $x \in \mathcal{X}$.
  
\begin{prop}
\label{thm:general-realisable}
Let $\mathcal{X}_1,\ldots,\mathcal{X}_d$ be locally compact Hausdorff spaces, and let $\mathcal{X} \coloneqq \prod_{j=1}^d \mathcal{X}_j$.  Assume that every open set in $\mathcal{X}$ is $\sigma$-compact.  If $P \in \mathcal{P}(\mathcal{X})$ and $Q \in \mathcal{P}(\mathcal{X}_{\star})$, then $Q \in \mathsf{MNAR}_P$ if and only if 
    \[
    P (f_{\max}) \geq Q(f) 
    \]
    for all $f\in C_{\mathrm{b}}(\mathcal{X}_\star)$.
\end{prop}
\begin{proof}
Recall the definition of $\phi_{\mathcal{Z}}: C_{\mathrm{b}}(\mathcal{Z}) \to \mathcal{M}(\mathcal{Z})^*$ before Lemma~\ref{Lemma:DualPair}. We endow $\mathcal{M}(\mathcal{Z})$ with the weak topology generated by  $\phi_{\mathcal{Z}}\bigl(C_{\mathrm{b}}(\mathcal{Z})\bigr)$, for $\mathcal{Z} \in \{\mathcal{X},\mathcal{X}_\star,\mathcal{X} \times 2^{[d]}\}$.  This ensures that $\phi_{\mathcal{Z}}(g)$ is weakly continuous for every $g \in C_{\mathrm{b}}(\mathcal{Z})$.

Let $h: \mathcal{X}\times 2^{[d]} \to \mathcal{X}_\star$ be the continuous function defined by $h(x, S) \coloneqq x \ostar \bm{1}_S$. Then $h$ induces a linear map $h_*: \mathcal{M}(\mathcal{X}\times 2^{[d]}) \to \mathcal{M}(\mathcal{X}_\star)$ given by $h_*(\mu)(B) \coloneqq \mu\bigl(h^{-1}(B)\bigr)$ (see Figure~\ref{Fig:CD} below).
Similarly, let $j:\mathcal{X}\times 2^{[d]}\to\mathcal{X}$ be the projection map $j(x,S) \coloneqq x$, and define its induced map $j_*: \mathcal{M}(\mathcal{X}\times 2^{[d]}) \to \mathcal{M}(\mathcal{X})$.
We have
$\{g\circ h: g\in C_{\mathrm{b}}(\mathcal{X}_{\star})\} \subseteq C_{\mathrm{b}}(\mathcal{X}\times 2^{[d]})$ and similarly $\{g \circ j: g\in C_{\mathrm{b}}(\mathcal{X})\}\subseteq C_{\mathrm{b}}(\mathcal{X}\times 2^{[d]})$, we have by \citet[Theorem~IV.2.1]{schaefer1971} that both $h_*$ and $j_*$ are weakly continuous. By Lemma~\ref{Lemma:ProductContinuity}, the linear map $T = (h_*,j_*): \mathcal{M}(\mathcal{X}\times 2^{[d]}) \to \mathcal{M}(\mathcal{X}_\star)\times \mathcal{M}(\mathcal{X})$ is continuous when we endow the image space with the product topology, which by Lemma~\ref{Lemma:ProductWeakTopology} is the same as the weak topology on $\mathcal{M}(\mathcal{X}_\star)\times \mathcal{M}(\mathcal{X})$ generated by $\phi_{\mathcal{X}_\star}\bigl(C_{\mathrm{b}}(\mathcal{X}_{\star})\bigr) \times \phi_{\mathcal{X}}\bigl(C_{\mathrm{b}}(\mathcal{X})\bigr)$.

\begin{figure}[htbp]
\begin{center}
  \begin{tikzcd}
    \mathcal{X}\times 2^{[d]} \arrow{r}{h} \arrow[swap]{dr}{g\circ h\in C_{\mathrm{b}}(\mathcal{X}\times 2^{[d]}) } & \mathcal{X}_{\star} \arrow{d}{g\in C_{\mathrm{b}}(\mathcal{X}_\star)} \\
     & \mathbb{R}
  \end{tikzcd}
  \hspace{1cm}
  \begin{tikzcd}
    \mathcal{M}(\mathcal{X}\times 2^{[d]}) \arrow{r}{h_*} \arrow[swap]{dr}{\phi_{\mathcal{X}\times 2^{[d]}}(g\circ h)\in \mathcal{M}(\mathcal{X}\times 2^{[d]})^*} & \mathcal{M}(\mathcal{X}_{\star}) \arrow{d}{\phi_{\mathcal{X}_\star}(g)\in \mathcal{M}(\mathcal{X}_\star)^*} \\
     & \mathbb{R}
  \end{tikzcd}
  \end{center}
  \caption{\label{Fig:CD}Schematic diagrams of various maps defined in the proof. The fact that the maps in the right panel commute follows from the fact that $h_*(\mu)(g) = \mu(g\circ h)$ for all $g\in C_{\mathrm{b}}(\mathcal{X}_\star)$.}
\end{figure}

Define $K$ to be the convex cone $\mathcal{M}_+(\mathcal{X}\times 2^{[d]})$. We claim that $h_*(K) = \mathcal{M}_+(\mathcal{X}_{\star})$. It is clear that $h_*(K) \subseteq \mathcal{M}_+(\mathcal{X}_{\star})$ since for any $\mu\in K$ and any $g\in C_{\mathrm{b}}(\mathcal{X}_{\star})$ such that $g\geq 0$, we have by \citet[Proposition~10.1]{folland1999real} that $h_*(\mu)(g) = \mu(g\circ h) \geq 0$. For the surjectivity, define $i: \mathcal{X}_{\star} \to \mathcal{X}\times2^{[d]}$ by $i(z) \coloneqq (z\odot \bm{1}_{\{j:z_j\neq \star\}} , \{j:z_j\neq \star\})$ and let $i_*: \mathcal{M}(\mathcal{X}_{\star})\to\mathcal{M}(\mathcal{X}\times2^{[d]})$ be its induced linear map. By the same argument as above, we have $i_*(\mathcal{M}_+(\mathcal{X}_\star)) \subseteq K$. For $\nu$ on $\mathcal{M}_+(\mathcal{X}_{\star})$, we have $\nu  = h_*\bigl(i_*(\nu)\bigr)$, and the surjectivity is established since $i_*(\nu)\in K$. Consequently, 
\begin{align*} 
h_*(K) = \mathcal{M}_+(\mathcal{X}_\star) &= \bigcap_{g\in C_{\mathrm{b}}(\mathcal{X}_{\star}): g \geq 0}\{\nu\in\mathcal{M}(\mathcal{X}_{\star}): \nu(g) \geq 0\} \\
&= \bigcap_{g\in C_{\mathrm{b}}(\mathcal{X}_{\star}): g \geq 0} \bigl(\phi_{\mathcal{X}_\star}(g)\bigr)^{-1}\bigl([0,\infty)\bigr)
\end{align*}
is a weakly closed set.  A similar argument shows that $j_*(K)=\mathcal{M}_+(\mathcal{X})$ is a weakly closed set. Thus, $T(K)$ is weakly closed set in $\mathcal{M}(\mathcal{X}_{\star})\times \mathcal{M}(\mathcal{X})$, by Lemma~\ref{Lemma:ProductWeakTopology}.

By definition, $Q\in \mathsf{MNAR}_P$ if and only if there exists $\mu_0\in K$ such that $T(\mu_0) = (Q, P)$. Therefore, by Lemma~\ref{Lemma:DualPair}, we can apply the generalised Farkas' lemma (Lemma~\ref{Thm:GeneralisedFarkas}) to obtain that
\begin{align*}
\label{Eq:FarkasEquivalence2}
    Q\in\mathsf{MNAR}_P &\iff \bigcap_{\mu\in K}\bigl\{(f,g)\in C_{\mathrm{b}}(\mathcal{X}_\star)\times C_{\mathrm{b}}(\mathcal{X}): h_*(\mu)(f) + j_*(\mu)(g) \geq 0\bigr\}\nonumber\\
    &\hspace{3cm}\subseteq \bigl\{(f,g) \in C_{\mathrm{b}}(\mathcal{X}_\star)\times C_{\mathrm{b}}(\mathcal{X}): Q(f) + P(g) \geq 0\bigr\}.
\end{align*}
Now, for any $(f,g) \in C_{\mathrm{b}}(\mathcal{X}_\star)\times C_{\mathrm{b}}(\mathcal{X})$ and $\mu \in K$, we have
\[
    h_*(\mu)(f) + j_*(\mu)(g) = \sum_{S\in2^{[d]}} \int_{\mathcal{X}} \{(f\circ h)(x,S) + g(x)\}\,d\mu(x,S).
\]
Hence,  $(f,g)$ satisfies $h_*(\mu)(f) + j_*(\mu)(g) \geq 0$ for all $\mu\in K$ if and only if $(f\circ h)(x,S)+g(x)\geq 0$ for all $x\in\mathcal{X}$ and $S\in2^{[d]}$. Since $P(g)$ is increasing in $g$, it therefore suffices to check that for each $f \in C_{\mathrm{b}}(\mathcal{X}_\star)$ the function $g_f \in C_{\mathrm{b}}(\mathcal{X})$ given by $g_f(x) \coloneqq -\min_{S\in2^{[d]}} (f\circ h)(x,S) = -\min_{S\in2^{[d]}} f_S(x_S)$ satisfies $Q(f) + P(g_f) \geq 0$.  Substituting $f' \coloneqq -f$, we have 
\begin{align*}
    Q\in\mathsf{MNAR}_P &\iff \text{$Q(f) + P(g_f)\geq 0$ for all $f\in C_{\mathrm{b}}(\mathcal{X}_\star)$} \\
    &\iff \text{$Q(f') \leq P(f'_{\max})$ for all $f'\in C_{\mathrm{b}}(\mathcal{X}_\star)$}
\end{align*}
as desired.
\end{proof}
We are now in a position to state and prove the more general version of Theorem~\ref{cor:P-epsilon-pi-S-realisability-Farkas-form}.
\begin{theorem}
    \label{Thm:AbstractVersion}
    Let $\mathcal{X}_1,\ldots,\mathcal{X}_d$ be locally compact Hausdorff spaces and let $\mathcal{X} \coloneqq \prod_{j=1}^d \mathcal{X}_j$.  Assume that every open set in $\mathcal{X}$ is $\sigma$-compact.  Fix $P \in \mathcal{P}(\mathcal{X})$, $\epsilon\in (0,1]$, $\pi \in \mathcal{P}(2^{[d]})$. Let $R \in \mathcal{P}(\mathcal{X}_{\star})$, and define a signed measure on $\mathcal{X}_\star$ by $Q \coloneqq \epsilon^{-1}\{R - (1-\epsilon)\mathsf{MCAR}_{(\pi, P)}\}$. Then $R\in \mathcal{R}(P,\epsilon,\pi)$ if and only if $Q\in\mathcal{P}(\mathcal{X}_{\star})$ and 
    \begin{equation*}
    P(f_{\max}) \geq Q(f)
    \end{equation*}
    for all $f \in C_{\mathrm{b}}(\mathcal{X}_\star)$.
\end{theorem}
\begin{proof}
From the definition, $R \in \mathcal{R}(P,\epsilon,\pi)$ if and only if $Q \in \mathsf{MNAR}_P$, which by Proposition~\ref{thm:general-realisable} occurs if and only if $Q \in \mathcal{P}(\mathcal{X}_\star)$ and $P(f_{\max})\geq Q(f)$ for all $f\in C_{\mathrm{b}}(\mathcal{X}_{\star})$. 
\end{proof}

\subsection{Proof of Proposition~\ref{prop:univariate-realisability}} \label{sec:proof-prop-4}
\begin{proof}[Proof of Proposition~\ref{prop:univariate-realisability}]
Suppose that $R \in \mathcal{R}(P,\epsilon,q)$ and let $A \in \mathcal{B}(\mathbb{R}_{\star})$ be such that $\mu_{\star}(A) = 0$.  Recall that if $X \sim P$, $B \sim \mathsf{Bern}(\epsilon)$, $\Omega^{(1)} \sim \mathsf{Bern}(q)$ and $\Omega^{(2)} \sim \mathsf{Bern}(q_2)$ for some $q_2 \in [0,1]$ with $B \indep (X,\Omega^{(1)},\Omega^{(2)})$ and $\Omega^{(1)} \indep X$, then we can generate $Z \sim R$ via $Z \coloneqq (1-B)\cdot (X \ostar \Omega^{(1)}) + B \cdot (X \ostar \Omega^{(2)})$.  Then by definition of $\mu_{\star}$, we must have $A \in \mathcal{B}(\mathbb{R})$ and $\mu(A) = 0$.  Since $P \ll \mu$, it follows that
\begin{align*}
    0 = P(A) = \mathbb{P}(X\in A) \geq \mathbb{P}(Z\in A) = R(A).
\end{align*}
This proves that $R \ll \mu_{\star}$.  Now define $m: \mathbb{R} \to [0,1]$ by $m(x) \coloneqq \mathbb{P}(\Omega^{(2)} = 1 \,|\, X = x)$.  Then for any $A \in \mathcal{B}(\mathbb{R})$,
\begin{align*}
    \mathbb{P}(Z \in A) &= (1-\epsilon)\cdot \mathbb{P}(X \in A,\, \Omega^{(1)} = 1) + \epsilon \cdot \mathbb{P}(X \in A,\, \Omega^{(2)} = 1)\\
    &= q(1-\epsilon) \cdot \int_A p(x)\; \mathrm{d}\mu(x) + \epsilon \cdot \int_A m(x)p(x)\; \mathrm{d}\mu(x).
\end{align*}
Hence, $\frac{\mathrm{d}R}{\mathrm{d}\mu_\star}(x) = q(1-\epsilon) \cdot p(x) + \epsilon\cdot  m(x)p(x)$ for $x\in\mathbb{R}$, and $\frac{\mathrm{d}R}{\mathrm{d}\mu_\star}(\star) = \mathbb{P}(Z = \star) = 1- q(1-\epsilon) - \epsilon\int_{\mathbb{R}} m(x)p(x) \,\mathrm{d}\mu(x)$.

Conversely, suppose that $R \in \mathcal{P}(\mathbb{R}_{\star})$ satisfies $R \ll \mu_\star$, and there exists a Borel measurable function $m:\mathbb{R} \to [0,1]$ such that $\mathrm{d}R/\mathrm{d}\mu_{\star}$ satisfies~\eqref{eq:radon-nikodym-realisable}.  Given $X \sim P$, define a random variable $\Omega^{(2)}$ taking values in $\{0,1\}$ such that $\mathbb{P}(\Omega^{(2)} = 1 \,|\, X=x) = m(x)$ for $x\in\mathbb{R}$.  Let $B \sim \mathsf{Bern}(\epsilon)$ and $\Omega^{(1)} \sim \mathsf{Bern}(q)$ be such that $B \indep (X,\Omega^{(1)},\Omega^{(2)})$ and $\Omega^{(1)} \indep X$.  Then $Z \coloneqq (1-B) \cdot (X \ostar \Omega^{(1)}) + B \cdot (X \ostar \Omega^{(2)}) \sim R$ and hence by construction $R \in \mathcal{R}(P,\epsilon,q)$.

This completes the proof, but we also provide an alternative proof of the converse statement using Theorem~\ref{cor:P-epsilon-pi-S-realisability-Farkas-form}.
Again suppose that $R \in \mathcal{P}(\mathbb{R}_{\star})$ satisfies $R \ll \mu_\star$, and that $\mathrm{d}R/\mathrm{d}\mu_{\star}$ satisfies~\eqref{eq:radon-nikodym-realisable}.  Define $Q \coloneqq \epsilon^{-1}\{R - (1-\epsilon)\mathsf{MCAR}_{(\pi, P)}\} \in \mathcal{M}(\mathbb{R}_\star)$ as in Theorem~\ref{cor:P-epsilon-pi-S-realisability-Farkas-form}, and let $f = (f_{\{1\}}, f_{\emptyset}) \in C_{\mathrm{b}}(\mathbb{R}_{\star})$.  Note that by definition, $f_{\emptyset} \in \mathbb{R}$ is a constant and $f_{\max}(x) = f_{\{1\}}(x) \vee f_{\emptyset}$ for all $x \in \mathbb{R}$.  Moreover, since $\mathsf{MCAR}_{(\pi, P)} \in \mathcal{R}(P,0,\pi)$, we have by the argument in the direct part of the proof that $\mathsf{MCAR}_{(\pi, P)} \ll \mu_\star$ with $\frac{\mathrm{d}\mathsf{MCAR}_{(\pi, P)}}{\mathrm{d}\mu_\star}(x) = q \cdot p(x)$ for $x \in \mathbb{R}$ and $\frac{\mathrm{d}\mathsf{MCAR}_{(\pi, P)}}{\mathrm{d}\mu_\star}(\star) = 1-q$, so
\begin{align*}
    \frac{\mathrm{d}Q}{\mathrm{d}\mu_\star}(z) = \begin{cases}
        m(z)p(z) \quad&\text{if }z\in\mathbb{R}\\
        1- \int_{\mathbb{R}} m(x)p(x) \,\mathrm{d}\mu(x) &\text{if }z=\star.
    \end{cases}
\end{align*}
Hence $Q \in \mathcal{P}(\mathbb{R}_\star)$, and
\begin{align*}
    P(f_{\max}) &= \int_{\mathbb{R}} \bigl( f_{\{1\}}(x) \vee f_{\emptyset} \bigr) p(x) \,\mathrm{d}\mu(x)\\
    &\geq \int_{\mathbb{R}} \bigl\{ m(x)f_{\{1\}}(x) + \bigl(1-m(x)\bigr)f_{\emptyset} \bigr\} p(x) \,\mathrm{d}\mu(x)
    = Q(f),
\end{align*}
where the inequality follows from the fact that $\max(a,b)$ is at least as large as any convex combination of $a$ and $b$, for $a,b \in \mathbb{R}$.  We conclude that $R \in \mathcal{R}(P,\epsilon,q)$, by Theorem~\ref{cor:P-epsilon-pi-S-realisability-Farkas-form}.
\end{proof}

\section{Proofs from Section~\ref{sec:mean-estimation-arbitrary-contamination}} \label{sec:proofs-mean-estimation-arbitrary}

\subsection{Proof of Theorem~\ref{thm:robust-descent-iterative-imputation-ub}}
We begin with some lemmas.  Recalling the way that we can generate $Z_1, \ldots, Z_{n} \stackrel{\mathrm{iid}}{\sim} P \in \mathcal{P}^{\mathrm{arb}} \bigl(\theta_0, \Sigma, \epsilon, \pi \bigr)$ from Section~\ref{sec:departures-mcar}, we let $\mathcal{I}_n \subseteq [n]$ denote the `inliers', or the indices of the uncontaminated observations.  Likewise, we denote by $\mathcal{O}_n \subseteq [n]$ the `outliers', or the indices of the contaminated observations so that $\mathcal{I}_n \cup \mathcal{O}_n = [n]$.  

\begin{lemma}
\label{lemma:covariance-of-imputed-block-means} 
Let $n,M \in \mathbb{N}$ be such that $n/M \geq 4$.  Suppose that $Z_1, \ldots, Z_{n} \stackrel{\mathrm{iid}}{\sim} P \in \mathcal{P}^{\mathrm{arb}} \big(\theta_0, \Sigma, \epsilon, \pi \big)$, with corresponding observation patterns $\Omega_1, \ldots, \Omega_{n} \in \{0, 1\}^{d}$.  Randomly select $M$ disjoint sets $(B_m)_{m \in [M]} \subseteq [n]$ (independent of $Z_1,\ldots,Z_n$) such that $\lvert B_m \rvert = \lfloor n/M \rfloor$, and for $\theta = (\theta_1,\ldots,\theta_d)^\top \in \mathbb{R}^d$, $m \in [M]$ and $j \in [d]$, define
    \begin{align} \label{eq:z-bar-definition}
        \overbar{\Omega}_{mj} \coloneqq \mathbbm{1}_{\{\sum_{i\in B_m} \Omega_{ij} > 0\}} \qquad\text{ and }\qquad \bar{Z}_{mj} \coloneqq \frac{\sum_{i \in B_m} \Omega_{ij}Z_{ij}}{\sum_{i \in B_m} \Omega_{ij}} \cdot \overbar{\Omega}_{mj} + \theta_j \cdot (1 - \overbar{\Omega}_{mj}).
    \end{align}
    Let $\bar{Z}_m \coloneqq (\bar{Z}_{m1},\ldots,\bar{Z}_{md})^\top$.
    Then for all $m \in [M]$, we have 
    \begin{subequations}
    \begin{align} \label{ineq:bias-of-block-means}
        \|\mathbb{E}(\bar{Z}_m \,|\, B_m \subseteq \mathcal{I}_n) - \theta_0\|_2^2 \leq \frac{\|\theta-\theta_0\|_2^2}{e|B_m| q_{\min}},
    \end{align}
    \begin{align} 
        \tr\bigl(\Cov(\bar{Z}_m \,|\, B_m \subseteq \mathcal{I}_n)\bigr) &\leq \tr \Bigl(\mathbb{E} \bigl\{ (\bar{Z}_m - \theta_0)(\bar{Z}_m - \theta_0)^\top \,\big|\, B_m \subseteq \mathcal{I}_n \bigr\}\Bigr) \nonumber\\
        &\leq \frac{2}{|B_m|} \cdot \tr\bigl( \Sigma^{\mathrm{IPW}} \bigr) +  \frac{\|\theta - \theta_0\|_2^2}{e|B_m| q_{\min}} \label{ineq:trace-bound-iterative-imputation}
    \end{align}
    and 
    \begin{align} 
    \bigl\|\Cov(\bar{Z}_m \,|\, B_m \subseteq \mathcal{I}_n)\bigr\|_{\mathrm{op}} &\leq \bigl\| \mathbb{E} \bigl\{ (\bar{Z}_m - \theta_0)(\bar{Z}_m - \theta_0)^\top \,\big|\, B_m \subseteq \mathcal{I}_n \bigr\} \bigr\|_{\mathrm{op}} \nonumber\\
    &\leq \frac{6}{|B_m|} \cdot \big\| \Sigma^{\mathrm{IPW}} \big\|_{\mathrm{op}} +  \frac{\|\theta - \theta_0\|_2^2}{e|B_m| q_{\min}}. \label{ineq:op-norm-bound-iterative-imputation}
    \end{align}
    \end{subequations}
\end{lemma}

\begin{proof} 
Write $\theta_0 = (\theta_{01},\ldots,\theta_{0d})^\top \in \mathbb{R}^d$.  For $m \in [M]$ with $B_m \subseteq \mathcal{I}_n$, and for $j \in [d]$, we have
\begin{align*}
    \bigl(\mathbb{E}(\bar{Z}_{mj}) - \theta_{0j}\bigr)^2 &= \bigl(\mathbb{P}(\bar{\Omega}_{mj}=1) \theta_{0j} + \mathbb{P}(\bar{\Omega}_{mj}=0)\theta_j - \theta_{0j}\bigr)^2 \\
    &= (1-q_j)^{2|B_m|}(\theta_j - \theta_{0j})^2\\
    &\leq e^{-2|B_m|q_{\min}}(\theta_j - \theta_{0j})^2 \leq \frac{(\theta_j - \theta_{0j})^2}{e|B_m|q_{\min}}.
\end{align*}
This proves~\eqref{ineq:bias-of-block-means}.

For~\eqref{ineq:trace-bound-iterative-imputation} and~\eqref{ineq:op-norm-bound-iterative-imputation}, we compute the entries of the matrix $(\bar{Z}_m - \theta_0)(\bar{Z}_m - \theta_0)^\top$, beginning with those on the diagonal.  For $j \in [d]$, let
\begin{align} 
\label{ineq:A_jj-bound}
    A_{jj} \coloneqq \mathbb{E} \biggl( \frac{|B_m|q_j}{\sum_{i \in B_m} \Omega_{ij}} \cdot \mathbbm{1}_{\{\sum_{i\in B_m} \Omega_{ij} > 0\}}  \biggr) \leq 2,
\end{align} 
where the inequality follows by the first part of Lemma~\ref{lem:inverse-binomial-bounds}.  Further, let $E_{jj} \coloneqq (1 - q_j)^{|B_m|}$.  For $i \in \mathcal{I}_n$, we can write $Z_i = X_i \ostar \Omega_i$, where $\mathbb{E}(X_i) = \theta_0$, $\mathrm{Cov}(X_i) = \Sigma$ and $X_i \indep \Omega_i$.  Hence, for any $m \in [M]$ such that $B_m \subseteq \mathcal{I}_n$ and any $j \in [d]$, 
\begin{align*}
    \mathbb{E} \bigl\{ (\bar{Z}_{mj} &- \theta_{0,j})^2 \bigr\} = \mathbb{E} \bigl[ \bigl\{ \overbar{\Omega}_{mj}(\bar{Z}_{mj} - \theta_{0,j}) + (1 - \overbar{\Omega}_{mj})(\theta_j - \theta_{0,j}) \bigr\}^2 \bigr]\\
    &= \mathbb{E} \bigl\{ \bigl(\overbar{\Omega}_{mj}(\bar{Z}_{mj} - \theta_{0,j}) \bigr)^2 \bigr\} + \mathbb{E} \bigl\{ (1 - \overbar{\Omega}_{mj})^2 (\theta_j - \theta_{0,j})^2 \bigr\} \\
    &= \mathbb{E} \biggl\{ \biggl( \frac{\sum_{i \in B_m} \Omega_{ij}(X_{ij} \!-\! \theta_{0,j})}{\sum_{i \in B_m} \Omega_{ij}} \cdot \mathbbm{1}_{\{\sum_{i\in B_m} \Omega_{ij} > 0\}} \biggr)^2 \biggr\} + \mathbb{P}(\overbar{\Omega}_{mj} = 0)(\theta_j - \theta_{0,j})^2 \\
    &= \mathbb{E} \biggl( \frac{\Sigma_{jj}}{\sum_{i \in B_m} \Omega_{ij}} \cdot \mathbbm{1}_{\{\sum_{i\in B_m} \Omega_{ij} > 0\}}  \biggr) + (1 - q_j)^{|B_m|} (\theta_j - \theta_{0,j})^2 \\
    &= A_{jj} \cdot \frac{\Sigma_{jj}^{\mathrm{IPW}}}{|B_m|} + E_{jj} \cdot (\theta_j - \theta_{0,j})^2. \numberthis \label{eq:diagonal-entries}
\end{align*}
Turning to the off-diagonal entries, for any $m \in [M]$ such that $B_m \subseteq \mathcal{I}_n$ and any distinct $j,k \in [d]$,
\begin{align*}
    \mathbb{E} \bigl\{ (\bar{Z}_{mj} &- \theta_{0,j}) (\bar{Z}_{mk} - \theta_{0,k}) \bigr\} \\
    =\; &\mathbb{E} \bigl[ \bigl\{ \overbar{\Omega}_{mj}(\bar{Z}_{mj} - \theta_{0,j}) + (1 - \overbar{\Omega}_{mj})(\theta_j - \theta_{0,j}) \bigr\} \\
    &\hspace{4cm}\cdot \bigl\{ \overbar{\Omega}_{mk}(\bar{Z}_{mk} - \theta_{0,k}) + (1 - \overbar{\Omega}_{mk})(\theta_k - \theta_{0,k}) \bigr\} \bigr]\\
    =\; & \mathbb{E} \bigl\{ \bigl(\overbar{\Omega}_{mj}(\bar{Z}_{mj} - \theta_{0,j}) \bigr) \bigl(\overbar{\Omega}_{mk}(\bar{Z}_{mk} - \theta_{0,k}) \bigr) \big\} \\
    &\hspace{4cm}+ \mathbb{E} \bigl\{ (1 - \overbar{\Omega}_{mj}) (1 - \overbar{\Omega}_{mk}) (\theta_j - \theta_{0,j}) (\theta_k - \theta_{0,k}) \big\},
\end{align*}
where in the final step, the cross-terms vanish as $\mathbb{E}(X_i) = \theta_0$.  
Without loss of generality, we assume that $1\in B_m$.  For the first term, we first define
\begin{align}
    A_{jk} &\coloneqq \mathbb{E} \Biggl\{ \frac{ (|B_m|q_j)(|B_m|q_k) }{\big(1 + \sum_{i \in B_m\setminus\{1\}} \Omega_{ij} \big) \cdot \big(1 + \sum_{i \in B_m\setminus\{1\}} \Omega_{i k} \big)} \Biggr\} \nonumber\\
    &\hspace{0.09cm}\leq \mathbb{E} \Biggl\{ \frac{ (|B_m|q_j)^2}{\bigl(1 + \sum_{i \in B_m\setminus\{1\}} \Omega_{ij} \bigr)^2} \Biggr\}^{1/2} \mathbb{E} \Biggl\{ \frac{ (|B_m|q_k)^2}{\bigl(1 + \sum_{i \in B_m\setminus\{1\}} \Omega_{ik} \bigr)^2} \Biggr\}^{1/2} \nonumber \\
    &\hspace{0.09cm}\leq \frac{2|B_m|^2}{(|B_m|-1)^2} \leq 4, \label{ineq:A_jk-bound}
\end{align}
where the first inequality follows from Cauchy--Schwarz and the second inequality follows from the second part of Lemma~\ref{lem:inverse-binomial-bounds}, and the final inequality uses the fact that $|B_m| \geq 4$. We then have
\begin{align*}
    \mathbb{E} &\bigl\{ \overbar{\Omega}_{mj}(\bar{Z}_{mj} - \theta_{0,j}) \cdot \overbar{\Omega}_{mk}(\bar{Z}_{mk} - \theta_{0,k}) \bigr\} \\
    &= \Sigma_{jk} \cdot \mathbb{E} \Biggl\{ \frac{ \bigl(\sum_{i \in B_{m}}\Omega_{ij} \Omega_{i k} \bigr)\overbar{\Omega}_{mj} \overbar{\Omega}_{mk}}{\big(\sum_{i \in B_m} \Omega_{ij} \big) \cdot \big(\sum_{i \in B_m} \Omega_{i k} \big)}  \Biggr\}\\
    &=  \Sigma_{jk} \cdot |B_m| \cdot \mathbb{E} \Biggl\{ \frac{ \Omega_{1j} \Omega_{1 k} }{\big(\sum_{i \in B_m} \Omega_{ij} \big) \cdot \big(\sum_{i \in B_m} \Omega_{i k} \big)}  \Biggr\} \\
    &= \Sigma_{jk} \cdot |B_m| \cdot \mathbb{P}(\Omega_{1j} = \Omega_{1 k} = 1) \cdot \mathbb{E} \Biggl\{ \frac{ 1 }{\big(1 + \sum_{i \in B_m\setminus\{1\}} \Omega_{ij} \big) \cdot \big(1 + \sum_{i \in B_m\setminus\{1\}} \Omega_{i k} \big)} \Biggr\} \\
    &= A_{jk} \cdot \frac{\Sigma_{jk}q_{jk}}{|B_m|q_j q_k}, 
\end{align*}
where the first equality follows from substituting the definition of $\bar{Z}_{mj}$ on the event $\{\overbar{\Omega}_{mj} = 1\}$ (and similarly for $k$) and the second equality follows by symmetry.
For the second term, we have 
\begin{align*}
    \mathbb{E} \bigl\{ (1 - \overbar{\Omega}_{mj}) (1 - \overbar{\Omega}_{mk}) (\theta_j - \theta_{0,j}) &(\theta_k - \theta_{0,k}) \bigr\} \\
    &= \mathbb{P}(\overbar{\Omega}_{mj} = \overbar{\Omega}_{mk} = 0) \cdot (\theta_j - \theta_{0,j}) (\theta_k - \theta_{0,k}) \\
    &= (1 - q_j - q_k + q_{jk})^{|B_m|} \cdot (\theta_j - \theta_{0,j}) (\theta_k - \theta_{0,k})\\
    &\eqqcolon E_{jk}\cdot (\theta_j - \theta_{0,j}) (\theta_k - \theta_{0,k}).
\end{align*}
Combining these two equalities then yields 
\begin{align} \label{eq:off-diagonal-entries}
    \mathbb{E} \big[(\bar{Z}_{mj} - \theta_{0,j}) (\bar{Z}_{mk} - \theta_{0,k})  \big] = A_{jk} \cdot \frac{1}{|B_m|} \cdot \Sigma^{\mathrm{IPW}}_{jk} + E_{jk}\cdot (\theta_j - \theta_{0,j}) (\theta_k - \theta_{0,k}).
\end{align}
Therefore, by \eqref{eq:diagonal-entries} and \eqref{eq:off-diagonal-entries},
\begin{align*}
    \mathbb{E} \bigl\{ (\bar{Z}_m - \theta_0)(\bar{Z}_m - \theta_0)^\top \bigr\} = \frac{1}{|B_m|} \cdot A\odot \Sigma^{\mathrm{IPW}} + E\odot \bigl\{ (\theta - \theta_0)(\theta - \theta_0)^\top \bigr\},
\end{align*}
where $A \coloneqq (A_{jk})_{j,k\in[d]}$ and $E \coloneqq (E_{jk})_{j,k\in[d]}$.  The desired inequality~\eqref{ineq:trace-bound-iterative-imputation} then follows as
\begin{align*}
    \tr\bigl( \mathbb{E} \bigl\{ (\bar{Z}_m - \theta_0)(\bar{Z}_m - \theta_0)^\top \bigr\} \bigr) &= \frac{1}{|B_m|} \cdot \sum_{j=1}^d A_{jj} \Sigma^{\mathrm{IPW}}_{jj} + \sum_{j=1}^d E_{jj} (\theta_j - \theta_{0,j})^2\\
    &\leq \frac{2}{|B_m|}\cdot \tr(\Sigma^{\mathrm{IPW}}) + \frac{\|\theta - \theta_0\|_2^2}{e|B_m| q_{\min}},
\end{align*}
where the inequality follows by~\eqref{ineq:A_jj-bound} and Lemma~\ref{lemma:controlling-matrix-E}.  

For inequality~\eqref{ineq:op-norm-bound-iterative-imputation}, we define a matrix $A' = (A_{jk}') \in \mathbb{R}^{d \times d}$ by $A'_{jk} \coloneqq A_{jk}$ for $j\neq k$ and 
\begin{align} \label{ineq:A'_jj-bound}
    A'_{jj} \coloneqq \mathbb{E} \Biggl\{ \frac{ (|B_m|q_j)^2}{\bigl(1 + \sum_{i \in B_m\setminus\{1\}} \Omega_{ij} \bigr)^2} \Biggr\} \leq 2,
\end{align}
where the inequality follows from the second part of Lemma~\ref{lem:inverse-binomial-bounds} and the assumption that $|B_{m}| \geq 4$.
Note that $A'$ is a positive semi-definite matrix, as it is the expectation of a positive semi-definite matrix. Now 
\begin{align*}
    \bigl\| \mathbb{E} &\bigl\{ (\bar{Z}_m - \theta_0)(\bar{Z}_m - \theta_0)^\top \bigr\} \bigr\|_{\mathrm{op}}\\
    &= \biggl\| \frac{1}{|B_m|} \cdot A\odot \Sigma^{\mathrm{IPW}} + E\odot \bigl\{ (\theta - \theta_0)(\theta - \theta_0)^\top \bigr\} \biggr\|_{\mathrm{op}}\\
    &\leq \frac{\bigl\|A' \odot \Sigma^{\mathrm{IPW}} \bigr\|_{\mathrm{op}}}{|B_m|} + \frac{\bigl\|(A-A') \odot \Sigma^{\mathrm{IPW}} \bigr\|_{\mathrm{op}}}{|B_m|} + \bigl\|E\odot \bigl\{ (\theta - \theta_0)(\theta - \theta_0)^\top \bigr\} \bigr\|_{\mathrm{op}}\\
    \overset{(i)}&{\leq}  \frac{\|A'\|_{\infty} \|\Sigma^{\mathrm{IPW}} \bigr\|_{\mathrm{op}}}{|B_m|}+ \frac{\|A-A'\|_{\infty} \bigl\|\Sigma^{\mathrm{IPW}} \bigr\|_{\mathrm{op}}}{|B_m|} + \bigl\|E\odot \bigl\{ (\theta - \theta_0)(\theta - \theta_0)^\top \bigr\} \bigr\|_{\mathrm{op}}\\
    \overset{(ii)}&{\leq} \frac{6}{|B_m|} \bigl\|\Sigma^{\mathrm{IPW}} \bigr\|_{\mathrm{op}} + \frac{\|\theta - \theta_0\|_2^2}{e|B_m| q_{\min}},
\end{align*}
where the first term in step $(i)$ follows from Lemma~\ref{lemma:operator-norm-of-hadamard-product} since $A'$ is positive semidefinite, the second term in step $(i)$ follows since $A-A'$ is diagonal, and step $(ii)$ follows from the inequalities~\eqref{ineq:A_jj-bound},~\eqref{ineq:A_jk-bound} and~\eqref{ineq:A'_jj-bound}, as well as Lemma~\ref{lemma:controlling-matrix-E}.
\end{proof}

\begin{lemma}\label{lemma:controlling-matrix-E}
    Under the set up in the proof of Lemma~\ref{lemma:covariance-of-imputed-block-means}, we have \begin{align*}
        \|E\|_{\infty} \leq \frac{1}{e|B_m|q_{\min}} \quad\text{and}\quad \|E\odot \bigl\{ (\theta - \theta_0)(\theta - \theta_0)^\top \bigr\}\|_{\mathrm{op}} \leq \frac{\|\theta - \theta_0\|_2^2}{e|B_m|q_{\min}}.
    \end{align*}
\end{lemma}
\begin{proof}
We will make use of the following inequality
\begin{align} \label{eq:block-means-simple-ineq}
    (1-x)^k \leq \frac{1}{ekx} \quad\text{for all }  x \in (0,1] \text{ and } k \in \mathbb{N}.
\end{align}
To see this, note that $k\log(1-x) \leq -kx \leq -\log(kx) - 1$.  Hence, for each $j \in [d]$,
\begin{align*}
    E_{jj} = (1-q_j)^{|B_m|} \leq \frac{1}{e|B_m|q_{\min}},
\end{align*}
and for each $j,k \in [d]$, 
\begin{align*}
    E_{jk} = (1-q_j-q_k + q_{jk})^{|B_m|} \leq \frac{1}{e|B_m|(q_j+q_k - q_{jk})} \leq \frac{1}{e|B_m|q_{\min}},
\end{align*}
where the final inequality follows since $q_{jk}\leq q_k$, so that $q_j+q_k-q_{jk}\geq q_j \geq q_{\min}$.  This establishes the first inequality.

For the second bound, we have 
\begin{align*}
    \bigl| \bigl[E\odot \bigl\{ (\theta - \theta_0)(\theta - \theta_0)^\top\bigr\}\bigr]_{jk} \bigr| \leq \frac{1}{e|B_m|q_{\min}} \cdot |\theta_j - \theta_{0,j}| \cdot |\theta_k - \theta_{0,k}|.
\end{align*}
Hence
\begin{align*}
    \bigl\|E\odot \bigl\{ (\theta - \theta_0)(\theta - \theta_0)^\top \bigr\}\bigr\|_{\mathrm{op}} \leq \frac{1}{e|B_m|q_{\min}} \bigl\| |\theta - \theta_0| \cdot |\theta - \theta_0|^\top \bigr\|_{\mathrm{op}} = \frac{\|\theta - \theta_0\|_2^2}{e|B_m|q_{\min}},
\end{align*}
where $|\theta - \theta_0|$ denotes the entrywise absolute value, and the inequality follows from the fact\footnote{To see this, observe that $v^\top A v \leq |v|^\top |A| |v| \leq |v|^\top B |v|$ for all $v\in\mathbb{R}^d$, where $|A|$ denotes the entrywise absolute value of $A$.} that if $A = (A_{jk}),B = (B_{jk}) \in \mathcal{S}^{d\times d}$ are such that $|A_{jk}| \leq B_{jk}$ for all $j,k\in[d]$, then $\|A\|_{\mathrm{op}} \leq \|B\|_{\mathrm{op}}$.
\end{proof}

\begin{lemma}\label{lemma:error-per-iteration}
    Let $\mathrm{ALG}$ satisfy~\eqref{eq:assumption-on-alg} for some $\epsilon_{\max} \in (0,1/2)$, $a \in (0,1]$, $C > 0$, and let $n\geq4$, $\epsilon\in\bigl[0,\frac{-\log(1-\epsilon_{\max})}{16}\bigr]$, $\delta\in[e^{-an/8},1]$ and $M\coloneqq \bigl\lceil \frac{2n\epsilon}{-\log(1-\epsilon_{\max})} \vee \log(1/\delta)\bigr\rceil$. Let $Z_1, \ldots, Z_{n} \stackrel{\mathrm{iid}}{\sim} P \in \mathcal{P}^{\mathrm{arb}} \big(\theta_0, \Sigma, \epsilon, \pi \big)$, let $\bar{Z}_1,\ldots,\bar{Z}_M$ be defined as in Lemma~\ref{lemma:covariance-of-imputed-block-means} for some $\theta \in \mathbb{R}^d$ and let
    \begin{align*}
        \tilde{\theta}_n \coloneqq \mathrm{ALG}(\bar{Z}_1,\ldots,\bar{Z}_M;\epsilon_{\max},\delta).
    \end{align*}
    Then, writing $C'\coloneqq 48C+2$, we have with probability at least $1-\delta$ that 
    \begin{align*}
        \|\tilde{\theta}_n - \theta_0\|_2^2 \leq C'\biggl( \frac{\tr(\Sigma^{\mathrm{IPW}})}{n} + \frac{\|\Sigma^{\mathrm{IPW}}\|_{\mathrm{op}}\log(1/\delta)}{n} + \|\Sigma^{\mathrm{IPW}}\|_{\mathrm{op}}\epsilon + \frac{M\|\theta-\theta_0\|_2^2}{nq_{\min}}\biggr).
    \end{align*}
\end{lemma}
\begin{proof}
    By our assumptions on $\epsilon$ and $\delta$, we have $M\leq n/4$. Moreover, for $m\in[M]$,
    \begin{align*}
        \mathbb{P}(B_m \subseteq \mathcal{I}_n) = (1-\epsilon)^{|B_m|} \geq (1-\epsilon)^{\frac{-\log(1-\epsilon_{\max})}{2\epsilon}} \geq (1-\epsilon)^{\frac{-\log(1-\epsilon_{\max})}{-\log(1-\epsilon)}} = 1-\epsilon_{\max}.
    \end{align*}
    Let $\mu \coloneqq \mathbb{E}(\bar{Z}_m \, | \, B_m \subseteq \mathcal{I}_n) \in \mathbb{R}^d$ and $\Gamma  \coloneqq \mathrm{Cov}(\bar{Z}_m \, | \, B_m \subseteq \mathcal{I}_n) \in \mathbb{R}^{d \times d}$ denote respectively the mean vector and covariance matrix of $\bar{Z}_m$ given that it is uncontaminated.  Then  $\bar{Z}_1,\ldots,\bar{Z}_M \overset{\mathrm{iid}}{\sim} (1-\epsilon_{\max})\bar{P} + \epsilon_{\max}\bar{Q}$, where $\bar{P}\in\mathcal{P}(\mu,\Gamma)$ and $\bar{Q} \in \mathcal{P}(\mathbb{R}^d)$. Thus, by~\eqref{eq:assumption-on-alg} and Lemma~\ref{lemma:covariance-of-imputed-block-means}, we have
    \begin{align*}
        \|\tilde{\theta}_n - \theta_0\|_2^2 &\leq 2\|\tilde{\theta}_n - \mu\|_2^2 + 2\|\mu - \theta_0\|_2^2 \\
        &\leq 2C\biggl(\frac{\tr(\Gamma)}{M} + \frac{\|\Gamma\|_{\mathrm{op}}\log(1/\delta)}{M} + \epsilon_{\max}\|\Gamma\|_{\mathrm{op}}\biggr) + 2\|\mu - \theta_0\|_2^2\\
        &\leq C'\biggl(\frac{\tr(\Sigma^{\mathrm{IPW}})}{n} + \frac{\|\Sigma^{\mathrm{IPW}}\|_{\mathrm{op}}\log(1/\delta)}{n} + \|\Sigma^{\mathrm{IPW}}\|_{\mathrm{op}}\epsilon + \frac{M\|\theta-\theta_0\|_2^2}{nq_{\min}}\biggr),
    \end{align*}
    as required.
\end{proof}


\begin{lemma} \label{lemma:error-of-initialisation}
    Let $\mathrm{ALG}$ satisfy~\eqref{eq:assumption-on-alg} for some $\epsilon_{\max} \in (0,1/2)$, $a \in (0,1]$, $C > 0$. Let $\epsilon\in \bigl[0, \frac{\epsilon_{\max}}{1+\epsilon_{\max}}q_{\min}\bigr]$, $\delta\in\bigl[2de^{-anq_{\min}/8}, 1\bigr]$ and $Z_1,\ldots,Z_n \overset{\mathrm{iid}}{\sim} P\in\mathcal{P}^{\mathrm{arb}}(\theta_0,\Sigma,\epsilon,\pi)$. For $j\in[d]$, let $I_j \coloneqq \{i\in[n] : Z_{ij} \neq \star\}$, $\tilde{\theta}_{n,j} \coloneqq \mathrm{ALG}\bigl((Z_{ij})_{i\in I_j}; \frac{\epsilon}{q_j(1-\epsilon)}, \frac{\delta}{2d}\bigr)$ and $\tilde{\theta}_n \coloneqq (\tilde{\theta}_{n,1}, \ldots, \tilde{\theta}_{n,d})^\top$. Then, with probability at least $1-\delta$,
    \begin{align*}
        \|\tilde{\theta}_n - \theta_0\|_2^2 \leq C\biggl(\frac{2\tr(\Sigma^{\mathrm{IPW}})\log(2ed/\delta)}{n} + \frac{\epsilon}{1-\epsilon}\tr(\Sigma^{\mathrm{IPW}})\biggr).
    \end{align*}
\end{lemma}
\begin{proof}
    By Bayes' theorem, for $i\in[n]$ and $j \in [d]$,
    \begin{align*}
        \mathbb{P}(i\in\mathcal{O}_n \,|\, Z_{ij} \neq \star) = \frac{\mathbb{P}( Z_{ij} \neq \star \,|\, i\in\mathcal{O}_n) \mathbb{P}(i\in\mathcal{O}_n)}{\mathbb{P}(Z_{ij} \neq \star)} \leq \frac{\epsilon}{q_j(1-\epsilon)}\eqqcolon\kappa_j,
    \end{align*}
    and $\kappa_j\leq \epsilon_{\max}$ by our assumption on $\epsilon$.
    Thus, writing $\theta_0 = (\theta_{0,1},\ldots,\theta_{0,d})^\top$ and $\Sigma = (\Sigma_{ij})_{i,j \in [d]}$, we have conditional on $I_j$ that  $(Z_{ij})_{i\in I_j} \overset{\mathrm{iid}}{\sim} (1-\kappa_j)P_j'+\kappa_j Q_j'$ where $P_j'\in\mathcal{P}(\theta_{0,j},\Sigma_{jj})$ and $Q_j'\in\mathcal{P}(\mathbb{R})$. Therefore, by~\eqref{eq:assumption-on-alg},
    \begin{align*}
        \mathbb{P}\biggl\{(\tilde{\theta}_{n,j}-\theta_{0,j})^2 \leq C\biggl(\frac{2\Sigma_{jj}\log(2ed/\delta)}{nq_j} + \kappa_j\Sigma_{jj}\biggr) \,\bigg|\, |I_j| \geq \frac{nq_j}{2}\biggr\} \geq 1-\frac{\delta}{2d}.
    \end{align*}
    Moreover, by Lemma~\ref{lemma:binomial-tail}(b) and since $\delta\geq 2de^{-nq_{\min}/8}$, we have 
    \begin{align*}
        \mathbb{P}\biggl(|I_j| \geq \frac{nq_j}{2}\biggr) \geq 1-\frac{\delta}{2d}.
    \end{align*}
    Hence,
    \begin{align*}
        \mathbb{P}\biggl\{&(\tilde{\theta}_{n,j}-\theta_{0,j})^2 \leq C\biggl(\frac{2\Sigma_{jj}\log(2ed/\delta)}{nq_j} + \kappa_j\Sigma_{jj}\biggr)\biggr\}\\
        &\geq \mathbb{P}\biggl\{(\tilde{\theta}_{n,j}-\theta_{0,j})^2 \leq C\biggl(\frac{2\Sigma_{jj}\log(2ed/\delta)}{nq_j} + \kappa_j\Sigma_{jj}\biggr) \,\bigg|\, |I_j| \geq \frac{nq_j}{2}\biggr\} \mathbb{P}\biggl(|I_j| \geq \frac{nq_j}{2}\biggr) \\
        &\geq 1-\frac{\delta}{d}.
    \end{align*}
    The final result now follows by a union bound.
\end{proof}

\begin{proof}[Proof of Theorem~\ref{thm:robust-descent-iterative-imputation-ub}]
    Let $C'\coloneqq 48C+2$, and recall the definition of $M$ from Algorithm~\ref{alg:robust-iterative-imputation}.  By Lemma~\ref{lemma:error-per-iteration}, for $t\in[T-1]$, we have with probability at least $1-\delta/(2T)$ that
    \begin{align*}
        \|\hat{\theta}^{(t+1)} &- \theta_0\|_2^2 \\
        &\leq 2C'\biggl( \frac{T\tr(\Sigma^{\mathrm{IPW}})}{n} + \frac{T\|\Sigma^{\mathrm{IPW}}\|_{\mathrm{op}}\log(2T/\delta)}{n} + \|\Sigma^{\mathrm{IPW}}\|_{\mathrm{op}}\epsilon + \frac{TM\|\hat{\theta}^{(t)}-\theta_0\|_2^2}{nq_{\min}}\biggr)\\
        &\eqqcolon \alpha + \beta \|\hat{\theta}^{(t)} \!-\!\theta_0\|_2^2.
    \end{align*}
    By assumption, we have $(192C+8)TM \leq nq_{\min}$, so $\beta\leq 1/2$. Therefore, by a union bound, with probability at least $1-\delta/2$, we have
    \begin{align}
        \|\hat{\theta}^{(T)} - \theta_0\|_2^2 \leq \alpha \sum_{\ell=0}^{T-2} \beta^{\ell} + \beta^{T-1} \|\hat{\theta}^{(1)} - \theta_0\|_2^2 \leq 2\alpha + \frac{\|\hat{\theta}^{(1)} - \theta_0\|_2^2}{2^{T-1}}. \label{eq:theta^T-theta_0}
    \end{align}
    Moreover, by Lemma~\ref{lemma:error-of-initialisation}, we have with probability at least $1-\delta/2$ that
    \begin{align}
        \|\hat{\theta}^{(1)} - \theta_0\|_2^2 \leq C\biggl(\frac{4T\tr(\Sigma^{\mathrm{IPW}})\log(2ed/\delta)}{n} + \frac{\epsilon}{1-\epsilon}\tr(\Sigma^{\mathrm{IPW}})\biggr). \label{eq:theta^1-theta_0}
    \end{align}
    Our choice of $T$ ensures that on combining~\eqref{eq:theta^T-theta_0} and~\eqref{eq:theta^1-theta_0} we obtain that with probability at least $1-\delta$,
    \begin{align*}
        \|\hat{\theta}^{(T)} - \theta_0\|_2^2 &\leq 3\alpha \\
        &= (288C+12)\biggl( \frac{T\tr(\Sigma^{\mathrm{IPW}})}{n} + \frac{T\|\Sigma^{\mathrm{IPW}}\|_{\mathrm{op}}\log(2T/\delta)}{n} + \|\Sigma^{\mathrm{IPW}}\|_{\mathrm{op}}\epsilon\biggr),
    \end{align*}
    as required.
\end{proof}

\subsection{Proof of Theorem~\ref{thm:arbitrary-contamination-lb}} 
\begin{proof}[Proof of Theorem~\ref{thm:arbitrary-contamination-lb}]
First, note that when $\epsilon = 0$, by Proposition~\ref{prop:arb-mean-MCAR-lb}, we have
\begin{align}\label{ineq:mcar-lb1}
\mathcal{M}\bigl(\delta, \mathcal{P}_{\Theta}, \| \cdot \|_2^2\bigr) \gtrsim \frac{\tr(\Sigma^{\mathrm{IPW}})}{n} + \frac{\| \Sigma^{\mathrm{IPW}}\|_{\mathrm{op}} \log(1/\delta)}{n}. 
\end{align}
    Now we consider the case $\epsilon \in \bigl(0,\frac{q_{\min}}{1+q_{\min}} \bigr)$.  
    Without loss of generality, assume that $\Sigma^{\mathrm{IPW}}_{11} = \max_{j \in [d]} \Sigma^{\mathrm{IPW}}_{jj}$, and let $a \coloneqq (\alpha+\alpha^2)/2$ and $b \coloneqq (3\alpha+\alpha^2)/2$ 
    for some $\alpha \in (0,1/3]$ to be chosen later.  Define random vectors $X^{(1)} = (X^{(1)}_1, \ldots, X^{(1)}_d)^\top \sim P^{(1)}$ and $X^{(2)} = (X^{(2)}_1, \ldots, X^{(2)}_d)^\top \sim P^{(2)}$ with independent components satisfying
    \begin{align*}
        X^{(1)}_1 \coloneqq \begin{cases}
            -\sqrt{\frac{\Sigma_{11}}{2\alpha}} \quad &\text{with prob. } \alpha\\
            0 &\text{with prob. } 1-2\alpha,\\
            \sqrt{\frac{\Sigma_{11}}{2\alpha}} \quad &\text{with prob. } \alpha
        \end{cases} \quad 
        X^{(2)}_1 \coloneqq \begin{cases}
            -\sqrt{\frac{\Sigma_{11}}{2\alpha}} \quad &\text{with prob. } a\\
            0 &\text{with prob. } 1 - a - b \\
            \sqrt{\frac{\Sigma_{11}}{2\alpha}} \quad &\text{with prob. } b,
        \end{cases}
    \end{align*}
    and $X^{(1)}_j \overset{d}{=} X^{(2)}_j \sim \mathsf{N}(0, \Sigma_{jj})$ for $j \in \{2, \ldots, d\}$.  Then 
    \[
    \Var(X^{(2)}_1) = \frac{(a + b + 2ab - a^2 - b^2)\Sigma_{11}}{2\alpha} = \Sigma_{11}.
    \]
    Thus $\Cov(X^{(1)}) = \Cov(X^{(2)}) = \Sigma$, so $P^{(\ell)} \in \mathcal{P}\bigl( \mathbb{E}(X^{(\ell)}), \Sigma \bigr)$ for $\ell \in \{1,2\}$, and
    \begin{align*}
        \bigl\| \mathbb{E}(X^{(1)}) - \mathbb{E}(X^{(2)}) \bigr\|_2^2 = \frac{\alpha \Sigma_{11}}{2}.
    \end{align*} 
    Moreover, by Lemma~\ref{lem:equivalence-of-f-af}, we have 
    \begin{align*}
        \mathrm{TV}\bigl(\mathsf{MCAR}_{(\pi, P^{(1)})},\mathsf{MCAR}_{(\pi, P^{(2)})}\bigl) &= \mathrm{ATV}(P^{(1)}, P^{(2)}, \pi) = \sum_{S: 1\in S} \pi(S) \cdot \mathrm{TV}\bigl(P^{(1)}_S, P^{(2)}_S\bigr)\\
        &= q_1 \cdot \mathrm{TV}\bigl(P^{(1)}_1, P^{(2)}_1\bigr) = \frac{q_1}{2}\biggl( \frac{\alpha \!-\! \alpha^2}{2} + \alpha^2 + \frac{\alpha \!+\! \alpha^2}{2} \biggr) \\
        &\leq q_1 \alpha.
    \end{align*}
    We then pick $\alpha = \epsilon / (3 q_1) < 1/3$ since $\epsilon < q_{\min}$ so that
    \[
    \mathrm{TV}\bigl(\mathsf{MCAR}_{(\pi, P^{(1)})},\mathsf{MCAR}_{(\pi, P^{(2)})}\bigl) \leq \epsilon \leq \frac{\epsilon}{1 - \epsilon}, \quad \text{and} \quad \bigl\| \mathbb{E}(X^{(1)}) - \mathbb{E}(X^{(2)}) \bigr\|_2^2 = \frac{\epsilon \Sigma_{11}^{\mathrm{IPW}}}{6}.
    \]
    Consequently, by~\citet[Theorem 4 and Lemma 25]{ma2024high}, we have
    \begin{align}\label{ineq:arb-lb-epsilon-term}
        \mathcal{M}\bigl(\delta, \mathcal{P}_{\Theta}, \| \cdot \|_2^2\bigr) \geq \frac{\bigl\| \mathbb{E}(X^{(1)}) - \mathbb{E}(X^{(2)}) \bigr\|_2^2}{4} = \frac{\epsilon \Sigma_{11}^{\mathrm{IPW}}}{24}.
    \end{align}
    Combining~\eqref{ineq:mcar-lb1} and~\eqref{ineq:arb-lb-epsilon-term} yields the desired result.  
    
    Next, we consider the case where $\epsilon \geq \frac{q_{\min}}{1+q_{\min}}$. Without loss of generality, assume that $q_1 = q_{\min}$.  Let $\theta^{(1)} \coloneqq (2t^{1/2}, 0, \ldots, 0)^\top \in \mathbb{R}^d$ for some $t>0$ and let $\theta^{(2)} \coloneqq 0 \in \mathbb{R}^d$.  Writing $P^{(1)} \coloneqq \mathsf{N}(\theta^{(1)}, \Sigma)$ and $P^{(2)} \coloneqq \mathsf{N}(\theta^{(2)}, \Sigma)$, we have by Lemma~\ref{lem:equivalence-of-f-af} that
    \begin{align*}
        \mathrm{TV}\bigl(\mathsf{MCAR}_{(\pi, P^{(1)})},\mathsf{MCAR}_{(\pi, P^{(2)})}\bigl) &= \mathrm{ATV}(P^{(1)}, P^{(2)}; \pi) = \sum_{S: 1\in S} \pi(S) \cdot \mathrm{TV}\bigl(P^{(1)}_S, P_S^{(2)}\bigr)\\
        &= q_1 \mathrm{TV}\bigl(P^{(1)}_{\{1\}}, P^{(2)}_{\{1\}}\bigr) \leq q_1 \leq \frac{\epsilon}{1-\epsilon}.
    \end{align*}
    Hence, by~\citet[Theorem 4 and Lemma 25]{ma2024high}, we see that $\mathcal{M}\bigl(\delta, \mathcal{P}_{\Theta}, \| \cdot \|_2^2\bigr) \geq t$.  Since $t > 0$ was arbitrary, the result follows.
\end{proof}

\subsection{Univariate arbitrary contamination lower bounds} \label{sec:univariate-arbitrary-contamination-lb}

The lower bounds in Proposition~\ref{Prop:Univariate-arb-contam-lb} are presented primarily to ensure the completeness of Table~\ref{table:summary}.  Corresponding upper bounds are attained by the median in the Gaussian case \citep[][Theorem~2.1]{chen2018robust}, and a trimmed mean \citep[][Theorem~1 and the subsequent remark]{lugosi21robust} in the sub-Gaussian case, in both cases applied to the observed data.
\begin{prop}
\label{Prop:Univariate-arb-contam-lb}
    Let $\epsilon\in[0,1)$, $q\in(0,1]$, $\sigma>0$, $\delta\in(0,1/4]$ and $\kappa \coloneqq \frac{\epsilon}{q(1-\epsilon)}$. 
    \begin{enumerate}
        \item[(a)] Let $\Theta\coloneqq\mathbb{R}$ and let $\mathcal{P}_{\theta} \coloneqq \bigl\{ P_0^{\otimes n} : P_0\in\mathcal{P}^{\mathrm{arb}}\bigl(\mathsf{N}(\theta,\sigma^2),\epsilon,q\bigr)\bigr\}$ for $\theta\in\Theta$. Then
        \begin{align*}
            \mathcal{M}(\delta,\mathcal{P}_{\Theta},|\cdot|^2) \begin{cases}
                \gtrsim \dfrac{\sigma^2\log(1/\delta)}{nq(1-\epsilon)} + \sigma^2 \kappa^2 \quad&\text{if } \epsilon < \frac{q}{1+q}\\
                =\infty &\text{if }\epsilon \geq \frac{q}{1+q}.
            \end{cases}
        \end{align*}
        \item[(b)] Let $\Theta\coloneqq\mathbb{R}$ and let $\mathcal{P}_{\theta} \coloneqq \bigl\{ P_0^{\otimes n} : P_0\in\mathcal{P}^{\mathrm{arb}}(P,\epsilon,q),\, P\in\mathcal{P}_{\psi_2}(\theta,\sigma^2) \bigr\}$ for $\theta\in\Theta$. Then
        \begin{align*}
            \mathcal{M}(\delta,\mathcal{P}_{\Theta},|\cdot|^2) \begin{cases}
                \gtrsim \dfrac{\sigma^2\log(1/\delta)}{nq(1-\epsilon)} + \sigma^2 \kappa^2 \log(1/\kappa) \quad&\text{if } \epsilon < \frac{q}{1+q}\\
                =\infty &\text{if }\epsilon \geq \frac{q}{1+q}.
            \end{cases}
        \end{align*}
    \end{enumerate}    
\end{prop}
\begin{proof}
    (a) First consider the case where $\epsilon < \frac{q}{1+q}$. Let $X_1\sim \mathsf{N}(0,\sigma^2) \eqqcolon P_1$ and $X_2 \sim \mathsf{N}(2\sigma\kappa,\sigma^2) \eqqcolon P_2$. By Pinsker's inequality, $\mathrm{TV}(P_1,P_2) \leq \sqrt{\frac{1}{2}\mathrm{KL}(P_1,P_2)} = \kappa$, so that by Lemma~\ref{lem:equivalence-of-f-af},
    \begin{align*}
        \mathrm{TV}\bigl( \mathsf{MCAR}_{(q,P_1)}, \mathsf{MCAR}_{(q,P_2)} \bigr) = q\cdot\mathrm{TV}(P_1,P_2) \leq \frac{\epsilon}{1-\epsilon}.
    \end{align*}
    Hence, by \citet[Lemma~25]{ma2024high}, we have
    \begin{align}
        \mathcal{M}(\delta,\mathcal{P}_{\Theta},|\cdot|^2) \geq \frac{(\mathbb{E}X_1 - \mathbb{E}X_2)^2}{4} = \sigma^2\kappa^2. \label{eq:uni-arb-gaussian-lb-1}
    \end{align}
    Further, by choosing the contamination distribution $Q\in\mathcal{P}(\mathbb{R}_{\star})$ such that $Q\bigl( \{\star\} \bigr)=1$ and applying Proposition~\ref{prop:univariate-mcar-lb}(a), we deduce that
    \begin{align}
        \mathcal{M}(\delta,\mathcal{P}_{\Theta},|\cdot|^2) \gtrsim \frac{\sigma^2\log(1/\delta)}{nq(1-\epsilon)}. \label{eq:uni-arb-gaussian-lb-2}
    \end{align}
    Combining~\eqref{eq:uni-arb-gaussian-lb-1} and~\eqref{eq:uni-arb-gaussian-lb-2} yields the lower bound for $\epsilon < \frac{q}{1+q}$.

    Next consider the case where $\epsilon \geq \frac{q}{1+q}$. Let $a>0$, $X_1\sim P_1 \coloneqq \mathsf{N}(0,\sigma^2)$ and $X_2 \sim P_2 \coloneqq \mathsf{N}(a\sigma,\sigma^2)$. By Lemma~\ref{lem:equivalence-of-f-af},
    \begin{align*}
        \mathrm{TV}\bigl( \mathsf{MCAR}_{(q,P_1)}, \mathsf{MCAR}_{(q,P_2)} \bigr) = q\cdot\mathrm{TV}(P_1,P_2) \leq q \leq \frac{\epsilon}{1-\epsilon}.
    \end{align*}
    Hence, by \citet[Lemma~25]{ma2024high}, we have
    \begin{align*}
        \mathcal{M}(\delta,\mathcal{P}_{\Theta},|\cdot|^2) \geq \frac{(\mathbb{E}X_1 - \mathbb{E}X_2)^2}{4} = \frac{\sigma^2 a^2}{4}.
    \end{align*}
    Since this holds for all $a>0$, we deduce that $\mathcal{M}(\delta,\mathcal{P}_{\Theta},|\cdot|^2) = \infty$ in this case.
    
    \medskip
    (b) Let $c_1>0$ be a universal constant that will be specified later. Define $P_1 \in \mathcal{P}(\mathbb{R})$ by $P_1\bigl((t,\infty)\bigr) \coloneqq e^{-t^2/(c_1\sigma)^2}$ for $t\geq 0$. Define $P_2 \in \mathcal{P}(\mathbb{R})$ by
    \begin{align*}
        P_2(\{0\}) \coloneqq \kappa, \quad P_2\bigl((t,\infty)\bigr) \coloneqq 
        \begin{cases}
            e^{-t^2/(c_1\sigma)^2} \;&\text{if } 0\leq t \leq c_1\sigma\sqrt{\log(1/\kappa)}\\
            0 \;&\text{if } t> c_1\sigma\sqrt{\log(1/\kappa)}.
        \end{cases}
    \end{align*}
    Let $X_1\sim P_1$ and $X_2\sim P_2$. Since $\mathbb{P}(|X_\ell| \geq t) \leq e^{-t^2/(c_1\sigma)^2}$ for $t\geq 0$ and $\ell\in\{1,2\}$, we have by \citet[Proposition~2.5.2]{vershynin2018high} that $\|X_\ell\|_{\psi_2} \leq c_1C_2\sigma$ for $\ell\in\{1,2\}$, where $C_2>0$ is a universal constant. Thus by \citet[Lemma~2.6.8]{vershynin2018high}, there exists a universal constant $C_3>0$ such that $\|X_\ell - \mathbb{E}X_\ell\|_{\psi_2} \leq c_1C_2C_3\sigma$. Hence, taking $c_1\coloneqq (C_2C_3)^{-1}$, we have $P_\ell \in \mathcal{P}_{\psi_2}\bigl(\mathbb{E}(X_\ell), \sigma^2\bigr)$ for $\ell\in\{1,2\}$.
    Moreover, $\mathrm{TV}(P_1,P_2) = P_2(\{0\}) = \kappa$, so that by Lemma~\ref{lem:equivalence-of-f-af},
    \begin{align*}
        \mathrm{TV}\bigl( \mathsf{MCAR}_{(q,P_1)}, \mathsf{MCAR}_{(q,P_2)} \bigr) = q\cdot\mathrm{TV}(P_1,P_2) = \frac{\epsilon}{1-\epsilon}.
    \end{align*}
    Now, integrating by parts yields
    \begin{align*}
        \mathbb{E}X_1 - \mathbb{E}X_2 &= \int_{c_1\sigma\sqrt{\log(1/\kappa)}}^\infty x\cdot \frac{2x}{(c_1\sigma)^2}e^{-x^2/(c_1\sigma)^2} \,\mathrm{d}x\\
        &= \kappa \cdot c_1\sigma\sqrt{\log(1/\kappa)} + \int_{c_1\sigma\sqrt{\log(1/\kappa)}}^\infty e^{-x^2/(c_1\sigma)^2} \,\mathrm{d}x \gtrsim \sigma\kappa\sqrt{\log(1/\kappa)}. 
    \end{align*}
    Hence, by \citet[Lemma~25]{ma2024high}, 
    \begin{align}
        \mathcal{M}(\delta,\mathcal{P}_{\Theta},|\cdot|^2) \geq \frac{(\mathbb{E}X_1 - \mathbb{E}X_2)^2}{4} \gtrsim \sigma^2\kappa^2\log(1/\kappa). \label{eq:uni-arb-sub-gaussian-lb-1}
    \end{align}
    By~\eqref{eq:uni-arb-sub-gaussian-lb-1} and applying Proposition~\ref{prop:univariate-mcar-lb}(a) with contamination distribution $Q\in\mathcal{P}(\mathbb{R}_{\star})$ satisfying $Q(\{\star\}) = 1$ as in~(a), we obtain the desired lower bound for $\epsilon < \frac{q}{1+q}$. Finally, the lower bound for $\epsilon \geq \frac{q}{1+q}$ follows from part~(a).
\end{proof}

\section{Proofs from Section~\ref{sec:realisable-mean-est}}\label{sec:proofs-realisable-mean-est}

\subsection{Proofs from Section~\ref{sec:gaussian-realisable-model}} \label{sec:proofs-one-dim-gaussian}

\subsubsection{Proof of Theorem~\ref{thm:univariate-gaussian-realisable-maxmin}}
\begin{proof}[Proof of Theorem~\ref{thm:univariate-gaussian-realisable-maxmin}]
Given $R\in \mathcal{R}(\theta_0)$, we can simulate from $R$ as follows: let $(X_0,\Omega_0^{(1)},\Omega_0^{(2)}, W_0)$ denote a random vector taking values in $\mathbb{R} \times \{0,1\}^{3}$ such that $W_0 \indep (X_0, \Omega_0^{(1)}, \Omega_0^{(2)})$, $W_0 \sim \mathsf{Ber}(\epsilon)$, $\Omega_0^{(1)} \indep (X_0, \Omega_0^{(2)})$, $\Omega_0^{(1)} \sim \mathsf{Ber}(q)$, $X_0 \sim \mathsf{N}(\theta_0, \sigma^2)$ and 
\[
R = \mathsf{Law}\bigl((1 - W_0) \cdot X_0 \ostar \Omega_{0}^{(1)} + W_0 \cdot X_0 \ostar \Omega_{0}^{(2)} \bigr).
\]
Note that if $Z_0 \coloneqq (1 - W_0) \cdot X_0 \ostar \Omega_{0}^{(1)} + W_0 \cdot X_0 \ostar \Omega_{0}^{(2)}$, then $Z_0 \mid \{W_0 = 0\} \sim \mathsf{MCAR}_{(\mathsf{N}(\theta_0, \sigma^2), q)}$.  We then generate $(X_i, \Omega_i^{(1)}, \Omega_i^{(2)}, W_i)_{i=1}^{n} \overset{\mathrm{iid}}{\sim} \mathsf{Law}(X_0, \Omega_0^{(1)}, \Omega_0^{(2)}, W_0)$, and set $Z_i \coloneqq (1 - W_i) \cdot X_i \ostar \Omega_{i}^{(1)} + W_i \cdot X_i \ostar \Omega_{i}^{(2)}$ for $i \in [n]$, so that $Z_1,\ldots,Z_n \stackrel{\mathrm{iid}}{\sim} R$.

    Now define the inliers as $\mathcal{I} \coloneqq \{i \in [n]: W_i = 0\}$, the outliers as $\mathcal{O} \coloneqq \{i \in [n] : W_i = 1\}$, and the observed indices as $\mathcal{D} \coloneqq \{i \in [n]: Z_i \neq \star\}$.  Equipped with this notation, we note the following pair of structural properties
    \[
    \max_{i \in \mathcal{I} \cap \mathcal{D}}\, X_i \leq \max_{i \in \mathcal{D}}\, Z_i \leq \max_{i \in ( \mathcal{I} \cap \mathcal{D}) \cup \mathcal{O}}\, X_i \quad \text{ and } \quad \min_{i \in ( \mathcal{I} \cap \mathcal{D}) \cup \mathcal{O}}\, X_i \leq \min_{i \in \mathcal{D}}\, Z_i \leq \min_{i \in \mathcal{I} \cap \mathcal{D}}\, X_i.
    \]
    We deduce the sandwich relation
    \begin{align} \label{ineq:sandwich-max-min}
    \frac{1}{2} \cdot \Bigl(\max_{i \in \mathcal{I} \cap \mathcal{D}}\, X_i + \min_{i \in ( \mathcal{I} \cap \mathcal{D}) \cup \mathcal{O}}\, X_i\Bigr) \leq \hat{\theta}^{\mathrm{AE}} \leq \frac{1}{2} \cdot \Bigl(\max_{i \in ( \mathcal{I} \cap \mathcal{D}) \cup \mathcal{O}}\, X_i + \min_{i \in \mathcal{I} \cap \mathcal{D}}\, X_i\Bigr).
    \end{align}
    Now $X_1,\ldots,X_n$ and $\mathcal{I} \cap \mathcal{D}$ are independent, and similarly $X_1,\ldots,X_n$ and $\mathcal{O}$ are independent, so $(X_i)_{i \in \mathcal{I} \cap \mathcal{D}}|(\mathcal{I} \cap \mathcal{D}) \overset{\mathrm{iid}}{\sim} \mathsf{N}(\theta_0, \sigma^2)$ and $(X_i)_{i \in (\mathcal{I} \cap \mathcal{D}) \cup \mathcal{O}}|\bigl((\mathcal{I} \cap \mathcal{D}) \cup \mathcal{O}\bigr) \overset{\mathrm{iid}}{\sim} \mathsf{N}(\theta_0, \sigma^2)$.  We let $N_1 \coloneqq \lvert \mathcal{I} \cap \mathcal{D} \rvert$ and $N_2 \coloneqq \lvert (\mathcal{I} \cap \mathcal{D}) \cup \mathcal{O} \rvert$, and define 
    \[
    B_{\ell} \coloneqq \sigma \sqrt{2\log{N_{\ell}}} + \frac{\sigma}{2} \cdot \frac{\log{\log{N_{\ell}}} + \log(4\pi)}{\sqrt{2 \log{N_{\ell}}}},
    \]
    for $\ell \in \{1,2\}$.  Let $\mathcal{E}_{1}\coloneqq \{ N_{1} \geq nq(1-\epsilon)/2\}$ and $\mathcal{E}_2 \coloneqq \{|\mathcal{O}|\leq 3n\epsilon\}$. By~\citet[Theorem 3]{tanguy2015some}, there exists a universal constant $C_1' > 0$ such that for $\ell \in \{1, 2\}$ and $\delta > 0$, 
    \begin{align*}
    &\mathbb{P}\biggl(\biggl\{\Bigl|\max_{i \in [N_{\ell}]} X_i - \theta - B_{\ell} \Bigr| \geq \frac{C_1'\sigma \log(8/\delta)}{\log^{1/2}N_{\ell}} \biggr\} \bigcap \mathcal{E}_1 \; \bigg \vert \; N_{\ell}\biggr) \leq \frac{\delta}{4}
    \end{align*}
    and
    \begin{align*}
    &\mathbb{P}\biggl( \biggl\{\Bigl|\min_{i \in [N_{\ell}]} X_i - \theta +  B_{\ell}\Bigr| \geq \frac{C_1' \sigma \log(8/\delta)}{\log^{1/2} N_{\ell}}\biggr\} \bigcap \mathcal{E}_1 \;\bigg \vert \; N_{\ell}\biggr) \leq \frac{\delta}{4}.
    \end{align*}
    Combining these inequalities with the sandwich relation~\eqref{ineq:sandwich-max-min} yields
    \begin{align}\label{ineq:deviation-bound-thetaAE}
    \mathbb{P}\biggl(\biggl\{\bigl \lvert \hat{\theta}^{\mathrm{AE}}_n - \theta_0 \bigr \rvert \geq B_2 - B_1 +  \frac{2C_1'\sigma \log(8/\delta)}{\log^{1/2}N_1}\biggr\} \bigcap \mathcal{E}_1 \; \bigg \vert \; N_1, N_2 \biggr) \leq \frac{2\delta}{3}.
    \end{align}
    Using the inequality $\sqrt{a} - \sqrt{b} \leq (a - b)/\sqrt{b}$ for $0 < b < a$ and the fact that $x \mapsto \frac{\log \log x + \log(4\pi)}{\log^{1/2} x}$ is decreasing for $x\geq \exp\bigl(\frac{e^2}{4\pi}\bigr)$, we deduce that on $\mathcal{E}_1 \cap \mathcal{E}_2$,
    \begin{align} \label{ineq:bound-diff-B1-B2}
    B_2 - B_1 \leq \frac{2\sigma \log(N_2/N_1)}{\log^{1/2}N_1} &\leq \frac{2 \sigma \log\bigl(1 + 3n\epsilon/N_1\bigr)}{\log^{1/2}N_1} \nonumber\\
    &\leq \frac{2 \sigma \log\bigl(1 + \frac{6\epsilon}{q(1 - \epsilon)}\bigr)}{\log^{1/2}{\bigl(nq(1 - \epsilon)/2\bigr)}} \leq \frac{3 \sigma \log\bigl(1 + \frac{6\epsilon}{q(1 - \epsilon)}\bigr)}{\log^{1/2}\bigl(nq(1-\epsilon)\bigr)}.
    \end{align}
    Now, we first assume that $\epsilon \geq n^{-1}\log(4/\delta)$. By Lemma~\ref{lemma:binomial-tail}, we have $\mathbb{P}(\mathcal{E}_1 \cap \mathcal{E}_2) \geq 1 - \delta/2$, since by assumption, $nq(1 -\epsilon) \geq 8 \log(4/\delta)$.  Moreover, combining the inequalities~\eqref{ineq:deviation-bound-thetaAE} and~\eqref{ineq:bound-diff-B1-B2} yields that on $\mathcal{E}_1 \cap \mathcal{E}_2$,
    \begin{align}
    \bigl \lvert \hat{\theta}^{\mathrm{AE}}_n - \theta_0 \bigr \rvert \leq \frac{3 \sigma \log\bigl(1 + \frac{6\epsilon}{q(1 - \epsilon)}\bigr)}{\log^{1/2}\bigl(nq(1-\epsilon)\bigr)} + \frac{2C_1' \sigma \log(8/\delta)}{\log^{1/2} N_1} \leq  C_1 \sigma \cdot \frac{ \log\bigl(1 + \frac{6\epsilon}{q(1 - \epsilon)}\bigr) + \log(8/\delta)}{\log^{1/2}\bigl(nq(1-\epsilon)\bigr)}, \label{eq:average-min-max}
    \end{align}
    where $C_1 \coloneqq 3(1+C_1')$. Hence, \eqref{eq:average-min-max} holds with probability at least $1-\delta$ when $\epsilon\geq n^{-1}\log(4/\delta)$. Finally, consider the case in which $\epsilon < n^{-1}\log(4/\delta)$. Then, since we have $\mathcal{R}\bigl(\mathsf{N}(\theta_0,\sigma^2),\epsilon, q\bigr) \subseteq \mathcal{R}\bigl(\mathsf{N}(\theta_0,\sigma^2),n^{-1}\log(4/\delta), q\bigr)$, it follows by~\eqref{eq:average-min-max} that
    \begin{align*}
        \bigl \lvert \hat{\theta}^{\mathrm{AE}}_n - \theta_0 \bigr \rvert &\leq C_1 \sigma \cdot \frac{ \log\bigl(1 + \frac{6\log(4/\delta)}{nq(1 - \epsilon)}\bigr) + \log(8/\delta)}{\log^{1/2}\bigl(nq(1-\epsilon)\bigr)} \leq C_1 \sigma \cdot \frac{ \frac{6\log(4/\delta)}{nq(1 - \epsilon)} + \log(8/\delta)}{\log^{1/2}\bigl(nq(1-\epsilon)\bigr)} \\
        &\leq C_1 \sigma \cdot \frac{2\log(8/\delta)}{\log^{1/2}\bigl(nq(1-\epsilon)\bigr)} \leq 2C_1 \sigma \cdot \frac{ \log\bigl(1 + \frac{6\epsilon}{q(1 - \epsilon)}\bigr) + \log(8/\delta)}{\log^{1/2}\bigl(nq(1-\epsilon)\bigr)},
    \end{align*}
    with probability at least $1-\delta$.
\end{proof}

\subsubsection{Proof of Theorem~\ref{thm:univariate-realisable-lb}}

\begin{figure}[ht]
    \centering
    \subfigure[\centering The two black curves are $\{q(1-\epsilon) + \epsilon\}\phi_{(-a,\sigma)}$ and $\{q(1-\epsilon) + \epsilon\}\phi_{(a,\sigma)}$ respectively, as labelled in the figure. The blue curve is $q(1-\epsilon)\phi_{(-a,\sigma)}$, and the orange curve is $q(1-\epsilon)\phi_{(a,\sigma)}$.]{{\includegraphics[width=0.9\textwidth]{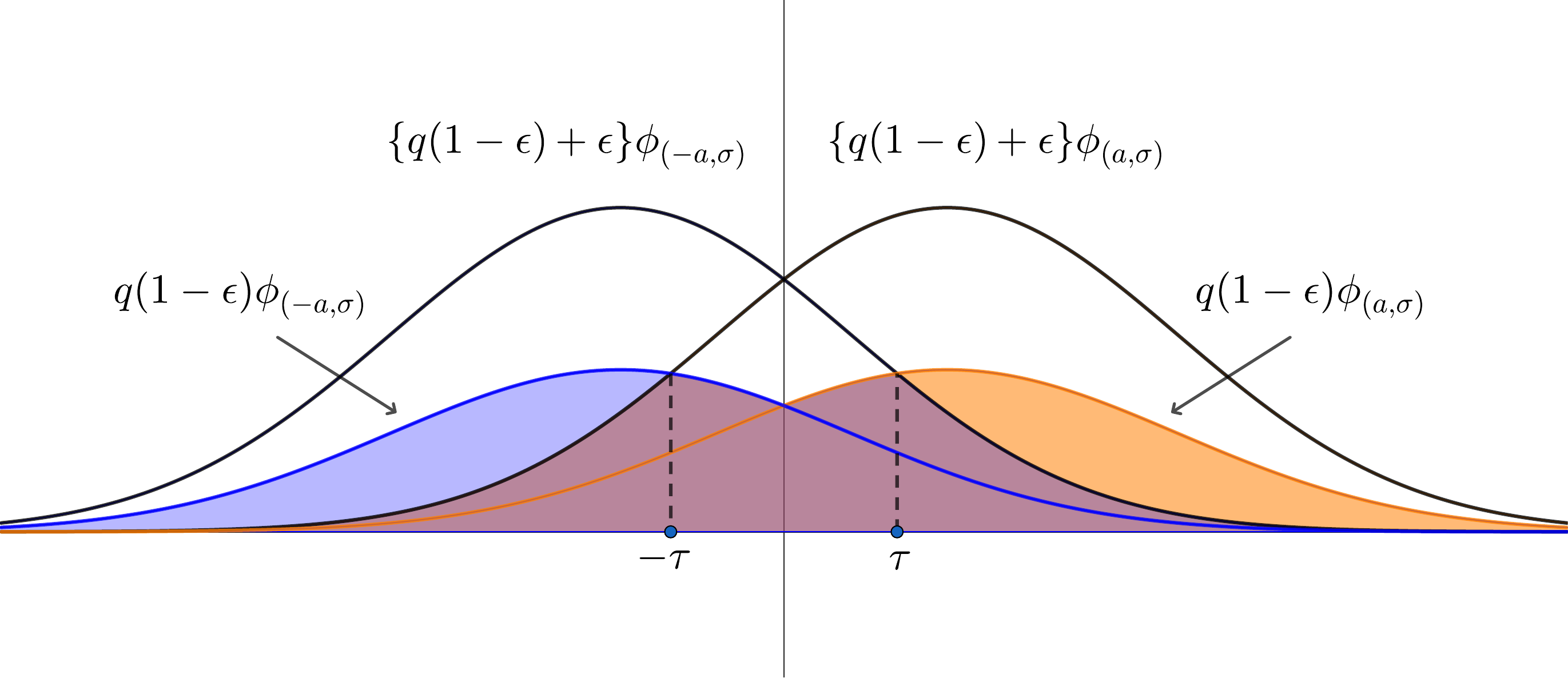}}}
    
    \subfigure[\centering The curve above the blue region illustrates the function $f_1$ in~\eqref{eq:f1-definition-univariate-lb}.]{{\includegraphics[width=0.49\textwidth]{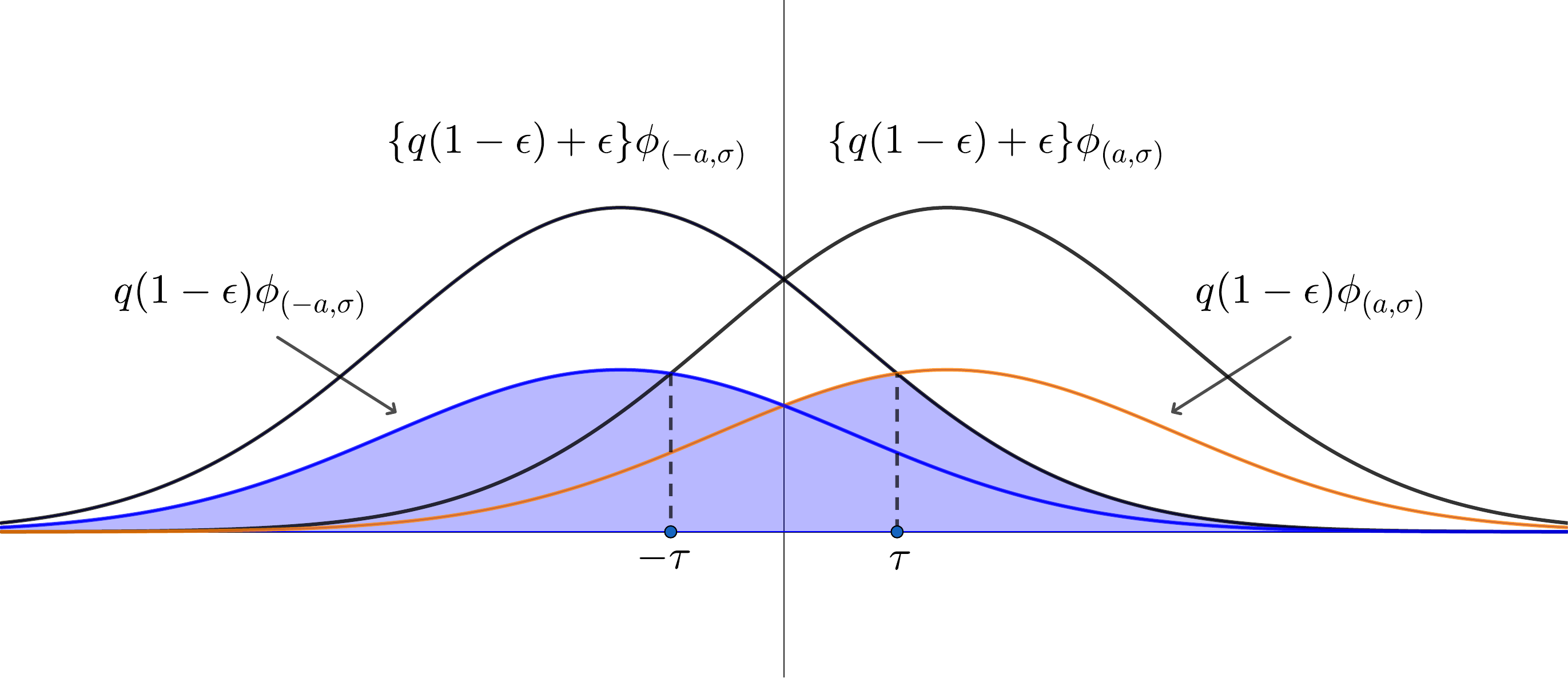}}}
    \subfigure[\centering The curve above the orange region illustrates the function $f_2$ in~\eqref{eq:f2-definition-univariate-lb}.]{{\includegraphics[width=0.49\textwidth]{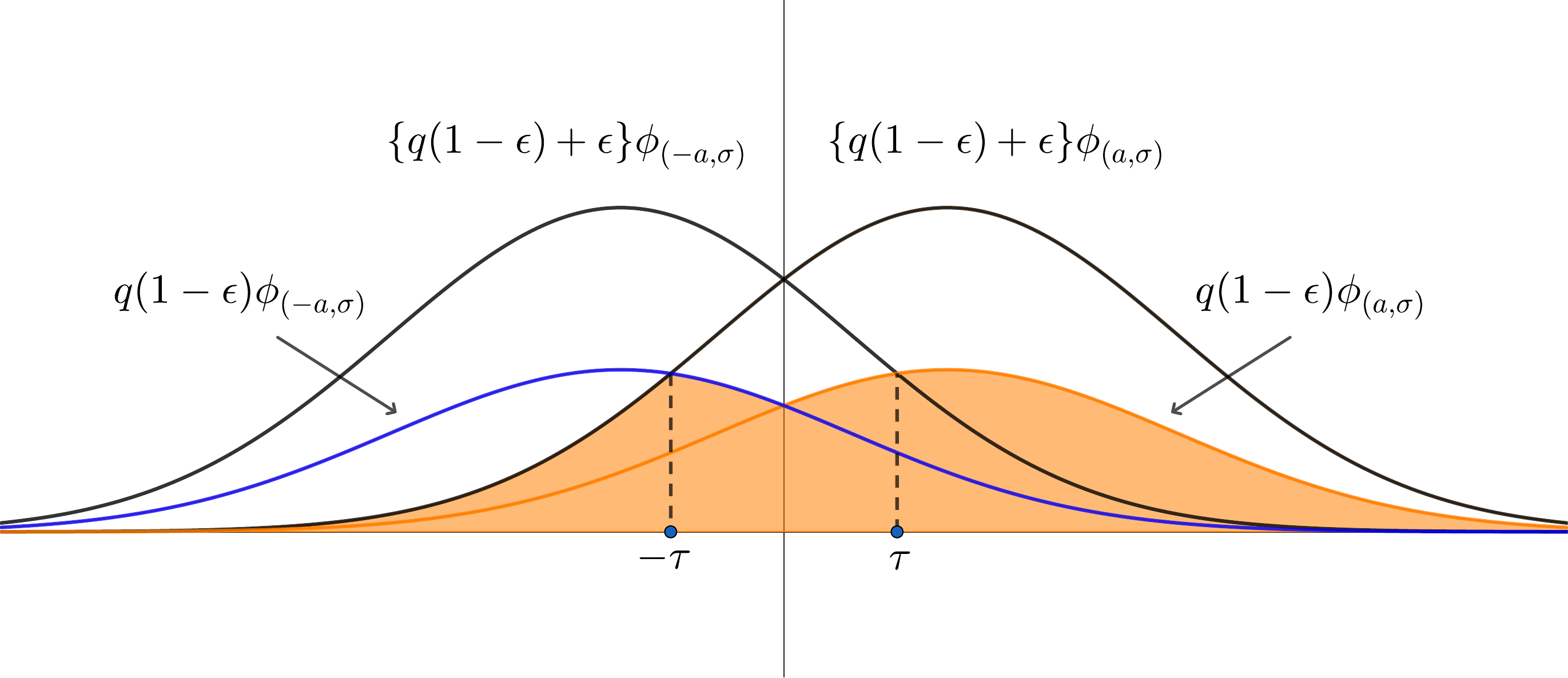}}}
    \caption{Construction of the lower bound in Theorem \ref{thm:univariate-realisable-lb}.}\label{fig:gaussian-realisable-lb}
\end{figure}

\begin{proof}[Proof of Theorem~\ref{thm:univariate-realisable-lb}]
Consider the construction illustrated in Figure~\ref{fig:gaussian-realisable-lb}.  For $a>0$ to be specified later, let 
\begin{align} \label{eq:def-t-eps-q-a}
    \tau \coloneqq \frac{\sigma^2}{2a} \cdot  \log \bigg( 1+\frac{\epsilon}{q(1-\epsilon)} \bigg)
\end{align}
denote the unique point in $\mathbb{R}$ where $\{q(1-\epsilon) + \epsilon\}\phi_{(-a,\sigma)}(\tau) = q(1 - \epsilon) \phi_{(a,\sigma)}(\tau)$.  Next, define the function $f_1: \mathbb{R} \rightarrow \mathbb{R}$ as
    \begin{align} \label{eq:f1-definition-univariate-lb}
    f_1(x) \coloneqq \begin{cases}
        q(1 - \epsilon)  \phi_{(-a,\sigma)}(x) & \text{ if } x \leq 0\\
        q(1 - \epsilon)  \phi_{(a,\sigma)}(x) & \text{ if } 0 < x \leq \tau\\
        \bigl\{q ( 1- \epsilon) + \epsilon\bigr\} \cdot \phi_{(-a,\sigma)}(x) & \text{ if } x > \tau.
    \end{cases} 
    \end{align}
    Similarly, we note that $-\tau$ is the unique point satisfying $\{q(1-\epsilon) + \epsilon\} \phi_{(a,\sigma)}(-\tau) = q (1 - \epsilon) \phi_{(-a,\sigma)}(-\tau)$ and define the function $f_2: \mathbb{R} \rightarrow \mathbb{R}$ as 
    \begin{align} \label{eq:f2-definition-univariate-lb}
    f_2(x) \coloneqq \begin{cases}
        \bigl\{q ( 1- \epsilon) + \epsilon\bigr\} \cdot\phi_{(a,\sigma)}(x) & \text{ if } x \leq -\tau\\
        q(1 - \epsilon)  \phi_{(-a,\sigma)}(x) & \text{ if } -\tau < x \leq 0\\
        q (1 - \epsilon)\phi_{(a,\sigma)}(x) & \text{ if } x > 0.
    \end{cases}
    \end{align}
    Note that $\int_{\mathbb{R}} f_\ell(x) \, \mathrm{d}x \leq q(1-\epsilon) + \epsilon \leq 1$ for $\ell \in \{1,2\}$, so we may construct $P_1, P_2 \in \mathcal{P}(\mathbb{R}_{\star})$ with Radon--Nikodym derivatives
    \[
    \frac{\mathrm{d}P_{\ell}}{\mathrm{d} \lambda_{\star}}(z) \coloneqq f_{\ell}(z) \mathbbm{1}_{\{z \in \mathbb{R}\}} + \biggl(1 - \int_{\mathbb{R}} f_{\ell}(x)\, \mathrm{d}x\biggr) \mathbbm{1}_{\{z = \star\}} \quad \text{for} \quad \ell \in \{1, 2\},
    \]
    where $\lambda_{\star}$ denotes the extension of the Lebesgue measure to $\mathbb{R}_{\star}$ as defined in Section~\ref{sec:notation}.  Then, by Proposition~\ref{prop:univariate-realisability}, $P_{1} \in \mathcal{R}\bigl(\mathsf{N}(-a, \sigma^2), \epsilon, q\bigr)$ and $P_{2} \in \mathcal{R}\bigl(\mathsf{N}(a, \sigma^2), \epsilon, q\bigr)$.  Since $P_{1}(\{\star\}) = P_{2}(\{\star\})$ and $f_1(x) = f_2(x)$ for $x \in [-\tau, \tau]$, we compute
    \begin{align*} 
        &\mathrm{KL}(P_1, P_2) = \int_{-\infty}^{-\tau} q(1 - \epsilon) \phi_{(-a,\sigma)}(x) \log\biggl(\frac{q(1 - \epsilon) \phi_{(-a,\sigma)}(x)}{\bigl\{q ( 1- \epsilon) + \epsilon\bigr\}\phi_{(a,\sigma)}(x)}\biggr)\, \mathrm{d}x  \\
        & \qquad \qquad+ \int_{\tau}^{\infty} \bigl\{q ( 1- \epsilon) + \epsilon\bigr\} \phi_{(-a,\sigma)}(x) \log\biggl(\frac{\bigl\{q ( 1- \epsilon) + \epsilon\bigr\} \phi_{(-a,\sigma)}(x)}{q(1 - \epsilon) \phi_{(a,\sigma)}(x)}\biggr)\, \mathrm{d} x \\
        &= q (1 - \epsilon) \biggl\{ \frac{2a^2}{\sigma^2} - \log\biggl(1+ \frac{\epsilon}{q(1-\epsilon)} \biggr) \biggr\} \bigl\{1 - \Phi_{(0,\sigma)}(\tau-a)\bigr\}\\
        &\qquad\qquad + \bigl\{q (1 - \epsilon) + \epsilon\bigr\} \biggl\{ \frac{2a^2}{\sigma^2} + \log\biggl(1+ \frac{\epsilon}{q(1-\epsilon)} \biggr) \biggr\}\bigl\{1-\Phi_{(0,\sigma)}(\tau+a)\bigr\} \nonumber\\
        &\qquad\qquad + 2a\bigl[ q(1-\epsilon) \phi_{(0,\sigma)}(\tau - a) - \bigl\{q(1-\epsilon) + \epsilon\bigr\} \phi_{(0,\sigma)}(\tau+a) \bigr] \\
        &= \frac{2aq(1-\epsilon)}{\sigma^2}(a-\tau)\bigl\{1 - \Phi_{(0,\sigma)}(\tau-a)\bigr\} + \frac{2a\{q(1-\epsilon)+\epsilon\}}{\sigma^2}(a+\tau)\bigl\{1-\Phi_{(0,\sigma)}(\tau+a)\bigr\} \\
        &\qquad\qquad + 2a\bigl[ q(1-\epsilon) \phi_{(0,\sigma)}(\tau - a) - \bigl\{q(1-\epsilon) + \epsilon\bigr\} \phi_{(0,\sigma)}(\tau+a) \bigr].
    \end{align*}
    Next, set 
    \begin{align*} 
    a \coloneqq \frac{\sigma}{4} \cdot \log\biggl(1 + \frac{\epsilon}{q (1 - \epsilon)}\biggr) \cdot \log^{-1/2}{\bigl(nq(1-\epsilon)\bigr)} > 0,
    \end{align*}
    so that by substituting this definition into~\eqref{eq:def-t-eps-q-a}, we obtain
    \[
    \tau=2\sigma\log^{1/2}\bigl( nq(1-\epsilon) \bigr) \quad\text{and}\quad a\leq \frac{\tau}{8},
    \]
    where the inequality follows from our assumption~\eqref{eq:asm-eps-gaussian-realisable-lb}.  Hence, by the Mills ratio bound $1 - \Phi_{(0,\sigma)}(x) \leq \sigma^2 \phi_{(0,\sigma)}(x)/x$ for $x > 0$, we have
    \begin{align*}
    \mathrm{KL}(P_1, P_2)
    &\leq 2a\{q (1 - \epsilon) + \epsilon \} \phi_{(0,\sigma)}(\tau+a) \\
    & \qquad \qquad + 2a\bigl[ q(1-\epsilon) \phi_{(0,\sigma)}(\tau - a) - \bigl\{q(1-\epsilon) + \epsilon\bigr\} \phi_{(0,\sigma)}(\tau+a) \bigr]\\
    &= 2aq (1 - \epsilon) \phi_{(0,\sigma)}(\tau-a) \\
    &= \frac{\sigma}{2} \cdot \log\biggl(1 + \frac{\epsilon}{q (1 - \epsilon)}\biggr) \cdot \log^{-1/2}\bigl(nq(1-\epsilon)\bigr) \cdot q (1 - \epsilon) \cdot \phi_{(0,\sigma)}(\tau-a)\\
    &\leq \frac{\sigma}{2} \cdot \log^{1/2}\bigl(nq(1-\epsilon)\bigr) \cdot q (1 - \epsilon) \cdot \phi_{(0,\sigma)}(7\tau/8) \\
    &= \frac{q(1-\epsilon)}{2\sqrt{2\pi}} \cdot \log^{1/2}\bigl(nq(1-\epsilon)\bigr) \cdot \exp\biggl\{-\frac{1}{2}\cdot \Bigl(\frac{7}{8}\Bigr)^2 \cdot 4\log\bigl(nq(1-\epsilon)\bigr)\biggr\} \\
    &\leq \frac{q(1-\epsilon)}{2\sqrt{2\pi}} \cdot \log^{1/2}\bigl(nq(1-\epsilon)\bigr) \cdot \bigl\{nq(1-\epsilon)\bigr\}^{-3/2} \leq \frac{1}{5n}.
    \end{align*} 
    Thus, $\mathrm{KL}(P_1^{\otimes n}, P_2^{\otimes n}) \leq 1/5 < \log\bigl( \frac{1}{4\delta(1-\delta)} \bigr)$ for $\delta\in(0,1/4]$, so by \citet[Theorem~4 and Corollary~6]{ma2024high}, we deduce that for $\delta\in(0,1/4]$,
    \begin{align} \label{eq:gaussian-realisable-proof-lb1}
        \mathcal{M}\bigl(\delta, \mathcal{P}_{\Theta}, | \cdot |^2\bigr) \geq a^2 = \frac{\sigma^2 \log^2\bigl(1 + \frac{\epsilon}{q (1 - \epsilon)}\bigr)}{16 \log \bigl(nq(1-\epsilon)\bigr)}.
    \end{align}
   Finally, note that $\mathsf{MCAR}_{(q(1-\epsilon), \mathsf{N}(\theta,\sigma^2))} \in \mathcal{R}(\mathsf{N}(\theta, \sigma^2), \epsilon, q)$ for all $\theta \in \mathbb{R}$, since we can choose the contamination distribution $Q$ such that $Q(\{\star\})=1$. Therefore, by Proposition~\ref{prop:univariate-mcar-lb}(a), we have that for $\delta\in(0,1/4]$,
   \begin{align}\label{eq:gaussian-realisable-proof-lb2}
       \mathcal{M}\bigl(\delta, \mathcal{P}_{\Theta}, | \cdot |^2\bigr) 
       \begin{cases}
         \geq \dfrac{\sigma^2 \log(1/\delta)}{20nq(1-\epsilon)} \quad&\text{if }\delta\geq \dfrac{\{1-q(1-\epsilon)\}^n}{2}\\
         = \infty \quad&\text{if }\delta< \dfrac{\{1-q(1-\epsilon)\}^n}{2}.
    \end{cases}
   \end{align}
   Combining \eqref{eq:gaussian-realisable-proof-lb1} and \eqref{eq:gaussian-realisable-proof-lb2} yields the desired result.
\end{proof}

\subsubsection{Proof of Theorem~\ref{thm:one-dim-kolmogorov-estimator}}
In order to prove Theorem~\ref{thm:one-dim-kolmogorov-estimator}, we require a preliminary lemma.
\begin{lemma}\label{lemma:one-dim-kolmogorov-distance-realisable-sets}
    Let $\theta_1, \theta_2 \in \mathbb{R}$ be distinct, and set $a \coloneqq \lvert \theta_1 - \theta_2 \rvert/2$. Then, writing $b \coloneqq \frac{1}{2}\log\bigl( 1+ \frac{4\epsilon}{q(1-\epsilon)}\bigr)$, there exists a continuous and strictly increasing function $f_{\mathrm{K},b}:(0,\infty) \to (0,1]$ such that
    \begin{align*}
        d_{\mathrm{K}}\bigl(\mathcal{R}(\theta_1),\mathcal{R}(\theta_2)\bigr) \geq f_{\mathrm{K},b}(a).
    \end{align*}    
    Moreover, 
    \begin{align*}
        f_{\mathrm{K},b}(a) \geq q(1-\epsilon) \cdot \frac{a}{\sigma} \cdot \phi\biggl(\frac{a}{\sigma}+\frac{\sigma b}{a}\biggr) \quad\text{when }b\leq 1/2,
    \end{align*}
    and
    \begin{align*}
        f_{\mathrm{K},b}(a) \geq q(1-\epsilon) \cdot \Phi\biggl(\frac{a}{\sigma}-\frac{2\sigma b}{a}\biggr) - \{q(1-\epsilon) + \epsilon\}\cdot \Phi\biggl(-\frac{a}{\sigma}-\frac{2\sigma b}{a}\biggr) \quad\text{when }b> 1/2.
    \end{align*}
\end{lemma}
\begin{proof}
    Since $d_\mathrm{K}$ is translation invariant, we may assume without loss of generality that $\theta_1 = -a$ and $\theta_2 = a$. By Proposition~\ref{prop:univariate-realisability}, if $R_\ell \in \mathcal{R}(\theta_\ell)$ for $\ell\in\{1,2\}$, then each admits a density $h_{\ell} :\mathbb{R}_\star \to \mathbb{R}$ with respect to the extended Lebesgue measure $\lambda_\star$ such that $h_\ell(x) / \phi_{(\theta_\ell,\sigma)}(x) \in [q(1-\epsilon),\, q(1-\epsilon) + \epsilon]$ for all $x\in\mathbb{R}$. Let $\tau \coloneqq \frac{\sigma^2}{2a} \cdot \log\bigl( 1 + \frac{\epsilon}{q(1-\epsilon)} \bigr) \leq  \frac{\sigma^2 b}{a}$,
    so that $q(1-\epsilon)\phi_{(-a,\sigma)}(-\tau) = \{q(1-\epsilon)+\epsilon\} \phi_{(a,\sigma)}(-\tau)$, see Figure~\ref{fig:gaussian-realisable-lb}.
    
    When $b\leq 1/2$,
    \begin{align*}
        d_{\mathrm{K}}\bigl(\mathcal{R}(\theta_1),\mathcal{R}(\theta_2)\bigr) &= \inf_{R_1\in\mathcal{R}(\theta_1),\, R_2\in\mathcal{R}(\theta_2)} \sup_{A\in\mathcal{A}} |R_1(A)-R_2(A)|\\
        &\geq
        \inf_{R_1\in\mathcal{R}(\theta_1),\, R_2\in\mathcal{R}(\theta_2)} \bigl\{R_1\bigl((-\infty,-\sigma^2b/a]\bigr) - R_2\bigl((-\infty,-\sigma^2b/a]\bigr)\bigr\}\\
        &\geq q(1-\epsilon) \cdot \Phi_{(\theta_1,\sigma)}(-\sigma^2b/a) - \{q(1-\epsilon) + \epsilon\}\cdot \Phi_{(\theta_2,\sigma)}(-\sigma^2b/a)\\
        &= q(1-\epsilon) \cdot \Phi\biggl(\frac{a}{\sigma}-\frac{\sigma b}{a}\biggr) - \{q(1-\epsilon) + \epsilon\}\cdot \Phi\biggl(-\frac{a}{\sigma}-\frac{\sigma b}{a}\biggr) \eqqcolon f_{\mathrm{K},b}(a).
    \end{align*}
    Now $f_{\mathrm{K},b}$ is continuously differentiable, with
    \begin{align*}
    f_{\mathrm{K},b}'(a) &= q(1 \!-\! \epsilon) \cdot \biggl(\frac{1}{\sigma} \!+\! \frac{\sigma b}{a^2}\biggr) \phi\biggl(\frac{a}{\sigma} \!-\! \frac{\sigma b}{a}\biggr) - \bigl\{q(1 - \epsilon) + \epsilon \bigr\}\cdot \Bigl(-\frac{1}{\sigma} + \frac{\sigma b}{a^2}\Bigr) \phi\biggl(-\frac{a}{\sigma}-\frac{\sigma b}{a}\biggr)\\
    &> \biggl(\frac{1}{\sigma} + \frac{\sigma b}{a^2}\biggr) \cdot \sigma \biggl\{ q(1 \!-\! \epsilon)  \cdot \phi_{(-a,\sigma)}\biggl(-\frac{\sigma^2 b}{a}\biggr) - \bigl\{q(1 - \epsilon) + \epsilon \bigr\}\cdot \phi_{(a,\sigma)}\biggl(-\frac{\sigma^2 b}{a}\biggr) \biggr\} \\
    &\geq \biggl(\frac{1}{\sigma} + \frac{\sigma b}{a^2}\biggr) \cdot \sigma \Bigl( q(1 - \epsilon)  \cdot \phi_{(-a,\sigma)}(-\tau) - \bigl\{q(1 - \epsilon) + \epsilon \bigr\}\cdot \phi_{(a,\sigma)}(-\tau) \Bigr) = 0,
    \end{align*}
    so that $f_{\mathrm{K},b}$ is strictly increasing as well.
    Moreover,
    \begin{align*}
        f_{\mathrm{K},b}(a) &= q(1-\epsilon) \cdot \biggl\{ \Phi\biggl(\frac{a}{\sigma}-\frac{\sigma b}{a}\biggr) - \Phi\biggl(-\frac{a}{\sigma}-\frac{\sigma b}{a}\biggr) \biggr\} - \epsilon\cdot \Phi\biggl(-\frac{a}{\sigma}-\frac{\sigma b}{a}\biggr)\\
        &\geq q(1-\epsilon) \cdot \frac{2a}{\sigma} \cdot \phi\biggl(\frac{a}{\sigma}+\frac{\sigma b}{a}\biggr) - \epsilon\cdot \Phi\biggl(-\frac{a}{\sigma}-\frac{\sigma b}{a}\biggr), \numberthis \label{eq:kolmogorov-distance-lb2}
    \end{align*}
    where the final inequality follows from the mean value theorem $\Phi\bigl(\frac{a}{\sigma}-\frac{\sigma b}{a}\bigr) - \Phi\bigl(-\frac{a}{\sigma}-\frac{\sigma b}{a}\bigr) = \frac{2a}{\sigma}\cdot\phi(x') \geq \frac{2a}{\sigma} \cdot \phi\bigl(\frac{a}{\sigma}+\frac{\sigma b}{a}\bigr)$, where $x'\in \bigl[-\frac{a}{\sigma}-\frac{\sigma b}{a},\, \frac{a}{\sigma}-\frac{\sigma b}{a}\bigr]$. Next notice that \begin{align*}
        \epsilon\cdot \Phi\biggl(-\frac{a}{\sigma}-\frac{\sigma b}{a}\biggr) \leq \frac{\epsilon}{a/\sigma+ \sigma b/a} \cdot \phi\biggl(\frac{a}{\sigma}+\frac{\sigma b}{a}\biggr) &\leq \frac{\epsilon a}{\sigma b} \cdot \phi\biggl(\frac{a}{\sigma}+\frac{\sigma b}{a}\biggr)\\
        &\leq q(1-\epsilon) \cdot \frac{a}{\sigma} \cdot \phi\biggl(\frac{a}{\sigma}+\frac{\sigma b}{a}\biggr),\numberthis \label{eq:kolmogorov-distance-lb3}
    \end{align*}
    where the first inequality follows from the Mills ratio bound $\Phi(-x) \leq \phi(x)/x$ for $x > 0$, and the final inequality follows from the fact that $\log(1+x) \geq x/2$ for $x \in [0,2]$, so that $b=\frac{1}{2}\log\bigl(1+\frac{4\epsilon}{q(1-\epsilon)}\bigr) \geq \frac{\epsilon}{q(1-\epsilon)}$. Therefore, by~\eqref{eq:kolmogorov-distance-lb2} and~\eqref{eq:kolmogorov-distance-lb3} we deduce that \begin{align*}
         f_{\mathrm{K},b}(a) \geq q(1-\epsilon) \cdot \frac{a}{\sigma} \cdot \phi\biggl(\frac{a}{\sigma}+\frac{\sigma b}{a}\biggr),
    \end{align*}
    when $b\leq 1/2$.

    On the other hand, when $b> 1/2$,
    \begin{align*}
        d_{\mathrm{K}}\bigl(\mathcal{R}(\theta_1),&\mathcal{R}(\theta_2)\bigr)    \geq
        \inf_{R_1\in\mathcal{R}(\theta_1),\, R_2\in\mathcal{R}(\theta_2)} \bigl\{R_1\bigl((-\infty,-2\sigma^2b/a]\bigr) - R_2\bigl((-\infty,-2\sigma^2b/a]\bigr)\bigr\}\\
        &\geq q(1-\epsilon) \cdot \Phi_{(\theta_1,\sigma)}(-2\sigma^2b/a) - \{q(1-\epsilon) + \epsilon\}\cdot \Phi_{(\theta_2,\sigma)}(-2\sigma^2b/a)\\
        &= q(1-\epsilon) \cdot \Phi\biggl(\frac{a}{\sigma}-\frac{2\sigma b}{a}\biggr) - \{q(1-\epsilon) + \epsilon\}\cdot \Phi\biggl(-\frac{a}{\sigma}-\frac{2\sigma b}{a}\biggr) \eqqcolon f_{\mathrm{K},b}(a).
    \end{align*}
    Similarly to the previous case, $f_{\mathrm{K},b}$ is continuously differentiable and strictly increasing.
\end{proof}

\begin{proof}[Proof of Theorem~\ref{thm:one-dim-kolmogorov-estimator}]
    We first derive an upper bound on $d_{\mathrm{K}} \bigl(\hat{R}_n, \mathcal{R}(\theta_0)\bigr)$. Let $\mathcal{D} \coloneqq \{i \in [n] : Z_i \neq \star\}$ and $\bar{q} \coloneqq \mathbb{P}(Z_1 \neq \star)$, so that with the convention that $0/0 \coloneqq 0$,
    \begin{align*}
        \sup_{A\in\mathcal{A}} |\hat{R}_n(A) - R(A)| &= \sup_{A\in\mathcal{A}} \biggl| \frac{|\mathcal{D}|}{n} \cdot \frac{1}{\lvert \mathcal{D} \rvert}\sum_{i\in\mathcal{D}} \mathbbm{1}_{\{Z_i \in A\}}- \bar{q} \cdot \mathbb{P}(Z_1 \in A | Z_1 \neq \star) \biggr|\\
        &\leq \frac{|\mathcal{D}|}{n} \cdot \sup_{A\in\mathcal{A}} \biggl| \frac{1}{\lvert \mathcal{D} \rvert}\sum_{i\in\mathcal{D}} \mathbbm{1}_{\{Z_i \in A\}} - \mathbb{P}(Z_1 \in A | Z_1 \neq \star) \biggr| + \biggl| \frac{|\mathcal{D}|}{n} - \bar{q} \biggr|. \numberthis \label{eq:dkw-decomposition}
    \end{align*}
Now, since $\bar{q} \geq q(1-\epsilon)$, we have by our lower bound on $\delta$ that
\[
\log\Bigl(\frac{4}{\delta}\Bigr) \leq \frac{\bigl\{ nq(1-\epsilon) \bigr\}^{1-\xi}}{6400} \leq \frac{nq(1-\epsilon)}{6400} \leq \frac{n\bar{q}}{6400}.
\]
Hence, by Bernstein's inequality~\citep[Theorem 2.8.4]{vershynin2018high}, with probability at least $1 - \delta/2$ that
    \begin{align}
        \biggl| \frac{|\mathcal{D}|}{n} - \bar{q} \biggr| 
        \leq \sqrt{\frac{4\bar{q}\log(4/\delta)}{n}} <\bar{q}. \label{eq:upper-bound-observed-proportion}
    \end{align}
    Furthermore, by the Dvoretzky--Kiefer--Wolfowitz--Massart--Reeve inequality \citep{massart1990tight, reeve2024short}, 
    \begin{align}
        \sup_{A\in\mathcal{A}} \biggl| \frac{1}{\lvert \mathcal{D} \rvert} \sum_{i\in\mathcal{D}} \mathbbm{1}_{\{Z_i \in A\}}- \mathbb{P}(Z_1 \in A \,\vert\, Z_1 \neq \star) \biggr| \leq \sqrt{\frac{\log(4/\delta)}{2|\mathcal{D}|}}, \label{eq:dkw-observed-part}
    \end{align}
    with probability at least $1 - \delta/2$.  Combining~\eqref{eq:dkw-decomposition}, \eqref{eq:upper-bound-observed-proportion} and~\eqref{eq:dkw-observed-part} we deduce that, with probability at least $1 - \delta$, 
    \begin{align}
        d_{\mathrm{K}} \bigl(\hat{R}_n, \mathcal{R}(\theta_0)\bigr) \leq \sup_{A\in\mathcal{A}} |\hat{R}_n(A) - R(A)| &\leq \sqrt{\frac{\lvert \mathcal{D} \rvert}{n}} \cdot \sqrt{\frac{\log(4/\delta)}{2 n}} + \sqrt{\frac{4\bar{q}\log(4/\delta)}{n}}\nonumber\\
        &\leq \sqrt{\frac{\bar{q} \log(4/\delta)}{n}} + \sqrt{\frac{4\bar{q}\log(4/\delta)}{n}}\nonumber\\
        &\leq 3\sqrt{\frac{\{q(1-\epsilon)+\epsilon\}\log(4/\delta)}{n}} \eqqcolon r_n. \label{eq:kolmogorov-distance-1-dim}
    \end{align}

    We now work on the event $\mathcal{E} \coloneqq \bigl\{ d_{\mathrm{K}} \bigl(\hat{R}_n, \mathcal{R}(\theta_0)\bigr) \leq r_n \bigr\}$, which occurs with probability at least $1-\delta$ by~\eqref{eq:kolmogorov-distance-1-dim}.
    If $\theta\in\mathbb{R}$ satisfies $d_{\mathrm{K}}\bigl(\mathcal{R}(\theta), \mathcal{R}(\theta_0)\bigr) > 2r_n$, then on the event $\mathcal{E}$, 
    \begin{align*}
        d_{\mathrm{K}}\bigl(\hat{R}_n, \mathcal{R}(\theta)\bigr) \geq d_{\mathrm{K}}\bigl(\mathcal{R}(\theta), \mathcal{R}(\theta_0)\bigr) - d_{\mathrm{K}} \bigl(\hat{R}_n, \mathcal{R}(\theta_0)\bigr) > r_n \geq d_{\mathrm{K}} \bigl(\hat{R}_n, \mathcal{R}(\theta_0)\bigr),
    \end{align*}
    so $\hat{\theta}_n^{\mathrm{K}} \neq \theta$.  Therefore, with $f_{\mathrm{K}, b}$ as defined in Lemma~\ref{lemma:one-dim-kolmogorov-distance-realisable-sets} and $b \coloneqq \frac{1}{2}\log\bigl( 1+ \frac{4\epsilon}{q(1-\epsilon)}\bigr)$, we deduce that on $\mathcal{E}$,
    \begin{align}
        |\hat{\theta}_n^{\mathrm{K}} - \theta_0| &\leq \sup \bigl\{ |\theta-\theta_0| : \theta\in\mathbb{R},\, d_{\mathrm{K}}\bigl(\mathcal{R}(\theta), \mathcal{R}(\theta_0)\bigr) \leq 2r_n \bigr\} \nonumber \\
        &\leq 2\sup \bigl\{ a\geq 0 : f_{\mathrm{K},b}(a) \leq 2r_n \bigr\} 
        = 2\inf \bigl\{ a\geq 0 : f_{\mathrm{K},b}(a) \geq 2r_n \bigr\}, \label{eq:one-dim-gaussian-realisable-proof-ub1}
    \end{align}
    where the second inequality follows since by Lemma~\ref{lemma:one-dim-kolmogorov-distance-realisable-sets}, $d_{\mathrm{K}}\bigl(\mathcal{R}(\theta), \mathcal{R}(\theta_0)\bigr) \geq f_{\mathrm{K},b}\bigl( \frac{|\theta-\theta_0|}{2} \bigr)$, and the final equality follows since $f_{\mathrm{K},b}$ is a strictly increasing and continuous function.
    
    When $b\leq 1/2$, we have by~\eqref{eq:one-dim-gaussian-realisable-proof-ub1} and Lemma~\ref{lemma:one-dim-kolmogorov-distance-realisable-sets} that on $\mathcal{E}$,
    \begin{align*}
        |\hat{\theta}_n^{\mathrm{K}} - \theta_0| 
        &\leq 2\inf \bigl\{a\geq 0 : f_{\mathrm{K},b}(a) \geq 2r_n \bigr\}\\
        &\leq 2\inf \biggl\{a\geq 0: q(1-\epsilon) \cdot \frac{a}{\sigma} \cdot \phi\biggl(\frac{a}{\sigma}+\frac{\sigma b}{a}\biggr) \geq 6\sqrt{\frac{\{q(1-\epsilon)+\epsilon\}\log(4/\delta)}{n}} \biggr\}\\
        &= 2\sigma \inf \Biggl\{ a\geq 0 : a \cdot \phi\biggl(a+\frac{b}{a}\biggr) \geq \sqrt{\biggl(1+\frac{\epsilon}{q(1-\epsilon)}\biggr) \cdot \frac{36\log(4/\delta)}{nq(1-\epsilon)}} \Biggr\}. \numberthis \label{eq:one-dim-gaussian-realisable-proof-ub2}
    \end{align*}

    Now suppose further that $b\leq \sqrt{\frac{\log(4/\delta)}{nq(1-\epsilon)}}$. The assumption on $\delta$ means that $b\leq \sqrt{\frac{\log(4/\delta)}{nq(1-\epsilon)}} \leq 1/80$ and thus $1+\frac{4\epsilon}{q(1-\epsilon)} < 5/4$. Let $a \coloneqq 20 \sqrt{\frac{\log(4/\delta)}{nq(1-\epsilon)}}$, so that $a\leq 1/4$.  Moreover, $b/a \leq 1/20$,  so $a + b/a \leq 3/10$.  Therefore, 
    \begin{align*}
        a\cdot \phi(a+b/a) \geq 20 \sqrt{\frac{\log(4/\delta)}{nq(1-\epsilon)}} \cdot \phi(3/10)
        &\geq \sqrt{\frac{5}{4} \cdot \frac{36\log(4/\delta)}{nq(1-\epsilon)}}\\
        &\geq \sqrt{\biggl(1+\frac{\epsilon}{q(1-\epsilon)}\biggr) \cdot \frac{36\log(4/\delta)}{nq(1-\epsilon)}}.
    \end{align*}
    Hence, by~\eqref{eq:one-dim-gaussian-realisable-proof-ub2}, we have on $\mathcal{E}$ that $|\hat{\theta}_n^{\mathrm{K}} - \theta_0| \leq 40\sigma \sqrt{\frac{\log(4/\delta)}{nq(1-\epsilon)}}$ when $b\leq \sqrt{\frac{\log(4/\delta)}{nq(1-\epsilon)}}$.

    Next, we consider the case $\sqrt{\frac{\log(4/\delta)}{nq(1-\epsilon)}} < b \leq 2\sqrt{\frac{\log(4/\delta)}{(nq(1-\epsilon))^{1-\xi}}}$.  Then $b\leq 1/40$ and we again have $1+\frac{4\epsilon}{q(1-\epsilon)} < 5/4$.  Let $a \coloneqq 20b$, so that $a \leq 1/2$. Then
    \begin{align*}
        a\cdot \phi(a+b/a) > 20\sqrt{\frac{\log(4/\delta)}{nq(1-\epsilon)}} \cdot \phi\biggl( \frac{1}{2} + \frac{1}{20} \biggr)
        &\geq \sqrt{\frac{5}{4} \cdot \frac{36\log(4/\delta)}{nq(1-\epsilon)}}\\
        &\geq \sqrt{\biggl(1+\frac{\epsilon}{q(1-\epsilon)}\biggr) \cdot \frac{36\log(4/\delta)}{nq(1-\epsilon)}}.
    \end{align*}
    Hence, by~\eqref{eq:one-dim-gaussian-realisable-proof-ub2}, when $\sqrt{\frac{\log(4/\delta)}{nq(1-\epsilon)}} < b \leq 2\sqrt{\frac{\log(4/\delta)}{(nq(1-\epsilon))^{1-\xi}}}$ we have on $\mathcal{E}$ that $|\hat{\theta}_n^{\mathrm{K}} - \theta_0| \leq 40\sigma b$. 

    As our third case, assume that $2\sqrt{\frac{ \log(4/\delta)}{\{nq(1-\epsilon)\}^{1-\xi}}} < b \leq 1/2$.  We define $a \coloneqq \frac{16b}{\sqrt{\xi \log(nq(1-\epsilon))}}$, so that $a\geq 32\sqrt{\frac{\log(4/\delta)}{(nq(1-\epsilon))^{1-\xi/2}}}$ using the fact that $x^{\xi/2} \geq \xi\log x$ for $x\in(0,\infty)$. By the assumption~\eqref{Eq:bupperbound}, we have $b\leq \frac{7\xi}{256}\log\bigl(nq(1-\epsilon)\bigr)$, so $a\leq 7b/a$. Therefore, 
    \begin{align*}
        a\cdot \phi(a+b/a) &\geq 32 \sqrt{\frac{\log(4/\delta)}{\bigl\{nq(1-\epsilon)\bigr\}^{1-\xi/2}}} \cdot \phi(8b/a)\\
        &= 32\sqrt{\frac{\log(4/\delta)}{ \bigl\{nq(1-\epsilon)\bigr\}^{1-\xi/2}}} \cdot \frac{1}{\sqrt{2\pi}} \cdot \exp\biggl\{ -\frac{\xi\log\bigl(nq(1-\epsilon)\bigr)}{8} \biggr\}\\
        &= \sqrt{\frac{512\log(4/\delta)}{\pi \bigl\{nq(1-\epsilon)\bigr\}^{1-\xi/4}}} \geq \sqrt{\biggl(1+\frac{\epsilon}{q(1-\epsilon)}\biggr) \cdot \frac{36\log(4/\delta)}{nq(1-\epsilon)}},
    \end{align*}
    where the final inequality holds since $\log\bigl(1+\frac{\epsilon}{q(1-\epsilon)}\bigr)\leq \frac{7\xi\log(nq(1-\epsilon))}{128}$. 
    Hence, by~\eqref{eq:one-dim-gaussian-realisable-proof-ub2}, we have on $\mathcal{E}$ that $|\hat{\theta}_n^{\mathrm{K}} - \theta_0| \leq \frac{32\sigma b}{\sqrt{\xi \log(nq(1-\epsilon))}}$ when $2\sqrt{\frac{\log(4/\delta)}{(nq(1-\epsilon))^{1-\xi}}} < b \leq 1/2$.

    Finally, consider the case where $b > 1/2$ (when this interval is not vacuous).  Then by~\eqref{eq:one-dim-gaussian-realisable-proof-ub1} and Lemma~\ref{lemma:one-dim-kolmogorov-distance-realisable-sets} we have that on $\mathcal{E}$,
    \begin{align*}
        |\hat{\theta}_n^{\mathrm{K}} - \theta_0| 
        &\leq 2\inf \bigl\{ a \geq 0 : f_{\mathrm{K},b}(a) \geq 2r_n \bigr\}\\
        &\leq 2\inf \biggl\{ a \geq 0: q(1-\epsilon) \cdot \Phi\biggl(\frac{a}{\sigma}-\frac{2\sigma b}{a}\biggr) - \{q(1-\epsilon) + \epsilon\}\cdot \Phi\biggl(-\frac{a}{\sigma}-\frac{2\sigma b}{a}\biggr)\\
        &\hspace{7cm}\geq \sqrt{\frac{36\{q(1-\epsilon)+\epsilon\}\log(4/\delta)}{n}} \biggr\}. \numberthis \label{eq:one-dim-gaussian-realisable-proof-ub3}
    \end{align*}
    Letting $a \coloneqq \frac{3\sigma b}{\sqrt{\xi\log(nq(1-\epsilon))}}$, we have
    \begin{align*}
        q(1 &-\epsilon) \cdot \Phi\biggl(\frac{a}{\sigma}-\frac{2\sigma b}{a}\biggr) - \{q(1-\epsilon) + \epsilon\}\cdot \Phi\biggl(-\frac{a}{\sigma}-\frac{2\sigma b}{a}\biggr)\\
        \overset{(i)}&{\geq} \frac{q(1-\epsilon)}{\bigl(-\frac{a}{\sigma}+\frac{2\sigma b}{a}\bigr) + \bigl(-\frac{a}{\sigma}+\frac{2\sigma b}{a}\bigr)^{-1}} \cdot \phi\biggl(-\frac{a}{\sigma}+\frac{2\sigma b}{a}\biggr) - \frac{q(1-\epsilon)+\epsilon}{\frac{a}{\sigma}+\frac{2\sigma b}{a}} \cdot \phi\biggl(\frac{a}{\sigma}+\frac{2\sigma b}{a}\biggr)\\
        \overset{(ii)}&{\geq} \biggl( \frac{a}{\sigma}+\frac{2\sigma b}{a} \biggr)^{-1} \frac{1}{\sqrt{2\pi}} \cdot \biggl\{ q(1-\epsilon) \exp\biggl( -\frac{a^2}{2\sigma^2} - \frac{2\sigma^2 b^2}{a^2} + 2b \biggr) \\
        &\hspace{6cm}  - \bigl\{q(1-\epsilon)+\epsilon\bigr\} \exp\biggl( -\frac{a^2}{2\sigma^2} - \frac{2\sigma^2 b^2}{a^2} - 2b \biggr) \biggr\}\\
        \overset{(iii)}&{\geq} \biggl( \frac{a}{\sigma}+\frac{2\sigma b}{a} \biggr)^{-1} \frac{4\epsilon}{\sqrt{2\pi}} \cdot \exp\biggl( -\frac{a^2}{2\sigma^2} - \frac{2\sigma^2 b^2}{a^2}\biggr)\\
        \overset{(iv)}&{\geq} \frac{1}{\sqrt{\xi\log\bigl( nq(1-\epsilon) \bigr)}} \cdot \frac{4\epsilon}{\sqrt{2\pi}} \cdot \bigl( nq(1-\epsilon) \bigr)^{-\xi/4}\\
        \overset{(v)}&{\geq} \sqrt{\frac{36\{q(1-\epsilon)+\epsilon\}\log(4/\delta)}{n}} = 2r_n.
    \end{align*}
    Here, $(i)$ follows from the Mills ratio bound $\phi(x)/(x + x^{-1}) \leq \Phi(-x) \leq \phi(x)/x$ for $x > 0$; $(ii)$ follows since  $\bigl(-\frac{a}{\sigma}+\frac{2\sigma b}{a}\bigr) + \bigl(-\frac{a}{\sigma}+\frac{2\sigma b}{a}\bigr)^{-1} \leq \frac{a}{\sigma}+\frac{2\sigma b}{a}$ whenever $1/2 < b \leq \frac{\xi\log(nq(1-\epsilon))}{9}$; $(iii)$ follows by substituting the definition of $b$; $(iv)$ follows since, by assumption $b\leq \frac{7\xi\log(nq(1-\epsilon))}{256}$, so $\frac{a^2}{\sigma^2} \leq \xi\log\bigl(nq(1-\epsilon)\bigr)/100$; and $(v)$ follows from the assumptions that $b > 1/2$ so $q(1-\epsilon) < 3\epsilon$, the fact that $x^{\xi/2} \geq \xi\log x$ for $x\in(0,\infty)$ and the assumption~\eqref{Eq:deltalowerbound}. Hence, by~\eqref{eq:one-dim-gaussian-realisable-proof-ub3}, we have on $\mathcal{E}$ that $|\hat{\theta}_n^{\mathrm{K}} - \theta_0| \leq \frac{6\sigma b}{\sqrt{\xi\log(nq(1-\epsilon))}}$ when $b > 1/2$. Combining all four cases yields the desired result.
\end{proof}

\subsubsection{Proof of Theorem~\ref{thm:multivariate-kolmogorov-estimator}}

\begin{lemma} \label{lemma:realisability-of-projection}
    Let $\epsilon\in[0,1)$, $\pi\in\mathcal{P}\bigl(\{\emptyset,[d]\}\bigr)$, $P\in\mathcal{P}(\mathbb{R}^d)$, $R\in\mathcal{R}_{\emptyset,[d]}(P,\epsilon,\pi)$ and $v\in\mathbb{R}^d$. Suppose that $X\sim P$, $Z\sim R$ and define $Z^{(v)} \coloneqq v^\top Z \cdot \mathbbm{1}_{\{Z\in\mathbb{R}^d\}} + \star \cdot \mathbbm{1}_{\{Z\notin\mathbb{R}^d\}}$ for $v\in\mathbb{R}^d$. Then, writing $P^{(v)} \coloneqq \mathsf{Law}(v^\top X)$ and $R^{(v)}\coloneqq \mathsf{Law}(Z^{(v)})$, we have $R^{(v)} \in \mathcal{R}\bigl( P^{(v)}, \epsilon, \pi([d]) \bigr)$.
\end{lemma}
\begin{proof}
    Let $q \coloneqq \pi([d])$.  We have $\mathsf{Law}(Z) = (1-\epsilon)\mathsf{Law}(X \ostar \Omega^{(1)}) + \epsilon \mathsf{Law}(X \ostar \Omega^{(2)})$ where $\Omega^{(1)} \indep X$ and $\mathbb{P}(\Omega^{(1)} = \bm{1}_{[d]}) = q = 1 - \mathbb{P}(\Omega^{(1)} = 0)$ and where $\Omega^{(2)}$ takes values in $\{0,\bm{1}_{[d]}\}$.  By properties of disintegrations (see Section~\ref{sec:disintegration}), we may define $m^{(v)}:\mathbb{R} \rightarrow [0,1]$ by $m^{(v)}(y) \coloneqq \mathbb{P}(\Omega^{(2)} = \bm{1}_{[d]} \,|\, v^\top X=y)$.  We also let $\mu^{(v)}$ be a $\sigma$-finite measure on $\mathbb{R}$ such that $P^{(v)}\ll\mu^{(v)}$ and let $p^{(v)}\coloneqq \mathrm{d}P^{(v)}/\mathrm{d}\mu^{(v)}$.  Finally, define $g:\mathbb{R}_\star \rightarrow [0,\infty)$ by
    \[
    g(z) \coloneqq \begin{cases}
    q(1 - \epsilon) p^{(v)}(z) + \epsilon m^{(v)}(z) p^{(v)}(z) & \text{ if } z \in \mathbb{R}\\
    1 - q(1-\epsilon) - \epsilon \int_{\mathbb{R}} m^{(v)}(y) p^{(v)}(y)\, \mathrm{d}\mu^{(v)}(y) & \text{ if } z = \star. 
    \end{cases}
    \]
    Then, for $A\in\mathcal{B}(\mathbb{R})$, we have
    \begin{align*}
    \int_A g(z) \, \mathrm{d}\mu^{(v)}_\star(z) &= q(1-\epsilon)\mathbb{P}(v^\top X \in A) + \epsilon \mathbb{P}\bigl(\{\Omega^{(2)} = \bm{1}_{[d]}\} \cap \{ v^\top X \in A\}\bigr)\\
    &= (1-\epsilon)\mathbb{P}\bigl( v^\top(X\ostar\Omega^{(1)}) \in A \bigr) + \epsilon\mathbb{P}\bigl( v^\top(X\ostar\Omega^{(2)}) \in A \bigr)\\
    &= \mathbb{P}(Z^{(v)} \in A) = R^{(v)}(A).
    \end{align*}
    It follows that $R^{(v)} \ll \mu^{(v)}_\star$, with Radon--Nikodym derivative $g$.  Hence, by Proposition~\ref{prop:univariate-realisability}, we have $R^{(v)} \in \mathcal{R}(P^{(v)}, \epsilon, q)$.
\end{proof}

\begin{proof}[Proof of Theorem~\ref{thm:multivariate-kolmogorov-estimator}]
    By Lemma~\ref{lemma:realisability-of-projection}, Theorem \ref{thm:one-dim-kolmogorov-estimator} and a union bound,
    \begin{align} \label{eq:union-bound-for-one-dim-Kolmogorov-estimators}
        \max_{v\in\mathcal{N}} \bigl(\hat{\theta}_n^{\mathrm{K}}(v) - v^\top \theta_0 \bigr)^2 \lesssim C_{n,q,\epsilon,\xi,\delta/9^d} \biggl\{ \frac{\|\Sigma\|_{\mathrm{op}}\bigl(d + \log(4/\delta)\bigr)}{nq(1-\epsilon)} +  \frac{\|\Sigma\|_{\mathrm{op}} \log^2\bigl( 1+\frac{4\epsilon}{q(1-\epsilon)} \bigr)}{\log\bigl(nq(1-\epsilon)\bigr)} \biggr\}
    \end{align}
    with probability at least $1-\delta$.  Next, since any $v \in \mathbb{S}^{d-1}$ can be written as $v = v_1 + v_2$, where $v_1 \in \mathcal{N}$ and $\|v_2\|_2 \leq 1/4$, we have 
    \begin{align*}
        \|\hat{\theta}_n^{\mathrm{MK}} - \theta_0\|_2 = \sup_{v\in\mathbb{S}^{d-1}} |v^\top\hat{\theta}_n^{\mathrm{MK}} - v^\top\theta_0 | \leq \max_{v\in\mathcal{N}} |v^\top\hat{\theta}_n^{\mathrm{MK}} - v^\top\theta_0 | + \frac{1}{4} \cdot \|\hat{\theta}_n^{\mathrm{MK}} - \theta_0\|_2,
    \end{align*}
    so
    \begin{align*}
         \|\hat{\theta}_n^{\mathrm{MK}} - \theta_0\|_2 \leq \frac{4}{3} \cdot \max_{v\in\mathcal{N}} |v^\top\hat{\theta}_n^{\mathrm{MK}} - v^\top\theta_0|.
    \end{align*}
    Hence, 
    \begin{align*}
        \|\hat{\theta}_n^{\mathrm{MK}} - \theta_0\|_2^2 &\leq 2 \max_{v\in\mathcal{N}}\, \bigl( v^\top \hat{\theta}_n^{\mathrm{MK}} -  \hat{\theta}_n^{\mathrm{K}}(v) +  \hat{\theta}_n^{\mathrm{K}}(v) - v^\top \theta_0 \bigr)^2\\
        &\leq 4\max_{v\in\mathcal{N}}\, \bigl( v^\top \hat{\theta}_n^{\mathrm{MK}} -  \hat{\theta}_n^{\mathrm{K}}(v) \bigr)^2 + 4\max_{v\in\mathcal{N}}\, \big( v^\top \theta_0 -  \hat{\theta}_n^{\mathrm{K}}(v) \big)^2\\
        &\leq 8\max_{v\in\mathcal{N}}\, \big( v^\top \theta_0 -  \hat{\theta}_n^{\mathrm{K}}(v) \big)^2\\
        &\lesssim C_{n,q,\epsilon,\xi,\delta/9^d} \biggl\{ \frac{\|\Sigma\|_{\mathrm{op}}\bigl(d+\log(4/\delta)\bigr)}{nq(1-\epsilon)} +  \frac{\|\Sigma\|_{\mathrm{op}} \log^2\bigl( 1+\frac{4\epsilon}{q(1-\epsilon)} \bigr)}{\log\bigl(nq(1-\epsilon)\bigr)} \biggr\},
    \end{align*}
    with probability at least $1-\delta$, where the third inequality follows from the definition of $\hat{\theta}_n^{\mathrm{MK}}$, and the last inequality follows from~\eqref{eq:union-bound-for-one-dim-Kolmogorov-estimators}.
\end{proof}

\subsubsection{Proof of Lemma~\ref{lemma:compute-kolmogorov-distance}}

\begin{proof}[Proof of Lemma~\ref{lemma:compute-kolmogorov-distance}]
    We first show that, for any $R\in\mathcal{R}(P, \epsilon, q)$, we have
    \begin{align*}
        d_{\mathrm{K}}(\hat{R}_n,R) = \max_{i\in \{0\} \cup [m]}\; \Bigl\{\Bigl|\frac{i}{n} - R\bigl((-\infty,Z_{(i)})\bigr) \Bigr| \vee \Bigl|\frac{i}{n} - R\bigl((-\infty,Z_{(i+1)})\bigr)\Bigr| \Bigr\}.
    \end{align*}
    To this end, fix $i \in \{0\} \cup [m]$. Then, since $\hat{R}_n\big((-\infty,t)\bigr) = i/n$ for $t \in [Z_{(i)},Z_{(i+1)}) \cap \mathbb{R}$, $t \mapsto R\bigl((-\infty,t]\bigr)$ is increasing on this interval and since $R \ll \lambda_{\star}$ by Proposition~\ref{prop:univariate-realisability}, we have
    \begin{align*}
    \sup_{t \in [Z_{(i)},Z_{(i+1)}) \cap \mathbb{R}} \bigl|\hat{R}_n\bigl((-\infty,t]\bigr) &- R\bigl((-\infty,t]\bigr)\bigr| \\
    &= \Bigl|\frac{i}{n} - R\bigl((-\infty,Z_{(i)})\bigr)\Bigr| \vee \lim_{t \nearrow Z_{(i+1)}} \Bigl|\frac{i}{n} \!-\! R\bigl((-\infty,t)\bigr)\Bigr| \\
    &= \Bigl|\frac{i}{n} - R\bigl((-\infty,Z_{(i)})\bigr)\Bigr| \vee \Bigl|\frac{i}{n} - R\bigl((-\infty,Z_{(i+1)})\bigr)\Bigr|.
    \end{align*}
    Hence
    \begin{align*}
    \sup_{t\in\mathbb{R}} \bigl|\hat{R}_n\bigl((-\infty, t]\bigr) &- R\bigl((-\infty,t]\bigr) \bigr|\\
    &= \max_{i\in \{0\} \cup [m]} \Bigl\{\Bigl |\frac{i}{n} - R\bigl((-\infty,Z_{(i)})\bigr) \Bigr| \vee \Bigl|\frac{i}{n} - R\bigl((-\infty, Z_{(i+1)})\bigr) \Bigr| \Bigr\}.
    \end{align*}
    Now, by Proposition~\ref{prop:univariate-realisability}, for $0 \leq V_1 \leq \ldots \leq V_{m+1} \leq 1$, there exists $R \in \mathcal{R}(P, \epsilon, q)$ such that $V_i = R\bigl( (-\infty,Z_{(i)}] \bigr)$ for $i\in[m]$ and $V_{m+1}= R\bigl((-\infty,\infty)\bigr)$ if and only if $(V_1,\ldots,V_{m+1})^\top \in \mathcal{V}$. The claim then follows.
\end{proof}

\subsection{Proofs from Section~\ref{sec:nonparametric-realisable}}

\subsubsection{Proof of Theorem~\ref{thm:one-dim-realisable-sample-mean-ub}}

The proof of Theorem~\ref{thm:one-dim-realisable-sample-mean-ub} relies on the following preliminary result, which controls the bias.
\begin{prop}\label{thm:bias-of-mean-one-dim-realisable-case}
    Let $\theta_0\in\mathbb{R}$, $\epsilon\in[0,1)$, $q\in(0,1]$ and $\sigma>0$.
    \begin{itemize}
        \item[(a)] Let $r \geq 2$, $P \in \mathcal{P}_{L^r}(\theta_0, \sigma^2)$ and $Z \sim R \in \mathcal{R}(P, \epsilon, q)$. Then 
        \begin{align*}
        \bigl\{ \mathbb{E}( Z \,|\, Z \neq \star) - \theta_0 \bigr\}^2 \leq \sigma^2 \cdot \biggl\{ \biggl(\frac{\epsilon}{q(1-\epsilon)}\biggr)^2 \wedge \biggl(\frac{\epsilon}{q(1-\epsilon)}\biggr)^{2/r} \biggr\}.
        \end{align*}
        \item[(b)] Let $r\geq 1$, $P \in \mathcal{P}_{\psi_r}(\theta_0, \sigma^2)$ and $Z\sim R \in \mathcal{R}(P, \epsilon, q)$. Then 
        \begin{align*}
        \bigl\{ \mathbb{E}( Z \,|\, Z \neq \star) - \theta_0 \bigr\}^2 \leq \sigma^2 \cdot \biggl\{ 4\biggl(\frac{\epsilon}{q(1-\epsilon)}\biggr)^2 \;\wedge\;  \log^{2/r} \biggl( 2 + \frac{2\epsilon}{q(1-\epsilon)} \biggr) \biggr\}.
        \end{align*}
        
    \end{itemize}
\end{prop}
\begin{proof}
    Let $\kappa \coloneqq \frac{\epsilon}{q(1-\epsilon)}$. By translation invariance, we may assume without loss of generality that $\theta_0 = 0$ throughout the proof.

    (a) Let $\mu$ be a measure on $\mathbb{R}$ such that $P \ll \mu$ and let $p\coloneqq \frac{\mathrm{d}P}{\mathrm{d}\mu}$, then by Proposition~\ref{prop:univariate-realisability}, we have \begin{align}
    \label{Eq:RRNDeriv}
        \frac{\mathrm{d}R}{\mathrm{d}\mu_\star}(z) = \begin{cases}
            q(1-\epsilon) \cdot p(z) + \epsilon\cdot  m(z)p(z) \quad&\text{if }z\in\mathbb{R}\\
            1- q(1-\epsilon) - \epsilon\int_{\mathbb{R}} m(x)p(x) \,\mathrm{d}\mu(x) &\text{if }z=\star,
        \end{cases}
    \end{align}
    for some Borel measurable function $m:\mathbb{R} \to [0,1]$. Therefore,
    \begin{align*}
        \big| \mathbb{E} (Z \,|\, Z \neq \star) \big| &= \frac{\bigl| q(1-\epsilon) \cdot \int_{\mathbb{R}} xp(x) \,\mathrm{d}\mu(x) + \epsilon\cdot \int_{\mathbb{R}} xm(x)p(x) \,\mathrm{d}\mu(x) \bigr|}{q(1-\epsilon) + \epsilon\int_{\mathbb{R}} m(x)p(x) \,\mathrm{d}\mu(x)}\\
        &= \frac{\epsilon \cdot \bigl|  \mathbb{E}_{P}\{Xm(X)\} \bigr|}{q(1-\epsilon) + \epsilon \cdot \mathbb{E}_{P}\{m(X)\}} \leq \frac{\epsilon \cdot \sigma \cdot \bigl\{\mathbb{E}_{P} \bigl(m^{r/(r-1)}(X)\bigr)\bigr\}^{1-1/r}}{q(1-\epsilon) + \epsilon \cdot \mathbb{E}_{P}\{m(X)\}},
    \end{align*}
    where the second equality follows from the assumption that $\theta_0=0$, and where the inequality follows from H\"{o}lder's inequality and the fact that $\mathbb{E}_{P}(|X|^r)^{1/r} \leq \sigma$. On the one hand, since $\bigl\{\mathbb{E}_{P} \bigl(m^{r/(r-1)}(X)\bigr)\bigr\}^{1-1/r} \leq 1$ and $\mathbb{E}_{P}\{m(X)\} \geq 0$, we have
    \begin{align} \label{ineq:holder-m-ineq1}
        \frac{\epsilon \cdot \bigl\{\mathbb{E}_{P} \bigl(m^{r/(r-1)}(X)\bigr)\bigr\}^{1-1/r}}{q(1-\epsilon) + \epsilon \cdot \mathbb{E}_{P}\{m(X)\}} \leq \kappa.
    \end{align}
    On the other hand, since $m(x)\in[0,1]$, we have $m^{r/(r-1)}(x) \leq m(x)$ for all $x\in\mathbb{R}$ and thus $\bigl\{\mathbb{E}_{P} \bigl(m^{r/(r-1)}(X)\bigr)\bigr\}^{1-1/r} \leq \bigl\{\mathbb{E}_{P}\bigl(m(X)\bigr)\bigr\}^{1-1/r} \eqqcolon t$. Therefore,
    \begin{align*}
        &\frac{\epsilon  \cdot \bigl\{\mathbb{E}_{P} \bigl(m^{r/(r-1)}(X)\bigr)\bigr\}^{1-1/r}}{q(1-\epsilon) + \epsilon \cdot \mathbb{E}_{P}\{m(X)\}} \leq \frac{\epsilon t}{q(1-\epsilon) + \epsilon t^{r/(r-1)}}\\
        &\qquad\qquad \leq \sup_{t'\geq 0} \frac{\epsilon t'}{q(1-\epsilon) + \epsilon (t')^{r/(r-1)}} \overset{(i)}{=} \frac{\epsilon \cdot \{(r-1)q(1-\epsilon)/\epsilon \}^{1-1/r}}{q(1-\epsilon)+(r-1)q(1-\epsilon)}\\
        &\qquad\qquad \leq (r-1)^{-1/r} \kappa^{1/r} \leq \kappa^{1/r}, \numberthis \label{ineq:holder-m-ineq2}
    \end{align*}
where $(i)$ follows from the fact that the function $t' \mapsto \frac{\epsilon t'}{q(1-\epsilon) + \epsilon (t')^{r/(r-1)}}$ is maximised when $t'= \{(r-1)q(1-\epsilon)/\epsilon \}^{1-1/r}$. Combining~\eqref{ineq:holder-m-ineq1} and~\eqref{ineq:holder-m-ineq2}, we deduce that
    \begin{align*}
        \big| \mathbb{E} (Z \,|\, Z \neq \star) \big| \leq \frac{\epsilon \cdot \sigma \cdot \bigl\{\mathbb{E}_{P} \bigl(m^{r/(r-1)}(X)\bigr)\bigr\}^{1-1/r}}{q(1-\epsilon) + \epsilon \cdot \mathbb{E}_{P}\{m(X)\}} \leq \sigma (\kappa \wedge \kappa^{1/r}),
    \end{align*}
    as desired.

    (b) Let $Q \in \mathcal{P}(\mathbb{R})$ such that $Q\ll P$. 
 By the variational characterisation of Kullback--Leibler divergence \citep[e.g.][Corollary 4.15]{boucheron2003concentration},
 \begin{align}\label{eq:variational-principle-KL}
        \mathbb{E}_{X\sim Q}\bigl( g(X) \bigr) \leq \mathrm{KL}(Q,P) + \log \mathbb{E}_{X\sim P}\bigl( e^{g(X)} \bigr),
    \end{align}
    for all Borel measurable functions $g:\mathbb{R} \to [0,\infty)$.  Now take $Q$ to be the conditional distribution of $Z$ given $\{Z\neq\star\}$.  Let $\mu$ and $p$ be as in the proof of~(a), so that~\eqref{Eq:RRNDeriv} holds for some Borel measurable function $m:\mathbb{R} \to [0,1]$.
    Therefore, for all $x \in \mathbb{R}$,
    \begin{align*}
        \frac{\mathrm{d}Q}{\mathrm{d}\mu}(x) = 
        \frac{q(1-\epsilon) \cdot p(x) + \epsilon\cdot  m(x)p(x)}{q(1-\epsilon) + \epsilon \cdot \int_{\mathbb{R}}m(y)p(y)\,\mathrm{d}\mu(y)}.
    \end{align*}
    Hence $Q\ll P$ and
    \begin{align}
        \frac{\mathrm{d}Q}{\mathrm{d}P}(x) \in \biggl[ 1-\frac{\epsilon}{q(1-\epsilon)+\epsilon},\, 1+\frac{\epsilon}{q(1-\epsilon)} \biggr], \label{Eq:RRNDerivBound}
    \end{align}
    for all $x \in \mathbb{R}$, from which we deduce that 
    \begin{align}
    \label{eq:KLbound}
        \mathrm{KL}(Q,P) = \int_{\mathbb{R}} \log\biggl( \frac{\mathrm{d}Q}{\mathrm{d}P}\biggr) \, \mathrm{d}Q \leq \log ( 1 + \kappa).
    \end{align}
    Taking $g(\cdot) = |\cdot|^r/\sigma^r$ and combining~\eqref{eq:variational-principle-KL} and~\eqref{eq:KLbound} yields
    \begin{align}
        \mathbb{E} \bigl( |Z|^r/\sigma^r \,\big|\, Z \neq \star \bigr) &\leq \log( 1+\kappa) + \log \mathbb{E}_{X\sim P} \bigl\{ \exp\bigl( |X|^r/\sigma^r \bigr) \bigr\} \nonumber\\
        &\leq \log( 1+\kappa) + \log2 = \log( 2+2\kappa), \label{eq:expectation-of-g(Z)}
    \end{align}
    where the second inequality follows since $P \in \mathcal{P}_{\psi_r}(\theta_0, \sigma^2)$ and since $\theta_0 = 0$ by assumption. Thus, 
    \begin{align}
        \bigl| \mathbb{E} (Z \,|\, Z \neq \star) \bigr| \leq \mathbb{E} \bigl( |Z| \,\big|\, Z \neq \star \bigr) \leq \mathbb{E} \bigl( |Z|^r \,\big|\, Z \neq \star \bigr)^{1/r} \leq \sigma\log^{1/r}( 2+2\kappa), \label{eq:psi-r-bias-1}
    \end{align}
    where the second inequality follows from the conditional version of Jensen's inequality and the third inequality follows from \eqref{eq:expectation-of-g(Z)}. Moreover, by \citet[Lemma A.2]{gotze2021concentration}, we have $\Var_{X\sim P}(X)^{1/2} \leq 2\bigl(\frac{2}{re}\bigr)^{1/r}\sigma \leq 2\sigma$ for $r\geq 1$. Hence $P \in \mathcal{P}_{L^2}(0,4\sigma^2)$, so we can apply part (a) of the theorem to obtain 
    \begin{align}
        \big| \mathbb{E} \bigl( Z \,\big|\, Z \neq \star \bigr) \big| \leq 2\sigma\kappa. \label{eq:psi-r-bias-2}
    \end{align}
    Combining~\eqref{eq:psi-r-bias-1} and~\eqref{eq:psi-r-bias-2} proves part (b). \medskip
\end{proof}
\begin{proof}[Proof of Theorem~\ref{thm:one-dim-realisable-sample-mean-ub}]
    Let $\kappa \coloneqq \frac{\epsilon}{q(1-\epsilon)}$.

    (a) Let $\mu$ and $p$ be as in the proof of Proposition~\ref{thm:bias-of-mean-one-dim-realisable-case}(a), so that~\eqref{Eq:RRNDeriv} holds for some Borel measurable function $m:\mathbb{R} \to [0,1]$.  
    On the one hand, since $m(X) \in [0,1]$, we have
    \begin{align*}
        \Var(Z_1 \,|\, Z_1 \neq \star) &= \Var(Z_1 - \theta_0 \,|\, Z_1 \neq \star) \leq \mathbb{E}\bigl\{ (Z_1 - \theta_0)^2 \,|\, Z_1 \neq \star \bigr\} \nonumber \\
        &= \frac{\int_{\mathbb{R}} (x-\theta_0)^2 \{q(1-\epsilon)p(x) + \epsilon m(x)p(x)\} \, \mathrm{d}\mu(x)}{q(1-\epsilon) + \epsilon \int_{\mathbb{R}} m(x)p(x) \, \mathrm{d}\mu(x)} \\
        &\leq \sigma^2 + \frac{\epsilon \cdot \mathbb{E}_P\{(X - \theta_0)^2 m(X)\}}{q(1 - \epsilon) + \epsilon \cdot \mathbb{E}_P\{m(X)\}} \leq (1+\kappa) \sigma^2. \numberthis \label{ineq:holder-m-ineq3}
    \end{align*}
    On the other hand, for $r > 2$, we have by H{\"o}lder's inequality that
    \begin{align*}
        \Var(Z_1 \,&|\, Z_1 \neq \star) \\
        &\leq \sigma^2 + \frac{\epsilon \cdot \mathbb{E}_P\{(X - \theta_0)^2 m(X)\}}{q(1 - \epsilon) + \epsilon \cdot \mathbb{E}_P\{m(X)\}} \leq \biggl[1 + \frac{\epsilon \cdot \bigl\{\mathbb{E}_P\bigr(m^{r/(r-2)}(X)\bigl)\bigr\}^{1 - 2/r}}{q(1 - \epsilon) + \epsilon \cdot \mathbb{E}_P\{m(X)\}}\biggr] \sigma^2\\
        &\leq \biggl[1 + \frac{\epsilon \cdot \bigl\{\mathbb{E}_P\bigr(m(X)\bigl)\bigr\}^{1 - 2/r}}{q(1 - \epsilon) + \epsilon \cdot \mathbb{E}_P\{m(X)\}}\biggr] \sigma^2 \leq \sup_{t'\geq 0} \biggl(1 + \frac{\epsilon t'}{q(1 - \epsilon) + \epsilon (t')^{r/(r-2)}}\biggr) \sigma^2\\
        &= \biggl\{1 + \frac{2}{r} \biggl( \frac{r-2}{2} \biggr)^{1-2/r} \cdot \kappa^{2/r}\biggr\}\sigma^2 \leq \biggl\{1 + \biggl( \frac{2}{r} \biggr)^{2/r} \kappa^{2/r}\biggr\}\sigma^2 \leq (1 + \kappa^{2/r}) \sigma^2, \numberthis \label{ineq:holder-m-ineq4}
    \end{align*}
    where the equality follows since the supremum is attained when $t' = \bigl( \frac{(r-2)q(1-\epsilon)}{2\epsilon}\bigr)^{1-2/r}$. Combining~\eqref{ineq:holder-m-ineq3} and~\eqref{ineq:holder-m-ineq4} yields that for $r\geq 2$,
    \begin{align}
        \Var(Z_1 \,|\, Z_1 \neq \star) \leq (1+\kappa^{2/r})\sigma^2. \label{Eq:CondVarBound}
    \end{align}
    By Lemma~\ref{lemma:binomial-tail}(b), the event $\mathcal{E}_0 \coloneqq \{|\mathcal{D}| \geq nq(1-\epsilon)/2\}$ has $\mathbb{P}(\mathcal{E}_0) \geq 1-\delta/2$, since $nq(1-\epsilon)\geq (8/a)\log(2c/\delta) \geq 8\log(2/\delta)$.  By~\eqref{eq:assumption-on-alg-heavy-tail-univariate} and~\eqref{Eq:CondVarBound},
    \begin{align*}
        \mathbb{P}\biggl(\bigl(\hat{\theta}_n - \mathbb{E}(Z_1 \, | \, Z_1\neq \star)\bigr)^2 \leq 2C(1+\kappa^{2/r}) \frac{\sigma^2 \log(2e/\delta) }{nq(1-\epsilon)} \,\bigg|\, \mathcal{E}_0 \biggr) \geq 1-\frac{\delta}{2}.
    \end{align*}
    Thus, 
    \begin{align}
        \mathbb{P}\biggl(\bigl(\hat{\theta}_n &- \mathbb{E}(Z_1|Z_1\neq \star)\bigr)^2 \leq 2C(1+\kappa^{2/r}) \frac{\sigma^2 \log(2e/\delta) }{nq(1-\epsilon)} \biggr)\nonumber\\
        &\geq \mathbb{P}\biggl(\bigl(\hat{\theta}_n - \mathbb{E}(Z_1|Z_1\neq \star)\bigr)^2 \leq 2C(1+\kappa^{2/r}) \frac{\sigma^2 \log(2e/\delta) }{nq(1-\epsilon)} \,\bigg|\, \mathcal{E}_0 \biggr)\mathbb{P}(\mathcal{E}_0) \geq 1-\delta. \label{Eq:E0E1bound}
    \end{align}
    On the event that $\bigl\{\bigl(\hat{\theta}_n - \mathbb{E}(Z_1|Z_1\neq \star)\bigr)^2 \leq 2C(1+\kappa^{2/r}) \frac{\sigma^2 \log(2e/\delta) }{nq(1-\epsilon)}\bigr\}$, we have by Proposition~\ref{thm:bias-of-mean-one-dim-realisable-case}(a) that
    \begin{align*}
        \bigl(\hat{\theta}_n - \theta_0 \bigr)^2 &\leq 2\bigl\{ \hat{\theta}_n - \mathbb{E}(Z_1 \,|\, Z_1\neq\star)\bigr\}^2 + 2\bigl\{ 
        \mathbb{E}(Z_1 \,|\, Z_1\neq\star) - \theta_0\bigr\}^2 \nonumber\\
        &\leq 4C\cdot \frac{\sigma^2 \log(2e/\delta) }{nq(1-\epsilon)} + 4C\kappa^{2/r}\cdot \frac{\sigma^2 \log(2e/\delta) }{nq(1-\epsilon)} + 2\sigma^2(\kappa^2 \wedge \kappa^{2/r}) \nonumber\\
        &\leq 8C\cdot \frac{\sigma^2 \log(2e/\delta) }{nq(1-\epsilon)} + (2C+2)\sigma^2(\kappa^2 \wedge \kappa^{2/r}),
    \end{align*}
    where the final inequality follows by considering separately the cases $\kappa\leq 1$ and $\kappa>1$, and in the second case noting that $\frac{\log(2e/\delta)}{nq(1-\epsilon)} \leq \frac{\{1+\log^{-1}(2)\} \log(2/\delta)}{nq(1-\epsilon)} \leq 1/2$ by assumption.

    \medskip
    (b) Let $\mu$ and $p$ be as in the proof of Proposition~\ref{thm:bias-of-mean-one-dim-realisable-case}(a), so that~\eqref{Eq:RRNDeriv} holds for some Borel measurable function $m:\mathbb{R} \to [0,1]$.  Then, for integers $\ell \geq 2$, we have 
    \begin{align*}
        \bigl\{ \mathbb{E}\bigl( |Z_1 - \theta_0|^{\ell} \,|\, Z_1\neq\star \bigr) \bigr\}^{1/\ell} &= \biggl(\frac{\int_{\mathbb{R}} |x-\theta_0|^\ell \{q(1-\epsilon)p(x) + \epsilon m(x)p(x)\} \, \mathrm{d}\mu(x)}{q(1-\epsilon) + \epsilon \int_{\mathbb{R}} m(x)p(x) \, \mathrm{d}\mu(x)} \biggr)^{1/\ell}\\
        &\leq \biggl( \frac{\{q(1-\epsilon) + \epsilon\}\mathbb{E}_{X\sim P}(|X-\theta_0|^\ell)}{q(1-\epsilon)} \biggr)^{1/\ell} 
        \overset{(i)}{\lesssim} \sqrt{1 + \kappa} \cdot \sigma \ell,
    \end{align*}
    where $(i)$ is true since $X\sim P \in \mathcal{P}_{\psi_r}(\theta_0, \sigma^2) \subseteq \mathcal{P}_{\psi_1}(\theta_0, \sigma^2)$ by Lemma~\ref{lemma:inclusion-of-psi-r-class}, so $\bigl(\mathbb{E}_{X\sim P} |X-\theta_0|^\ell\bigr)^{1/\ell} \lesssim \sigma\ell$ by \citet[Proposition 2.7.1]{vershynin2018high}. Moreover, by the Cauchy--Schwarz inequality and~\eqref{ineq:holder-m-ineq3}, $\mathbb{E}\bigl( |Z_1 - \theta_0| \,|\, Z_1\neq\star \bigr) \leq \bigl\{ \mathbb{E}\bigl( |Z_1 - \theta_0|^{2} \,|\, Z_1\neq\star \bigr) \bigr\}^{1/2} \lesssim \sigma\sqrt{1+\kappa}$.  Hence, by \citet[Proposition 2.7.1]{vershynin2018high} again, conditional on $\{Z_1\neq\star\}$, we have $\|Z_1 - \theta_0\|_{\psi_1} \lesssim \sigma\sqrt{1+\kappa}$. Then, by \citet[Lemma 2.7.10]{vershynin2018high}, we have, conditional on $\{Z_1\neq\star\}$, that
    \begin{align}
        \bigl\|Z_1 - \mathbb{E}(Z_1\,|\, Z_1\neq\star)\bigr\|_{\psi_1} \lesssim \sigma\sqrt{1+\kappa}. \label{eq:sub-exponential-norm-bound}
    \end{align}  
    Recall the definition of the event $\mathcal{E}_0$ from the proof of~(a), and observe that $\mathbb{P}(\mathcal{E}_0) \geq 1- \delta/4$ by Lemma~\ref{lemma:binomial-tail}(b).  Now, similarly to~\eqref{Eq:E0E1bound}, by Bernstein's inequality \citep[][Corollary~2.8.3]{vershynin2018high} and since $\frac{\log(8/\delta)}{nq(1-\epsilon)} \leq 1/8$, there exists a universal constant $C_2 > 0$ such that the event 
    \[
    \mathcal{E}_2 \coloneqq \biggl\{ \biggl( \frac{\sum_{i\in\mathcal{D}} \{Z_i - \mathbb{E}(Z_1\,|\, Z_1\neq\star)\}}{|\mathcal{D}|} \biggr)^2 \leq C_2 ( 1 + \kappa) \frac{\sigma^2\log(8/\delta)}{nq(1-\epsilon)} \biggr\}
    \]
    satisfies $\mathbb{P}(\mathcal{E}_0 \cap \mathcal{E}_2) \geq 1 - \delta/2$.  Moreover, on $\mathcal{E}_0 \cap \mathcal{E}_2$, by Proposition~\ref{thm:bias-of-mean-one-dim-realisable-case}(b),
   \begin{align*}
        \bigl( \hat{\theta}_n - \theta_0 \bigr)^2 &\lesssim \biggl( \frac{\sum_{i\in\mathcal{D}} \{Z_i - \mathbb{E}(Z_1\,|\, Z_1\neq\star)\}}{|\mathcal{D}|} \biggr)^2 + \bigl\{ \mathbb{E}(Z_1\,|\, Z_1\neq\star) - \theta_0 \bigr\}^2\\
        &\lesssim ( 1 + \kappa) \cdot \frac{\sigma^2\log(8/\delta)}{nq(1-\epsilon)} + \sigma^2 \kappa^2 \lesssim \frac{\sigma^2\log(8/\delta)}{nq(1-\epsilon)} + \sigma^2\biggl( \frac{\log(8/\delta)}{nq(1-\epsilon)} \biggr)^2 + \sigma^2 \kappa^2\\
        &\lesssim \frac{\sigma^2\log(8/\delta)}{nq(1-\epsilon)} + \sigma^2 \kappa^2, \numberthis \label{eq:exponential-tail-bound-1}
    \end{align*}
    where the penultimate inequality follows from the inequality $ab \leq \frac{a^2+b^2}{2}$ for $a,b\in\mathbb{R}$, and the final inequality follows from the assumption $\frac{\log(8/\delta)}{nq(1-\epsilon)} \leq 1/8$.
    
    Next, let $Q\in\mathcal{P}(\mathbb{R})$ be such that $Q\ll P$. Then the variational characterisation of $\chi^2$-divergence \citep[e.g.,][Example~7.4]{polyanskiy2024information} yields that
    \begin{align}
        2\mathbb{E}_{X\sim Q}\{g(X)\} \leq 1 + \chi^2(Q,P) + \mathbb{E}_{X\sim P}\{g^2(X)\}, \label{eq:variational-form-chi-squared-divergence}
    \end{align}
    for all Borel measurable $g:\mathbb{R} \to [0,\infty)$. We first consider the case $r>1$.  Now take $Q$ to be the conditional distribution of $Z_1$ given $\{Z_1\neq\star\}$, so that, by the representation in~\eqref{Eq:RRNDerivBound}, we have
    \[
    \chi^2(Q,P) = \int_{\mathbb{R}} \biggl(\frac{\mathrm{d}Q}{\mathrm{d}P} - 1\biggr)^2 \, \mathrm{d}P \leq \kappa^2.
    \]
    Thus, taking $g : x \mapsto \exp\bigl\{\lambda (x-\theta_0)\bigr\}$ in~\eqref{eq:variational-form-chi-squared-divergence} and applying Lemma~\ref{lemma:MGF-bound} yields that
    \begin{align*}
        2\mathbb{E}\bigl[ \exp\bigl\{\lambda (Z_1-\theta_0)\bigr\} \bigm| Z_1\neq\star \bigr] \leq 1 + \kappa^2 + 2\exp\bigl\{(2\sigma\lambda)^{r/(r-1)}\bigr\},
    \end{align*}
    for all $\lambda>0$. Hence, for $s \in [n]$,
    \begin{align*}
        \log\mathbb{E}\biggl\{ \exp\biggl(\frac{\lambda}{|\mathcal{D}|}\sum_{i\in\mathcal{D}}(Z_i -\theta_0)\biggr) \,&\biggm|\, |\mathcal{D}| = s \biggr\} \\
        &\leq s\log\Biggl\{ \frac{1}{2} \biggl[ 1 + \kappa^2 + 2\exp\biggl\{\biggl(\frac{2\sigma\lambda}{s}\biggr)^{r/(r-1)}\biggr\} \biggr] \Biggr\}\\
        &\leq s \biggl\{ \log( 1 + \kappa^2) + \log 2 + \biggl(\frac{2\sigma\lambda}{s}\biggr)^{r/(r-1)}  \biggr\}, \numberthis\label{eq:log-mgf-bound}
    \end{align*}
    where the final inequality follows from the fact that $\log\bigl(\frac{a+b}{2}\bigr) \leq \log a + \log b$ for all $a,b\geq 1$.
    Then, applying a Chernoff bound gives that for every $t \geq 0$ and $s \in [n]$, 
    \begin{align*}
        \mathbb{P}\bigl(\hat{\theta}_n - \theta_0 \geq t \bigm| |\mathcal{D}| = s\bigr) = \mathbb{P}\biggl( \frac{1}{|\mathcal{D}|}\sum_{i\in\mathcal{D}} (Z_i-\theta_0) \geq t \biggm| |\mathcal{D}| = s\biggr) \leq \exp\bigl(-\psi^*(t)\bigr),
    \end{align*}
    where
    \begin{align*}
        \psi^*(t) &\coloneqq \sup_{\lambda>0}\, \biggl\{\lambda t - s\log( 1 + \kappa^2) - s\log 2 - \frac{(2\sigma\lambda)^{r/(r-1)}}{s^{1/(r-1)}} \biggr\}\\
        &= \frac{st^r}{(2\sigma)^r} \cdot \biggl(\frac{r-1}{r}\biggr)^{r-1} \cdot \frac{1}{r} - s\log( 2 + 2\kappa^2) \geq \frac{st^r}{(2\sigma)^r} \cdot \frac{1}{er} - s\log( 2 + 2\kappa^2),
    \end{align*}
    since the supremum over $\lambda \in (0,\infty)$ is attained at $\lambda^* \coloneqq \bigl( \frac{r-1}{r} \cdot \frac{ts^{1/(r-1)}}{(2\sigma)^{r/(r-1)}} \bigr)^{r-1}$.
    By replacing $Z_i - \theta_0$ with $-(Z_i - \theta_0)$ for $i \in [n]$, we deduce that for every $t \geq 0$,
    \begin{align*}
        \mathbb{P}\bigl(|\hat{\theta}_n - \theta_0| \geq t \bigm| |\mathcal{D}|=s\bigr) \leq 2\exp\biggl\{ -\frac{st^r}{(2\sigma)^r} \cdot \frac{1}{er} + s\log( 2 + 2\kappa^2) \biggr\}.
    \end{align*}
    Hence, defining the event
    \[
    \mathcal{E}_3 \coloneqq \biggl\{ (\hat{\theta}_n - \theta_0)^2 \leq \biggl( \frac{(2\sigma)^r er\log(8/\delta)}{nq(1-\epsilon)/2} + (2\sigma)^r er \log( 2 + 2\kappa^2) \biggr)^{2/r}\biggr\},
    \]
    and proceeding in a similar fashion to~\eqref{Eq:E0E1bound}, we deduce that $\mathbb{P}(\mathcal{E}_0 \cap \mathcal{E}_3) \geq 1- \delta/2$.  Moreover, on $\mathcal{E}_0 \cap \mathcal{E}_3$, we have
    \begin{align*}
        (\hat{\theta}_n - \theta_0)^2 &\leq \biggl\{ \frac{(2\sigma)^r er\log(8/\delta)}{nq(1-\epsilon)/2} + (2\sigma)^r er \log( 2 + 2\kappa^2) \biggr\}^{2/r}\\
        &\leq \biggl\{ \frac{(2\sigma)^r er}{4} + (2\sigma)^r er \log( 2 + 2\kappa^2) \biggr\}^{2/r}\leq \biggl\{ \frac{3}{2} \cdot (2\sigma)^r er \log(2 + 2\kappa^2) \biggr\}^{2/r}\\
        &\leq \bigl\{ 3 \cdot (2\sigma)^r er \log( 2 + 2\kappa) \bigr\}^{2/r} \leq (9e\sigma)^2 \log^{2/r}( 2 + 2\kappa). \numberthis \label{eq:exponential-tail-bound-2}
    \end{align*}
    Thus, on $\mathcal{E}_0 \cap \mathcal{E}_2 \cap \mathcal{E}_3$, which satisfies $\mathbb{P}(\mathcal{E}_0 \cap \mathcal{E}_2 \cap \mathcal{E}_3) \geq 1 - \delta$, we combine~\eqref{eq:exponential-tail-bound-1} and~\eqref{eq:exponential-tail-bound-2} to obtain the desired result for $r>1$. 

    Finally, we consider the case where $r=1$. By \citet[Lemma~2.5]{zhivotovskiy2024dimension}, \eqref{eq:variational-form-chi-squared-divergence} yields, with the same choice of $Q$ and $g$, that
    \begin{align*}
        2\mathbb{E}\bigl[ \exp\bigl\{\lambda (Z_1-\theta_0)\bigr\} \bigm| Z_1\neq\star \bigr] \leq 1 + \kappa^2 + \exp\bigl\{(2\sigma\lambda)^2\bigr\},
    \end{align*}
    for all $|\lambda|\leq \frac{1}{2\sigma}$. Hence, by a similar argument to the $r > 1$ case,
    \begin{align}
        \log\mathbb{E}\biggl\{ \exp\biggl(\frac{\lambda}{|\mathcal{D}|}\sum_{i\in\mathcal{D}}(Z_i-\theta_0)\biggr) \,\biggm|\, |\mathcal{D}| = s \biggr\} \leq s \biggl\{ \log(1 + \kappa^2) + \biggl(\frac{2\sigma\lambda}{s}\biggr)^2 \biggr\}, \label{eq:log-mgf-bound-r=1}
    \end{align}
    for $s\in[n]$ and $|\lambda|\leq \frac{s}{2\sigma}$. Then, applying a Chernoff bound yields
    \begin{align*}
        \mathbb{P}\bigl(|\hat{\theta}_n - \theta_0| \geq t \bigm| |\mathcal{D}|=s\bigr) = \mathbb{P}\biggl( \Bigl| \frac{1}{|\mathcal{D}|} \sum_{i\in\mathcal{D}} (Z_i-\theta_0) \Bigr| \geq t \biggm| |\mathcal{D}| = s \biggr) \leq 2\exp\bigl(-\psi^*(t)\bigr),
    \end{align*}
    where
    \begin{align*}
        \psi^*(t) &\coloneqq \sup_{0<\lambda\leq \frac{s}{2\sigma}}\, \biggl\{\lambda t - s\log( 1 + \kappa^2) - \frac{(2\sigma\lambda)^2}{s}\biggr\}.
    \end{align*}
    Taking $t \coloneqq 8\sigma \log( 2 + 2\kappa^2)$ and $\lambda = s/(2\sigma)$ yields, for $s\geq nq(1-\epsilon)/2$, that
    \begin{align*}
        \mathbb{P}\bigl(|\hat{\theta}_n - \theta_0| \geq t \bigm| |\mathcal{D}| = s\bigr) &\leq 2\exp\bigl\{ -3s \log( 2 + 2\kappa^2) + s \bigr\} \leq 2\exp(-s) \leq \frac{\delta}{4},
    \end{align*}
    where the final inequality follows from the assumption $nq(1-\epsilon)\geq 8\log(8/\delta)$. Therefore, letting $\mathcal{E}_4 \coloneqq \bigl\{ |\hat{\theta}_n - \theta_0| < 8\sigma \log( 2 + 2\kappa^2) \bigr\}$, we have $\mathbb{P}(\mathcal{E}_0 \cap \mathcal{E}_4) \geq 1-\delta/2$. Moreover, on $\mathcal{E}_0 \cap \mathcal{E}_4$,
    \begin{align}
        (\hat{\theta}_n - \theta_0)^2 < 64\sigma^2 \log^2(2 + 2\kappa^2) \leq 256\sigma^2 \log^2( 2 + 2\kappa). \label{eq:exponential-tail-bound-3}
    \end{align}
    Thus, on $\mathcal{E}_0 \cap \mathcal{E}_2 \cap \mathcal{E}_4$, which satisfies $\mathbb{P}(\mathcal{E}_0 \cap \mathcal{E}_2 \cap \mathcal{E}_4) \geq 1 - \delta$, we combine~\eqref{eq:exponential-tail-bound-1} and~\eqref{eq:exponential-tail-bound-3} to obtain the desired result for $r=1$. 
\end{proof}

\subsubsection{Proof of Theorem~\ref{thm:nonparametric-realisable-model-lb}}
For $\theta\in\mathbb{R}$ and $K>0$, define
\begin{align}\label{eq:distributions-with-bounded-support}
    \mathcal{P}_{\mathrm{b}}(\theta,K) \coloneqq \Bigl\{ P \in \mathcal{P}(\mathbb{R}) : \mathbb{E}_P(X) = \theta,\text{$P$ is supported on an interval of length at most $K$} \Bigr\}.
\end{align}
\begin{proof}[Proof of Theorem~\ref{thm:nonparametric-realisable-model-lb}]    
    (a) Define $a \coloneqq \frac{q(1-\epsilon)}{q(1-\epsilon)+\epsilon} \in (0,1]$ and $b\coloneqq \frac{\sigma}{2} \cdot a^{-1/r} > 0$.  Let $X_1 \sim P_1$ and $X_2 \sim P_2$ be random variables satisfying
    \begin{align*}
        X_1 = \begin{cases}
            -b \quad&\text{with probability }\frac{1}{a+1}\\
            b &\text{with probability }\frac{a}{a+1}
        \end{cases}
        \quad\text{and}\quad X_2 = \begin{cases}
            -b \quad&\text{with probability }\frac{a}{a+1}\\
            b &\text{with probability }\frac{1}{a+1}.
        \end{cases}
    \end{align*}
    Then $\theta_1 \coloneqq \mathbb{E}(X_1) = -\frac{(1-a)b}{a+1}$ and $\theta_2 \coloneqq \mathbb{E}(X_2) = \frac{(1-a)b}{a+1}$. Moreover,
    \begin{align*}
        \mathbb{E}\bigl( |X_1 - \theta_1|^r \bigr) &= \frac{(2ab)^r + a(2b)^r}{(a+1)^{r+1}} \leq a \cdot (2b)^r = \sigma^r,
    \end{align*}
    where we have used the fact that $a^r + a \leq a(a+1) \leq a(a+1)^{r+1}$; by symmetry, $\mathbb{E}\bigl( |X_2 - \theta_2|^r \bigr) \leq \sigma^r$.  Consequently, $P_1 \in \mathcal{P}_{L^r}(\theta_1, \sigma^2)$ and $P_2 \in \mathcal{P}_{L^r}(\theta_2, \sigma^2)$. Now define $R_0 \in \mathcal{P}(\mathbb{R}_{\star})$ by
    \begin{align*}
        R_0(\{-b\}) \coloneqq \frac{q(1-\epsilon)}{a+1} \eqqcolon R_0(\{b\}) \quad \text{ and } \quad R_0(\{\star\}) \coloneqq 1- R_0(\{-b\}) - R_0(\{b\}) \in [0,1).
    \end{align*}
    By Proposition~\ref{prop:univariate-realisability}, $R_0 \in \mathcal{R}(P_1,\epsilon,q) \cap \mathcal{R}(P_2,\epsilon,q)$.  Therefore, by \citet[][Theorem~4 and Lemma~5]{ma2024high},
    \begin{align*}
        \mathcal{M}(\delta,\mathcal{P}_{\Theta},|\cdot|^2) \geq \frac{(\theta_2 - \theta_1)^2}{4} = \biggl\{ \frac{(1-a)b}{a+1} \biggr\}^2
        &= \frac{\sigma^2}{4} \biggl(\frac{\epsilon}{2q(1-\epsilon)+\epsilon}\biggr)^2  \biggl(\frac{q(1-\epsilon) + \epsilon}{q(1-\epsilon)}\biggr)^{2/r} \\
        &\geq \frac{\sigma^2}{36} \cdot \biggl\{ \biggl(\frac{\epsilon}{q(1-\epsilon)}\biggr)^2 \wedge \biggl(\frac{\epsilon}{q(1-\epsilon)}\biggr)^{2/r} \biggr\},
    \end{align*}
    where the final bound is obtained by considering separately the cases $\epsilon \leq q(1-\epsilon)$ and $\epsilon > q(1-\epsilon)$.  This proves the second term in the lower bound.

    For the first term in the lower bound, we observe that $\mathcal{P}_{\mathrm{b}}(\theta,\sigma) \subseteq \mathcal{P}_{L^r}(\theta,\sigma^2)$ for all $r \geq 2$.  We therefore obtain the desired conclusion by choosing the contamination distribution $Q\in\mathcal{P}(\mathbb{R}_{\star})$ such that $Q\bigl( \{\star\} \bigr)=1$ and applying Proposition~\ref{prop:univariate-mcar-lb}(b).

    \medskip(b) Define $P_1, P_2 \in \mathcal{P}(\mathbb{R})$ with Lebesgue densities $p_1, p_2$ respectively as in Lemma~\ref{lemma:psi-r-orlicz-norm-of-mixture}, so that $P_1 \in \mathcal{P}_{\psi_r}\bigl(\mathbb{E}_{P_1}(X_1),\sigma^2\bigr)$ and $P_2 \in \mathcal{P}_{\psi_r}\bigl(\mathbb{E}_{P_2}(X_2),\sigma^2\bigr)$.  Further, with $b > 0$ defined as in Lemma~\ref{lemma:psi-r-orlicz-norm-of-mixture}, define $R_1 \in \mathcal{P}(\mathbb{R}_{\star})$ through its Radon--Nikodym derivative
    \begin{align*}
        \frac{\mathrm{d}R_1}{\mathrm{d}\lambda_{\star}}(z) \coloneqq \begin{cases}
            q(1-\epsilon) \cdot p_1(z) \quad&\text{if }z\in(-\infty,b)\\
            \bigl\{q(1-\epsilon) + \epsilon \bigr\} \cdot p_1(z) \quad&\text{if }z\in[b,\infty)\\
            1- q(1-\epsilon) \cdot \int_{-\infty}^b p_1(x)\,\mathrm{d}x - \bigl\{q(1-\epsilon) + \epsilon \bigr\} \cdot \int_{b}^{\infty} p_1(x)\,\mathrm{d}x \quad&\text{if }z=\star,
        \end{cases}
    \end{align*}
    so that, by Proposition~\ref{prop:univariate-realisability}, $R_1 \in\mathcal{R}(P_1,\epsilon,q) \cap \mathcal{R}(P_2,\epsilon,q)$. Therefore, by \citet[][Theorem~4 and Lemma~5]{ma2024high},
    \begin{align}
        \mathcal{M}(\delta,\mathcal{P}_{\Theta},|\cdot|^2) \geq \frac{\bigl\{ \mathbb{E}_{P_2}(X_2) - \mathbb{E}_{P_1}(X_1) \bigr\}^2}{4}. \label{eq:minimax-quantile-psi-r-lb}
    \end{align}
    Now, writing $\sigma_0 \coloneqq \sigma/C_0$,
    \begin{align*}
        \mathbb{E}_{P_2}(X_2) &- \mathbb{E}_{P_1}(X_1) = \frac{\epsilon}{q(1-\epsilon)} \int_b^{\infty} xp_1(x)\,\mathrm{d}x - \frac{\epsilon}{q(1-\epsilon)+\epsilon} \int_0^b xp_1(x)\,\mathrm{d}x\\
        \overset{(i)}&{=} \frac{\epsilon}{q(1-\epsilon)} \biggl\{ be^{-(b/\sigma_0)^r} + \int_b^{\infty} e^{-(x/\sigma_0)^{r}} \,\mathrm{d}x \biggr\} \\
        &\qquad - \frac{\epsilon}{q(1-\epsilon)+\epsilon} \biggl\{ -be^{-(b/\sigma_0)^r} + \int_0^b e^{-(x/\sigma_0)^r} \,\mathrm{d}x \biggr\}\\
        \overset{(ii)}&{=} \biggl( \frac{\epsilon}{q(1-\epsilon)} + \frac{\epsilon}{q(1-\epsilon)+\epsilon} \biggr) \cdot \frac{q(1-\epsilon)}{2q(1-\epsilon) + \epsilon} \cdot \sigma_0\log^{1/r}\biggl( 2 + \frac{\epsilon}{q(1-\epsilon)} \biggr)\\
        &\qquad +  \frac{\epsilon}{q(1-\epsilon)} \int_b^{\infty} e^{-(x/\sigma_0)^{r}} \,\mathrm{d}x - \frac{\epsilon}{q(1-\epsilon)+\epsilon} \int_0^b e^{-(x/\sigma_0)^{r}}\,\mathrm{d}x, \numberthis \label{eq:difference-of-mean}
    \end{align*}
    where $(i)$ follows from integration by parts, $(ii)$ follows by substituting the definition of~$b$.
    Now let $h(t) \coloneqq \frac{1}{t} \int_0^t e^{-(x/\sigma_0)^{r}}\,\mathrm{d}x$. Then 
    \begin{align*}
        h'(t) = \frac{te^{-(t/\sigma_0)^r} - \int_0^t e^{-(x/\sigma_0)^{r}}\,\mathrm{d}x}{t^2} \leq 0,
    \end{align*}
    so $h$ is a decreasing function.   

    First consider the case where $\frac{\epsilon}{q(1-\epsilon)} \geq e^{2^r} - 2$ or equivalently $\epsilon \geq \frac{\{\exp(2^r)-2\}q}{1+\{\exp(2^r)-2\}q}$, so that $\log^{1/r}\bigl( 2+\frac{\epsilon}{q(1-\epsilon)} \bigr) \geq 2$ and
    \begin{align*}
        h(b) \leq h(2\sigma_0) = \frac{\int_0^2 e^{-x^r} \,\mathrm{d}x}{2} = \frac{\int_0^1 e^{-x^r} \,\mathrm{d}x + \int_1^2 e^{-x^r} \,\mathrm{d}x}{2} \leq \frac{1+e^{-1}}{2}.
    \end{align*}
    Hence, by~\eqref{eq:difference-of-mean},
    \begin{align*}
        \mathbb{E}_{P_2}(X_2) \! - \! \mathbb{E}_{P_1}(X_1) &\geq \biggl( \frac{\epsilon}{q(1\!-\!\epsilon)} + \frac{\epsilon}{q(1\!-\!\epsilon)\!+\!\epsilon} \biggr) \cdot \frac{q(1\!-\!\epsilon)}{2q(1\!-\!\epsilon) \!+\! \epsilon} \cdot \sigma_0\log^{1/r}\biggl( 2 \!+\! \frac{\epsilon}{q(1\!-\!\epsilon)} \biggr)\\
        &\qquad - \frac{\epsilon}{q(1-\epsilon)+\epsilon} \cdot \frac{1+e^{-1}}{2} \cdot \sigma_0\log^{1/r}\biggl( 2 + \frac{\epsilon}{q(1-\epsilon)} \biggr)\\
        &= \frac{1-e^{-1}}{2} \cdot \frac{\epsilon}{q(1-\epsilon)+\epsilon} \cdot \sigma_0\log^{1/r}\biggl( 2 + \frac{\epsilon}{q(1-\epsilon)} \biggr)\\
        &\geq \frac{1-e^{-1}}{8} \cdot \frac{\sigma}{C_0} \cdot \log^{1/r}\biggl( 2 + \frac{2\epsilon}{q(1-\epsilon)} \biggr).
    \end{align*}
    Therefore, by~\eqref{eq:minimax-quantile-psi-r-lb}, when $\epsilon \geq \frac{\{\exp(2^r)-2\}q}{1+\{\exp(2^r)-2\}q}$,
    \begin{align}
        \mathcal{M}(\delta,\mathcal{P}_{\Theta},|\cdot|^2) \gtrsim \sigma^2 \log^{2/r}\biggl( 2 + \frac{2\epsilon}{q(1-\epsilon)} \biggr). \label{eq:minimax-quantile-psi-r-lb-1}
    \end{align}
    Next consider the case where $\frac{\epsilon}{q(1-\epsilon)} \leq 1$, or equivalently $\epsilon \leq q/(1+q)$. Define $P_3,P_4 \in \mathcal{P}(\mathbb{R})$ by
    \begin{align*}
        P_3\biggl(\biggl\{ -\frac{\sigma}{4} \biggr\}\biggr) \coloneqq \frac{q(1-\epsilon)}{2q(1-\epsilon)+\epsilon} &\eqqcolon P_4\biggl(\biggl\{ \frac{\sigma}{4} \biggr\}\biggr), \quad \text{ and }\\
        P_3\biggl(\biggl\{ \frac{\sigma}{4} \biggr\}\biggr) \coloneqq \frac{q(1-\epsilon) + \epsilon}{2q(1-\epsilon)+\epsilon} &\eqqcolon P_4\biggl(\biggl\{ -\frac{\sigma}{4} \biggr\}\biggr).
    \end{align*}
    Thus $P_3\in\mathcal{P}_{\psi_r}\bigl(\mathbb{E}_{P_3}(X_3), \sigma^2\bigr)$ and $P_4\in\mathcal{P}_{\psi_r}\bigl(\mathbb{E}_{P_4}(X_4), \sigma^2\bigr)$. Further define $R_2 \in\mathcal{P}(\mathbb{R}_{\star})$ by
    \begin{align*}
        R_2\biggl(\biggl\{ -\frac{\sigma}{4} \biggr\}\biggr) &\coloneqq \frac{q(1-\epsilon)\{q(1-\epsilon) + \epsilon\}}{2q(1-\epsilon)+\epsilon} \eqqcolon R_2\biggl(\biggl\{\frac{\sigma}{4} \biggr\}\biggr) , \\
        R_2(\{\star\}) &\coloneqq 1- \frac{2q(1-\epsilon)\{q(1-\epsilon) + \epsilon\}}{2q(1-\epsilon)+\epsilon}.
    \end{align*}
    By Proposition~\ref{prop:univariate-realisability}, $R_2 \in \mathcal{R}(P_3,\epsilon,q) \cap \mathcal{R}(P_4,\epsilon,q)$. Therefore, by \citet[][Theorem~4 and Lemma~5]{ma2024high}, when $\epsilon \leq \frac{q}{1+q}$,
    \begin{align}
        \mathcal{M}(\delta,\mathcal{P}_{\Theta},|\cdot|^2) \geq \frac{\bigl\{ \mathbb{E}_{P_3}(X_3) - \mathbb{E}_{P_4}(X_4) \bigr\}^2}{4} \geq \frac{\sigma^2}{16}\biggl( \frac{\epsilon}{2q(1-\epsilon)+\epsilon} \biggr)^2 \geq \frac{\sigma^2}{144} \biggl( \frac{\epsilon}{q(1-\epsilon)} \biggr)^2. \label{eq:minimax-quantile-psi-r-lb-2}
    \end{align}
    Combining~\eqref{eq:minimax-quantile-psi-r-lb-1} and~\eqref{eq:minimax-quantile-psi-r-lb-2} yields that when $\epsilon \leq \frac{q}{1+q}$ or $\epsilon \geq \frac{\{\exp(2^r)-2\}q}{1+\{\exp(2^r)-2\}q}$,
    \begin{align}
        \mathcal{M}(\delta,\mathcal{P}_{\Theta},|\cdot|^2) \gtrsim \sigma^2\cdot \biggl\{ \biggl( \frac{\epsilon}{q(1-\epsilon)} \biggr)^2 \wedge \log^{2/r}\biggl( 2 + \frac{2\epsilon}{q(1-\epsilon)} \biggr) \biggr\}. \label{eq:minimax-quantile-psi-r-lb-3}
    \end{align}
    Further observe that $\mathcal{R}(P,\frac{q}{1+q},q) \subseteq \mathcal{R}(P,\epsilon,q)$ for all $P\in\mathcal{P}(\mathbb{R})$ when $\epsilon > \frac{q}{1+q}$. Thus, by~\eqref{eq:minimax-quantile-psi-r-lb-3}, we deduce that when $\frac{q}{1+q} < \epsilon < \frac{\{\exp(2^r)-2\}q}{1+\{\exp(2^r)-2\}q}$,
    \begin{align*}
        \mathcal{M}(\delta,\mathcal{P}_{\Theta},|\cdot|^2) &\gtrsim \sigma^2\cdot \biggl\{ \biggl( \frac{\frac{q}{1+q}}{q(1-\frac{q}{1+q})} \biggr)^2 \wedge \log^{2/r}\biggl( 2 + \frac{\frac{2q}{1+q}}{q(1-\frac{q}{1+q})} \biggr) \biggr\}\\
        &= \sigma^2 \gtrsim \sigma^2\cdot \biggl\{ \biggl( \frac{\epsilon}{q(1-\epsilon)} \biggr)^2 \wedge \log^{2/r}\biggl( 2 + \frac{2\epsilon}{q(1-\epsilon)} \biggr) \biggr\}, \numberthis \label{eq:minimax-quantile-psi-r-lb-4}
    \end{align*}
    where the last inequality follows from the fact that when $\epsilon < \frac{\{\exp(2^r)-2\}q}{1+\{\exp(2^r)-2\}q}$, 
    \begin{align*}
    \log^{2/r}\biggl( 2 + \frac{2\epsilon}{q(1-\epsilon)} \biggr) < \log^{2/r}\bigl(2+2(e^{2^r}-2)\bigr) &\leq \log^{2/r}\bigl(e^{2^r + \log 2}\bigr)\\
    &\leq (2 \cdot 2^r)^{2/r} = 2^{2/r}\cdot 2 \leq 8.
    \end{align*}
    Combining~\eqref{eq:minimax-quantile-psi-r-lb-3} and~\eqref{eq:minimax-quantile-psi-r-lb-4} yields the second term in the lower bound.

    For the first term in the lower bound, we observe that $\mathcal{P}_{\mathrm{b}}(\theta,\sigma/2) \subseteq \mathcal{P}_{\psi_r}(\theta,\sigma^2)$ for all $r \geq 1$.  We therefore obtain the desired conclusion by choosing the contamination distribution $Q\in\mathcal{P}(\mathbb{R}_{\star})$ such that $Q\bigl( \{\star\} \bigr)=1$ and applying Proposition~\ref{prop:univariate-mcar-lb}(b).
\end{proof}

\begin{lemma}\label{lemma:psi-r-orlicz-norm-of-mixture}
    Let $\epsilon \in [0,1)$, $q \in (0,1]$, $\sigma > 0$ and $r \geq 1$.  There exists a universal constant $C_0 > 0$ such that if $X_1 \sim P_{1} \in \mathcal{P}(\mathbb{R})$ and $X_2 \sim P_{2} \in \mathcal{P}(\mathbb{R})$ have Lebesgue densities $p_1$ and~$p_2$ respectively, where $p_1(x) \coloneqq \frac{rx^{r-1}}{(\sigma/C_0)^r}e^{-(C_0x/\sigma)^r}\mathbbm{1}_{\{x \geq 0\}}$ and 
    \begin{align*}
        p_2(x) \coloneqq \begin{cases}
            \frac{q(1-\epsilon)}{q(1-\epsilon) + \epsilon} \cdot p_1(x) \quad&\text{if }x<b\\
            \frac{q(1-\epsilon)+\epsilon}{q(1-\epsilon)} \cdot p_1(x) \quad&\text{if }x\geq b
        \end{cases}
        \quad\text{with}\quad b\coloneqq \frac{\sigma}{C_0}\log^{1/r}\biggl( 2 + \frac{\epsilon}{q(1-\epsilon)} \biggr),
    \end{align*}
    then $\|X_1 - \mathbb{E}X_1\|_{\psi_r} \vee \|X_2 - \mathbb{E}X_2\|_{\psi_r} \leq \sigma$.  
\end{lemma}
\begin{proof}
    Since $\mathbb{P}(|X_1| \geq x) = e^{-(C_0 x/\sigma)^{r}}$ for all $x\geq 0$, we have by \citet[Proposition 2.7.1]{vershynin2018high} that $\|X_1^r\|_{\psi_1} \leq C_1(\sigma/C_0)^r$ for some universal constant $C_1 > 0$, so $\|X_1\|_{\psi_r} \leq C_1^{1/r} \sigma/C_0 \leq (C_1 \vee 1)\sigma/C_0$. Then, by \citet[Lemma A.3]{gotze2021concentration}, we have 
    \[
    \|X_1-\mathbb{E}X_1\|_{\psi_r} \leq \biggl\{ 1+ \biggl(\frac{2}{(re)^{1/r}\log 2} \biggr)^{1/r} \biggr\}(C_1 \vee 1)\frac{\sigma}{C_0} \leq 4(C_1 \vee 1)\frac{\sigma}{C_0}.
    \]
    Turning to $X_2$, first observe that $p_2$ is a Lebesgue density, since
    \[
    \int_{\mathbb{R}} p_2(x) \, \mathrm{d}x = \frac{q(1-\epsilon)}{q(1-\epsilon) + \epsilon}\{1 - e^{-(C_0b/\sigma)^r}\} + \frac{q(1-\epsilon)+\epsilon}{q(1-\epsilon)} e^{-(C_0b/\sigma)^r} = 1.
    \]
    Now, for $x \geq 0$, we have
    \begin{align*}
        \mathbb{P}(X_2-b \geq x) = \frac{q(1-\epsilon)+\epsilon}{q(1-\epsilon)} \cdot \mathbb{P}(X_1 \geq b+x) &\leq \frac{q(1-\epsilon)+\epsilon}{q(1-\epsilon)} \cdot e^{-(C_0 b/\sigma)^r- (C_0 x/\sigma)^r} \\
        &= \frac{q(1-\epsilon)+\epsilon}{2q(1-\epsilon) + \epsilon} \cdot e^{-(C_0 x/\sigma)^r} \leq e^{-(C_0x/\sigma)^r}.
    \end{align*}
    Define $a \coloneqq \frac{q(1-\epsilon)}{q(1-\epsilon)+\epsilon} \in (0,1]$, so that $b = (\sigma/C_0) \log^{1/r}\bigl( \frac{1+a}{a} \bigr)$.
    For $x\in[0,b]$, we have 
    \begin{align*}
        \mathbb{P}(X_2-b \leq -x) &= a \cdot \mathbb{P}(X_1\leq b-x) \leq a \leq \frac{2a}{1+a} = 2e^{-(C_0b/\sigma)^r} \leq 2e^{-(C_0x/\sigma)^r}.
    \end{align*}
    For $x>b$, we have $\mathbb{P}(X_2-b \leq -x) = 0$.
    Combining these inequalities, we obtain $\mathbb{P}(|X_2-b| \geq x) \leq 3e^{-(C_0x/\sigma)^r}$ for all $x\geq 0$. Therefore, by~\citet[Proposition 2.7.1]{vershynin2018high}, we deduce\footnote{Note that in \citet[Proposition 2.7.1(a)]{vershynin2018high}, the condition is that $\mathbb{P}(|X|\geq t) \leq 2\exp(-t/K_1)$ for all $t \geq 0$. However, the result is still true if we replace the factor $2$ by $3$. See for example, \citet[Proposition 2.5.2]{vershynin2018high} for the proof strategy.} that $\|(X_2-b)^r\|_{\psi_1} \leq C_2(\sigma/C_0)^r$ for some universal constant $C_2>0$, so 
    \[
    \|X_2-b\|_{\psi_r} \leq C_2^{1/r}\frac{\sigma}{C_0} \leq (C_2 \vee 1)\frac{\sigma}{C_0}.
    \]
    By \citet[Lemma A.3]{gotze2021concentration} again, $\|X_2-\mathbb{E}X_2\|_{\psi_r} \leq 4(C_2 \vee 1)\sigma/C_0$. Finally, taking $C_0 \coloneqq 4C_1 \vee 4C_2 \vee 4$ completes the proof.
\end{proof}

\subsubsection{Proof of Theorem~\ref{thm:nonparametric-multivariate-realisable-mean-ub}}

The following proposition, which is analogous to Proposition~\ref{thm:bias-of-mean-one-dim-realisable-case} in the univariate case, will be used in the proof of Theorem~\ref{thm:nonparametric-multivariate-realisable-mean-ub}.
\begin{prop} \label{prop:bias-of-multivariate-mean}
    Let $\theta_0 \in \mathbb{R}^d$, $\Sigma \in \mathcal{S}_{++}^{d\times d}$, $\epsilon\in[0,1)$, $\delta\in(0,1]$, $\pi\in\mathcal{P}\bigl(\{\emptyset,[d]\}\bigr)$ and $q\coloneqq \pi([d])$.
    \begin{itemize}
        \item[(a)] Let $r\geq 2$, $P\in \mathcal{P}_{L^r}(\theta_0, \Sigma)$ and $Z \sim R \in \mathcal{R}_{\emptyset,[d]}(P, \epsilon, \pi)$. Then
        \begin{align*}
            \bigl\| \mathbb{E}(Z\,|\,Z\in\mathbb{R}^d) - \theta_0 \bigr\|_2^2 \leq \|\Sigma\|_{\mathrm{op}} \biggl\{ \biggl(\frac{\epsilon}{q(1-\epsilon)}\biggr)^2 \wedge \biggl(\frac{\epsilon}{q(1-\epsilon)}\biggr)^{2/r} \biggr\}.
        \end{align*}
        \item[(b)] Let $r\geq 1$, $P\in \mathcal{P}_{\psi_r}(\theta_0, \Sigma)$, and $Z \sim R \in \mathcal{R}_{\emptyset,[d]}(P, \epsilon, \pi)$. Then
        \begin{align*}
            \bigl\| \mathbb{E}(Z\,|\,Z\in\mathbb{R}^d) - \theta_0 \bigr\|_2^2 \leq \|\Sigma\|_{\mathrm{op}} \biggl\{ 4\biggl(\frac{\epsilon}{q(1-\epsilon)}\biggr)^2 \;\wedge\;  \log^{2/r} \biggl( 2 + \frac{2\epsilon}{q(1-\epsilon)} \biggr) \biggr\}.
        \end{align*}
    \end{itemize}
\end{prop}
\begin{proof}
    Let $\kappa \coloneqq \frac{\epsilon}{q(1-\epsilon)}$, $X\sim P$, $v \in \mathbb{S}^{d-1}$, $Z^{(v)} \coloneqq v^\top Z \cdot \mathbbm{1}_{\{Z\in\mathbb{R}^d\}} + \star \cdot \mathbbm{1}_{\{Z\notin\mathbb{R}^d\}}$, $R^{(v)} \coloneqq \mathsf{Law}(Z^{(v)})$ and $P^{(v)} \coloneqq \mathsf{Law}(v^\top X)$. By Lemma~\ref{lemma:realisability-of-projection}, we have $R^{(v)} \in \mathcal{R}(P^{(v)},\epsilon,q)$.

    (a) Since $P^{(v)} \in \mathcal{P}_{L^r}(v^\top\theta_0, v^\top\Sigma v)$ we have by Proposition~\ref{thm:bias-of-mean-one-dim-realisable-case}(a) that
    \begin{align*}
        \bigl\| \mathbb{E}(Z\,|\,Z\in\mathbb{R}^d) - \theta_0 \bigr\|_2^2 &= \sup_{v\in\mathbb{S}^{d-1}} \bigl\{ v^\top \mathbb{E}(Z | Z\in\mathbb{R}^d) - v^\top\theta_0 \bigr\}^2 \\
        &= \sup_{v\in\mathbb{S}^{d-1}} \bigl\{ \mathbb{E}(Z^{(v)} | Z^{(v)} \neq \star) - v^\top\theta_0 \bigr\}^2\\
        &\leq \sup_{v\in\mathbb{S}^{d-1}} v^\top\Sigma v \cdot (\kappa^2 \wedge \kappa^{2/r})\\
        &= \|\Sigma\|_{\mathrm{op}} (\kappa^2 \wedge \kappa^{2/r}),
    \end{align*}
    as required.
    
    (b) We now have $P^{(v)} \in \mathcal{P}_{\psi_r}(v^\top\theta_0, v^\top\Sigma v)$, so the proof is the same as part (a), except that we use Proposition~\ref{thm:bias-of-mean-one-dim-realisable-case}(b) instead.
\end{proof}

\begin{proof}[Proof of Theorem~\ref{thm:nonparametric-multivariate-realisable-mean-ub}]
    Let $\kappa \coloneqq \frac{\epsilon}{q(1-\epsilon)}$.

    (a) For $v \in \mathbb{S}^{d-1}$, we have by the same argument as the proof of~\eqref{Eq:CondVarBound} that
    \begin{align*}
        \Var(v^\top Z_1 \,|\, Z_1 \in \mathbb{R}^d) \leq (1 + \kappa^{2/r})v^\top\Sigma v.
    \end{align*}
    Therefore, writing $\Gamma \coloneqq \Cov(Z_1 \,|\, Z_1 \in \mathbb{R}^d) \in \mathcal{S}_{+}^{d \times d}$, we have 
    \begin{align}
        \| \Gamma\|_{\mathrm{op}} \leq (1 + \kappa^{2/r}) \|\Sigma\|_{\mathrm{op}} 
        \quad\text{and}\quad
        \tr(\Gamma) \leq (1 + \kappa^{2/r})\tr(\Sigma). \label{eq:trace-bound}
    \end{align}
    By Lemma~\ref{lemma:binomial-tail}(b), the event $\mathcal{E}_0 \coloneqq \{|\mathcal{D}| \geq nq(1-\epsilon)/2\}$ has $\mathbb{P}(\mathcal{E}_0) \geq 1-\delta/2$, since $nq(1-\epsilon)\geq 8\log(2/\delta)$.  Moreover, by~\eqref{eq:assumption-on-alg-heavy-tail-multivariate} and~\eqref{eq:trace-bound},
    \begin{align*}
        \mathbb{P}\biggl\{\bigl\|\hat{\theta}_n - \mathbb{E}(Z_1|Z_1\in\mathbb{R}^d)\bigr\|_2^2 > 2C(1+\kappa^{2/r}) \biggl(\frac{\tr(\Sigma)}{nq(1-\epsilon)} + \frac{\|\Sigma\|_{\mathrm{op}}\log(2/\delta)}{nq(1-\epsilon)}\biggr) \,\bigg|\, \mathcal{E}_0 \biggr\} \leq \frac{\delta}{2}.
    \end{align*}
    Thus, 
    \begin{align*}
        &\mathbb{P}\biggl\{\bigl\|\hat{\theta}_n - \mathbb{E}(Z_1|Z_1\in\mathbb{R}^d)\bigr\|_2^2 \leq 2C(1+\kappa^{2/r}) \biggl(\frac{\tr(\Sigma)}{nq(1-\epsilon)} + \frac{\|\Sigma\|_{\mathrm{op}}\log(2/\delta)}{nq(1-\epsilon)}\biggr) \biggr\}\\
        &\geq \mathbb{P}\biggl\{\bigl\|\hat{\theta}_n \!-\! \mathbb{E}(Z_1|Z_1\in\mathbb{R}^d)\bigr\|_2^2 \leq 2C(1\!+\!\kappa^{2/r}) \biggl(\frac{\tr(\Sigma)}{nq(1-\epsilon)} + \frac{\|\Sigma\|_{\mathrm{op}}\log(2/\delta)}{nq(1-\epsilon)}\biggr) \,\bigg|\, \mathcal{E}_0 \biggr\}\mathbb{P}(\mathcal{E}_0)\\
        &\geq 1-\delta.
    \end{align*}
    On the event that $\bigl\{\bigl\|\hat{\theta}_n - \mathbb{E}(Z_1|Z_1\in\mathbb{R}^d)\bigr\|_2^2 \leq 2C(1+\kappa^{2/r}) \bigl(\frac{\tr(\Sigma)}{nq(1-\epsilon)} + \frac{\|\Sigma\|_{\mathrm{op}}\log(2/\delta)}{nq(1-\epsilon)}\bigr)\bigr\}$, we have by Proposition~\ref{prop:bias-of-multivariate-mean}(a) that
    \begin{align*}
        \|\hat{\theta}_n &- \theta_0\|_2^2 \leq 2\bigl\|\hat{\theta}_n - \mathbb{E}(Z_1\,|\,Z_1\in\mathbb{R}^d)\bigr\|_2^2 + 2\bigl\|\mathbb{E}(Z_1\,|\,Z_1\in\mathbb{R}^d) - \theta_0\bigr\|_2^2\\
        &\leq 4C\frac{\tr(\Sigma) + \|\Sigma\|_{\mathrm{op}}\log(2/\delta)}{nq(1-\epsilon)} + 4C\frac{\mathbf{r}(\Sigma) + \log(2/\delta)}{nq(1-\epsilon)} \|\Sigma\|_{\mathrm{op}}\kappa^{2/r} \!+\! 2\|\Sigma\|_{\mathrm{op}} (\kappa^2 \wedge \kappa^{2/r})\\
        &\leq 8C\frac{\tr(\Sigma) + \|\Sigma\|_{\mathrm{op}}\log(2/\delta)}{nq(1-\epsilon)} + (5C+2)\|\Sigma\|_{\mathrm{op}} (\kappa^2 \wedge \kappa^{2/r}),
    \end{align*}
    where the final inequality follows by considering separately the cases $\kappa \leq 1$ and $\kappa > 1$, and in the second case noting that $\mathbf{r}(\Sigma) \leq nq(1-\epsilon)$ and $\log(2/\delta) \leq nq(1-\epsilon)/8$.
    
    \medskip(b) For $v \in \mathbb{S}^{d-1}$, we have by the same argument as in the proof of~\eqref{eq:sub-exponential-norm-bound} that conditional on $\{Z_1 \in \mathbb{R}^d\}$,
    \begin{align*}
        \bigl\| v^\top Z_1 - \mathbb{E}(v^\top Z_1 \,|\, Z_1 \in \mathbb{R}^d) \bigr\|_{\psi_1} \leq \sqrt{(1+\kappa)v^\top\Sigma v},
    \end{align*}
    so that $Z_1 \,|\, \{Z_1 \in \mathbb{R}^d\} \in \mathcal{P}_{d,\psi_1}\bigl( \mathbb{E}(v^\top Z_1 \,|\, Z_1 \in \mathbb{R}^d), (1+\kappa)\Sigma \bigr)$. By Lemma~\ref{lemma:binomial-tail}(b), the event $\mathcal{E}_0 \coloneqq \{|\mathcal{D}| \geq nq(1-\epsilon)/2\}$ satisfies $\mathbb{P}(\mathcal{E}_0) \geq 1-\delta/4$ since $nq(1-\epsilon) \geq 8\log(8/\delta)$. Moreover, writing
    \begin{align*}
        \mathcal{E}_2 \coloneqq \biggl\{ \bigl\|\hat{\theta}_n - \mathbb{E}(Z_1\,|\,Z_1\in\mathbb{R}^d)\bigr\|_2^2 \leq 48(1+\kappa) \cdot \frac{\tr(\Sigma) + \|\Sigma\|_{\mathrm{op}} \log(8/\delta)}{nq(1-\epsilon)}\biggr\},
    \end{align*}
    we have by Lemma~\ref{lemma:concentration-of-sample-mean-sub-exponential-vector} (a consequence of the PAC--Bayes lemma) that $\mathbb{P}\bigl(\mathcal{E}_2 \,\big|\, |\mathcal{D}| = s\bigr) \geq 1-\delta/4$ for $s\geq nq(1-\epsilon)/2$, so $\mathbb{P}(\mathcal{E}_0 \cap \mathcal{E}_2) \geq 1-\delta/2$. On $\mathcal{E}_0 \cap \mathcal{E}_2$, we have by Proposition~\ref{prop:bias-of-multivariate-mean}(b) that
    \begin{align*}
        \|\hat{\theta}_n - \theta_0\|_2^2 &\leq 2\bigl\|\hat{\theta}_n - \mathbb{E}(Z_1\,|\,Z_1\in\mathbb{R}^d)\bigr\|_2^2 + 2\bigl\|\mathbb{E}(Z_1\,|\,Z_1\in\mathbb{R}^d) - \theta_0\bigr\|_2^2\\
        &\lesssim \frac{\tr(\Sigma) + \|\Sigma\|_{\mathrm{op}} \log(8/\delta)}{nq(1-\epsilon)} + \|\Sigma\|_{\mathrm{op}}\kappa\cdot \frac{\mathbf{r}(\Sigma) + \log(8/\delta)}{nq(1-\epsilon)} + \|\Sigma\|_{\mathrm{op}}\kappa^2\\
        &\lesssim \frac{\tr(\Sigma) + \|\Sigma\|_{\mathrm{op}} \log(8/\delta)}{nq(1-\epsilon)} + \|\Sigma\|_{\mathrm{op}}\kappa^2, \numberthis \label{eq:E0-cap-E2}
    \end{align*}
    where the final inequality follows by considering separately the cases $\kappa \leq 1$ and $\kappa > 1$, and in the second case noting that $\mathbf{r}(\Sigma) \leq nq(1-\epsilon)$ and $\log(8/\delta) \leq nq(1-\epsilon)/8$.
    
    For the last term in the upper bound, we first consider the case where $r>1$. For $w \in \mathbb{R}^d$, we have by the same argument as in the proof of~\eqref{eq:log-mgf-bound} that
    \begin{align}
        \log\mathbb{E}\bigl\{ \exp\bigl(\lambda w^\top(Z_1 -\theta_0)\bigr) \bigm| Z_1 \in \mathbb{R}^d \bigr\} \leq \log(1+\kappa^2) + \log 2 + \bigl(2\lambda \sqrt{w^\top\Sigma w}\bigr)^{r/(r-1)},  \label{eq:log-mgf-f-lambda}
    \end{align}
    for all $\lambda>0$. Let $\beta\coloneqq \mathbf{r}(\Sigma)$, let $\mu$ denote the distribution of $\mathsf{N}_d(0,\beta^{-1}\Sigma)$ and for $u\in\Sigma^{1/2}\mathbb{S}^{d-1}$, let $\rho_u$ denote the conditional distribution of $Y$ given $\bigl\{\|Y-u\|_2 \leq 2\|\Sigma\|_{\mathrm{op}}^{1/2}\bigr\}$, where $Y\sim\mathsf{N}_d(u,\beta^{-1}\Sigma)$.  By Chebychev's inequality, 
    \[
    \mathbb{P}\bigl(\|Y-u\|_2 \geq 2\|\Sigma\|_{\mathrm{op}}^{1/2}\bigr) \leq \frac{\tr(\Sigma)}{4\beta \|\Sigma\|_{\mathrm{op}}} = \frac{1}{4}.
    \]
    Hence, by the third displayed equation of \citet[][p.~11]{zhivotovskiy2024dimension}, we have 
    \begin{align*}
        \mathrm{KL}(\rho_u,\mu) = \log \biggl( \frac{1}{\mathbb{P}\bigl(\|Y-u\|_2 \leq 2\|\Sigma\|_{\mathrm{op}}^{1/2}\bigr)} \biggr) + \frac{\beta}{2} \leq 2\log 2 + \frac{\mathbf{r}(\Sigma)}{2}.
    \end{align*}
    Fix $u \in \Sigma^{1/2}\mathbb{S}^{d-1}$, let $v\in\mathbb{R}^d$ be such that $\|v-u\|_2 \leq 2\|\Sigma\|_{\mathrm{op}}^{1/2}$, and for $\lambda>0$, define $f_{\lambda}:\mathbb{R}^d\times\mathbb{R}^d \to \mathbb{R}$ by $f_{\lambda}(x,y) \coloneqq \lambda y^\top\Sigma^{-1/2}(x-\theta_0)$. Then, since $\|v\|_2 \leq 3\|\Sigma\|_{\mathrm{op}}^{1/2}$, we have by~\eqref{eq:log-mgf-f-lambda} that 
    \begin{align*}
        \log \mathbb{E}_{Z\sim R}\bigl(e^{f_\lambda(Z,v)} \bigm| Z\in\mathbb{R}^d \bigr) \leq \log(2+2\kappa^2) + \bigl(6\lambda \|\Sigma\|_{\mathrm{op}}^{1/2}\bigr)^{r/(r-1)},
    \end{align*}
    so $\mathbb{E}_{\xi_u \sim \rho_u} \bigl\{\log \mathbb{E}_{Z\sim R}\bigl(e^{f_{\lambda}(Z,\xi_u)} \bigm| Z\in\mathbb{R}^d \bigr) \bigr\} \leq \log(2+2\kappa^2) + \bigl(6\lambda \|\Sigma\|_{\mathrm{op}}^{1/2}\bigr)^{r/(r-1)}$.
    Therefore, for $s\geq nq(1-\epsilon)/2$, by the PAC--Bayes lemma (Lemma~\ref{lemma:PAC-Bayes}), conditional on $|\mathcal{D}| = s$, we have with probability at least $1-\delta/4$ that
    \begin{align*}
        \biggl\| \frac{1}{|\mathcal{D}|}\sum_{i\in\mathcal{D}} Z_i &- \theta_0 \biggr\|_2 = \sup_{u\in\Sigma^{1/2}\mathbb{S}^{d-1}} \frac{1}{\lambda|\mathcal{D}|}\sum_{i\in\mathcal{D}} \mathbb{E}_{\xi_u \sim \rho_u} f_{\lambda}(Z_i, \xi_u)\\
        &\leq \inf_{\lambda>0} \biggl\{ \frac{\log(2+2\kappa^2)}{\lambda} + \bigl(6\|\Sigma\|_{\mathrm{op}}^{1/2}\bigr)^{r/(r-1)} \lambda^{1/(r-1)} + \frac{\mathbf{r}(\Sigma)/2 + 2\log(4/\delta)}{s\lambda} \biggr\}\\
        \overset{(i)}&{\leq} 12\|\Sigma\|_{\mathrm{op}}^{1/2} \biggl\{ \log(2+2\kappa^2) + \frac{\mathbf{r}(\Sigma)/2 + 2\log(4/\delta)}{s} \biggr\}^{1/r}\\
        \overset{(ii)}&{\leq} 12\|\Sigma\|_{\mathrm{op}}^{1/2} \bigl\{ \log(2+2\kappa^2) + 2\bigr\}^{1/r} \lesssim \|\Sigma\|_{\mathrm{op}}^{1/2} \log^{1/r}(2+2\kappa),
    \end{align*}
    where $(i)$ follows by choosing $\lambda = \frac{1}{6\|\Sigma\|_{\mathrm{op}}^{1/2}} \bigl\{ \log(2+2\kappa^2) + \frac{\mathbf{r}(\Sigma)/2 + 2\log(4/\delta)}{s} \bigr\}^{(r-1)/r}$ and $(ii)$ follows from the assumptions that $nq(1-\epsilon) \geq \mathbf{r}(\Sigma)$ and $\delta \geq 8\exp\bigl( -nq(1-\epsilon)/8 \bigr)$. Hence, there exists a universal constant $C_1 > 0$ such that the event 
    \begin{align}
        \mathcal{E}_3 \coloneqq \biggl\{ \biggl\| \frac{1}{|\mathcal{D}|}\sum_{i\in\mathcal{D}} Z_i - \theta_0 \biggr\|_2^2 \leq C_1\|\Sigma\|_{\mathrm{op}} \log^{2/r}(2+2\kappa) \biggr\}, \label{eq:def-E3}
    \end{align}
    satisfies $\mathbb{P}(\mathcal{E}_0 \cap \mathcal{E}_3) \geq 1-\delta/2$. Thus, on the event $\mathcal{E}_0 \cap \mathcal{E}_2 \cap \mathcal{E}_3$, which has probability at least $1-\delta$, we combine~\eqref{eq:E0-cap-E2} and~\eqref{eq:def-E3} to obtain the desired result for $r>1$.

    Finally, we consider the case where $r=1$. For $w \in \mathbb{R}^d$, we have by the same argument as the proof of~\eqref{eq:log-mgf-bound-r=1} that
    \begin{align*}
        \log\mathbb{E}\bigl\{ \exp\bigl(\lambda w^\top(Z_1 -\theta_0)\bigr) \, \big| \, Z_1 \in \mathbb{R}^d \bigr\} \leq \log(1+\kappa^2) + \log 2 + \bigl(2\lambda \sqrt{w^\top\Sigma w}\bigr)^2, 
    \end{align*}
    for $|\lambda| \leq \frac{1}{2}\|\Sigma\|_{\mathrm{op}}^{-1/2} \leq \frac{1}{2\sqrt{w^\top\Sigma w}}$. Hence, for $s\geq nq(1-\epsilon)/2$, by following the same proof as the $r>1$ case above, we deduce that, conditional on $|\mathcal{D}|=s$, we have with probability at least $1-\delta/4$ that
    \begin{align*}
        \biggl\| \frac{1}{|\mathcal{D}|}\sum_{i\in\mathcal{D}} Z_i \!-\! \theta_0 \biggr\|_2
        &\leq \inf_{\lambda \in (0,\frac{1}{2}\|\Sigma\|_{\mathrm{op}}^{-1/2}]} \biggl\{ \frac{\log(2\!+\!2\kappa^2)}{\lambda} \!+\! \bigl(6\|\Sigma\|_{\mathrm{op}}^{1/2}\bigr)^2 \lambda \!+\! \frac{\mathbf{r}(\Sigma)/2 \!+\! 2\log(4/\delta)}{s\lambda} \biggr\}\\
        \overset{(i)}&{\leq} 2\|\Sigma\|_{\mathrm{op}}^{1/2} \biggl\{ \log(2+2\kappa^2) + 9 + \frac{\mathbf{r}(\Sigma)/2 + 2\log(4/\delta)}{s} \biggr\}\\
        \overset{(ii)}&{\leq} 2\|\Sigma\|_{\mathrm{op}}^{1/2} \bigl\{ \log(2+2\kappa^2) + 11 \bigr\} \lesssim \|\Sigma\|_{\mathrm{op}}^{1/2} \log(2+2\kappa),
    \end{align*}
    where $(i)$ follows by choosing $\lambda = \frac{1}{2}\|\Sigma\|_{\mathrm{op}}^{-1/2}$, and $(ii)$ follows from the assumptions that $nq(1-\epsilon) \geq \mathbf{r}(\Sigma)$ and $\delta \geq 8\exp\bigl( -nq(1-\epsilon)/8 \bigr)$. Hence, there exists a universal constant $C_2 > 0$ such that the event 
    \begin{align}
        \mathcal{E}_4 \coloneqq \biggl\{ \biggl\| \frac{1}{|\mathcal{D}|}\sum_{i\in\mathcal{D}} Z_i - \theta_0 \biggr\|_2^2 \leq C_2 \|\Sigma\|_{\mathrm{op}} \log^2(2+2\kappa) \biggr\}, \label{eq:def-E4}
    \end{align}
    satisfies $\mathbb{P}(\mathcal{E}_0 \cap \mathcal{E}_4) \geq 1 - \delta/2$.  Thus, on the event $\mathcal{E}_0 \cap \mathcal{E}_2 \cap \mathcal{E}_4$, which has probability at least $1-\delta$, we combine~\eqref{eq:E0-cap-E2} and~\eqref{eq:def-E4} to obtain the desired result for $r=1$.
\end{proof}

\section{Proofs from Section~\ref{sec:regression-missing-response}}

\subsection{Proof of Lemma~\ref{lemma:beta-gamma-regular}}
\begin{proof}[Proof of Lemma~\ref{lemma:beta-gamma-regular}]
(a) Let $(v_m)$ be a sequence in $\mathbb{S}^{d-1}$ with 
\[
\mathbb{P}\bigl(|X_1^\top v_m| > \gamma\bigr) \searrow \inf_{v \in \mathbb{S}^{d-1}} \mathbb{P}\bigl(|X_1^\top v| > \gamma\bigr)
\]
as $m \rightarrow \infty$.  Then by compactness of $\mathbb{S}^{d-1}$, there exists a subsequence $(v_{m_k})$, as well as $v_* \in \mathbb{S}^{d-1}$, for which $v_{m_k} \rightarrow v_*$ as $k \rightarrow \infty$.  But then $|X_1^\top v_{m_k}| \stackrel{d}{\rightarrow} |X_1^\top v_*|$ as $k \rightarrow \infty$, so by, e.g., \citet[][Lemma~2.2]{van1998asymptotic},
\[
\mathbb{P}\bigl(|X_1^\top v_*| > \gamma\bigr) \leq \liminf_{k \rightarrow \infty} \mathbb{P}\bigl(|X_1^\top v_{m_k}| > \gamma\bigr) = \inf_{v \in \mathbb{S}^{d-1}} \mathbb{P}\bigl(|X_1^\top v| > \gamma\bigr).
\]
It follows that the infimum in the definition of $\beta$ is attained.  

If $\beta = 0$ for all $\gamma > 0$, then for every $\gamma > 0$ we can find $v_*(\gamma) \in \mathbb{S}^{d-1}$ with $\mathbb{P}\bigl(|X_1^\top v_*(\gamma)| > \gamma\bigr) = 0$.  Writing $v_m \coloneqq v_*(1/m)$, there exist integers $1 \leq m_1 < m_2 < \ldots$ and $v_{**} \in \mathbb{S}^{d-1}$ with $v_{m_k} \rightarrow v_{**}$ as $k \rightarrow \infty$.  Since $|X_1^\top v_{m_k}| - 1/m_k \stackrel{d}{\rightarrow} |X_1^\top v_{**}|$ as $k \rightarrow \infty$ we have by \citet[][Lemma~2.2]{van1998asymptotic} again that
\[
\mathbb{P}\bigl(|X_1^\top v_{**}| > 0\bigr) \leq \liminf_{k \rightarrow \infty} \mathbb{P}\biggl(|X_1^\top v_{m_k}| > \frac{1}{m_k}\biggr) = 0.
\]
But then, defining the hyperplane $H \coloneqq \{x \in \mathbb{R}^d:x^\top v_{**} = 0\}$, we have $P(H) = 1$.

\medskip

(b) The claim is equivalent to showing that there exists a universal constant $c>0$ such that if $\frac{d + \log(1/\delta)}{n} \leq c\beta^2$, then,  with probability at least $1-\delta$,
\begin{align*}
    \sup_{v\in\mathbb{S}^{d-1}} -\frac{1}{n} \sum_{i=1}^n \mathbbm{1}_{\{|X_i^\top v| > \gamma\}} \leq -2\beta.
\end{align*}
To establish this, let $\mathcal{H} \coloneqq \{x\mapsto -\mathbbm{1}_{\{|x^\top v| > \gamma\}} : v\in \mathbb{S}^{d-1}\}$.  Then 
\begin{align*}
    \sup_{v\in\mathbb{S}^{d-1}} -\frac{1}{n} \sum_{i=1}^n \mathbbm{1}_{\{|X_i^\top v| > \gamma\}} + 3\beta &\leq \sup_{v\in\mathbb{S}^{d-1}} \frac{1}{n} \sum_{i=1}^n \bigl\{-\mathbbm{1}_{\{|X_i^\top v| > \gamma\}} + \mathbb{P}(|X_i^\top v| > \gamma) \bigr\}\\
    &= \sup_{h\in\mathcal{H}} \frac{1}{n}\sum_{i=1}^n \bigl\{h(X_i) - \mathbb{E}h(X_i)\bigr\}\eqqcolon V, \numberthis \label{eq:beta-gamma-regular-bound-1}
\end{align*}
where the first inequality follows since $\mathbb{P}(|X_i^\top v| > \gamma) \geq 3\beta$ for all $v\in\mathbb{S}^{d-1}$. By the bounded differences inequality~\citep[e.g.,][Theorem 6.2]{boucheron2003concentration}, with probability at least $1 - \delta$,
\begin{align} \label{eq:beta-gamma-regular-bounded-diff}
     V \leq \mathbb{E}(V) + \sqrt{\frac{\log(1/\delta)}{2n}}.
\end{align}
For a collection $\mathcal{H}_1$ of binary-valued functions, we let $\mathrm{VC}(\mathcal{H}_1)$ denote its Vapnik--Chervonenkis dimension.  For $v \in \mathbb{R}^d$ and $b \in \mathbb{R}$, define $g_{v,b}:\mathbb{R}^d \rightarrow \mathbb{R}$ by $g_{v,b}(x) \coloneqq x^\top v + b$, and define the vector space $\mathcal{G} \coloneqq \{g_{v,b}: v\in\mathbb{R}^d,b\in\mathbb{R}\}$.  Now let $\mathcal{H'} \coloneqq \{x\mapsto -\mathbbm{1}_{\{g(x)>0\}} : g\in \mathcal{G}\}$, which by \citet[Proposition~4.20]{wainwright2019high} satisfies $\mathrm{VC}(\mathcal{H}') \leq \mathrm{dim}(\mathcal{G}) = d+1$. Then
\begin{align*}
    \mathcal{H} = \bigl\{x\mapsto -\mathbbm{1}_{\{g_{v,-\gamma}(x) > 0\} \cup \{g_{-v,-\gamma}(x) > 0\}}& : v\in\mathbb{S}^{d-1}\bigr\} \\
    &\subseteq \bigl\{x\mapsto -\mathbbm{1}_{\{g_1(x)>0\} \cup \{g_2(x)>0\}} : g_1,g_2\in\mathcal{G} \bigr\}.
\end{align*}
Hence, by \citet[Lemma~3.2.3]{blumer1989learnability}, we have $\mathrm{VC}(\mathcal{H}) \leq 4\log_2(6)\mathrm{VC}(\mathcal{H}') \leq 11d+11$.
We deduce by \citet[Theorem~8.3.23]{vershynin2018high} that there exists a universal constant $C_1 > 0$ such that $\mathbb{E}(V) \leq C_1\sqrt{\frac{d+1}{n}}$.  Thus, by~\eqref{eq:beta-gamma-regular-bound-1} and~\eqref{eq:beta-gamma-regular-bounded-diff} we conclude that there exists a universal constant $C_2>0$ such that with probability at least $1-\delta$,
\begin{align*}
    \sup_{v\in\mathbb{S}^{d-1}} -\frac{1}{n} \sum_{i=1}^n \mathbbm{1}_{\{|X_i^\top v| > \gamma\}} + 3\beta \leq \mathbb{E}(V) + \sqrt{\frac{\log(1/\delta)}{2n}} \leq C_2\sqrt{\frac{d+\log(1/\delta)}{n}} \leq \beta,
\end{align*}
where the final inequality follows by choosing $c \coloneqq 1/C_2^2$ and using the assumption that $\frac{d+\log(1/\delta)}{n} \leq c\beta^2$. This proves the claim.
\end{proof}

\subsection{Proof of Theorem~\ref{thm:gaussian-realisable-response}} \label{sec:proofs-regression-missing-response}
We begin with some preliminary lemmas. 
\begin{lemma}\label{lemma:uniform-dkw}
    Consider the setting of Theorem~\ref{thm:gaussian-realisable-response}. There exists a universal constant $C>0$ such that for $\delta\in(0,1]$, we have with probability at least $1-\delta$ conditional on $X_1=x_1,\ldots,X_n=x_n$ that
    \begin{align*}
        \sup_{\theta\in\mathbb{R}^d} d_{\mathrm{K}}^{\mathrm{sym}}(\hat{R}_{n,\theta}, R_{n,\theta}) \leq C\sqrt{\frac{d+\log(1/\delta)}{n}}.
    \end{align*}
\end{lemma}
\begin{proof}
First define
    \begin{align*}
        V \coloneqq \sup_{\theta\in\mathbb{R}^d} d_{\mathrm{K}}^{\mathrm{sym}}(\hat{R}_{n,\theta}, R_{n,\theta}) = \sup_{\theta\in\mathbb{R}^d} \sup_{A \in \mathcal{A}^{\mathrm{sym}}} \biggl| \frac{1}{n} \sum_{i=1}^n \mathbbm{1}_{\{Z_i - x_i^\top \theta \in A\}} - \frac{1}{n} \sum_{i=1}^n \tilde{R}_{i, \theta}(A) \biggr|.
    \end{align*}
    Then, by the bounded differences inequality~\citep[e.g.,][Theorem 6.2]{boucheron2003concentration}, with probability at least $1 - \delta$,
    \begin{align} \label{eq:uniform-dkw-bounded-diff}
        V \leq \mathbb{E}(V) + \sqrt{\frac{\log(1/\delta)}{2n}}.
    \end{align}
    Now define
    \begin{align*}
        \mathcal{G} \!\coloneqq \!\Bigl\{ g:\mathbb{R}^d \times \mathbb{R}_{\star} \to \mathbb{R} \text{ s.t. } g(x,z) \!=\! (z \!-\! x^\top\theta \!-\! t)\mathbbm{1}_{\{z\neq \star\}}\!\! +  \!\!\mathbbm{1}_{\{z= \star\}} \text{ for some } \theta\in\mathbb{R}^d,\, t\in\mathbb{R} \Bigr\},
    \end{align*}
    and define $\mathcal{H}_+ \coloneqq \bigl\{ (x,z) \mapsto \mathbbm{1}_{\{g(x,z) \leq 0\}} : g \in \mathcal{G}\bigr\}$ and $\mathcal{H}_- \coloneqq \bigl\{ (x,z) \mapsto \mathbbm{1}_{\{g(x,z) \leq 0\}} : g \in -\mathcal{G}\bigr\}$. Then 
    \begin{align*}
        V = \sup_{h \in \mathcal{H}_+ \cup \mathcal{H}_-} \biggl| \frac{1}{n} \sum_{i=1}^n \bigl\{h(x_i,Z_i) - \mathbb{E} h(x_i,Z_i)\bigr\} \biggr|.
    \end{align*}
    Since $\mathcal{G}$ is a vector space of functions with $\dim(\mathcal{G}) = d+1$, by \citet[Exercise~3.24(b)]{mohri2018foundations} and \citet[Proposition~4.20]{wainwright2019high}, we deduce that $\mathrm{VC}(\mathcal{H}_+ \cup \mathcal{H}_-) \leq \mathrm{VC}(\mathcal{H}_+) + \mathrm{VC}(\mathcal{H}_-) + 1 \leq 2\dim(\mathcal{G}) + 1 \leq 2d+3$. Therefore, applying~\citet[Theorem~8.3.23]{vershynin2018high} yields that $\mathbb{E}(V) \leq C'\sqrt{\frac{2d+3}{n}}$ for some universal constant $C'>0$. Combining this with~\eqref{eq:uniform-dkw-bounded-diff} proves the desired result.
\end{proof}

\begin{lemma} \label{lem:fixed-KS-linear-regression}
    Consider the setting of Theorem~\ref{thm:gaussian-realisable-response}, and assume that $\theta \neq \theta_0$.  Then, writing $a\coloneqq \frac{1}{2}\|\theta_0 - \theta\|_2 > 0$ and $b \coloneqq \frac{1}{2}\log\bigl( 1+ \frac{4(1-\beta q(1-\epsilon))}{\beta q(1-\epsilon)} \bigr)$, we have
    \begin{align*}
        d_{\mathrm{K}}^{\mathrm{sym}}\bigl(R_{n,\theta},\mathcal{R}_0^{\mathrm{Lin}}\bigr) \geq \beta q(1-\epsilon)\Phi\biggl( \frac{a\gamma}{\sigma} - \frac{2\sigma b}{a\gamma} \biggr) - \Phi\biggl( -\frac{a\gamma}{\sigma} - \frac{2\sigma b}{a\gamma} \biggr) \eqqcolon f_{\mathrm{K},b}(a),
    \end{align*}
 where $f_{\mathrm{K},b} : (0,\infty) \to (0,\infty)$ is strictly increasing and continuous.
\end{lemma}
\begin{proof}
    By Assumption~\ref{asm:fixed-design-regularity}, we may assume without loss of generality that there exists $\mathcal{T}_+ \subseteq [n]$ such that $|\mathcal{T}_+| \geq \beta n$ and $-x_i^\top (\theta_0 - \theta) \geq 2a\gamma$.  By Proposition~\ref{prop:univariate-realisability}, for $i\in\mathcal{T}_+$ and $t\in\mathbb{R}$, we have
    \begin{align*}
        \tilde{R}_{i,\theta}\bigl((-\infty,t]\bigr) \geq q(1-\epsilon)\Phi_{(0,\sigma)}\bigl( t - x_i^\top(\theta_0 - \theta) \bigr) &\geq q(1-\epsilon)\Phi_{(0,\sigma)}(t+2a\gamma) \\
        &= q(1-\epsilon)\Phi_{(-2a\gamma,\sigma)}(t).
    \end{align*}
    Moreover, by Proposition~\ref{prop:univariate-realisability} again for $R_0 \in \mathcal{R}_0^{\mathrm{Lin}}$ and $t\in\mathbb{R}$, we have $R_0\bigl( (-\infty,t] \bigr) \leq \Phi_{(0,\sigma)}(t)$. Therefore,
    \begin{align*}
        d_{\mathrm{K}}^{\mathrm{sym}}\bigl(R_{n,\theta} , \mathcal{R}_0^{\mathrm{Lin}}\bigr)
        &\geq \inf_{R_0\in \mathcal{R}_0^{\mathrm{Lin}}} \sup_{t \in \mathbb{R}} \biggl\{\frac{1}{n}\sum_{i=1}^n \tilde{R}_{i,\theta}\bigl( (-\infty,t] \bigr) - R_0\bigl( (-\infty,t] \bigr) \biggr\}\\
        &\geq \inf_{R_0\in \mathcal{R}_0^{\mathrm{Lin}}} \sup_{t \in \mathbb{R}}\, \biggl\{\frac{1}{n}\sum_{i\in\mathcal{T}_+} \tilde{R}_{i,\theta}\bigl( (-\infty,t] \bigr) - R_0\bigl( (-\infty,t] \bigr) \biggr\}\\
        &\geq \sup_{t\in\mathbb{R}}\, \bigl\{ \beta q(1-\epsilon)\Phi_{(-2a\gamma,\sigma)}(t) - \Phi_{(0,\sigma)}(t) \bigr\}\\
        &\geq \beta q(1-\epsilon)\Phi\biggl( \frac{a\gamma}{\sigma} - \frac{2\sigma b}{a\gamma} \biggr) - \Phi\biggl( -\frac{a\gamma}{\sigma} - \frac{2\sigma b}{a\gamma} \biggr) = f_{\mathrm{K},b}(a),
    \end{align*}
    where the final inequality follows by choosing $t = -\frac{2\sigma^2b}{a\gamma} - a\gamma$.  The function $f_{\mathrm{K},b}$ is continuous as a composition of continuous functions, and the fact that it is strictly increasing follows as in the proof of Lemma~\ref{lemma:one-dim-kolmogorov-distance-realisable-sets}, setting $(\epsilon, q)$ therein as $(\bar{\epsilon}, \bar{q})$, with $\bar{\epsilon} \coloneqq 1 - \beta q(1 - \epsilon)$ and $\bar{q} \coloneqq 1$.
\end{proof}

\begin{proof}[Proof of Theorem~\ref{thm:gaussian-realisable-response}]
    Let $\delta\in(0,1]$ and for the universal constant $C > 0$ from Lemma~\ref{lemma:uniform-dkw}, define the event
    \[
    \mathcal{E} \coloneqq \biggl\{\sup_{\theta\in\mathbb{R}^d} d_{\mathrm{K}}^{\mathrm{sym}}(\hat{R}_{n,\theta}, R_{n,\theta}) \leq C\sqrt{\frac{d+\log(1/\delta)}{n}}\biggr\}. 
    \]
    By Lemma~\ref{lemma:uniform-dkw}, satisfies $\mathbb{P}(\mathcal{E}\,|\,X_1=x_1,\ldots,X_n=x_n) \geq 1-\delta$, and from now on, we will work on the event $\mathcal{E}$.  Recalling that $R_{n,\theta_0}\in \mathcal{R}_0^{\mathrm{Lin}}$, we have
    \begin{align*}
        d_{\mathrm{K}}^{\mathrm{sym}}(\hat{R}_{n,\theta_0}, \mathcal{R}_0^{\mathrm{Lin}}) \leq d_{\mathrm{K}}^{\mathrm{sym}}(\hat{R}_{n,\theta_0}, R_{n,\theta_0}) \leq C\sqrt{\frac{d+\log(1/\delta)}{n}}.
    \end{align*}
    Moreover, if $\theta\in\mathbb{R}^d$ satisfies $d_{\mathrm{K}}^{\mathrm{sym}}(R_{n,\theta}, \mathcal{R}_0^{\mathrm{Lin}}) > 2C\sqrt{\frac{d+\log(1/\delta)}{n}}$, then 
    \begin{align*}
        d_{\mathrm{K}}^{\mathrm{sym}}(\hat{R}_{n,\theta}, \mathcal{R}_0^{\mathrm{Lin}}) &\geq d_{\mathrm{K}}^{\mathrm{sym}}(R_{n,\theta}, \mathcal{R}_0^{\mathrm{Lin}}) - d_{\mathrm{K}}^{\mathrm{sym}}(\hat{R}_{n,\theta}, R_{n,\theta})\\
        &> C\sqrt{\frac{d+\log(1/\delta)}{n}} \geq d_{\mathrm{K}}^{\mathrm{sym}}(\hat{R}_{n,\theta_0}, \mathcal{R}_0^{\mathrm{Lin}}),
    \end{align*}
   so $\hat{\theta}_n^{\mathrm{K}} \neq \theta$. Therefore, with $b$ and $f_{\mathrm{K},b}$ as defined in Lemma~\ref{lem:fixed-KS-linear-regression}, we deduce that with probability at least $1-\delta$,
    \begin{align*}
        \|\hat{\theta}^{\mathrm{K}}_n - \theta_0\|_2 &\leq \sup \biggl\{ \|\theta - \theta_0\|_2 : \theta\in\mathbb{R}^d ,\, d_{\mathrm{K}}^{\mathrm{sym}}(R_{n,\theta}, \mathcal{R}_0^{\mathrm{Lin}}) \leq 2C\sqrt{\frac{d+\log(1/\delta)}{n}} \biggr\}\\
        &\leq 2\inf \biggl\{ a>0 : \beta q(1-\epsilon)\Phi\biggl( \frac{a\gamma}{\sigma} - \frac{2\sigma b}{a\gamma} \biggr) - \Phi\biggl( -\frac{a\gamma}{\sigma} - \frac{2\sigma b}{a\gamma} \biggr)\\
        &\hspace{7cm}\geq 2C\sqrt{\frac{d+\log(1/\delta)}{n}} \biggr\}, \numberthis \label{eq:gaussian-realisable-response-eq1}
    \end{align*}
    where to obtain the second inequality, we note that by Lemma~\ref{lem:fixed-KS-linear-regression}, $d_{\mathrm{K}}^{\mathrm{sym}}(R_{n,\theta}, \mathcal{R}_0^{\mathrm{Lin}}) \geq f_{\mathrm{K},b}\bigl( \frac{\|\theta-\theta_0\|_2}{2} \bigr)$ and $f_{\mathrm{K},b}$ is a strictly increasing and continuous function.  Letting $a = \frac{3\sigma b}{\gamma\sqrt{\xi\log n}}$, we have by our assumption on $b$ that $2\sigma b/(a\gamma) - a\gamma/\sigma = \frac{2}{3}\sqrt{\xi\log n} - \frac{3b}{\sqrt{\xi\log n}} > 0$, so
    \begin{align*}
        \beta q(1&-\epsilon)\Phi\biggl( \frac{a\gamma}{\sigma} - \frac{2\sigma b}{a\gamma} \biggr) - \Phi\biggl( -\frac{a\gamma}{\sigma} - \frac{2\sigma b}{a\gamma} \biggr)\\
        \overset{(i)}&{\geq} \frac{\beta q(1-\epsilon)}{\bigl(-\frac{a\gamma}{\sigma} + \frac{2\sigma b}{a\gamma}\bigr) + \bigl(-\frac{a\gamma}{\sigma} + \frac{2\sigma b}{a\gamma}\bigr)^{-1}} \cdot \phi\biggl(-\frac{a\gamma}{\sigma} + \frac{2\sigma b}{a\gamma}\biggr) - \frac{1}{\frac{a\gamma}{\sigma} + \frac{2\sigma b}{a\gamma}} \cdot \phi\biggl( \frac{a\gamma}{\sigma} + \frac{2\sigma b}{a\gamma} \biggr)\\
        \overset{(ii)}&{\geq} \biggl( \frac{a\gamma}{\sigma} + \frac{2\sigma b}{a\gamma} \biggr)^{-1} \frac{1}{\sqrt{2\pi}} \biggl\{ \beta q(1-\epsilon) \exp\biggl( -\frac{a^2\gamma^2}{2\sigma^2} - \frac{2\sigma^2b^2}{a^2\gamma^2} + 2b \biggr) \\
        &\hspace{8cm} - \exp\biggl( -\frac{a^2\gamma^2}{2\sigma^2} -\frac{2\sigma^2b^2}{a^2\gamma^2} - 2b \biggr) \biggr\}\\
        \overset{(iii)}&{\geq} \biggl( \frac{a\gamma}{\sigma} + \frac{2\sigma b}{a\gamma} \biggr)^{-1} \frac{\sqrt{2}}{\sqrt{\pi}} \cdot \exp\biggl( -\frac{a^2\gamma^2}{2\sigma^2} - \frac{2\sigma^2b^2}{a^2\gamma^2}\biggr)\\
        \overset{(iv)}&{\geq} \frac{1}{\sqrt{\xi\log n}} \cdot n^{-\xi/4} \overset{(v)}{\geq} 2C\sqrt{\frac{d+\log(1/\delta)}{n}}
    \end{align*}
    where $(i)$ follows from the Mills ratio bound $\phi(x)/(x + x^{-1}) \leq \Phi(-x) \leq \phi(x)/x$ for $x > 0$; $(ii)$ follows since $\frac{1}{2} \leq b \leq \frac{\xi\log n}{9}$ implies $\bigl(-\frac{a\gamma}{\sigma} + \frac{2\sigma b}{a\gamma}\bigr) + \bigl(-\frac{a\gamma}{\sigma} + \frac{2\sigma b}{a\gamma}\bigr)^{-1} \leq \frac{a\gamma}{\sigma} + \frac{2\sigma b}{a\gamma}$; $(iii)$ follows by substituting the definition of $b$ and using the fact that $\beta q(1-\epsilon)\leq 1/2$; $(iv)$ follows since $b \leq \frac{\xi\log n}{30}$ implies $\frac{a^2\gamma^2}{\sigma^2} \leq \frac{\xi\log n}{100}$; and $(v)$ follows from the assumption that $n^{1-\xi} \geq C_1\bigl\{d+\log(1/\delta)\bigr\}$ with $C_1 \coloneqq 4C^2$ and using the fact that $x^{\xi/2} \geq \xi\log x$ for $x\in(0,\infty)$. Therefore, with probability at least $1-\delta$, we have by~\eqref{eq:gaussian-realisable-response-eq1} that
    \begin{align*}
        \|\hat{\theta}_n^{\mathrm{K}} - \theta_0\|_2^2 \leq \frac{36\sigma^2 b^2}{\gamma^2\xi\log n}\leq \frac{9\sigma^2 \log^2\bigl( 1+ \frac{4(1-\beta q(1-\epsilon))}{\beta q(1-\epsilon)} \bigr)}{\gamma^2 \xi \log \bigl(nq(1-\epsilon)\bigr)},
    \end{align*}
    as required.
\end{proof}

\section{Proof of Proposition~\ref{prop:adaptation}}
\begin{proof}[Proof of Proposition~\ref{prop:adaptation}]
    Let $\epsilon_1 \coloneqq \min \{\epsilon\in\mathcal{C} : \epsilon\geq\epsilon_{\star}\}$. For $\epsilon\in\mathcal{C}$, define the event $\mathcal{E}_{\epsilon} \coloneqq \bigl\{ \|\hat{\theta}_n(\epsilon,\delta') - \theta_0\|_2 \leq \phi(\epsilon,\delta') \bigr\}$ and let $\mathcal{E} \coloneqq \bigcap_{\epsilon\in\mathcal{C}: \epsilon\geq\epsilon_1} \mathcal{E}_{\epsilon}$. By~\eqref{eq:phi-def} and a union bound, we have $\mathbb{P}(\mathcal{E}) \geq 1-\delta$. On the event $\mathcal{E}$, we have
    \begin{align*}
        \theta_0 \in \bigcap_{\epsilon\in\mathcal{C}: \epsilon\geq\epsilon_1} B_2\bigl(\hat{\theta}_n(\epsilon,\delta'), \phi(\epsilon,\delta')\bigr),
    \end{align*}
    so by definition of $\epsilon_0$, we have $\epsilon_0 \leq \epsilon_1$.  Moreover, on $\mathcal{E}$, we have 
    \[
    B_2\bigl(\hat{\theta}_n(\epsilon_0,\delta'), \phi(\epsilon_0,\delta')\bigr) \bigcap B_2\bigl(\hat{\theta}_n(\epsilon_1,\delta'), \phi(\epsilon_1,\delta')\bigr) \neq \emptyset,
    \]
    so that
    \begin{align}
        \|\tilde{\theta}_n(\delta) - \theta_0\|_2 = \|\hat{\theta}_n(\epsilon_0,\delta') - \theta_0\|_2 &\leq \|\hat{\theta}_n(\epsilon_0,\delta') - \hat{\theta}_n(\epsilon_1,\delta')\|_2 + \|\hat{\theta}_n(\epsilon_1,\delta') - \theta_0\|_2\nonumber\\
        &\leq \bigl(\phi(\epsilon_0,\delta') + \phi(\epsilon_1,\delta')\bigr) + \phi(\epsilon_1,\delta') \leq 3\phi(\epsilon_1,\delta'), \label{eq:lepski-triangle-ineq}
    \end{align}
    where the final inequality follows since $\epsilon_0 \leq \epsilon_1$. If $\epsilon_{\star} < \epsilon_{\max} \cdot 2^{-\lceil\log_2 n\rceil}$, then $\epsilon_1 = \epsilon_{\max} \cdot 2^{-\lceil\log_2 n\rceil}\leq \epsilon_{\max}/n$, so by the assumption on $\phi$ and~\eqref{eq:lepski-triangle-ineq}, we have with probability at least $1-\delta$ that
    \begin{align*}
        \|\hat{\theta}_n(\epsilon_0,\delta') - \theta_0\|_2 \leq 3\phi(\epsilon_{\max}/n,\delta') \leq 3C \phi(0,\delta') \leq 3C \phi(2\epsilon_{\star}\wedge\epsilon_{\max},\delta').
    \end{align*}
    On the other hand, if $\epsilon_{\star} \geq \epsilon_{\max} \cdot 2^{-\lceil\log_2 n\rceil}$, then $\epsilon_1\leq 2\epsilon_{\star}\wedge\epsilon_{\max}$, so by the assumption on $\phi$ and~\eqref{eq:lepski-triangle-ineq}, we have with probability at least $1-\delta$ that
    \begin{align*}
        \|\hat{\theta}_n(\epsilon_0,\delta') - \theta_0\|_2 \leq 3\phi(2\epsilon_{\star}\wedge\epsilon_{\max},\delta').
    \end{align*}
    This completes the proof.
\end{proof}

\section{Auxiliary lemmas}
\label{sec:auxiliary}

If $\mathcal{Z}$ is a topological space, then we define the embedding $\phi_{\mathcal{Z}}: C_{\mathrm{b}}(\mathcal{Z}) \to \mathcal{M}(\mathcal{Z})^*$ by $\phi_{\mathcal{Z}}(f)(\mu) \coloneqq \mu(f)$.  If $\mathcal{Z}$ is a locally compact Hausdorff space, then a Borel measure $\mu$ on $\mathcal{Z}$ is \emph{regular} if $\mu(E) = \inf\{\mu(U): U \supseteq E, U \ \text{open}\}$ and $\mu(E) = \sup\{\mu(K): K \subseteq E, K \ \text{compact}\}$ for every Borel subset $E$ of $\mathcal{Z}$.  
\begin{lemma}
\label{Lemma:DualPair}
Let $\mathcal{Z}$ be a locally compact Hausdorff space in which every open set is $\sigma$-compact.  Then $\phi_\mathcal{Z}$ embeds $C_{\mathrm{b}}(\mathcal{Z})$ into a subspace of $\mathcal{M}(\mathcal{Z})^*$ that separates points. 
\end{lemma}
\begin{proof}
If $f, g \in C_{\mathrm{b}}(\mathcal{Z})$ and $\lambda_1,\lambda_2 \in \mathbb{R}$, then $\phi_{\mathcal{Z}}(\lambda_1 f + \lambda_2 g) = \lambda_1 \phi_{\mathcal{Z}}(f) + \lambda_2\phi_{\mathcal{Z}}(g)$, so $\phi_{\mathcal{Z}}$ embeds $C_{\mathrm{b}}(\mathcal{Z})$ into a subspace of $\mathcal{M}(\mathcal{Z})^*$.  

Let $\mu$ and $\mu'$ be two distinct measures in $\mathcal{M}(\mathcal{Z})$ and define $\nu \coloneqq \mu-\mu' \in \mathcal{M}(\mathcal{Z})$. By the the Jordan decomposition theorem \citep[Theorem~3.3]{folland1999real}, we can write $\nu = \nu_+ - \nu_-$ where $\nu_+, \nu_- \in \mathcal{M}_+(\mathcal{Z})$ are supported on disjoint measurable sets $P, N \subseteq \mathcal{Z}$ respectively.  Since $\nu\neq 0$, there exists a Borel set $B \subseteq \mathcal{Z}$ and $\epsilon > 0$ such that either $\nu_+(B\cap P) \geq \epsilon$ or $\nu_-(B\cap N) \geq \epsilon$. Without loss of generality, we assume the former.  By \citet[][Theorem~7.8]{folland1999real}, $\nu_+$ and $\nu_-$ are regular measures, so there exists a compact set $K \subseteq \mathcal{Z}$ and an open set $U \subseteq \mathcal{Z}$ such that $K\subseteq B\cap P\subseteq U$ and $\nu_+(U\setminus K) + \nu_-(U\setminus K) \leq \epsilon/2$. By Urysohn's lemma for locally compact Hausdorff spaces \citep[Lemma~4.32]{folland1999real}, there exists a continuous function $f:\mathcal{Z}\to [0,1]$ such that $f(K) = \{1\}$, $f(U^c) = \{0\}$. Observe that
\begin{align*}
\nu(f) & \geq \nu_+(K) - \nu_-(U\setminus P) \geq \nu_+(B\cap P) - \bigl(\nu_+(U\setminus K) + \nu_-(U\setminus K)\bigr) \geq \epsilon/2.
\end{align*}
Consequently, $f\in C_{\mathrm{b}}(\mathcal{Z})$ separates $\mu$ and $\mu'$ as desired.
\end{proof}
If $(X,\tau)$ and $(Y,\sigma)$ are topological spaces, we write $\tau \otimes \sigma$ for the product topology on the Cartesian product $X \times Y$, i.e.~$\tau \otimes \sigma$ is the coarsest topology for which the projections $(x,y) \mapsto x$ and $(x,y) \mapsto y$ are continuous. 
\begin{lemma}
\label{Lemma:ProductWeakTopology}
If $X$ and $Y$ are real vector spaces and  $X'$ and $Y'$ are subspaces of $X^*$ and $Y^*$, then the map $\iota:X'\times Y' \rightarrow (X\times Y)^*$ given by $\iota(f,g)(x,y) := f(x) + g(y)$ embeds $X' \times Y'$ as a subspace of $(X\times Y)^*$.  Furthermore, if $\tau(X; X')$, $\tau(Y; Y')$ and $\tau\bigl(X\times Y; \iota(X'\times Y')\bigr)$ denote the weak topologies generated by $X'$, $Y'$ and $\iota(X'\times Y')$ on $X$, $Y$ and $X\times Y$ respectively, then $\tau(X; X')\otimes \tau(Y; Y') = \tau\bigl(X\times Y; \iota(X'\times Y')\bigr)$.
\end{lemma}
\begin{proof}
To check that $\iota$ embeds $X'\times Y'$ as a subspace of $(X\times Y)^*$,  we only need to verify the bilinearity of the map $((x,y),(f,g))\mapsto f(x)+g(y)$ on $(X \times Y) \times (X' \times Y')$, which is true since $X,Y,X',Y'$ are vector spaces and $(X,X')$, $(Y,Y')$ are dual pairs.

For the second claim, let $\pi_X:X\times Y\to X$ and $\pi_Y:X\times Y\to Y$ be projection maps defined by $\pi_X(x,y) \coloneqq x$ and $\pi_Y(x,y) \coloneqq y$.  By the definition of the product topology, $\tau(X;X')\otimes \tau(Y;Y')$ is the coarsest topology on $X\times Y$ under which both $\pi_X$ and $\pi_Y$ are continuous. Also, $\tau\bigl(X\times Y;\iota(X'\times Y')\bigr)$ is the coarsest topology on $X\times Y$ under which $\iota(f,g)$ is continuous for all $f\in X'$ and $g\in Y'$. Hence the desired result is equivalent to the statement that for any topology $\mathcal{T}$ on $X\times Y$, the functions $\pi_X:(X\times Y, \mathcal{T})\to(X,\tau(X;X'))$ and $\pi_Y:(X\times Y, \mathcal{T})\to(Y,\tau(Y;Y'))$ are continuous if and only if $\iota(f,g):(X\times Y, \mathcal{T}) \to \mathbb{R}$ is continuous for all $(f,g)\in X'\times Y'$.

The `only if' direction is true since for any $(f,g)\in X'\times Y'$,  $\iota(f,g) = f\circ \pi_X + g\circ \pi_Y$ is the sum of compositions of continuous functions, and hence continuous.    For the `if' direction, we assume that $\iota(f,g)$ is continuous for all $(f,g)\in X'\times Y'$; by symmetry we only need to check that $\pi_X$ is continuous. Taking $g$ to be the zero map, we have $\iota(f,0)(x,y) = f(\pi_X(x,y))$, so $f\circ\pi_X$ is continuous for every $f$. Open sets in $(X,\tau(X;X'))$ are unions of sets in $\{f^{-1}(U): f\in X', \text{$U$ open in $\mathbb{R}$}\}$. Since $f\circ \pi_X$ is continuous, we have
    \[
    \pi_X^{-1}\bigl(f^{-1}(U)\bigr) = (f\circ \pi_X)^{-1}(U)
    \]
    is open for every $f\in X'$ and $U$ open in $\mathbb{R}$. Therefore, $\pi_X$ is continuous as desired, and this establishes the lemma.
\end{proof}

\begin{lemma}
\label{Lemma:ProductContinuity}
    Let $X,Y,Z$ be topological spaces and equip $Y\times Z$ with the product topology. Then $f:X\to Y$ and $g:X\to Z$ are continuous if and only if $h:x\mapsto \bigl(f(x),g(x)\bigr)$ is a continuous function from $X$ to $Y\times Z$. 
\end{lemma}
\begin{proof}
    By definition of the product topology, the projection maps $\pi_Y:Y\times Z \to Y$ and $\pi_Z:Y\times Z \to Z$ defined by $\pi_Y(y,z) \coloneqq y$ and $\pi_Z(y,z) \coloneqq z$ are continuous. This proves the `if' direction since $f = \pi_Y\circ h$ and $g = \pi_Z\circ h$. For the `only if' direction, we observe that open sets in $Y\times Z$ are unions of sets of the form $U\times V$ for $U$ open in $Y$ and $V$ open in $Z$. Since $f$ and $g$ are continuous, $h^{-1}(U\times V) = f^{-1}(U)\cap g^{-1}(V)$ is open in $X$, so $h$ is continuous as desired.
\end{proof}

Recall that if $A_1$ and $A_2$ are sets, then the \emph{disjoint union} of $A_1$ and $A_2$ is defined by $A_1 \sqcup A_2:=\{(a,1):a\in A_1\}\cup \{(a,2):a\in A_2\}$.  Moreover, if $(A_1,\tau_1)$ and $(A_2,\tau_2)$ are topological spaces, then $A_1 \sqcup A_2$ can be endowed with the \emph{disjoint union topology}, given by $\bigl\{(U_1\times\{1\}) \cup (U_2\times \{2\}):U_1\in\tau_1, \, U_2\in\tau_2\bigr\}$.  In the special case where $A_1$ and $A_2$ are disjoint subsets of a topological space $(\mathcal{X},\tau)$, the second argument of elements in $A_1 \sqcup A_2$ becomes redundant, so we can identify $A_1\sqcup A_2$ with $A_1 \cup A_2$, and we may write the disjoint union topology simply as $\{U_1 \cup U_2:U_1 \in A_1 \cap \tau,U_2 \in A_2 \cap \tau\}$.

\begin{lemma}\label{lem:projection-of-open-set-compact-set}
    Let $\mathcal{Z}_1, \ldots, \mathcal{Z}_d$ be topological spaces, and let $\mathcal{Z} \coloneqq \prod_{j=1}^d \mathcal{Z}_j$ be the product space equipped with the product topology. Let $S \subseteq [d]$ be non-empty and let $\mathcal{Z}_S \coloneqq \prod_{j\in S} \mathcal{Z}_j$ be the product space equipped with the product topology. 
    \begin{itemize}
        \item[(a)] If $U\subseteq \mathcal{Z}$ is open, then the set $U_S\coloneqq \{x_S : x\in U\}$ is open in $\mathcal{Z}_S$.
        \item[(b)] If $K\subseteq \mathcal{Z}$ is compact, then the set $K_S\coloneqq \{x_S : x\in K\}$ is compact in $\mathcal{Z}_S$.
    \end{itemize}
\end{lemma}
\begin{proof}
    (a) We can write $U = \bigcup_{i\in I} U^{(i)}$ for some index set $I$, where $U^{(i)} = \prod_{j=1}^d U^{(i)}_j$ and $U^{(i)}_j$ is open in $\mathcal{Z}_j$ for all $i\in I$, $j\in[d]$. Hence $U_S = \bigcup_{i\in I} U^{(i)}_S$ where $U^{(i)}_S = \prod_{j\in S} U^{(i)}_j$, so $U_S$ is open in $\mathcal{Z}_S$.

    \vspace{1em}
    \noindent (b) For any open cover $\{U^{(i)}_S\}_{i\in I}$ of $K_S$, define $U^{(i)} \coloneqq \{x\in\mathcal{Z} : x_S \in U^{(i)}_S\}$ for $i\in I$. Note that $U^{(i)}$ is open in $\mathcal{Z}$ for $i\in I$, as it is the pre-image of an open set under a projection map (which is continuous, by definition of the product topology). Thus, $\{U^{(i)}\}_{i\in I}$ is an open cover of $K$, which has a finite subcover $I_0 \subseteq I$ since $K$ is compact. Therefore, $\{U^{(i)}_S\}_{i\in I_0}$ is also a finite subcover of $K_S$, so $K_S$ is compact in $\mathcal{Z}_S$.
\end{proof}

\begin{lemma}
\label{Lemma:Preservation}
Let $\mathcal{X}_1$ and $\mathcal{X}_2$ be topological spaces.  
\begin{enumerate}[(a)]
\item If $\mathcal{X}_1$ and $\mathcal{X}_2$ are Hausdorff, then $\mathcal{X}_1 \times \mathcal{X}_2$ is Hausdorff in the product topology and $\mathcal{X}_1 \sqcup \mathcal{X}_2$ is Hausdorff in the disjoint union topology.
\item If $\mathcal{X}_1$ and $\mathcal{X}_2$ are locally compact, then $\mathcal{X} \times \mathcal{X}_2$ is locally compact in the product topology and $\mathcal{X}_1 \sqcup \mathcal{X}_2$ is locally compact in the disjoint union topology.
\end{enumerate}
\end{lemma}
\begin{proof}
(a) The first statement follows from \citet[Theorem~19.4]{munkrestopology}. 
 For the second statement, let $(x_1,j_1),(x_2,j_2) \in \mathcal{X}_1 \sqcup \mathcal{X}_2$ be distinct, where $j_1,j_2 \in \{1,2\}$, and for $\ell \in \{1,2\}$, we have $x_\ell \in \mathcal{X}_\ell$.  If $j_1 = j_2$, then the result follows from the fact that $\mathcal{X}$ and $\mathcal{X}_2$ are Hausdorff.  Otherwise, we can separate the two points using the open sets $\mathcal{X}_1\times \{1\}$ and $\mathcal{X}_2\times \{2\}$.

\medskip

\noindent (b) For the first statement, if $x_1 \in \mathcal{X}_1$ and $x_2 \in \mathcal{X}_2$, then we can find compact neighbourhoods $K_j \subseteq \mathcal{X}_j$ of $x_j$ for $j\in\{1,2\}$.  Then by Tychonov's theorem \citep[Theorem~37.3]{munkrestopology}, $K_1 \times K_2$ is a compact neighbourhood of $(x_1,x_2)$.  For the second statement, if $(x,j)\in \mathcal{X}_1 \sqcup\mathcal{X}_2$, then we can find a compact subset $K\subseteq \mathcal{X}_j$ containing $x$. Then $K\times \{j\}$ is a compact subset of $\mathcal{X}_1\sqcup\mathcal{X}_2$ containing $(x,j)$.
\end{proof}

\begin{lemma}
Let $\mathcal{X}_1,\ldots,\mathcal{X}_d$ be locally compact Hausdorff spaces, and let $\mathcal{X} \coloneqq \prod_{j=1}^d \mathcal{X}_j$.  Then $\mathcal{X}$, $\mathcal{X}_\star$ and $\mathcal{X} \times 2^{[d]}$ are locally compact Hausdorff spaces.  Moreover, if every open set in~$\mathcal{X}$ is $\sigma$-compact, then $\mathcal{X}_\star$ and $\mathcal{X} \times 2^{[d]}$ also have this property.   
\end{lemma}
\begin{proof}
The fact that $\mathcal{X}$ is a locally compact Hausdorff space follows from Lemma~\ref{Lemma:Preservation}.  Moreover, the singleton space $\{\star\}$ as well as the space $2^{[d]}$ endowed with the discrete topology are both locally compact Hausdorff spaces.  We observe that $\mathcal{X}$, $\mathcal{X}_\star$ and $\mathcal{X} \times 2^{[d]}$ can be generated from $\mathcal{X}_1,\ldots,\mathcal{X}_d$, $\{\star\}$, $2^{[d]}$ via a combination of product space and disjoint union operations. Hence the first result follows from Lemma~\ref{Lemma:Preservation}. 

    To check that every open set in $\mathcal{X}_{\star} = \bigsqcup_{S\in 2^{[d]}} \mathcal{X}^{(S)}$ is $\sigma$-compact, observe that for any open set $U \subseteq \mathcal{X}_{\star}$, we can write $U = \bigcup_{S \subseteq [d]} U^{(S)}$ where $U^{(S)} \coloneqq U \cap \mathcal{X}^{(S)}$. Therefore, it suffices to show that for every $S \subseteq [d]$, any open set $U\subseteq \mathcal{X}^{(S)}$ is $\sigma$-compact. Let $U_S \coloneqq \{a_S : a\in U\}$ and let $V \coloneqq \{x\in \mathcal{X} : x_S \in U_S\}$. Then $V$ is open in $\mathcal{X}$ since it is the pre-image of a projection of an open set, so we can write $V = \bigcup_{i=1}^\infty K(i)$, where $K(i)$ is compact in $\mathcal{X}$ for each $i$.  Moreover, $U_S = \bigcup_{i=1}^\infty K(i)_S$, and $K(i)_S$ is compact in $\mathcal{X}_S$ by Lemma~\ref{lem:projection-of-open-set-compact-set}(b). We claim that $K(i)^{(S)} \coloneqq \{z\in\mathcal{X}_\star : z_S\in K(i)_S,\, z_j=\star\; \forall j\notin S\}$ is compact.  To see this, consider any open cover $\{U(j)\}_{j\in J}$ of $K(i)^{(S)}$, where, without loss of generality, we assume that $U(j) \subseteq \mathcal{X}^{(S)}$ for all $j\in J$.  Writing $U(j)_{S} \coloneqq \{a_S : a\in U(j)\}$ for $j\in J$, we have by Lemma~\ref{lem:projection-of-open-set-compact-set}(a) that $\{U(j)_{S}\}_{j \in J}$ forms an open cover of $K(i)_S$.  We can therefore find a finite subcover $\{U(j)_S\}_{j\in J_0}$ of $K(i)_S$, so that $\{U(j)\}_{j\in J_0}$ forms a finite subcover of $K(i)^{(S)}$.  We deduce that $U = \bigcup_{i=1}^\infty K(i)^{(S)}$ is a countable union of compact sets. 

    To show that every open set in $\mathcal{X} \times 2^{[d]}$ is $\sigma$-compact, observe that since $2^{[d]}$ is finite, any open set in $\mathcal{X} \times 2^{[d]}$ is of the form $\bigcup_{S \subseteq [d]} \bigl\{U(S) \times S\bigr\}$, where $U(S)$ is open in $\mathcal{X}$.  Since each $U(S)$ is $\sigma$-compact and $S$ is finite (hence compact), it follows that each set $U(S) \times S$ is $\sigma$-compact, and hence $\bigcup_{S \subseteq [d]} \bigl\{U(S) \times S\bigr\}$ is $\sigma$-compact.
\end{proof}

\begin{lemma}\label{lemma:X-star-is-polish}
    If $(\mathcal{X}_1,\tau_1),\ldots,(\mathcal{X}_d,\tau_d)$ are Polish spaces, then the Cartesian product space $\mathcal{X}_{\star} \coloneqq \prod_{j=1}^d \mathcal{X}_{j,\star}$ equipped with the product topology is also a Polish space.
\end{lemma}
\begin{proof}
    A finite (or even countable) Cartesian product of Polish spaces is Polish \citep[][Proposition~3.3]{kechris2012classical}, so it suffices to show that $(\mathcal{X}_{j,\star},\tau_{j,\star})$ is Polish for each $j\in[d]$, where $\tau_{j,\star} \coloneqq \tau_j \cup \{A \cup \{\star\} : A\in \tau_j\}$. Now fix $j\in[d]$, and find a countable dense subset $\{x_n\}_{n=1}^\infty$ of $\mathcal{X}_j$. Then $\{\star\} \cup \{x_n\}_{n=1}^\infty$ is a countable dense subset of $\mathcal{X}_{j,\star}$, so $\mathcal{X}_{j,\star}$ is separable. Now find a metric $d$ on $\mathcal{X}_j$ such that $d$ generates the topology $\tau_j$ and $(\mathcal{X}_j,d)$ is complete. Define the standard bounded metric $\bar{d}$ by $\bar{d}(x,y) \coloneqq d(x,y) \wedge 1$ for $x,y\in\mathcal{X}_j$.  Then, by \citet[][Theorem~20.1]{munkrestopology}, $\bar{d}$ also generates the topology $\tau_j$. Define a metric $d'$ on $\mathcal{X}_{j,\star}$ by $d'(x,y)\coloneqq \bar{d}(x,y)$ for $x,y\in\mathcal{X}_j$, $d'(x,\star)\coloneqq 2$ for $x\in\mathcal{X}_j$, and $d'(\star,\star) \coloneqq 0$. Letting~$\tau_{j,\star}'$ denote the topology on $\mathcal{X}_{j,\star}$ generated by $d'$, we first show that $\tau_{j,\star}' = \tau_{j,\star}$. On the one hand, since $\{\star\} \in \tau_{j,\star}'$ and $\tau_j \subseteq \tau_{j,\star}'$, we have $\tau_{j,\star} \subseteq \tau_{j,\star}'$. On the other hand, if $x_0 \in \mathcal{X}_{j,\star}$, $r \geq 0$ and $A \coloneqq \{x\in\mathcal{X}_{j,\star} : d'(x,x_0) < r\}$ denotes an open ball in $\tau_{j,\star}'$, then
    \[
    A= \left\{ \begin{array}{ll} \mathcal{X}_{j,\star} & \mbox{if $r > 2$} \\ 
    \{x\in\mathcal{X}_j : \bar{d}(x,x_0) < r\} & \mbox{if $r \leq 2$ and $x_0 \in \mathcal{X}_j$}\\
    \{\star\} & \mbox{if $r \leq 2$ and $x_0 = \star$.}
    \end{array} \right.
    \]
    We deduce that $A \in \tau_{j,\star}$, so since such open balls generate $\tau_{j,\star}'$, we have $\tau_{j,\star}' \subseteq \tau_{j,\star}$. Hence, $d'$ generates the topology $\tau_{j,\star}$. Next, we show that $(\mathcal{X}_{j,\star}, d')$ is complete. Let $(z_n)_{n=1}^{\infty}$ be a Cauchy sequence in $\mathcal{X}_{j,\star}$, so there exists $N\in\mathbb{N}$ such that $d'(z_{n_1},z_{n_2}) \leq 1/2$ for all $n_1,n_2 \geq N$. Therefore, either $z_n = \star$ for all $n\geq N$ or $z_n \in \mathcal{X}_j$ for all $n\geq N$.  In the former case,  $z_n \to \star$ as $n \rightarrow \infty$.  In the latter case, $(z_n)_{n=N}^\infty$ is also a Cauchy sequence in $(\mathcal{X}_j, d)$ and hence by completeness of $(\mathcal{X}_j, d)$, it has a limit in $\mathcal{X}_j$. This shows that $(\mathcal{X}_{j,\star}, d')$ is complete and $d'$ generates the topology $\tau_{j,\star}$, so $(\mathcal{X}_{j,\star}, \tau_{j,\star})$ is completely metrisable. Therefore, $(\mathcal{X}_{j,\star}, \tau_{j,\star})$ is a Polish space, as required.
\end{proof}

The following lemma can be deduced from~\citet[][3.1(e), p.~95]{horn1990hadamard}, but for the convenience of the reader, we provide a short proof here.  
\begin{lemma}\label{lemma:operator-norm-of-hadamard-product}
    Let $A,B\in\mathcal{S}^{d\times d}$ and further suppose that $A$ is positive semi-definite.  Then $\|A\odot B\|_{\mathrm{op}} \leq \|A\|_{\infty} \|B\|_{\mathrm{op}}$.
\end{lemma}
\begin{proof}
The proof largely follows that of~\citet{horn1990hadamard}.  Since \begin{align*}
    \begin{pmatrix}
        A & A\\
        A & A
    \end{pmatrix} \in \mathcal{S}^{2d \times 2d} \quad\text{and}\quad \begin{pmatrix}
        \|B\|_{\mathrm{op}}I_d & B\\
        B & \|B\|_{\mathrm{op}}I_d
    \end{pmatrix} \in \mathcal{S}^{2d \times 2d}
\end{align*}
are both positive semi-definite, by the Schur product theorem \citep[Theorem 7.5.3(a)]{horn2012matrix}, their Hadamard product \begin{align*}
    \begin{pmatrix}
        \|B\|_{\mathrm{op}}(I_d \odot A) & A \odot B\\
        A \odot B & \|B\|_{\mathrm{op}}(I_d \odot A)
    \end{pmatrix}
\end{align*}
is also positive semi-definite. Hence, for any $v\in\mathbb{R}^d$, we have \begin{align*}
    0 &\leq \begin{pmatrix}
        v^\top & -v^\top
    \end{pmatrix} \begin{pmatrix}
        \|B\|_{\mathrm{op}}(I_d \odot A) & A \odot B\\
        A \odot B & \|B\|_{\mathrm{op}}(I_d \odot A)
    \end{pmatrix} \begin{pmatrix}
        v \\ -v
    \end{pmatrix}\\
    &= 2 \|B\|_{\mathrm{op}} v^\top(I \odot A) v - 2v^\top (A\odot B) v,
\end{align*}
so $\|A\odot B\|_{\mathrm{op}} \leq \|B\|_{\mathrm{op}} \|I \odot A\|_{\mathrm{op}} \leq \|A\|_{\infty} \|B\|_{\mathrm{op}}$.
\end{proof}

\begin{lemma}\label{lem:inverse-binomial-bounds}
        Let $Y \sim \mathsf{Bin}(n, q)$ for some $n \in \mathbb{N}$ and $q \in (0,1]$. Then 
    \begin{align*}
        \mathbb{E} \bigl( Y^{-1} \cdot \mathbbm{1}_{\{Y>0\}} \bigr) \leq \frac{2}{n q} \quad\text{and}\quad \mathbb{E}\bigl\{(Y+1)^{-2} \bigr\} \leq \frac{2}{n^2 q^2}.
    \end{align*}
\end{lemma}
\begin{proof}
    We have 
    \begin{align*}
        \mathbb{E}\bigl\{ (Y+1)^{-1} \bigr\} &= \sum_{y=0}^n (y+1)^{-1}\binom{n}{y} q^y (1-q)^{n-y}\\
        &= \sum_{y=0}^n \frac{1}{q(n+1)} \binom{n+1}{y+1} q^{y+1}(1-q)^{n-y}\\
        &= \frac{1}{q(n+1)} \sum_{y=1}^{n+1} \binom{n+1}{y} q^{y}(1-q)^{n+1-y}\leq \frac{1}{nq}.
    \end{align*}
    The first inequality in the statement then follows as $y^{-1} \leq 2(y + 1)^{-1}$ for all $y \geq 1$.  Similarly, we have 
    \begin{align*}
        \mathbb{E}\bigl\{ (Y+1)^{-1}(Y+2)^{-1} \bigr\} &= \sum_{y=0}^n (y+1)^{-1}(y+2)^{-1}\binom{n}{y} q^y (1-q)^{n-y}\\
        &= \sum_{y=0}^n \frac{1}{q^2(n+1)(n+2)} \binom{n+2}{y+2} q^{y+2}(1-q)^{n-y} \leq \frac{1}{n^2q^2}.
    \end{align*}
    The second inequality in the statement then follows since $(y+1)^{-2} \leq 2(y+1)^{-1}(y+2)^{-1}$ for all $y \geq 0$.  
\end{proof}

\begin{lemma}\label{lemma:binomial-tail}
    Suppose that $(B_i)_{i \in [n]} \overset{\mathrm{iid}}{\sim} \mathsf{Ber}(q)$. 
    \begin{itemize}
        \item[(a)] With probability at least $1 - \delta$, 
        \[
        \frac{1}{n} \sum_{i=1}^{n} B_i \leq 2q + \frac{\log(1/\delta)}{n}.
        \]
        \item[(b)] If $q\geq \frac{8\log(1/\delta)}{n}$, then with probability at least $1 - \delta$, 
        \[
        \frac{1}{n} \sum_{i=1}^{n} B_i \geq \frac{q}{2}.
        \]
    \end{itemize}
\end{lemma}
\begin{proof}
(a) By Bernstein's inequality \citep[][Theorem 2.10]{boucheron2003concentration}, we have with probability at least $1-\delta$ that
\begin{align*}
\frac{1}{n} \sum_{i=1}^{n} B_i &\leq q + \sqrt{\frac{2q(1-q)}{n}}\log^{1/2}(1/\delta) + \frac{1}{3n}\log(1/\delta) \\
&\leq \biggl(q^{1/2} + \frac{1}{\sqrt{2n}}\log^{1/2}(1/\delta)\biggr)^2 \leq 2q + \frac{1}{n}\log(1/\delta).
\end{align*}

(b) By the multiplicative Chernoff bound~\citep[][Theorem 2.3(c)]{McDiarmid1998} for the sum of independent Bernoulli random variables, we have
\begin{align*}
    \mathbb{P}\biggl( \frac{1}{n} \sum_{i=1}^{n} B_i \leq \frac{q}{2} \biggr) \leq \exp(-nq/8) \leq \delta,
\end{align*}
where the final inequality follows from the assumption that $q\geq \frac{8\log(1/\delta)}{n}$.
\end{proof}

\begin{lemma}\label{lemma:inclusion-of-psi-r-class}
    Let $0<r_1\leq r_2$.  Then $\mathcal{P}_{\psi_{r_2}}(\theta_0, \sigma^2) \subseteq \mathcal{P}_{\psi_{r_1}}(\theta_0, \sigma^2)$.
\end{lemma}
\begin{proof}
    Let $X\sim P \in \mathcal{P}_{\psi_{r_2}}(\theta_0, \sigma^2)$.  Then 
    \begin{align*}
        2\geq \mathbb{E}\biggl\{ \exp\biggl( \frac{|X - \theta_0|^{r_2}}{\sigma^{r_2}} \biggr) \biggr\} \geq \biggl[\mathbb{E}\biggl\{ \exp\biggl( \frac{|X - \theta_0|^{r_1}}{\sigma^{r_1}} \biggr) \biggr\}\biggr]^{r_2/r_1},
    \end{align*}
    by Jensen's inequality. Thus $\mathbb{E}\exp\bigl( |X - \theta_0|^{r_1} / \sigma^{r_1} \bigr) \leq 2$, so $P\in \mathcal{P}_{\psi_{r_1}}(\theta_0, \sigma^2)$.
\end{proof}

\begin{lemma} \label{lemma:MGF-bound}
    Let $r>1$, $\sigma>0$ and $X\sim P\in\mathcal{P}_{\psi_r}(0,\sigma^2)$. Then
    \begin{align*}
        \mathbb{E} \exp(\lambda X) \leq 2\exp\bigl\{ (\sigma\lambda)^{r/(r-1)} \bigr\},
    \end{align*}
    for all $\lambda>0$.
\end{lemma}
\begin{proof}
Young's inequality states that whenever $p,q > 1$ are such that $1/p + 1/q  =1$, we have $ab \leq a^p/p + b^q/q$ for all $a,b \geq 0$.  Hence
    \begin{align*}
        \lambda X \leq \lambda|X| \leq \frac{|X|^r}{r\sigma^r} + \frac{(\sigma\lambda)^{r/(r-1)}}{r/(r-1)} \leq \frac{|X|^r}{\sigma^r} + (\sigma\lambda)^{r/(r-1)}.
    \end{align*}
    Therefore,
    \begin{align*}
        \mathbb{E} \exp(\lambda X) \leq \mathbb{E} \bigl\{ \exp(|X|^r/\sigma^r) \bigr\} \cdot \exp\bigl\{ (\sigma\lambda)^{r/(r-1)} \bigr\} \leq 2\exp\bigl\{ (\sigma\lambda)^{r/(r-1)} \bigr\},
    \end{align*}
    as required.
\end{proof}

\begin{lemma}[PAC--Bayes lemma] \label{lemma:PAC-Bayes}
    Let $\mathcal{X}$ be a measurable space and let $X_1,\ldots,X_n \overset{\mathrm{iid}}{\sim} P \in \mathcal{P}(\mathcal{X})$. Let $\Xi \subseteq \mathbb{R}^d$ and $\mu \in \mathcal{P}(\Xi)$. Further let $f : \mathcal{X} \times \Xi \to \mathbb{R}$ be such that $\mathbb{E}_{X \sim P} (e^{f(X,\xi)}) < \infty$ for $\mu$-almost all $\xi\in\Xi$. Then, for every $\delta\in(0,1]$, we have with probability at least $1-\delta$ that
    \begin{align*}
        \sup_{\rho \in \mathcal{P}(\Xi):\rho \ll \mu} \biggl\{ \frac{1}{n} \sum_{i=1}^n \mathbb{E}_{\xi\sim\rho} f(X_i,\xi) - \mathbb{E}_{\xi\sim\rho} \log\bigl\{ \mathbb{E}_{X \sim P} &(e^{f(X,\xi)})\bigr\} \\
        &- \frac{\mathrm{KL}(\rho,\mu) + \log(1/\delta)}{n} \biggr\} \leq 0,
    \end{align*}
    where, for instance, $\mathbb{E}_{\xi\sim\rho} f(X_i,\xi) \coloneqq \int_{\Xi}  f(X_i,v) \, \mathrm{d}\rho(v)$.
    \end{lemma}
\begin{proof}
    See, for example, \citet[][Lemma~2.1]{zhivotovskiy2024dimension}.
\end{proof}

The following lemma provides a concentration result for the sample mean of independent and identically distributed sub-exponential random vectors. The proof strategy follows that of \citet[][Proposition~3.1]{zhivotovskiy2024dimension}, who considered the case $n=1$.
\begin{lemma} \label{lemma:concentration-of-sample-mean-sub-exponential-vector}
    Let $\theta_0 \in \mathbb{R}^d$, $\Sigma\in\mathcal{S}_{++}^{d\times d}$, $\delta\in(0,1]$ and $X_1,\ldots,X_n \overset{\mathrm{iid}}{\sim} P \in \mathcal{P}_{d,\psi_1}(\theta_0,\Sigma)$. Assume further that $\delta \geq 2e^{-n/3}$. Then with probability at least $1-\delta$,
    \begin{align*}
        \biggl\| \frac{1}{n} \sum_{i=1}^n X_i - \theta_0 \biggr\|_2^2 \leq 24 \cdot \frac{\tr(\Sigma) + \|\Sigma\|_{\mathrm{op}} \log(2/\delta)}{n}.
    \end{align*}
\end{lemma}
\begin{proof}
    Let $\beta\coloneqq 2\log(2/\delta)$, let $\mu$ denote the distribution of $\mathsf{N}_d(0,\beta^{-1}\Sigma)$ and for $u\in\Sigma^{1/2}\mathbb{S}^{d-1}$, let $\rho_u$ denote the conditional distribution of $Y$ given $\bigl\{\|Y-u\|_2 \leq \sqrt{2\beta^{-1}\tr(\Sigma)}\bigr\}$, where $Y\sim\mathsf{N}_d(u,\beta^{-1}\Sigma)$. By the computation of \citet[][p.~11]{zhivotovskiy2024dimension}, we have 
    \begin{align*}
        \mathrm{KL}(\rho_u,\mu) \leq \frac{\beta}{2} + \log 2 \leq 2\log\Bigl(\frac{2}{\delta}\Bigr).
    \end{align*} 
    Now, let $v \in \mathbb{R}^d$ be such that $\|v - u\|_2 \leq \sqrt{2\beta^{-1}\tr(\Sigma)}$, and for $\lambda \in \mathbb{R}$, define $f_\lambda:\mathbb{R}^d\times\mathbb{R}^d \to \mathbb{R}$ by $f_\lambda(x,y) \coloneqq \lambda y^\top\Sigma^{-1/2}(x-\theta_0)$. Then, for $X\sim P$ and  $\lambda \in \mathbb{R}$, we have 
    \[
    \bigl\| v^\top \Sigma^{-1/2}(X - \theta_0) \bigr\|_{\psi_1} \leq \|v\|_2 \leq \|\Sigma\|_{\mathrm{op}}^{1/2} + \sqrt{2\beta^{-1}\tr(\Sigma)} \eqqcolon R.
    \]
    It follows by \citet[][Lemma~2.5]{zhivotovskiy2024dimension} that
    \begin{align*}
        \log \mathbb{E}_{X\sim P} ( e^{f_{\lambda}(X,v)}) = \log \mathbb{E}_{X\sim P} ( e^{\lambda \cdot v^\top \Sigma^{-1/2}(X - \theta_0)}) \leq 4\lambda^2 R^2,
    \end{align*}
    for all $|\lambda| \leq \frac{1}{2R}$, so $\mathbb{E}_{\xi_u \sim \rho_u} \bigl\{ \log \mathbb{E}_{X\sim P} ( e^{f_{\lambda}(X,\xi_u)}) \bigr\} \leq 4\lambda^2 R^2$ for all $|\lambda| \leq \frac{1}{2R}$.  The PAC--Bayes lemma~(Lemma~\ref{lemma:PAC-Bayes}) then yields that with probability at least $1-\delta$,
    \begin{align*}
        \sup_{u \in \Sigma^{1/2}\mathbb{S}^{d-1}} \biggl\{ \frac{1}{n} \sum_{i=1}^n \mathbb{E}_{\xi_u \sim \rho_u} f_{\lambda}(X_i,\xi_u) - \mathbb{E}_{\xi_u \sim \rho_u} \bigl\{ \log &\mathbb{E}_{X\sim P} (e^{f_{\lambda}(X,\xi_u)}) \bigr\} \\
        &- \frac{\mathrm{KL}(\rho_u,\mu) + \log(1/\delta)}{n} \biggr\} \leq 0.
    \end{align*}
    Therefore, we deduce that with probability at least $1-\delta$,
    \begin{align*}
        \biggl\| \frac{1}{n} \sum_{i=1}^n X_i - \theta_0 \biggr\|_2 &= \sup_{u \in \Sigma^{1/2}\mathbb{S}^{d-1}} \frac{1}{n} \sum_{i=1}^n u^\top \Sigma^{-1/2} (X_i - \theta_0)\\
        &= \sup_{u \in \Sigma^{1/2}\mathbb{S}^{d-1}} \frac{1}{n\lambda} \sum_{i=1}^n \mathbb{E}_{\xi_u \sim \rho_u} f_{\lambda}(X_i,\xi_u) \\
        &\leq \inf_{\lambda \in [0,\frac{1}{2R}]} \biggl\{4\lambda R^2 + \frac{3\log(2/\delta)}{n\lambda}\biggr\}\\
        \overset{(i)}&{=} 2R\sqrt{\frac{3\log(2/\delta)}{n}} = 2\sqrt{\frac{3}{n}} \cdot \Bigl\{\sqrt{\tr(\Sigma)} + \sqrt{\|\Sigma\|_{\mathrm{op}} \log(2/\delta)}\Bigr\}.
    \end{align*}
    where $(i)$ follows by choosing $\lambda = \frac{1}{2R}\sqrt{\frac{3\log(2/\delta)}{n}}$, which is at most $\frac{1}{2R}$ since $\frac{3\log(2/\delta)}{n} \leq 1$ by assumption. The final conclusion follows by squaring both sides of the inequality above and using the inequality $(a+b)^2 \leq 2a^2 + 2b^2$ for $a,b \in \mathbb{R}$.
\end{proof}

\section{Background on disintegrations}\label{sec:disintegration}

Our definition of MAR relies on the decomposition of a probability measure on a product space into the marginal distribution on one coordinate and a family of conditional distributions on the other.  This can be achieved via the notion of disintegration.  Let $(\mathcal{X},\mathcal{A})$ and $(\mathcal{Y},\mathcal{B})$ be measurable spaces, and let $P$ be a probability measure on the product space $(\mathcal{X}\times \mathcal{Y}, \mathcal{A} \otimes \mathcal{B})$.  Further, let $\mu$ denote the marginal distribution of $P$ on $(\mathcal{X},\mathcal{A})$. We say that $(P_x)_{x \in \mathcal{X}}$ is a \emph{disintegration\index{disintegration} of~$P$ into conditional distributions on $\mathcal{Y}$} if 
\begin{enumerate}[(a)]
    \item $P_x$ is a probability measure on $(\mathcal{Y},\mathcal{B})$, for each $x \in \mathcal{X}$;
    \item $x \mapsto P_x(B)$ is an $\mathcal{A}$-measurable function, for every $B \in \mathcal{B}$;
    \item $P(A\times B) = \int_A P_x(B) \, d\mu(x)$ for all $A \in \mathcal{A}$ and $B \in \mathcal{B}$.
\end{enumerate}
In our setting, $P$ denotes the joint distribution of a random pair $(X,Y)$, taking values in $\mathcal{X}$ and $\mathcal{Y}$ respectively.  We interpret $P_x$ as the conditional distribution of $Y$ given $X=x$, even though it may be the case that the conditioning event has probability zero.  Going further, we also interpret $P_X$ as the conditional distribution of $Y$ given~$X$.  Indeed, we then have for all $A \in \mathcal{A}$ and $B \in \mathcal{B}$ that	\begin{align*}
	\mathbb{E} \bigl(P_X(B) \mathbbm{1}_A(X) \bigr) = \int_A P_x(B) \,d\mu(x) = P(A \times B) &= \mathbb{P}(X \in A,Y \in  B) \\
 &= \mathbb{E}\bigl(\mathbbm{1}_A(X)\mathbbm{1}_{B}(Y)\bigr),
	\end{align*}
so $\mathbb{P}(Y \in B \, | \, X) = \mathbb{E}\bigl(\mathbbm{1}_B(Y) \, | \, X) = P_X(B)$ almost surely.
The following result, which follows from \citet[][Theorems~10.2.1 and~10.2.2]{dudley2018real}, provides a sufficient condition for the existence of a disintegration and may be regarded as a generalisation of Fubini's theorem for probability measures on the product of Polish spaces.
\begin{theorem}\label{lemma:existenceOfDisintegrationFromDudley} Suppose that $(\mathcal{X},\mathcal{A})$ and $(\mathcal{Y},\mathcal{B})$ are Polish spaces with their corresponding Borel $\sigma$-algebras.  Let $P$ be a probability distribution on $(\mathcal{X}\times \mathcal{Y}, \mathcal{A} \otimes \mathcal{B})$, with $\mu$ denoting the marginal distribution of $P$ on $(\mathcal{X},\mathcal{A})$. Then there exists a disintegration $(P_x)_{x \in \mathcal{X}}$ of $P$ into conditional distributions on $\mathcal{Y}$ with the property that 
\begin{align*}
\int_{\mathcal{X}\times \mathcal{Y}}g(x,y)\,dP(x,y) = \int_{\mathcal{X}} \biggl(\int_{\mathcal{Y}} g(x,y) \,dP_x(y)\biggr) \,d\mu(x),
\end{align*}
for every $P$-integrable function $g: \mathcal{X}\times \mathcal{Y} \rightarrow \mathbb{R}$. Moreover, the disintegration $(P_x)_{x \in \mathcal{X}}$ of $P$  is unique in the sense that if there exists another disintegration $(\tilde{P}_x)_{x \in \mathcal{X}}$ of $P$ into conditional distributions on $\mathcal{Y}$, then $\tilde{P}_x = P_x$ for $\mu$-almost every $x \in \mathcal{X}$.
\end{theorem}
In order to apply this result in our missing data context, recall the random pair $(X,\Omega')$ taking values in $\mathcal{X} \times \{0,1\}^d$ from~\eqref{eq:MAR-law}.  For each $\omega \in \{0,1\}^d$, we assume the existence of disintegrations $(P_{x\ostar\omega})_{x \in \mathcal{X}}$ of the joint distribution of $(X \ostar \omega,\Omega')$ into conditional distributions on $\{0,1\}^d$ as well as $(P_x)_{x \in \mathcal{X}}$ of the joint distribution of $(X, \Omega')$ into conditional distributions on $\{0,1\}^d$.  The existence of these disintegrations is guaranteed by Theorem~\ref{lemma:existenceOfDisintegrationFromDudley} when $\mathcal{X}_j$ is a Polish space for each $j \in [d]$, because it then follows from Lemma~\ref{lemma:X-star-is-polish} and its proof that $\mathcal{X} \coloneqq \prod_{j=1}^d \mathcal{X}_j$ and $\mathcal{X}_\star \coloneqq \prod_{j=1}^d \mathcal{X}_{j,\star}$ are Polish.  Formally then, the condition $\mathbb{P}(\Omega' = \omega\,|\, X=x) = \mathbb{P}(\Omega' = \omega\,|\, X\ostar\omega=x\ostar\omega)$ in~\eqref{eq:MAR-law} means that $P_x(\omega) = P_{x\ostar\omega}(\omega)$.  In fact, since the MAR definition refers to a family of distributions of $X \ostar \Omega'$, we need these disintegrations for each possible joint distribution of $(X,\Omega')$ with $X \sim P$ and $\mathbb{P}(\Omega' = \bm{1}_S) = \pi(S)$ for all $S \subseteq[d]$ (such disintegrations are again guaranteed to exist by Theorem~\ref{lemma:existenceOfDisintegrationFromDudley} when $\mathcal{X}_j$ is a Polish space for each $j \in [d]$).  

\section{MCAR lower bounds for mean estimation} \label{sec:heterogeneous-mean}

Recall the definition of an $f$-divergence $\mathrm{Div}_f(\cdot,\cdot)$ from~\eqref{eq:f-divergence}.  Lemma~\ref{lem:equivalence-of-f-af} below relates the $f$-divergence of two MCAR distributions on $\mathcal{X}_\star$ to a notion of average $f$-divergence given in Definition~\ref{def:ADiv} below.  For probability measures $P, Q \in \mathcal{P}(\mathcal{X})$, we let, for $S \subseteq [d]$, $P_S$ and $Q_S$ denote their respective marginal distributions on $\mathcal{X}_S$.  
\begin{defn} \label{def:ADiv}
    Let $P, Q \in \mathcal{P}(\mathcal{X})$, let $\pi\in \mathcal{P}(2^{[d]})$ and let $f: (0, \infty) \rightarrow \mathbb{R}$ be a convex function with $f(1) = 0$. We define the \emph{average $f$-divergence} between $P$ and $Q$ with respect to $\pi$ to be
    \begin{align*}
        \mathrm{ADiv}_f(P, Q;\pi) \coloneqq \sum_{S \subseteq [d]} \pi(S) \cdot \mathrm{Div}_f\bigl(P_S, Q_S\bigr),
    \end{align*}
    where we adopt the convention that $\mathrm{Div}_f(P_S, Q_S) \coloneqq 0$ if $S = \emptyset$.
\end{defn} 
We will write $\mathrm{ATV}(\cdot,\cdot;\pi)$ and $\mathrm{AKL}(\cdot,\cdot;\pi)$ respectively for the average total variation distance and average Kullback--Leibler divergence with respect to $\pi$.  It is worth noting that the average total variation distance is a pseudo-metric but not necessarily a metric on $\mathcal{P}(\mathcal{X})$; indeed, we have $\mathrm{ATV}(P,Q;\pi) = 0$ whenever $P$ and $Q$ have the same marginal distributions on the support of $\pi$.  

The following lemma shows how an $f$-divergence between two MCAR distributions on $\mathcal{P}(\mathcal{X}_{\star})$ can be computed as an average $f$-divergence on $\mathcal{P}(\mathcal{X})$ in the sense of Definition~\ref{def:ADiv}.  
\begin{lemma}\label{lem:equivalence-of-f-af}
    Let $P, Q \in \mathcal{P}(\mathcal{X})$ and let $\pi \in \mathcal{P}(2^{[d]})$. Then
    \[
    \mathrm{Div}_f\bigl(\mathsf{MCAR}_{(\pi, P)}, \mathsf{MCAR}_{(\pi, Q)}\bigr) = \mathrm{ADiv}_f(P, Q; \pi).
    \]
\end{lemma}
\begin{proof}[Proof of Lemma~\ref{lem:equivalence-of-f-af}]
Recall the definition of $\mathcal{X}^{(S)}$ and $\mathcal{X}_S$ from Section~\ref{sec:notation-proofs}.  For $A \in\mathcal{B}(\mathcal{X}_\star)$, note that $A \cap \mathcal{X}^{(S)} \in \mathcal{B}(\mathcal{X}^{(S)})$ and define $(A \cap \mathcal{X}^{(S)})_S \coloneqq \bigl\{ x_S : x\in A \cap \mathcal{X}^{(S)} \bigr\} \in \mathcal{B}(\mathcal{X}_S)$.  Let $P^{(S)} \in \mathcal{P}(\mathcal{X}_{\star})$ be defined as $P^{(S)}(A) \coloneqq P_S \bigl((A \cap \mathcal{X}^{(S)})_S \bigr)$ for $A \in \mathcal{B}(\mathcal{X}_{\star})$, so that $P^{(S)}$ is supported on $\mathcal{X}^{(S)}$. 
For each $S \subseteq [d]$, we can apply the Lebesgue decomposition theorem to obtain the decomposition $P^{(S)} = P^{(S)}_{\mathrm{ac}} + P^{(S)}_{\mathrm{sing}}$ (with respect to $Q^{(S)}$).  Then, with respect to $\mathsf{MCAR}_{(\pi, Q)}$, 
\[
\bigl(\mathsf{MCAR}_{(\pi, P)}\bigr)_{\mathrm{ac}} = \biggl(\sum_{S \subseteq [d]} \pi(S) \cdot P^{(S)}\biggr)_{\mathrm{ac}} = \sum_{S \subseteq [d]} \pi(S) \cdot P^{(S)}_{\mathrm{ac}} 
\]
and
\[
\bigl(\mathsf{MCAR}_{(\pi, P)}\bigr)_{\mathrm{sing}} = \biggl(\sum_{S \subseteq [d]} \pi(S) \cdot P^{(S)}\biggr)_{\mathrm{sing}} = \sum_{S \subseteq [d]} \pi(S) \cdot P^{(S)}_{\mathrm{sing}}.
\]
Hence, since $\mathcal{X}_{\star} = \sqcup_{S \subseteq [d]} \mathcal{X}^{(S)}$,
\begin{align*}
&\mathrm{Div}_f\bigl(\mathsf{MCAR}_{(\pi, P)}, \mathsf{MCAR}_{(\pi, Q)}\bigr)\\
&= \int_{\mathcal{X}_{\star}} f\biggl(\frac{\mathrm{d}\sum_{S \subseteq [d]} \pi(S) P^{(S)}_{\mathrm{ac}}}{\mathrm{d}\sum_{S \subseteq [d]} \pi(S) Q^{(S)}}\biggr)  \sum_{S \subseteq [d]} \pi(S) \, \mathrm{d}Q^{(S)} + M_f \cdot \sum_{S \subseteq [d]} \pi(S) P^{(S)}_{\mathrm{sing}}(\mathcal{X}_\star) \\
&= \sum_{S \subseteq [d]}\pi(S) \int_{\mathcal{X}^{(S)}} f\biggl(\frac{\mathrm{d}P^{(S)}_{\mathrm{ac}}}{\mathrm{d} Q^{(S)}}\biggr) \, \mathrm{d}Q^{(S)} +  M_f \cdot \sum_{S \subseteq [d]}\pi(S) P^{(S)}_{\mathrm{sing}}(\mathcal{X}^{(S)}) = \mathrm{ADiv}_f(P,Q;\pi),
\end{align*}
as desired.
\end{proof}

Very often, it is convenient to apply Pinsker's inequality to total variation distances, in order to control them via (more tractable) Kullback--Leibler divergences.  We remark that in doing so directly to the left-hand side of Lemma~\ref{lem:equivalence-of-f-af}, we obtain 
\begin{align*} 
\mathrm{TV}\bigl(\mathsf{MCAR}_{(\pi, P)}, &\mathsf{MCAR}_{(\pi, Q)}\bigr) \leq \frac{1}{2^{1/2}} \cdot \mathrm{KL}^{1/2}\bigl(\mathsf{MCAR}_{(\pi, P)}, \mathsf{MCAR}_{(\pi, Q)}\bigr)\\
&= \frac{1}{2^{1/2}} \mathrm{AKL}^{1/2}(P, Q;\pi) = \frac{1}{2^{1/2}} \biggl\{\sum_{S \subseteq [d]} \pi(S) \cdot \mathrm{KL}\bigl(P_S, Q_S\bigr)\biggr\}^{1/2}.
\end{align*}
On the other hand, applying Pinsker's inequality to the right-hand side of Lemma~\ref{lem:equivalence-of-f-af} yields the bound
\begin{align*}
\mathrm{ATV}(P, Q;\pi) = \sum_{S \subseteq [d]} \pi(S) \cdot \mathrm{TV}\bigl(P_S, Q_S\bigr) \leq \frac{1}{2^{1/2}} \sum_{S \subseteq [d]} \pi(S) \cdot \mathrm{KL}^{1/2}\bigl(P_S, Q_S\bigr),
\end{align*}
which is an improvement, by Jensen's inequality.

We now state two lower bounds in the MCAR setting, beginning with the univariate setting.

\begin{prop}\label{prop:univariate-mcar-lb}
    Let $n \in \mathbb{N}$, $q \in (0, 1]$ and $\Theta \coloneqq \mathbb{R}$. 
    \begin{enumerate}
    \item[(a)] Let $\sigma>0$ and $\delta\in(0, 1/4]$. Then, writing $\mathcal{P}_{\theta} \coloneqq \bigl\{\mathsf{MCAR}_{(q,\mathsf{N}(\theta,\sigma^2))}^{\otimes n}\bigr\}$, we have
    \begin{align*}
    \mathcal{M}\bigl(\delta,\mathcal{P}_{\Theta},|\cdot|^2\bigr) 
    \begin{cases}
         \geq \dfrac{\sigma^2 \log(1/\delta)}{20nq} \quad&\text{if }\delta\geq \dfrac{(1-q)^n}{2}\\
         = \infty \quad&\text{if }\delta< \dfrac{(1-q)^n}{2}.
    \end{cases}
    \end{align*}
    \item[(b)] Let $K>0$ and $\delta\in(0,1/4]$. Then, with $\mathcal{P}_{\mathrm{b}}(\theta, K)$ as in~\eqref{eq:distributions-with-bounded-support} and writing $\mathcal{P}_{\theta} \coloneqq \bigl\{\mathsf{MCAR}_{(q,P)}^{\otimes n} : P\in\mathcal{P}_{\mathrm{b}}(\theta,K)\bigr\}$, we have
    \begin{align*}
        \mathcal{M}\bigl(\delta, \mathcal{P}_{\Theta}, | \cdot |^2\bigr) \begin{cases}
            \geq \dfrac{K^2 \log(1/\delta)}{80nq} \quad&\text{if } \delta\geq \exp(-nq/2)\\
            = \infty \quad&\text{if } \delta < \dfrac{(1-q)^n}{2}.
        \end{cases} 
    \end{align*}
    \end{enumerate}
\end{prop}
\begin{proof}
    (a) Let $\theta_1 \coloneqq 0$ and $\theta_2 \coloneqq \sigma \sqrt{\frac{1}{nq} \log\bigl( \frac{1}{4\delta(1-\delta)} \bigr)}$. By Lemma~\ref{lem:equivalence-of-f-af}, we have
    \begin{align*}
        \mathrm{KL}\bigl( \mathsf{MCAR}^{\otimes n}_{(\pi, \mathsf{N}(\theta_1,\sigma^2))}, \mathsf{MCAR}^{\otimes n}_{(\pi, \mathsf{N}(\theta_2,\sigma^2))}\bigr) &= nq \cdot \mathrm{KL}\bigl( \mathsf{N}(\theta_1,\sigma^2), \mathsf{N}(\theta_2,\sigma^2)\bigr)\\
        &= \frac{1}{2} \log\biggl( \frac{1}{4\delta(1-\delta)} \biggr) < \log\biggl( \frac{1}{4\delta(1-\delta)} \biggr).
    \end{align*}
    Therefore, by \citet[Corollary~6 and Theorem~4]{ma2024high}, we deduce that for $\delta\in(0,1/4]$,
    \begin{align*}
        \mathcal{M}\bigl(\delta, \mathcal{P}_{\Theta}, | \cdot |^2\bigr) \geq \biggl( \frac{\theta_1 - \theta_2}{2} \biggr)^2 = \frac{\sigma^2\log\bigl(\frac{1}{4\delta(1-\delta)}\bigr)}{4nq} \geq \frac{\sigma^2 \log(1/\delta)}{20nq}.
    \end{align*}
    Moreover, for any $\theta_1,\theta_2\in\mathbb{R}$, we have
    \begin{align*}
        \mathrm{TV}\bigl(&\mathsf{MCAR}_{(q,\mathsf{N}(\theta_1,\sigma^2))}^{\otimes n}, \mathsf{MCAR}_{(q,\mathsf{N}(\theta_2,\sigma^2))}^{\otimes n}\bigr)\\
        &= \sup_{A \in \mathcal{B}(\mathbb{R}_{\star}^n) \setminus \{\star\}^n} \Bigl\{\mathsf{MCAR}_{(q,\mathsf{N}(\theta_1,\sigma^2))}^{\otimes n}(A) - \mathsf{MCAR}_{(q,\mathsf{N}(\theta_2,\sigma^2))}^{\otimes n}(A)\Bigr\} \leq 1-(1-q)^n,
    \end{align*}
    where both steps follow since $\mathsf{MCAR}_{(q,\mathsf{N}(\theta_1,\sigma^2))}^{\otimes n}(\{\star\}^n) = \mathsf{MCAR}_{(q,\mathsf{N}(\theta_2,\sigma^2))}^{\otimes n}(\{\star\}^n) = (1-q)^n$.  Therefore, by \citet[Lemma~5]{ma2024high}, we have that $\mathcal{M}(\delta,\mathcal{P}_{\Theta},|\cdot|^2) \geq (\theta_1-\theta_2)^2/4$ for $\delta< \frac{(1-q)^n}{2}$. The claim follows since $\theta_1,\theta_2$ were arbitrary.

    \medskip
    (b) Define $P_1,P_2 \in \mathcal{P}(\mathbb{R})$ by 
    \[
    P_1(\{x\}) \coloneqq \begin{cases} \frac{1}{2} \quad &\text{ if } x = 0\\
    \frac{1}{2} \quad &\text{ if } x = K
    \end{cases} \quad \text{and} \quad P_2(\{x\}) \coloneqq \begin{cases}
    \frac{1-a}{2} \quad &\text{ if } x = 0\\
    \frac{1 + a}{2} \quad &\text{ if } x=K,
    \end{cases}
    \]
    where $a\coloneqq \sqrt{\frac{1}{nq}\log\bigl(\frac{1}{4\delta(1-\delta)}\bigr)} \leq \sqrt{\frac{\log(1/\delta)}{nq}} \leq 1/\sqrt{2}$ for $\delta \in [e^{-nq/2}, 1/4]$. 
    Let $\theta_1 \coloneqq \mathbb{E}_{P_1}(X) = K/2$ and $\theta_2 \coloneqq \mathbb{E}_{P_2}(X) = (1+a)K/2$, so that $P_{\ell} \in \mathcal{P}_{\mathrm{b}}(\theta_\ell, K)$ for $\ell\in\{1,2\}$. Moreover, by Lemma~\ref{lem:equivalence-of-f-af},
    \begin{align*}
        \mathrm{KL}\bigl(\mathsf{MCAR}^{\otimes n}_{(q,P_1)}, \mathsf{MCAR}^{\otimes n}_{(q,P_2)}\bigr) = nq \mathrm{KL}(P_1,P_2) &= \frac{nq}{2} \log\biggl( \frac{1}{1-a^2} \biggr)\\
        &< nqa^2 = \log\biggl( \frac{1}{4\delta(1-\delta)} \biggr),
    \end{align*}
    where the inequality follows because 
    $\log\bigl(\frac{1}{1-x^2}\bigr) < 2x^2$ for $x \in (0,1/\sqrt{2}]$. Hence, by \citet[Corollary~6 and Theorem~4]{ma2024high}, we deduce that for $\delta\in[e^{-nq/2},1/4]$,
    \begin{align*}
        \mathcal{M}\bigl(\delta, \mathcal{P}_{\Theta}, | \cdot |^2\bigr) \geq \biggl( \frac{\theta_1 - \theta_2}{2} \biggr)^2 \geq \frac{K^2 \log(1/\delta)}{80nq}.
    \end{align*}
    Now let $\theta\in\mathbb{R}$, $P_1' \coloneqq \mathsf{Unif}[0,K]$ and $P_2' \coloneqq \mathsf{Unif}[\theta, \theta + K]$. Then by the same argument as in part (a), we have
    \begin{align*}
        \mathrm{TV}\bigl( \mathsf{MCAR}_{(q,P_1')}^{\otimes n}, \mathsf{MCAR}_{(q,P_2')}^{\otimes n} \bigr) \leq 1-(1-q)^n.
    \end{align*}
    Therefore, by \citet[Lemma~5]{ma2024high}, we have that $\mathcal{M}(\delta,\mathcal{P}_{\Theta},|\cdot|^2) \geq \theta^2/4$ for $\delta< \frac{(1-q)^n}{2}$. The claim follows since $\theta_1,\theta_2$ were arbitrary.
\end{proof}

Our next proposition lower bounds the minimax quantile for mean estimation in the multivariate Gaussian setting when the covariance matrix is diagonal. 
\begin{prop} \label{prop:arb-mean-MCAR-lb}
    Let $\delta \in (0, 1/4]$, $\Sigma = (\Sigma_{jk})_{j,k \in [d]} \in \mathcal{S}^{d \times d}_{++}$ be diagonal, $\pi\in\mathcal{P}(2^{[d]})$, and let $P_{\theta} \coloneqq \mathsf{N}(\theta,\Sigma)$ for $\theta\in\mathbb{R}^d$. Then, writing $\mathcal{P}_{\theta} \coloneqq \bigl\{ \mathsf{MCAR}_{(\pi,P_{\theta})}^{\otimes n} \bigr\}$, we have
    \begin{align*}
        \mathcal{M}\bigl(\delta, \mathcal{P}_{\Theta}, \| \cdot \|_2^2\bigr) \gtrsim \frac{\tr\bigl(\Sigma^{\mathrm{IPW}}\bigr)}{n} +\frac{\| \Sigma^{\mathrm{IPW}} \|_{\mathrm{op}} \log(1/\delta)}{n}.
    \end{align*}
\end{prop}

\begin{proof}
We consider two separate constructions to capture each of the terms in the lower bound.  For the first, let $\mathcal{V} \coloneqq \{0, 1\}^{d}$ and for each $v = (v_1,\ldots,v_d)^\top \in \mathcal{V}$, set $\theta_v = (\theta_{v,1},\ldots,\theta_{v,d})^\top \coloneqq a \odot v$, where $a = (a_1,\ldots,a_d)^\top \in \mathbb{R}^d$ is given by $a_j \coloneqq \frac{4}{3}\sqrt{\Sigma_{jj}/(n q_j)}$ for $j\in[d]$. Define $\Theta_0 \coloneqq \{\theta_v : v\in\mathcal{V}\}$, which has diameter $D \coloneqq \frac{4}{3}\sqrt{\tr(\Sigma^{\mathrm{IPW}})/n}$.  For any $v,v' \in \mathcal{V}$ that differ only in their $j$th coordinates, we have by Pinsker's inequality and Lemma~\ref{lem:equivalence-of-f-af} that
    \begin{align*}
    \mathrm{TV}\bigl( \mathsf{MCAR}_{(\pi, P_{\theta_v})}^{\otimes n}, \mathsf{MCAR}_{(\pi, P_{\theta_{v'}})}^{\otimes n}\bigr) &\leq \biggl\{\frac{n}{2} \cdot \mathrm{KL}\bigl( \mathsf{MCAR}_{(\pi, P_{\theta_v})}, \mathsf{MCAR}_{(\pi, P_{\theta_{v'}})}\bigr)\biggr\}^{1/2}\\
    &= \biggl\{\frac{n}{2} \sum_{S\subseteq [d]} \pi(S) \cdot \mathrm{KL}\bigl((P_{\theta_v})_S, (P_{\theta_{v'}})_S \bigr)\biggr\}^{1/2}\\
    &= \biggl\{\frac{n}{2} \sum_{S\subseteq [d]} \pi(S) \cdot \sum_{k\in S} \frac{(\theta_{v,k} - \theta_{v',k})^2}{2\Sigma_{kk}} \biggr\}^{1/2}\\
    &= \biggl\{\frac{n}{4} \sum_{S\subseteq [d] : j\in S} \pi(S) \cdot \frac{a_j^2}{\Sigma_{jj}} \biggr\}^{1/2} = \frac{2}{3}.
    \end{align*}
    Therefore, by Assouad's Lemma \citep[e.g.,][Lemma 23]{ma2024high}, 
    \begin{align*}
        \inf_{\hat{\theta}_n \in \hat{\Theta}_{n}} \sup_{\theta_0\in\Theta_0} \mathbb{E}_{\mathsf{MCAR}_{(\pi,P_{\theta_0})}^{\otimes n}} \bigl( \|\hat{\theta}_n - \theta\|_2^2 \bigr) \geq \frac{4\tr(\Sigma^{\mathrm{IPW}})}{27n}.
    \end{align*}
    Applying \citet[Theorem~8]{ma2024high}, with $\epsilon=3/40$ therein, we deduce that for $\delta\in(0,1/15]$,
    \begin{align*}
        \mathcal{M}_-\bigl(\delta,\mathcal{P}_{\Theta},\|\cdot\|_2^2\bigr) \geq \frac{\tr(\Sigma^{\mathrm{IPW}})}{100n}.
    \end{align*}
    We then apply \citet[Theorem~4 and Proposition~9]{ma2024high}, with $A=k=2$ therein, to deduce that for $\delta\in(0,1/4]$,
    \begin{align} \label{eq:mcar-minimax-quantile-term1}
        \mathcal{M}\bigl(\delta,\mathcal{P}_{\Theta},\|\cdot\|_2^2\bigr) \geq \frac{\tr(\Sigma^{\mathrm{IPW}})}{2^6 \cdot 3^2 \cdot 5^2 \cdot n}.
    \end{align}
    Our second construction involves just two distributions.  Let $j_0 \coloneqq \sargmax_{j \in [d]} \Sigma_{jj}/q_j$ and set $\theta_1 \coloneqq 0$, $\theta_2 \coloneqq \sqrt{\frac{\Sigma_{j_0 j_0}}{n q_{j_0}} \log\bigl(\frac{1}{4\delta(1-\delta)}\bigr)}\, e_{j_0}$.  Then by Lemma~\ref{lem:equivalence-of-f-af}, 
\[
\mathrm{KL}\bigl(\mathsf{MCAR}_{(\pi, P_{\theta_1})}^{\otimes n}, \mathsf{MCAR}_{(\pi, P_{\theta_{2}})}^{\otimes n}\bigr) = n \cdot \mathrm{AKL}\bigl(P_{\theta_1}, P_{\theta_2}; \pi \bigr) = \frac{1}{2} \log\biggl(\frac{1}{4\delta(1-\delta)}\biggr).
\]
By~\citet[][Theorem 4 and Corollary 6]{ma2024high}, we have for $\delta \in (0, 1/4]$ that 
\begin{align} \label{eq:mcar-minimax-quantile-term2}
\mathcal{M}\bigl(\delta, \mathcal{P}_{\Theta}, \| \cdot \|_2^2\bigr) \geq \frac{\| \Sigma^{\mathrm{IPW}} \|_{\mathrm{op}} \log(1/\delta)}{20n}.
\end{align}
    Finally, combining~\eqref{eq:mcar-minimax-quantile-term1} and~\eqref{eq:mcar-minimax-quantile-term2} yields the desired result.
\end{proof}

\end{document}


\begin{frontmatter}
\title{Supplementary material for `Estimation beyond Missing (Completely) at Random'}

\begin{aug}
\author[A2]{\fnms{Tianyi}~\snm{Ma}\ead[label=e6]{tm681@cam.ac.uk}}
\author[B2]{\fnms{Kabir A.}~\snm{Verchand}\ead[label=e7]{verchand@usc.edu}}
\author[C2]{\fnms{Thomas B.}~\snm{Berrett}\ead[label=e8]{tom.berrett@warwick.ac.uk}}
\author[D2]{\fnms{Tengyao}~\snm{Wang}\ead[label=e9]{t.wang59@lse.ac.uk}}
\author[A2]{\fnms{Richard
J.}~\snm{Samworth}\ead[label=e10]{r.samworth@statslab.cam.ac.uk}}
\address[A2]{Statistical Laboratory, University of
Cambridge\printead[presep={,\ }]{e6,e10}}

\address[B2]{Department of Data Sciences and Operations, University of Southern California\printead[presep={,\ }]{e7}}

\address[C2]{Department of Statistics, University of Warwick\printead[presep={,\ }]{e8}}

\address[D2]{Department of Statistics, London School of Economics and Political Science\printead[presep={,\ }]{e9}}
\end{aug}
\end{frontmatter}

\setcounter{section}{0}
\setcounter{equation}{0}
\setcounter{theorem}{0}
\def\theequation{S\arabic{equation}}
\def\thesection{S\arabic{section}}
\def\thetheorem{S\arabic{theorem}}
\def\thefigure{S\arabic{figure}}
\def\thealgorithm{S\arabic{algorithm}}

This is the supplementary material for \cite{ma2024estimation}.

\section{Notation used in proofs}\label{sec:notation-proofs}

For a measurable space $(\mathcal{Z}, \mathcal{C})$ and probability measures $P, Q \in \mathcal{P}(\mathcal{Z})$, we write $P \perp Q$ if $P$ and $Q$ are singular.  The Lebesgue decomposition theorem yields the unique decomposition $P = P_{\mathrm{ac}} + P_{\mathrm{sing}}$ where $P_{\mathrm{ac}} \ll Q$ and where $P_{\mathrm{sing}} \perp Q$.  For a convex function $f: (0, \infty) \rightarrow \mathbb{R}$, we let $M_f \coloneqq \lim_{x \rightarrow \infty} f(x)/x \in (-\infty, \infty]$ denote its \emph{maximal slope}.   We then define the \emph{$f$-divergence} between $P$ and $Q$ to be 
\begin{align} \label{eq:f-divergence}
\mathrm{Div}_f(P, Q) \coloneqq \int_{\mathcal{Z}} f\biggl(\frac{\mathrm{d} P_{\mathrm{ac}}}{\mathrm{d}Q}\biggr) \, \mathrm{d}Q + M_f \cdot P_{\mathrm{sing}}(\mathcal{Z}).
\end{align}
As important examples, if $f(x) = |x-1|/2$, then we obtain the total variation distance $\mathrm{TV}(P, Q) \coloneqq \sup_{A \in \mathcal{C}} \lvert P(A) - Q(A) \rvert$, while if $f(x) = x \log x$, then the resulting $f$-divergence is the Kullback--Leibler divergence
\[
\mathrm{KL}(P,Q) \coloneqq \begin{cases} 
\int_{\mathcal{Z}} \log\bigl(\frac{\mathrm{d} P}{\mathrm{d} Q}\bigr) \, \mathrm{d} Q & \text{ if } P \ll Q\\
\infty & \text { otherwise}.
\end{cases}
\]
Finally, if $f(x) = (x - 1)^2$, then we obtain the $\chi^2$-divergence
\[
\chi^2(P,Q) \coloneqq \begin{cases} 
\int_{\mathcal{Z}} \bigl(\frac{dP}{dQ} - 1\bigr)^2 \, dQ & \text{ if } P \ll Q \\
\infty & \text{ otherwise}.
\end{cases}
\]
Recalling the spaces $\mathcal{X}_1,\ldots,\mathcal{X}_d$ from Section~\ref{sec:extended-space-properties}, for a set $S \in 2^{[d]} \setminus \{\emptyset\}$, let $\mathcal{X}_{S} \coloneqq \prod_{j \in S} \mathcal{X}_j$, and also define $\mathcal{X}_\emptyset \coloneqq \{\star\}$ and $\mathcal{X} \coloneqq \prod_{j=1}^d \mathcal{X}_j$.  Given $x = (x_1,\ldots,x_d) \in \mathcal{X}$ and $S \in 2^{[d]} \setminus \{\emptyset\}$, we define $x_S \coloneqq (x_j)_{j \in S}$, with $x_\emptyset \coloneqq \star$.  For $S \subseteq [d]$, we define $\mathcal{X}_j^{(S)} \coloneqq \mathcal{X}_j$ if $j\in S$ and $\mathcal{X}_j^{(S)} \coloneqq \{\star\}$ if $j \notin S$, and also set $\mathcal{X}^{(S)} \coloneqq \prod_{j=1}^d \mathcal{X}_j^{(S)}$.  Next, we let
\[
\mathcal{B}^{(S)}(\mathcal{X}_\star) \coloneqq \bigl\{A \in \mathcal{B}(\mathcal{X}_\star): \forall z = (z_1,\ldots,z_d) \in A,\, z_j \neq \star, \, \forall j \in S \ \text{and} \ z_k = \star,\, \forall k \notin S \bigr\}.
\]
Given $S \subseteq [d]$, we write $\mathcal{G}_S$ for the set of real-valued functions on $\mathcal{X}_S$, and also write $\mathcal{G}_\star$ for the set of real-valued functions on $\mathcal{X}_\star$.  A function $f \in \mathcal{G}_\star$ may be identified with the sequence of functions $(f_S:S \subseteq [d])$, where $f_S \in \mathcal{G}_S$ for each $S$.  Formally, this identification is via the bijection $\psi: \prod_{S \subseteq [d]} \mathcal{G}_S \rightarrow \mathcal{G}_\star$ given by $\psi\bigl((f_{S'}:S' \subseteq [d])\bigr)(z) \coloneqq f_S(z_S)$ for $z \in \mathcal{X}^{(S)}$ and $S \subseteq [d]$.  In other words, we evaluate $f \in \mathcal{G}_\star$ at $z \in \mathcal{X}_\star$ by setting $S$ to be the coordinates in $z$ that are not equal to $\star$, and then computing $f_S(z_S)$.

\section{Proofs from Section~\ref{sec:setup}}\label{sec:proofs-setup}

\subsection{Proof of Theorem~\ref{cor:P-epsilon-pi-S-realisability-Farkas-form}}\label{sec:proof-general-realisable}

Theorem~\ref{cor:P-epsilon-pi-S-realisability-Farkas-form} follows immediately from Theorem~\ref{Thm:AbstractVersion} below, which is stated in greater generality, encompassing both continuous and discrete spaces.  In fact, we begin with a sketch of the proof of this general result in the setting where $\mathcal{X}$ is finite, both to explain the relevance of (a generalisation of) Farkas's lemma in this context, and to provide intuition for the more technical arguments that follow.  Let $X \sim P \in \mathcal{P}(\mathcal{X})$ and let $Q \coloneqq \mathsf{Law}(X \ostar \Omega)$ for some random vector $\Omega$ taking values in $\{0,1\}^d$.  We write $M = (M_{S,x})_{S\subseteq [d],x\in\mathcal{X}} \coloneqq \bigl(\mathbb{P}(\Omega = \bm{1}_S \, | \, X = x)\bigr)_{S \subseteq [d], x \in \mathcal{X}} \in [0,1]^{2^{[d]} \times \mathcal{X}}$ to summarise the missingness mechanism.  Now write $\mathbb{A} \in [0,1]^{\mathcal{X}_\star \times (2^{[d]} \times \mathcal{X})}$ for the matrix with
\[
    \mathbb{A}_{z,(S,x)} \coloneqq P(\{x\}) \mathbbm{1}_{\{z_S=x_S\}} \prod_{j \in S^{c}}\mathbbm{1}_{\{ z_{j} = \star \}},
\]
so that each column of $\mathbb{A}$ has at most one non-zero entry.  Then
\[
    (\mathbb{A} M)_{z} = \sum_{S \subseteq [d]} \sum_{x \in \mathcal{X}} P(\{x\}) M_{S, x} \mathbbm{1}_{\{z_S=x_S\}}\prod_{j \in S^{c}}\mathbbm{1}_{\{ z_{j} = \star \}} = Q(\{z\}).
\]
Now, for $x \in \mathcal{X}$, write $\sigma_x \in \{0,1\}^{2^{[d]} \times \mathcal{X}}$ for the vector with $(\sigma_x)_{(S,x')} \coloneqq \mathbbm{1}_{\{x=x'\}}$, so that $\sigma_x^\top M = \sum_{S \in 2^{[d]}} M_{S,x}$, and form the matrix $\mathbb{B} \coloneqq (\sigma_x^\top)_{x \in \mathcal{X}} \in \{0,1\}^{\mathcal{X} \times (2^{[d]} \times \mathcal{X})}$.  We can then define $\mathcal{J} \coloneqq \{M \in [0,1]^{2^{[d]} \times \mathcal{X}}:\mathbb{B} M = \bm{1}_{\mathcal{X}}\}$ to denote the set of valid mechanisms.  We deduce that $Q \in \mathsf{MNAR}_P$ if and only if there exists $M \in \mathcal{J}$ such that $\mathbb{A} M = Q$.  By Farkas's lemma, this latter condition is equivalent to the statement that there does not exist $(y,w) = \bigl((y_z)_{z \in \mathcal{X}_\star},(w_x)_{x \in \mathcal{X}}\bigr) \in \mathbb{R}^{\mathcal{X}_\star} \times \mathbb{R}^{\mathcal{X}}$ such that $\sum_{z \in \mathcal{X}_\star} Q(\{z\}) y_z + \sum_{x \in \mathcal{X}} w_x < 0$ and $0 \leq (\mathbb{A}^\top y + \mathbb{B}^\top w)_{(S,x)} = P(\{x\}) y_{x \ostar \bm{1}_{S}} + w_x$ for each $S \subseteq [d]$ and $x \in \mathcal{X}$. 
The search for such a pair $(y,w)$ amounts to a constrained optimisation problem, whose solution for each fixed~$y$ is to take $w_x = -P(\{x\}) \min_{S \subseteq [d]} y_{x \ostar \bm{1}_S}$ for $x \in \mathcal{X}$.  Then 
\[
\sum_{z \in \mathcal{X}_\star} Q(\{z\}) y_z + \sum_{x \in \mathcal{X}} w_x = \sum_{z \in \mathcal{X}_\star} Q(\{z\}) y_z - \sum_{x \in \mathcal{X}} P(\{x\}) \min_{S \subseteq [d]} y_{x \ostar \bm{1}_S},
\]
so the condition that there does not exist $(y,w)$ for which this quantity is negative corresponds to~\eqref{Eq:fmax} after identifying $y$ with~$-f$.

Moving now to the proof of the full theorem, we require several preliminary topological results that are stated and proved in Section~\ref{sec:auxiliary}.  We will also use the generalisation of Farkas's lemma below.  Recall that if $X$ is a real vector space, then the \emph{algebraic dual} of $X$, denoted $X^*$, is the vector space of linear functions $f:X \rightarrow \mathbb{R}$.  Whenever $X'$ is a subspace of this algebraic dual, we say $X'$ \emph{separates points} if for every $x_1,x_2 \in X$ with $x_1 \neq x_2$, there exists $f \in X'$ with $f(x_1) \neq f(x_2)$.  The \emph{weak topology} on $X$ generated by $X'$ is the coarsest topology such that $f^{-1}(U)$ is open in $X$ for every $f\in X'$ and open set $U\subseteq \mathbb{R}$.  Now let $Y$ be another real vector space and let $Y'$ be a subspace of its algebraic dual.  A linear map $T:X \rightarrow Y$ is \emph{$(X',Y')$-weakly continuous} if it is continuous when $X$ and $Y$ are equipped with the weak topologies generated by $X'$ and $Y'$ respectively.  Where $X'$ and $Y'$ are clear from context, we will abbreviate this terminology by simply referring to $T$ as weakly continuous.
\begin{theorem}[{\citealp[Theorem~2]{craven1977generalizations}}]
\label{Thm:GeneralisedFarkas}
    Let $X$ and $Y$ be real vector spaces, and let $X'$ and $Y'$ be subspaces of the algebraic duals of $X$ and $Y$, respectively, that separate points.  Given $y\in Y$, a weakly continuous linear map $T:X\to Y$, and a convex cone $K\subseteq X$ such that $T(K)$ is weakly closed in $Y$, the following are equivalent:
    \begin{enumerate}
        \item[(a)] $Tx=y$ has a solution $x\in K$;
        \item[(b)] If $g \in Y'$ satisfies $g(Tx)\geq 0$ for all $x\in K$, then $g(y)\geq 0$.
    \end{enumerate}
\end{theorem}

For any topological space $\mathcal{Z}$, we write $C_{\mathrm{b}}(\mathcal{Z})$ for the space of bounded continuous real-valued functions on $\mathcal{Z}$.  Let $\mathcal{M}(\mathcal{Z})$ denote the space of finite, signed Borel measures on $\mathcal{Z}$ and let $\mathcal{M}_+(\mathcal{Z})$ be the subspace of (non-negative) finite Borel measures.  We call $\mathcal{Z}$ a \emph{Hausdorff space} if, given any distinct $z_1,z_2 \in \mathcal{Z}$, we can find disjoint open subsets $V_1,V_2$ such that $z_1 \in V_1$, $z_2 \in V_2$.  The space $\mathcal{Z}$ is \emph{locally compact} if every point in $\mathcal{Z}$ has a compact neighbourhood, i.e.~if for every $z \in \mathcal{Z}$, we can find an open set $U \subseteq \mathcal{Z}$ and a compact set $K \subseteq \mathcal{Z}$ such that $z \in U \subseteq K$.

The main content of the proof of Theorem~\ref{cor:P-epsilon-pi-S-realisability-Farkas-form} is Proposition~\ref{thm:general-realisable} below.  Observe that the restriction of the bijection $\psi$ in Section~\ref{sec:notation-proofs} to the set $\{(f_S:S \subseteq [d]) : f_S \in C_{\mathrm{b}}(\mathcal{X}_S) \;\forall S\subseteq[d]\}$ has image $C_{\mathrm{b}}(\mathcal{X}_\star)$.  This identifies $C_{\mathrm{b}}(\mathcal{X}_\star)$ with $\bigl(C_{\mathrm{b}}(\mathcal{X}_S) : S \in 2^{[d]}\bigr)$, but henceforth we will not be explicit about this identification, and will simply write $f = (f_S:S \in 2^{[d]}) \in C_{\mathrm{b}}(\mathcal{X}_\star)$.  Given such an $f = (f_S:S \in 2^{[d]}) \in C_{\mathrm{b}}(\mathcal{X}_\star)$, we can express the function $f_{\max}$ from Section~\ref{sec:realisability} as $f_{\max}(x) \coloneqq \max_{S\in 2^{[d]}} f_S(x_S)$ for $x \in \mathcal{X}$.
  
\begin{prop}
\label{thm:general-realisable}
Let $\mathcal{X}_1,\ldots,\mathcal{X}_d$ be locally compact Hausdorff spaces, and let $\mathcal{X} \coloneqq \prod_{j=1}^d \mathcal{X}_j$.  Assume that every open set in $\mathcal{X}$ is $\sigma$-compact.  If $P \in \mathcal{P}(\mathcal{X})$ and $Q \in \mathcal{P}(\mathcal{X}_{\star})$, then $Q \in \mathsf{MNAR}_P$ if and only if 
    \[
    P (f_{\max}) \geq Q(f) 
    \]
    for all $f\in C_{\mathrm{b}}(\mathcal{X}_\star)$.
\end{prop}
\begin{proof}
Recall the definition of $\phi_{\mathcal{Z}}: C_{\mathrm{b}}(\mathcal{Z}) \to \mathcal{M}(\mathcal{Z})^*$ before Lemma~\ref{Lemma:DualPair}. We endow $\mathcal{M}(\mathcal{Z})$ with the weak topology generated by  $\phi_{\mathcal{Z}}\bigl(C_{\mathrm{b}}(\mathcal{Z})\bigr)$, for $\mathcal{Z} \in \{\mathcal{X},\mathcal{X}_\star,\mathcal{X} \times 2^{[d]}\}$.  This ensures that $\phi_{\mathcal{Z}}(g)$ is weakly continuous for every $g \in C_{\mathrm{b}}(\mathcal{Z})$.

Let $h: \mathcal{X}\times 2^{[d]} \to \mathcal{X}_\star$ be the continuous function defined by $h(x, S) \coloneqq x \ostar \bm{1}_S$. Then $h$ induces a linear map $h_*: \mathcal{M}(\mathcal{X}\times 2^{[d]}) \to \mathcal{M}(\mathcal{X}_\star)$ given by $h_*(\mu)(B) \coloneqq \mu\bigl(h^{-1}(B)\bigr)$ (see Figure~\ref{Fig:CD} below).
Similarly, let $j:\mathcal{X}\times 2^{[d]}\to\mathcal{X}$ be the projection map $j(x,S) \coloneqq x$, and define its induced map $j_*: \mathcal{M}(\mathcal{X}\times 2^{[d]}) \to \mathcal{M}(\mathcal{X})$.
We have
$\{g\circ h: g\in C_{\mathrm{b}}(\mathcal{X}_{\star})\} \subseteq C_{\mathrm{b}}(\mathcal{X}\times 2^{[d]})$ and similarly $\{g \circ j: g\in C_{\mathrm{b}}(\mathcal{X})\}\subseteq C_{\mathrm{b}}(\mathcal{X}\times 2^{[d]})$, we have by \citet[Theorem~IV.2.1]{schaefer1971} that both $h_*$ and $j_*$ are weakly continuous. By Lemma~\ref{Lemma:ProductContinuity}, the linear map $T = (h_*,j_*): \mathcal{M}(\mathcal{X}\times 2^{[d]}) \to \mathcal{M}(\mathcal{X}_\star)\times \mathcal{M}(\mathcal{X})$ is continuous when we endow the image space with the product topology, which by Lemma~\ref{Lemma:ProductWeakTopology} is the same as the weak topology on $\mathcal{M}(\mathcal{X}_\star)\times \mathcal{M}(\mathcal{X})$ generated by $\phi_{\mathcal{X}_\star}\bigl(C_{\mathrm{b}}(\mathcal{X}_{\star})\bigr) \times \phi_{\mathcal{X}}\bigl(C_{\mathrm{b}}(\mathcal{X})\bigr)$.

\begin{figure}[htbp]
\begin{center}
  \begin{tikzcd}
    \mathcal{X}\times 2^{[d]} \arrow{r}{h} \arrow[swap]{dr}{g\circ h\in C_{\mathrm{b}}(\mathcal{X}\times 2^{[d]}) } & \mathcal{X}_{\star} \arrow{d}{g\in C_{\mathrm{b}}(\mathcal{X}_\star)} \\
     & \mathbb{R}
  \end{tikzcd}
  \hspace{1cm}
  \begin{tikzcd}
    \mathcal{M}(\mathcal{X}\times 2^{[d]}) \arrow{r}{h_*} \arrow[swap]{dr}{\phi_{\mathcal{X}\times 2^{[d]}}(g\circ h)\in \mathcal{M}(\mathcal{X}\times 2^{[d]})^*} & \mathcal{M}(\mathcal{X}_{\star}) \arrow{d}{\phi_{\mathcal{X}_\star}(g)\in \mathcal{M}(\mathcal{X}_\star)^*} \\
     & \mathbb{R}
  \end{tikzcd}
  \end{center}
  \caption{\label{Fig:CD}Schematic diagrams of various maps defined in the proof. The fact that the maps in the right panel commute follows from the fact that $h_*(\mu)(g) = \mu(g\circ h)$ for all $g\in C_{\mathrm{b}}(\mathcal{X}_\star)$.}
\end{figure}

Define $K$ to be the convex cone $\mathcal{M}_+(\mathcal{X}\times 2^{[d]})$. We claim that $h_*(K) = \mathcal{M}_+(\mathcal{X}_{\star})$. It is clear that $h_*(K) \subseteq \mathcal{M}_+(\mathcal{X}_{\star})$ since for any $\mu\in K$ and any $g\in C_{\mathrm{b}}(\mathcal{X}_{\star})$ such that $g\geq 0$, we have by \citet[Proposition~10.1]{folland1999real} that $h_*(\mu)(g) = \mu(g\circ h) \geq 0$. For the surjectivity, define $i: \mathcal{X}_{\star} \to \mathcal{X}\times2^{[d]}$ by $i(z) \coloneqq (z\odot \bm{1}_{\{j:z_j\neq \star\}} , \{j:z_j\neq \star\})$ and let $i_*: \mathcal{M}(\mathcal{X}_{\star})\to\mathcal{M}(\mathcal{X}\times2^{[d]})$ be its induced linear map. By the same argument as above, we have $i_*(\mathcal{M}_+(\mathcal{X}_\star)) \subseteq K$. For $\nu$ on $\mathcal{M}_+(\mathcal{X}_{\star})$, we have $\nu  = h_*\bigl(i_*(\nu)\bigr)$, and the surjectivity is established since $i_*(\nu)\in K$. Consequently, 
\begin{align*} 
h_*(K) = \mathcal{M}_+(\mathcal{X}_\star) &= \bigcap_{g\in C_{\mathrm{b}}(\mathcal{X}_{\star}): g \geq 0}\{\nu\in\mathcal{M}(\mathcal{X}_{\star}): \nu(g) \geq 0\} \\
&= \bigcap_{g\in C_{\mathrm{b}}(\mathcal{X}_{\star}): g \geq 0} \bigl(\phi_{\mathcal{X}_\star}(g)\bigr)^{-1}\bigl([0,\infty)\bigr)
\end{align*}
is a weakly closed set.  A similar argument shows that $j_*(K)=\mathcal{M}_+(\mathcal{X})$ is a weakly closed set. Thus, $T(K)$ is weakly closed set in $\mathcal{M}(\mathcal{X}_{\star})\times \mathcal{M}(\mathcal{X})$, by Lemma~\ref{Lemma:ProductWeakTopology}.

By definition, $Q\in \mathsf{MNAR}_P$ if and only if there exists $\mu_0\in K$ such that $T(\mu_0) = (Q, P)$. Therefore, by Lemma~\ref{Lemma:DualPair}, we can apply the generalised Farkas' lemma (Lemma~\ref{Thm:GeneralisedFarkas}) to obtain that
\begin{align*}
\label{Eq:FarkasEquivalence2}
    Q\in\mathsf{MNAR}_P &\iff \bigcap_{\mu\in K}\bigl\{(f,g)\in C_{\mathrm{b}}(\mathcal{X}_\star)\times C_{\mathrm{b}}(\mathcal{X}): h_*(\mu)(f) + j_*(\mu)(g) \geq 0\bigr\}\nonumber\\
    &\hspace{3cm}\subseteq \bigl\{(f,g) \in C_{\mathrm{b}}(\mathcal{X}_\star)\times C_{\mathrm{b}}(\mathcal{X}): Q(f) + P(g) \geq 0\bigr\}.
\end{align*}
Now, for any $(f,g) \in C_{\mathrm{b}}(\mathcal{X}_\star)\times C_{\mathrm{b}}(\mathcal{X})$ and $\mu \in K$, we have
\[
    h_*(\mu)(f) + j_*(\mu)(g) = \sum_{S\in2^{[d]}} \int_{\mathcal{X}} \{(f\circ h)(x,S) + g(x)\}\,d\mu(x,S).
\]
Hence,  $(f,g)$ satisfies $h_*(\mu)(f) + j_*(\mu)(g) \geq 0$ for all $\mu\in K$ if and only if $(f\circ h)(x,S)+g(x)\geq 0$ for all $x\in\mathcal{X}$ and $S\in2^{[d]}$. Since $P(g)$ is increasing in $g$, it therefore suffices to check that for each $f \in C_{\mathrm{b}}(\mathcal{X}_\star)$ the function $g_f \in C_{\mathrm{b}}(\mathcal{X})$ given by $g_f(x) \coloneqq -\min_{S\in2^{[d]}} (f\circ h)(x,S) = -\min_{S\in2^{[d]}} f_S(x_S)$ satisfies $Q(f) + P(g_f) \geq 0$.  Substituting $f' \coloneqq -f$, we have 
\begin{align*}
    Q\in\mathsf{MNAR}_P &\iff \text{$Q(f) + P(g_f)\geq 0$ for all $f\in C_{\mathrm{b}}(\mathcal{X}_\star)$} \\
    &\iff \text{$Q(f') \leq P(f'_{\max})$ for all $f'\in C_{\mathrm{b}}(\mathcal{X}_\star)$}
\end{align*}
as desired.
\end{proof}
We are now in a position to state and prove the more general version of Theorem~\ref{cor:P-epsilon-pi-S-realisability-Farkas-form}.
\begin{theorem}
    \label{Thm:AbstractVersion}
    Let $\mathcal{X}_1,\ldots,\mathcal{X}_d$ be locally compact Hausdorff spaces and let $\mathcal{X} \coloneqq \prod_{j=1}^d \mathcal{X}_j$.  Assume that every open set in $\mathcal{X}$ is $\sigma$-compact.  Fix $P \in \mathcal{P}(\mathcal{X})$, $\epsilon\in (0,1]$, $\pi \in \mathcal{P}(2^{[d]})$. Let $R \in \mathcal{P}(\mathcal{X}_{\star})$, and define a signed measure on $\mathcal{X}_\star$ by $Q \coloneqq \epsilon^{-1}\{R - (1-\epsilon)\mathsf{MCAR}_{(\pi, P)}\}$. Then $R\in \mathcal{R}(P,\epsilon,\pi)$ if and only if $Q\in\mathcal{P}(\mathcal{X}_{\star})$ and 
    \begin{equation*}
    P(f_{\max}) \geq Q(f)
    \end{equation*}
    for all $f \in C_{\mathrm{b}}(\mathcal{X}_\star)$.
\end{theorem}
\begin{proof}
From the definition, $R \in \mathcal{R}(P,\epsilon,\pi)$ if and only if $Q \in \mathsf{MNAR}_P$, which by Proposition~\ref{thm:general-realisable} occurs if and only if $Q \in \mathcal{P}(\mathcal{X}_\star)$ and $P(f_{\max})\geq Q(f)$ for all $f\in C_{\mathrm{b}}(\mathcal{X}_{\star})$. 
\end{proof}

\subsection{Proof of Proposition~\ref{prop:univariate-realisability}} \label{sec:proof-prop-4}
\begin{proof}[Proof of Proposition~\ref{prop:univariate-realisability}]
Suppose that $R \in \mathcal{R}(P,\epsilon,q)$ and let $A \in \mathcal{B}(\mathbb{R}_{\star})$ be such that $\mu_{\star}(A) = 0$.  Recall that if $X \sim P$, $B \sim \mathsf{Bern}(\epsilon)$, $\Omega^{(1)} \sim \mathsf{Bern}(q)$ and $\Omega^{(2)} \sim \mathsf{Bern}(q_2)$ for some $q_2 \in [0,1]$ with $B \indep (X,\Omega^{(1)},\Omega^{(2)})$ and $\Omega^{(1)} \indep X$, then we can generate $Z \sim R$ via $Z \coloneqq (1-B)\cdot (X \ostar \Omega^{(1)}) + B \cdot (X \ostar \Omega^{(2)})$.  Then by definition of $\mu_{\star}$, we must have $A \in \mathcal{B}(\mathbb{R})$ and $\mu(A) = 0$.  Since $P \ll \mu$, it follows that
\begin{align*}
    0 = P(A) = \mathbb{P}(X\in A) \geq \mathbb{P}(Z\in A) = R(A).
\end{align*}
This proves that $R \ll \mu_{\star}$.  Now define $m: \mathbb{R} \to [0,1]$ by $m(x) \coloneqq \mathbb{P}(\Omega^{(2)} = 1 \,|\, X = x)$.  Then for any $A \in \mathcal{B}(\mathbb{R})$,
\begin{align*}
    \mathbb{P}(Z \in A) &= (1-\epsilon)\cdot \mathbb{P}(X \in A,\, \Omega^{(1)} = 1) + \epsilon \cdot \mathbb{P}(X \in A,\, \Omega^{(2)} = 1)\\
    &= q(1-\epsilon) \cdot \int_A p(x)\; \mathrm{d}\mu(x) + \epsilon \cdot \int_A m(x)p(x)\; \mathrm{d}\mu(x).
\end{align*}
Hence, $\frac{\mathrm{d}R}{\mathrm{d}\mu_\star}(x) = q(1-\epsilon) \cdot p(x) + \epsilon\cdot  m(x)p(x)$ for $x\in\mathbb{R}$, and $\frac{\mathrm{d}R}{\mathrm{d}\mu_\star}(\star) = \mathbb{P}(Z = \star) = 1- q(1-\epsilon) - \epsilon\int_{\mathbb{R}} m(x)p(x) \,\mathrm{d}\mu(x)$.

Conversely, suppose that $R \in \mathcal{P}(\mathbb{R}_{\star})$ satisfies $R \ll \mu_\star$, and there exists a Borel measurable function $m:\mathbb{R} \to [0,1]$ such that $\mathrm{d}R/\mathrm{d}\mu_{\star}$ satisfies~\eqref{eq:radon-nikodym-realisable}.  Given $X \sim P$, define a random variable $\Omega^{(2)}$ taking values in $\{0,1\}$ such that $\mathbb{P}(\Omega^{(2)} = 1 \,|\, X=x) = m(x)$ for $x\in\mathbb{R}$.  Let $B \sim \mathsf{Bern}(\epsilon)$ and $\Omega^{(1)} \sim \mathsf{Bern}(q)$ be such that $B \indep (X,\Omega^{(1)},\Omega^{(2)})$ and $\Omega^{(1)} \indep X$.  Then $Z \coloneqq (1-B) \cdot (X \ostar \Omega^{(1)}) + B \cdot (X \ostar \Omega^{(2)}) \sim R$ and hence by construction $R \in \mathcal{R}(P,\epsilon,q)$.

This completes the proof, but we also provide an alternative proof of the converse statement using Theorem~\ref{cor:P-epsilon-pi-S-realisability-Farkas-form}.
Again suppose that $R \in \mathcal{P}(\mathbb{R}_{\star})$ satisfies $R \ll \mu_\star$, and that $\mathrm{d}R/\mathrm{d}\mu_{\star}$ satisfies~\eqref{eq:radon-nikodym-realisable}.  Define $Q \coloneqq \epsilon^{-1}\{R - (1-\epsilon)\mathsf{MCAR}_{(\pi, P)}\} \in \mathcal{M}(\mathbb{R}_\star)$ as in Theorem~\ref{cor:P-epsilon-pi-S-realisability-Farkas-form}, and let $f = (f_{\{1\}}, f_{\emptyset}) \in C_{\mathrm{b}}(\mathbb{R}_{\star})$.  Note that by definition, $f_{\emptyset} \in \mathbb{R}$ is a constant and $f_{\max}(x) = f_{\{1\}}(x) \vee f_{\emptyset}$ for all $x \in \mathbb{R}$.  Moreover, since $\mathsf{MCAR}_{(\pi, P)} \in \mathcal{R}(P,0,\pi)$, we have by the argument in the direct part of the proof that $\mathsf{MCAR}_{(\pi, P)} \ll \mu_\star$ with $\frac{\mathrm{d}\mathsf{MCAR}_{(\pi, P)}}{\mathrm{d}\mu_\star}(x) = q \cdot p(x)$ for $x \in \mathbb{R}$ and $\frac{\mathrm{d}\mathsf{MCAR}_{(\pi, P)}}{\mathrm{d}\mu_\star}(\star) = 1-q$, so
\begin{align*}
    \frac{\mathrm{d}Q}{\mathrm{d}\mu_\star}(z) = \begin{cases}
        m(z)p(z) \quad&\text{if }z\in\mathbb{R}\\
        1- \int_{\mathbb{R}} m(x)p(x) \,\mathrm{d}\mu(x) &\text{if }z=\star.
    \end{cases}
\end{align*}
Hence $Q \in \mathcal{P}(\mathbb{R}_\star)$, and
\begin{align*}
    P(f_{\max}) &= \int_{\mathbb{R}} \bigl( f_{\{1\}}(x) \vee f_{\emptyset} \bigr) p(x) \,\mathrm{d}\mu(x)\\
    &\geq \int_{\mathbb{R}} \bigl\{ m(x)f_{\{1\}}(x) + \bigl(1-m(x)\bigr)f_{\emptyset} \bigr\} p(x) \,\mathrm{d}\mu(x)
    = Q(f),
\end{align*}
where the inequality follows from the fact that $\max(a,b)$ is at least as large as any convex combination of $a$ and $b$, for $a,b \in \mathbb{R}$.  We conclude that $R \in \mathcal{R}(P,\epsilon,q)$, by Theorem~\ref{cor:P-epsilon-pi-S-realisability-Farkas-form}.
\end{proof}

\section{Proofs from Section~\ref{sec:mean-estimation-arbitrary-contamination}} \label{sec:proofs-mean-estimation-arbitrary}

\subsection{Proof of Theorem~\ref{thm:robust-descent-iterative-imputation-ub}}
We begin with some lemmas.  Recalling the way that we can generate $Z_1, \ldots, Z_{n} \stackrel{\mathrm{iid}}{\sim} P \in \mathcal{P}^{\mathrm{arb}} \bigl(\theta_0, \Sigma, \epsilon, \pi \bigr)$ from Section~\ref{sec:departures-mcar}, we let $\mathcal{I}_n \subseteq [n]$ denote the `inliers', or the indices of the uncontaminated observations.  Likewise, we denote by $\mathcal{O}_n \subseteq [n]$ the `outliers', or the indices of the contaminated observations so that $\mathcal{I}_n \cup \mathcal{O}_n = [n]$.  

\begin{lemma}
\label{lemma:covariance-of-imputed-block-means} 
Let $n,M \in \mathbb{N}$ be such that $n/M \geq 4$.  Suppose that $Z_1, \ldots, Z_{n} \stackrel{\mathrm{iid}}{\sim} P \in \mathcal{P}^{\mathrm{arb}} \big(\theta_0, \Sigma, \epsilon, \pi \big)$, with corresponding observation patterns $\Omega_1, \ldots, \Omega_{n} \in \{0, 1\}^{d}$.  Randomly select $M$ disjoint sets $(B_m)_{m \in [M]} \subseteq [n]$ (independent of $Z_1,\ldots,Z_n$) such that $\lvert B_m \rvert = \lfloor n/M \rfloor$, and for $\theta = (\theta_1,\ldots,\theta_d)^\top \in \mathbb{R}^d$, $m \in [M]$ and $j \in [d]$, define
    \begin{align} \label{eq:z-bar-definition}
        \overbar{\Omega}_{mj} \coloneqq \mathbbm{1}_{\{\sum_{i\in B_m} \Omega_{ij} > 0\}} \qquad\text{ and }\qquad \bar{Z}_{mj} \coloneqq \frac{\sum_{i \in B_m} \Omega_{ij}Z_{ij}}{\sum_{i \in B_m} \Omega_{ij}} \cdot \overbar{\Omega}_{mj} + \theta_j \cdot (1 - \overbar{\Omega}_{mj}).
    \end{align}
    Let $\bar{Z}_m \coloneqq (\bar{Z}_{m1},\ldots,\bar{Z}_{md})^\top$.
    Then for all $m \in [M]$, we have 
    \begin{subequations}
    \begin{align} \label{ineq:bias-of-block-means}
        \|\mathbb{E}(\bar{Z}_m \,|\, B_m \subseteq \mathcal{I}_n) - \theta_0\|_2^2 \leq \frac{\|\theta-\theta_0\|_2^2}{e|B_m| q_{\min}},
    \end{align}
    \begin{align} 
        \tr\bigl(\Cov(\bar{Z}_m \,|\, B_m \subseteq \mathcal{I}_n)\bigr) &\leq \tr \Bigl(\mathbb{E} \bigl\{ (\bar{Z}_m - \theta_0)(\bar{Z}_m - \theta_0)^\top \,\big|\, B_m \subseteq \mathcal{I}_n \bigr\}\Bigr) \nonumber\\
        &\leq \frac{2}{|B_m|} \cdot \tr\bigl( \Sigma^{\mathrm{IPW}} \bigr) +  \frac{\|\theta - \theta_0\|_2^2}{e|B_m| q_{\min}} \label{ineq:trace-bound-iterative-imputation}
    \end{align}
    and 
    \begin{align} 
    \bigl\|\Cov(\bar{Z}_m \,|\, B_m \subseteq \mathcal{I}_n)\bigr\|_{\mathrm{op}} &\leq \bigl\| \mathbb{E} \bigl\{ (\bar{Z}_m - \theta_0)(\bar{Z}_m - \theta_0)^\top \,\big|\, B_m \subseteq \mathcal{I}_n \bigr\} \bigr\|_{\mathrm{op}} \nonumber\\
    &\leq \frac{6}{|B_m|} \cdot \big\| \Sigma^{\mathrm{IPW}} \big\|_{\mathrm{op}} +  \frac{\|\theta - \theta_0\|_2^2}{e|B_m| q_{\min}}. \label{ineq:op-norm-bound-iterative-imputation}
    \end{align}
    \end{subequations}
\end{lemma}

\begin{proof} 
Write $\theta_0 = (\theta_{01},\ldots,\theta_{0d})^\top \in \mathbb{R}^d$.  For $m \in [M]$ with $B_m \subseteq \mathcal{I}_n$, and for $j \in [d]$, we have
\begin{align*}
    \bigl(\mathbb{E}(\bar{Z}_{mj}) - \theta_{0j}\bigr)^2 &= \bigl(\mathbb{P}(\bar{\Omega}_{mj}=1) \theta_{0j} + \mathbb{P}(\bar{\Omega}_{mj}=0)\theta_j - \theta_{0j}\bigr)^2 \\
    &= (1-q_j)^{2|B_m|}(\theta_j - \theta_{0j})^2\\
    &\leq e^{-2|B_m|q_{\min}}(\theta_j - \theta_{0j})^2 \leq \frac{(\theta_j - \theta_{0j})^2}{e|B_m|q_{\min}}.
\end{align*}
This proves~\eqref{ineq:bias-of-block-means}.

For~\eqref{ineq:trace-bound-iterative-imputation} and~\eqref{ineq:op-norm-bound-iterative-imputation}, we compute the entries of the matrix $(\bar{Z}_m - \theta_0)(\bar{Z}_m - \theta_0)^\top$, beginning with those on the diagonal.  For $j \in [d]$, let
\begin{align} 
\label{ineq:A_jj-bound}
    A_{jj} \coloneqq \mathbb{E} \biggl( \frac{|B_m|q_j}{\sum_{i \in B_m} \Omega_{ij}} \cdot \mathbbm{1}_{\{\sum_{i\in B_m} \Omega_{ij} > 0\}}  \biggr) \leq 2,
\end{align} 
where the inequality follows by the first part of Lemma~\ref{lem:inverse-binomial-bounds}.  Further, let $E_{jj} \coloneqq (1 - q_j)^{|B_m|}$.  For $i \in \mathcal{I}_n$, we can write $Z_i = X_i \ostar \Omega_i$, where $\mathbb{E}(X_i) = \theta_0$, $\mathrm{Cov}(X_i) = \Sigma$ and $X_i \indep \Omega_i$.  Hence, for any $m \in [M]$ such that $B_m \subseteq \mathcal{I}_n$ and any $j \in [d]$, 
\begin{align*}
    \mathbb{E} \bigl\{ (\bar{Z}_{mj} &- \theta_{0,j})^2 \bigr\} = \mathbb{E} \bigl[ \bigl\{ \overbar{\Omega}_{mj}(\bar{Z}_{mj} - \theta_{0,j}) + (1 - \overbar{\Omega}_{mj})(\theta_j - \theta_{0,j}) \bigr\}^2 \bigr]\\
    &= \mathbb{E} \bigl\{ \bigl(\overbar{\Omega}_{mj}(\bar{Z}_{mj} - \theta_{0,j}) \bigr)^2 \bigr\} + \mathbb{E} \bigl\{ (1 - \overbar{\Omega}_{mj})^2 (\theta_j - \theta_{0,j})^2 \bigr\} \\
    &= \mathbb{E} \biggl\{ \biggl( \frac{\sum_{i \in B_m} \Omega_{ij}(X_{ij} \!-\! \theta_{0,j})}{\sum_{i \in B_m} \Omega_{ij}} \cdot \mathbbm{1}_{\{\sum_{i\in B_m} \Omega_{ij} > 0\}} \biggr)^2 \biggr\} + \mathbb{P}(\overbar{\Omega}_{mj} = 0)(\theta_j - \theta_{0,j})^2 \\
    &= \mathbb{E} \biggl( \frac{\Sigma_{jj}}{\sum_{i \in B_m} \Omega_{ij}} \cdot \mathbbm{1}_{\{\sum_{i\in B_m} \Omega_{ij} > 0\}}  \biggr) + (1 - q_j)^{|B_m|} (\theta_j - \theta_{0,j})^2 \\
    &= A_{jj} \cdot \frac{\Sigma_{jj}^{\mathrm{IPW}}}{|B_m|} + E_{jj} \cdot (\theta_j - \theta_{0,j})^2. \numberthis \label{eq:diagonal-entries}
\end{align*}
Turning to the off-diagonal entries, for any $m \in [M]$ such that $B_m \subseteq \mathcal{I}_n$ and any distinct $j,k \in [d]$,
\begin{align*}
    \mathbb{E} \bigl\{ (\bar{Z}_{mj} &- \theta_{0,j}) (\bar{Z}_{mk} - \theta_{0,k}) \bigr\} \\
    =\; &\mathbb{E} \bigl[ \bigl\{ \overbar{\Omega}_{mj}(\bar{Z}_{mj} - \theta_{0,j}) + (1 - \overbar{\Omega}_{mj})(\theta_j - \theta_{0,j}) \bigr\} \\
    &\hspace{4cm}\cdot \bigl\{ \overbar{\Omega}_{mk}(\bar{Z}_{mk} - \theta_{0,k}) + (1 - \overbar{\Omega}_{mk})(\theta_k - \theta_{0,k}) \bigr\} \bigr]\\
    =\; & \mathbb{E} \bigl\{ \bigl(\overbar{\Omega}_{mj}(\bar{Z}_{mj} - \theta_{0,j}) \bigr) \bigl(\overbar{\Omega}_{mk}(\bar{Z}_{mk} - \theta_{0,k}) \bigr) \big\} \\
    &\hspace{4cm}+ \mathbb{E} \bigl\{ (1 - \overbar{\Omega}_{mj}) (1 - \overbar{\Omega}_{mk}) (\theta_j - \theta_{0,j}) (\theta_k - \theta_{0,k}) \big\},
\end{align*}
where in the final step, the cross-terms vanish as $\mathbb{E}(X_i) = \theta_0$.  
Without loss of generality, we assume that $1\in B_m$.  For the first term, we first define
\begin{align}
    A_{jk} &\coloneqq \mathbb{E} \Biggl\{ \frac{ (|B_m|q_j)(|B_m|q_k) }{\big(1 + \sum_{i \in B_m\setminus\{1\}} \Omega_{ij} \big) \cdot \big(1 + \sum_{i \in B_m\setminus\{1\}} \Omega_{i k} \big)} \Biggr\} \nonumber\\
    &\hspace{0.09cm}\leq \mathbb{E} \Biggl\{ \frac{ (|B_m|q_j)^2}{\bigl(1 + \sum_{i \in B_m\setminus\{1\}} \Omega_{ij} \bigr)^2} \Biggr\}^{1/2} \mathbb{E} \Biggl\{ \frac{ (|B_m|q_k)^2}{\bigl(1 + \sum_{i \in B_m\setminus\{1\}} \Omega_{ik} \bigr)^2} \Biggr\}^{1/2} \nonumber \\
    &\hspace{0.09cm}\leq \frac{2|B_m|^2}{(|B_m|-1)^2} \leq 4, \label{ineq:A_jk-bound}
\end{align}
where the first inequality follows from Cauchy--Schwarz and the second inequality follows from the second part of Lemma~\ref{lem:inverse-binomial-bounds}, and the final inequality uses the fact that $|B_m| \geq 4$. We then have
\begin{align*}
    \mathbb{E} &\bigl\{ \overbar{\Omega}_{mj}(\bar{Z}_{mj} - \theta_{0,j}) \cdot \overbar{\Omega}_{mk}(\bar{Z}_{mk} - \theta_{0,k}) \bigr\} \\
    &= \Sigma_{jk} \cdot \mathbb{E} \Biggl\{ \frac{ \bigl(\sum_{i \in B_{m}}\Omega_{ij} \Omega_{i k} \bigr)\overbar{\Omega}_{mj} \overbar{\Omega}_{mk}}{\big(\sum_{i \in B_m} \Omega_{ij} \big) \cdot \big(\sum_{i \in B_m} \Omega_{i k} \big)}  \Biggr\}\\
    &=  \Sigma_{jk} \cdot |B_m| \cdot \mathbb{E} \Biggl\{ \frac{ \Omega_{1j} \Omega_{1 k} }{\big(\sum_{i \in B_m} \Omega_{ij} \big) \cdot \big(\sum_{i \in B_m} \Omega_{i k} \big)}  \Biggr\} \\
    &= \Sigma_{jk} \cdot |B_m| \cdot \mathbb{P}(\Omega_{1j} = \Omega_{1 k} = 1) \cdot \mathbb{E} \Biggl\{ \frac{ 1 }{\big(1 + \sum_{i \in B_m\setminus\{1\}} \Omega_{ij} \big) \cdot \big(1 + \sum_{i \in B_m\setminus\{1\}} \Omega_{i k} \big)} \Biggr\} \\
    &= A_{jk} \cdot \frac{\Sigma_{jk}q_{jk}}{|B_m|q_j q_k}, 
\end{align*}
where the first equality follows from substituting the definition of $\bar{Z}_{mj}$ on the event $\{\overbar{\Omega}_{mj} = 1\}$ (and similarly for $k$) and the second equality follows by symmetry.
For the second term, we have 
\begin{align*}
    \mathbb{E} \bigl\{ (1 - \overbar{\Omega}_{mj}) (1 - \overbar{\Omega}_{mk}) (\theta_j - \theta_{0,j}) &(\theta_k - \theta_{0,k}) \bigr\} \\
    &= \mathbb{P}(\overbar{\Omega}_{mj} = \overbar{\Omega}_{mk} = 0) \cdot (\theta_j - \theta_{0,j}) (\theta_k - \theta_{0,k}) \\
    &= (1 - q_j - q_k + q_{jk})^{|B_m|} \cdot (\theta_j - \theta_{0,j}) (\theta_k - \theta_{0,k})\\
    &\eqqcolon E_{jk}\cdot (\theta_j - \theta_{0,j}) (\theta_k - \theta_{0,k}).
\end{align*}
Combining these two equalities then yields 
\begin{align} \label{eq:off-diagonal-entries}
    \mathbb{E} \big[(\bar{Z}_{mj} - \theta_{0,j}) (\bar{Z}_{mk} - \theta_{0,k})  \big] = A_{jk} \cdot \frac{1}{|B_m|} \cdot \Sigma^{\mathrm{IPW}}_{jk} + E_{jk}\cdot (\theta_j - \theta_{0,j}) (\theta_k - \theta_{0,k}).
\end{align}
Therefore, by \eqref{eq:diagonal-entries} and \eqref{eq:off-diagonal-entries},
\begin{align*}
    \mathbb{E} \bigl\{ (\bar{Z}_m - \theta_0)(\bar{Z}_m - \theta_0)^\top \bigr\} = \frac{1}{|B_m|} \cdot A\odot \Sigma^{\mathrm{IPW}} + E\odot \bigl\{ (\theta - \theta_0)(\theta - \theta_0)^\top \bigr\},
\end{align*}
where $A \coloneqq (A_{jk})_{j,k\in[d]}$ and $E \coloneqq (E_{jk})_{j,k\in[d]}$.  The desired inequality~\eqref{ineq:trace-bound-iterative-imputation} then follows as
\begin{align*}
    \tr\bigl( \mathbb{E} \bigl\{ (\bar{Z}_m - \theta_0)(\bar{Z}_m - \theta_0)^\top \bigr\} \bigr) &= \frac{1}{|B_m|} \cdot \sum_{j=1}^d A_{jj} \Sigma^{\mathrm{IPW}}_{jj} + \sum_{j=1}^d E_{jj} (\theta_j - \theta_{0,j})^2\\
    &\leq \frac{2}{|B_m|}\cdot \tr(\Sigma^{\mathrm{IPW}}) + \frac{\|\theta - \theta_0\|_2^2}{e|B_m| q_{\min}},
\end{align*}
where the inequality follows by~\eqref{ineq:A_jj-bound} and Lemma~\ref{lemma:controlling-matrix-E}.  

For inequality~\eqref{ineq:op-norm-bound-iterative-imputation}, we define a matrix $A' = (A_{jk}') \in \mathbb{R}^{d \times d}$ by $A'_{jk} \coloneqq A_{jk}$ for $j\neq k$ and 
\begin{align} \label{ineq:A'_jj-bound}
    A'_{jj} \coloneqq \mathbb{E} \Biggl\{ \frac{ (|B_m|q_j)^2}{\bigl(1 + \sum_{i \in B_m\setminus\{1\}} \Omega_{ij} \bigr)^2} \Biggr\} \leq 2,
\end{align}
where the inequality follows from the second part of Lemma~\ref{lem:inverse-binomial-bounds} and the assumption that $|B_{m}| \geq 4$.
Note that $A'$ is a positive semi-definite matrix, as it is the expectation of a positive semi-definite matrix. Now 
\begin{align*}
    \bigl\| \mathbb{E} &\bigl\{ (\bar{Z}_m - \theta_0)(\bar{Z}_m - \theta_0)^\top \bigr\} \bigr\|_{\mathrm{op}}\\
    &= \biggl\| \frac{1}{|B_m|} \cdot A\odot \Sigma^{\mathrm{IPW}} + E\odot \bigl\{ (\theta - \theta_0)(\theta - \theta_0)^\top \bigr\} \biggr\|_{\mathrm{op}}\\
    &\leq \frac{\bigl\|A' \odot \Sigma^{\mathrm{IPW}} \bigr\|_{\mathrm{op}}}{|B_m|} + \frac{\bigl\|(A-A') \odot \Sigma^{\mathrm{IPW}} \bigr\|_{\mathrm{op}}}{|B_m|} + \bigl\|E\odot \bigl\{ (\theta - \theta_0)(\theta - \theta_0)^\top \bigr\} \bigr\|_{\mathrm{op}}\\
    \overset{(i)}&{\leq}  \frac{\|A'\|_{\infty} \|\Sigma^{\mathrm{IPW}} \bigr\|_{\mathrm{op}}}{|B_m|}+ \frac{\|A-A'\|_{\infty} \bigl\|\Sigma^{\mathrm{IPW}} \bigr\|_{\mathrm{op}}}{|B_m|} + \bigl\|E\odot \bigl\{ (\theta - \theta_0)(\theta - \theta_0)^\top \bigr\} \bigr\|_{\mathrm{op}}\\
    \overset{(ii)}&{\leq} \frac{6}{|B_m|} \bigl\|\Sigma^{\mathrm{IPW}} \bigr\|_{\mathrm{op}} + \frac{\|\theta - \theta_0\|_2^2}{e|B_m| q_{\min}},
\end{align*}
where the first term in step $(i)$ follows from Lemma~\ref{lemma:operator-norm-of-hadamard-product} since $A'$ is positive semidefinite, the second term in step $(i)$ follows since $A-A'$ is diagonal, and step $(ii)$ follows from the inequalities~\eqref{ineq:A_jj-bound},~\eqref{ineq:A_jk-bound} and~\eqref{ineq:A'_jj-bound}, as well as Lemma~\ref{lemma:controlling-matrix-E}.
\end{proof}

\begin{lemma}\label{lemma:controlling-matrix-E}
    Under the set up in the proof of Lemma~\ref{lemma:covariance-of-imputed-block-means}, we have \begin{align*}
        \|E\|_{\infty} \leq \frac{1}{e|B_m|q_{\min}} \quad\text{and}\quad \|E\odot \bigl\{ (\theta - \theta_0)(\theta - \theta_0)^\top \bigr\}\|_{\mathrm{op}} \leq \frac{\|\theta - \theta_0\|_2^2}{e|B_m|q_{\min}}.
    \end{align*}
\end{lemma}
\begin{proof}
We will make use of the following inequality
\begin{align} \label{eq:block-means-simple-ineq}
    (1-x)^k \leq \frac{1}{ekx} \quad\text{for all }  x \in (0,1] \text{ and } k \in \mathbb{N}.
\end{align}
To see this, note that $k\log(1-x) \leq -kx \leq -\log(kx) - 1$.  Hence, for each $j \in [d]$,
\begin{align*}
    E_{jj} = (1-q_j)^{|B_m|} \leq \frac{1}{e|B_m|q_{\min}},
\end{align*}
and for each $j,k \in [d]$, 
\begin{align*}
    E_{jk} = (1-q_j-q_k + q_{jk})^{|B_m|} \leq \frac{1}{e|B_m|(q_j+q_k - q_{jk})} \leq \frac{1}{e|B_m|q_{\min}},
\end{align*}
where the final inequality follows since $q_{jk}\leq q_k$, so that $q_j+q_k-q_{jk}\geq q_j \geq q_{\min}$.  This establishes the first inequality.

For the second bound, we have 
\begin{align*}
    \bigl| \bigl[E\odot \bigl\{ (\theta - \theta_0)(\theta - \theta_0)^\top\bigr\}\bigr]_{jk} \bigr| \leq \frac{1}{e|B_m|q_{\min}} \cdot |\theta_j - \theta_{0,j}| \cdot |\theta_k - \theta_{0,k}|.
\end{align*}
Hence
\begin{align*}
    \bigl\|E\odot \bigl\{ (\theta - \theta_0)(\theta - \theta_0)^\top \bigr\}\bigr\|_{\mathrm{op}} \leq \frac{1}{e|B_m|q_{\min}} \bigl\| |\theta - \theta_0| \cdot |\theta - \theta_0|^\top \bigr\|_{\mathrm{op}} = \frac{\|\theta - \theta_0\|_2^2}{e|B_m|q_{\min}},
\end{align*}
where $|\theta - \theta_0|$ denotes the entrywise absolute value, and the inequality follows from the fact\footnote{To see this, observe that $v^\top A v \leq |v|^\top |A| |v| \leq |v|^\top B |v|$ for all $v\in\mathbb{R}^d$, where $|A|$ denotes the entrywise absolute value of $A$.} that if $A = (A_{jk}),B = (B_{jk}) \in \mathcal{S}^{d\times d}$ are such that $|A_{jk}| \leq B_{jk}$ for all $j,k\in[d]$, then $\|A\|_{\mathrm{op}} \leq \|B\|_{\mathrm{op}}$.
\end{proof}

\begin{lemma}\label{lemma:error-per-iteration}
    Let $\mathrm{ALG}$ satisfy~\eqref{eq:assumption-on-alg} for some $\epsilon_{\max} \in (0,1/2)$, $a \in (0,1]$, $C > 0$, and let $n\geq4$, $\epsilon\in\bigl[0,\frac{-\log(1-\epsilon_{\max})}{16}\bigr]$, $\delta\in[e^{-an/8},1]$ and $M\coloneqq \bigl\lceil \frac{2n\epsilon}{-\log(1-\epsilon_{\max})} \vee \log(1/\delta)\bigr\rceil$. Let $Z_1, \ldots, Z_{n} \stackrel{\mathrm{iid}}{\sim} P \in \mathcal{P}^{\mathrm{arb}} \big(\theta_0, \Sigma, \epsilon, \pi \big)$, let $\bar{Z}_1,\ldots,\bar{Z}_M$ be defined as in Lemma~\ref{lemma:covariance-of-imputed-block-means} for some $\theta \in \mathbb{R}^d$ and let
    \begin{align*}
        \tilde{\theta}_n \coloneqq \mathrm{ALG}(\bar{Z}_1,\ldots,\bar{Z}_M;\epsilon_{\max},\delta).
    \end{align*}
    Then, writing $C'\coloneqq 48C+2$, we have with probability at least $1-\delta$ that 
    \begin{align*}
        \|\tilde{\theta}_n - \theta_0\|_2^2 \leq C'\biggl( \frac{\tr(\Sigma^{\mathrm{IPW}})}{n} + \frac{\|\Sigma^{\mathrm{IPW}}\|_{\mathrm{op}}\log(1/\delta)}{n} + \|\Sigma^{\mathrm{IPW}}\|_{\mathrm{op}}\epsilon + \frac{M\|\theta-\theta_0\|_2^2}{nq_{\min}}\biggr).
    \end{align*}
\end{lemma}
\begin{proof}
    By our assumptions on $\epsilon$ and $\delta$, we have $M\leq n/4$. Moreover, for $m\in[M]$,
    \begin{align*}
        \mathbb{P}(B_m \subseteq \mathcal{I}_n) = (1-\epsilon)^{|B_m|} \geq (1-\epsilon)^{\frac{-\log(1-\epsilon_{\max})}{2\epsilon}} \geq (1-\epsilon)^{\frac{-\log(1-\epsilon_{\max})}{-\log(1-\epsilon)}} = 1-\epsilon_{\max}.
    \end{align*}
    Let $\mu \coloneqq \mathbb{E}(\bar{Z}_m \, | \, B_m \subseteq \mathcal{I}_n) \in \mathbb{R}^d$ and $\Gamma  \coloneqq \mathrm{Cov}(\bar{Z}_m \, | \, B_m \subseteq \mathcal{I}_n) \in \mathbb{R}^{d \times d}$ denote respectively the mean vector and covariance matrix of $\bar{Z}_m$ given that it is uncontaminated.  Then  $\bar{Z}_1,\ldots,\bar{Z}_M \overset{\mathrm{iid}}{\sim} (1-\epsilon_{\max})\bar{P} + \epsilon_{\max}\bar{Q}$, where $\bar{P}\in\mathcal{P}(\mu,\Gamma)$ and $\bar{Q} \in \mathcal{P}(\mathbb{R}^d)$. Thus, by~\eqref{eq:assumption-on-alg} and Lemma~\ref{lemma:covariance-of-imputed-block-means}, we have
    \begin{align*}
        \|\tilde{\theta}_n - \theta_0\|_2^2 &\leq 2\|\tilde{\theta}_n - \mu\|_2^2 + 2\|\mu - \theta_0\|_2^2 \\
        &\leq 2C\biggl(\frac{\tr(\Gamma)}{M} + \frac{\|\Gamma\|_{\mathrm{op}}\log(1/\delta)}{M} + \epsilon_{\max}\|\Gamma\|_{\mathrm{op}}\biggr) + 2\|\mu - \theta_0\|_2^2\\
        &\leq C'\biggl(\frac{\tr(\Sigma^{\mathrm{IPW}})}{n} + \frac{\|\Sigma^{\mathrm{IPW}}\|_{\mathrm{op}}\log(1/\delta)}{n} + \|\Sigma^{\mathrm{IPW}}\|_{\mathrm{op}}\epsilon + \frac{M\|\theta-\theta_0\|_2^2}{nq_{\min}}\biggr),
    \end{align*}
    as required.
\end{proof}


\begin{lemma} \label{lemma:error-of-initialisation}
    Let $\mathrm{ALG}$ satisfy~\eqref{eq:assumption-on-alg} for some $\epsilon_{\max} \in (0,1/2)$, $a \in (0,1]$, $C > 0$. Let $\epsilon\in \bigl[0, \frac{\epsilon_{\max}}{1+\epsilon_{\max}}q_{\min}\bigr]$, $\delta\in\bigl[2de^{-anq_{\min}/8}, 1\bigr]$ and $Z_1,\ldots,Z_n \overset{\mathrm{iid}}{\sim} P\in\mathcal{P}^{\mathrm{arb}}(\theta_0,\Sigma,\epsilon,\pi)$. For $j\in[d]$, let $I_j \coloneqq \{i\in[n] : Z_{ij} \neq \star\}$, $\tilde{\theta}_{n,j} \coloneqq \mathrm{ALG}\bigl((Z_{ij})_{i\in I_j}; \frac{\epsilon}{q_j(1-\epsilon)}, \frac{\delta}{2d}\bigr)$ and $\tilde{\theta}_n \coloneqq (\tilde{\theta}_{n,1}, \ldots, \tilde{\theta}_{n,d})^\top$. Then, with probability at least $1-\delta$,
    \begin{align*}
        \|\tilde{\theta}_n - \theta_0\|_2^2 \leq C\biggl(\frac{2\tr(\Sigma^{\mathrm{IPW}})\log(2ed/\delta)}{n} + \frac{\epsilon}{1-\epsilon}\tr(\Sigma^{\mathrm{IPW}})\biggr).
    \end{align*}
\end{lemma}
\begin{proof}
    By Bayes' theorem, for $i\in[n]$ and $j \in [d]$,
    \begin{align*}
        \mathbb{P}(i\in\mathcal{O}_n \,|\, Z_{ij} \neq \star) = \frac{\mathbb{P}( Z_{ij} \neq \star \,|\, i\in\mathcal{O}_n) \mathbb{P}(i\in\mathcal{O}_n)}{\mathbb{P}(Z_{ij} \neq \star)} \leq \frac{\epsilon}{q_j(1-\epsilon)}\eqqcolon\kappa_j,
    \end{align*}
    and $\kappa_j\leq \epsilon_{\max}$ by our assumption on $\epsilon$.
    Thus, writing $\theta_0 = (\theta_{0,1},\ldots,\theta_{0,d})^\top$ and $\Sigma = (\Sigma_{ij})_{i,j \in [d]}$, we have conditional on $I_j$ that  $(Z_{ij})_{i\in I_j} \overset{\mathrm{iid}}{\sim} (1-\kappa_j)P_j'+\kappa_j Q_j'$ where $P_j'\in\mathcal{P}(\theta_{0,j},\Sigma_{jj})$ and $Q_j'\in\mathcal{P}(\mathbb{R})$. Therefore, by~\eqref{eq:assumption-on-alg},
    \begin{align*}
        \mathbb{P}\biggl\{(\tilde{\theta}_{n,j}-\theta_{0,j})^2 \leq C\biggl(\frac{2\Sigma_{jj}\log(2ed/\delta)}{nq_j} + \kappa_j\Sigma_{jj}\biggr) \,\bigg|\, |I_j| \geq \frac{nq_j}{2}\biggr\} \geq 1-\frac{\delta}{2d}.
    \end{align*}
    Moreover, by Lemma~\ref{lemma:binomial-tail}(b) and since $\delta\geq 2de^{-nq_{\min}/8}$, we have 
    \begin{align*}
        \mathbb{P}\biggl(|I_j| \geq \frac{nq_j}{2}\biggr) \geq 1-\frac{\delta}{2d}.
    \end{align*}
    Hence,
    \begin{align*}
        \mathbb{P}\biggl\{&(\tilde{\theta}_{n,j}-\theta_{0,j})^2 \leq C\biggl(\frac{2\Sigma_{jj}\log(2ed/\delta)}{nq_j} + \kappa_j\Sigma_{jj}\biggr)\biggr\}\\
        &\geq \mathbb{P}\biggl\{(\tilde{\theta}_{n,j}-\theta_{0,j})^2 \leq C\biggl(\frac{2\Sigma_{jj}\log(2ed/\delta)}{nq_j} + \kappa_j\Sigma_{jj}\biggr) \,\bigg|\, |I_j| \geq \frac{nq_j}{2}\biggr\} \mathbb{P}\biggl(|I_j| \geq \frac{nq_j}{2}\biggr) \\
        &\geq 1-\frac{\delta}{d}.
    \end{align*}
    The final result now follows by a union bound.
\end{proof}

\begin{proof}[Proof of Theorem~\ref{thm:robust-descent-iterative-imputation-ub}]
    Let $C'\coloneqq 48C+2$, and recall the definition of $M$ from Algorithm~\ref{alg:robust-iterative-imputation}.  By Lemma~\ref{lemma:error-per-iteration}, for $t\in[T-1]$, we have with probability at least $1-\delta/(2T)$ that
    \begin{align*}
        \|&\hat{\theta}^{(t+1)} - \theta_0\|_2^2 \\
        &\hspace{0.5cm}\leq 2C'\biggl( \frac{T\tr(\Sigma^{\mathrm{IPW}})}{n} + \frac{T\|\Sigma^{\mathrm{IPW}}\|_{\mathrm{op}}\log(2T/\delta)}{n} + \|\Sigma^{\mathrm{IPW}}\|_{\mathrm{op}}\epsilon + \frac{TM\|\hat{\theta}^{(t)}-\theta_0\|_2^2}{nq_{\min}}\biggr)\\
        &\hspace{0.5cm}\eqqcolon \alpha + \beta \|\hat{\theta}^{(t)} -\theta_0\|_2^2.
    \end{align*}
    By assumption, we have $(192C+8)TM \leq nq_{\min}$, so $\beta\leq 1/2$. Therefore, by a union bound, with probability at least $1-\delta/2$, we have
    \begin{align}
        \|\hat{\theta}^{(T)} - \theta_0\|_2^2 \leq \alpha \sum_{\ell=0}^{T-2} \beta^{\ell} + \beta^{T-1} \|\hat{\theta}^{(1)} - \theta_0\|_2^2 \leq 2\alpha + \frac{\|\hat{\theta}^{(1)} - \theta_0\|_2^2}{2^{T-1}}. \label{eq:theta^T-theta_0}
    \end{align}
    Moreover, by Lemma~\ref{lemma:error-of-initialisation}, we have with probability at least $1-\delta/2$ that
    \begin{align}
        \|\hat{\theta}^{(1)} - \theta_0\|_2^2 \leq C\biggl(\frac{4T\tr(\Sigma^{\mathrm{IPW}})\log(2ed/\delta)}{n} + \frac{\epsilon}{1-\epsilon}\tr(\Sigma^{\mathrm{IPW}})\biggr). \label{eq:theta^1-theta_0}
    \end{align}
    Our choice of $T$ ensures that on combining~\eqref{eq:theta^T-theta_0} and~\eqref{eq:theta^1-theta_0} we obtain that with probability at least $1-\delta$,
    \begin{align*}
        \|\hat{\theta}^{(T)} - \theta_0\|_2^2 &\leq 3\alpha \\
        &= (288C+12)\biggl( \frac{T\tr(\Sigma^{\mathrm{IPW}})}{n} + \frac{T\|\Sigma^{\mathrm{IPW}}\|_{\mathrm{op}}\log(2T/\delta)}{n} + \|\Sigma^{\mathrm{IPW}}\|_{\mathrm{op}}\epsilon\biggr),
    \end{align*}
    as required.
\end{proof}

\subsection{Proof of Theorem~\ref{thm:arbitrary-contamination-lb}} 
\begin{proof}[Proof of Theorem~\ref{thm:arbitrary-contamination-lb}]
First, note that when $\epsilon = 0$, by Proposition~\ref{prop:arb-mean-MCAR-lb}, we have
\begin{align}\label{ineq:mcar-lb1}
\mathcal{M}\bigl(\delta, \mathcal{P}_{\Theta}, \| \cdot \|_2^2\bigr) \gtrsim \frac{\tr(\Sigma^{\mathrm{IPW}})}{n} + \frac{\| \Sigma^{\mathrm{IPW}}\|_{\mathrm{op}} \log(1/\delta)}{n}. 
\end{align}
    Now we consider the case $\epsilon \in \bigl(0,\frac{q_{\min}}{1+q_{\min}} \bigr)$.  
    Without loss of generality, assume that $\Sigma^{\mathrm{IPW}}_{11} = \max_{j \in [d]} \Sigma^{\mathrm{IPW}}_{jj}$, and let $a \coloneqq (\alpha+\alpha^2)/2$ and $b \coloneqq (3\alpha+\alpha^2)/2$ 
    for some $\alpha \in (0,1/3]$ to be chosen later.  Define random vectors $X^{(1)} = (X^{(1)}_1, \ldots, X^{(1)}_d)^\top \sim P^{(1)}$ and $X^{(2)} = (X^{(2)}_1, \ldots, X^{(2)}_d)^\top \sim P^{(2)}$ with independent components satisfying
    \begin{align*}
        X^{(1)}_1 \coloneqq \begin{cases}
            -\sqrt{\frac{\Sigma_{11}}{2\alpha}} \quad &\text{with prob. } \alpha\\
            0 &\text{with prob. } 1-2\alpha,\\
            \sqrt{\frac{\Sigma_{11}}{2\alpha}} \quad &\text{with prob. } \alpha
        \end{cases} \quad 
        X^{(2)}_1 \coloneqq \begin{cases}
            -\sqrt{\frac{\Sigma_{11}}{2\alpha}} \quad &\text{with prob. } a\\
            0 &\text{with prob. } 1 - a - b \\
            \sqrt{\frac{\Sigma_{11}}{2\alpha}} \quad &\text{with prob. } b,
        \end{cases}
    \end{align*}
    and $X^{(1)}_j \overset{d}{=} X^{(2)}_j \sim \mathsf{N}(0, \Sigma_{jj})$ for $j \in \{2, \ldots, d\}$.  Then 
    \[
    \Var(X^{(2)}_1) = \frac{(a + b + 2ab - a^2 - b^2)\Sigma_{11}}{2\alpha} = \Sigma_{11}.
    \]
    Thus $\Cov(X^{(1)}) = \Cov(X^{(2)}) = \Sigma$, so $P^{(\ell)} \in \mathcal{P}\bigl( \mathbb{E}(X^{(\ell)}), \Sigma \bigr)$ for $\ell \in \{1,2\}$, and
    \begin{align*}
        \bigl\| \mathbb{E}(X^{(1)}) - \mathbb{E}(X^{(2)}) \bigr\|_2^2 = \frac{\alpha \Sigma_{11}}{2}.
    \end{align*} 
    Moreover, by Lemma~\ref{lem:equivalence-of-f-af}, we have 
    \begin{align*}
        \mathrm{TV}\bigl(\mathsf{MCAR}_{(\pi, P^{(1)})},\mathsf{MCAR}_{(\pi, P^{(2)})}\bigl) &= \mathrm{ATV}(P^{(1)}, P^{(2)}, \pi) = \sum_{S: 1\in S} \pi(S) \cdot \mathrm{TV}\bigl(P^{(1)}_S, P^{(2)}_S\bigr)\\
        &= q_1 \cdot \mathrm{TV}\bigl(P^{(1)}_1, P^{(2)}_1\bigr) = \frac{q_1}{2}\biggl( \frac{\alpha \!-\! \alpha^2}{2} + \alpha^2 + \frac{\alpha \!+\! \alpha^2}{2} \biggr) \\
        &\leq q_1 \alpha.
    \end{align*}
    We then pick $\alpha = \epsilon / (3 q_1) < 1/3$ since $\epsilon < q_{\min}$ so that
    \[
    \mathrm{TV}\bigl(\mathsf{MCAR}_{(\pi, P^{(1)})},\mathsf{MCAR}_{(\pi, P^{(2)})}\bigl) \leq \epsilon \leq \frac{\epsilon}{1 - \epsilon}, \quad \text{and} \quad \bigl\| \mathbb{E}(X^{(1)}) - \mathbb{E}(X^{(2)}) \bigr\|_2^2 = \frac{\epsilon \Sigma_{11}^{\mathrm{IPW}}}{6}.
    \]
    Consequently, by~\citet[Theorem 4 and Lemma 25]{ma2024high}, we have
    \begin{align}\label{ineq:arb-lb-epsilon-term}
        \mathcal{M}\bigl(\delta, \mathcal{P}_{\Theta}, \| \cdot \|_2^2\bigr) \geq \frac{\bigl\| \mathbb{E}(X^{(1)}) - \mathbb{E}(X^{(2)}) \bigr\|_2^2}{4} = \frac{\epsilon \Sigma_{11}^{\mathrm{IPW}}}{24}.
    \end{align}
    Combining~\eqref{ineq:mcar-lb1} and~\eqref{ineq:arb-lb-epsilon-term} yields the desired result.  
    
    Next, we consider the case where $\epsilon \geq \frac{q_{\min}}{1+q_{\min}}$. Without loss of generality, assume that $q_1 = q_{\min}$.  Let $\theta^{(1)} \coloneqq (2t^{1/2}, 0, \ldots, 0)^\top \in \mathbb{R}^d$ for some $t>0$ and let $\theta^{(2)} \coloneqq 0 \in \mathbb{R}^d$.  Writing $P^{(1)} \coloneqq \mathsf{N}(\theta^{(1)}, \Sigma)$ and $P^{(2)} \coloneqq \mathsf{N}(\theta^{(2)}, \Sigma)$, we have by Lemma~\ref{lem:equivalence-of-f-af} that
    \begin{align*}
        \mathrm{TV}\bigl(\mathsf{MCAR}_{(\pi, P^{(1)})},\mathsf{MCAR}_{(\pi, P^{(2)})}\bigl) &= \mathrm{ATV}(P^{(1)}, P^{(2)}; \pi) = \sum_{S: 1\in S} \pi(S) \cdot \mathrm{TV}\bigl(P^{(1)}_S, P_S^{(2)}\bigr)\\
        &= q_1 \mathrm{TV}\bigl(P^{(1)}_{\{1\}}, P^{(2)}_{\{1\}}\bigr) \leq q_1 \leq \frac{\epsilon}{1-\epsilon}.
    \end{align*}
    Hence, by~\citet[Theorem 4 and Lemma 25]{ma2024high}, we see that $\mathcal{M}\bigl(\delta, \mathcal{P}_{\Theta}, \| \cdot \|_2^2\bigr) \geq t$.  Since $t > 0$ was arbitrary, the result follows.
\end{proof}

\subsection{Univariate arbitrary contamination lower bounds} \label{sec:univariate-arbitrary-contamination-lb}

The lower bounds in Proposition~\ref{Prop:Univariate-arb-contam-lb} are presented primarily to ensure the completeness of Table~\ref{table:summary}.  Corresponding upper bounds are attained by the median in the Gaussian case \citep[][Theorem~2.1]{chen2018robust}, and a trimmed mean \citep[][Theorem~1 and the subsequent remark]{lugosi21robust} in the sub-Gaussian case, in both cases applied to the observed data.
\begin{prop}
\label{Prop:Univariate-arb-contam-lb}
    Let $\epsilon\in[0,1)$, $q\in(0,1]$, $\sigma>0$, $\delta\in(0,1/4]$ and $\kappa \coloneqq \frac{\epsilon}{q(1-\epsilon)}$. 
    \begin{enumerate}
        \item[(a)] Let $\Theta\coloneqq\mathbb{R}$ and let $\mathcal{P}_{\theta} \coloneqq \bigl\{ P_0^{\otimes n} : P_0\in\mathcal{P}^{\mathrm{arb}}\bigl(\mathsf{N}(\theta,\sigma^2),\epsilon,q\bigr)\bigr\}$ for $\theta\in\Theta$. Then
        \begin{align*}
            \mathcal{M}(\delta,\mathcal{P}_{\Theta},|\cdot|^2) \begin{cases}
                \gtrsim \dfrac{\sigma^2\log(1/\delta)}{nq(1-\epsilon)} + \sigma^2 \kappa^2 \quad&\text{if } \epsilon < \frac{q}{1+q}\\
                =\infty &\text{if }\epsilon \geq \frac{q}{1+q}.
            \end{cases}
        \end{align*}
        \item[(b)] Let $\Theta\coloneqq\mathbb{R}$ and let $\mathcal{P}_{\theta} \coloneqq \bigl\{ P_0^{\otimes n} : P_0\in\mathcal{P}^{\mathrm{arb}}(P,\epsilon,q),\, P\in\mathcal{P}_{\psi_2}(\theta,\sigma^2) \bigr\}$ for $\theta\in\Theta$. Then
        \begin{align*}
            \mathcal{M}(\delta,\mathcal{P}_{\Theta},|\cdot|^2) \begin{cases}
                \gtrsim \dfrac{\sigma^2\log(1/\delta)}{nq(1-\epsilon)} + \sigma^2 \kappa^2 \log(1/\kappa) \quad&\text{if } \epsilon < \frac{q}{1+q}\\
                =\infty &\text{if }\epsilon \geq \frac{q}{1+q}.
            \end{cases}
        \end{align*}
    \end{enumerate}    
\end{prop}
\begin{proof}
    (a) First consider the case where $\epsilon < \frac{q}{1+q}$. Let $X_1\sim \mathsf{N}(0,\sigma^2) \eqqcolon P_1$ and $X_2 \sim \mathsf{N}(2\sigma\kappa,\sigma^2) \eqqcolon P_2$. By Pinsker's inequality, $\mathrm{TV}(P_1,P_2) \leq \sqrt{\frac{1}{2}\mathrm{KL}(P_1,P_2)} = \kappa$, so that by Lemma~\ref{lem:equivalence-of-f-af},
    \begin{align*}
        \mathrm{TV}\bigl( \mathsf{MCAR}_{(q,P_1)}, \mathsf{MCAR}_{(q,P_2)} \bigr) = q\cdot\mathrm{TV}(P_1,P_2) \leq \frac{\epsilon}{1-\epsilon}.
    \end{align*}
    Hence, by \citet[Lemma~25]{ma2024high}, we have
    \begin{align}
        \mathcal{M}(\delta,\mathcal{P}_{\Theta},|\cdot|^2) \geq \frac{(\mathbb{E}X_1 - \mathbb{E}X_2)^2}{4} = \sigma^2\kappa^2. \label{eq:uni-arb-gaussian-lb-1}
    \end{align}
    Further, by choosing the contamination distribution $Q\in\mathcal{P}(\mathbb{R}_{\star})$ such that $Q\bigl( \{\star\} \bigr)=1$ and applying Proposition~\ref{prop:univariate-mcar-lb}(a), we deduce that
    \begin{align}
        \mathcal{M}(\delta,\mathcal{P}_{\Theta},|\cdot|^2) \gtrsim \frac{\sigma^2\log(1/\delta)}{nq(1-\epsilon)}. \label{eq:uni-arb-gaussian-lb-2}
    \end{align}
    Combining~\eqref{eq:uni-arb-gaussian-lb-1} and~\eqref{eq:uni-arb-gaussian-lb-2} yields the lower bound for $\epsilon < \frac{q}{1+q}$.

    Next consider the case where $\epsilon \geq \frac{q}{1+q}$. Let $a>0$, $X_1\sim P_1 \coloneqq \mathsf{N}(0,\sigma^2)$ and $X_2 \sim P_2 \coloneqq \mathsf{N}(a\sigma,\sigma^2)$. By Lemma~\ref{lem:equivalence-of-f-af},
    \begin{align*}
        \mathrm{TV}\bigl( \mathsf{MCAR}_{(q,P_1)}, \mathsf{MCAR}_{(q,P_2)} \bigr) = q\cdot\mathrm{TV}(P_1,P_2) \leq q \leq \frac{\epsilon}{1-\epsilon}.
    \end{align*}
    Hence, by \citet[Lemma~25]{ma2024high}, we have
    \begin{align*}
        \mathcal{M}(\delta,\mathcal{P}_{\Theta},|\cdot|^2) \geq \frac{(\mathbb{E}X_1 - \mathbb{E}X_2)^2}{4} = \frac{\sigma^2 a^2}{4}.
    \end{align*}
    Since this holds for all $a>0$, we deduce that $\mathcal{M}(\delta,\mathcal{P}_{\Theta},|\cdot|^2) = \infty$ in this case.
    
    \medskip
    (b) Let $c_1>0$ be a universal constant that will be specified later. Define $P_1 \in \mathcal{P}(\mathbb{R})$ by $P_1\bigl((t,\infty)\bigr) \coloneqq e^{-t^2/(c_1\sigma)^2}$ for $t\geq 0$. Define $P_2 \in \mathcal{P}(\mathbb{R})$ by
    \begin{align*}
        P_2(\{0\}) \coloneqq \kappa, \quad P_2\bigl((t,\infty)\bigr) \coloneqq 
        \begin{cases}
            e^{-t^2/(c_1\sigma)^2} \;&\text{if } 0\leq t \leq c_1\sigma\sqrt{\log(1/\kappa)}\\
            0 \;&\text{if } t> c_1\sigma\sqrt{\log(1/\kappa)}.
        \end{cases}
    \end{align*}
    Let $X_1\sim P_1$ and $X_2\sim P_2$. Since $\mathbb{P}(|X_\ell| \geq t) \leq e^{-t^2/(c_1\sigma)^2}$ for $t\geq 0$ and $\ell\in\{1,2\}$, we have by \citet[Proposition~2.5.2]{vershynin2018high} that $\|X_\ell\|_{\psi_2} \leq c_1C_2\sigma$ for $\ell\in\{1,2\}$, where $C_2>0$ is a universal constant. Thus by \citet[Lemma~2.6.8]{vershynin2018high}, there exists a universal constant $C_3>0$ such that $\|X_\ell - \mathbb{E}X_\ell\|_{\psi_2} \leq c_1C_2C_3\sigma$. Hence, taking $c_1\coloneqq (C_2C_3)^{-1}$, we have $P_\ell \in \mathcal{P}_{\psi_2}\bigl(\mathbb{E}(X_\ell), \sigma^2\bigr)$ for $\ell\in\{1,2\}$.
    Moreover, $\mathrm{TV}(P_1,P_2) = P_2(\{0\}) = \kappa$, so that by Lemma~\ref{lem:equivalence-of-f-af},
    \begin{align*}
        \mathrm{TV}\bigl( \mathsf{MCAR}_{(q,P_1)}, \mathsf{MCAR}_{(q,P_2)} \bigr) = q\cdot\mathrm{TV}(P_1,P_2) = \frac{\epsilon}{1-\epsilon}.
    \end{align*}
    Now, integrating by parts yields
    \begin{align*}
        \mathbb{E}X_1 - \mathbb{E}X_2 &= \int_{c_1\sigma\sqrt{\log(1/\kappa)}}^\infty x\cdot \frac{2x}{(c_1\sigma)^2}e^{-x^2/(c_1\sigma)^2} \,\mathrm{d}x\\
        &= \kappa \cdot c_1\sigma\sqrt{\log(1/\kappa)} + \int_{c_1\sigma\sqrt{\log(1/\kappa)}}^\infty e^{-x^2/(c_1\sigma)^2} \,\mathrm{d}x \gtrsim \sigma\kappa\sqrt{\log(1/\kappa)}. 
    \end{align*}
    Hence, by \citet[Lemma~25]{ma2024high}, 
    \begin{align}
        \mathcal{M}(\delta,\mathcal{P}_{\Theta},|\cdot|^2) \geq \frac{(\mathbb{E}X_1 - \mathbb{E}X_2)^2}{4} \gtrsim \sigma^2\kappa^2\log(1/\kappa). \label{eq:uni-arb-sub-gaussian-lb-1}
    \end{align}
    By~\eqref{eq:uni-arb-sub-gaussian-lb-1} and applying Proposition~\ref{prop:univariate-mcar-lb}(a) with contamination distribution $Q\in\mathcal{P}(\mathbb{R}_{\star})$ satisfying $Q(\{\star\}) = 1$ as in~(a), we obtain the desired lower bound for $\epsilon < \frac{q}{1+q}$. Finally, the lower bound for $\epsilon \geq \frac{q}{1+q}$ follows from part~(a).
\end{proof}

\section{Proofs from Section~\ref{sec:realisable-mean-est}}\label{sec:proofs-realisable-mean-est}

\subsection{Proofs from Section~\ref{sec:gaussian-realisable-model}} \label{sec:proofs-one-dim-gaussian}

\subsubsection{Proof of Theorem~\ref{thm:univariate-gaussian-realisable-maxmin}}
\begin{proof}[Proof of Theorem~\ref{thm:univariate-gaussian-realisable-maxmin}]
Given $R\in \mathcal{R}(\theta_0)$, we can simulate from $R$ as follows: let $(X_0,\Omega_0^{(1)},\Omega_0^{(2)}, W_0)$ denote a random vector taking values in $\mathbb{R} \times \{0,1\}^{3}$ such that $W_0 \indep (X_0, \Omega_0^{(1)}, \Omega_0^{(2)})$, $W_0 \sim \mathsf{Ber}(\epsilon)$, $\Omega_0^{(1)} \indep (X_0, \Omega_0^{(2)})$, $\Omega_0^{(1)} \sim \mathsf{Ber}(q)$, $X_0 \sim \mathsf{N}(\theta_0, \sigma^2)$ and 
\[
R = \mathsf{Law}\bigl((1 - W_0) \cdot X_0 \ostar \Omega_{0}^{(1)} + W_0 \cdot X_0 \ostar \Omega_{0}^{(2)} \bigr).
\]
Note that if $Z_0 \coloneqq (1 - W_0) \cdot X_0 \ostar \Omega_{0}^{(1)} + W_0 \cdot X_0 \ostar \Omega_{0}^{(2)}$, then $Z_0 \mid \{W_0 = 0\} \sim \mathsf{MCAR}_{(\mathsf{N}(\theta_0, \sigma^2), q)}$.  We then generate $(X_i, \Omega_i^{(1)}, \Omega_i^{(2)}, W_i)_{i=1}^{n} \overset{\mathrm{iid}}{\sim} \mathsf{Law}(X_0, \Omega_0^{(1)}, \Omega_0^{(2)}, W_0)$, and set $Z_i \coloneqq (1 - W_i) \cdot X_i \ostar \Omega_{i}^{(1)} + W_i \cdot X_i \ostar \Omega_{i}^{(2)}$ for $i \in [n]$, so that $Z_1,\ldots,Z_n \stackrel{\mathrm{iid}}{\sim} R$.

    Now define the inliers as $\mathcal{I} \coloneqq \{i \in [n]: W_i = 0\}$, the outliers as $\mathcal{O} \coloneqq \{i \in [n] : W_i = 1\}$, and the observed indices as $\mathcal{D} \coloneqq \{i \in [n]: Z_i \neq \star\}$.  Equipped with this notation, we note the following pair of structural properties
    \[
    \max_{i \in \mathcal{I} \cap \mathcal{D}}\, X_i \leq \max_{i \in \mathcal{D}}\, Z_i \leq \max_{i \in ( \mathcal{I} \cap \mathcal{D}) \cup \mathcal{O}}\, X_i \quad \text{ and } \quad \min_{i \in ( \mathcal{I} \cap \mathcal{D}) \cup \mathcal{O}}\, X_i \leq \min_{i \in \mathcal{D}}\, Z_i \leq \min_{i \in \mathcal{I} \cap \mathcal{D}}\, X_i.
    \]
    We deduce the sandwich relation
    \begin{align} \label{ineq:sandwich-max-min}
    \frac{1}{2} \cdot \Bigl(\max_{i \in \mathcal{I} \cap \mathcal{D}}\, X_i + \min_{i \in ( \mathcal{I} \cap \mathcal{D}) \cup \mathcal{O}}\, X_i\Bigr) \leq \hat{\theta}^{\mathrm{AE}} \leq \frac{1}{2} \cdot \Bigl(\max_{i \in ( \mathcal{I} \cap \mathcal{D}) \cup \mathcal{O}}\, X_i + \min_{i \in \mathcal{I} \cap \mathcal{D}}\, X_i\Bigr).
    \end{align}
    Now $X_1,\ldots,X_n$ and $\mathcal{I} \cap \mathcal{D}$ are independent, and similarly $X_1,\ldots,X_n$ and $\mathcal{O}$ are independent, so $(X_i)_{i \in \mathcal{I} \cap \mathcal{D}}|(\mathcal{I} \cap \mathcal{D}) \overset{\mathrm{iid}}{\sim} \mathsf{N}(\theta_0, \sigma^2)$ and $(X_i)_{i \in (\mathcal{I} \cap \mathcal{D}) \cup \mathcal{O}}|\bigl((\mathcal{I} \cap \mathcal{D}) \cup \mathcal{O}\bigr) \overset{\mathrm{iid}}{\sim} \mathsf{N}(\theta_0, \sigma^2)$.  We let $N_1 \coloneqq \lvert \mathcal{I} \cap \mathcal{D} \rvert$ and $N_2 \coloneqq \lvert (\mathcal{I} \cap \mathcal{D}) \cup \mathcal{O} \rvert$, and define 
    \[
    B_{\ell} \coloneqq \sigma \sqrt{2\log{N_{\ell}}} + \frac{\sigma}{2} \cdot \frac{\log{\log{N_{\ell}}} + \log(4\pi)}{\sqrt{2 \log{N_{\ell}}}},
    \]
    for $\ell \in \{1,2\}$.  Let $\mathcal{E}_{1}\coloneqq \{ N_{1} \geq nq(1-\epsilon)/2\}$ and $\mathcal{E}_2 \coloneqq \{|\mathcal{O}|\leq 3n\epsilon\}$. By~\citet[Theorem 3]{tanguy2015some}, there exists a universal constant $C_1' > 0$ such that for $\ell \in \{1, 2\}$ and $\delta > 0$, 
    \begin{align*}
    &\mathbb{P}\biggl(\biggl\{\Bigl|\max_{i \in [N_{\ell}]} X_i - \theta - B_{\ell} \Bigr| \geq \frac{C_1'\sigma \log(8/\delta)}{\log^{1/2}N_{\ell}} \biggr\} \bigcap \mathcal{E}_1 \; \bigg \vert \; N_{\ell}\biggr) \leq \frac{\delta}{4}
    \end{align*}
    and
    \begin{align*}
    &\mathbb{P}\biggl( \biggl\{\Bigl|\min_{i \in [N_{\ell}]} X_i - \theta +  B_{\ell}\Bigr| \geq \frac{C_1' \sigma \log(8/\delta)}{\log^{1/2} N_{\ell}}\biggr\} \bigcap \mathcal{E}_1 \;\bigg \vert \; N_{\ell}\biggr) \leq \frac{\delta}{4}.
    \end{align*}
    Combining these inequalities with the sandwich relation~\eqref{ineq:sandwich-max-min} yields
    \begin{align}\label{ineq:deviation-bound-thetaAE}
    \mathbb{P}\biggl(\biggl\{\bigl \lvert \hat{\theta}^{\mathrm{AE}}_n - \theta_0 \bigr \rvert \geq B_2 - B_1 +  \frac{2C_1'\sigma \log(8/\delta)}{\log^{1/2}N_1}\biggr\} \bigcap \mathcal{E}_1 \; \bigg \vert \; N_1, N_2 \biggr) \leq \frac{2\delta}{3}.
    \end{align}
    Using the inequality $\sqrt{a} - \sqrt{b} \leq (a - b)/\sqrt{b}$ for $0 < b < a$ and the fact that $x \mapsto \frac{\log \log x + \log(4\pi)}{\log^{1/2} x}$ is decreasing for $x\geq \exp\bigl(\frac{e^2}{4\pi}\bigr)$, we deduce that on $\mathcal{E}_1 \cap \mathcal{E}_2$,
    \begin{align} \label{ineq:bound-diff-B1-B2}
    B_2 - B_1 \leq \frac{2\sigma \log(N_2/N_1)}{\log^{1/2}N_1} &\leq \frac{2 \sigma \log\bigl(1 + 3n\epsilon/N_1\bigr)}{\log^{1/2}N_1} \nonumber\\
    &\leq \frac{2 \sigma \log\bigl(1 + \frac{6\epsilon}{q(1 - \epsilon)}\bigr)}{\log^{1/2}{\bigl(nq(1 - \epsilon)/2\bigr)}} \leq \frac{3 \sigma \log\bigl(1 + \frac{6\epsilon}{q(1 - \epsilon)}\bigr)}{\log^{1/2}\bigl(nq(1-\epsilon)\bigr)}.
    \end{align}
    Now, we first assume that $\epsilon \geq n^{-1}\log(4/\delta)$. By Lemma~\ref{lemma:binomial-tail}, we have $\mathbb{P}(\mathcal{E}_1 \cap \mathcal{E}_2) \geq 1 - \delta/2$, since by assumption, $nq(1 -\epsilon) \geq 8 \log(4/\delta)$.  Moreover, combining the inequalities~\eqref{ineq:deviation-bound-thetaAE} and~\eqref{ineq:bound-diff-B1-B2} yields that on $\mathcal{E}_1 \cap \mathcal{E}_2$,
    \begin{align}
    \bigl \lvert \hat{\theta}^{\mathrm{AE}}_n - \theta_0 \bigr \rvert \leq \frac{3 \sigma \log\bigl(1 + \frac{6\epsilon}{q(1 - \epsilon)}\bigr)}{\log^{1/2}\bigl(nq(1-\epsilon)\bigr)} + \frac{2C_1' \sigma \log(8/\delta)}{\log^{1/2} N_1} \leq  C_1 \sigma \cdot \frac{ \log\bigl(1 + \frac{6\epsilon}{q(1 - \epsilon)}\bigr) + \log(8/\delta)}{\log^{1/2}\bigl(nq(1-\epsilon)\bigr)}, \label{eq:average-min-max}
    \end{align}
    where $C_1 \coloneqq 3(1+C_1')$. Hence, \eqref{eq:average-min-max} holds with probability at least $1-\delta$ when $\epsilon\geq n^{-1}\log(4/\delta)$. Finally, consider the case in which $\epsilon < n^{-1}\log(4/\delta)$. Then, since we have $\mathcal{R}\bigl(\mathsf{N}(\theta_0,\sigma^2),\epsilon, q\bigr) \subseteq \mathcal{R}\bigl(\mathsf{N}(\theta_0,\sigma^2),n^{-1}\log(4/\delta), q\bigr)$, it follows by~\eqref{eq:average-min-max} that
    \begin{align*}
        \bigl \lvert \hat{\theta}^{\mathrm{AE}}_n - \theta_0 \bigr \rvert &\leq C_1 \sigma \cdot \frac{ \log\bigl(1 + \frac{6\log(4/\delta)}{nq(1 - \epsilon)}\bigr) + \log(8/\delta)}{\log^{1/2}\bigl(nq(1-\epsilon)\bigr)} \leq C_1 \sigma \cdot \frac{ \frac{6\log(4/\delta)}{nq(1 - \epsilon)} + \log(8/\delta)}{\log^{1/2}\bigl(nq(1-\epsilon)\bigr)} \\
        &\leq C_1 \sigma \cdot \frac{2\log(8/\delta)}{\log^{1/2}\bigl(nq(1-\epsilon)\bigr)} \leq 2C_1 \sigma \cdot \frac{ \log\bigl(1 + \frac{6\epsilon}{q(1 - \epsilon)}\bigr) + \log(8/\delta)}{\log^{1/2}\bigl(nq(1-\epsilon)\bigr)},
    \end{align*}
    with probability at least $1-\delta$.
\end{proof}

\subsubsection{Proof of Theorem~\ref{thm:univariate-realisable-lb}}

\begin{figure}[ht]
    \centering
    \subfigure[\centering The two black curves are $\{q(1-\epsilon) + \epsilon\}\phi_{(-a,\sigma)}$ and $\{q(1-\epsilon) + \epsilon\}\phi_{(a,\sigma)}$ respectively, as labelled in the figure. The blue curve is $q(1-\epsilon)\phi_{(-a,\sigma)}$, and the orange curve is $q(1-\epsilon)\phi_{(a,\sigma)}$.]{{\includegraphics[width=0.9\textwidth]{plots/gaussian-realisable-lower-bound-1.pdf}}}
    
    \subfigure[\centering The curve above the blue region illustrates the function $f_1$ in~\eqref{eq:f1-definition-univariate-lb}.]{{\includegraphics[width=0.49\textwidth]{plots/gaussian-realisable-lower-bound-2.pdf}}}
    \subfigure[\centering The curve above the orange region illustrates the function $f_2$ in~\eqref{eq:f2-definition-univariate-lb}.]{{\includegraphics[width=0.49\textwidth]{plots/gaussian-realisable-lower-bound-3.pdf}}}
    \caption{Construction of the lower bound in Theorem \ref{thm:univariate-realisable-lb}.}\label{fig:gaussian-realisable-lb}
\end{figure}

\begin{proof}[Proof of Theorem~\ref{thm:univariate-realisable-lb}]
Consider the construction illustrated in Figure~\ref{fig:gaussian-realisable-lb}.  For $a>0$ to be specified later, let 
\begin{align} \label{eq:def-t-eps-q-a}
    \tau \coloneqq \frac{\sigma^2}{2a} \cdot  \log \bigg( 1+\frac{\epsilon}{q(1-\epsilon)} \bigg)
\end{align}
denote the unique point in $\mathbb{R}$ where $\{q(1-\epsilon) + \epsilon\}\phi_{(-a,\sigma)}(\tau) = q(1 - \epsilon) \phi_{(a,\sigma)}(\tau)$.  Next, define the function $f_1: \mathbb{R} \rightarrow \mathbb{R}$ as
    \begin{align} \label{eq:f1-definition-univariate-lb}
    f_1(x) \coloneqq \begin{cases}
        q(1 - \epsilon)  \phi_{(-a,\sigma)}(x) & \text{ if } x \leq 0\\
        q(1 - \epsilon)  \phi_{(a,\sigma)}(x) & \text{ if } 0 < x \leq \tau\\
        \bigl\{q ( 1- \epsilon) + \epsilon\bigr\} \cdot \phi_{(-a,\sigma)}(x) & \text{ if } x > \tau.
    \end{cases} 
    \end{align}
    Similarly, we note that $-\tau$ is the unique point satisfying $\{q(1-\epsilon) + \epsilon\} \phi_{(a,\sigma)}(-\tau) = q (1 - \epsilon) \phi_{(-a,\sigma)}(-\tau)$ and define the function $f_2: \mathbb{R} \rightarrow \mathbb{R}$ as 
    \begin{align} \label{eq:f2-definition-univariate-lb}
    f_2(x) \coloneqq \begin{cases}
        \bigl\{q ( 1- \epsilon) + \epsilon\bigr\} \cdot\phi_{(a,\sigma)}(x) & \text{ if } x \leq -\tau\\
        q(1 - \epsilon)  \phi_{(-a,\sigma)}(x) & \text{ if } -\tau < x \leq 0\\
        q (1 - \epsilon)\phi_{(a,\sigma)}(x) & \text{ if } x > 0.
    \end{cases}
    \end{align}
    Note that $\int_{\mathbb{R}} f_\ell(x) \, \mathrm{d}x \leq q(1-\epsilon) + \epsilon \leq 1$ for $\ell \in \{1,2\}$, so we may construct $P_1, P_2 \in \mathcal{P}(\mathbb{R}_{\star})$ with Radon--Nikodym derivatives
    \[
    \frac{\mathrm{d}P_{\ell}}{\mathrm{d} \lambda_{\star}}(z) \coloneqq f_{\ell}(z) \mathbbm{1}_{\{z \in \mathbb{R}\}} + \biggl(1 - \int_{\mathbb{R}} f_{\ell}(x)\, \mathrm{d}x\biggr) \mathbbm{1}_{\{z = \star\}} \quad \text{for} \quad \ell \in \{1, 2\},
    \]
    where $\lambda_{\star}$ denotes the extension of the Lebesgue measure to $\mathbb{R}_{\star}$ as defined in Section~\ref{sec:notation}.  Then, by Proposition~\ref{prop:univariate-realisability}, $P_{1} \in \mathcal{R}\bigl(\mathsf{N}(-a, \sigma^2), \epsilon, q\bigr)$ and $P_{2} \in \mathcal{R}\bigl(\mathsf{N}(a, \sigma^2), \epsilon, q\bigr)$.  Since $P_{1}(\{\star\}) = P_{2}(\{\star\})$ and $f_1(x) = f_2(x)$ for $x \in [-\tau, \tau]$, we compute
    \begin{align*} 
        &\mathrm{KL}(P_1, P_2) = \int_{-\infty}^{-\tau} q(1 - \epsilon) \phi_{(-a,\sigma)}(x) \log\biggl(\frac{q(1 - \epsilon) \phi_{(-a,\sigma)}(x)}{\bigl\{q ( 1- \epsilon) + \epsilon\bigr\}\phi_{(a,\sigma)}(x)}\biggr)\, \mathrm{d}x  \\
        & \qquad \qquad+ \int_{\tau}^{\infty} \bigl\{q ( 1- \epsilon) + \epsilon\bigr\} \phi_{(-a,\sigma)}(x) \log\biggl(\frac{\bigl\{q ( 1- \epsilon) + \epsilon\bigr\} \phi_{(-a,\sigma)}(x)}{q(1 - \epsilon) \phi_{(a,\sigma)}(x)}\biggr)\, \mathrm{d} x \\
        &= q (1 - \epsilon) \biggl\{ \frac{2a^2}{\sigma^2} - \log\biggl(1+ \frac{\epsilon}{q(1-\epsilon)} \biggr) \biggr\} \bigl\{1 - \Phi_{(0,\sigma)}(\tau-a)\bigr\}\\
        &\qquad\qquad + \bigl\{q (1 - \epsilon) + \epsilon\bigr\} \biggl\{ \frac{2a^2}{\sigma^2} + \log\biggl(1+ \frac{\epsilon}{q(1-\epsilon)} \biggr) \biggr\}\bigl\{1-\Phi_{(0,\sigma)}(\tau+a)\bigr\} \nonumber\\
        &\qquad\qquad + 2a\bigl[ q(1-\epsilon) \phi_{(0,\sigma)}(\tau - a) - \bigl\{q(1-\epsilon) + \epsilon\bigr\} \phi_{(0,\sigma)}(\tau+a) \bigr] \\
        &= \frac{2aq(1-\epsilon)}{\sigma^2}(a-\tau)\bigl\{1 - \Phi_{(0,\sigma)}(\tau-a)\bigr\} + \frac{2a\{q(1-\epsilon)+\epsilon\}}{\sigma^2}(a+\tau)\bigl\{1-\Phi_{(0,\sigma)}(\tau+a)\bigr\} \\
        &\qquad\qquad + 2a\bigl[ q(1-\epsilon) \phi_{(0,\sigma)}(\tau - a) - \bigl\{q(1-\epsilon) + \epsilon\bigr\} \phi_{(0,\sigma)}(\tau+a) \bigr].
    \end{align*}
    Next, set 
    \begin{align*} 
    a \coloneqq \frac{\sigma}{4} \cdot \log\biggl(1 + \frac{\epsilon}{q (1 - \epsilon)}\biggr) \cdot \log^{-1/2}{\bigl(nq(1-\epsilon)\bigr)} > 0,
    \end{align*}
    so that by substituting this definition into~\eqref{eq:def-t-eps-q-a}, we obtain
    \[
    \tau=2\sigma\log^{1/2}\bigl( nq(1-\epsilon) \bigr) \quad\text{and}\quad a\leq \frac{\tau}{8},
    \]
    where the inequality follows from our assumption~\eqref{eq:asm-eps-gaussian-realisable-lb}.  Hence, by the Mills ratio bound $1 - \Phi_{(0,\sigma)}(x) \leq \sigma^2 \phi_{(0,\sigma)}(x)/x$ for $x > 0$, we have
    \begin{align*}
    \mathrm{KL}(P_1, P_2)
    &\leq 2a\{q (1 - \epsilon) + \epsilon \} \phi_{(0,\sigma)}(\tau+a) \\
    & \qquad \qquad + 2a\bigl[ q(1-\epsilon) \phi_{(0,\sigma)}(\tau - a) - \bigl\{q(1-\epsilon) + \epsilon\bigr\} \phi_{(0,\sigma)}(\tau+a) \bigr]\\
    &= 2aq (1 - \epsilon) \phi_{(0,\sigma)}(\tau-a) \\
    &= \frac{\sigma}{2} \cdot \log\biggl(1 + \frac{\epsilon}{q (1 - \epsilon)}\biggr) \cdot \log^{-1/2}\bigl(nq(1-\epsilon)\bigr) \cdot q (1 - \epsilon) \cdot \phi_{(0,\sigma)}(\tau-a)\\
    &\leq \frac{\sigma}{2} \cdot \log^{1/2}\bigl(nq(1-\epsilon)\bigr) \cdot q (1 - \epsilon) \cdot \phi_{(0,\sigma)}(7\tau/8) \\
    &= \frac{q(1-\epsilon)}{2\sqrt{2\pi}} \cdot \log^{1/2}\bigl(nq(1-\epsilon)\bigr) \cdot \exp\biggl\{-\frac{1}{2}\cdot \Bigl(\frac{7}{8}\Bigr)^2 \cdot 4\log\bigl(nq(1-\epsilon)\bigr)\biggr\} \\
    &\leq \frac{q(1-\epsilon)}{2\sqrt{2\pi}} \cdot \log^{1/2}\bigl(nq(1-\epsilon)\bigr) \cdot \bigl\{nq(1-\epsilon)\bigr\}^{-3/2} \leq \frac{1}{5n}.
    \end{align*} 
    Thus, $\mathrm{KL}(P_1^{\otimes n}, P_2^{\otimes n}) \leq 1/5 < \log\bigl( \frac{1}{4\delta(1-\delta)} \bigr)$ for $\delta\in(0,1/4]$, so by \citet[Theorem~4 and Corollary~6]{ma2024high}, we deduce that for $\delta\in(0,1/4]$,
    \begin{align} \label{eq:gaussian-realisable-proof-lb1}
        \mathcal{M}\bigl(\delta, \mathcal{P}_{\Theta}, | \cdot |^2\bigr) \geq a^2 = \frac{\sigma^2 \log^2\bigl(1 + \frac{\epsilon}{q (1 - \epsilon)}\bigr)}{16 \log \bigl(nq(1-\epsilon)\bigr)}.
    \end{align}
   Finally, note that $\mathsf{MCAR}_{(q(1-\epsilon), \mathsf{N}(\theta,\sigma^2))} \in \mathcal{R}(\mathsf{N}(\theta, \sigma^2), \epsilon, q)$ for all $\theta \in \mathbb{R}$, since we can choose the contamination distribution $Q$ such that $Q(\{\star\})=1$. Therefore, by Proposition~\ref{prop:univariate-mcar-lb}(a), we have that for $\delta\in(0,1/4]$,
   \begin{align}\label{eq:gaussian-realisable-proof-lb2}
       \mathcal{M}\bigl(\delta, \mathcal{P}_{\Theta}, | \cdot |^2\bigr) 
       \begin{cases}
         \geq \dfrac{\sigma^2 \log(1/\delta)}{20nq(1-\epsilon)} \quad&\text{if }\delta\geq \dfrac{\{1-q(1-\epsilon)\}^n}{2}\\
         = \infty \quad&\text{if }\delta< \dfrac{\{1-q(1-\epsilon)\}^n}{2}.
    \end{cases}
   \end{align}
   Combining \eqref{eq:gaussian-realisable-proof-lb1} and \eqref{eq:gaussian-realisable-proof-lb2} yields the desired result.
\end{proof}

\subsubsection{Proof of Theorem~\ref{thm:one-dim-kolmogorov-estimator}}
In order to prove Theorem~\ref{thm:one-dim-kolmogorov-estimator}, we require a preliminary lemma.
\begin{lemma}\label{lemma:one-dim-kolmogorov-distance-realisable-sets}
    Let $\theta_1, \theta_2 \in \mathbb{R}$ be distinct, and set $a \coloneqq \lvert \theta_1 - \theta_2 \rvert/2$. Then, writing $b \coloneqq \frac{1}{2}\log\bigl( 1+ \frac{4\epsilon}{q(1-\epsilon)}\bigr)$, there exists a continuous and strictly increasing function $f_{\mathrm{K},b}:(0,\infty) \to (0,1]$ such that
    \begin{align*}
        d_{\mathrm{K}}\bigl(\mathcal{R}(\theta_1),\mathcal{R}(\theta_2)\bigr) \geq f_{\mathrm{K},b}(a).
    \end{align*}    
    Moreover, 
    \begin{align*}
        f_{\mathrm{K},b}(a) \geq q(1-\epsilon) \cdot \frac{a}{\sigma} \cdot \phi\biggl(\frac{a}{\sigma}+\frac{\sigma b}{a}\biggr) \quad\text{when }b\leq 1/2,
    \end{align*}
    and
    \begin{align*}
        f_{\mathrm{K},b}(a) \geq q(1-\epsilon) \cdot \Phi\biggl(\frac{a}{\sigma}-\frac{2\sigma b}{a}\biggr) - \{q(1-\epsilon) + \epsilon\}\cdot \Phi\biggl(-\frac{a}{\sigma}-\frac{2\sigma b}{a}\biggr) \quad\text{when }b> 1/2.
    \end{align*}
\end{lemma}
\begin{proof}
    Since $d_\mathrm{K}$ is translation invariant, we may assume without loss of generality that $\theta_1 = -a$ and $\theta_2 = a$. By Proposition~\ref{prop:univariate-realisability}, if $R_\ell \in \mathcal{R}(\theta_\ell)$ for $\ell\in\{1,2\}$, then each admits a density $h_{\ell} :\mathbb{R}_\star \to \mathbb{R}$ with respect to the extended Lebesgue measure $\lambda_\star$ such that $h_\ell(x) / \phi_{(\theta_\ell,\sigma)}(x) \in [q(1-\epsilon),\, q(1-\epsilon) + \epsilon]$ for all $x\in\mathbb{R}$. Let $\tau \coloneqq \frac{\sigma^2}{2a} \cdot \log\bigl( 1 + \frac{\epsilon}{q(1-\epsilon)} \bigr) \leq  \frac{\sigma^2 b}{a}$,
    so that $q(1-\epsilon)\phi_{(-a,\sigma)}(-\tau) = \{q(1-\epsilon)+\epsilon\} \phi_{(a,\sigma)}(-\tau)$, see Figure~\ref{fig:gaussian-realisable-lb}.
    
    When $b\leq 1/2$,
    \begin{align*}
        d_{\mathrm{K}}\bigl(\mathcal{R}(\theta_1),\mathcal{R}(\theta_2)\bigr) &= \inf_{R_1\in\mathcal{R}(\theta_1),\, R_2\in\mathcal{R}(\theta_2)} \sup_{A\in\mathcal{A}} |R_1(A)-R_2(A)|\\
        &\geq
        \inf_{R_1\in\mathcal{R}(\theta_1),\, R_2\in\mathcal{R}(\theta_2)} \bigl\{R_1\bigl((-\infty,-\sigma^2b/a]\bigr) - R_2\bigl((-\infty,-\sigma^2b/a]\bigr)\bigr\}\\
        &\geq q(1-\epsilon) \cdot \Phi_{(\theta_1,\sigma)}(-\sigma^2b/a) - \{q(1-\epsilon) + \epsilon\}\cdot \Phi_{(\theta_2,\sigma)}(-\sigma^2b/a)\\
        &= q(1-\epsilon) \cdot \Phi\biggl(\frac{a}{\sigma}-\frac{\sigma b}{a}\biggr) - \{q(1-\epsilon) + \epsilon\}\cdot \Phi\biggl(-\frac{a}{\sigma}-\frac{\sigma b}{a}\biggr) \eqqcolon f_{\mathrm{K},b}(a).
    \end{align*}
    Now $f_{\mathrm{K},b}$ is continuously differentiable, with
    \begin{align*}
    f_{\mathrm{K},b}'(a) &= q(1 \!-\! \epsilon) \cdot \biggl(\frac{1}{\sigma} \!+\! \frac{\sigma b}{a^2}\biggr) \phi\biggl(\frac{a}{\sigma} \!-\! \frac{\sigma b}{a}\biggr) - \bigl\{q(1 - \epsilon) + \epsilon \bigr\}\cdot \Bigl(-\frac{1}{\sigma} + \frac{\sigma b}{a^2}\Bigr) \phi\biggl(-\frac{a}{\sigma}-\frac{\sigma b}{a}\biggr)\\
    &> \biggl(\frac{1}{\sigma} + \frac{\sigma b}{a^2}\biggr) \cdot \sigma \biggl\{ q(1 \!-\! \epsilon)  \cdot \phi_{(-a,\sigma)}\biggl(-\frac{\sigma^2 b}{a}\biggr) - \bigl\{q(1 - \epsilon) + \epsilon \bigr\}\cdot \phi_{(a,\sigma)}\biggl(-\frac{\sigma^2 b}{a}\biggr) \biggr\} \\
    &\geq \biggl(\frac{1}{\sigma} + \frac{\sigma b}{a^2}\biggr) \cdot \sigma \Bigl( q(1 - \epsilon)  \cdot \phi_{(-a,\sigma)}(-\tau) - \bigl\{q(1 - \epsilon) + \epsilon \bigr\}\cdot \phi_{(a,\sigma)}(-\tau) \Bigr) = 0,
    \end{align*}
    so that $f_{\mathrm{K},b}$ is strictly increasing as well.
    Moreover,
    \begin{align*}
        f_{\mathrm{K},b}(a) &= q(1-\epsilon) \cdot \biggl\{ \Phi\biggl(\frac{a}{\sigma}-\frac{\sigma b}{a}\biggr) - \Phi\biggl(-\frac{a}{\sigma}-\frac{\sigma b}{a}\biggr) \biggr\} - \epsilon\cdot \Phi\biggl(-\frac{a}{\sigma}-\frac{\sigma b}{a}\biggr)\\
        &\geq q(1-\epsilon) \cdot \frac{2a}{\sigma} \cdot \phi\biggl(\frac{a}{\sigma}+\frac{\sigma b}{a}\biggr) - \epsilon\cdot \Phi\biggl(-\frac{a}{\sigma}-\frac{\sigma b}{a}\biggr), \numberthis \label{eq:kolmogorov-distance-lb2}
    \end{align*}
    where the final inequality follows from the mean value theorem $\Phi\bigl(\frac{a}{\sigma}-\frac{\sigma b}{a}\bigr) - \Phi\bigl(-\frac{a}{\sigma}-\frac{\sigma b}{a}\bigr) = \frac{2a}{\sigma}\cdot\phi(x') \geq \frac{2a}{\sigma} \cdot \phi\bigl(\frac{a}{\sigma}+\frac{\sigma b}{a}\bigr)$, where $x'\in \bigl[-\frac{a}{\sigma}-\frac{\sigma b}{a},\, \frac{a}{\sigma}-\frac{\sigma b}{a}\bigr]$. Next notice that \begin{align*}
        \epsilon\cdot \Phi\biggl(-\frac{a}{\sigma}-\frac{\sigma b}{a}\biggr) \leq \frac{\epsilon}{a/\sigma+ \sigma b/a} \cdot \phi\biggl(\frac{a}{\sigma}+\frac{\sigma b}{a}\biggr) &\leq \frac{\epsilon a}{\sigma b} \cdot \phi\biggl(\frac{a}{\sigma}+\frac{\sigma b}{a}\biggr)\\
        &\leq q(1-\epsilon) \cdot \frac{a}{\sigma} \cdot \phi\biggl(\frac{a}{\sigma}+\frac{\sigma b}{a}\biggr),\numberthis \label{eq:kolmogorov-distance-lb3}
    \end{align*}
    where the first inequality follows from the Mills ratio bound $\Phi(-x) \leq \phi(x)/x$ for $x > 0$, and the final inequality follows from the fact that $\log(1+x) \geq x/2$ for $x \in [0,2]$, so that $b=\frac{1}{2}\log\bigl(1+\frac{4\epsilon}{q(1-\epsilon)}\bigr) \geq \frac{\epsilon}{q(1-\epsilon)}$. Therefore, by~\eqref{eq:kolmogorov-distance-lb2} and~\eqref{eq:kolmogorov-distance-lb3} we deduce that \begin{align*}
         f_{\mathrm{K},b}(a) \geq q(1-\epsilon) \cdot \frac{a}{\sigma} \cdot \phi\biggl(\frac{a}{\sigma}+\frac{\sigma b}{a}\biggr),
    \end{align*}
    when $b\leq 1/2$.

    On the other hand, when $b> 1/2$,
    \begin{align*}
        d_{\mathrm{K}}\bigl(\mathcal{R}(\theta_1),&\mathcal{R}(\theta_2)\bigr)    \geq
        \inf_{R_1\in\mathcal{R}(\theta_1),\, R_2\in\mathcal{R}(\theta_2)} \bigl\{R_1\bigl((-\infty,-2\sigma^2b/a]\bigr) - R_2\bigl((-\infty,-2\sigma^2b/a]\bigr)\bigr\}\\
        &\geq q(1-\epsilon) \cdot \Phi_{(\theta_1,\sigma)}(-2\sigma^2b/a) - \{q(1-\epsilon) + \epsilon\}\cdot \Phi_{(\theta_2,\sigma)}(-2\sigma^2b/a)\\
        &= q(1-\epsilon) \cdot \Phi\biggl(\frac{a}{\sigma}-\frac{2\sigma b}{a}\biggr) - \{q(1-\epsilon) + \epsilon\}\cdot \Phi\biggl(-\frac{a}{\sigma}-\frac{2\sigma b}{a}\biggr) \eqqcolon f_{\mathrm{K},b}(a).
    \end{align*}
    Similarly to the previous case, $f_{\mathrm{K},b}$ is continuously differentiable and strictly increasing.
\end{proof}

\begin{proof}[Proof of Theorem~\ref{thm:one-dim-kolmogorov-estimator}]
    We first derive an upper bound on $d_{\mathrm{K}} \bigl(\hat{R}_n, \mathcal{R}(\theta_0)\bigr)$. Let $\mathcal{D} \coloneqq \{i \in [n] : Z_i \neq \star\}$ and $\bar{q} \coloneqq \mathbb{P}(Z_1 \neq \star)$, so that with the convention that $0/0 \coloneqq 0$,
    \begin{align*}
        \sup_{A\in\mathcal{A}} |\hat{R}_n(A) - R(A)| &= \sup_{A\in\mathcal{A}} \biggl| \frac{|\mathcal{D}|}{n} \cdot \frac{1}{\lvert \mathcal{D} \rvert}\sum_{i\in\mathcal{D}} \mathbbm{1}_{\{Z_i \in A\}}- \bar{q} \cdot \mathbb{P}(Z_1 \in A | Z_1 \neq \star) \biggr|\\
        &\leq \frac{|\mathcal{D}|}{n} \cdot \sup_{A\in\mathcal{A}} \biggl| \frac{1}{\lvert \mathcal{D} \rvert}\sum_{i\in\mathcal{D}} \mathbbm{1}_{\{Z_i \in A\}} - \mathbb{P}(Z_1 \in A | Z_1 \neq \star) \biggr| + \biggl| \frac{|\mathcal{D}|}{n} - \bar{q} \biggr|. \numberthis \label{eq:dkw-decomposition}
    \end{align*}
Now, since $\bar{q} \geq q(1-\epsilon)$, we have by our lower bound on $\delta$ that
\[
\log\Bigl(\frac{4}{\delta}\Bigr) \leq \frac{\bigl\{ nq(1-\epsilon) \bigr\}^{1-\xi}}{6400} \leq \frac{nq(1-\epsilon)}{6400} \leq \frac{n\bar{q}}{6400}.
\]
Hence, by Bernstein's inequality~\citep[Theorem 2.8.4]{vershynin2018high}, with probability at least $1 - \delta/2$ that
    \begin{align}
        \biggl| \frac{|\mathcal{D}|}{n} - \bar{q} \biggr| 
        \leq \sqrt{\frac{4\bar{q}\log(4/\delta)}{n}} <\bar{q}. \label{eq:upper-bound-observed-proportion}
    \end{align}
    Furthermore, by the Dvoretzky--Kiefer--Wolfowitz--Massart--Reeve inequality \citep{massart1990tight, reeve2024short}, 
    \begin{align}
        \sup_{A\in\mathcal{A}} \biggl| \frac{1}{\lvert \mathcal{D} \rvert} \sum_{i\in\mathcal{D}} \mathbbm{1}_{\{Z_i \in A\}}- \mathbb{P}(Z_1 \in A \,\vert\, Z_1 \neq \star) \biggr| \leq \sqrt{\frac{\log(4/\delta)}{2|\mathcal{D}|}}, \label{eq:dkw-observed-part}
    \end{align}
    with probability at least $1 - \delta/2$.  Combining~\eqref{eq:dkw-decomposition}, \eqref{eq:upper-bound-observed-proportion} and~\eqref{eq:dkw-observed-part} we deduce that, with probability at least $1 - \delta$, 
    \begin{align}
        d_{\mathrm{K}} \bigl(\hat{R}_n, \mathcal{R}(\theta_0)\bigr) \leq \sup_{A\in\mathcal{A}} |\hat{R}_n(A) - R(A)| &\leq \sqrt{\frac{\lvert \mathcal{D} \rvert}{n}} \cdot \sqrt{\frac{\log(4/\delta)}{2 n}} + \sqrt{\frac{4\bar{q}\log(4/\delta)}{n}}\nonumber\\
        &\leq \sqrt{\frac{\bar{q} \log(4/\delta)}{n}} + \sqrt{\frac{4\bar{q}\log(4/\delta)}{n}}\nonumber\\
        &\leq 3\sqrt{\frac{\{q(1-\epsilon)+\epsilon\}\log(4/\delta)}{n}} \eqqcolon r_n. \label{eq:kolmogorov-distance-1-dim}
    \end{align}

    We now work on the event $\mathcal{E} \coloneqq \bigl\{ d_{\mathrm{K}} \bigl(\hat{R}_n, \mathcal{R}(\theta_0)\bigr) \leq r_n \bigr\}$, which occurs with probability at least $1-\delta$ by~\eqref{eq:kolmogorov-distance-1-dim}.
    If $\theta\in\mathbb{R}$ satisfies $d_{\mathrm{K}}\bigl(\mathcal{R}(\theta), \mathcal{R}(\theta_0)\bigr) > 2r_n$, then on the event $\mathcal{E}$, 
    \begin{align*}
        d_{\mathrm{K}}\bigl(\hat{R}_n, \mathcal{R}(\theta)\bigr) \geq d_{\mathrm{K}}\bigl(\mathcal{R}(\theta), \mathcal{R}(\theta_0)\bigr) - d_{\mathrm{K}} \bigl(\hat{R}_n, \mathcal{R}(\theta_0)\bigr) > r_n \geq d_{\mathrm{K}} \bigl(\hat{R}_n, \mathcal{R}(\theta_0)\bigr),
    \end{align*}
    so $\hat{\theta}_n^{\mathrm{K}} \neq \theta$.  Therefore, with $f_{\mathrm{K}, b}$ as defined in Lemma~\ref{lemma:one-dim-kolmogorov-distance-realisable-sets} and $b \coloneqq \frac{1}{2}\log\bigl( 1+ \frac{4\epsilon}{q(1-\epsilon)}\bigr)$, we deduce that on $\mathcal{E}$,
    \begin{align}
        |\hat{\theta}_n^{\mathrm{K}} - \theta_0| &\leq \sup \bigl\{ |\theta-\theta_0| : \theta\in\mathbb{R},\, d_{\mathrm{K}}\bigl(\mathcal{R}(\theta), \mathcal{R}(\theta_0)\bigr) \leq 2r_n \bigr\} \nonumber \\
        &\leq 2\sup \bigl\{ a\geq 0 : f_{\mathrm{K},b}(a) \leq 2r_n \bigr\} 
        = 2\inf \bigl\{ a\geq 0 : f_{\mathrm{K},b}(a) \geq 2r_n \bigr\}, \label{eq:one-dim-gaussian-realisable-proof-ub1}
    \end{align}
    where the second inequality follows since by Lemma~\ref{lemma:one-dim-kolmogorov-distance-realisable-sets}, $d_{\mathrm{K}}\bigl(\mathcal{R}(\theta), \mathcal{R}(\theta_0)\bigr) \geq f_{\mathrm{K},b}\bigl( \frac{|\theta-\theta_0|}{2} \bigr)$, and the final equality follows since $f_{\mathrm{K},b}$ is a strictly increasing and continuous function.
    
    When $b\leq 1/2$, we have by~\eqref{eq:one-dim-gaussian-realisable-proof-ub1} and Lemma~\ref{lemma:one-dim-kolmogorov-distance-realisable-sets} that on $\mathcal{E}$,
    \begin{align*}
        |\hat{\theta}_n^{\mathrm{K}} - \theta_0| 
        &\leq 2\inf \bigl\{a\geq 0 : f_{\mathrm{K},b}(a) \geq 2r_n \bigr\}\\
        &\leq 2\inf \biggl\{a\geq 0: q(1-\epsilon) \cdot \frac{a}{\sigma} \cdot \phi\biggl(\frac{a}{\sigma}+\frac{\sigma b}{a}\biggr) \geq 6\sqrt{\frac{\{q(1-\epsilon)+\epsilon\}\log(4/\delta)}{n}} \biggr\}\\
        &= 2\sigma \inf \Biggl\{ a\geq 0 : a \cdot \phi\biggl(a+\frac{b}{a}\biggr) \geq \sqrt{\biggl(1+\frac{\epsilon}{q(1-\epsilon)}\biggr) \cdot \frac{36\log(4/\delta)}{nq(1-\epsilon)}} \Biggr\}. \numberthis \label{eq:one-dim-gaussian-realisable-proof-ub2}
    \end{align*}

    Now suppose further that $b\leq \sqrt{\frac{\log(4/\delta)}{nq(1-\epsilon)}}$. The assumption on $\delta$ means that $b\leq \sqrt{\frac{\log(4/\delta)}{nq(1-\epsilon)}} \leq 1/80$ and thus $1+\frac{4\epsilon}{q(1-\epsilon)} < 5/4$. Let $a \coloneqq 20 \sqrt{\frac{\log(4/\delta)}{nq(1-\epsilon)}}$, so that $a\leq 1/4$.  Moreover, $b/a \leq 1/20$,  so $a + b/a \leq 3/10$.  Therefore, 
    \begin{align*}
        a\cdot \phi(a+b/a) \geq 20 \sqrt{\frac{\log(4/\delta)}{nq(1-\epsilon)}} \cdot \phi(3/10)
        &\geq \sqrt{\frac{5}{4} \cdot \frac{36\log(4/\delta)}{nq(1-\epsilon)}}\\
        &\geq \sqrt{\biggl(1+\frac{\epsilon}{q(1-\epsilon)}\biggr) \cdot \frac{36\log(4/\delta)}{nq(1-\epsilon)}}.
    \end{align*}
    Hence, by~\eqref{eq:one-dim-gaussian-realisable-proof-ub2}, we have on $\mathcal{E}$ that $|\hat{\theta}_n^{\mathrm{K}} - \theta_0| \leq 40\sigma \sqrt{\frac{\log(4/\delta)}{nq(1-\epsilon)}}$ when $b\leq \sqrt{\frac{\log(4/\delta)}{nq(1-\epsilon)}}$.

    Next, we consider the case $\sqrt{\frac{\log(4/\delta)}{nq(1-\epsilon)}} < b \leq 2\sqrt{\frac{\log(4/\delta)}{(nq(1-\epsilon))^{1-\xi}}}$.  Then $b\leq 1/40$ and we again have $1+\frac{4\epsilon}{q(1-\epsilon)} < 5/4$.  Let $a \coloneqq 20b$, so that $a \leq 1/2$. Then
    \begin{align*}
        a\cdot \phi(a+b/a) > 20\sqrt{\frac{\log(4/\delta)}{nq(1-\epsilon)}} \cdot \phi\biggl( \frac{1}{2} + \frac{1}{20} \biggr)
        &\geq \sqrt{\frac{5}{4} \cdot \frac{36\log(4/\delta)}{nq(1-\epsilon)}}\\
        &\geq \sqrt{\biggl(1+\frac{\epsilon}{q(1-\epsilon)}\biggr) \cdot \frac{36\log(4/\delta)}{nq(1-\epsilon)}}.
    \end{align*}
    Hence, by~\eqref{eq:one-dim-gaussian-realisable-proof-ub2}, when $\sqrt{\frac{\log(4/\delta)}{nq(1-\epsilon)}} < b \leq 2\sqrt{\frac{\log(4/\delta)}{(nq(1-\epsilon))^{1-\xi}}}$ we have on $\mathcal{E}$ that $|\hat{\theta}_n^{\mathrm{K}} - \theta_0| \leq 40\sigma b$. 

    As our third case, assume that $2\sqrt{\frac{ \log(4/\delta)}{\{nq(1-\epsilon)\}^{1-\xi}}} < b \leq 1/2$.  We define $a \coloneqq \frac{16b}{\sqrt{\xi \log(nq(1-\epsilon))}}$, so that $a\geq 32\sqrt{\frac{\log(4/\delta)}{(nq(1-\epsilon))^{1-\xi/2}}}$ using the fact that $x^{\xi/2} \geq \xi\log x$ for $x\in(0,\infty)$. By the assumption~\eqref{Eq:bupperbound}, we have $b\leq \frac{7\xi}{256}\log\bigl(nq(1-\epsilon)\bigr)$, so $a\leq 7b/a$. Therefore, 
    \begin{align*}
        a\cdot \phi(a+b/a) &\geq 32 \sqrt{\frac{\log(4/\delta)}{\bigl\{nq(1-\epsilon)\bigr\}^{1-\xi/2}}} \cdot \phi(8b/a)\\
        &= 32\sqrt{\frac{\log(4/\delta)}{ \bigl\{nq(1-\epsilon)\bigr\}^{1-\xi/2}}} \cdot \frac{1}{\sqrt{2\pi}} \cdot \exp\biggl\{ -\frac{\xi\log\bigl(nq(1-\epsilon)\bigr)}{8} \biggr\}\\
        &= \sqrt{\frac{512\log(4/\delta)}{\pi \bigl\{nq(1-\epsilon)\bigr\}^{1-\xi/4}}} \geq \sqrt{\biggl(1+\frac{\epsilon}{q(1-\epsilon)}\biggr) \cdot \frac{36\log(4/\delta)}{nq(1-\epsilon)}},
    \end{align*}
    where the final inequality holds since $\log\bigl(1+\frac{\epsilon}{q(1-\epsilon)}\bigr)\leq \frac{7\xi\log(nq(1-\epsilon))}{128}$. 
    Hence, by~\eqref{eq:one-dim-gaussian-realisable-proof-ub2}, we have on $\mathcal{E}$ that $|\hat{\theta}_n^{\mathrm{K}} - \theta_0| \leq \frac{32\sigma b}{\sqrt{\xi \log(nq(1-\epsilon))}}$ when $2\sqrt{\frac{\log(4/\delta)}{(nq(1-\epsilon))^{1-\xi}}} < b \leq 1/2$.

    Finally, consider the case where $b > 1/2$ (when this interval is not vacuous).  Then by~\eqref{eq:one-dim-gaussian-realisable-proof-ub1} and Lemma~\ref{lemma:one-dim-kolmogorov-distance-realisable-sets} we have that on $\mathcal{E}$,
    \begin{align*}
        |\hat{\theta}_n^{\mathrm{K}} - \theta_0| 
        &\leq 2\inf \bigl\{ a \geq 0 : f_{\mathrm{K},b}(a) \geq 2r_n \bigr\}\\
        &\leq 2\inf \biggl\{ a \geq 0: q(1-\epsilon) \cdot \Phi\biggl(\frac{a}{\sigma}-\frac{2\sigma b}{a}\biggr) - \{q(1-\epsilon) + \epsilon\}\cdot \Phi\biggl(-\frac{a}{\sigma}-\frac{2\sigma b}{a}\biggr)\\
        &\hspace{7cm}\geq \sqrt{\frac{36\{q(1-\epsilon)+\epsilon\}\log(4/\delta)}{n}} \biggr\}. \numberthis \label{eq:one-dim-gaussian-realisable-proof-ub3}
    \end{align*}
    Letting $a \coloneqq \frac{3\sigma b}{\sqrt{\xi\log(nq(1-\epsilon))}}$, we have
    \begin{align*}
        q(1 &-\epsilon) \cdot \Phi\biggl(\frac{a}{\sigma}-\frac{2\sigma b}{a}\biggr) - \{q(1-\epsilon) + \epsilon\}\cdot \Phi\biggl(-\frac{a}{\sigma}-\frac{2\sigma b}{a}\biggr)\\
        \overset{(i)}&{\geq} \frac{q(1-\epsilon)}{\bigl(-\frac{a}{\sigma}+\frac{2\sigma b}{a}\bigr) + \bigl(-\frac{a}{\sigma}+\frac{2\sigma b}{a}\bigr)^{-1}} \cdot \phi\biggl(-\frac{a}{\sigma}+\frac{2\sigma b}{a}\biggr) - \frac{q(1-\epsilon)+\epsilon}{\frac{a}{\sigma}+\frac{2\sigma b}{a}} \cdot \phi\biggl(\frac{a}{\sigma}+\frac{2\sigma b}{a}\biggr)\\
        \overset{(ii)}&{\geq} \biggl( \frac{a}{\sigma}+\frac{2\sigma b}{a} \biggr)^{-1} \frac{1}{\sqrt{2\pi}} \cdot \biggl\{ q(1-\epsilon) \exp\biggl( -\frac{a^2}{2\sigma^2} - \frac{2\sigma^2 b^2}{a^2} + 2b \biggr) \\
        &\hspace{6cm}  - \bigl\{q(1-\epsilon)+\epsilon\bigr\} \exp\biggl( -\frac{a^2}{2\sigma^2} - \frac{2\sigma^2 b^2}{a^2} - 2b \biggr) \biggr\}\\
        \overset{(iii)}&{\geq} \biggl( \frac{a}{\sigma}+\frac{2\sigma b}{a} \biggr)^{-1} \frac{4\epsilon}{\sqrt{2\pi}} \cdot \exp\biggl( -\frac{a^2}{2\sigma^2} - \frac{2\sigma^2 b^2}{a^2}\biggr)\\
        \overset{(iv)}&{\geq} \frac{1}{\sqrt{\xi\log\bigl( nq(1-\epsilon) \bigr)}} \cdot \frac{4\epsilon}{\sqrt{2\pi}} \cdot \bigl( nq(1-\epsilon) \bigr)^{-\xi/4}\\
        \overset{(v)}&{\geq} \sqrt{\frac{36\{q(1-\epsilon)+\epsilon\}\log(4/\delta)}{n}} = 2r_n.
    \end{align*}
    Here, $(i)$ follows from the Mills ratio bound $\phi(x)/(x + x^{-1}) \leq \Phi(-x) \leq \phi(x)/x$ for $x > 0$; $(ii)$ follows since  $\bigl(-\frac{a}{\sigma}+\frac{2\sigma b}{a}\bigr) + \bigl(-\frac{a}{\sigma}+\frac{2\sigma b}{a}\bigr)^{-1} \leq \frac{a}{\sigma}+\frac{2\sigma b}{a}$ whenever $1/2 < b \leq \frac{\xi\log(nq(1-\epsilon))}{9}$; $(iii)$ follows by substituting the definition of $b$; $(iv)$ follows since, by assumption $b\leq \frac{7\xi\log(nq(1-\epsilon))}{256}$, so $\frac{a^2}{\sigma^2} \leq \xi\log\bigl(nq(1-\epsilon)\bigr)/100$; and $(v)$ follows from the assumptions that $b > 1/2$ so $q(1-\epsilon) < 3\epsilon$, the fact that $x^{\xi/2} \geq \xi\log x$ for $x\in(0,\infty)$ and the assumption~\eqref{Eq:deltalowerbound}. Hence, by~\eqref{eq:one-dim-gaussian-realisable-proof-ub3}, we have on $\mathcal{E}$ that $|\hat{\theta}_n^{\mathrm{K}} - \theta_0| \leq \frac{6\sigma b}{\sqrt{\xi\log(nq(1-\epsilon))}}$ when $b > 1/2$. Combining all four cases yields the desired result.
\end{proof}

\subsubsection{Proof of Theorem~\ref{thm:multivariate-kolmogorov-estimator}}

\begin{lemma} \label{lemma:realisability-of-projection}
    Let $\epsilon\in[0,1)$, $\pi\in\mathcal{P}\bigl(\{\emptyset,[d]\}\bigr)$, $P\in\mathcal{P}(\mathbb{R}^d)$, $R\in\mathcal{R}_{\emptyset,[d]}(P,\epsilon,\pi)$ and $v\in\mathbb{R}^d$. Suppose that $X\sim P$, $Z\sim R$ and define $Z^{(v)} \coloneqq v^\top Z \cdot \mathbbm{1}_{\{Z\in\mathbb{R}^d\}} + \star \cdot \mathbbm{1}_{\{Z\notin\mathbb{R}^d\}}$ for $v\in\mathbb{R}^d$. Then, writing $P^{(v)} \coloneqq \mathsf{Law}(v^\top X)$ and $R^{(v)}\coloneqq \mathsf{Law}(Z^{(v)})$, we have $R^{(v)} \in \mathcal{R}\bigl( P^{(v)}, \epsilon, \pi([d]) \bigr)$.
\end{lemma}
\begin{proof}
    Let $q \coloneqq \pi([d])$.  We have $\mathsf{Law}(Z) = (1-\epsilon)\mathsf{Law}(X \ostar \Omega^{(1)}) + \epsilon \mathsf{Law}(X \ostar \Omega^{(2)})$ where $\Omega^{(1)} \indep X$ and $\mathbb{P}(\Omega^{(1)} = \bm{1}_{[d]}) = q = 1 - \mathbb{P}(\Omega^{(1)} = 0)$ and where $\Omega^{(2)}$ takes values in $\{0,\bm{1}_{[d]}\}$.  By properties of disintegrations (see Section~\ref{sec:disintegration}), we may define $m^{(v)}:\mathbb{R} \rightarrow [0,1]$ by $m^{(v)}(y) \coloneqq \mathbb{P}(\Omega^{(2)} = \bm{1}_{[d]} \,|\, v^\top X=y)$.  We also let $\mu^{(v)}$ be a $\sigma$-finite measure on $\mathbb{R}$ such that $P^{(v)}\ll\mu^{(v)}$ and let $p^{(v)}\coloneqq \mathrm{d}P^{(v)}/\mathrm{d}\mu^{(v)}$.  Finally, define $g:\mathbb{R}_\star \rightarrow [0,\infty)$ by
    \[
    g(z) \coloneqq \begin{cases}
    q(1 - \epsilon) p^{(v)}(z) + \epsilon m^{(v)}(z) p^{(v)}(z) & \text{ if } z \in \mathbb{R}\\
    1 - q(1-\epsilon) - \epsilon \int_{\mathbb{R}} m^{(v)}(y) p^{(v)}(y)\, \mathrm{d}\mu^{(v)}(y) & \text{ if } z = \star. 
    \end{cases}
    \]
    Then, for $A\in\mathcal{B}(\mathbb{R})$, we have
    \begin{align*}
    \int_A g(z) \, \mathrm{d}\mu^{(v)}_\star(z) &= q(1-\epsilon)\mathbb{P}(v^\top X \in A) + \epsilon \mathbb{P}\bigl(\{\Omega^{(2)} = \bm{1}_{[d]}\} \cap \{ v^\top X \in A\}\bigr)\\
    &= (1-\epsilon)\mathbb{P}\bigl( v^\top(X\ostar\Omega^{(1)}) \in A \bigr) + \epsilon\mathbb{P}\bigl( v^\top(X\ostar\Omega^{(2)}) \in A \bigr)\\
    &= \mathbb{P}(Z^{(v)} \in A) = R^{(v)}(A).
    \end{align*}
    It follows that $R^{(v)} \ll \mu^{(v)}_\star$, with Radon--Nikodym derivative $g$.  Hence, by Proposition~\ref{prop:univariate-realisability}, we have $R^{(v)} \in \mathcal{R}(P^{(v)}, \epsilon, q)$.
\end{proof}

\begin{proof}[Proof of Theorem~\ref{thm:multivariate-kolmogorov-estimator}]
    By Lemma~\ref{lemma:realisability-of-projection}, Theorem \ref{thm:one-dim-kolmogorov-estimator} and a union bound,
    \begin{align} \label{eq:union-bound-for-one-dim-Kolmogorov-estimators}
        \max_{v\in\mathcal{N}} \bigl(\hat{\theta}_n^{\mathrm{K}}(v) - v^\top \theta_0 \bigr)^2 \lesssim C_{n,q,\epsilon,\xi,\delta/9^d} \biggl\{ \frac{\|\Sigma\|_{\mathrm{op}}\bigl(d + \log(4/\delta)\bigr)}{nq(1-\epsilon)} +  \frac{\|\Sigma\|_{\mathrm{op}} \log^2\bigl( 1+\frac{4\epsilon}{q(1-\epsilon)} \bigr)}{\log\bigl(nq(1-\epsilon)\bigr)} \biggr\}
    \end{align}
    with probability at least $1-\delta$.  Next, since any $v \in \mathbb{S}^{d-1}$ can be written as $v = v_1 + v_2$, where $v_1 \in \mathcal{N}$ and $\|v_2\|_2 \leq 1/4$, we have 
    \begin{align*}
        \|\hat{\theta}_n^{\mathrm{MK}} - \theta_0\|_2 = \sup_{v\in\mathbb{S}^{d-1}} |v^\top\hat{\theta}_n^{\mathrm{MK}} - v^\top\theta_0 | \leq \max_{v\in\mathcal{N}} |v^\top\hat{\theta}_n^{\mathrm{MK}} - v^\top\theta_0 | + \frac{1}{4} \cdot \|\hat{\theta}_n^{\mathrm{MK}} - \theta_0\|_2,
    \end{align*}
    so
    \begin{align*}
         \|\hat{\theta}_n^{\mathrm{MK}} - \theta_0\|_2 \leq \frac{4}{3} \cdot \max_{v\in\mathcal{N}} |v^\top\hat{\theta}_n^{\mathrm{MK}} - v^\top\theta_0|.
    \end{align*}
    Hence, 
    \begin{align*}
        \|\hat{\theta}_n^{\mathrm{MK}} - \theta_0\|_2^2 &\leq 2 \max_{v\in\mathcal{N}}\, \bigl( v^\top \hat{\theta}_n^{\mathrm{MK}} -  \hat{\theta}_n^{\mathrm{K}}(v) +  \hat{\theta}_n^{\mathrm{K}}(v) - v^\top \theta_0 \bigr)^2\\
        &\leq 4\max_{v\in\mathcal{N}}\, \bigl( v^\top \hat{\theta}_n^{\mathrm{MK}} -  \hat{\theta}_n^{\mathrm{K}}(v) \bigr)^2 + 4\max_{v\in\mathcal{N}}\, \big( v^\top \theta_0 -  \hat{\theta}_n^{\mathrm{K}}(v) \big)^2\\
        &\leq 8\max_{v\in\mathcal{N}}\, \big( v^\top \theta_0 -  \hat{\theta}_n^{\mathrm{K}}(v) \big)^2\\
        &\lesssim C_{n,q,\epsilon,\xi,\delta/9^d} \biggl\{ \frac{\|\Sigma\|_{\mathrm{op}}\bigl(d+\log(4/\delta)\bigr)}{nq(1-\epsilon)} +  \frac{\|\Sigma\|_{\mathrm{op}} \log^2\bigl( 1+\frac{4\epsilon}{q(1-\epsilon)} \bigr)}{\log\bigl(nq(1-\epsilon)\bigr)} \biggr\},
    \end{align*}
    with probability at least $1-\delta$, where the third inequality follows from the definition of $\hat{\theta}_n^{\mathrm{MK}}$, and the last inequality follows from~\eqref{eq:union-bound-for-one-dim-Kolmogorov-estimators}.
\end{proof}

\subsubsection{Proof of Lemma~\ref{lemma:compute-kolmogorov-distance}}

\begin{proof}[Proof of Lemma~\ref{lemma:compute-kolmogorov-distance}]
    We first show that, for any $R\in\mathcal{R}(P, \epsilon, q)$, we have
    \begin{align*}
        d_{\mathrm{K}}(\hat{R}_n,R) = \max_{i\in \{0\} \cup [m]}\; \Bigl\{\Bigl|\frac{i}{n} - R\bigl((-\infty,Z_{(i)})\bigr) \Bigr| \vee \Bigl|\frac{i}{n} - R\bigl((-\infty,Z_{(i+1)})\bigr)\Bigr| \Bigr\}.
    \end{align*}
    To this end, fix $i \in \{0\} \cup [m]$. Then, since $\hat{R}_n\big((-\infty,t)\bigr) = i/n$ for $t \in [Z_{(i)},Z_{(i+1)}) \cap \mathbb{R}$, $t \mapsto R\bigl((-\infty,t]\bigr)$ is increasing on this interval and since $R \ll \lambda_{\star}$ by Proposition~\ref{prop:univariate-realisability}, we have
    \begin{align*}
    \sup_{t \in [Z_{(i)},Z_{(i+1)}) \cap \mathbb{R}} \bigl|\hat{R}_n\bigl((-\infty,t]\bigr) &- R\bigl((-\infty,t]\bigr)\bigr| \\
    &= \Bigl|\frac{i}{n} - R\bigl((-\infty,Z_{(i)})\bigr)\Bigr| \vee \lim_{t \nearrow Z_{(i+1)}} \Bigl|\frac{i}{n} \!-\! R\bigl((-\infty,t)\bigr)\Bigr| \\
    &= \Bigl|\frac{i}{n} - R\bigl((-\infty,Z_{(i)})\bigr)\Bigr| \vee \Bigl|\frac{i}{n} - R\bigl((-\infty,Z_{(i+1)})\bigr)\Bigr|.
    \end{align*}
    Hence
    \begin{align*}
    \sup_{t\in\mathbb{R}} \bigl|\hat{R}_n\bigl((-\infty, t]\bigr) &- R\bigl((-\infty,t]\bigr) \bigr|\\
    &= \max_{i\in \{0\} \cup [m]} \Bigl\{\Bigl |\frac{i}{n} - R\bigl((-\infty,Z_{(i)})\bigr) \Bigr| \vee \Bigl|\frac{i}{n} - R\bigl((-\infty, Z_{(i+1)})\bigr) \Bigr| \Bigr\}.
    \end{align*}
    Now, by Proposition~\ref{prop:univariate-realisability}, for $0 \leq V_1 \leq \ldots \leq V_{m+1} \leq 1$, there exists $R \in \mathcal{R}(P, \epsilon, q)$ such that $V_i = R\bigl( (-\infty,Z_{(i)}] \bigr)$ for $i\in[m]$ and $V_{m+1}= R\bigl((-\infty,\infty)\bigr)$ if and only if $(V_1,\ldots,V_{m+1})^\top \in \mathcal{V}$. The claim then follows.
\end{proof}

\subsection{Proofs from Section~\ref{sec:nonparametric-realisable}}

\subsubsection{Proof of Theorem~\ref{thm:one-dim-realisable-sample-mean-ub}}

The proof of Theorem~\ref{thm:one-dim-realisable-sample-mean-ub} relies on the following preliminary result, which controls the bias.
\begin{prop}\label{thm:bias-of-mean-one-dim-realisable-case}
    Let $\theta_0\in\mathbb{R}$, $\epsilon\in[0,1)$, $q\in(0,1]$ and $\sigma>0$.
    \begin{itemize}
        \item[(a)] Let $r \geq 2$, $P \in \mathcal{P}_{L^r}(\theta_0, \sigma^2)$ and $Z \sim R \in \mathcal{R}(P, \epsilon, q)$. Then 
        \begin{align*}
        \bigl\{ \mathbb{E}( Z \,|\, Z \neq \star) - \theta_0 \bigr\}^2 \leq \sigma^2 \cdot \biggl\{ \biggl(\frac{\epsilon}{q(1-\epsilon)}\biggr)^2 \wedge \biggl(\frac{\epsilon}{q(1-\epsilon)}\biggr)^{2/r} \biggr\}.
        \end{align*}
        \item[(b)] Let $r\geq 1$, $P \in \mathcal{P}_{\psi_r}(\theta_0, \sigma^2)$ and $Z\sim R \in \mathcal{R}(P, \epsilon, q)$. Then 
        \begin{align*}
        \bigl\{ \mathbb{E}( Z \,|\, Z \neq \star) - \theta_0 \bigr\}^2 \leq \sigma^2 \cdot \biggl\{ 4\biggl(\frac{\epsilon}{q(1-\epsilon)}\biggr)^2 \;\wedge\;  \log^{2/r} \biggl( 2 + \frac{2\epsilon}{q(1-\epsilon)} \biggr) \biggr\}.
        \end{align*}
        
    \end{itemize}
\end{prop}
\begin{proof}
    Let $\kappa \coloneqq \frac{\epsilon}{q(1-\epsilon)}$. By translation invariance, we may assume without loss of generality that $\theta_0 = 0$ throughout the proof.

    (a) Let $\mu$ be a measure on $\mathbb{R}$ such that $P \ll \mu$ and let $p\coloneqq \frac{\mathrm{d}P}{\mathrm{d}\mu}$, then by Proposition~\ref{prop:univariate-realisability}, we have \begin{align}
    \label{Eq:RRNDeriv}
        \frac{\mathrm{d}R}{\mathrm{d}\mu_\star}(z) = \begin{cases}
            q(1-\epsilon) \cdot p(z) + \epsilon\cdot  m(z)p(z) \quad&\text{if }z\in\mathbb{R}\\
            1- q(1-\epsilon) - \epsilon\int_{\mathbb{R}} m(x)p(x) \,\mathrm{d}\mu(x) &\text{if }z=\star,
        \end{cases}
    \end{align}
    for some Borel measurable function $m:\mathbb{R} \to [0,1]$. Therefore,
    \begin{align*}
        \big| \mathbb{E} (Z \,|\, Z \neq \star) \big| &= \frac{\bigl| q(1-\epsilon) \cdot \int_{\mathbb{R}} xp(x) \,\mathrm{d}\mu(x) + \epsilon\cdot \int_{\mathbb{R}} xm(x)p(x) \,\mathrm{d}\mu(x) \bigr|}{q(1-\epsilon) + \epsilon\int_{\mathbb{R}} m(x)p(x) \,\mathrm{d}\mu(x)}\\
        &= \frac{\epsilon \cdot \bigl|  \mathbb{E}_{P}\{Xm(X)\} \bigr|}{q(1-\epsilon) + \epsilon \cdot \mathbb{E}_{P}\{m(X)\}} \leq \frac{\epsilon \cdot \sigma \cdot \bigl\{\mathbb{E}_{P} \bigl(m^{r/(r-1)}(X)\bigr)\bigr\}^{1-1/r}}{q(1-\epsilon) + \epsilon \cdot \mathbb{E}_{P}\{m(X)\}},
    \end{align*}
    where the second equality follows from the assumption that $\theta_0=0$, and where the inequality follows from H\"{o}lder's inequality and the fact that $\mathbb{E}_{P}(|X|^r)^{1/r} \leq \sigma$. On the one hand, since $\bigl\{\mathbb{E}_{P} \bigl(m^{r/(r-1)}(X)\bigr)\bigr\}^{1-1/r} \leq 1$ and $\mathbb{E}_{P}\{m(X)\} \geq 0$, we have
    \begin{align} \label{ineq:holder-m-ineq1}
        \frac{\epsilon \cdot \bigl\{\mathbb{E}_{P} \bigl(m^{r/(r-1)}(X)\bigr)\bigr\}^{1-1/r}}{q(1-\epsilon) + \epsilon \cdot \mathbb{E}_{P}\{m(X)\}} \leq \kappa.
    \end{align}
    On the other hand, since $m(x)\in[0,1]$, we have $m^{r/(r-1)}(x) \leq m(x)$ for all $x\in\mathbb{R}$ and thus $\bigl\{\mathbb{E}_{P} \bigl(m^{r/(r-1)}(X)\bigr)\bigr\}^{1-1/r} \leq \bigl\{\mathbb{E}_{P}\bigl(m(X)\bigr)\bigr\}^{1-1/r} \eqqcolon t$. Therefore,
    \begin{align*}
        &\frac{\epsilon  \cdot \bigl\{\mathbb{E}_{P} \bigl(m^{r/(r-1)}(X)\bigr)\bigr\}^{1-1/r}}{q(1-\epsilon) + \epsilon \cdot \mathbb{E}_{P}\{m(X)\}} \leq \frac{\epsilon t}{q(1-\epsilon) + \epsilon t^{r/(r-1)}}\\
        &\qquad\qquad \leq \sup_{t'\geq 0} \frac{\epsilon t'}{q(1-\epsilon) + \epsilon (t')^{r/(r-1)}} \overset{(i)}{=} \frac{\epsilon \cdot \{(r-1)q(1-\epsilon)/\epsilon \}^{1-1/r}}{q(1-\epsilon)+(r-1)q(1-\epsilon)}\\
        &\qquad\qquad \leq (r-1)^{-1/r} \kappa^{1/r} \leq \kappa^{1/r}, \numberthis \label{ineq:holder-m-ineq2}
    \end{align*}
where $(i)$ follows from the fact that the function $t' \mapsto \frac{\epsilon t'}{q(1-\epsilon) + \epsilon (t')^{r/(r-1)}}$ is maximised when $t'= \{(r-1)q(1-\epsilon)/\epsilon \}^{1-1/r}$. Combining~\eqref{ineq:holder-m-ineq1} and~\eqref{ineq:holder-m-ineq2}, we deduce that
    \begin{align*}
        \big| \mathbb{E} (Z \,|\, Z \neq \star) \big| \leq \frac{\epsilon \cdot \sigma \cdot \bigl\{\mathbb{E}_{P} \bigl(m^{r/(r-1)}(X)\bigr)\bigr\}^{1-1/r}}{q(1-\epsilon) + \epsilon \cdot \mathbb{E}_{P}\{m(X)\}} \leq \sigma (\kappa \wedge \kappa^{1/r}),
    \end{align*}
    as desired.

    (b) Let $Q \in \mathcal{P}(\mathbb{R})$ such that $Q\ll P$. 
 By the variational characterisation of Kullback--Leibler divergence \citep[e.g.][Corollary 4.15]{boucheron2003concentration},
 \begin{align}\label{eq:variational-principle-KL}
        \mathbb{E}_{X\sim Q}\bigl( g(X) \bigr) \leq \mathrm{KL}(Q,P) + \log \mathbb{E}_{X\sim P}\bigl( e^{g(X)} \bigr),
    \end{align}
    for all Borel measurable functions $g:\mathbb{R} \to [0,\infty)$.  Now take $Q$ to be the conditional distribution of $Z$ given $\{Z\neq\star\}$.  Let $\mu$ and $p$ be as in the proof of~(a), so that~\eqref{Eq:RRNDeriv} holds for some Borel measurable function $m:\mathbb{R} \to [0,1]$.
    Therefore, for all $x \in \mathbb{R}$,
    \begin{align*}
        \frac{\mathrm{d}Q}{\mathrm{d}\mu}(x) = 
        \frac{q(1-\epsilon) \cdot p(x) + \epsilon\cdot  m(x)p(x)}{q(1-\epsilon) + \epsilon \cdot \int_{\mathbb{R}}m(y)p(y)\,\mathrm{d}\mu(y)}.
    \end{align*}
    Hence $Q\ll P$ and
    \begin{align}
        \frac{\mathrm{d}Q}{\mathrm{d}P}(x) \in \biggl[ 1-\frac{\epsilon}{q(1-\epsilon)+\epsilon},\, 1+\frac{\epsilon}{q(1-\epsilon)} \biggr], \label{Eq:RRNDerivBound}
    \end{align}
    for all $x \in \mathbb{R}$, from which we deduce that 
    \begin{align}
    \label{eq:KLbound}
        \mathrm{KL}(Q,P) = \int_{\mathbb{R}} \log\biggl( \frac{\mathrm{d}Q}{\mathrm{d}P}\biggr) \, \mathrm{d}Q \leq \log ( 1 + \kappa).
    \end{align}
    Taking $g(\cdot) = |\cdot|^r/\sigma^r$ and combining~\eqref{eq:variational-principle-KL} and~\eqref{eq:KLbound} yields
    \begin{align}
        \mathbb{E} \bigl( |Z|^r/\sigma^r \,\big|\, Z \neq \star \bigr) &\leq \log( 1+\kappa) + \log \mathbb{E}_{X\sim P} \bigl\{ \exp\bigl( |X|^r/\sigma^r \bigr) \bigr\} \nonumber\\
        &\leq \log( 1+\kappa) + \log2 = \log( 2+2\kappa), \label{eq:expectation-of-g(Z)}
    \end{align}
    where the second inequality follows since $P \in \mathcal{P}_{\psi_r}(\theta_0, \sigma^2)$ and since $\theta_0 = 0$ by assumption. Thus, 
    \begin{align}
        \bigl| \mathbb{E} (Z \,|\, Z \neq \star) \bigr| \leq \mathbb{E} \bigl( |Z| \,\big|\, Z \neq \star \bigr) \leq \mathbb{E} \bigl( |Z|^r \,\big|\, Z \neq \star \bigr)^{1/r} \leq \sigma\log^{1/r}( 2+2\kappa), \label{eq:psi-r-bias-1}
    \end{align}
    where the second inequality follows from the conditional version of Jensen's inequality and the third inequality follows from \eqref{eq:expectation-of-g(Z)}. Moreover, by \citet[Lemma A.2]{gotze2021concentration}, we have $\Var_{X\sim P}(X)^{1/2} \leq 2\bigl(\frac{2}{re}\bigr)^{1/r}\sigma \leq 2\sigma$ for $r\geq 1$. Hence $P \in \mathcal{P}_{L^2}(0,4\sigma^2)$, so we can apply part (a) of the theorem to obtain 
    \begin{align}
        \big| \mathbb{E} \bigl( Z \,\big|\, Z \neq \star \bigr) \big| \leq 2\sigma\kappa. \label{eq:psi-r-bias-2}
    \end{align}
    Combining~\eqref{eq:psi-r-bias-1} and~\eqref{eq:psi-r-bias-2} proves part (b). \medskip
\end{proof}
\begin{proof}[Proof of Theorem~\ref{thm:one-dim-realisable-sample-mean-ub}]
    Let $\kappa \coloneqq \frac{\epsilon}{q(1-\epsilon)}$.

    (a) Let $\mu$ and $p$ be as in the proof of Proposition~\ref{thm:bias-of-mean-one-dim-realisable-case}(a), so that~\eqref{Eq:RRNDeriv} holds for some Borel measurable function $m:\mathbb{R} \to [0,1]$.  
    On the one hand, since $m(X) \in [0,1]$, we have
    \begin{align*}
        \Var(Z_1 \,|\, Z_1 \neq \star) &= \Var(Z_1 - \theta_0 \,|\, Z_1 \neq \star) \leq \mathbb{E}\bigl\{ (Z_1 - \theta_0)^2 \,|\, Z_1 \neq \star \bigr\} \nonumber \\
        &= \frac{\int_{\mathbb{R}} (x-\theta_0)^2 \{q(1-\epsilon)p(x) + \epsilon m(x)p(x)\} \, \mathrm{d}\mu(x)}{q(1-\epsilon) + \epsilon \int_{\mathbb{R}} m(x)p(x) \, \mathrm{d}\mu(x)} \\
        &\leq \sigma^2 + \frac{\epsilon \cdot \mathbb{E}_P\{(X - \theta_0)^2 m(X)\}}{q(1 - \epsilon) + \epsilon \cdot \mathbb{E}_P\{m(X)\}} \leq (1+\kappa) \sigma^2. \numberthis \label{ineq:holder-m-ineq3}
    \end{align*}
    On the other hand, for $r > 2$, we have by H{\"o}lder's inequality that
    \begin{align*}
        \Var(Z_1 \,&|\, Z_1 \neq \star) \\
        &\leq \sigma^2 + \frac{\epsilon \cdot \mathbb{E}_P\{(X - \theta_0)^2 m(X)\}}{q(1 - \epsilon) + \epsilon \cdot \mathbb{E}_P\{m(X)\}} \leq \biggl[1 + \frac{\epsilon \cdot \bigl\{\mathbb{E}_P\bigr(m^{r/(r-2)}(X)\bigl)\bigr\}^{1 - 2/r}}{q(1 - \epsilon) + \epsilon \cdot \mathbb{E}_P\{m(X)\}}\biggr] \sigma^2\\
        &\leq \biggl[1 + \frac{\epsilon \cdot \bigl\{\mathbb{E}_P\bigr(m(X)\bigl)\bigr\}^{1 - 2/r}}{q(1 - \epsilon) + \epsilon \cdot \mathbb{E}_P\{m(X)\}}\biggr] \sigma^2 \leq \sup_{t'\geq 0} \biggl(1 + \frac{\epsilon t'}{q(1 - \epsilon) + \epsilon (t')^{r/(r-2)}}\biggr) \sigma^2\\
        &= \biggl\{1 + \frac{2}{r} \biggl( \frac{r-2}{2} \biggr)^{1-2/r} \cdot \kappa^{2/r}\biggr\}\sigma^2 \leq \biggl\{1 + \biggl( \frac{2}{r} \biggr)^{2/r} \kappa^{2/r}\biggr\}\sigma^2 \leq (1 + \kappa^{2/r}) \sigma^2, \numberthis \label{ineq:holder-m-ineq4}
    \end{align*}
    where the equality follows since the supremum is attained when $t' = \bigl( \frac{(r-2)q(1-\epsilon)}{2\epsilon}\bigr)^{1-2/r}$. Combining~\eqref{ineq:holder-m-ineq3} and~\eqref{ineq:holder-m-ineq4} yields that for $r\geq 2$,
    \begin{align}
        \Var(Z_1 \,|\, Z_1 \neq \star) \leq (1+\kappa^{2/r})\sigma^2. \label{Eq:CondVarBound}
    \end{align}
    By Lemma~\ref{lemma:binomial-tail}(b), the event $\mathcal{E}_0 \coloneqq \{|\mathcal{D}| \geq nq(1-\epsilon)/2\}$ has $\mathbb{P}(\mathcal{E}_0) \geq 1-\delta/2$, since $nq(1-\epsilon)\geq (8/a)\log(2c/\delta) \geq 8\log(2/\delta)$.  By~\eqref{eq:assumption-on-alg-heavy-tail-univariate} and~\eqref{Eq:CondVarBound},
    \begin{align*}
        \mathbb{P}\biggl(\bigl(\hat{\theta}_n - \mathbb{E}(Z_1 \, | \, Z_1\neq \star)\bigr)^2 \leq 2C(1+\kappa^{2/r}) \frac{\sigma^2 \log(2e/\delta) }{nq(1-\epsilon)} \,\bigg|\, \mathcal{E}_0 \biggr) \geq 1-\frac{\delta}{2}.
    \end{align*}
    Thus, 
    \begin{align}
        \mathbb{P}\biggl(\bigl(\hat{\theta}_n &- \mathbb{E}(Z_1|Z_1\neq \star)\bigr)^2 \leq 2C(1+\kappa^{2/r}) \frac{\sigma^2 \log(2e/\delta) }{nq(1-\epsilon)} \biggr)\nonumber\\
        &\geq \mathbb{P}\biggl(\bigl(\hat{\theta}_n - \mathbb{E}(Z_1|Z_1\neq \star)\bigr)^2 \leq 2C(1+\kappa^{2/r}) \frac{\sigma^2 \log(2e/\delta) }{nq(1-\epsilon)} \,\bigg|\, \mathcal{E}_0 \biggr)\mathbb{P}(\mathcal{E}_0) \geq 1-\delta. \label{Eq:E0E1bound}
    \end{align}
    On the event that $\bigl\{\bigl(\hat{\theta}_n - \mathbb{E}(Z_1|Z_1\neq \star)\bigr)^2 \leq 2C(1+\kappa^{2/r}) \frac{\sigma^2 \log(2e/\delta) }{nq(1-\epsilon)}\bigr\}$, we have by Proposition~\ref{thm:bias-of-mean-one-dim-realisable-case}(a) that
    \begin{align*}
        \bigl(\hat{\theta}_n - \theta_0 \bigr)^2 &\leq 2\bigl\{ \hat{\theta}_n - \mathbb{E}(Z_1 \,|\, Z_1\neq\star)\bigr\}^2 + 2\bigl\{ 
        \mathbb{E}(Z_1 \,|\, Z_1\neq\star) - \theta_0\bigr\}^2 \nonumber\\
        &\leq 4C\cdot \frac{\sigma^2 \log(2e/\delta) }{nq(1-\epsilon)} + 4C\kappa^{2/r}\cdot \frac{\sigma^2 \log(2e/\delta) }{nq(1-\epsilon)} + 2\sigma^2(\kappa^2 \wedge \kappa^{2/r}) \nonumber\\
        &\leq 8C\cdot \frac{\sigma^2 \log(2e/\delta) }{nq(1-\epsilon)} + (2C+2)\sigma^2(\kappa^2 \wedge \kappa^{2/r}),
    \end{align*}
    where the final inequality follows by considering separately the cases $\kappa\leq 1$ and $\kappa>1$, and in the second case noting that $\frac{\log(2e/\delta)}{nq(1-\epsilon)} \leq \frac{\{1+\log^{-1}(2)\} \log(2/\delta)}{nq(1-\epsilon)} \leq 1/2$ by assumption.

    
    \medskip
    (b) Let $\mu$ and $p$ be as in the proof of Proposition~\ref{thm:bias-of-mean-one-dim-realisable-case}(a), so that~\eqref{Eq:RRNDeriv} holds for some Borel measurable function $m:\mathbb{R} \to [0,1]$.  Then, for integers $\ell \geq 2$, we have 
    \begin{align*}
        \bigl\{ \mathbb{E}\bigl( |Z_1 - \theta_0|^{\ell} \,|\, Z_1\neq\star \bigr) \bigr\}^{1/\ell} &= \biggl(\frac{\int_{\mathbb{R}} |x-\theta_0|^\ell \{q(1-\epsilon)p(x) + \epsilon m(x)p(x)\} \, \mathrm{d}\mu(x)}{q(1-\epsilon) + \epsilon \int_{\mathbb{R}} m(x)p(x) \, \mathrm{d}\mu(x)} \biggr)^{1/\ell}\\
        &\leq \biggl( \frac{\{q(1-\epsilon) + \epsilon\}\mathbb{E}_{X\sim P}(|X-\theta_0|^\ell)}{q(1-\epsilon)} \biggr)^{1/\ell} 
        \overset{(i)}{\lesssim} \sqrt{1 + \kappa} \cdot \sigma \ell,
    \end{align*}
    where $(i)$ is true since $X\sim P \in \mathcal{P}_{\psi_r}(\theta_0, \sigma^2) \subseteq \mathcal{P}_{\psi_1}(\theta_0, \sigma^2)$ by Lemma~\ref{lemma:inclusion-of-psi-r-class}, so $\bigl(\mathbb{E}_{X\sim P} |X-\theta_0|^\ell\bigr)^{1/\ell} \lesssim \sigma\ell$ by \citet[Proposition 2.7.1]{vershynin2018high}. Moreover, by the Cauchy--Schwarz inequality and~\eqref{ineq:holder-m-ineq3}, $\mathbb{E}\bigl( |Z_1 - \theta_0| \,|\, Z_1\neq\star \bigr) \leq \bigl\{ \mathbb{E}\bigl( |Z_1 - \theta_0|^{2} \,|\, Z_1\neq\star \bigr) \bigr\}^{1/2} \lesssim \sigma\sqrt{1+\kappa}$.  Hence, by \citet[Proposition 2.7.1]{vershynin2018high} again, conditional on $\{Z_1\neq\star\}$, we have $\|Z_1 - \theta_0\|_{\psi_1} \lesssim \sigma\sqrt{1+\kappa}$. Then, by \citet[Lemma 2.7.10]{vershynin2018high}, we have, conditional on $\{Z_1\neq\star\}$, that
    \begin{align}
        \bigl\|Z_1 - \mathbb{E}(Z_1\,|\, Z_1\neq\star)\bigr\|_{\psi_1} \lesssim \sigma\sqrt{1+\kappa}. \label{eq:sub-exponential-norm-bound}
    \end{align}  
    Recall the definition of the event $\mathcal{E}_0$ from the proof of~(a), and observe that $\mathbb{P}(\mathcal{E}_0) \geq 1- \delta/4$ by Lemma~\ref{lemma:binomial-tail}(b).  Now, similarly to~\eqref{Eq:E0E1bound}, by Bernstein's inequality \citep[][Corollary~2.8.3]{vershynin2018high} and since $\frac{\log(8/\delta)}{nq(1-\epsilon)} \leq 1/8$, there exists a universal constant $C_2 > 0$ such that the event 
    \[
    \mathcal{E}_2 \coloneqq \biggl\{ \biggl( \frac{\sum_{i\in\mathcal{D}} \{Z_i - \mathbb{E}(Z_1\,|\, Z_1\neq\star)\}}{|\mathcal{D}|} \biggr)^2 \leq C_2 ( 1 + \kappa) \frac{\sigma^2\log(8/\delta)}{nq(1-\epsilon)} \biggr\}
    \]
    satisfies $\mathbb{P}(\mathcal{E}_0 \cap \mathcal{E}_2) \geq 1 - \delta/2$.  Moreover, on $\mathcal{E}_0 \cap \mathcal{E}_2$, by Proposition~\ref{thm:bias-of-mean-one-dim-realisable-case}(b),
   \begin{align*}
        \bigl( \hat{\theta}_n - \theta_0 \bigr)^2 &\lesssim \biggl( \frac{\sum_{i\in\mathcal{D}} \{Z_i - \mathbb{E}(Z_1\,|\, Z_1\neq\star)\}}{|\mathcal{D}|} \biggr)^2 + \bigl\{ \mathbb{E}(Z_1\,|\, Z_1\neq\star) - \theta_0 \bigr\}^2\\
        &\lesssim ( 1 + \kappa) \cdot \frac{\sigma^2\log(8/\delta)}{nq(1-\epsilon)} + \sigma^2 \kappa^2 \lesssim \frac{\sigma^2\log(8/\delta)}{nq(1-\epsilon)} + \sigma^2\biggl( \frac{\log(8/\delta)}{nq(1-\epsilon)} \biggr)^2 + \sigma^2 \kappa^2\\
        &\lesssim \frac{\sigma^2\log(8/\delta)}{nq(1-\epsilon)} + \sigma^2 \kappa^2, \numberthis \label{eq:exponential-tail-bound-1}
    \end{align*}
    where the penultimate inequality follows from the inequality $ab \leq \frac{a^2+b^2}{2}$ for $a,b\in\mathbb{R}$, and the final inequality follows from the assumption $\frac{\log(8/\delta)}{nq(1-\epsilon)} \leq 1/8$.
    
    Next, let $Q\in\mathcal{P}(\mathbb{R})$ be such that $Q\ll P$. Then the variational characterisation of $\chi^2$-divergence \citep[e.g.,][Example~7.4]{polyanskiy2024information} yields that
    \begin{align}
        2\mathbb{E}_{X\sim Q}\{g(X)\} \leq 1 + \chi^2(Q,P) + \mathbb{E}_{X\sim P}\{g^2(X)\}, \label{eq:variational-form-chi-squared-divergence}
    \end{align}
    for all Borel measurable $g:\mathbb{R} \to [0,\infty)$. We first consider the case $r>1$.  Now take $Q$ to be the conditional distribution of $Z_1$ given $\{Z_1\neq\star\}$, so that, by the representation in~\eqref{Eq:RRNDerivBound}, we have
    \[
    \chi^2(Q,P) = \int_{\mathbb{R}} \biggl(\frac{\mathrm{d}Q}{\mathrm{d}P} - 1\biggr)^2 \, \mathrm{d}P \leq \kappa^2.
    \]
    Thus, taking $g : x \mapsto \exp\bigl\{\lambda (x-\theta_0)\bigr\}$ in~\eqref{eq:variational-form-chi-squared-divergence} and applying Lemma~\ref{lemma:MGF-bound} yields that
    \begin{align*}
        2\mathbb{E}\bigl[ \exp\bigl\{\lambda (Z_1-\theta_0)\bigr\} \bigm| Z_1\neq\star \bigr] \leq 1 + \kappa^2 + 2\exp\bigl\{(2\sigma\lambda)^{r/(r-1)}\bigr\},
    \end{align*}
    for all $\lambda>0$. Hence, for $s \in [n]$,
    \begin{align*}
        \log\mathbb{E}\biggl\{ \exp\biggl(\frac{\lambda}{|\mathcal{D}|}\sum_{i\in\mathcal{D}}(Z_i -\theta_0)\biggr) \,&\biggm|\, |\mathcal{D}| = s \biggr\} \\
        &\leq s\log\Biggl\{ \frac{1}{2} \biggl[ 1 + \kappa^2 + 2\exp\biggl\{\biggl(\frac{2\sigma\lambda}{s}\biggr)^{r/(r-1)}\biggr\} \biggr] \Biggr\}\\
        &\leq s \biggl\{ \log( 1 + \kappa^2) + \log 2 + \biggl(\frac{2\sigma\lambda}{s}\biggr)^{r/(r-1)}  \biggr\}, \numberthis\label{eq:log-mgf-bound}
    \end{align*}
    where the final inequality follows from the fact that $\log\bigl(\frac{a+b}{2}\bigr) \leq \log a + \log b$ for all $a,b\geq 1$.
    Then, applying a Chernoff bound gives that for every $t \geq 0$ and $s \in [n]$, 
    \begin{align*}
        \mathbb{P}\bigl(\hat{\theta}_n - \theta_0 \geq t \bigm| |\mathcal{D}| = s\bigr) = \mathbb{P}\biggl( \frac{1}{|\mathcal{D}|}\sum_{i\in\mathcal{D}} (Z_i-\theta_0) \geq t \biggm| |\mathcal{D}| = s\biggr) \leq \exp\bigl(-\psi^*(t)\bigr),
    \end{align*}
    where
    \begin{align*}
        \psi^*(t) &\coloneqq \sup_{\lambda>0}\, \biggl\{\lambda t - s\log( 1 + \kappa^2) - s\log 2 - \frac{(2\sigma\lambda)^{r/(r-1)}}{s^{1/(r-1)}} \biggr\}\\
        &= \frac{st^r}{(2\sigma)^r} \cdot \biggl(\frac{r-1}{r}\biggr)^{r-1} \cdot \frac{1}{r} - s\log( 2 + 2\kappa^2) \geq \frac{st^r}{(2\sigma)^r} \cdot \frac{1}{er} - s\log( 2 + 2\kappa^2),
    \end{align*}
    since the supremum over $\lambda \in (0,\infty)$ is attained at $\lambda^* \coloneqq \bigl( \frac{r-1}{r} \cdot \frac{ts^{1/(r-1)}}{(2\sigma)^{r/(r-1)}} \bigr)^{r-1}$.
    By replacing $Z_i - \theta_0$ with $-(Z_i - \theta_0)$ for $i \in [n]$, we deduce that for every $t \geq 0$,
    \begin{align*}
        \mathbb{P}\bigl(|\hat{\theta}_n - \theta_0| \geq t \bigm| |\mathcal{D}|=s\bigr) \leq 2\exp\biggl\{ -\frac{st^r}{(2\sigma)^r} \cdot \frac{1}{er} + s\log( 2 + 2\kappa^2) \biggr\}.
    \end{align*}
    Hence, defining the event
    \[
    \mathcal{E}_3 \coloneqq \biggl\{ (\hat{\theta}_n - \theta_0)^2 \leq \biggl( \frac{(2\sigma)^r er\log(8/\delta)}{nq(1-\epsilon)/2} + (2\sigma)^r er \log( 2 + 2\kappa^2) \biggr)^{2/r}\biggr\},
    \]
    and proceeding in a similar fashion to~\eqref{Eq:E0E1bound}, we deduce that $\mathbb{P}(\mathcal{E}_0 \cap \mathcal{E}_3) \geq 1- \delta/2$.  Moreover, on $\mathcal{E}_0 \cap \mathcal{E}_3$, we have
    \begin{align*}
        (\hat{\theta}_n - \theta_0)^2 &\leq \biggl\{ \frac{(2\sigma)^r er\log(8/\delta)}{nq(1-\epsilon)/2} + (2\sigma)^r er \log( 2 + 2\kappa^2) \biggr\}^{2/r}\\
        &\leq \biggl\{ \frac{(2\sigma)^r er}{4} + (2\sigma)^r er \log( 2 + 2\kappa^2) \biggr\}^{2/r}\leq \biggl\{ \frac{3}{2} \cdot (2\sigma)^r er \log(2 + 2\kappa^2) \biggr\}^{2/r}\\
        &\leq \bigl\{ 3 \cdot (2\sigma)^r er \log( 2 + 2\kappa) \bigr\}^{2/r} \leq (9e\sigma)^2 \log^{2/r}( 2 + 2\kappa). \numberthis \label{eq:exponential-tail-bound-2}
    \end{align*}
    Thus, on $\mathcal{E}_0 \cap \mathcal{E}_2 \cap \mathcal{E}_3$, which satisfies $\mathbb{P}(\mathcal{E}_0 \cap \mathcal{E}_2 \cap \mathcal{E}_3) \geq 1 - \delta$, we combine~\eqref{eq:exponential-tail-bound-1} and~\eqref{eq:exponential-tail-bound-2} to obtain the desired result for $r>1$. 

    Finally, we consider the case where $r=1$. By \citet[Lemma~2.5]{zhivotovskiy2024dimension}, \eqref{eq:variational-form-chi-squared-divergence} yields, with the same choice of $Q$ and $g$, that
    \begin{align*}
        2\mathbb{E}\bigl[ \exp\bigl\{\lambda (Z_1-\theta_0)\bigr\} \bigm| Z_1\neq\star \bigr] \leq 1 + \kappa^2 + \exp\bigl\{(2\sigma\lambda)^2\bigr\},
    \end{align*}
    for all $|\lambda|\leq \frac{1}{2\sigma}$. Hence, by a similar argument to the $r > 1$ case,
    \begin{align}
        \log\mathbb{E}\biggl\{ \exp\biggl(\frac{\lambda}{|\mathcal{D}|}\sum_{i\in\mathcal{D}}(Z_i-\theta_0)\biggr) \,\biggm|\, |\mathcal{D}| = s \biggr\} \leq s \biggl\{ \log(1 + \kappa^2) + \biggl(\frac{2\sigma\lambda}{s}\biggr)^2 \biggr\}, \label{eq:log-mgf-bound-r=1}
    \end{align}
    for $s\in[n]$ and $|\lambda|\leq \frac{s}{2\sigma}$. Then, applying a Chernoff bound yields
    \begin{align*}
        \mathbb{P}\bigl(|\hat{\theta}_n - \theta_0| \geq t \bigm| |\mathcal{D}|=s\bigr) = \mathbb{P}\biggl( \Bigl| \frac{1}{|\mathcal{D}|} \sum_{i\in\mathcal{D}} (Z_i-\theta_0) \Bigr| \geq t \biggm| |\mathcal{D}| = s \biggr) \leq 2\exp\bigl(-\psi^*(t)\bigr),
    \end{align*}
    where
    \begin{align*}
        \psi^*(t) &\coloneqq \sup_{0<\lambda\leq \frac{s}{2\sigma}}\, \biggl\{\lambda t - s\log( 1 + \kappa^2) - \frac{(2\sigma\lambda)^2}{s}\biggr\}.
    \end{align*}
    Taking $t \coloneqq 8\sigma \log( 2 + 2\kappa^2)$ and $\lambda = s/(2\sigma)$ yields, for $s\geq nq(1-\epsilon)/2$, that
    \begin{align*}
        \mathbb{P}\bigl(|\hat{\theta}_n - \theta_0| \geq t \bigm| |\mathcal{D}| = s\bigr) &\leq 2\exp\bigl\{ -3s \log( 2 + 2\kappa^2) + s \bigr\} \leq 2\exp(-s) \leq \frac{\delta}{4},
    \end{align*}
    where the final inequality follows from the assumption $nq(1-\epsilon)\geq 8\log(8/\delta)$. Therefore, letting $\mathcal{E}_4 \coloneqq \bigl\{ |\hat{\theta}_n - \theta_0| < 8\sigma \log( 2 + 2\kappa^2) \bigr\}$, we have $\mathbb{P}(\mathcal{E}_0 \cap \mathcal{E}_4) \geq 1-\delta/2$. Moreover, on $\mathcal{E}_0 \cap \mathcal{E}_4$,
    \begin{align}
        (\hat{\theta}_n - \theta_0)^2 < 64\sigma^2 \log^2(2 + 2\kappa^2) \leq 256\sigma^2 \log^2( 2 + 2\kappa). \label{eq:exponential-tail-bound-3}
    \end{align}
    Thus, on $\mathcal{E}_0 \cap \mathcal{E}_2 \cap \mathcal{E}_4$, which satisfies $\mathbb{P}(\mathcal{E}_0 \cap \mathcal{E}_2 \cap \mathcal{E}_4) \geq 1 - \delta$, we combine~\eqref{eq:exponential-tail-bound-1} and~\eqref{eq:exponential-tail-bound-3} to obtain the desired result for $r=1$. 
\end{proof}

\subsubsection{Proof of Theorem~\ref{thm:nonparametric-realisable-model-lb}}
For $\theta\in\mathbb{R}$ and $K>0$, define
\begin{align}\label{eq:distributions-with-bounded-support}
    \mathcal{P}_{\mathrm{b}}(\theta,K) \coloneqq \Bigl\{ P \in \mathcal{P}(\mathbb{R}) : \mathbb{E}_P(X) = \theta,\text{$P$ is supported on an interval of length at most $K$} \Bigr\}.
\end{align}
\begin{proof}[Proof of Theorem~\ref{thm:nonparametric-realisable-model-lb}]    
    (a) Define $a \coloneqq \frac{q(1-\epsilon)}{q(1-\epsilon)+\epsilon} \in (0,1]$ and $b\coloneqq \frac{\sigma}{2} \cdot a^{-1/r} > 0$.  Let $X_1 \sim P_1$ and $X_2 \sim P_2$ be random variables satisfying
    \begin{align*}
        X_1 = \begin{cases}
            -b \quad&\text{with probability }\frac{1}{a+1}\\
            b &\text{with probability }\frac{a}{a+1}
        \end{cases}
        \quad\text{and}\quad X_2 = \begin{cases}
            -b \quad&\text{with probability }\frac{a}{a+1}\\
            b &\text{with probability }\frac{1}{a+1}.
        \end{cases}
    \end{align*}
    Then $\theta_1 \coloneqq \mathbb{E}(X_1) = -\frac{(1-a)b}{a+1}$ and $\theta_2 \coloneqq \mathbb{E}(X_2) = \frac{(1-a)b}{a+1}$. Moreover,
    \begin{align*}
        \mathbb{E}\bigl( |X_1 - \theta_1|^r \bigr) &= \frac{(2ab)^r + a(2b)^r}{(a+1)^{r+1}} \leq a \cdot (2b)^r = \sigma^r,
    \end{align*}
    where we have used the fact that $a^r + a \leq a(a+1) \leq a(a+1)^{r+1}$; by symmetry, $\mathbb{E}\bigl( |X_2 - \theta_2|^r \bigr) \leq \sigma^r$.  Consequently, $P_1 \in \mathcal{P}_{L^r}(\theta_1, \sigma^2)$ and $P_2 \in \mathcal{P}_{L^r}(\theta_2, \sigma^2)$. Now define $R_0 \in \mathcal{P}(\mathbb{R}_{\star})$ by
    \begin{align*}
        R_0(\{-b\}) \coloneqq \frac{q(1-\epsilon)}{a+1} \eqqcolon R_0(\{b\}) \quad \text{ and } \quad R_0(\{\star\}) \coloneqq 1- R_0(\{-b\}) - R_0(\{b\}) \in [0,1).
    \end{align*}
    By Proposition~\ref{prop:univariate-realisability}, $R_0 \in \mathcal{R}(P_1,\epsilon,q) \cap \mathcal{R}(P_2,\epsilon,q)$.  Therefore, by \citet[][Theorem~4 and Lemma~5]{ma2024high},
    \begin{align*}
        \mathcal{M}(\delta,\mathcal{P}_{\Theta},|\cdot|^2) \geq \frac{(\theta_2 - \theta_1)^2}{4} = \biggl\{ \frac{(1-a)b}{a+1} \biggr\}^2
        &= \frac{\sigma^2}{4} \biggl(\frac{\epsilon}{2q(1-\epsilon)+\epsilon}\biggr)^2  \biggl(\frac{q(1-\epsilon) + \epsilon}{q(1-\epsilon)}\biggr)^{2/r} \\
        &\geq \frac{\sigma^2}{36} \cdot \biggl\{ \biggl(\frac{\epsilon}{q(1-\epsilon)}\biggr)^2 \wedge \biggl(\frac{\epsilon}{q(1-\epsilon)}\biggr)^{2/r} \biggr\},
    \end{align*}
    where the final bound is obtained by considering separately the cases $\epsilon \leq q(1-\epsilon)$ and $\epsilon > q(1-\epsilon)$.  This proves the second term in the lower bound.

    For the first term in the lower bound, we observe that $\mathcal{P}_{\mathrm{b}}(\theta,\sigma) \subseteq \mathcal{P}_{L^r}(\theta,\sigma^2)$ for all $r \geq 2$.  We therefore obtain the desired conclusion by choosing the contamination distribution $Q\in\mathcal{P}(\mathbb{R}_{\star})$ such that $Q\bigl( \{\star\} \bigr)=1$ and applying Proposition~\ref{prop:univariate-mcar-lb}(b).

    \medskip(b) Define $P_1, P_2 \in \mathcal{P}(\mathbb{R})$ with Lebesgue densities $p_1, p_2$ respectively as in Lemma~\ref{lemma:psi-r-orlicz-norm-of-mixture}, so that $P_1 \in \mathcal{P}_{\psi_r}\bigl(\mathbb{E}_{P_1}(X_1),\sigma^2\bigr)$ and $P_2 \in \mathcal{P}_{\psi_r}\bigl(\mathbb{E}_{P_2}(X_2),\sigma^2\bigr)$.  Further, with $b > 0$ defined as in Lemma~\ref{lemma:psi-r-orlicz-norm-of-mixture}, define $R_1 \in \mathcal{P}(\mathbb{R}_{\star})$ through its Radon--Nikodym derivative
    \begin{align*}
        \frac{\mathrm{d}R_1}{\mathrm{d}\lambda_{\star}}(z) \coloneqq \begin{cases}
            q(1-\epsilon) \cdot p_1(z) \quad&\text{if }z\in(-\infty,b)\\
            \bigl\{q(1-\epsilon) + \epsilon \bigr\} \cdot p_1(z) \quad&\text{if }z\in[b,\infty)\\
            1- q(1-\epsilon) \cdot \int_{-\infty}^b p_1(x)\,\mathrm{d}x - \bigl\{q(1-\epsilon) + \epsilon \bigr\} \cdot \int_{b}^{\infty} p_1(x)\,\mathrm{d}x \quad&\text{if }z=\star,
        \end{cases}
    \end{align*}
    so that, by Proposition~\ref{prop:univariate-realisability}, $R_1 \in\mathcal{R}(P_1,\epsilon,q) \cap \mathcal{R}(P_2,\epsilon,q)$. Therefore, by \citet[][Theorem~4 and Lemma~5]{ma2024high},
    \begin{align}
        \mathcal{M}(\delta,\mathcal{P}_{\Theta},|\cdot|^2) \geq \frac{\bigl\{ \mathbb{E}_{P_2}(X_2) - \mathbb{E}_{P_1}(X_1) \bigr\}^2}{4}. \label{eq:minimax-quantile-psi-r-lb}
    \end{align}
    Now, writing $\sigma_0 \coloneqq \sigma/C_0$,
    \begin{align*}
        \mathbb{E}_{P_2}(X_2) &- \mathbb{E}_{P_1}(X_1) = \frac{\epsilon}{q(1-\epsilon)} \int_b^{\infty} xp_1(x)\,\mathrm{d}x - \frac{\epsilon}{q(1-\epsilon)+\epsilon} \int_0^b xp_1(x)\,\mathrm{d}x\\
        \overset{(i)}&{=} \frac{\epsilon}{q(1-\epsilon)} \biggl\{ be^{-(b/\sigma_0)^r} + \int_b^{\infty} e^{-(x/\sigma_0)^{r}} \,\mathrm{d}x \biggr\} \\
        &\qquad - \frac{\epsilon}{q(1-\epsilon)+\epsilon} \biggl\{ -be^{-(b/\sigma_0)^r} + \int_0^b e^{-(x/\sigma_0)^r} \,\mathrm{d}x \biggr\}\\
        \overset{(ii)}&{=} \biggl( \frac{\epsilon}{q(1-\epsilon)} + \frac{\epsilon}{q(1-\epsilon)+\epsilon} \biggr) \cdot \frac{q(1-\epsilon)}{2q(1-\epsilon) + \epsilon} \cdot \sigma_0\log^{1/r}\biggl( 2 + \frac{\epsilon}{q(1-\epsilon)} \biggr)\\
        &\qquad +  \frac{\epsilon}{q(1-\epsilon)} \int_b^{\infty} e^{-(x/\sigma_0)^{r}} \,\mathrm{d}x - \frac{\epsilon}{q(1-\epsilon)+\epsilon} \int_0^b e^{-(x/\sigma_0)^{r}}\,\mathrm{d}x, \numberthis \label{eq:difference-of-mean}
    \end{align*}
    where $(i)$ follows from integration by parts, $(ii)$ follows by substituting the definition of~$b$.
    Now let $h(t) \coloneqq \frac{1}{t} \int_0^t e^{-(x/\sigma_0)^{r}}\,\mathrm{d}x$. Then 
    \begin{align*}
        h'(t) = \frac{te^{-(t/\sigma_0)^r} - \int_0^t e^{-(x/\sigma_0)^{r}}\,\mathrm{d}x}{t^2} \leq 0,
    \end{align*}
    so $h$ is a decreasing function.   

    First consider the case where $\frac{\epsilon}{q(1-\epsilon)} \geq e^{2^r} - 2$ or equivalently $\epsilon \geq \frac{\{\exp(2^r)-2\}q}{1+\{\exp(2^r)-2\}q}$, so that $\log^{1/r}\bigl( 2+\frac{\epsilon}{q(1-\epsilon)} \bigr) \geq 2$ and
    \begin{align*}
        h(b) \leq h(2\sigma_0) = \frac{\int_0^2 e^{-x^r} \,\mathrm{d}x}{2} = \frac{\int_0^1 e^{-x^r} \,\mathrm{d}x + \int_1^2 e^{-x^r} \,\mathrm{d}x}{2} \leq \frac{1+e^{-1}}{2}.
    \end{align*}
    Hence, by~\eqref{eq:difference-of-mean},
    \begin{align*}
        \mathbb{E}_{P_2}(X_2) \! - \! \mathbb{E}_{P_1}(X_1) &\geq \biggl( \frac{\epsilon}{q(1\!-\!\epsilon)} + \frac{\epsilon}{q(1\!-\!\epsilon)\!+\!\epsilon} \biggr) \cdot \frac{q(1\!-\!\epsilon)}{2q(1\!-\!\epsilon) \!+\! \epsilon} \cdot \sigma_0\log^{1/r}\biggl( 2 \!+\! \frac{\epsilon}{q(1\!-\!\epsilon)} \biggr)\\
        &\qquad - \frac{\epsilon}{q(1-\epsilon)+\epsilon} \cdot \frac{1+e^{-1}}{2} \cdot \sigma_0\log^{1/r}\biggl( 2 + \frac{\epsilon}{q(1-\epsilon)} \biggr)\\
        &= \frac{1-e^{-1}}{2} \cdot \frac{\epsilon}{q(1-\epsilon)+\epsilon} \cdot \sigma_0\log^{1/r}\biggl( 2 + \frac{\epsilon}{q(1-\epsilon)} \biggr)\\
        &\geq \frac{1-e^{-1}}{8} \cdot \frac{\sigma}{C_0} \cdot \log^{1/r}\biggl( 2 + \frac{2\epsilon}{q(1-\epsilon)} \biggr).
    \end{align*}
    Therefore, by~\eqref{eq:minimax-quantile-psi-r-lb}, when $\epsilon \geq \frac{\{\exp(2^r)-2\}q}{1+\{\exp(2^r)-2\}q}$,
    \begin{align}
        \mathcal{M}(\delta,\mathcal{P}_{\Theta},|\cdot|^2) \gtrsim \sigma^2 \log^{2/r}\biggl( 2 + \frac{2\epsilon}{q(1-\epsilon)} \biggr). \label{eq:minimax-quantile-psi-r-lb-1}
    \end{align}
    Next consider the case where $\frac{\epsilon}{q(1-\epsilon)} \leq 1$, or equivalently $\epsilon \leq q/(1+q)$. Define $P_3,P_4 \in \mathcal{P}(\mathbb{R})$ by
    \begin{align*}
        P_3\biggl(\biggl\{ -\frac{\sigma}{4} \biggr\}\biggr) \coloneqq \frac{q(1-\epsilon)}{2q(1-\epsilon)+\epsilon} &\eqqcolon P_4\biggl(\biggl\{ \frac{\sigma}{4} \biggr\}\biggr), \quad \text{ and }\\
        P_3\biggl(\biggl\{ \frac{\sigma}{4} \biggr\}\biggr) \coloneqq \frac{q(1-\epsilon) + \epsilon}{2q(1-\epsilon)+\epsilon} &\eqqcolon P_4\biggl(\biggl\{ -\frac{\sigma}{4} \biggr\}\biggr).
    \end{align*}
    Thus $P_3\in\mathcal{P}_{\psi_r}\bigl(\mathbb{E}_{P_3}(X_3), \sigma^2\bigr)$ and $P_4\in\mathcal{P}_{\psi_r}\bigl(\mathbb{E}_{P_4}(X_4), \sigma^2\bigr)$. Further define $R_2 \in\mathcal{P}(\mathbb{R}_{\star})$ by
    \begin{align*}
        R_2\biggl(\biggl\{ -\frac{\sigma}{4} \biggr\}\biggr) &\coloneqq \frac{q(1-\epsilon)\{q(1-\epsilon) + \epsilon\}}{2q(1-\epsilon)+\epsilon} \eqqcolon R_2\biggl(\biggl\{\frac{\sigma}{4} \biggr\}\biggr) , \\
        R_2(\{\star\}) &\coloneqq 1- \frac{2q(1-\epsilon)\{q(1-\epsilon) + \epsilon\}}{2q(1-\epsilon)+\epsilon}.
    \end{align*}
    By Proposition~\ref{prop:univariate-realisability}, $R_2 \in \mathcal{R}(P_3,\epsilon,q) \cap \mathcal{R}(P_4,\epsilon,q)$. Therefore, by \citet[][Theorem~4 and Lemma~5]{ma2024high}, when $\epsilon \leq \frac{q}{1+q}$,
    \begin{align}
        \mathcal{M}(\delta,\mathcal{P}_{\Theta},|\cdot|^2) \geq \frac{\bigl\{ \mathbb{E}_{P_3}(X_3) - \mathbb{E}_{P_4}(X_4) \bigr\}^2}{4} \geq \frac{\sigma^2}{16}\biggl( \frac{\epsilon}{2q(1-\epsilon)+\epsilon} \biggr)^2 \geq \frac{\sigma^2}{144} \biggl( \frac{\epsilon}{q(1-\epsilon)} \biggr)^2. \label{eq:minimax-quantile-psi-r-lb-2}
    \end{align}
    Combining~\eqref{eq:minimax-quantile-psi-r-lb-1} and~\eqref{eq:minimax-quantile-psi-r-lb-2} yields that when $\epsilon \leq \frac{q}{1+q}$ or $\epsilon \geq \frac{\{\exp(2^r)-2\}q}{1+\{\exp(2^r)-2\}q}$,
    \begin{align}
        \mathcal{M}(\delta,\mathcal{P}_{\Theta},|\cdot|^2) \gtrsim \sigma^2\cdot \biggl\{ \biggl( \frac{\epsilon}{q(1-\epsilon)} \biggr)^2 \wedge \log^{2/r}\biggl( 2 + \frac{2\epsilon}{q(1-\epsilon)} \biggr) \biggr\}. \label{eq:minimax-quantile-psi-r-lb-3}
    \end{align}
    Further observe that $\mathcal{R}(P,\frac{q}{1+q},q) \subseteq \mathcal{R}(P,\epsilon,q)$ for all $P\in\mathcal{P}(\mathbb{R})$ when $\epsilon > \frac{q}{1+q}$. Thus, by~\eqref{eq:minimax-quantile-psi-r-lb-3}, we deduce that when $\frac{q}{1+q} < \epsilon < \frac{\{\exp(2^r)-2\}q}{1+\{\exp(2^r)-2\}q}$,
    \begin{align*}
        \mathcal{M}(\delta,\mathcal{P}_{\Theta},|\cdot|^2) &\gtrsim \sigma^2\cdot \biggl\{ \biggl( \frac{\frac{q}{1+q}}{q(1-\frac{q}{1+q})} \biggr)^2 \wedge \log^{2/r}\biggl( 2 + \frac{\frac{2q}{1+q}}{q(1-\frac{q}{1+q})} \biggr) \biggr\}\\
        &= \sigma^2 \gtrsim \sigma^2\cdot \biggl\{ \biggl( \frac{\epsilon}{q(1-\epsilon)} \biggr)^2 \wedge \log^{2/r}\biggl( 2 + \frac{2\epsilon}{q(1-\epsilon)} \biggr) \biggr\}, \numberthis \label{eq:minimax-quantile-psi-r-lb-4}
    \end{align*}
    where the last inequality follows from the fact that when $\epsilon < \frac{\{\exp(2^r)-2\}q}{1+\{\exp(2^r)-2\}q}$, 
    \begin{align*}
    \log^{2/r}\biggl( 2 + \frac{2\epsilon}{q(1-\epsilon)} \biggr) < \log^{2/r}\bigl(2+2(e^{2^r}-2)\bigr) &\leq \log^{2/r}\bigl(e^{2^r + \log 2}\bigr)\\
    &\leq (2 \cdot 2^r)^{2/r} = 2^{2/r}\cdot 2 \leq 8.
    \end{align*}
    Combining~\eqref{eq:minimax-quantile-psi-r-lb-3} and~\eqref{eq:minimax-quantile-psi-r-lb-4} yields the second term in the lower bound.

    For the first term in the lower bound, we observe that $\mathcal{P}_{\mathrm{b}}(\theta,\sigma/2) \subseteq \mathcal{P}_{\psi_r}(\theta,\sigma^2)$ for all $r \geq 1$.  We therefore obtain the desired conclusion by choosing the contamination distribution $Q\in\mathcal{P}(\mathbb{R}_{\star})$ such that $Q\bigl( \{\star\} \bigr)=1$ and applying Proposition~\ref{prop:univariate-mcar-lb}(b).
\end{proof}

\begin{lemma}\label{lemma:psi-r-orlicz-norm-of-mixture}
    Let $\epsilon \in [0,1)$, $q \in (0,1]$, $\sigma > 0$ and $r \geq 1$.  There exists a universal constant $C_0 > 0$ such that if $X_1 \sim P_{1} \in \mathcal{P}(\mathbb{R})$ and $X_2 \sim P_{2} \in \mathcal{P}(\mathbb{R})$ have Lebesgue densities $p_1$ and~$p_2$ respectively, where $p_1(x) \coloneqq \frac{rx^{r-1}}{(\sigma/C_0)^r}e^{-(C_0x/\sigma)^r}\mathbbm{1}_{\{x \geq 0\}}$ and 
    \begin{align*}
        p_2(x) \coloneqq \begin{cases}
            \frac{q(1-\epsilon)}{q(1-\epsilon) + \epsilon} \cdot p_1(x) \quad&\text{if }x<b\\
            \frac{q(1-\epsilon)+\epsilon}{q(1-\epsilon)} \cdot p_1(x) \quad&\text{if }x\geq b
        \end{cases}
        \quad\text{with}\quad b\coloneqq \frac{\sigma}{C_0}\log^{1/r}\biggl( 2 + \frac{\epsilon}{q(1-\epsilon)} \biggr),
    \end{align*}
    then $\|X_1 - \mathbb{E}X_1\|_{\psi_r} \vee \|X_2 - \mathbb{E}X_2\|_{\psi_r} \leq \sigma$.  
\end{lemma}
\begin{proof}
    Since $\mathbb{P}(|X_1| \geq x) = e^{-(C_0 x/\sigma)^{r}}$ for all $x\geq 0$, we have by \citet[Proposition 2.7.1]{vershynin2018high} that $\|X_1^r\|_{\psi_1} \leq C_1(\sigma/C_0)^r$ for some universal constant $C_1 > 0$, so $\|X_1\|_{\psi_r} \leq C_1^{1/r} \sigma/C_0 \leq (C_1 \vee 1)\sigma/C_0$. Then, by \citet[Lemma A.3]{gotze2021concentration}, we have 
    \[
    \|X_1-\mathbb{E}X_1\|_{\psi_r} \leq \biggl\{ 1+ \biggl(\frac{2}{(re)^{1/r}\log 2} \biggr)^{1/r} \biggr\}(C_1 \vee 1)\frac{\sigma}{C_0} \leq 4(C_1 \vee 1)\frac{\sigma}{C_0}.
    \]
    Turning to $X_2$, first observe that $p_2$ is a Lebesgue density, since
    \[
    \int_{\mathbb{R}} p_2(x) \, \mathrm{d}x = \frac{q(1-\epsilon)}{q(1-\epsilon) + \epsilon}\{1 - e^{-(C_0b/\sigma)^r}\} + \frac{q(1-\epsilon)+\epsilon}{q(1-\epsilon)} e^{-(C_0b/\sigma)^r} = 1.
    \]
    Now, for $x \geq 0$, we have
    \begin{align*}
        \mathbb{P}(X_2-b \geq x) = \frac{q(1-\epsilon)+\epsilon}{q(1-\epsilon)} \cdot \mathbb{P}(X_1 \geq b+x) &\leq \frac{q(1-\epsilon)+\epsilon}{q(1-\epsilon)} \cdot e^{-(C_0 b/\sigma)^r- (C_0 x/\sigma)^r} \\
        &= \frac{q(1-\epsilon)+\epsilon}{2q(1-\epsilon) + \epsilon} \cdot e^{-(C_0 x/\sigma)^r} \leq e^{-(C_0x/\sigma)^r}.
    \end{align*}
    Define $a \coloneqq \frac{q(1-\epsilon)}{q(1-\epsilon)+\epsilon} \in (0,1]$, so that $b = (\sigma/C_0) \log^{1/r}\bigl( \frac{1+a}{a} \bigr)$.
    For $x\in[0,b]$, we have 
    \begin{align*}
        \mathbb{P}(X_2-b \leq -x) &= a \cdot \mathbb{P}(X_1\leq b-x) \leq a \leq \frac{2a}{1+a} = 2e^{-(C_0b/\sigma)^r} \leq 2e^{-(C_0x/\sigma)^r}.
    \end{align*}
    For $x>b$, we have $\mathbb{P}(X_2-b \leq -x) = 0$.
    Combining these inequalities, we obtain $\mathbb{P}(|X_2-b| \geq x) \leq 3e^{-(C_0x/\sigma)^r}$ for all $x\geq 0$. Therefore, by~\citet[Proposition 2.7.1]{vershynin2018high}, we deduce\footnote{Note that in \citet[Proposition 2.7.1(a)]{vershynin2018high}, the condition is that $\mathbb{P}(|X|\geq t) \leq 2\exp(-t/K_1)$ for all $t \geq 0$. However, the result is still true if we replace the factor $2$ by $3$. See for example, \citet[Proposition 2.5.2]{vershynin2018high} for the proof strategy.} that $\|(X_2-b)^r\|_{\psi_1} \leq C_2(\sigma/C_0)^r$ for some universal constant $C_2>0$, so 
    \[
    \|X_2-b\|_{\psi_r} \leq C_2^{1/r}\frac{\sigma}{C_0} \leq (C_2 \vee 1)\frac{\sigma}{C_0}.
    \]
    By \citet[Lemma A.3]{gotze2021concentration} again, $\|X_2-\mathbb{E}X_2\|_{\psi_r} \leq 4(C_2 \vee 1)\sigma/C_0$. Finally, taking $C_0 \coloneqq 4C_1 \vee 4C_2 \vee 4$ completes the proof.
\end{proof}

\subsubsection{Proof of Theorem~\ref{thm:nonparametric-multivariate-realisable-mean-ub}}

The following proposition, which is analogous to Proposition~\ref{thm:bias-of-mean-one-dim-realisable-case} in the univariate case, will be used in the proof of Theorem~\ref{thm:nonparametric-multivariate-realisable-mean-ub}.
\begin{prop} \label{prop:bias-of-multivariate-mean}
    Let $\theta_0 \in \mathbb{R}^d$, $\Sigma \in \mathcal{S}_{++}^{d\times d}$, $\epsilon\in[0,1)$, $\delta\in(0,1]$, $\pi\in\mathcal{P}\bigl(\{\emptyset,[d]\}\bigr)$ and $q\coloneqq \pi([d])$.
    \begin{itemize}
        \item[(a)] Let $r\geq 2$, $P\in \mathcal{P}_{L^r}(\theta_0, \Sigma)$ and $Z \sim R \in \mathcal{R}_{\emptyset,[d]}(P, \epsilon, \pi)$. Then
        \begin{align*}
            \bigl\| \mathbb{E}(Z\,|\,Z\in\mathbb{R}^d) - \theta_0 \bigr\|_2^2 \leq \|\Sigma\|_{\mathrm{op}} \biggl\{ \biggl(\frac{\epsilon}{q(1-\epsilon)}\biggr)^2 \wedge \biggl(\frac{\epsilon}{q(1-\epsilon)}\biggr)^{2/r} \biggr\}.
        \end{align*}
        \item[(b)] Let $r\geq 1$, $P\in \mathcal{P}_{\psi_r}(\theta_0, \Sigma)$, and $Z \sim R \in \mathcal{R}_{\emptyset,[d]}(P, \epsilon, \pi)$. Then
        \begin{align*}
            \bigl\| \mathbb{E}(Z\,|\,Z\in\mathbb{R}^d) - \theta_0 \bigr\|_2^2 \leq \|\Sigma\|_{\mathrm{op}} \biggl\{ 4\biggl(\frac{\epsilon}{q(1-\epsilon)}\biggr)^2 \;\wedge\;  \log^{2/r} \biggl( 2 + \frac{2\epsilon}{q(1-\epsilon)} \biggr) \biggr\}.
        \end{align*}
    \end{itemize}
\end{prop}
\begin{proof}
    Let $\kappa \coloneqq \frac{\epsilon}{q(1-\epsilon)}$, $X\sim P$, $v \in \mathbb{S}^{d-1}$, $Z^{(v)} \coloneqq v^\top Z \cdot \mathbbm{1}_{\{Z\in\mathbb{R}^d\}} + \star \cdot \mathbbm{1}_{\{Z\notin\mathbb{R}^d\}}$, $R^{(v)} \coloneqq \mathsf{Law}(Z^{(v)})$ and $P^{(v)} \coloneqq \mathsf{Law}(v^\top X)$. By Lemma~\ref{lemma:realisability-of-projection}, we have $R^{(v)} \in \mathcal{R}(P^{(v)},\epsilon,q)$.

    (a) Since $P^{(v)} \in \mathcal{P}_{L^r}(v^\top\theta_0, v^\top\Sigma v)$ we have by Proposition~\ref{thm:bias-of-mean-one-dim-realisable-case}(a) that
    \begin{align*}
        \bigl\| \mathbb{E}(Z\,|\,Z\in\mathbb{R}^d) - \theta_0 \bigr\|_2^2 &= \sup_{v\in\mathbb{S}^{d-1}} \bigl\{ v^\top \mathbb{E}(Z | Z\in\mathbb{R}^d) - v^\top\theta_0 \bigr\}^2 \\
        &= \sup_{v\in\mathbb{S}^{d-1}} \bigl\{ \mathbb{E}(Z^{(v)} | Z^{(v)} \neq \star) - v^\top\theta_0 \bigr\}^2\\
        &\leq \sup_{v\in\mathbb{S}^{d-1}} v^\top\Sigma v \cdot (\kappa^2 \wedge \kappa^{2/r})\\
        &= \|\Sigma\|_{\mathrm{op}} (\kappa^2 \wedge \kappa^{2/r}),
    \end{align*}
    as required.
    
    (b) We now have $P^{(v)} \in \mathcal{P}_{\psi_r}(v^\top\theta_0, v^\top\Sigma v)$, so the proof is the same as part (a), except that we use Proposition~\ref{thm:bias-of-mean-one-dim-realisable-case}(b) instead.
\end{proof}

\begin{proof}[Proof of Theorem~\ref{thm:nonparametric-multivariate-realisable-mean-ub}]
    Let $\kappa \coloneqq \frac{\epsilon}{q(1-\epsilon)}$.

    (a) For $v \in \mathbb{S}^{d-1}$, we have by the same argument as the proof of~\eqref{Eq:CondVarBound} that
    \begin{align*}
        \Var(v^\top Z_1 \,|\, Z_1 \in \mathbb{R}^d) \leq (1 + \kappa^{2/r})v^\top\Sigma v.
    \end{align*}
    Therefore, writing $\Gamma \coloneqq \Cov(Z_1 \,|\, Z_1 \in \mathbb{R}^d) \in \mathcal{S}_{+}^{d \times d}$, we have 
    \begin{align}
        \| \Gamma\|_{\mathrm{op}} \leq (1 + \kappa^{2/r}) \|\Sigma\|_{\mathrm{op}} 
        \quad\text{and}\quad
        \tr(\Gamma) \leq (1 + \kappa^{2/r})\tr(\Sigma). \label{eq:trace-bound}
    \end{align}
    By Lemma~\ref{lemma:binomial-tail}(b), the event $\mathcal{E}_0 \coloneqq \{|\mathcal{D}| \geq nq(1-\epsilon)/2\}$ has $\mathbb{P}(\mathcal{E}_0) \geq 1-\delta/2$, since $nq(1-\epsilon)\geq 8\log(2/\delta)$.  Moreover, by~\eqref{eq:assumption-on-alg-heavy-tail-multivariate} and~\eqref{eq:trace-bound},
    \begin{align*}
        \mathbb{P}\biggl\{\bigl\|\hat{\theta}_n - \mathbb{E}(Z_1|Z_1\in\mathbb{R}^d)\bigr\|_2^2 > 2C(1+\kappa^{2/r}) \biggl(\frac{\tr(\Sigma)}{nq(1-\epsilon)} + \frac{\|\Sigma\|_{\mathrm{op}}\log(2/\delta)}{nq(1-\epsilon)}\biggr) \,\bigg|\, \mathcal{E}_0 \biggr\} \leq \frac{\delta}{2}.
    \end{align*}
    Thus, 
    \begin{align*}
        &\mathbb{P}\biggl\{\bigl\|\hat{\theta}_n - \mathbb{E}(Z_1|Z_1\in\mathbb{R}^d)\bigr\|_2^2 \leq 2C(1+\kappa^{2/r}) \biggl(\frac{\tr(\Sigma)}{nq(1-\epsilon)} + \frac{\|\Sigma\|_{\mathrm{op}}\log(2/\delta)}{nq(1-\epsilon)}\biggr) \biggr\}\\
        &\geq \mathbb{P}\biggl\{\bigl\|\hat{\theta}_n \!-\! \mathbb{E}(Z_1|Z_1\in\mathbb{R}^d)\bigr\|_2^2 \leq 2C(1\!+\!\kappa^{2/r}) \biggl(\frac{\tr(\Sigma)}{nq(1-\epsilon)} + \frac{\|\Sigma\|_{\mathrm{op}}\log(2/\delta)}{nq(1-\epsilon)}\biggr) \,\bigg|\, \mathcal{E}_0 \biggr\}\mathbb{P}(\mathcal{E}_0)\\
        &\geq 1-\delta.
    \end{align*}
    On the event that $\bigl\{\bigl\|\hat{\theta}_n - \mathbb{E}(Z_1|Z_1\in\mathbb{R}^d)\bigr\|_2^2 \leq 2C(1+\kappa^{2/r}) \bigl(\frac{\tr(\Sigma)}{nq(1-\epsilon)} + \frac{\|\Sigma\|_{\mathrm{op}}\log(2/\delta)}{nq(1-\epsilon)}\bigr)\bigr\}$, we have by Proposition~\ref{prop:bias-of-multivariate-mean}(a) that
    \begin{align*}
        \|\hat{\theta}_n &- \theta_0\|_2^2 \leq 2\bigl\|\hat{\theta}_n - \mathbb{E}(Z_1\,|\,Z_1\in\mathbb{R}^d)\bigr\|_2^2 + 2\bigl\|\mathbb{E}(Z_1\,|\,Z_1\in\mathbb{R}^d) - \theta_0\bigr\|_2^2\\
        &\leq 4C\frac{\tr(\Sigma) + \|\Sigma\|_{\mathrm{op}}\log(2/\delta)}{nq(1-\epsilon)} + 4C\frac{\mathbf{r}(\Sigma) + \log(2/\delta)}{nq(1-\epsilon)} \|\Sigma\|_{\mathrm{op}}\kappa^{2/r} \!+\! 2\|\Sigma\|_{\mathrm{op}} (\kappa^2 \wedge \kappa^{2/r})\\
        &\leq 8C\frac{\tr(\Sigma) + \|\Sigma\|_{\mathrm{op}}\log(2/\delta)}{nq(1-\epsilon)} + (5C+2)\|\Sigma\|_{\mathrm{op}} (\kappa^2 \wedge \kappa^{2/r}),
    \end{align*}
    where the final inequality follows by considering separately the cases $\kappa \leq 1$ and $\kappa > 1$, and in the second case noting that $\mathbf{r}(\Sigma) \leq nq(1-\epsilon)$ and $\log(2/\delta) \leq nq(1-\epsilon)/8$.
    
    \medskip(b) For $v \in \mathbb{S}^{d-1}$, we have by the same argument as in the proof of~\eqref{eq:sub-exponential-norm-bound} that conditional on $\{Z_1 \in \mathbb{R}^d\}$,
    \begin{align*}
        \bigl\| v^\top Z_1 - \mathbb{E}(v^\top Z_1 \,|\, Z_1 \in \mathbb{R}^d) \bigr\|_{\psi_1} \leq \sqrt{(1+\kappa)v^\top\Sigma v},
    \end{align*}
    so that $Z_1 \,|\, \{Z_1 \in \mathbb{R}^d\} \in \mathcal{P}_{d,\psi_1}\bigl( \mathbb{E}(v^\top Z_1 \,|\, Z_1 \in \mathbb{R}^d), (1+\kappa)\Sigma \bigr)$. By Lemma~\ref{lemma:binomial-tail}(b), the event $\mathcal{E}_0 \coloneqq \{|\mathcal{D}| \geq nq(1-\epsilon)/2\}$ satisfies $\mathbb{P}(\mathcal{E}_0) \geq 1-\delta/4$ since $nq(1-\epsilon) \geq 8\log(8/\delta)$. Moreover, writing
    \begin{align*}
        \mathcal{E}_2 \coloneqq \biggl\{ \bigl\|\hat{\theta}_n - \mathbb{E}(Z_1\,|\,Z_1\in\mathbb{R}^d)\bigr\|_2^2 \leq 48(1+\kappa) \cdot \frac{\tr(\Sigma) + \|\Sigma\|_{\mathrm{op}} \log(8/\delta)}{nq(1-\epsilon)}\biggr\},
    \end{align*}
    we have by Lemma~\ref{lemma:concentration-of-sample-mean-sub-exponential-vector} (a consequence of the PAC--Bayes lemma) that $\mathbb{P}\bigl(\mathcal{E}_2 \,\big|\, |\mathcal{D}| = s\bigr) \geq 1-\delta/4$ for $s\geq nq(1-\epsilon)/2$, so $\mathbb{P}(\mathcal{E}_0 \cap \mathcal{E}_2) \geq 1-\delta/2$. On $\mathcal{E}_0 \cap \mathcal{E}_2$, we have by Proposition~\ref{prop:bias-of-multivariate-mean}(b) that
    \begin{align*}
        \|\hat{\theta}_n - \theta_0\|_2^2 &\leq 2\bigl\|\hat{\theta}_n - \mathbb{E}(Z_1\,|\,Z_1\in\mathbb{R}^d)\bigr\|_2^2 + 2\bigl\|\mathbb{E}(Z_1\,|\,Z_1\in\mathbb{R}^d) - \theta_0\bigr\|_2^2\\
        &\lesssim \frac{\tr(\Sigma) + \|\Sigma\|_{\mathrm{op}} \log(8/\delta)}{nq(1-\epsilon)} + \|\Sigma\|_{\mathrm{op}}\kappa\cdot \frac{\mathbf{r}(\Sigma) + \log(8/\delta)}{nq(1-\epsilon)} + \|\Sigma\|_{\mathrm{op}}\kappa^2\\
        &\lesssim \frac{\tr(\Sigma) + \|\Sigma\|_{\mathrm{op}} \log(8/\delta)}{nq(1-\epsilon)} + \|\Sigma\|_{\mathrm{op}}\kappa^2, \numberthis \label{eq:E0-cap-E2}
    \end{align*}
    where the final inequality follows by considering separately the cases $\kappa \leq 1$ and $\kappa > 1$, and in the second case noting that $\mathbf{r}(\Sigma) \leq nq(1-\epsilon)$ and $\log(8/\delta) \leq nq(1-\epsilon)/8$.
    
    For the last term in the upper bound, we first consider the case where $r>1$. For $w \in \mathbb{R}^d$, we have by the same argument as in the proof of~\eqref{eq:log-mgf-bound} that
    \begin{align}
        \log\mathbb{E}\bigl\{ \exp\bigl(\lambda w^\top(Z_1 -\theta_0)\bigr) \bigm| Z_1 \in \mathbb{R}^d \bigr\} \leq \log(1+\kappa^2) + \log 2 + \bigl(2\lambda \sqrt{w^\top\Sigma w}\bigr)^{r/(r-1)},  \label{eq:log-mgf-f-lambda}
    \end{align}
    for all $\lambda>0$. Let $\beta\coloneqq \mathbf{r}(\Sigma)$, let $\mu$ denote the distribution of $\mathsf{N}_d(0,\beta^{-1}\Sigma)$ and for $u\in\Sigma^{1/2}\mathbb{S}^{d-1}$, let $\rho_u$ denote the conditional distribution of $Y$ given $\bigl\{\|Y-u\|_2 \leq 2\|\Sigma\|_{\mathrm{op}}^{1/2}\bigr\}$, where $Y\sim\mathsf{N}_d(u,\beta^{-1}\Sigma)$.  By Chebychev's inequality, 
    \[
    \mathbb{P}\bigl(\|Y-u\|_2 \geq 2\|\Sigma\|_{\mathrm{op}}^{1/2}\bigr) \leq \frac{\tr(\Sigma)}{4\beta \|\Sigma\|_{\mathrm{op}}} = \frac{1}{4}.
    \]
    Hence, by the third displayed equation of \citet[][p.~11]{zhivotovskiy2024dimension}, we have 
    \begin{align*}
        \mathrm{KL}(\rho_u,\mu) = \log \biggl( \frac{1}{\mathbb{P}\bigl(\|Y-u\|_2 \leq 2\|\Sigma\|_{\mathrm{op}}^{1/2}\bigr)} \biggr) + \frac{\beta}{2} \leq 2\log 2 + \frac{\mathbf{r}(\Sigma)}{2}.
    \end{align*}
    Fix $u \in \Sigma^{1/2}\mathbb{S}^{d-1}$, let $v\in\mathbb{R}^d$ be such that $\|v-u\|_2 \leq 2\|\Sigma\|_{\mathrm{op}}^{1/2}$, and for $\lambda>0$, define $f_{\lambda}:\mathbb{R}^d\times\mathbb{R}^d \to \mathbb{R}$ by $f_{\lambda}(x,y) \coloneqq \lambda y^\top\Sigma^{-1/2}(x-\theta_0)$. Then, since $\|v\|_2 \leq 3\|\Sigma\|_{\mathrm{op}}^{1/2}$, we have by~\eqref{eq:log-mgf-f-lambda} that 
    \begin{align*}
        \log \mathbb{E}_{Z\sim R}\bigl(e^{f_\lambda(Z,v)} \bigm| Z\in\mathbb{R}^d \bigr) \leq \log(2+2\kappa^2) + \bigl(6\lambda \|\Sigma\|_{\mathrm{op}}^{1/2}\bigr)^{r/(r-1)},
    \end{align*}
    so $\mathbb{E}_{\xi_u \sim \rho_u} \bigl\{\log \mathbb{E}_{Z\sim R}\bigl(e^{f_{\lambda}(Z,\xi_u)} \bigm| Z\in\mathbb{R}^d \bigr) \bigr\} \leq \log(2+2\kappa^2) + \bigl(6\lambda \|\Sigma\|_{\mathrm{op}}^{1/2}\bigr)^{r/(r-1)}$.
    Therefore, for $s\geq nq(1-\epsilon)/2$, by the PAC--Bayes lemma (Lemma~\ref{lemma:PAC-Bayes}), conditional on $|\mathcal{D}| = s$, we have with probability at least $1-\delta/4$ that
    \begin{align*}
        \biggl\| \frac{1}{|\mathcal{D}|}\sum_{i\in\mathcal{D}} Z_i &- \theta_0 \biggr\|_2 = \sup_{u\in\Sigma^{1/2}\mathbb{S}^{d-1}} \frac{1}{\lambda|\mathcal{D}|}\sum_{i\in\mathcal{D}} \mathbb{E}_{\xi_u \sim \rho_u} f_{\lambda}(Z_i, \xi_u)\\
        &\leq \inf_{\lambda>0} \biggl\{ \frac{\log(2+2\kappa^2)}{\lambda} + \bigl(6\|\Sigma\|_{\mathrm{op}}^{1/2}\bigr)^{r/(r-1)} \lambda^{1/(r-1)} + \frac{\mathbf{r}(\Sigma)/2 + 2\log(4/\delta)}{s\lambda} \biggr\}\\
        \overset{(i)}&{\leq} 12\|\Sigma\|_{\mathrm{op}}^{1/2} \biggl\{ \log(2+2\kappa^2) + \frac{\mathbf{r}(\Sigma)/2 + 2\log(4/\delta)}{s} \biggr\}^{1/r}\\
        \overset{(ii)}&{\leq} 12\|\Sigma\|_{\mathrm{op}}^{1/2} \bigl\{ \log(2+2\kappa^2) + 2\bigr\}^{1/r} \lesssim \|\Sigma\|_{\mathrm{op}}^{1/2} \log^{1/r}(2+2\kappa),
    \end{align*}
    where $(i)$ follows by choosing $\lambda = \frac{1}{6\|\Sigma\|_{\mathrm{op}}^{1/2}} \bigl\{ \log(2+2\kappa^2) + \frac{\mathbf{r}(\Sigma)/2 + 2\log(4/\delta)}{s} \bigr\}^{(r-1)/r}$ and $(ii)$ follows from the assumptions that $nq(1-\epsilon) \geq \mathbf{r}(\Sigma)$ and $\delta \geq 8\exp\bigl( -nq(1-\epsilon)/8 \bigr)$. Hence, there exists a universal constant $C_1 > 0$ such that the event 
    \begin{align}
        \mathcal{E}_3 \coloneqq \biggl\{ \biggl\| \frac{1}{|\mathcal{D}|}\sum_{i\in\mathcal{D}} Z_i - \theta_0 \biggr\|_2^2 \leq C_1\|\Sigma\|_{\mathrm{op}} \log^{2/r}(2+2\kappa) \biggr\}, \label{eq:def-E3}
    \end{align}
    satisfies $\mathbb{P}(\mathcal{E}_0 \cap \mathcal{E}_3) \geq 1-\delta/2$. Thus, on the event $\mathcal{E}_0 \cap \mathcal{E}_2 \cap \mathcal{E}_3$, which has probability at least $1-\delta$, we combine~\eqref{eq:E0-cap-E2} and~\eqref{eq:def-E3} to obtain the desired result for $r>1$.

    Finally, we consider the case where $r=1$. For $w \in \mathbb{R}^d$, we have by the same argument as the proof of~\eqref{eq:log-mgf-bound-r=1} that
    \begin{align*}
        \log\mathbb{E}\bigl\{ \exp\bigl(\lambda w^\top(Z_1 -\theta_0)\bigr) \, \big| \, Z_1 \in \mathbb{R}^d \bigr\} \leq \log(1+\kappa^2) + \log 2 + \bigl(2\lambda \sqrt{w^\top\Sigma w}\bigr)^2, 
    \end{align*}
    for $|\lambda| \leq \frac{1}{2}\|\Sigma\|_{\mathrm{op}}^{-1/2} \leq \frac{1}{2\sqrt{w^\top\Sigma w}}$. Hence, for $s\geq nq(1-\epsilon)/2$, by following the same proof as the $r>1$ case above, we deduce that, conditional on $|\mathcal{D}|=s$, we have with probability at least $1-\delta/4$ that
    \begin{align*}
        \biggl\| \frac{1}{|\mathcal{D}|}\sum_{i\in\mathcal{D}} Z_i \!-\! \theta_0 \biggr\|_2
        &\leq \inf_{\lambda \in (0,\frac{1}{2}\|\Sigma\|_{\mathrm{op}}^{-1/2}]} \biggl\{ \frac{\log(2\!+\!2\kappa^2)}{\lambda} \!+\! \bigl(6\|\Sigma\|_{\mathrm{op}}^{1/2}\bigr)^2 \lambda \!+\! \frac{\mathbf{r}(\Sigma)/2 \!+\! 2\log(4/\delta)}{s\lambda} \biggr\}\\
        \overset{(i)}&{\leq} 2\|\Sigma\|_{\mathrm{op}}^{1/2} \biggl\{ \log(2+2\kappa^2) + 9 + \frac{\mathbf{r}(\Sigma)/2 + 2\log(4/\delta)}{s} \biggr\}\\
        \overset{(ii)}&{\leq} 2\|\Sigma\|_{\mathrm{op}}^{1/2} \bigl\{ \log(2+2\kappa^2) + 11 \bigr\} \lesssim \|\Sigma\|_{\mathrm{op}}^{1/2} \log(2+2\kappa),
    \end{align*}
    where $(i)$ follows by choosing $\lambda = \frac{1}{2}\|\Sigma\|_{\mathrm{op}}^{-1/2}$, and $(ii)$ follows from the assumptions that $nq(1-\epsilon) \geq \mathbf{r}(\Sigma)$ and $\delta \geq 8\exp\bigl( -nq(1-\epsilon)/8 \bigr)$. Hence, there exists a universal constant $C_2 > 0$ such that the event 
    \begin{align}
        \mathcal{E}_4 \coloneqq \biggl\{ \biggl\| \frac{1}{|\mathcal{D}|}\sum_{i\in\mathcal{D}} Z_i - \theta_0 \biggr\|_2^2 \leq C_2 \|\Sigma\|_{\mathrm{op}} \log^2(2+2\kappa) \biggr\}, \label{eq:def-E4}
    \end{align}
    satisfies $\mathbb{P}(\mathcal{E}_0 \cap \mathcal{E}_4) \geq 1 - \delta/2$.  Thus, on the event $\mathcal{E}_0 \cap \mathcal{E}_2 \cap \mathcal{E}_4$, which has probability at least $1-\delta$, we combine~\eqref{eq:E0-cap-E2} and~\eqref{eq:def-E4} to obtain the desired result for $r=1$.
\end{proof}

\section{Proofs from Section~\ref{sec:regression-missing-response}}

\subsection{Proof of Lemma~\ref{lemma:beta-gamma-regular}}
\begin{proof}[Proof of Lemma~\ref{lemma:beta-gamma-regular}]
(a) Let $(v_m)$ be a sequence in $\mathbb{S}^{d-1}$ with 
\[
\mathbb{P}\bigl(|X_1^\top v_m| > \gamma\bigr) \searrow \inf_{v \in \mathbb{S}^{d-1}} \mathbb{P}\bigl(|X_1^\top v| > \gamma\bigr)
\]
as $m \rightarrow \infty$.  Then by compactness of $\mathbb{S}^{d-1}$, there exists a subsequence $(v_{m_k})$, as well as $v_* \in \mathbb{S}^{d-1}$, for which $v_{m_k} \rightarrow v_*$ as $k \rightarrow \infty$.  But then $|X_1^\top v_{m_k}| \stackrel{d}{\rightarrow} |X_1^\top v_*|$ as $k \rightarrow \infty$, so by, e.g., \citet[][Lemma~2.2]{van1998asymptotic},
\[
\mathbb{P}\bigl(|X_1^\top v_*| > \gamma\bigr) \leq \liminf_{k \rightarrow \infty} \mathbb{P}\bigl(|X_1^\top v_{m_k}| > \gamma\bigr) = \inf_{v \in \mathbb{S}^{d-1}} \mathbb{P}\bigl(|X_1^\top v| > \gamma\bigr).
\]
It follows that the infimum in the definition of $\beta$ is attained.  

If $\beta = 0$ for all $\gamma > 0$, then for every $\gamma > 0$ we can find $v_*(\gamma) \in \mathbb{S}^{d-1}$ with $\mathbb{P}\bigl(|X_1^\top v_*(\gamma)| > \gamma\bigr) = 0$.  Writing $v_m \coloneqq v_*(1/m)$, there exist integers $1 \leq m_1 < m_2 < \ldots$ and $v_{**} \in \mathbb{S}^{d-1}$ with $v_{m_k} \rightarrow v_{**}$ as $k \rightarrow \infty$.  Since $|X_1^\top v_{m_k}| - 1/m_k \stackrel{d}{\rightarrow} |X_1^\top v_{**}|$ as $k \rightarrow \infty$ we have by \citet[][Lemma~2.2]{van1998asymptotic} again that
\[
\mathbb{P}\bigl(|X_1^\top v_{**}| > 0\bigr) \leq \liminf_{k \rightarrow \infty} \mathbb{P}\biggl(|X_1^\top v_{m_k}| > \frac{1}{m_k}\biggr) = 0.
\]
But then, defining the hyperplane $H \coloneqq \{x \in \mathbb{R}^d:x^\top v_{**} = 0\}$, we have $P(H) = 1$.

\medskip

(b) The claim is equivalent to showing that there exists a universal constant $c>0$ such that if $\frac{d + \log(1/\delta)}{n} \leq c\beta^2$, then,  with probability at least $1-\delta$,
\begin{align*}
    \sup_{v\in\mathbb{S}^{d-1}} -\frac{1}{n} \sum_{i=1}^n \mathbbm{1}_{\{|X_i^\top v| > \gamma\}} \leq -2\beta.
\end{align*}
To establish this, let $\mathcal{H} \coloneqq \{x\mapsto -\mathbbm{1}_{\{|x^\top v| > \gamma\}} : v\in \mathbb{S}^{d-1}\}$.  Then 
\begin{align*}
    \sup_{v\in\mathbb{S}^{d-1}} -\frac{1}{n} \sum_{i=1}^n \mathbbm{1}_{\{|X_i^\top v| > \gamma\}} + 3\beta &\leq \sup_{v\in\mathbb{S}^{d-1}} \frac{1}{n} \sum_{i=1}^n \bigl\{-\mathbbm{1}_{\{|X_i^\top v| > \gamma\}} + \mathbb{P}(|X_i^\top v| > \gamma) \bigr\}\\
    &= \sup_{h\in\mathcal{H}} \frac{1}{n}\sum_{i=1}^n \bigl\{h(X_i) - \mathbb{E}h(X_i)\bigr\}\eqqcolon V, \numberthis \label{eq:beta-gamma-regular-bound-1}
\end{align*}
where the first inequality follows since $\mathbb{P}(|X_i^\top v| > \gamma) \geq 3\beta$ for all $v\in\mathbb{S}^{d-1}$. By the bounded differences inequality~\citep[e.g.,][Theorem 6.2]{boucheron2003concentration}, with probability at least $1 - \delta$,
\begin{align} \label{eq:beta-gamma-regular-bounded-diff}
     V \leq \mathbb{E}(V) + \sqrt{\frac{\log(1/\delta)}{2n}}.
\end{align}
For a collection $\mathcal{H}_1$ of binary-valued functions, we let $\mathrm{VC}(\mathcal{H}_1)$ denote its Vapnik--Chervonenkis dimension.  For $v \in \mathbb{R}^d$ and $b \in \mathbb{R}$, define $g_{v,b}:\mathbb{R}^d \rightarrow \mathbb{R}$ by $g_{v,b}(x) \coloneqq x^\top v + b$, and define the vector space $\mathcal{G} \coloneqq \{g_{v,b}: v\in\mathbb{R}^d,b\in\mathbb{R}\}$.  Now let $\mathcal{H'} \coloneqq \{x\mapsto -\mathbbm{1}_{\{g(x)>0\}} : g\in \mathcal{G}\}$, which by \citet[Proposition~4.20]{wainwright2019high} satisfies $\mathrm{VC}(\mathcal{H}') \leq \mathrm{dim}(\mathcal{G}) = d+1$. Then
\begin{align*}
    \mathcal{H} = \bigl\{x\mapsto -\mathbbm{1}_{\{g_{v,-\gamma}(x) > 0\} \cup \{g_{-v,-\gamma}(x) > 0\}}& : v\in\mathbb{S}^{d-1}\bigr\} \\
    &\subseteq \bigl\{x\mapsto -\mathbbm{1}_{\{g_1(x)>0\} \cup \{g_2(x)>0\}} : g_1,g_2\in\mathcal{G} \bigr\}.
\end{align*}
Hence, by \citet[Lemma~3.2.3]{blumer1989learnability}, we have $\mathrm{VC}(\mathcal{H}) \leq 4\log_2(6)\mathrm{VC}(\mathcal{H}') \leq 11d+11$.
We deduce by \citet[Theorem~8.3.23]{vershynin2018high} that there exists a universal constant $C_1 > 0$ such that $\mathbb{E}(V) \leq C_1\sqrt{\frac{d+1}{n}}$.  Thus, by~\eqref{eq:beta-gamma-regular-bound-1} and~\eqref{eq:beta-gamma-regular-bounded-diff} we conclude that there exists a universal constant $C_2>0$ such that with probability at least $1-\delta$,
\begin{align*}
    \sup_{v\in\mathbb{S}^{d-1}} -\frac{1}{n} \sum_{i=1}^n \mathbbm{1}_{\{|X_i^\top v| > \gamma\}} + 3\beta \leq \mathbb{E}(V) + \sqrt{\frac{\log(1/\delta)}{2n}} \leq C_2\sqrt{\frac{d+\log(1/\delta)}{n}} \leq \beta,
\end{align*}
where the final inequality follows by choosing $c \coloneqq 1/C_2^2$ and using the assumption that $\frac{d+\log(1/\delta)}{n} \leq c\beta^2$. This proves the claim.
\end{proof}

\subsection{Proof of Theorem~\ref{thm:gaussian-realisable-response}} \label{sec:proofs-regression-missing-response}
We begin with some preliminary lemmas. 
\begin{lemma}\label{lemma:uniform-dkw}
    Consider the setting of Theorem~\ref{thm:gaussian-realisable-response}. There exists a universal constant $C>0$ such that for $\delta\in(0,1]$, we have with probability at least $1-\delta$ conditional on $X_1=x_1,\ldots,X_n=x_n$ that
    \begin{align*}
        \sup_{\theta\in\mathbb{R}^d} d_{\mathrm{K}}^{\mathrm{sym}}(\hat{R}_{n,\theta}, R_{n,\theta}) \leq C\sqrt{\frac{d+\log(1/\delta)}{n}}.
    \end{align*}
\end{lemma}
\begin{proof}
First define
    \begin{align*}
        V \coloneqq \sup_{\theta\in\mathbb{R}^d} d_{\mathrm{K}}^{\mathrm{sym}}(\hat{R}_{n,\theta}, R_{n,\theta}) = \sup_{\theta\in\mathbb{R}^d} \sup_{A \in \mathcal{A}^{\mathrm{sym}}} \biggl| \frac{1}{n} \sum_{i=1}^n \mathbbm{1}_{\{Z_i - x_i^\top \theta \in A\}} - \frac{1}{n} \sum_{i=1}^n \tilde{R}_{i, \theta}(A) \biggr|.
    \end{align*}
    Then, by the bounded differences inequality~\citep[e.g.,][Theorem 6.2]{boucheron2003concentration}, with probability at least $1 - \delta$,
    \begin{align} \label{eq:uniform-dkw-bounded-diff}
        V \leq \mathbb{E}(V) + \sqrt{\frac{\log(1/\delta)}{2n}}.
    \end{align}
    Now define
    \begin{align*}
        \mathcal{G} \!\coloneqq \!\Bigl\{ g:\mathbb{R}^d \times \mathbb{R}_{\star} \to \mathbb{R} \text{ s.t. } g(x,z) \!=\! (z \!-\! x^\top\theta \!-\! t)\mathbbm{1}_{\{z\neq \star\}}\!\! +  \!\!\mathbbm{1}_{\{z= \star\}} \text{ for some } \theta\in\mathbb{R}^d,\, t\in\mathbb{R} \Bigr\},
    \end{align*}
    and define $\mathcal{H}_+ \coloneqq \bigl\{ (x,z) \mapsto \mathbbm{1}_{\{g(x,z) \leq 0\}} : g \in \mathcal{G}\bigr\}$ and $\mathcal{H}_- \coloneqq \bigl\{ (x,z) \mapsto \mathbbm{1}_{\{g(x,z) \leq 0\}} : g \in -\mathcal{G}\bigr\}$. Then 
    \begin{align*}
        V = \sup_{h \in \mathcal{H}_+ \cup \mathcal{H}_-} \biggl| \frac{1}{n} \sum_{i=1}^n \bigl\{h(x_i,Z_i) - \mathbb{E} h(x_i,Z_i)\bigr\} \biggr|.
    \end{align*}
    Since $\mathcal{G}$ is a vector space of functions with $\dim(\mathcal{G}) = d+1$, by \citet[Exercise~3.24(b)]{mohri2018foundations} and \citet[Proposition~4.20]{wainwright2019high}, we deduce that $\mathrm{VC}(\mathcal{H}_+ \cup \mathcal{H}_-) \leq \mathrm{VC}(\mathcal{H}_+) + \mathrm{VC}(\mathcal{H}_-) + 1 \leq 2\dim(\mathcal{G}) + 1 \leq 2d+3$. Therefore, applying~\citet[Theorem~8.3.23]{vershynin2018high} yields that $\mathbb{E}(V) \leq C'\sqrt{\frac{2d+3}{n}}$ for some universal constant $C'>0$. Combining this with~\eqref{eq:uniform-dkw-bounded-diff} proves the desired result.
\end{proof}

\begin{lemma} \label{lem:fixed-KS-linear-regression}
    Consider the setting of Theorem~\ref{thm:gaussian-realisable-response}, and assume that $\theta \neq \theta_0$.  Then, writing $a\coloneqq \frac{1}{2}\|\theta_0 - \theta\|_2 > 0$ and $b \coloneqq \frac{1}{2}\log\bigl( 1+ \frac{4(1-\beta q(1-\epsilon))}{\beta q(1-\epsilon)} \bigr)$, we have
    \begin{align*}
        d_{\mathrm{K}}^{\mathrm{sym}}\bigl(R_{n,\theta},\mathcal{R}_0^{\mathrm{Lin}}\bigr) \geq \beta q(1-\epsilon)\Phi\biggl( \frac{a\gamma}{\sigma} - \frac{2\sigma b}{a\gamma} \biggr) - \Phi\biggl( -\frac{a\gamma}{\sigma} - \frac{2\sigma b}{a\gamma} \biggr) \eqqcolon f_{\mathrm{K},b}(a),
    \end{align*}
 where $f_{\mathrm{K},b} : (0,\infty) \to (0,\infty)$ is strictly increasing and continuous.
\end{lemma}
\begin{proof}
    By Assumption~\ref{asm:fixed-design-regularity}, we may assume without loss of generality that there exists $\mathcal{T}_+ \subseteq [n]$ such that $|\mathcal{T}_+| \geq \beta n$ and $-x_i^\top (\theta_0 - \theta) \geq 2a\gamma$.  By Proposition~\ref{prop:univariate-realisability}, for $i\in\mathcal{T}_+$ and $t\in\mathbb{R}$, we have
    \begin{align*}
        \tilde{R}_{i,\theta}\bigl((-\infty,t]\bigr) \geq q(1-\epsilon)\Phi_{(0,\sigma)}\bigl( t - x_i^\top(\theta_0 - \theta) \bigr) &\geq q(1-\epsilon)\Phi_{(0,\sigma)}(t+2a\gamma) \\
        &= q(1-\epsilon)\Phi_{(-2a\gamma,\sigma)}(t).
    \end{align*}
    Moreover, by Proposition~\ref{prop:univariate-realisability} again for $R_0 \in \mathcal{R}_0^{\mathrm{Lin}}$ and $t\in\mathbb{R}$, we have $R_0\bigl( (-\infty,t] \bigr) \leq \Phi_{(0,\sigma)}(t)$. Therefore,
    \begin{align*}
        d_{\mathrm{K}}^{\mathrm{sym}}\bigl(R_{n,\theta} , \mathcal{R}_0^{\mathrm{Lin}}\bigr)
        &\geq \inf_{R_0\in \mathcal{R}_0^{\mathrm{Lin}}} \sup_{t \in \mathbb{R}} \biggl\{\frac{1}{n}\sum_{i=1}^n \tilde{R}_{i,\theta}\bigl( (-\infty,t] \bigr) - R_0\bigl( (-\infty,t] \bigr) \biggr\}\\
        &\geq \inf_{R_0\in \mathcal{R}_0^{\mathrm{Lin}}} \sup_{t \in \mathbb{R}}\, \biggl\{\frac{1}{n}\sum_{i\in\mathcal{T}_+} \tilde{R}_{i,\theta}\bigl( (-\infty,t] \bigr) - R_0\bigl( (-\infty,t] \bigr) \biggr\}\\
        &\geq \sup_{t\in\mathbb{R}}\, \bigl\{ \beta q(1-\epsilon)\Phi_{(-2a\gamma,\sigma)}(t) - \Phi_{(0,\sigma)}(t) \bigr\}\\
        &\geq \beta q(1-\epsilon)\Phi\biggl( \frac{a\gamma}{\sigma} - \frac{2\sigma b}{a\gamma} \biggr) - \Phi\biggl( -\frac{a\gamma}{\sigma} - \frac{2\sigma b}{a\gamma} \biggr) = f_{\mathrm{K},b}(a),
    \end{align*}
    where the final inequality follows by choosing $t = -\frac{2\sigma^2b}{a\gamma} - a\gamma$.  The function $f_{\mathrm{K},b}$ is continuous as a composition of continuous functions, and the fact that it is strictly increasing follows as in the proof of Lemma~\ref{lemma:one-dim-kolmogorov-distance-realisable-sets}, setting $(\epsilon, q)$ therein as $(\bar{\epsilon}, \bar{q})$, with $\bar{\epsilon} \coloneqq 1 - \beta q(1 - \epsilon)$ and $\bar{q} \coloneqq 1$.
\end{proof}

\begin{proof}[Proof of Theorem~\ref{thm:gaussian-realisable-response}]
    Let $\delta\in(0,1]$ and for the universal constant $C > 0$ from Lemma~\ref{lemma:uniform-dkw}, define the event
    \[
    \mathcal{E} \coloneqq \biggl\{\sup_{\theta\in\mathbb{R}^d} d_{\mathrm{K}}^{\mathrm{sym}}(\hat{R}_{n,\theta}, R_{n,\theta}) \leq C\sqrt{\frac{d+\log(1/\delta)}{n}}\biggr\}. 
    \]
    By Lemma~\ref{lemma:uniform-dkw}, satisfies $\mathbb{P}(\mathcal{E}\,|\,X_1=x_1,\ldots,X_n=x_n) \geq 1-\delta$, and from now on, we will work on the event $\mathcal{E}$.  Recalling that $R_{n,\theta_0}\in \mathcal{R}_0^{\mathrm{Lin}}$, we have
    \begin{align*}
        d_{\mathrm{K}}^{\mathrm{sym}}(\hat{R}_{n,\theta_0}, \mathcal{R}_0^{\mathrm{Lin}}) \leq d_{\mathrm{K}}^{\mathrm{sym}}(\hat{R}_{n,\theta_0}, R_{n,\theta_0}) \leq C\sqrt{\frac{d+\log(1/\delta)}{n}}.
    \end{align*}
    Moreover, if $\theta\in\mathbb{R}^d$ satisfies $d_{\mathrm{K}}^{\mathrm{sym}}(R_{n,\theta}, \mathcal{R}_0^{\mathrm{Lin}}) > 2C\sqrt{\frac{d+\log(1/\delta)}{n}}$, then 
    \begin{align*}
        d_{\mathrm{K}}^{\mathrm{sym}}(\hat{R}_{n,\theta}, \mathcal{R}_0^{\mathrm{Lin}}) &\geq d_{\mathrm{K}}^{\mathrm{sym}}(R_{n,\theta}, \mathcal{R}_0^{\mathrm{Lin}}) - d_{\mathrm{K}}^{\mathrm{sym}}(\hat{R}_{n,\theta}, R_{n,\theta})\\
        &> C\sqrt{\frac{d+\log(1/\delta)}{n}} \geq d_{\mathrm{K}}^{\mathrm{sym}}(\hat{R}_{n,\theta_0}, \mathcal{R}_0^{\mathrm{Lin}}),
    \end{align*}
   so $\hat{\theta}_n^{\mathrm{K}} \neq \theta$. Therefore, with $b$ and $f_{\mathrm{K},b}$ as defined in Lemma~\ref{lem:fixed-KS-linear-regression}, we deduce that with probability at least $1-\delta$,
    \begin{align*}
        \|\hat{\theta}^{\mathrm{K}}_n - \theta_0\|_2 &\leq \sup \biggl\{ \|\theta - \theta_0\|_2 : \theta\in\mathbb{R}^d ,\, d_{\mathrm{K}}^{\mathrm{sym}}(R_{n,\theta}, \mathcal{R}_0^{\mathrm{Lin}}) \leq 2C\sqrt{\frac{d+\log(1/\delta)}{n}} \biggr\}\\
        &\leq 2\inf \biggl\{ a>0 : \beta q(1-\epsilon)\Phi\biggl( \frac{a\gamma}{\sigma} - \frac{2\sigma b}{a\gamma} \biggr) - \Phi\biggl( -\frac{a\gamma}{\sigma} - \frac{2\sigma b}{a\gamma} \biggr)\\
        &\hspace{7cm}\geq 2C\sqrt{\frac{d+\log(1/\delta)}{n}} \biggr\}, \numberthis \label{eq:gaussian-realisable-response-eq1}
    \end{align*}
    where to obtain the second inequality, we note that by Lemma~\ref{lem:fixed-KS-linear-regression}, $d_{\mathrm{K}}^{\mathrm{sym}}(R_{n,\theta}, \mathcal{R}_0^{\mathrm{Lin}}) \geq f_{\mathrm{K},b}\bigl( \frac{\|\theta-\theta_0\|_2}{2} \bigr)$ and $f_{\mathrm{K},b}$ is a strictly increasing and continuous function.  Letting $a = \frac{3\sigma b}{\gamma\sqrt{\xi\log n}}$, we have by our assumption on $b$ that $2\sigma b/(a\gamma) - a\gamma/\sigma = \frac{2}{3}\sqrt{\xi\log n} - \frac{3b}{\sqrt{\xi\log n}} > 0$, so
    \begin{align*}
        \beta q(1&-\epsilon)\Phi\biggl( \frac{a\gamma}{\sigma} - \frac{2\sigma b}{a\gamma} \biggr) - \Phi\biggl( -\frac{a\gamma}{\sigma} - \frac{2\sigma b}{a\gamma} \biggr)\\
        \overset{(i)}&{\geq} \frac{\beta q(1-\epsilon)}{\bigl(-\frac{a\gamma}{\sigma} + \frac{2\sigma b}{a\gamma}\bigr) + \bigl(-\frac{a\gamma}{\sigma} + \frac{2\sigma b}{a\gamma}\bigr)^{-1}} \cdot \phi\biggl(-\frac{a\gamma}{\sigma} + \frac{2\sigma b}{a\gamma}\biggr) - \frac{1}{\frac{a\gamma}{\sigma} + \frac{2\sigma b}{a\gamma}} \cdot \phi\biggl( \frac{a\gamma}{\sigma} + \frac{2\sigma b}{a\gamma} \biggr)\\
        \overset{(ii)}&{\geq} \biggl( \frac{a\gamma}{\sigma} + \frac{2\sigma b}{a\gamma} \biggr)^{-1} \frac{1}{\sqrt{2\pi}} \biggl\{ \beta q(1-\epsilon) \exp\biggl( -\frac{a^2\gamma^2}{2\sigma^2} - \frac{2\sigma^2b^2}{a^2\gamma^2} + 2b \biggr) \\
        &\hspace{8cm} - \exp\biggl( -\frac{a^2\gamma^2}{2\sigma^2} -\frac{2\sigma^2b^2}{a^2\gamma^2} - 2b \biggr) \biggr\}\\
        \overset{(iii)}&{\geq} \biggl( \frac{a\gamma}{\sigma} + \frac{2\sigma b}{a\gamma} \biggr)^{-1} \frac{\sqrt{2}}{\sqrt{\pi}} \cdot \exp\biggl( -\frac{a^2\gamma^2}{2\sigma^2} - \frac{2\sigma^2b^2}{a^2\gamma^2}\biggr)\\
        \overset{(iv)}&{\geq} \frac{1}{\sqrt{\xi\log n}} \cdot n^{-\xi/4} \overset{(v)}{\geq} 2C\sqrt{\frac{d+\log(1/\delta)}{n}}
    \end{align*}
    where $(i)$ follows from the Mills ratio bound $\phi(x)/(x + x^{-1}) \leq \Phi(-x) \leq \phi(x)/x$ for $x > 0$; $(ii)$ follows since $\frac{1}{2} \leq b \leq \frac{\xi\log n}{9}$ implies $\bigl(-\frac{a\gamma}{\sigma} + \frac{2\sigma b}{a\gamma}\bigr) + \bigl(-\frac{a\gamma}{\sigma} + \frac{2\sigma b}{a\gamma}\bigr)^{-1} \leq \frac{a\gamma}{\sigma} + \frac{2\sigma b}{a\gamma}$; $(iii)$ follows by substituting the definition of $b$ and using the fact that $\beta q(1-\epsilon)\leq 1/2$; $(iv)$ follows since $b \leq \frac{\xi\log n}{30}$ implies $\frac{a^2\gamma^2}{\sigma^2} \leq \frac{\xi\log n}{100}$; and $(v)$ follows from the assumption that $n^{1-\xi} \geq C_1\bigl\{d+\log(1/\delta)\bigr\}$ with $C_1 \coloneqq 4C^2$ and using the fact that $x^{\xi/2} \geq \xi\log x$ for $x\in(0,\infty)$. Therefore, with probability at least $1-\delta$, we have by~\eqref{eq:gaussian-realisable-response-eq1} that
    \begin{align*}
        \|\hat{\theta}_n^{\mathrm{K}} - \theta_0\|_2^2 \leq \frac{36\sigma^2 b^2}{\gamma^2\xi\log n}\leq \frac{9\sigma^2 \log^2\bigl( 1+ \frac{4(1-\beta q(1-\epsilon))}{\beta q(1-\epsilon)} \bigr)}{\gamma^2 \xi \log \bigl(nq(1-\epsilon)\bigr)},
    \end{align*}
    as required.
\end{proof}

\section{Proof of Proposition~\ref{prop:adaptation}}
\begin{proof}[Proof of Proposition~\ref{prop:adaptation}]
    Let $\epsilon_1 \coloneqq \min \{\epsilon\in\mathcal{C} : \epsilon\geq\epsilon_{\star}\}$. For $\epsilon\in\mathcal{C}$, define the event $\mathcal{E}_{\epsilon} \coloneqq \bigl\{ \|\hat{\theta}_n(\epsilon,\delta') - \theta_0\|_2 \leq \phi(\epsilon,\delta') \bigr\}$ and let $\mathcal{E} \coloneqq \bigcap_{\epsilon\in\mathcal{C}: \epsilon\geq\epsilon_1} \mathcal{E}_{\epsilon}$. By~\eqref{eq:phi-def} and a union bound, we have $\mathbb{P}(\mathcal{E}) \geq 1-\delta$. On the event $\mathcal{E}$, we have
    \begin{align*}
        \theta_0 \in \bigcap_{\epsilon\in\mathcal{C}: \epsilon\geq\epsilon_1} B_2\bigl(\hat{\theta}_n(\epsilon,\delta'), \phi(\epsilon,\delta')\bigr),
    \end{align*}
    so by definition of $\epsilon_0$, we have $\epsilon_0 \leq \epsilon_1$.  Moreover, on $\mathcal{E}$, we have 
    \[
    B_2\bigl(\hat{\theta}_n(\epsilon_0,\delta'), \phi(\epsilon_0,\delta')\bigr) \bigcap B_2\bigl(\hat{\theta}_n(\epsilon_1,\delta'), \phi(\epsilon_1,\delta')\bigr) \neq \emptyset,
    \]
    so that
    \begin{align}
        \|\tilde{\theta}_n(\delta) - \theta_0\|_2 = \|\hat{\theta}_n(\epsilon_0,\delta') - \theta_0\|_2 &\leq \|\hat{\theta}_n(\epsilon_0,\delta') - \hat{\theta}_n(\epsilon_1,\delta')\|_2 + \|\hat{\theta}_n(\epsilon_1,\delta') - \theta_0\|_2\nonumber\\
        &\leq \bigl(\phi(\epsilon_0,\delta') + \phi(\epsilon_1,\delta')\bigr) + \phi(\epsilon_1,\delta') \leq 3\phi(\epsilon_1,\delta'), \label{eq:lepski-triangle-ineq}
    \end{align}
    where the final inequality follows since $\epsilon_0 \leq \epsilon_1$. If $\epsilon_{\star} < \epsilon_{\max} \cdot 2^{-\lceil\log_2 n\rceil}$, then $\epsilon_1 = \epsilon_{\max} \cdot 2^{-\lceil\log_2 n\rceil}\leq \epsilon_{\max}/n$, so by the assumption on $\phi$ and~\eqref{eq:lepski-triangle-ineq}, we have with probability at least $1-\delta$ that
    \begin{align*}
        \|\hat{\theta}_n(\epsilon_0,\delta') - \theta_0\|_2 \leq 3\phi(\epsilon_{\max}/n,\delta') \leq 3C \phi(0,\delta') \leq 3C \phi(2\epsilon_{\star}\wedge\epsilon_{\max},\delta').
    \end{align*}
    On the other hand, if $\epsilon_{\star} \geq \epsilon_{\max} \cdot 2^{-\lceil\log_2 n\rceil}$, then $\epsilon_1\leq 2\epsilon_{\star}\wedge\epsilon_{\max}$, so by the assumption on $\phi$ and~\eqref{eq:lepski-triangle-ineq}, we have with probability at least $1-\delta$ that
    \begin{align*}
        \|\hat{\theta}_n(\epsilon_0,\delta') - \theta_0\|_2 \leq 3\phi(2\epsilon_{\star}\wedge\epsilon_{\max},\delta').
    \end{align*}
    This completes the proof.
\end{proof}

\section{Auxiliary lemmas}
\label{sec:auxiliary}

If $\mathcal{Z}$ is a topological space, then we define the embedding $\phi_{\mathcal{Z}}: C_{\mathrm{b}}(\mathcal{Z}) \to \mathcal{M}(\mathcal{Z})^*$ by $\phi_{\mathcal{Z}}(f)(\mu) \coloneqq \mu(f)$.  If $\mathcal{Z}$ is a locally compact Hausdorff space, then a Borel measure $\mu$ on $\mathcal{Z}$ is \emph{regular} if $\mu(E) = \inf\{\mu(U): U \supseteq E, U \ \text{open}\}$ and $\mu(E) = \sup\{\mu(K): K \subseteq E, K \ \text{compact}\}$ for every Borel subset $E$ of $\mathcal{Z}$.  
\begin{lemma}
\label{Lemma:DualPair}
Let $\mathcal{Z}$ be a locally compact Hausdorff space in which every open set is $\sigma$-compact.  Then $\phi_\mathcal{Z}$ embeds $C_{\mathrm{b}}(\mathcal{Z})$ into a subspace of $\mathcal{M}(\mathcal{Z})^*$ that separates points. 
\end{lemma}
\begin{proof}
If $f, g \in C_{\mathrm{b}}(\mathcal{Z})$ and $\lambda_1,\lambda_2 \in \mathbb{R}$, then $\phi_{\mathcal{Z}}(\lambda_1 f + \lambda_2 g) = \lambda_1 \phi_{\mathcal{Z}}(f) + \lambda_2\phi_{\mathcal{Z}}(g)$, so $\phi_{\mathcal{Z}}$ embeds $C_{\mathrm{b}}(\mathcal{Z})$ into a subspace of $\mathcal{M}(\mathcal{Z})^*$.  

Let $\mu$ and $\mu'$ be two distinct measures in $\mathcal{M}(\mathcal{Z})$ and define $\nu \coloneqq \mu-\mu' \in \mathcal{M}(\mathcal{Z})$. By the the Jordan decomposition theorem \citep[Theorem~3.3]{folland1999real}, we can write $\nu = \nu_+ - \nu_-$ where $\nu_+, \nu_- \in \mathcal{M}_+(\mathcal{Z})$ are supported on disjoint measurable sets $P, N \subseteq \mathcal{Z}$ respectively.  Since $\nu\neq 0$, there exists a Borel set $B \subseteq \mathcal{Z}$ and $\epsilon > 0$ such that either $\nu_+(B\cap P) \geq \epsilon$ or $\nu_-(B\cap N) \geq \epsilon$. Without loss of generality, we assume the former.  By \citet[][Theorem~7.8]{folland1999real}, $\nu_+$ and $\nu_-$ are regular measures, so there exists a compact set $K \subseteq \mathcal{Z}$ and an open set $U \subseteq \mathcal{Z}$ such that $K\subseteq B\cap P\subseteq U$ and $\nu_+(U\setminus K) + \nu_-(U\setminus K) \leq \epsilon/2$. By Urysohn's lemma for locally compact Hausdorff spaces \citep[Lemma~4.32]{folland1999real}, there exists a continuous function $f:\mathcal{Z}\to [0,1]$ such that $f(K) = \{1\}$, $f(U^c) = \{0\}$. Observe that
\begin{align*}
\nu(f) & \geq \nu_+(K) - \nu_-(U\setminus P) \geq \nu_+(B\cap P) - \bigl(\nu_+(U\setminus K) + \nu_-(U\setminus K)\bigr) \geq \epsilon/2.
\end{align*}
Consequently, $f\in C_{\mathrm{b}}(\mathcal{Z})$ separates $\mu$ and $\mu'$ as desired.
\end{proof}
If $(X,\tau)$ and $(Y,\sigma)$ are topological spaces, we write $\tau \otimes \sigma$ for the product topology on the Cartesian product $X \times Y$, i.e.~$\tau \otimes \sigma$ is the coarsest topology for which the projections $(x,y) \mapsto x$ and $(x,y) \mapsto y$ are continuous. 
\begin{lemma}
\label{Lemma:ProductWeakTopology}
If $X$ and $Y$ are real vector spaces and  $X'$ and $Y'$ are subspaces of $X^*$ and $Y^*$, then the map $\iota:X'\times Y' \rightarrow (X\times Y)^*$ given by $\iota(f,g)(x,y) := f(x) + g(y)$ embeds $X' \times Y'$ as a subspace of $(X\times Y)^*$.  Furthermore, if $\tau(X; X')$, $\tau(Y; Y')$ and $\tau\bigl(X\times Y; \iota(X'\times Y')\bigr)$ denote the weak topologies generated by $X'$, $Y'$ and $\iota(X'\times Y')$ on $X$, $Y$ and $X\times Y$ respectively, then $\tau(X; X')\otimes \tau(Y; Y') = \tau\bigl(X\times Y; \iota(X'\times Y')\bigr)$.
\end{lemma}
\begin{proof}
To check that $\iota$ embeds $X'\times Y'$ as a subspace of $(X\times Y)^*$,  we only need to verify the bilinearity of the map $((x,y),(f,g))\mapsto f(x)+g(y)$ on $(X \times Y) \times (X' \times Y')$, which is true since $X,Y,X',Y'$ are vector spaces and $(X,X')$, $(Y,Y')$ are dual pairs.

For the second claim, let $\pi_X:X\times Y\to X$ and $\pi_Y:X\times Y\to Y$ be projection maps defined by $\pi_X(x,y) \coloneqq x$ and $\pi_Y(x,y) \coloneqq y$.  By the definition of the product topology, $\tau(X;X')\otimes \tau(Y;Y')$ is the coarsest topology on $X\times Y$ under which both $\pi_X$ and $\pi_Y$ are continuous. Also, $\tau\bigl(X\times Y;\iota(X'\times Y')\bigr)$ is the coarsest topology on $X\times Y$ under which $\iota(f,g)$ is continuous for all $f\in X'$ and $g\in Y'$. Hence the desired result is equivalent to the statement that for any topology $\mathcal{T}$ on $X\times Y$, the functions $\pi_X:(X\times Y, \mathcal{T})\to(X,\tau(X;X'))$ and $\pi_Y:(X\times Y, \mathcal{T})\to(Y,\tau(Y;Y'))$ are continuous if and only if $\iota(f,g):(X\times Y, \mathcal{T}) \to \mathbb{R}$ is continuous for all $(f,g)\in X'\times Y'$.

The `only if' direction is true since for any $(f,g)\in X'\times Y'$,  $\iota(f,g) = f\circ \pi_X + g\circ \pi_Y$ is the sum of compositions of continuous functions, and hence continuous.    For the `if' direction, we assume that $\iota(f,g)$ is continuous for all $(f,g)\in X'\times Y'$; by symmetry we only need to check that $\pi_X$ is continuous. Taking $g$ to be the zero map, we have $\iota(f,0)(x,y) = f(\pi_X(x,y))$, so $f\circ\pi_X$ is continuous for every $f$. Open sets in $(X,\tau(X;X'))$ are unions of sets in $\{f^{-1}(U): f\in X', \text{$U$ open in $\mathbb{R}$}\}$. Since $f\circ \pi_X$ is continuous, we have
    \[
    \pi_X^{-1}\bigl(f^{-1}(U)\bigr) = (f\circ \pi_X)^{-1}(U)
    \]
    is open for every $f\in X'$ and $U$ open in $\mathbb{R}$. Therefore, $\pi_X$ is continuous as desired, and this establishes the lemma.
\end{proof}

\begin{lemma}
\label{Lemma:ProductContinuity}
    Let $X,Y,Z$ be topological spaces and equip $Y\times Z$ with the product topology. Then $f:X\to Y$ and $g:X\to Z$ are continuous if and only if $h:x\mapsto \bigl(f(x),g(x)\bigr)$ is a continuous function from $X$ to $Y\times Z$. 
\end{lemma}
\begin{proof}
    By definition of the product topology, the projection maps $\pi_Y:Y\times Z \to Y$ and $\pi_Z:Y\times Z \to Z$ defined by $\pi_Y(y,z) \coloneqq y$ and $\pi_Z(y,z) \coloneqq z$ are continuous. This proves the `if' direction since $f = \pi_Y\circ h$ and $g = \pi_Z\circ h$. For the `only if' direction, we observe that open sets in $Y\times Z$ are unions of sets of the form $U\times V$ for $U$ open in $Y$ and $V$ open in $Z$. Since $f$ and $g$ are continuous, $h^{-1}(U\times V) = f^{-1}(U)\cap g^{-1}(V)$ is open in $X$, so $h$ is continuous as desired.
\end{proof}

Recall that if $A_1$ and $A_2$ are sets, then the \emph{disjoint union} of $A_1$ and $A_2$ is defined by $A_1 \sqcup A_2:=\{(a,1):a\in A_1\}\cup \{(a,2):a\in A_2\}$.  Moreover, if $(A_1,\tau_1)$ and $(A_2,\tau_2)$ are topological spaces, then $A_1 \sqcup A_2$ can be endowed with the \emph{disjoint union topology}, given by $\bigl\{(U_1\times\{1\}) \cup (U_2\times \{2\}):U_1\in\tau_1, \, U_2\in\tau_2\bigr\}$.  In the special case where $A_1$ and $A_2$ are disjoint subsets of a topological space $(\mathcal{X},\tau)$, the second argument of elements in $A_1 \sqcup A_2$ becomes redundant, so we can identify $A_1\sqcup A_2$ with $A_1 \cup A_2$, and we may write the disjoint union topology simply as $\{U_1 \cup U_2:U_1 \in A_1 \cap \tau,U_2 \in A_2 \cap \tau\}$.

\begin{lemma}\label{lem:projection-of-open-set-compact-set}
    Let $\mathcal{Z}_1, \ldots, \mathcal{Z}_d$ be topological spaces, and let $\mathcal{Z} \coloneqq \prod_{j=1}^d \mathcal{Z}_j$ be the product space equipped with the product topology. Let $S \subseteq [d]$ be non-empty and let $\mathcal{Z}_S \coloneqq \prod_{j\in S} \mathcal{Z}_j$ be the product space equipped with the product topology. 
    \begin{itemize}
        \item[(a)] If $U\subseteq \mathcal{Z}$ is open, then the set $U_S\coloneqq \{x_S : x\in U\}$ is open in $\mathcal{Z}_S$.
        \item[(b)] If $K\subseteq \mathcal{Z}$ is compact, then the set $K_S\coloneqq \{x_S : x\in K\}$ is compact in $\mathcal{Z}_S$.
    \end{itemize}
\end{lemma}
\begin{proof}
    (a) We can write $U = \bigcup_{i\in I} U^{(i)}$ for some index set $I$, where $U^{(i)} = \prod_{j=1}^d U^{(i)}_j$ and $U^{(i)}_j$ is open in $\mathcal{Z}_j$ for all $i\in I$, $j\in[d]$. Hence $U_S = \bigcup_{i\in I} U^{(i)}_S$ where $U^{(i)}_S = \prod_{j\in S} U^{(i)}_j$, so $U_S$ is open in $\mathcal{Z}_S$.

    \vspace{1em}
    \noindent (b) For any open cover $\{U^{(i)}_S\}_{i\in I}$ of $K_S$, define $U^{(i)} \coloneqq \{x\in\mathcal{Z} : x_S \in U^{(i)}_S\}$ for $i\in I$. Note that $U^{(i)}$ is open in $\mathcal{Z}$ for $i\in I$, as it is the pre-image of an open set under a projection map (which is continuous, by definition of the product topology). Thus, $\{U^{(i)}\}_{i\in I}$ is an open cover of $K$, which has a finite subcover $I_0 \subseteq I$ since $K$ is compact. Therefore, $\{U^{(i)}_S\}_{i\in I_0}$ is also a finite subcover of $K_S$, so $K_S$ is compact in $\mathcal{Z}_S$.
\end{proof}

\begin{lemma}
\label{Lemma:Preservation}
Let $\mathcal{X}_1$ and $\mathcal{X}_2$ be topological spaces.  
\begin{enumerate}[(a)]
\item If $\mathcal{X}_1$ and $\mathcal{X}_2$ are Hausdorff, then $\mathcal{X}_1 \times \mathcal{X}_2$ is Hausdorff in the product topology and $\mathcal{X}_1 \sqcup \mathcal{X}_2$ is Hausdorff in the disjoint union topology.
\item If $\mathcal{X}_1$ and $\mathcal{X}_2$ are locally compact, then $\mathcal{X} \times \mathcal{X}_2$ is locally compact in the product topology and $\mathcal{X}_1 \sqcup \mathcal{X}_2$ is locally compact in the disjoint union topology.
\end{enumerate}
\end{lemma}
\begin{proof}
(a) The first statement follows from \citet[Theorem~19.4]{munkrestopology}. 
 For the second statement, let $(x_1,j_1),(x_2,j_2) \in \mathcal{X}_1 \sqcup \mathcal{X}_2$ be distinct, where $j_1,j_2 \in \{1,2\}$, and for $\ell \in \{1,2\}$, we have $x_\ell \in \mathcal{X}_\ell$.  If $j_1 = j_2$, then the result follows from the fact that $\mathcal{X}$ and $\mathcal{X}_2$ are Hausdorff.  Otherwise, we can separate the two points using the open sets $\mathcal{X}_1\times \{1\}$ and $\mathcal{X}_2\times \{2\}$.

\medskip

\noindent (b) For the first statement, if $x_1 \in \mathcal{X}_1$ and $x_2 \in \mathcal{X}_2$, then we can find compact neighbourhoods $K_j \subseteq \mathcal{X}_j$ of $x_j$ for $j\in\{1,2\}$.  Then by Tychonov's theorem \citep[Theorem~37.3]{munkrestopology}, $K_1 \times K_2$ is a compact neighbourhood of $(x_1,x_2)$.  For the second statement, if $(x,j)\in \mathcal{X}_1 \sqcup\mathcal{X}_2$, then we can find a compact subset $K\subseteq \mathcal{X}_j$ containing $x$. Then $K\times \{j\}$ is a compact subset of $\mathcal{X}_1\sqcup\mathcal{X}_2$ containing $(x,j)$.
\end{proof}

\begin{lemma}
Let $\mathcal{X}_1,\ldots,\mathcal{X}_d$ be locally compact Hausdorff spaces, and let $\mathcal{X} \coloneqq \prod_{j=1}^d \mathcal{X}_j$.  Then $\mathcal{X}$, $\mathcal{X}_\star$ and $\mathcal{X} \times 2^{[d]}$ are locally compact Hausdorff spaces.  Moreover, if every open set in~$\mathcal{X}$ is $\sigma$-compact, then $\mathcal{X}_\star$ and $\mathcal{X} \times 2^{[d]}$ also have this property.   
\end{lemma}
\begin{proof}
The fact that $\mathcal{X}$ is a locally compact Hausdorff space follows from Lemma~\ref{Lemma:Preservation}.  Moreover, the singleton space $\{\star\}$ as well as the space $2^{[d]}$ endowed with the discrete topology are both locally compact Hausdorff spaces.  We observe that $\mathcal{X}$, $\mathcal{X}_\star$ and $\mathcal{X} \times 2^{[d]}$ can be generated from $\mathcal{X}_1,\ldots,\mathcal{X}_d$, $\{\star\}$, $2^{[d]}$ via a combination of product space and disjoint union operations. Hence the first result follows from Lemma~\ref{Lemma:Preservation}. 

    To check that every open set in $\mathcal{X}_{\star} = \bigsqcup_{S\in 2^{[d]}} \mathcal{X}^{(S)}$ is $\sigma$-compact, observe that for any open set $U \subseteq \mathcal{X}_{\star}$, we can write $U = \bigcup_{S \subseteq [d]} U^{(S)}$ where $U^{(S)} \coloneqq U \cap \mathcal{X}^{(S)}$. Therefore, it suffices to show that for every $S \subseteq [d]$, any open set $U\subseteq \mathcal{X}^{(S)}$ is $\sigma$-compact. Let $U_S \coloneqq \{a_S : a\in U\}$ and let $V \coloneqq \{x\in \mathcal{X} : x_S \in U_S\}$. Then $V$ is open in $\mathcal{X}$ since it is the pre-image of a projection of an open set, so we can write $V = \bigcup_{i=1}^\infty K(i)$, where $K(i)$ is compact in $\mathcal{X}$ for each $i$.  Moreover, $U_S = \bigcup_{i=1}^\infty K(i)_S$, and $K(i)_S$ is compact in $\mathcal{X}_S$ by Lemma~\ref{lem:projection-of-open-set-compact-set}(b). We claim that $K(i)^{(S)} \coloneqq \{z\in\mathcal{X}_\star : z_S\in K(i)_S,\, z_j=\star\; \forall j\notin S\}$ is compact.  To see this, consider any open cover $\{U(j)\}_{j\in J}$ of $K(i)^{(S)}$, where, without loss of generality, we assume that $U(j) \subseteq \mathcal{X}^{(S)}$ for all $j\in J$.  Writing $U(j)_{S} \coloneqq \{a_S : a\in U(j)\}$ for $j\in J$, we have by Lemma~\ref{lem:projection-of-open-set-compact-set}(a) that $\{U(j)_{S}\}_{j \in J}$ forms an open cover of $K(i)_S$.  We can therefore find a finite subcover $\{U(j)_S\}_{j\in J_0}$ of $K(i)_S$, so that $\{U(j)\}_{j\in J_0}$ forms a finite subcover of $K(i)^{(S)}$.  We deduce that $U = \bigcup_{i=1}^\infty K(i)^{(S)}$ is a countable union of compact sets. 

    To show that every open set in $\mathcal{X} \times 2^{[d]}$ is $\sigma$-compact, observe that since $2^{[d]}$ is finite, any open set in $\mathcal{X} \times 2^{[d]}$ is of the form $\bigcup_{S \subseteq [d]} \bigl\{U(S) \times S\bigr\}$, where $U(S)$ is open in $\mathcal{X}$.  Since each $U(S)$ is $\sigma$-compact and $S$ is finite (hence compact), it follows that each set $U(S) \times S$ is $\sigma$-compact, and hence $\bigcup_{S \subseteq [d]} \bigl\{U(S) \times S\bigr\}$ is $\sigma$-compact.
\end{proof}

\begin{lemma}\label{lemma:X-star-is-polish}
    If $(\mathcal{X}_1,\tau_1),\ldots,(\mathcal{X}_d,\tau_d)$ are Polish spaces, then the Cartesian product space $\mathcal{X}_{\star} \coloneqq \prod_{j=1}^d \mathcal{X}_{j,\star}$ equipped with the product topology is also a Polish space.
\end{lemma}
\begin{proof}
    A finite (or even countable) Cartesian product of Polish spaces is Polish \citep[][Proposition~3.3]{kechris2012classical}, so it suffices to show that $(\mathcal{X}_{j,\star},\tau_{j,\star})$ is Polish for each $j\in[d]$, where $\tau_{j,\star} \coloneqq \tau_j \cup \{A \cup \{\star\} : A\in \tau_j\}$. Now fix $j\in[d]$, and find a countable dense subset $\{x_n\}_{n=1}^\infty$ of $\mathcal{X}_j$. Then $\{\star\} \cup \{x_n\}_{n=1}^\infty$ is a countable dense subset of $\mathcal{X}_{j,\star}$, so $\mathcal{X}_{j,\star}$ is separable. Now find a metric $d$ on $\mathcal{X}_j$ such that $d$ generates the topology $\tau_j$ and $(\mathcal{X}_j,d)$ is complete. Define the standard bounded metric $\bar{d}$ by $\bar{d}(x,y) \coloneqq d(x,y) \wedge 1$ for $x,y\in\mathcal{X}_j$.  Then, by \citet[][Theorem~20.1]{munkrestopology}, $\bar{d}$ also generates the topology $\tau_j$. Define a metric $d'$ on $\mathcal{X}_{j,\star}$ by $d'(x,y)\coloneqq \bar{d}(x,y)$ for $x,y\in\mathcal{X}_j$, $d'(x,\star)\coloneqq 2$ for $x\in\mathcal{X}_j$, and $d'(\star,\star) \coloneqq 0$. Letting~$\tau_{j,\star}'$ denote the topology on $\mathcal{X}_{j,\star}$ generated by $d'$, we first show that $\tau_{j,\star}' = \tau_{j,\star}$. On the one hand, since $\{\star\} \in \tau_{j,\star}'$ and $\tau_j \subseteq \tau_{j,\star}'$, we have $\tau_{j,\star} \subseteq \tau_{j,\star}'$. On the other hand, if $x_0 \in \mathcal{X}_{j,\star}$, $r \geq 0$ and $A \coloneqq \{x\in\mathcal{X}_{j,\star} : d'(x,x_0) < r\}$ denotes an open ball in $\tau_{j,\star}'$, then
    \[
    A= \left\{ \begin{array}{ll} \mathcal{X}_{j,\star} & \mbox{if $r > 2$} \\ 
    \{x\in\mathcal{X}_j : \bar{d}(x,x_0) < r\} & \mbox{if $r \leq 2$ and $x_0 \in \mathcal{X}_j$}\\
    \{\star\} & \mbox{if $r \leq 2$ and $x_0 = \star$.}
    \end{array} \right.
    \]
    We deduce that $A \in \tau_{j,\star}$, so since such open balls generate $\tau_{j,\star}'$, we have $\tau_{j,\star}' \subseteq \tau_{j,\star}$. Hence, $d'$ generates the topology $\tau_{j,\star}$. Next, we show that $(\mathcal{X}_{j,\star}, d')$ is complete. Let $(z_n)_{n=1}^{\infty}$ be a Cauchy sequence in $\mathcal{X}_{j,\star}$, so there exists $N\in\mathbb{N}$ such that $d'(z_{n_1},z_{n_2}) \leq 1/2$ for all $n_1,n_2 \geq N$. Therefore, either $z_n = \star$ for all $n\geq N$ or $z_n \in \mathcal{X}_j$ for all $n\geq N$.  In the former case,  $z_n \to \star$ as $n \rightarrow \infty$.  In the latter case, $(z_n)_{n=N}^\infty$ is also a Cauchy sequence in $(\mathcal{X}_j, d)$ and hence by completeness of $(\mathcal{X}_j, d)$, it has a limit in $\mathcal{X}_j$. This shows that $(\mathcal{X}_{j,\star}, d')$ is complete and $d'$ generates the topology $\tau_{j,\star}$, so $(\mathcal{X}_{j,\star}, \tau_{j,\star})$ is completely metrisable. Therefore, $(\mathcal{X}_{j,\star}, \tau_{j,\star})$ is a Polish space, as required.
\end{proof}

The following lemma can be deduced from~\citet[][3.1(e), p.~95]{horn1990hadamard}, but for the convenience of the reader, we provide a short proof here.  
\begin{lemma}\label{lemma:operator-norm-of-hadamard-product}
    Let $A,B\in\mathcal{S}^{d\times d}$ and further suppose that $A$ is positive semi-definite.  Then $\|A\odot B\|_{\mathrm{op}} \leq \|A\|_{\infty} \|B\|_{\mathrm{op}}$.
\end{lemma}
\begin{proof}
The proof largely follows that of~\citet{horn1990hadamard}.  Since \begin{align*}
    \begin{pmatrix}
        A & A\\
        A & A
    \end{pmatrix} \in \mathcal{S}^{2d \times 2d} \quad\text{and}\quad \begin{pmatrix}
        \|B\|_{\mathrm{op}}I_d & B\\
        B & \|B\|_{\mathrm{op}}I_d
    \end{pmatrix} \in \mathcal{S}^{2d \times 2d}
\end{align*}
are both positive semi-definite, by the Schur product theorem \citep[Theorem 7.5.3(a)]{horn2012matrix}, their Hadamard product \begin{align*}
    \begin{pmatrix}
        \|B\|_{\mathrm{op}}(I_d \odot A) & A \odot B\\
        A \odot B & \|B\|_{\mathrm{op}}(I_d \odot A)
    \end{pmatrix}
\end{align*}
is also positive semi-definite. Hence, for any $v\in\mathbb{R}^d$, we have \begin{align*}
    0 &\leq \begin{pmatrix}
        v^\top & -v^\top
    \end{pmatrix} \begin{pmatrix}
        \|B\|_{\mathrm{op}}(I_d \odot A) & A \odot B\\
        A \odot B & \|B\|_{\mathrm{op}}(I_d \odot A)
    \end{pmatrix} \begin{pmatrix}
        v \\ -v
    \end{pmatrix}\\
    &= 2 \|B\|_{\mathrm{op}} v^\top(I \odot A) v - 2v^\top (A\odot B) v,
\end{align*}
so $\|A\odot B\|_{\mathrm{op}} \leq \|B\|_{\mathrm{op}} \|I \odot A\|_{\mathrm{op}} \leq \|A\|_{\infty} \|B\|_{\mathrm{op}}$.
\end{proof}

\begin{lemma}\label{lem:inverse-binomial-bounds}
        Let $Y \sim \mathsf{Bin}(n, q)$ for some $n \in \mathbb{N}$ and $q \in (0,1]$. Then 
    \begin{align*}
        \mathbb{E} \bigl( Y^{-1} \cdot \mathbbm{1}_{\{Y>0\}} \bigr) \leq \frac{2}{n q} \quad\text{and}\quad \mathbb{E}\bigl\{(Y+1)^{-2} \bigr\} \leq \frac{2}{n^2 q^2}.
    \end{align*}
\end{lemma}
\begin{proof}
    We have 
    \begin{align*}
        \mathbb{E}\bigl\{ (Y+1)^{-1} \bigr\} &= \sum_{y=0}^n (y+1)^{-1}\binom{n}{y} q^y (1-q)^{n-y}\\
        &= \sum_{y=0}^n \frac{1}{q(n+1)} \binom{n+1}{y+1} q^{y+1}(1-q)^{n-y}\\
        &= \frac{1}{q(n+1)} \sum_{y=1}^{n+1} \binom{n+1}{y} q^{y}(1-q)^{n+1-y}\leq \frac{1}{nq}.
    \end{align*}
    The first inequality in the statement then follows as $y^{-1} \leq 2(y + 1)^{-1}$ for all $y \geq 1$.  Similarly, we have 
    \begin{align*}
        \mathbb{E}\bigl\{ (Y+1)^{-1}(Y+2)^{-1} \bigr\} &= \sum_{y=0}^n (y+1)^{-1}(y+2)^{-1}\binom{n}{y} q^y (1-q)^{n-y}\\
        &= \sum_{y=0}^n \frac{1}{q^2(n+1)(n+2)} \binom{n+2}{y+2} q^{y+2}(1-q)^{n-y} \leq \frac{1}{n^2q^2}.
    \end{align*}
    The second inequality in the statement then follows since $(y+1)^{-2} \leq 2(y+1)^{-1}(y+2)^{-1}$ for all $y \geq 0$.  
\end{proof}

\begin{lemma}\label{lemma:binomial-tail}
    Suppose that $(B_i)_{i \in [n]} \overset{\mathrm{iid}}{\sim} \mathsf{Ber}(q)$. 
    \begin{itemize}
        \item[(a)] With probability at least $1 - \delta$, 
        \[
        \frac{1}{n} \sum_{i=1}^{n} B_i \leq 2q + \frac{\log(1/\delta)}{n}.
        \]
        \item[(b)] If $q\geq \frac{8\log(1/\delta)}{n}$, then with probability at least $1 - \delta$, 
        \[
        \frac{1}{n} \sum_{i=1}^{n} B_i \geq \frac{q}{2}.
        \]
    \end{itemize}
\end{lemma}
\begin{proof}
(a) By Bernstein's inequality \citep[][Theorem 2.10]{boucheron2003concentration}, we have with probability at least $1-\delta$ that
\begin{align*}
\frac{1}{n} \sum_{i=1}^{n} B_i &\leq q + \sqrt{\frac{2q(1-q)}{n}}\log^{1/2}(1/\delta) + \frac{1}{3n}\log(1/\delta) \\
&\leq \biggl(q^{1/2} + \frac{1}{\sqrt{2n}}\log^{1/2}(1/\delta)\biggr)^2 \leq 2q + \frac{1}{n}\log(1/\delta).
\end{align*}

(b) By the multiplicative Chernoff bound~\citep[][Theorem 2.3(c)]{McDiarmid1998} for the sum of independent Bernoulli random variables, we have
\begin{align*}
    \mathbb{P}\biggl( \frac{1}{n} \sum_{i=1}^{n} B_i \leq \frac{q}{2} \biggr) \leq \exp(-nq/8) \leq \delta,
\end{align*}
where the final inequality follows from the assumption that $q\geq \frac{8\log(1/\delta)}{n}$.
\end{proof}

\begin{lemma}\label{lemma:inclusion-of-psi-r-class}
    Let $0<r_1\leq r_2$.  Then $\mathcal{P}_{\psi_{r_2}}(\theta_0, \sigma^2) \subseteq \mathcal{P}_{\psi_{r_1}}(\theta_0, \sigma^2)$.
\end{lemma}
\begin{proof}
    Let $X\sim P \in \mathcal{P}_{\psi_{r_2}}(\theta_0, \sigma^2)$.  Then 
    \begin{align*}
        2\geq \mathbb{E}\biggl\{ \exp\biggl( \frac{|X - \theta_0|^{r_2}}{\sigma^{r_2}} \biggr) \biggr\} \geq \biggl[\mathbb{E}\biggl\{ \exp\biggl( \frac{|X - \theta_0|^{r_1}}{\sigma^{r_1}} \biggr) \biggr\}\biggr]^{r_2/r_1},
    \end{align*}
    by Jensen's inequality. Thus $\mathbb{E}\exp\bigl( |X - \theta_0|^{r_1} / \sigma^{r_1} \bigr) \leq 2$, so $P\in \mathcal{P}_{\psi_{r_1}}(\theta_0, \sigma^2)$.
\end{proof}

\begin{lemma} \label{lemma:MGF-bound}
    Let $r>1$, $\sigma>0$ and $X\sim P\in\mathcal{P}_{\psi_r}(0,\sigma^2)$. Then
    \begin{align*}
        \mathbb{E} \exp(\lambda X) \leq 2\exp\bigl\{ (\sigma\lambda)^{r/(r-1)} \bigr\},
    \end{align*}
    for all $\lambda>0$.
\end{lemma}
\begin{proof}
Young's inequality states that whenever $p,q > 1$ are such that $1/p + 1/q  =1$, we have $ab \leq a^p/p + b^q/q$ for all $a,b \geq 0$.  Hence
    \begin{align*}
        \lambda X \leq \lambda|X| \leq \frac{|X|^r}{r\sigma^r} + \frac{(\sigma\lambda)^{r/(r-1)}}{r/(r-1)} \leq \frac{|X|^r}{\sigma^r} + (\sigma\lambda)^{r/(r-1)}.
    \end{align*}
    Therefore,
    \begin{align*}
        \mathbb{E} \exp(\lambda X) \leq \mathbb{E} \bigl\{ \exp(|X|^r/\sigma^r) \bigr\} \cdot \exp\bigl\{ (\sigma\lambda)^{r/(r-1)} \bigr\} \leq 2\exp\bigl\{ (\sigma\lambda)^{r/(r-1)} \bigr\},
    \end{align*}
    as required.
\end{proof}

\begin{lemma}[PAC--Bayes lemma] \label{lemma:PAC-Bayes}
    Let $\mathcal{X}$ be a measurable space and let $X_1,\ldots,X_n \overset{\mathrm{iid}}{\sim} P \in \mathcal{P}(\mathcal{X})$. Let $\Xi \subseteq \mathbb{R}^d$ and $\mu \in \mathcal{P}(\Xi)$. Further let $f : \mathcal{X} \times \Xi \to \mathbb{R}$ be such that $\mathbb{E}_{X \sim P} (e^{f(X,\xi)}) < \infty$ for $\mu$-almost all $\xi\in\Xi$. Then, for every $\delta\in(0,1]$, we have with probability at least $1-\delta$ that
    \begin{align*}
        \sup_{\rho \in \mathcal{P}(\Xi):\rho \ll \mu} \biggl\{ \frac{1}{n} \sum_{i=1}^n \mathbb{E}_{\xi\sim\rho} f(X_i,\xi) - \mathbb{E}_{\xi\sim\rho} \log\bigl\{ \mathbb{E}_{X \sim P} &(e^{f(X,\xi)})\bigr\} \\
        &- \frac{\mathrm{KL}(\rho,\mu) + \log(1/\delta)}{n} \biggr\} \leq 0,
    \end{align*}
    where, for instance, $\mathbb{E}_{\xi\sim\rho} f(X_i,\xi) \coloneqq \int_{\Xi}  f(X_i,v) \, \mathrm{d}\rho(v)$.
    \end{lemma}
\begin{proof}
    See, for example, \citet[][Lemma~2.1]{zhivotovskiy2024dimension}.
\end{proof}

The following lemma provides a concentration result for the sample mean of independent and identically distributed sub-exponential random vectors. The proof strategy follows that of \citet[][Proposition~3.1]{zhivotovskiy2024dimension}, who considered the case $n=1$.
\begin{lemma} \label{lemma:concentration-of-sample-mean-sub-exponential-vector}
    Let $\theta_0 \in \mathbb{R}^d$, $\Sigma\in\mathcal{S}_{++}^{d\times d}$, $\delta\in(0,1]$ and $X_1,\ldots,X_n \overset{\mathrm{iid}}{\sim} P \in \mathcal{P}_{d,\psi_1}(\theta_0,\Sigma)$. Assume further that $\delta \geq 2e^{-n/3}$. Then with probability at least $1-\delta$,
    \begin{align*}
        \biggl\| \frac{1}{n} \sum_{i=1}^n X_i - \theta_0 \biggr\|_2^2 \leq 24 \cdot \frac{\tr(\Sigma) + \|\Sigma\|_{\mathrm{op}} \log(2/\delta)}{n}.
    \end{align*}
\end{lemma}
\begin{proof}
    Let $\beta\coloneqq 2\log(2/\delta)$, let $\mu$ denote the distribution of $\mathsf{N}_d(0,\beta^{-1}\Sigma)$ and for $u\in\Sigma^{1/2}\mathbb{S}^{d-1}$, let $\rho_u$ denote the conditional distribution of $Y$ given $\bigl\{\|Y-u\|_2 \leq \sqrt{2\beta^{-1}\tr(\Sigma)}\bigr\}$, where $Y\sim\mathsf{N}_d(u,\beta^{-1}\Sigma)$. By the computation of \citet[][p.~11]{zhivotovskiy2024dimension}, we have 
    \begin{align*}
        \mathrm{KL}(\rho_u,\mu) \leq \frac{\beta}{2} + \log 2 \leq 2\log\Bigl(\frac{2}{\delta}\Bigr).
    \end{align*} 
    Now, let $v \in \mathbb{R}^d$ be such that $\|v - u\|_2 \leq \sqrt{2\beta^{-1}\tr(\Sigma)}$, and for $\lambda \in \mathbb{R}$, define $f_\lambda:\mathbb{R}^d\times\mathbb{R}^d \to \mathbb{R}$ by $f_\lambda(x,y) \coloneqq \lambda y^\top\Sigma^{-1/2}(x-\theta_0)$. Then, for $X\sim P$ and  $\lambda \in \mathbb{R}$, we have 
    \[
    \bigl\| v^\top \Sigma^{-1/2}(X - \theta_0) \bigr\|_{\psi_1} \leq \|v\|_2 \leq \|\Sigma\|_{\mathrm{op}}^{1/2} + \sqrt{2\beta^{-1}\tr(\Sigma)} \eqqcolon R.
    \]
    It follows by \citet[][Lemma~2.5]{zhivotovskiy2024dimension} that
    \begin{align*}
        \log \mathbb{E}_{X\sim P} ( e^{f_{\lambda}(X,v)}) = \log \mathbb{E}_{X\sim P} ( e^{\lambda \cdot v^\top \Sigma^{-1/2}(X - \theta_0)}) \leq 4\lambda^2 R^2,
    \end{align*}
    for all $|\lambda| \leq \frac{1}{2R}$, so $\mathbb{E}_{\xi_u \sim \rho_u} \bigl\{ \log \mathbb{E}_{X\sim P} ( e^{f_{\lambda}(X,\xi_u)}) \bigr\} \leq 4\lambda^2 R^2$ for all $|\lambda| \leq \frac{1}{2R}$.  The PAC--Bayes lemma~(Lemma~\ref{lemma:PAC-Bayes}) then yields that with probability at least $1-\delta$,
    \begin{align*}
        \sup_{u \in \Sigma^{1/2}\mathbb{S}^{d-1}} \biggl\{ \frac{1}{n} \sum_{i=1}^n \mathbb{E}_{\xi_u \sim \rho_u} f_{\lambda}(X_i,\xi_u) - \mathbb{E}_{\xi_u \sim \rho_u} \bigl\{ \log &\mathbb{E}_{X\sim P} (e^{f_{\lambda}(X,\xi_u)}) \bigr\} \\
        &- \frac{\mathrm{KL}(\rho_u,\mu) + \log(1/\delta)}{n} \biggr\} \leq 0.
    \end{align*}
    Therefore, we deduce that with probability at least $1-\delta$,
    \begin{align*}
        \biggl\| \frac{1}{n} \sum_{i=1}^n X_i - \theta_0 \biggr\|_2 &= \sup_{u \in \Sigma^{1/2}\mathbb{S}^{d-1}} \frac{1}{n} \sum_{i=1}^n u^\top \Sigma^{-1/2} (X_i - \theta_0)\\
        &= \sup_{u \in \Sigma^{1/2}\mathbb{S}^{d-1}} \frac{1}{n\lambda} \sum_{i=1}^n \mathbb{E}_{\xi_u \sim \rho_u} f_{\lambda}(X_i,\xi_u) \\
        &\leq \inf_{\lambda \in [0,\frac{1}{2R}]} \biggl\{4\lambda R^2 + \frac{3\log(2/\delta)}{n\lambda}\biggr\}\\
        \overset{(i)}&{=} 2R\sqrt{\frac{3\log(2/\delta)}{n}} = 2\sqrt{\frac{3}{n}} \cdot \Bigl\{\sqrt{\tr(\Sigma)} + \sqrt{\|\Sigma\|_{\mathrm{op}} \log(2/\delta)}\Bigr\}.
    \end{align*}
    where $(i)$ follows by choosing $\lambda = \frac{1}{2R}\sqrt{\frac{3\log(2/\delta)}{n}}$, which is at most $\frac{1}{2R}$ since $\frac{3\log(2/\delta)}{n} \leq 1$ by assumption. The final conclusion follows by squaring both sides of the inequality above and using the inequality $(a+b)^2 \leq 2a^2 + 2b^2$ for $a,b \in \mathbb{R}$.
\end{proof}

\section{Background on disintegrations}\label{sec:disintegration}

Our definition of MAR relies on the decomposition of a probability measure on a product space into the marginal distribution on one coordinate and a family of conditional distributions on the other.  This can be achieved via the notion of disintegration.  Let $(\mathcal{X},\mathcal{A})$ and $(\mathcal{Y},\mathcal{B})$ be measurable spaces, and let $P$ be a probability measure on the product space $(\mathcal{X}\times \mathcal{Y}, \mathcal{A} \otimes \mathcal{B})$.  Further, let $\mu$ denote the marginal distribution of $P$ on $(\mathcal{X},\mathcal{A})$. We say that $(P_x)_{x \in \mathcal{X}}$ is a \emph{disintegration\index{disintegration} of~$P$ into conditional distributions on $\mathcal{Y}$} if 
\begin{enumerate}[(a)]
    \item $P_x$ is a probability measure on $(\mathcal{Y},\mathcal{B})$, for each $x \in \mathcal{X}$;
    \item $x \mapsto P_x(B)$ is an $\mathcal{A}$-measurable function, for every $B \in \mathcal{B}$;
    \item $P(A\times B) = \int_A P_x(B) \, d\mu(x)$ for all $A \in \mathcal{A}$ and $B \in \mathcal{B}$.
\end{enumerate}
In our setting, $P$ denotes the joint distribution of a random pair $(X,Y)$, taking values in $\mathcal{X}$ and $\mathcal{Y}$ respectively.  We interpret $P_x$ as the conditional distribution of $Y$ given $X=x$, even though it may be the case that the conditioning event has probability zero.  Going further, we also interpret $P_X$ as the conditional distribution of $Y$ given~$X$.  Indeed, we then have for all $A \in \mathcal{A}$ and $B \in \mathcal{B}$ that	\begin{align*}
	\mathbb{E} \bigl(P_X(B) \mathbbm{1}_A(X) \bigr) = \int_A P_x(B) \,d\mu(x) = P(A \times B) &= \mathbb{P}(X \in A,Y \in  B) \\
 &= \mathbb{E}\bigl(\mathbbm{1}_A(X)\mathbbm{1}_{B}(Y)\bigr),
	\end{align*}
so $\mathbb{P}(Y \in B \, | \, X) = \mathbb{E}\bigl(\mathbbm{1}_B(Y) \, | \, X) = P_X(B)$ almost surely.
The following result, which follows from \citet[][Theorems~10.2.1 and~10.2.2]{dudley2018real}, provides a sufficient condition for the existence of a disintegration and may be regarded as a generalisation of Fubini's theorem for probability measures on the product of Polish spaces.
\begin{theorem}\label{lemma:existenceOfDisintegrationFromDudley} Suppose that $(\mathcal{X},\mathcal{A})$ and $(\mathcal{Y},\mathcal{B})$ are Polish spaces with their corresponding Borel $\sigma$-algebras.  Let $P$ be a probability distribution on $(\mathcal{X}\times \mathcal{Y}, \mathcal{A} \otimes \mathcal{B})$, with $\mu$ denoting the marginal distribution of $P$ on $(\mathcal{X},\mathcal{A})$. Then there exists a disintegration $(P_x)_{x \in \mathcal{X}}$ of $P$ into conditional distributions on $\mathcal{Y}$ with the property that 
\begin{align*}
\int_{\mathcal{X}\times \mathcal{Y}}g(x,y)\,dP(x,y) = \int_{\mathcal{X}} \biggl(\int_{\mathcal{Y}} g(x,y) \,dP_x(y)\biggr) \,d\mu(x),
\end{align*}
for every $P$-integrable function $g: \mathcal{X}\times \mathcal{Y} \rightarrow \mathbb{R}$. Moreover, the disintegration $(P_x)_{x \in \mathcal{X}}$ of $P$  is unique in the sense that if there exists another disintegration $(\tilde{P}_x)_{x \in \mathcal{X}}$ of $P$ into conditional distributions on $\mathcal{Y}$, then $\tilde{P}_x = P_x$ for $\mu$-almost every $x \in \mathcal{X}$.
\end{theorem}
In order to apply this result in our missing data context, recall the random pair $(X,\Omega')$ taking values in $\mathcal{X} \times \{0,1\}^d$ from~\eqref{eq:MAR-law}.  For each $\omega \in \{0,1\}^d$, we assume the existence of disintegrations $(P_{x\ostar\omega})_{x \in \mathcal{X}}$ of the joint distribution of $(X \ostar \omega,\Omega')$ into conditional distributions on $\{0,1\}^d$ as well as $(P_x)_{x \in \mathcal{X}}$ of the joint distribution of $(X, \Omega')$ into conditional distributions on $\{0,1\}^d$.  The existence of these disintegrations is guaranteed by Theorem~\ref{lemma:existenceOfDisintegrationFromDudley} when $\mathcal{X}_j$ is a Polish space for each $j \in [d]$, because it then follows from Lemma~\ref{lemma:X-star-is-polish} and its proof that $\mathcal{X} \coloneqq \prod_{j=1}^d \mathcal{X}_j$ and $\mathcal{X}_\star \coloneqq \prod_{j=1}^d \mathcal{X}_{j,\star}$ are Polish.  Formally then, the condition $\mathbb{P}(\Omega' = \omega\,|\, X=x) = \mathbb{P}(\Omega' = \omega\,|\, X\ostar\omega=x\ostar\omega)$ in~\eqref{eq:MAR-law} means that $P_x(\omega) = P_{x\ostar\omega}(\omega)$.  In fact, since the MAR definition refers to a family of distributions of $X \ostar \Omega'$, we need these disintegrations for each possible joint distribution of $(X,\Omega')$ with $X \sim P$ and $\mathbb{P}(\Omega' = \bm{1}_S) = \pi(S)$ for all $S \subseteq[d]$ (such disintegrations are again guaranteed to exist by Theorem~\ref{lemma:existenceOfDisintegrationFromDudley} when $\mathcal{X}_j$ is a Polish space for each $j \in [d]$).  

\section{MCAR lower bounds for mean estimation} \label{sec:heterogeneous-mean}

Recall the definition of an $f$-divergence $\mathrm{Div}_f(\cdot,\cdot)$ from~\eqref{eq:f-divergence}.  Lemma~\ref{lem:equivalence-of-f-af} below relates the $f$-divergence of two MCAR distributions on $\mathcal{X}_\star$ to a notion of average $f$-divergence given in Definition~\ref{def:ADiv} below.  For probability measures $P, Q \in \mathcal{P}(\mathcal{X})$, we let, for $S \subseteq [d]$, $P_S$ and $Q_S$ denote their respective marginal distributions on $\mathcal{X}_S$.  
\begin{defn} \label{def:ADiv}
    Let $P, Q \in \mathcal{P}(\mathcal{X})$, let $\pi\in \mathcal{P}(2^{[d]})$ and let $f: (0, \infty) \rightarrow \mathbb{R}$ be a convex function with $f(1) = 0$. We define the \emph{average $f$-divergence} between $P$ and $Q$ with respect to $\pi$ to be
    \begin{align*}
        \mathrm{ADiv}_f(P, Q;\pi) \coloneqq \sum_{S \subseteq [d]} \pi(S) \cdot \mathrm{Div}_f\bigl(P_S, Q_S\bigr),
    \end{align*}
    where we adopt the convention that $\mathrm{Div}_f(P_S, Q_S) \coloneqq 0$ if $S = \emptyset$.
\end{defn} 
We will write $\mathrm{ATV}(\cdot,\cdot;\pi)$ and $\mathrm{AKL}(\cdot,\cdot;\pi)$ respectively for the average total variation distance and average Kullback--Leibler divergence with respect to $\pi$.  It is worth noting that the average total variation distance is a pseudo-metric but not necessarily a metric on $\mathcal{P}(\mathcal{X})$; indeed, we have $\mathrm{ATV}(P,Q;\pi) = 0$ whenever $P$ and $Q$ have the same marginal distributions on the support of $\pi$.  

The following lemma shows how an $f$-divergence between two MCAR distributions on $\mathcal{P}(\mathcal{X}_{\star})$ can be computed as an average $f$-divergence on $\mathcal{P}(\mathcal{X})$ in the sense of Definition~\ref{def:ADiv}.  
\begin{lemma}\label{lem:equivalence-of-f-af}
    Let $P, Q \in \mathcal{P}(\mathcal{X})$ and let $\pi \in \mathcal{P}(2^{[d]})$. Then
    \[
    \mathrm{Div}_f\bigl(\mathsf{MCAR}_{(\pi, P)}, \mathsf{MCAR}_{(\pi, Q)}\bigr) = \mathrm{ADiv}_f(P, Q; \pi).
    \]
\end{lemma}
\begin{proof}[Proof of Lemma~\ref{lem:equivalence-of-f-af}]
Recall the definition of $\mathcal{X}^{(S)}$ and $\mathcal{X}_S$ from Section~\ref{sec:notation-proofs}.  For $A \in\mathcal{B}(\mathcal{X}_\star)$, note that $A \cap \mathcal{X}^{(S)} \in \mathcal{B}(\mathcal{X}^{(S)})$ and define $(A \cap \mathcal{X}^{(S)})_S \coloneqq \bigl\{ x_S : x\in A \cap \mathcal{X}^{(S)} \bigr\} \in \mathcal{B}(\mathcal{X}_S)$.  Let $P^{(S)} \in \mathcal{P}(\mathcal{X}_{\star})$ be defined as $P^{(S)}(A) \coloneqq P_S \bigl((A \cap \mathcal{X}^{(S)})_S \bigr)$ for $A \in \mathcal{B}(\mathcal{X}_{\star})$, so that $P^{(S)}$ is supported on $\mathcal{X}^{(S)}$. 
For each $S \subseteq [d]$, we can apply the Lebesgue decomposition theorem to obtain the decomposition $P^{(S)} = P^{(S)}_{\mathrm{ac}} + P^{(S)}_{\mathrm{sing}}$ (with respect to $Q^{(S)}$).  Then, with respect to $\mathsf{MCAR}_{(\pi, Q)}$, 
\[
\bigl(\mathsf{MCAR}_{(\pi, P)}\bigr)_{\mathrm{ac}} = \biggl(\sum_{S \subseteq [d]} \pi(S) \cdot P^{(S)}\biggr)_{\mathrm{ac}} = \sum_{S \subseteq [d]} \pi(S) \cdot P^{(S)}_{\mathrm{ac}} 
\]
and
\[
\bigl(\mathsf{MCAR}_{(\pi, P)}\bigr)_{\mathrm{sing}} = \biggl(\sum_{S \subseteq [d]} \pi(S) \cdot P^{(S)}\biggr)_{\mathrm{sing}} = \sum_{S \subseteq [d]} \pi(S) \cdot P^{(S)}_{\mathrm{sing}}.
\]
Hence, since $\mathcal{X}_{\star} = \sqcup_{S \subseteq [d]} \mathcal{X}^{(S)}$,
\begin{align*}
&\mathrm{Div}_f\bigl(\mathsf{MCAR}_{(\pi, P)}, \mathsf{MCAR}_{(\pi, Q)}\bigr)\\
&= \int_{\mathcal{X}_{\star}} f\biggl(\frac{\mathrm{d}\sum_{S \subseteq [d]} \pi(S) P^{(S)}_{\mathrm{ac}}}{\mathrm{d}\sum_{S \subseteq [d]} \pi(S) Q^{(S)}}\biggr)  \sum_{S \subseteq [d]} \pi(S) \, \mathrm{d}Q^{(S)} + M_f \cdot \sum_{S \subseteq [d]} \pi(S) P^{(S)}_{\mathrm{sing}}(\mathcal{X}_\star) \\
&= \sum_{S \subseteq [d]}\pi(S) \int_{\mathcal{X}^{(S)}} f\biggl(\frac{\mathrm{d}P^{(S)}_{\mathrm{ac}}}{\mathrm{d} Q^{(S)}}\biggr) \, \mathrm{d}Q^{(S)} +  M_f \cdot \sum_{S \subseteq [d]}\pi(S) P^{(S)}_{\mathrm{sing}}(\mathcal{X}^{(S)}) = \mathrm{ADiv}_f(P,Q;\pi),
\end{align*}
as desired.
\end{proof}

Very often, it is convenient to apply Pinsker's inequality to total variation distances, in order to control them via (more tractable) Kullback--Leibler divergences.  We remark that in doing so directly to the left-hand side of Lemma~\ref{lem:equivalence-of-f-af}, we obtain 
\begin{align*} 
\mathrm{TV}\bigl(\mathsf{MCAR}_{(\pi, P)}, &\mathsf{MCAR}_{(\pi, Q)}\bigr) \leq \frac{1}{2^{1/2}} \cdot \mathrm{KL}^{1/2}\bigl(\mathsf{MCAR}_{(\pi, P)}, \mathsf{MCAR}_{(\pi, Q)}\bigr)\\
&= \frac{1}{2^{1/2}} \mathrm{AKL}^{1/2}(P, Q;\pi) = \frac{1}{2^{1/2}} \biggl\{\sum_{S \subseteq [d]} \pi(S) \cdot \mathrm{KL}\bigl(P_S, Q_S\bigr)\biggr\}^{1/2}.
\end{align*}
On the other hand, applying Pinsker's inequality to the right-hand side of Lemma~\ref{lem:equivalence-of-f-af} yields the bound
\begin{align*}
\mathrm{ATV}(P, Q;\pi) = \sum_{S \subseteq [d]} \pi(S) \cdot \mathrm{TV}\bigl(P_S, Q_S\bigr) \leq \frac{1}{2^{1/2}} \sum_{S \subseteq [d]} \pi(S) \cdot \mathrm{KL}^{1/2}\bigl(P_S, Q_S\bigr),
\end{align*}
which is an improvement, by Jensen's inequality.

We now state two lower bounds in the MCAR setting, beginning with the univariate setting.

\begin{prop}\label{prop:univariate-mcar-lb}
    Let $n \in \mathbb{N}$, $q \in (0, 1]$ and $\Theta \coloneqq \mathbb{R}$. 
    \begin{enumerate}
    \item[(a)] Let $\sigma>0$ and $\delta\in(0, 1/4]$. Then, writing $\mathcal{P}_{\theta} \coloneqq \bigl\{\mathsf{MCAR}_{(q,\mathsf{N}(\theta,\sigma^2))}^{\otimes n}\bigr\}$, we have
    \begin{align*}
    \mathcal{M}\bigl(\delta,\mathcal{P}_{\Theta},|\cdot|^2\bigr) 
    \begin{cases}
         \geq \dfrac{\sigma^2 \log(1/\delta)}{20nq} \quad&\text{if }\delta\geq \dfrac{(1-q)^n}{2}\\
         = \infty \quad&\text{if }\delta< \dfrac{(1-q)^n}{2}.
    \end{cases}
    \end{align*}
    \item[(b)] Let $K>0$ and $\delta\in(0,1/4]$. Then, with $\mathcal{P}_{\mathrm{b}}(\theta, K)$ as in~\eqref{eq:distributions-with-bounded-support} and writing $\mathcal{P}_{\theta} \coloneqq \bigl\{\mathsf{MCAR}_{(q,P)}^{\otimes n} : P\in\mathcal{P}_{\mathrm{b}}(\theta,K)\bigr\}$, we have
    \begin{align*}
        \mathcal{M}\bigl(\delta, \mathcal{P}_{\Theta}, | \cdot |^2\bigr) \begin{cases}
            \geq \dfrac{K^2 \log(1/\delta)}{80nq} \quad&\text{if } \delta\geq \exp(-nq/2)\\
            = \infty \quad&\text{if } \delta < \dfrac{(1-q)^n}{2}.
        \end{cases} 
    \end{align*}
    \end{enumerate}
\end{prop}
\begin{proof}
    (a) Let $\theta_1 \coloneqq 0$ and $\theta_2 \coloneqq \sigma \sqrt{\frac{1}{nq} \log\bigl( \frac{1}{4\delta(1-\delta)} \bigr)}$. By Lemma~\ref{lem:equivalence-of-f-af}, we have
    \begin{align*}
        \mathrm{KL}\bigl( \mathsf{MCAR}^{\otimes n}_{(\pi, \mathsf{N}(\theta_1,\sigma^2))}, \mathsf{MCAR}^{\otimes n}_{(\pi, \mathsf{N}(\theta_2,\sigma^2))}\bigr) &= nq \cdot \mathrm{KL}\bigl( \mathsf{N}(\theta_1,\sigma^2), \mathsf{N}(\theta_2,\sigma^2)\bigr)\\
        &= \frac{1}{2} \log\biggl( \frac{1}{4\delta(1-\delta)} \biggr) < \log\biggl( \frac{1}{4\delta(1-\delta)} \biggr).
    \end{align*}
    Therefore, by \citet[Corollary~6 and Theorem~4]{ma2024high}, we deduce that for $\delta\in(0,1/4]$,
    \begin{align*}
        \mathcal{M}\bigl(\delta, \mathcal{P}_{\Theta}, | \cdot |^2\bigr) \geq \biggl( \frac{\theta_1 - \theta_2}{2} \biggr)^2 = \frac{\sigma^2\log\bigl(\frac{1}{4\delta(1-\delta)}\bigr)}{4nq} \geq \frac{\sigma^2 \log(1/\delta)}{20nq}.
    \end{align*}
    Moreover, for any $\theta_1,\theta_2\in\mathbb{R}$, we have
    \begin{align*}
        \mathrm{TV}\bigl(&\mathsf{MCAR}_{(q,\mathsf{N}(\theta_1,\sigma^2))}^{\otimes n}, \mathsf{MCAR}_{(q,\mathsf{N}(\theta_2,\sigma^2))}^{\otimes n}\bigr)\\
        &= \sup_{A \in \mathcal{B}(\mathbb{R}_{\star}^n) \setminus \{\star\}^n} \Bigl\{\mathsf{MCAR}_{(q,\mathsf{N}(\theta_1,\sigma^2))}^{\otimes n}(A) - \mathsf{MCAR}_{(q,\mathsf{N}(\theta_2,\sigma^2))}^{\otimes n}(A)\Bigr\} \leq 1-(1-q)^n,
    \end{align*}
    where both steps follow since $\mathsf{MCAR}_{(q,\mathsf{N}(\theta_1,\sigma^2))}^{\otimes n}(\{\star\}^n) = \mathsf{MCAR}_{(q,\mathsf{N}(\theta_2,\sigma^2))}^{\otimes n}(\{\star\}^n) = (1-q)^n$.  Therefore, by \citet[Lemma~5]{ma2024high}, we have that $\mathcal{M}(\delta,\mathcal{P}_{\Theta},|\cdot|^2) \geq (\theta_1-\theta_2)^2/4$ for $\delta< \frac{(1-q)^n}{2}$. The claim follows since $\theta_1,\theta_2$ were arbitrary.

    \medskip
    (b) Define $P_1,P_2 \in \mathcal{P}(\mathbb{R})$ by 
    \[
    P_1(\{x\}) \coloneqq \begin{cases} \frac{1}{2} \quad &\text{ if } x = 0\\
    \frac{1}{2} \quad &\text{ if } x = K
    \end{cases} \quad \text{and} \quad P_2(\{x\}) \coloneqq \begin{cases}
    \frac{1-a}{2} \quad &\text{ if } x = 0\\
    \frac{1 + a}{2} \quad &\text{ if } x=K,
    \end{cases}
    \]
    where $a\coloneqq \sqrt{\frac{1}{nq}\log\bigl(\frac{1}{4\delta(1-\delta)}\bigr)} \leq \sqrt{\frac{\log(1/\delta)}{nq}} \leq 1/\sqrt{2}$ for $\delta \in [e^{-nq/2}, 1/4]$. 
    Let $\theta_1 \coloneqq \mathbb{E}_{P_1}(X) = K/2$ and $\theta_2 \coloneqq \mathbb{E}_{P_2}(X) = (1+a)K/2$, so that $P_{\ell} \in \mathcal{P}_{\mathrm{b}}(\theta_\ell, K)$ for $\ell\in\{1,2\}$. Moreover, by Lemma~\ref{lem:equivalence-of-f-af},
    \begin{align*}
        \mathrm{KL}\bigl(\mathsf{MCAR}^{\otimes n}_{(q,P_1)}, \mathsf{MCAR}^{\otimes n}_{(q,P_2)}\bigr) = nq \mathrm{KL}(P_1,P_2) &= \frac{nq}{2} \log\biggl( \frac{1}{1-a^2} \biggr)\\
        &< nqa^2 = \log\biggl( \frac{1}{4\delta(1-\delta)} \biggr),
    \end{align*}
    where the inequality follows because 
    $\log\bigl(\frac{1}{1-x^2}\bigr) < 2x^2$ for $x \in (0,1/\sqrt{2}]$. Hence, by \citet[Corollary~6 and Theorem~4]{ma2024high}, we deduce that for $\delta\in[e^{-nq/2},1/4]$,
    \begin{align*}
        \mathcal{M}\bigl(\delta, \mathcal{P}_{\Theta}, | \cdot |^2\bigr) \geq \biggl( \frac{\theta_1 - \theta_2}{2} \biggr)^2 \geq \frac{K^2 \log(1/\delta)}{80nq}.
    \end{align*}
    Now let $\theta\in\mathbb{R}$, $P_1' \coloneqq \mathsf{Unif}[0,K]$ and $P_2' \coloneqq \mathsf{Unif}[\theta, \theta + K]$. Then by the same argument as in part (a), we have
    \begin{align*}
        \mathrm{TV}\bigl( \mathsf{MCAR}_{(q,P_1')}^{\otimes n}, \mathsf{MCAR}_{(q,P_2')}^{\otimes n} \bigr) \leq 1-(1-q)^n.
    \end{align*}
    Therefore, by \citet[Lemma~5]{ma2024high}, we have that $\mathcal{M}(\delta,\mathcal{P}_{\Theta},|\cdot|^2) \geq \theta^2/4$ for $\delta< \frac{(1-q)^n}{2}$. The claim follows since $\theta_1,\theta_2$ were arbitrary.
\end{proof}

Our next proposition lower bounds the minimax quantile for mean estimation in the multivariate Gaussian setting when the covariance matrix is diagonal. 
\begin{prop} \label{prop:arb-mean-MCAR-lb}
    Let $\delta \in (0, 1/4]$, $\Sigma = (\Sigma_{jk})_{j,k \in [d]} \in \mathcal{S}^{d \times d}_{++}$ be diagonal, $\pi\in\mathcal{P}(2^{[d]})$, and let $P_{\theta} \coloneqq \mathsf{N}(\theta,\Sigma)$ for $\theta\in\mathbb{R}^d$. Then, writing $\mathcal{P}_{\theta} \coloneqq \bigl\{ \mathsf{MCAR}_{(\pi,P_{\theta})}^{\otimes n} \bigr\}$, we have
    \begin{align*}
        \mathcal{M}\bigl(\delta, \mathcal{P}_{\Theta}, \| \cdot \|_2^2\bigr) \gtrsim \frac{\tr\bigl(\Sigma^{\mathrm{IPW}}\bigr)}{n} +\frac{\| \Sigma^{\mathrm{IPW}} \|_{\mathrm{op}} \log(1/\delta)}{n}.
    \end{align*}
\end{prop}

\begin{proof}
We consider two separate constructions to capture each of the terms in the lower bound.  For the first, let $\mathcal{V} \coloneqq \{0, 1\}^{d}$ and for each $v = (v_1,\ldots,v_d)^\top \in \mathcal{V}$, set $\theta_v = (\theta_{v,1},\ldots,\theta_{v,d})^\top \coloneqq a \odot v$, where $a = (a_1,\ldots,a_d)^\top \in \mathbb{R}^d$ is given by $a_j \coloneqq \frac{4}{3}\sqrt{\Sigma_{jj}/(n q_j)}$ for $j\in[d]$. Define $\Theta_0 \coloneqq \{\theta_v : v\in\mathcal{V}\}$, which has diameter $D \coloneqq \frac{4}{3}\sqrt{\tr(\Sigma^{\mathrm{IPW}})/n}$.  For any $v,v' \in \mathcal{V}$ that differ only in their $j$th coordinates, we have by Pinsker's inequality and Lemma~\ref{lem:equivalence-of-f-af} that
    \begin{align*}
    \mathrm{TV}\bigl( \mathsf{MCAR}_{(\pi, P_{\theta_v})}^{\otimes n}, \mathsf{MCAR}_{(\pi, P_{\theta_{v'}})}^{\otimes n}\bigr) &\leq \biggl\{\frac{n}{2} \cdot \mathrm{KL}\bigl( \mathsf{MCAR}_{(\pi, P_{\theta_v})}, \mathsf{MCAR}_{(\pi, P_{\theta_{v'}})}\bigr)\biggr\}^{1/2}\\
    &= \biggl\{\frac{n}{2} \sum_{S\subseteq [d]} \pi(S) \cdot \mathrm{KL}\bigl((P_{\theta_v})_S, (P_{\theta_{v'}})_S \bigr)\biggr\}^{1/2}\\
    &= \biggl\{\frac{n}{2} \sum_{S\subseteq [d]} \pi(S) \cdot \sum_{k\in S} \frac{(\theta_{v,k} - \theta_{v',k})^2}{2\Sigma_{kk}} \biggr\}^{1/2}\\
    &= \biggl\{\frac{n}{4} \sum_{S\subseteq [d] : j\in S} \pi(S) \cdot \frac{a_j^2}{\Sigma_{jj}} \biggr\}^{1/2} = \frac{2}{3}.
    \end{align*}
    Therefore, by Assouad's Lemma \citep[e.g.,][Lemma 23]{ma2024high}, 
    \begin{align*}
        \inf_{\hat{\theta}_n \in \hat{\Theta}_{n}} \sup_{\theta_0\in\Theta_0} \mathbb{E}_{\mathsf{MCAR}_{(\pi,P_{\theta_0})}^{\otimes n}} \bigl( \|\hat{\theta}_n - \theta\|_2^2 \bigr) \geq \frac{4\tr(\Sigma^{\mathrm{IPW}})}{27n}.
    \end{align*}
    Applying \citet[Theorem~8]{ma2024high}, with $\epsilon=3/40$ therein, we deduce that for $\delta\in(0,1/15]$,
    \begin{align*}
        \mathcal{M}_-\bigl(\delta,\mathcal{P}_{\Theta},\|\cdot\|_2^2\bigr) \geq \frac{\tr(\Sigma^{\mathrm{IPW}})}{100n}.
    \end{align*}
    We then apply \citet[Theorem~4 and Proposition~9]{ma2024high}, with $A=k=2$ therein, to deduce that for $\delta\in(0,1/4]$,
    \begin{align} \label{eq:mcar-minimax-quantile-term1}
        \mathcal{M}\bigl(\delta,\mathcal{P}_{\Theta},\|\cdot\|_2^2\bigr) \geq \frac{\tr(\Sigma^{\mathrm{IPW}})}{2^6 \cdot 3^2 \cdot 5^2 \cdot n}.
    \end{align}
    Our second construction involves just two distributions.  Let $j_0 \coloneqq \sargmax_{j \in [d]} \Sigma_{jj}/q_j$ and set $\theta_1 \coloneqq 0$, $\theta_2 \coloneqq \sqrt{\frac{\Sigma_{j_0 j_0}}{n q_{j_0}} \log\bigl(\frac{1}{4\delta(1-\delta)}\bigr)}\, e_{j_0}$.  Then by Lemma~\ref{lem:equivalence-of-f-af}, 
\[
\mathrm{KL}\bigl(\mathsf{MCAR}_{(\pi, P_{\theta_1})}^{\otimes n}, \mathsf{MCAR}_{(\pi, P_{\theta_{2}})}^{\otimes n}\bigr) = n \cdot \mathrm{AKL}\bigl(P_{\theta_1}, P_{\theta_2}; \pi \bigr) = \frac{1}{2} \log\biggl(\frac{1}{4\delta(1-\delta)}\biggr).
\]
By~\citet[][Theorem 4 and Corollary 6]{ma2024high}, we have for $\delta \in (0, 1/4]$ that 
\begin{align} \label{eq:mcar-minimax-quantile-term2}
\mathcal{M}\bigl(\delta, \mathcal{P}_{\Theta}, \| \cdot \|_2^2\bigr) \geq \frac{\| \Sigma^{\mathrm{IPW}} \|_{\mathrm{op}} \log(1/\delta)}{20n}.
\end{align}
    Finally, combining~\eqref{eq:mcar-minimax-quantile-term1} and~\eqref{eq:mcar-minimax-quantile-term2} yields the desired result.
\end{proof}










{
\bibliographystyle{imsart-nameyear.bst}
\bibliography{bibliography}
}


\begin{frontmatter}
\title{Supplementary material for `Estimation beyond Missing (Completely) at Random'}

\begin{aug}
\author[A2]{\fnms{Tianyi}~\snm{Ma}\ead[label=e6]{tm681@cam.ac.uk}}
\author[B2]{\fnms{Kabir A.}~\snm{Verchand}\ead[label=e7]{verchand@usc.edu}}
\author[C2]{\fnms{Thomas B.}~\snm{Berrett}\ead[label=e8]{tom.berrett@warwick.ac.uk}}
\author[D2]{\fnms{Tengyao}~\snm{Wang}\ead[label=e9]{t.wang59@lse.ac.uk}}
\author[A2]{\fnms{Richard
J.}~\snm{Samworth}\ead[label=e10]{r.samworth@statslab.cam.ac.uk}}
\address[A2]{Statistical Laboratory, University of
Cambridge\printead[presep={,\ }]{e6,e10}}

\address[B2]{Department of Data Sciences and Operations, University of Southern California\printead[presep={,\ }]{e7}}

\address[C2]{Department of Statistics, University of Warwick\printead[presep={,\ }]{e8}}

\address[D2]{Department of Statistics, London School of Economics and Political Science\printead[presep={,\ }]{e9}}
\end{aug}
\end{frontmatter}

\setcounter{section}{0}
\setcounter{equation}{0}
\setcounter{theorem}{0}
\def\theequation{S\arabic{equation}}
\def\thesection{S\arabic{section}}
\def\thetheorem{S\arabic{theorem}}
\def\thefigure{S\arabic{figure}}
\def\thealgorithm{S\arabic{algorithm}}

This is the supplementary material for \cite{ma2024estimation}.

\section{Notation used in proofs}\label{sec:notation-proofs}

For a measurable space $(\mathcal{Z}, \mathcal{C})$ and probability measures $P, Q \in \mathcal{P}(\mathcal{Z})$, we write $P \perp Q$ if $P$ and $Q$ are singular.  The Lebesgue decomposition theorem yields the unique decomposition $P = P_{\mathrm{ac}} + P_{\mathrm{sing}}$ where $P_{\mathrm{ac}} \ll Q$ and where $P_{\mathrm{sing}} \perp Q$.  For a convex function $f: (0, \infty) \rightarrow \mathbb{R}$, we let $M_f \coloneqq \lim_{x \rightarrow \infty} f(x)/x \in (-\infty, \infty]$ denote its \emph{maximal slope}.   We then define the \emph{$f$-divergence} between $P$ and $Q$ to be 
\begin{align} \label{eq:f-divergence}
\mathrm{Div}_f(P, Q) \coloneqq \int_{\mathcal{Z}} f\biggl(\frac{\mathrm{d} P_{\mathrm{ac}}}{\mathrm{d}Q}\biggr) \, \mathrm{d}Q + M_f \cdot P_{\mathrm{sing}}(\mathcal{Z}).
\end{align}
As important examples, if $f(x) = |x-1|/2$, then we obtain the total variation distance $\mathrm{TV}(P, Q) \coloneqq \sup_{A \in \mathcal{C}} \lvert P(A) - Q(A) \rvert$, while if $f(x) = x \log x$, then the resulting $f$-divergence is the Kullback--Leibler divergence
\[
\mathrm{KL}(P,Q) \coloneqq \begin{cases} 
\int_{\mathcal{Z}} \log\bigl(\frac{\mathrm{d} P}{\mathrm{d} Q}\bigr) \, \mathrm{d} Q & \text{ if } P \ll Q\\
\infty & \text { otherwise}.
\end{cases}
\]
Finally, if $f(x) = (x - 1)^2$, then we obtain the $\chi^2$-divergence
\[
\chi^2(P,Q) \coloneqq \begin{cases} 
\int_{\mathcal{Z}} \bigl(\frac{dP}{dQ} - 1\bigr)^2 \, dQ & \text{ if } P \ll Q \\
\infty & \text{ otherwise}.
\end{cases}
\]
Recalling the spaces $\mathcal{X}_1,\ldots,\mathcal{X}_d$ from Section~\ref{sec:extended-space-properties}, for a set $S \in 2^{[d]} \setminus \{\emptyset\}$, let $\mathcal{X}_{S} \coloneqq \prod_{j \in S} \mathcal{X}_j$, and also define $\mathcal{X}_\emptyset \coloneqq \{\star\}$ and $\mathcal{X} \coloneqq \prod_{j=1}^d \mathcal{X}_j$.  Given $x = (x_1,\ldots,x_d) \in \mathcal{X}$ and $S \in 2^{[d]} \setminus \{\emptyset\}$, we define $x_S \coloneqq (x_j)_{j \in S}$, with $x_\emptyset \coloneqq \star$.  For $S \subseteq [d]$, we define $\mathcal{X}_j^{(S)} \coloneqq \mathcal{X}_j$ if $j\in S$ and $\mathcal{X}_j^{(S)} \coloneqq \{\star\}$ if $j \notin S$, and also set $\mathcal{X}^{(S)} \coloneqq \prod_{j=1}^d \mathcal{X}_j^{(S)}$.  Next, we let
\[
\mathcal{B}^{(S)}(\mathcal{X}_\star) \coloneqq \bigl\{A \in \mathcal{B}(\mathcal{X}_\star): \forall z = (z_1,\ldots,z_d) \in A,\, z_j \neq \star, \, \forall j \in S \ \text{and} \ z_k = \star,\, \forall k \notin S \bigr\}.
\]
Given $S \subseteq [d]$, we write $\mathcal{G}_S$ for the set of real-valued functions on $\mathcal{X}_S$, and also write $\mathcal{G}_\star$ for the set of real-valued functions on $\mathcal{X}_\star$.  A function $f \in \mathcal{G}_\star$ may be identified with the sequence of functions $(f_S:S \subseteq [d])$, where $f_S \in \mathcal{G}_S$ for each $S$.  Formally, this identification is via the bijection $\psi: \prod_{S \subseteq [d]} \mathcal{G}_S \rightarrow \mathcal{G}_\star$ given by $\psi\bigl((f_{S'}:S' \subseteq [d])\bigr)(z) \coloneqq f_S(z_S)$ for $z \in \mathcal{X}^{(S)}$ and $S \subseteq [d]$.  In other words, we evaluate $f \in \mathcal{G}_\star$ at $z \in \mathcal{X}_\star$ by setting $S$ to be the coordinates in $z$ that are not equal to $\star$, and then computing $f_S(z_S)$.

\section{Proofs from Section~\ref{sec:setup}}\label{sec:proofs-setup}

\subsection{Proof of Theorem~\ref{cor:P-epsilon-pi-S-realisability-Farkas-form}}\label{sec:proof-general-realisable}

Theorem~\ref{cor:P-epsilon-pi-S-realisability-Farkas-form} follows immediately from Theorem~\ref{Thm:AbstractVersion} below, which is stated in greater generality, encompassing both continuous and discrete spaces.  In fact, we begin with a sketch of the proof of this general result in the setting where $\mathcal{X}$ is finite, both to explain the relevance of (a generalisation of) Farkas's lemma in this context, and to provide intuition for the more technical arguments that follow.  Let $X \sim P \in \mathcal{P}(\mathcal{X})$ and let $Q \coloneqq \mathsf{Law}(X \ostar \Omega)$ for some random vector $\Omega$ taking values in $\{0,1\}^d$.  We write $M = (M_{S,x})_{S\subseteq [d],x\in\mathcal{X}} \coloneqq \bigl(\mathbb{P}(\Omega = \bm{1}_S \, | \, X = x)\bigr)_{S \subseteq [d], x \in \mathcal{X}} \in [0,1]^{2^{[d]} \times \mathcal{X}}$ to summarise the missingness mechanism.  Now write $\mathbb{A} \in [0,1]^{\mathcal{X}_\star \times (2^{[d]} \times \mathcal{X})}$ for the matrix with
\[
    \mathbb{A}_{z,(S,x)} \coloneqq P(\{x\}) \mathbbm{1}_{\{z_S=x_S\}} \prod_{j \in S^{c}}\mathbbm{1}_{\{ z_{j} = \star \}},
\]
so that each column of $\mathbb{A}$ has at most one non-zero entry.  Then
\[
    (\mathbb{A} M)_{z} = \sum_{S \subseteq [d]} \sum_{x \in \mathcal{X}} P(\{x\}) M_{S, x} \mathbbm{1}_{\{z_S=x_S\}}\prod_{j \in S^{c}}\mathbbm{1}_{\{ z_{j} = \star \}} = Q(\{z\}).
\]
Now, for $x \in \mathcal{X}$, write $\sigma_x \in \{0,1\}^{2^{[d]} \times \mathcal{X}}$ for the vector with $(\sigma_x)_{(S,x')} \coloneqq \mathbbm{1}_{\{x=x'\}}$, so that $\sigma_x^\top M = \sum_{S \in 2^{[d]}} M_{S,x}$, and form the matrix $\mathbb{B} \coloneqq (\sigma_x^\top)_{x \in \mathcal{X}} \in \{0,1\}^{\mathcal{X} \times (2^{[d]} \times \mathcal{X})}$.  We can then define $\mathcal{J} \coloneqq \{M \in [0,1]^{2^{[d]} \times \mathcal{X}}:\mathbb{B} M = \bm{1}_{\mathcal{X}}\}$ to denote the set of valid mechanisms.  We deduce that $Q \in \mathsf{MNAR}_P$ if and only if there exists $M \in \mathcal{J}$ such that $\mathbb{A} M = Q$.  By Farkas's lemma, this latter condition is equivalent to the statement that there does not exist $(y,w) = \bigl((y_z)_{z \in \mathcal{X}_\star},(w_x)_{x \in \mathcal{X}}\bigr) \in \mathbb{R}^{\mathcal{X}_\star} \times \mathbb{R}^{\mathcal{X}}$ such that $\sum_{z \in \mathcal{X}_\star} Q(\{z\}) y_z + \sum_{x \in \mathcal{X}} w_x < 0$ and $0 \leq (\mathbb{A}^\top y + \mathbb{B}^\top w)_{(S,x)} = P(\{x\}) y_{x \ostar \bm{1}_{S}} + w_x$ for each $S \subseteq [d]$ and $x \in \mathcal{X}$. 
The search for such a pair $(y,w)$ amounts to a constrained optimisation problem, whose solution for each fixed~$y$ is to take $w_x = -P(\{x\}) \min_{S \subseteq [d]} y_{x \ostar \bm{1}_S}$ for $x \in \mathcal{X}$.  Then 
\[
\sum_{z \in \mathcal{X}_\star} Q(\{z\}) y_z + \sum_{x \in \mathcal{X}} w_x = \sum_{z \in \mathcal{X}_\star} Q(\{z\}) y_z - \sum_{x \in \mathcal{X}} P(\{x\}) \min_{S \subseteq [d]} y_{x \ostar \bm{1}_S},
\]
so the condition that there does not exist $(y,w)$ for which this quantity is negative corresponds to~\eqref{Eq:fmax} after identifying $y$ with~$-f$.

Moving now to the proof of the full theorem, we require several preliminary topological results that are stated and proved in Section~\ref{sec:auxiliary}.  We will also use the generalisation of Farkas's lemma below.  Recall that if $X$ is a real vector space, then the \emph{algebraic dual} of $X$, denoted $X^*$, is the vector space of linear functions $f:X \rightarrow \mathbb{R}$.  Whenever $X'$ is a subspace of this algebraic dual, we say $X'$ \emph{separates points} if for every $x_1,x_2 \in X$ with $x_1 \neq x_2$, there exists $f \in X'$ with $f(x_1) \neq f(x_2)$.  The \emph{weak topology} on $X$ generated by $X'$ is the coarsest topology such that $f^{-1}(U)$ is open in $X$ for every $f\in X'$ and open set $U\subseteq \mathbb{R}$.  Now let $Y$ be another real vector space and let $Y'$ be a subspace of its algebraic dual.  A linear map $T:X \rightarrow Y$ is \emph{$(X',Y')$-weakly continuous} if it is continuous when $X$ and $Y$ are equipped with the weak topologies generated by $X'$ and $Y'$ respectively.  Where $X'$ and $Y'$ are clear from context, we will abbreviate this terminology by simply referring to $T$ as weakly continuous.
\begin{theorem}[{\citealp[Theorem~2]{craven1977generalizations}}]
\label{Thm:GeneralisedFarkas}
    Let $X$ and $Y$ be real vector spaces, and let $X'$ and $Y'$ be subspaces of the algebraic duals of $X$ and $Y$, respectively, that separate points.  Given $y\in Y$, a weakly continuous linear map $T:X\to Y$, and a convex cone $K\subseteq X$ such that $T(K)$ is weakly closed in $Y$, the following are equivalent:
    \begin{enumerate}
        \item[(a)] $Tx=y$ has a solution $x\in K$;
        \item[(b)] If $g \in Y'$ satisfies $g(Tx)\geq 0$ for all $x\in K$, then $g(y)\geq 0$.
    \end{enumerate}
\end{theorem}

For any topological space $\mathcal{Z}$, we write $C_{\mathrm{b}}(\mathcal{Z})$ for the space of bounded continuous real-valued functions on $\mathcal{Z}$.  Let $\mathcal{M}(\mathcal{Z})$ denote the space of finite, signed Borel measures on $\mathcal{Z}$ and let $\mathcal{M}_+(\mathcal{Z})$ be the subspace of (non-negative) finite Borel measures.  We call $\mathcal{Z}$ a \emph{Hausdorff space} if, given any distinct $z_1,z_2 \in \mathcal{Z}$, we can find disjoint open subsets $V_1,V_2$ such that $z_1 \in V_1$, $z_2 \in V_2$.  The space $\mathcal{Z}$ is \emph{locally compact} if every point in $\mathcal{Z}$ has a compact neighbourhood, i.e.~if for every $z \in \mathcal{Z}$, we can find an open set $U \subseteq \mathcal{Z}$ and a compact set $K \subseteq \mathcal{Z}$ such that $z \in U \subseteq K$.

The main content of the proof of Theorem~\ref{cor:P-epsilon-pi-S-realisability-Farkas-form} is Proposition~\ref{thm:general-realisable} below.  Observe that the restriction of the bijection $\psi$ in Section~\ref{sec:notation-proofs} to the set $\{(f_S:S \subseteq [d]) : f_S \in C_{\mathrm{b}}(\mathcal{X}_S) \;\forall S\subseteq[d]\}$ has image $C_{\mathrm{b}}(\mathcal{X}_\star)$.  This identifies $C_{\mathrm{b}}(\mathcal{X}_\star)$ with $\bigl(C_{\mathrm{b}}(\mathcal{X}_S) : S \in 2^{[d]}\bigr)$, but henceforth we will not be explicit about this identification, and will simply write $f = (f_S:S \in 2^{[d]}) \in C_{\mathrm{b}}(\mathcal{X}_\star)$.  Given such an $f = (f_S:S \in 2^{[d]}) \in C_{\mathrm{b}}(\mathcal{X}_\star)$, we can express the function $f_{\max}$ from Section~\ref{sec:realisability} as $f_{\max}(x) \coloneqq \max_{S\in 2^{[d]}} f_S(x_S)$ for $x \in \mathcal{X}$.
  
\begin{prop}
\label{thm:general-realisable}
Let $\mathcal{X}_1,\ldots,\mathcal{X}_d$ be locally compact Hausdorff spaces, and let $\mathcal{X} \coloneqq \prod_{j=1}^d \mathcal{X}_j$.  Assume that every open set in $\mathcal{X}$ is $\sigma$-compact.  If $P \in \mathcal{P}(\mathcal{X})$ and $Q \in \mathcal{P}(\mathcal{X}_{\star})$, then $Q \in \mathsf{MNAR}_P$ if and only if 
    \[
    P (f_{\max}) \geq Q(f) 
    \]
    for all $f\in C_{\mathrm{b}}(\mathcal{X}_\star)$.
\end{prop}
\begin{proof}
Recall the definition of $\phi_{\mathcal{Z}}: C_{\mathrm{b}}(\mathcal{Z}) \to \mathcal{M}(\mathcal{Z})^*$ before Lemma~\ref{Lemma:DualPair}. We endow $\mathcal{M}(\mathcal{Z})$ with the weak topology generated by  $\phi_{\mathcal{Z}}\bigl(C_{\mathrm{b}}(\mathcal{Z})\bigr)$, for $\mathcal{Z} \in \{\mathcal{X},\mathcal{X}_\star,\mathcal{X} \times 2^{[d]}\}$.  This ensures that $\phi_{\mathcal{Z}}(g)$ is weakly continuous for every $g \in C_{\mathrm{b}}(\mathcal{Z})$.

Let $h: \mathcal{X}\times 2^{[d]} \to \mathcal{X}_\star$ be the continuous function defined by $h(x, S) \coloneqq x \ostar \bm{1}_S$. Then $h$ induces a linear map $h_*: \mathcal{M}(\mathcal{X}\times 2^{[d]}) \to \mathcal{M}(\mathcal{X}_\star)$ given by $h_*(\mu)(B) \coloneqq \mu\bigl(h^{-1}(B)\bigr)$ (see Figure~\ref{Fig:CD} below).
Similarly, let $j:\mathcal{X}\times 2^{[d]}\to\mathcal{X}$ be the projection map $j(x,S) \coloneqq x$, and define its induced map $j_*: \mathcal{M}(\mathcal{X}\times 2^{[d]}) \to \mathcal{M}(\mathcal{X})$.
We have
$\{g\circ h: g\in C_{\mathrm{b}}(\mathcal{X}_{\star})\} \subseteq C_{\mathrm{b}}(\mathcal{X}\times 2^{[d]})$ and similarly $\{g \circ j: g\in C_{\mathrm{b}}(\mathcal{X})\}\subseteq C_{\mathrm{b}}(\mathcal{X}\times 2^{[d]})$, we have by \citet[Theorem~IV.2.1]{schaefer1971} that both $h_*$ and $j_*$ are weakly continuous. By Lemma~\ref{Lemma:ProductContinuity}, the linear map $T = (h_*,j_*): \mathcal{M}(\mathcal{X}\times 2^{[d]}) \to \mathcal{M}(\mathcal{X}_\star)\times \mathcal{M}(\mathcal{X})$ is continuous when we endow the image space with the product topology, which by Lemma~\ref{Lemma:ProductWeakTopology} is the same as the weak topology on $\mathcal{M}(\mathcal{X}_\star)\times \mathcal{M}(\mathcal{X})$ generated by $\phi_{\mathcal{X}_\star}\bigl(C_{\mathrm{b}}(\mathcal{X}_{\star})\bigr) \times \phi_{\mathcal{X}}\bigl(C_{\mathrm{b}}(\mathcal{X})\bigr)$.

\begin{figure}[htbp]
\begin{center}
  \begin{tikzcd}
    \mathcal{X}\times 2^{[d]} \arrow{r}{h} \arrow[swap]{dr}{g\circ h\in C_{\mathrm{b}}(\mathcal{X}\times 2^{[d]}) } & \mathcal{X}_{\star} \arrow{d}{g\in C_{\mathrm{b}}(\mathcal{X}_\star)} \\
     & \mathbb{R}
  \end{tikzcd}
  \hspace{1cm}
  \begin{tikzcd}
    \mathcal{M}(\mathcal{X}\times 2^{[d]}) \arrow{r}{h_*} \arrow[swap]{dr}{\phi_{\mathcal{X}\times 2^{[d]}}(g\circ h)\in \mathcal{M}(\mathcal{X}\times 2^{[d]})^*} & \mathcal{M}(\mathcal{X}_{\star}) \arrow{d}{\phi_{\mathcal{X}_\star}(g)\in \mathcal{M}(\mathcal{X}_\star)^*} \\
     & \mathbb{R}
  \end{tikzcd}
  \end{center}
  \caption{\label{Fig:CD}Schematic diagrams of various maps defined in the proof. The fact that the maps in the right panel commute follows from the fact that $h_*(\mu)(g) = \mu(g\circ h)$ for all $g\in C_{\mathrm{b}}(\mathcal{X}_\star)$.}
\end{figure}

Define $K$ to be the convex cone $\mathcal{M}_+(\mathcal{X}\times 2^{[d]})$. We claim that $h_*(K) = \mathcal{M}_+(\mathcal{X}_{\star})$. It is clear that $h_*(K) \subseteq \mathcal{M}_+(\mathcal{X}_{\star})$ since for any $\mu\in K$ and any $g\in C_{\mathrm{b}}(\mathcal{X}_{\star})$ such that $g\geq 0$, we have by \citet[Proposition~10.1]{folland1999real} that $h_*(\mu)(g) = \mu(g\circ h) \geq 0$. For the surjectivity, define $i: \mathcal{X}_{\star} \to \mathcal{X}\times2^{[d]}$ by $i(z) \coloneqq (z\odot \bm{1}_{\{j:z_j\neq \star\}} , \{j:z_j\neq \star\})$ and let $i_*: \mathcal{M}(\mathcal{X}_{\star})\to\mathcal{M}(\mathcal{X}\times2^{[d]})$ be its induced linear map. By the same argument as above, we have $i_*(\mathcal{M}_+(\mathcal{X}_\star)) \subseteq K$. For $\nu$ on $\mathcal{M}_+(\mathcal{X}_{\star})$, we have $\nu  = h_*\bigl(i_*(\nu)\bigr)$, and the surjectivity is established since $i_*(\nu)\in K$. Consequently, 
\begin{align*} 
h_*(K) = \mathcal{M}_+(\mathcal{X}_\star) &= \bigcap_{g\in C_{\mathrm{b}}(\mathcal{X}_{\star}): g \geq 0}\{\nu\in\mathcal{M}(\mathcal{X}_{\star}): \nu(g) \geq 0\} \\
&= \bigcap_{g\in C_{\mathrm{b}}(\mathcal{X}_{\star}): g \geq 0} \bigl(\phi_{\mathcal{X}_\star}(g)\bigr)^{-1}\bigl([0,\infty)\bigr)
\end{align*}
is a weakly closed set.  A similar argument shows that $j_*(K)=\mathcal{M}_+(\mathcal{X})$ is a weakly closed set. Thus, $T(K)$ is weakly closed set in $\mathcal{M}(\mathcal{X}_{\star})\times \mathcal{M}(\mathcal{X})$, by Lemma~\ref{Lemma:ProductWeakTopology}.

By definition, $Q\in \mathsf{MNAR}_P$ if and only if there exists $\mu_0\in K$ such that $T(\mu_0) = (Q, P)$. Therefore, by Lemma~\ref{Lemma:DualPair}, we can apply the generalised Farkas' lemma (Lemma~\ref{Thm:GeneralisedFarkas}) to obtain that
\begin{align*}
\label{Eq:FarkasEquivalence2}
    Q\in\mathsf{MNAR}_P &\iff \bigcap_{\mu\in K}\bigl\{(f,g)\in C_{\mathrm{b}}(\mathcal{X}_\star)\times C_{\mathrm{b}}(\mathcal{X}): h_*(\mu)(f) + j_*(\mu)(g) \geq 0\bigr\}\nonumber\\
    &\hspace{3cm}\subseteq \bigl\{(f,g) \in C_{\mathrm{b}}(\mathcal{X}_\star)\times C_{\mathrm{b}}(\mathcal{X}): Q(f) + P(g) \geq 0\bigr\}.
\end{align*}
Now, for any $(f,g) \in C_{\mathrm{b}}(\mathcal{X}_\star)\times C_{\mathrm{b}}(\mathcal{X})$ and $\mu \in K$, we have
\[
    h_*(\mu)(f) + j_*(\mu)(g) = \sum_{S\in2^{[d]}} \int_{\mathcal{X}} \{(f\circ h)(x,S) + g(x)\}\,d\mu(x,S).
\]
Hence,  $(f,g)$ satisfies $h_*(\mu)(f) + j_*(\mu)(g) \geq 0$ for all $\mu\in K$ if and only if $(f\circ h)(x,S)+g(x)\geq 0$ for all $x\in\mathcal{X}$ and $S\in2^{[d]}$. Since $P(g)$ is increasing in $g$, it therefore suffices to check that for each $f \in C_{\mathrm{b}}(\mathcal{X}_\star)$ the function $g_f \in C_{\mathrm{b}}(\mathcal{X})$ given by $g_f(x) \coloneqq -\min_{S\in2^{[d]}} (f\circ h)(x,S) = -\min_{S\in2^{[d]}} f_S(x_S)$ satisfies $Q(f) + P(g_f) \geq 0$.  Substituting $f' \coloneqq -f$, we have 
\begin{align*}
    Q\in\mathsf{MNAR}_P &\iff \text{$Q(f) + P(g_f)\geq 0$ for all $f\in C_{\mathrm{b}}(\mathcal{X}_\star)$} \\
    &\iff \text{$Q(f') \leq P(f'_{\max})$ for all $f'\in C_{\mathrm{b}}(\mathcal{X}_\star)$}
\end{align*}
as desired.
\end{proof}
We are now in a position to state and prove the more general version of Theorem~\ref{cor:P-epsilon-pi-S-realisability-Farkas-form}.
\begin{theorem}
    \label{Thm:AbstractVersion}
    Let $\mathcal{X}_1,\ldots,\mathcal{X}_d$ be locally compact Hausdorff spaces and let $\mathcal{X} \coloneqq \prod_{j=1}^d \mathcal{X}_j$.  Assume that every open set in $\mathcal{X}$ is $\sigma$-compact.  Fix $P \in \mathcal{P}(\mathcal{X})$, $\epsilon\in (0,1]$, $\pi \in \mathcal{P}(2^{[d]})$. Let $R \in \mathcal{P}(\mathcal{X}_{\star})$, and define a signed measure on $\mathcal{X}_\star$ by $Q \coloneqq \epsilon^{-1}\{R - (1-\epsilon)\mathsf{MCAR}_{(\pi, P)}\}$. Then $R\in \mathcal{R}(P,\epsilon,\pi)$ if and only if $Q\in\mathcal{P}(\mathcal{X}_{\star})$ and 
    \begin{equation*}
    P(f_{\max}) \geq Q(f)
    \end{equation*}
    for all $f \in C_{\mathrm{b}}(\mathcal{X}_\star)$.
\end{theorem}
\begin{proof}
From the definition, $R \in \mathcal{R}(P,\epsilon,\pi)$ if and only if $Q \in \mathsf{MNAR}_P$, which by Proposition~\ref{thm:general-realisable} occurs if and only if $Q \in \mathcal{P}(\mathcal{X}_\star)$ and $P(f_{\max})\geq Q(f)$ for all $f\in C_{\mathrm{b}}(\mathcal{X}_{\star})$. 
\end{proof}

\subsection{Proof of Proposition~\ref{prop:univariate-realisability}} \label{sec:proof-prop-4}
\begin{proof}[Proof of Proposition~\ref{prop:univariate-realisability}]
Suppose that $R \in \mathcal{R}(P,\epsilon,q)$ and let $A \in \mathcal{B}(\mathbb{R}_{\star})$ be such that $\mu_{\star}(A) = 0$.  Recall that if $X \sim P$, $B \sim \mathsf{Bern}(\epsilon)$, $\Omega^{(1)} \sim \mathsf{Bern}(q)$ and $\Omega^{(2)} \sim \mathsf{Bern}(q_2)$ for some $q_2 \in [0,1]$ with $B \indep (X,\Omega^{(1)},\Omega^{(2)})$ and $\Omega^{(1)} \indep X$, then we can generate $Z \sim R$ via $Z \coloneqq (1-B)\cdot (X \ostar \Omega^{(1)}) + B \cdot (X \ostar \Omega^{(2)})$.  Then by definition of $\mu_{\star}$, we must have $A \in \mathcal{B}(\mathbb{R})$ and $\mu(A) = 0$.  Since $P \ll \mu$, it follows that
\begin{align*}
    0 = P(A) = \mathbb{P}(X\in A) \geq \mathbb{P}(Z\in A) = R(A).
\end{align*}
This proves that $R \ll \mu_{\star}$.  Now define $m: \mathbb{R} \to [0,1]$ by $m(x) \coloneqq \mathbb{P}(\Omega^{(2)} = 1 \,|\, X = x)$.  Then for any $A \in \mathcal{B}(\mathbb{R})$,
\begin{align*}
    \mathbb{P}(Z \in A) &= (1-\epsilon)\cdot \mathbb{P}(X \in A,\, \Omega^{(1)} = 1) + \epsilon \cdot \mathbb{P}(X \in A,\, \Omega^{(2)} = 1)\\
    &= q(1-\epsilon) \cdot \int_A p(x)\; \mathrm{d}\mu(x) + \epsilon \cdot \int_A m(x)p(x)\; \mathrm{d}\mu(x).
\end{align*}
Hence, $\frac{\mathrm{d}R}{\mathrm{d}\mu_\star}(x) = q(1-\epsilon) \cdot p(x) + \epsilon\cdot  m(x)p(x)$ for $x\in\mathbb{R}$, and $\frac{\mathrm{d}R}{\mathrm{d}\mu_\star}(\star) = \mathbb{P}(Z = \star) = 1- q(1-\epsilon) - \epsilon\int_{\mathbb{R}} m(x)p(x) \,\mathrm{d}\mu(x)$.

Conversely, suppose that $R \in \mathcal{P}(\mathbb{R}_{\star})$ satisfies $R \ll \mu_\star$, and there exists a Borel measurable function $m:\mathbb{R} \to [0,1]$ such that $\mathrm{d}R/\mathrm{d}\mu_{\star}$ satisfies~\eqref{eq:radon-nikodym-realisable}.  Given $X \sim P$, define a random variable $\Omega^{(2)}$ taking values in $\{0,1\}$ such that $\mathbb{P}(\Omega^{(2)} = 1 \,|\, X=x) = m(x)$ for $x\in\mathbb{R}$.  Let $B \sim \mathsf{Bern}(\epsilon)$ and $\Omega^{(1)} \sim \mathsf{Bern}(q)$ be such that $B \indep (X,\Omega^{(1)},\Omega^{(2)})$ and $\Omega^{(1)} \indep X$.  Then $Z \coloneqq (1-B) \cdot (X \ostar \Omega^{(1)}) + B \cdot (X \ostar \Omega^{(2)}) \sim R$ and hence by construction $R \in \mathcal{R}(P,\epsilon,q)$.

This completes the proof, but we also provide an alternative proof of the converse statement using Theorem~\ref{cor:P-epsilon-pi-S-realisability-Farkas-form}.
Again suppose that $R \in \mathcal{P}(\mathbb{R}_{\star})$ satisfies $R \ll \mu_\star$, and that $\mathrm{d}R/\mathrm{d}\mu_{\star}$ satisfies~\eqref{eq:radon-nikodym-realisable}.  Define $Q \coloneqq \epsilon^{-1}\{R - (1-\epsilon)\mathsf{MCAR}_{(\pi, P)}\} \in \mathcal{M}(\mathbb{R}_\star)$ as in Theorem~\ref{cor:P-epsilon-pi-S-realisability-Farkas-form}, and let $f = (f_{\{1\}}, f_{\emptyset}) \in C_{\mathrm{b}}(\mathbb{R}_{\star})$.  Note that by definition, $f_{\emptyset} \in \mathbb{R}$ is a constant and $f_{\max}(x) = f_{\{1\}}(x) \vee f_{\emptyset}$ for all $x \in \mathbb{R}$.  Moreover, since $\mathsf{MCAR}_{(\pi, P)} \in \mathcal{R}(P,0,\pi)$, we have by the argument in the direct part of the proof that $\mathsf{MCAR}_{(\pi, P)} \ll \mu_\star$ with $\frac{\mathrm{d}\mathsf{MCAR}_{(\pi, P)}}{\mathrm{d}\mu_\star}(x) = q \cdot p(x)$ for $x \in \mathbb{R}$ and $\frac{\mathrm{d}\mathsf{MCAR}_{(\pi, P)}}{\mathrm{d}\mu_\star}(\star) = 1-q$, so
\begin{align*}
    \frac{\mathrm{d}Q}{\mathrm{d}\mu_\star}(z) = \begin{cases}
        m(z)p(z) \quad&\text{if }z\in\mathbb{R}\\
        1- \int_{\mathbb{R}} m(x)p(x) \,\mathrm{d}\mu(x) &\text{if }z=\star.
    \end{cases}
\end{align*}
Hence $Q \in \mathcal{P}(\mathbb{R}_\star)$, and
\begin{align*}
    P(f_{\max}) &= \int_{\mathbb{R}} \bigl( f_{\{1\}}(x) \vee f_{\emptyset} \bigr) p(x) \,\mathrm{d}\mu(x)\\
    &\geq \int_{\mathbb{R}} \bigl\{ m(x)f_{\{1\}}(x) + \bigl(1-m(x)\bigr)f_{\emptyset} \bigr\} p(x) \,\mathrm{d}\mu(x)
    = Q(f),
\end{align*}
where the inequality follows from the fact that $\max(a,b)$ is at least as large as any convex combination of $a$ and $b$, for $a,b \in \mathbb{R}$.  We conclude that $R \in \mathcal{R}(P,\epsilon,q)$, by Theorem~\ref{cor:P-epsilon-pi-S-realisability-Farkas-form}.
\end{proof}

\section{Proofs from Section~\ref{sec:mean-estimation-arbitrary-contamination}} \label{sec:proofs-mean-estimation-arbitrary}

\subsection{Proof of Theorem~\ref{thm:robust-descent-iterative-imputation-ub}}
We begin with some lemmas.  Recalling the way that we can generate $Z_1, \ldots, Z_{n} \stackrel{\mathrm{iid}}{\sim} P \in \mathcal{P}^{\mathrm{arb}} \bigl(\theta_0, \Sigma, \epsilon, \pi \bigr)$ from Section~\ref{sec:departures-mcar}, we let $\mathcal{I}_n \subseteq [n]$ denote the `inliers', or the indices of the uncontaminated observations.  Likewise, we denote by $\mathcal{O}_n \subseteq [n]$ the `outliers', or the indices of the contaminated observations so that $\mathcal{I}_n \cup \mathcal{O}_n = [n]$.  

\begin{lemma}
\label{lemma:covariance-of-imputed-block-means} 
Let $n,M \in \mathbb{N}$ be such that $n/M \geq 4$.  Suppose that $Z_1, \ldots, Z_{n} \stackrel{\mathrm{iid}}{\sim} P \in \mathcal{P}^{\mathrm{arb}} \big(\theta_0, \Sigma, \epsilon, \pi \big)$, with corresponding observation patterns $\Omega_1, \ldots, \Omega_{n} \in \{0, 1\}^{d}$.  Randomly select $M$ disjoint sets $(B_m)_{m \in [M]} \subseteq [n]$ (independent of $Z_1,\ldots,Z_n$) such that $\lvert B_m \rvert = \lfloor n/M \rfloor$, and for $\theta = (\theta_1,\ldots,\theta_d)^\top \in \mathbb{R}^d$, $m \in [M]$ and $j \in [d]$, define
    \begin{align} \label{eq:z-bar-definition}
        \overbar{\Omega}_{mj} \coloneqq \mathbbm{1}_{\{\sum_{i\in B_m} \Omega_{ij} > 0\}} \qquad\text{ and }\qquad \bar{Z}_{mj} \coloneqq \frac{\sum_{i \in B_m} \Omega_{ij}Z_{ij}}{\sum_{i \in B_m} \Omega_{ij}} \cdot \overbar{\Omega}_{mj} + \theta_j \cdot (1 - \overbar{\Omega}_{mj}).
    \end{align}
    Let $\bar{Z}_m \coloneqq (\bar{Z}_{m1},\ldots,\bar{Z}_{md})^\top$.
    Then for all $m \in [M]$, we have 
    \begin{subequations}
    \begin{align} \label{ineq:bias-of-block-means}
        \|\mathbb{E}(\bar{Z}_m \,|\, B_m \subseteq \mathcal{I}_n) - \theta_0\|_2^2 \leq \frac{\|\theta-\theta_0\|_2^2}{e|B_m| q_{\min}},
    \end{align}
    \begin{align} 
        \tr\bigl(\Cov(\bar{Z}_m \,|\, B_m \subseteq \mathcal{I}_n)\bigr) &\leq \tr \Bigl(\mathbb{E} \bigl\{ (\bar{Z}_m - \theta_0)(\bar{Z}_m - \theta_0)^\top \,\big|\, B_m \subseteq \mathcal{I}_n \bigr\}\Bigr) \nonumber\\
        &\leq \frac{2}{|B_m|} \cdot \tr\bigl( \Sigma^{\mathrm{IPW}} \bigr) +  \frac{\|\theta - \theta_0\|_2^2}{e|B_m| q_{\min}} \label{ineq:trace-bound-iterative-imputation}
    \end{align}
    and 
    \begin{align} 
    \bigl\|\Cov(\bar{Z}_m \,|\, B_m \subseteq \mathcal{I}_n)\bigr\|_{\mathrm{op}} &\leq \bigl\| \mathbb{E} \bigl\{ (\bar{Z}_m - \theta_0)(\bar{Z}_m - \theta_0)^\top \,\big|\, B_m \subseteq \mathcal{I}_n \bigr\} \bigr\|_{\mathrm{op}} \nonumber\\
    &\leq \frac{6}{|B_m|} \cdot \big\| \Sigma^{\mathrm{IPW}} \big\|_{\mathrm{op}} +  \frac{\|\theta - \theta_0\|_2^2}{e|B_m| q_{\min}}. \label{ineq:op-norm-bound-iterative-imputation}
    \end{align}
    \end{subequations}
\end{lemma}

\begin{proof} 
Write $\theta_0 = (\theta_{01},\ldots,\theta_{0d})^\top \in \mathbb{R}^d$.  For $m \in [M]$ with $B_m \subseteq \mathcal{I}_n$, and for $j \in [d]$, we have
\begin{align*}
    \bigl(\mathbb{E}(\bar{Z}_{mj}) - \theta_{0j}\bigr)^2 &= \bigl(\mathbb{P}(\bar{\Omega}_{mj}=1) \theta_{0j} + \mathbb{P}(\bar{\Omega}_{mj}=0)\theta_j - \theta_{0j}\bigr)^2 \\
    &= (1-q_j)^{2|B_m|}(\theta_j - \theta_{0j})^2\\
    &\leq e^{-2|B_m|q_{\min}}(\theta_j - \theta_{0j})^2 \leq \frac{(\theta_j - \theta_{0j})^2}{e|B_m|q_{\min}}.
\end{align*}
This proves~\eqref{ineq:bias-of-block-means}.

For~\eqref{ineq:trace-bound-iterative-imputation} and~\eqref{ineq:op-norm-bound-iterative-imputation}, we compute the entries of the matrix $(\bar{Z}_m - \theta_0)(\bar{Z}_m - \theta_0)^\top$, beginning with those on the diagonal.  For $j \in [d]$, let
\begin{align} 
\label{ineq:A_jj-bound}
    A_{jj} \coloneqq \mathbb{E} \biggl( \frac{|B_m|q_j}{\sum_{i \in B_m} \Omega_{ij}} \cdot \mathbbm{1}_{\{\sum_{i\in B_m} \Omega_{ij} > 0\}}  \biggr) \leq 2,
\end{align} 
where the inequality follows by the first part of Lemma~\ref{lem:inverse-binomial-bounds}.  Further, let $E_{jj} \coloneqq (1 - q_j)^{|B_m|}$.  For $i \in \mathcal{I}_n$, we can write $Z_i = X_i \ostar \Omega_i$, where $\mathbb{E}(X_i) = \theta_0$, $\mathrm{Cov}(X_i) = \Sigma$ and $X_i \indep \Omega_i$.  Hence, for any $m \in [M]$ such that $B_m \subseteq \mathcal{I}_n$ and any $j \in [d]$, 
\begin{align*}
    \mathbb{E} \bigl\{ (\bar{Z}_{mj} &- \theta_{0,j})^2 \bigr\} = \mathbb{E} \bigl[ \bigl\{ \overbar{\Omega}_{mj}(\bar{Z}_{mj} - \theta_{0,j}) + (1 - \overbar{\Omega}_{mj})(\theta_j - \theta_{0,j}) \bigr\}^2 \bigr]\\
    &= \mathbb{E} \bigl\{ \bigl(\overbar{\Omega}_{mj}(\bar{Z}_{mj} - \theta_{0,j}) \bigr)^2 \bigr\} + \mathbb{E} \bigl\{ (1 - \overbar{\Omega}_{mj})^2 (\theta_j - \theta_{0,j})^2 \bigr\} \\
    &= \mathbb{E} \biggl\{ \biggl( \frac{\sum_{i \in B_m} \Omega_{ij}(X_{ij} \!-\! \theta_{0,j})}{\sum_{i \in B_m} \Omega_{ij}} \cdot \mathbbm{1}_{\{\sum_{i\in B_m} \Omega_{ij} > 0\}} \biggr)^2 \biggr\} + \mathbb{P}(\overbar{\Omega}_{mj} = 0)(\theta_j - \theta_{0,j})^2 \\
    &= \mathbb{E} \biggl( \frac{\Sigma_{jj}}{\sum_{i \in B_m} \Omega_{ij}} \cdot \mathbbm{1}_{\{\sum_{i\in B_m} \Omega_{ij} > 0\}}  \biggr) + (1 - q_j)^{|B_m|} (\theta_j - \theta_{0,j})^2 \\
    &= A_{jj} \cdot \frac{\Sigma_{jj}^{\mathrm{IPW}}}{|B_m|} + E_{jj} \cdot (\theta_j - \theta_{0,j})^2. \numberthis \label{eq:diagonal-entries}
\end{align*}
Turning to the off-diagonal entries, for any $m \in [M]$ such that $B_m \subseteq \mathcal{I}_n$ and any distinct $j,k \in [d]$,
\begin{align*}
    \mathbb{E} \bigl\{ (\bar{Z}_{mj} &- \theta_{0,j}) (\bar{Z}_{mk} - \theta_{0,k}) \bigr\} \\
    =\; &\mathbb{E} \bigl[ \bigl\{ \overbar{\Omega}_{mj}(\bar{Z}_{mj} - \theta_{0,j}) + (1 - \overbar{\Omega}_{mj})(\theta_j - \theta_{0,j}) \bigr\} \\
    &\hspace{4cm}\cdot \bigl\{ \overbar{\Omega}_{mk}(\bar{Z}_{mk} - \theta_{0,k}) + (1 - \overbar{\Omega}_{mk})(\theta_k - \theta_{0,k}) \bigr\} \bigr]\\
    =\; & \mathbb{E} \bigl\{ \bigl(\overbar{\Omega}_{mj}(\bar{Z}_{mj} - \theta_{0,j}) \bigr) \bigl(\overbar{\Omega}_{mk}(\bar{Z}_{mk} - \theta_{0,k}) \bigr) \big\} \\
    &\hspace{4cm}+ \mathbb{E} \bigl\{ (1 - \overbar{\Omega}_{mj}) (1 - \overbar{\Omega}_{mk}) (\theta_j - \theta_{0,j}) (\theta_k - \theta_{0,k}) \big\},
\end{align*}
where in the final step, the cross-terms vanish as $\mathbb{E}(X_i) = \theta_0$.  
Without loss of generality, we assume that $1\in B_m$.  For the first term, we first define
\begin{align}
    A_{jk} &\coloneqq \mathbb{E} \Biggl\{ \frac{ (|B_m|q_j)(|B_m|q_k) }{\big(1 + \sum_{i \in B_m\setminus\{1\}} \Omega_{ij} \big) \cdot \big(1 + \sum_{i \in B_m\setminus\{1\}} \Omega_{i k} \big)} \Biggr\} \nonumber\\
    &\hspace{0.09cm}\leq \mathbb{E} \Biggl\{ \frac{ (|B_m|q_j)^2}{\bigl(1 + \sum_{i \in B_m\setminus\{1\}} \Omega_{ij} \bigr)^2} \Biggr\}^{1/2} \mathbb{E} \Biggl\{ \frac{ (|B_m|q_k)^2}{\bigl(1 + \sum_{i \in B_m\setminus\{1\}} \Omega_{ik} \bigr)^2} \Biggr\}^{1/2} \nonumber \\
    &\hspace{0.09cm}\leq \frac{2|B_m|^2}{(|B_m|-1)^2} \leq 4, \label{ineq:A_jk-bound}
\end{align}
where the first inequality follows from Cauchy--Schwarz and the second inequality follows from the second part of Lemma~\ref{lem:inverse-binomial-bounds}, and the final inequality uses the fact that $|B_m| \geq 4$. We then have
\begin{align*}
    \mathbb{E} &\bigl\{ \overbar{\Omega}_{mj}(\bar{Z}_{mj} - \theta_{0,j}) \cdot \overbar{\Omega}_{mk}(\bar{Z}_{mk} - \theta_{0,k}) \bigr\} \\
    &= \Sigma_{jk} \cdot \mathbb{E} \Biggl\{ \frac{ \bigl(\sum_{i \in B_{m}}\Omega_{ij} \Omega_{i k} \bigr)\overbar{\Omega}_{mj} \overbar{\Omega}_{mk}}{\big(\sum_{i \in B_m} \Omega_{ij} \big) \cdot \big(\sum_{i \in B_m} \Omega_{i k} \big)}  \Biggr\}\\
    &=  \Sigma_{jk} \cdot |B_m| \cdot \mathbb{E} \Biggl\{ \frac{ \Omega_{1j} \Omega_{1 k} }{\big(\sum_{i \in B_m} \Omega_{ij} \big) \cdot \big(\sum_{i \in B_m} \Omega_{i k} \big)}  \Biggr\} \\
    &= \Sigma_{jk} \cdot |B_m| \cdot \mathbb{P}(\Omega_{1j} = \Omega_{1 k} = 1) \cdot \mathbb{E} \Biggl\{ \frac{ 1 }{\big(1 + \sum_{i \in B_m\setminus\{1\}} \Omega_{ij} \big) \cdot \big(1 + \sum_{i \in B_m\setminus\{1\}} \Omega_{i k} \big)} \Biggr\} \\
    &= A_{jk} \cdot \frac{\Sigma_{jk}q_{jk}}{|B_m|q_j q_k}, 
\end{align*}
where the first equality follows from substituting the definition of $\bar{Z}_{mj}$ on the event $\{\overbar{\Omega}_{mj} = 1\}$ (and similarly for $k$) and the second equality follows by symmetry.
For the second term, we have 
\begin{align*}
    \mathbb{E} \bigl\{ (1 - \overbar{\Omega}_{mj}) (1 - \overbar{\Omega}_{mk}) (\theta_j - \theta_{0,j}) &(\theta_k - \theta_{0,k}) \bigr\} \\
    &= \mathbb{P}(\overbar{\Omega}_{mj} = \overbar{\Omega}_{mk} = 0) \cdot (\theta_j - \theta_{0,j}) (\theta_k - \theta_{0,k}) \\
    &= (1 - q_j - q_k + q_{jk})^{|B_m|} \cdot (\theta_j - \theta_{0,j}) (\theta_k - \theta_{0,k})\\
    &\eqqcolon E_{jk}\cdot (\theta_j - \theta_{0,j}) (\theta_k - \theta_{0,k}).
\end{align*}
Combining these two equalities then yields 
\begin{align} \label{eq:off-diagonal-entries}
    \mathbb{E} \big[(\bar{Z}_{mj} - \theta_{0,j}) (\bar{Z}_{mk} - \theta_{0,k})  \big] = A_{jk} \cdot \frac{1}{|B_m|} \cdot \Sigma^{\mathrm{IPW}}_{jk} + E_{jk}\cdot (\theta_j - \theta_{0,j}) (\theta_k - \theta_{0,k}).
\end{align}
Therefore, by \eqref{eq:diagonal-entries} and \eqref{eq:off-diagonal-entries},
\begin{align*}
    \mathbb{E} \bigl\{ (\bar{Z}_m - \theta_0)(\bar{Z}_m - \theta_0)^\top \bigr\} = \frac{1}{|B_m|} \cdot A\odot \Sigma^{\mathrm{IPW}} + E\odot \bigl\{ (\theta - \theta_0)(\theta - \theta_0)^\top \bigr\},
\end{align*}
where $A \coloneqq (A_{jk})_{j,k\in[d]}$ and $E \coloneqq (E_{jk})_{j,k\in[d]}$.  The desired inequality~\eqref{ineq:trace-bound-iterative-imputation} then follows as
\begin{align*}
    \tr\bigl( \mathbb{E} \bigl\{ (\bar{Z}_m - \theta_0)(\bar{Z}_m - \theta_0)^\top \bigr\} \bigr) &= \frac{1}{|B_m|} \cdot \sum_{j=1}^d A_{jj} \Sigma^{\mathrm{IPW}}_{jj} + \sum_{j=1}^d E_{jj} (\theta_j - \theta_{0,j})^2\\
    &\leq \frac{2}{|B_m|}\cdot \tr(\Sigma^{\mathrm{IPW}}) + \frac{\|\theta - \theta_0\|_2^2}{e|B_m| q_{\min}},
\end{align*}
where the inequality follows by~\eqref{ineq:A_jj-bound} and Lemma~\ref{lemma:controlling-matrix-E}.  

For inequality~\eqref{ineq:op-norm-bound-iterative-imputation}, we define a matrix $A' = (A_{jk}') \in \mathbb{R}^{d \times d}$ by $A'_{jk} \coloneqq A_{jk}$ for $j\neq k$ and 
\begin{align} \label{ineq:A'_jj-bound}
    A'_{jj} \coloneqq \mathbb{E} \Biggl\{ \frac{ (|B_m|q_j)^2}{\bigl(1 + \sum_{i \in B_m\setminus\{1\}} \Omega_{ij} \bigr)^2} \Biggr\} \leq 2,
\end{align}
where the inequality follows from the second part of Lemma~\ref{lem:inverse-binomial-bounds} and the assumption that $|B_{m}| \geq 4$.
Note that $A'$ is a positive semi-definite matrix, as it is the expectation of a positive semi-definite matrix. Now 
\begin{align*}
    \bigl\| \mathbb{E} &\bigl\{ (\bar{Z}_m - \theta_0)(\bar{Z}_m - \theta_0)^\top \bigr\} \bigr\|_{\mathrm{op}}\\
    &= \biggl\| \frac{1}{|B_m|} \cdot A\odot \Sigma^{\mathrm{IPW}} + E\odot \bigl\{ (\theta - \theta_0)(\theta - \theta_0)^\top \bigr\} \biggr\|_{\mathrm{op}}\\
    &\leq \frac{\bigl\|A' \odot \Sigma^{\mathrm{IPW}} \bigr\|_{\mathrm{op}}}{|B_m|} + \frac{\bigl\|(A-A') \odot \Sigma^{\mathrm{IPW}} \bigr\|_{\mathrm{op}}}{|B_m|} + \bigl\|E\odot \bigl\{ (\theta - \theta_0)(\theta - \theta_0)^\top \bigr\} \bigr\|_{\mathrm{op}}\\
    \overset{(i)}&{\leq}  \frac{\|A'\|_{\infty} \|\Sigma^{\mathrm{IPW}} \bigr\|_{\mathrm{op}}}{|B_m|}+ \frac{\|A-A'\|_{\infty} \bigl\|\Sigma^{\mathrm{IPW}} \bigr\|_{\mathrm{op}}}{|B_m|} + \bigl\|E\odot \bigl\{ (\theta - \theta_0)(\theta - \theta_0)^\top \bigr\} \bigr\|_{\mathrm{op}}\\
    \overset{(ii)}&{\leq} \frac{6}{|B_m|} \bigl\|\Sigma^{\mathrm{IPW}} \bigr\|_{\mathrm{op}} + \frac{\|\theta - \theta_0\|_2^2}{e|B_m| q_{\min}},
\end{align*}
where the first term in step $(i)$ follows from Lemma~\ref{lemma:operator-norm-of-hadamard-product} since $A'$ is positive semidefinite, the second term in step $(i)$ follows since $A-A'$ is diagonal, and step $(ii)$ follows from the inequalities~\eqref{ineq:A_jj-bound},~\eqref{ineq:A_jk-bound} and~\eqref{ineq:A'_jj-bound}, as well as Lemma~\ref{lemma:controlling-matrix-E}.
\end{proof}

\begin{lemma}\label{lemma:controlling-matrix-E}
    Under the set up in the proof of Lemma~\ref{lemma:covariance-of-imputed-block-means}, we have \begin{align*}
        \|E\|_{\infty} \leq \frac{1}{e|B_m|q_{\min}} \quad\text{and}\quad \|E\odot \bigl\{ (\theta - \theta_0)(\theta - \theta_0)^\top \bigr\}\|_{\mathrm{op}} \leq \frac{\|\theta - \theta_0\|_2^2}{e|B_m|q_{\min}}.
    \end{align*}
\end{lemma}
\begin{proof}
We will make use of the following inequality
\begin{align} \label{eq:block-means-simple-ineq}
    (1-x)^k \leq \frac{1}{ekx} \quad\text{for all }  x \in (0,1] \text{ and } k \in \mathbb{N}.
\end{align}
To see this, note that $k\log(1-x) \leq -kx \leq -\log(kx) - 1$.  Hence, for each $j \in [d]$,
\begin{align*}
    E_{jj} = (1-q_j)^{|B_m|} \leq \frac{1}{e|B_m|q_{\min}},
\end{align*}
and for each $j,k \in [d]$, 
\begin{align*}
    E_{jk} = (1-q_j-q_k + q_{jk})^{|B_m|} \leq \frac{1}{e|B_m|(q_j+q_k - q_{jk})} \leq \frac{1}{e|B_m|q_{\min}},
\end{align*}
where the final inequality follows since $q_{jk}\leq q_k$, so that $q_j+q_k-q_{jk}\geq q_j \geq q_{\min}$.  This establishes the first inequality.

For the second bound, we have 
\begin{align*}
    \bigl| \bigl[E\odot \bigl\{ (\theta - \theta_0)(\theta - \theta_0)^\top\bigr\}\bigr]_{jk} \bigr| \leq \frac{1}{e|B_m|q_{\min}} \cdot |\theta_j - \theta_{0,j}| \cdot |\theta_k - \theta_{0,k}|.
\end{align*}
Hence
\begin{align*}
    \bigl\|E\odot \bigl\{ (\theta - \theta_0)(\theta - \theta_0)^\top \bigr\}\bigr\|_{\mathrm{op}} \leq \frac{1}{e|B_m|q_{\min}} \bigl\| |\theta - \theta_0| \cdot |\theta - \theta_0|^\top \bigr\|_{\mathrm{op}} = \frac{\|\theta - \theta_0\|_2^2}{e|B_m|q_{\min}},
\end{align*}
where $|\theta - \theta_0|$ denotes the entrywise absolute value, and the inequality follows from the fact\footnote{To see this, observe that $v^\top A v \leq |v|^\top |A| |v| \leq |v|^\top B |v|$ for all $v\in\mathbb{R}^d$, where $|A|$ denotes the entrywise absolute value of $A$.} that if $A = (A_{jk}),B = (B_{jk}) \in \mathcal{S}^{d\times d}$ are such that $|A_{jk}| \leq B_{jk}$ for all $j,k\in[d]$, then $\|A\|_{\mathrm{op}} \leq \|B\|_{\mathrm{op}}$.
\end{proof}

\begin{lemma}\label{lemma:error-per-iteration}
    Let $\mathrm{ALG}$ satisfy~\eqref{eq:assumption-on-alg} for some $\epsilon_{\max} \in (0,1/2)$, $a \in (0,1]$, $C > 0$, and let $n\geq4$, $\epsilon\in\bigl[0,\frac{-\log(1-\epsilon_{\max})}{16}\bigr]$, $\delta\in[e^{-an/8},1]$ and $M\coloneqq \bigl\lceil \frac{2n\epsilon}{-\log(1-\epsilon_{\max})} \vee \log(1/\delta)\bigr\rceil$. Let $Z_1, \ldots, Z_{n} \stackrel{\mathrm{iid}}{\sim} P \in \mathcal{P}^{\mathrm{arb}} \big(\theta_0, \Sigma, \epsilon, \pi \big)$, let $\bar{Z}_1,\ldots,\bar{Z}_M$ be defined as in Lemma~\ref{lemma:covariance-of-imputed-block-means} for some $\theta \in \mathbb{R}^d$ and let
    \begin{align*}
        \tilde{\theta}_n \coloneqq \mathrm{ALG}(\bar{Z}_1,\ldots,\bar{Z}_M;\epsilon_{\max},\delta).
    \end{align*}
    Then, writing $C'\coloneqq 48C+2$, we have with probability at least $1-\delta$ that 
    \begin{align*}
        \|\tilde{\theta}_n - \theta_0\|_2^2 \leq C'\biggl( \frac{\tr(\Sigma^{\mathrm{IPW}})}{n} + \frac{\|\Sigma^{\mathrm{IPW}}\|_{\mathrm{op}}\log(1/\delta)}{n} + \|\Sigma^{\mathrm{IPW}}\|_{\mathrm{op}}\epsilon + \frac{M\|\theta-\theta_0\|_2^2}{nq_{\min}}\biggr).
    \end{align*}
\end{lemma}
\begin{proof}
    By our assumptions on $\epsilon$ and $\delta$, we have $M\leq n/4$. Moreover, for $m\in[M]$,
    \begin{align*}
        \mathbb{P}(B_m \subseteq \mathcal{I}_n) = (1-\epsilon)^{|B_m|} \geq (1-\epsilon)^{\frac{-\log(1-\epsilon_{\max})}{2\epsilon}} \geq (1-\epsilon)^{\frac{-\log(1-\epsilon_{\max})}{-\log(1-\epsilon)}} = 1-\epsilon_{\max}.
    \end{align*}
    Let $\mu \coloneqq \mathbb{E}(\bar{Z}_m \, | \, B_m \subseteq \mathcal{I}_n) \in \mathbb{R}^d$ and $\Gamma  \coloneqq \mathrm{Cov}(\bar{Z}_m \, | \, B_m \subseteq \mathcal{I}_n) \in \mathbb{R}^{d \times d}$ denote respectively the mean vector and covariance matrix of $\bar{Z}_m$ given that it is uncontaminated.  Then  $\bar{Z}_1,\ldots,\bar{Z}_M \overset{\mathrm{iid}}{\sim} (1-\epsilon_{\max})\bar{P} + \epsilon_{\max}\bar{Q}$, where $\bar{P}\in\mathcal{P}(\mu,\Gamma)$ and $\bar{Q} \in \mathcal{P}(\mathbb{R}^d)$. Thus, by~\eqref{eq:assumption-on-alg} and Lemma~\ref{lemma:covariance-of-imputed-block-means}, we have
    \begin{align*}
        \|\tilde{\theta}_n - \theta_0\|_2^2 &\leq 2\|\tilde{\theta}_n - \mu\|_2^2 + 2\|\mu - \theta_0\|_2^2 \\
        &\leq 2C\biggl(\frac{\tr(\Gamma)}{M} + \frac{\|\Gamma\|_{\mathrm{op}}\log(1/\delta)}{M} + \epsilon_{\max}\|\Gamma\|_{\mathrm{op}}\biggr) + 2\|\mu - \theta_0\|_2^2\\
        &\leq C'\biggl(\frac{\tr(\Sigma^{\mathrm{IPW}})}{n} + \frac{\|\Sigma^{\mathrm{IPW}}\|_{\mathrm{op}}\log(1/\delta)}{n} + \|\Sigma^{\mathrm{IPW}}\|_{\mathrm{op}}\epsilon + \frac{M\|\theta-\theta_0\|_2^2}{nq_{\min}}\biggr),
    \end{align*}
    as required.
\end{proof}


\begin{lemma} \label{lemma:error-of-initialisation}
    Let $\mathrm{ALG}$ satisfy~\eqref{eq:assumption-on-alg} for some $\epsilon_{\max} \in (0,1/2)$, $a \in (0,1]$, $C > 0$. Let $\epsilon\in \bigl[0, \frac{\epsilon_{\max}}{1+\epsilon_{\max}}q_{\min}\bigr]$, $\delta\in\bigl[2de^{-anq_{\min}/8}, 1\bigr]$ and $Z_1,\ldots,Z_n \overset{\mathrm{iid}}{\sim} P\in\mathcal{P}^{\mathrm{arb}}(\theta_0,\Sigma,\epsilon,\pi)$. For $j\in[d]$, let $I_j \coloneqq \{i\in[n] : Z_{ij} \neq \star\}$, $\tilde{\theta}_{n,j} \coloneqq \mathrm{ALG}\bigl((Z_{ij})_{i\in I_j}; \frac{\epsilon}{q_j(1-\epsilon)}, \frac{\delta}{2d}\bigr)$ and $\tilde{\theta}_n \coloneqq (\tilde{\theta}_{n,1}, \ldots, \tilde{\theta}_{n,d})^\top$. Then, with probability at least $1-\delta$,
    \begin{align*}
        \|\tilde{\theta}_n - \theta_0\|_2^2 \leq C\biggl(\frac{2\tr(\Sigma^{\mathrm{IPW}})\log(2ed/\delta)}{n} + \frac{\epsilon}{1-\epsilon}\tr(\Sigma^{\mathrm{IPW}})\biggr).
    \end{align*}
\end{lemma}
\begin{proof}
    By Bayes' theorem, for $i\in[n]$ and $j \in [d]$,
    \begin{align*}
        \mathbb{P}(i\in\mathcal{O}_n \,|\, Z_{ij} \neq \star) = \frac{\mathbb{P}( Z_{ij} \neq \star \,|\, i\in\mathcal{O}_n) \mathbb{P}(i\in\mathcal{O}_n)}{\mathbb{P}(Z_{ij} \neq \star)} \leq \frac{\epsilon}{q_j(1-\epsilon)}\eqqcolon\kappa_j,
    \end{align*}
    and $\kappa_j\leq \epsilon_{\max}$ by our assumption on $\epsilon$.
    Thus, writing $\theta_0 = (\theta_{0,1},\ldots,\theta_{0,d})^\top$ and $\Sigma = (\Sigma_{ij})_{i,j \in [d]}$, we have conditional on $I_j$ that  $(Z_{ij})_{i\in I_j} \overset{\mathrm{iid}}{\sim} (1-\kappa_j)P_j'+\kappa_j Q_j'$ where $P_j'\in\mathcal{P}(\theta_{0,j},\Sigma_{jj})$ and $Q_j'\in\mathcal{P}(\mathbb{R})$. Therefore, by~\eqref{eq:assumption-on-alg},
    \begin{align*}
        \mathbb{P}\biggl\{(\tilde{\theta}_{n,j}-\theta_{0,j})^2 \leq C\biggl(\frac{2\Sigma_{jj}\log(2ed/\delta)}{nq_j} + \kappa_j\Sigma_{jj}\biggr) \,\bigg|\, |I_j| \geq \frac{nq_j}{2}\biggr\} \geq 1-\frac{\delta}{2d}.
    \end{align*}
    Moreover, by Lemma~\ref{lemma:binomial-tail}(b) and since $\delta\geq 2de^{-nq_{\min}/8}$, we have 
    \begin{align*}
        \mathbb{P}\biggl(|I_j| \geq \frac{nq_j}{2}\biggr) \geq 1-\frac{\delta}{2d}.
    \end{align*}
    Hence,
    \begin{align*}
        \mathbb{P}\biggl\{&(\tilde{\theta}_{n,j}-\theta_{0,j})^2 \leq C\biggl(\frac{2\Sigma_{jj}\log(2ed/\delta)}{nq_j} + \kappa_j\Sigma_{jj}\biggr)\biggr\}\\
        &\geq \mathbb{P}\biggl\{(\tilde{\theta}_{n,j}-\theta_{0,j})^2 \leq C\biggl(\frac{2\Sigma_{jj}\log(2ed/\delta)}{nq_j} + \kappa_j\Sigma_{jj}\biggr) \,\bigg|\, |I_j| \geq \frac{nq_j}{2}\biggr\} \mathbb{P}\biggl(|I_j| \geq \frac{nq_j}{2}\biggr) \\
        &\geq 1-\frac{\delta}{d}.
    \end{align*}
    The final result now follows by a union bound.
\end{proof}

\begin{proof}[Proof of Theorem~\ref{thm:robust-descent-iterative-imputation-ub}]
    Let $C'\coloneqq 48C+2$, and recall the definition of $M$ from Algorithm~\ref{alg:robust-iterative-imputation}.  By Lemma~\ref{lemma:error-per-iteration}, for $t\in[T-1]$, we have with probability at least $1-\delta/(2T)$ that
    \begin{align*}
        \|&\hat{\theta}^{(t+1)} - \theta_0\|_2^2 \\
        &\hspace{0.5cm}\leq 2C'\biggl( \frac{T\tr(\Sigma^{\mathrm{IPW}})}{n} + \frac{T\|\Sigma^{\mathrm{IPW}}\|_{\mathrm{op}}\log(2T/\delta)}{n} + \|\Sigma^{\mathrm{IPW}}\|_{\mathrm{op}}\epsilon + \frac{TM\|\hat{\theta}^{(t)}-\theta_0\|_2^2}{nq_{\min}}\biggr)\\
        &\hspace{0.5cm}\eqqcolon \alpha + \beta \|\hat{\theta}^{(t)} -\theta_0\|_2^2.
    \end{align*}
    By assumption, we have $(192C+8)TM \leq nq_{\min}$, so $\beta\leq 1/2$. Therefore, by a union bound, with probability at least $1-\delta/2$, we have
    \begin{align}
        \|\hat{\theta}^{(T)} - \theta_0\|_2^2 \leq \alpha \sum_{\ell=0}^{T-2} \beta^{\ell} + \beta^{T-1} \|\hat{\theta}^{(1)} - \theta_0\|_2^2 \leq 2\alpha + \frac{\|\hat{\theta}^{(1)} - \theta_0\|_2^2}{2^{T-1}}. \label{eq:theta^T-theta_0}
    \end{align}
    Moreover, by Lemma~\ref{lemma:error-of-initialisation}, we have with probability at least $1-\delta/2$ that
    \begin{align}
        \|\hat{\theta}^{(1)} - \theta_0\|_2^2 \leq C\biggl(\frac{4T\tr(\Sigma^{\mathrm{IPW}})\log(2ed/\delta)}{n} + \frac{\epsilon}{1-\epsilon}\tr(\Sigma^{\mathrm{IPW}})\biggr). \label{eq:theta^1-theta_0}
    \end{align}
    Our choice of $T$ ensures that on combining~\eqref{eq:theta^T-theta_0} and~\eqref{eq:theta^1-theta_0} we obtain that with probability at least $1-\delta$,
    \begin{align*}
        \|\hat{\theta}^{(T)} - \theta_0\|_2^2 &\leq 3\alpha \\
        &= (288C+12)\biggl( \frac{T\tr(\Sigma^{\mathrm{IPW}})}{n} + \frac{T\|\Sigma^{\mathrm{IPW}}\|_{\mathrm{op}}\log(2T/\delta)}{n} + \|\Sigma^{\mathrm{IPW}}\|_{\mathrm{op}}\epsilon\biggr),
    \end{align*}
    as required.
\end{proof}

\subsection{Proof of Theorem~\ref{thm:arbitrary-contamination-lb}} 
\begin{proof}[Proof of Theorem~\ref{thm:arbitrary-contamination-lb}]
First, note that when $\epsilon = 0$, by Proposition~\ref{prop:arb-mean-MCAR-lb}, we have
\begin{align}\label{ineq:mcar-lb1}
\mathcal{M}\bigl(\delta, \mathcal{P}_{\Theta}, \| \cdot \|_2^2\bigr) \gtrsim \frac{\tr(\Sigma^{\mathrm{IPW}})}{n} + \frac{\| \Sigma^{\mathrm{IPW}}\|_{\mathrm{op}} \log(1/\delta)}{n}. 
\end{align}
    Now we consider the case $\epsilon \in \bigl(0,\frac{q_{\min}}{1+q_{\min}} \bigr)$.  
    Without loss of generality, assume that $\Sigma^{\mathrm{IPW}}_{11} = \max_{j \in [d]} \Sigma^{\mathrm{IPW}}_{jj}$, and let $a \coloneqq (\alpha+\alpha^2)/2$ and $b \coloneqq (3\alpha+\alpha^2)/2$ 
    for some $\alpha \in (0,1/3]$ to be chosen later.  Define random vectors $X^{(1)} = (X^{(1)}_1, \ldots, X^{(1)}_d)^\top \sim P^{(1)}$ and $X^{(2)} = (X^{(2)}_1, \ldots, X^{(2)}_d)^\top \sim P^{(2)}$ with independent components satisfying
    \begin{align*}
        X^{(1)}_1 \coloneqq \begin{cases}
            -\sqrt{\frac{\Sigma_{11}}{2\alpha}} \quad &\text{with prob. } \alpha\\
            0 &\text{with prob. } 1-2\alpha,\\
            \sqrt{\frac{\Sigma_{11}}{2\alpha}} \quad &\text{with prob. } \alpha
        \end{cases} \quad 
        X^{(2)}_1 \coloneqq \begin{cases}
            -\sqrt{\frac{\Sigma_{11}}{2\alpha}} \quad &\text{with prob. } a\\
            0 &\text{with prob. } 1 - a - b \\
            \sqrt{\frac{\Sigma_{11}}{2\alpha}} \quad &\text{with prob. } b,
        \end{cases}
    \end{align*}
    and $X^{(1)}_j \overset{d}{=} X^{(2)}_j \sim \mathsf{N}(0, \Sigma_{jj})$ for $j \in \{2, \ldots, d\}$.  Then 
    \[
    \Var(X^{(2)}_1) = \frac{(a + b + 2ab - a^2 - b^2)\Sigma_{11}}{2\alpha} = \Sigma_{11}.
    \]
    Thus $\Cov(X^{(1)}) = \Cov(X^{(2)}) = \Sigma$, so $P^{(\ell)} \in \mathcal{P}\bigl( \mathbb{E}(X^{(\ell)}), \Sigma \bigr)$ for $\ell \in \{1,2\}$, and
    \begin{align*}
        \bigl\| \mathbb{E}(X^{(1)}) - \mathbb{E}(X^{(2)}) \bigr\|_2^2 = \frac{\alpha \Sigma_{11}}{2}.
    \end{align*} 
    Moreover, by Lemma~\ref{lem:equivalence-of-f-af}, we have 
    \begin{align*}
        \mathrm{TV}\bigl(\mathsf{MCAR}_{(\pi, P^{(1)})},\mathsf{MCAR}_{(\pi, P^{(2)})}\bigl) &= \mathrm{ATV}(P^{(1)}, P^{(2)}, \pi) = \sum_{S: 1\in S} \pi(S) \cdot \mathrm{TV}\bigl(P^{(1)}_S, P^{(2)}_S\bigr)\\
        &= q_1 \cdot \mathrm{TV}\bigl(P^{(1)}_1, P^{(2)}_1\bigr) = \frac{q_1}{2}\biggl( \frac{\alpha \!-\! \alpha^2}{2} + \alpha^2 + \frac{\alpha \!+\! \alpha^2}{2} \biggr) \\
        &\leq q_1 \alpha.
    \end{align*}
    We then pick $\alpha = \epsilon / (3 q_1) < 1/3$ since $\epsilon < q_{\min}$ so that
    \[
    \mathrm{TV}\bigl(\mathsf{MCAR}_{(\pi, P^{(1)})},\mathsf{MCAR}_{(\pi, P^{(2)})}\bigl) \leq \epsilon \leq \frac{\epsilon}{1 - \epsilon}, \quad \text{and} \quad \bigl\| \mathbb{E}(X^{(1)}) - \mathbb{E}(X^{(2)}) \bigr\|_2^2 = \frac{\epsilon \Sigma_{11}^{\mathrm{IPW}}}{6}.
    \]
    Consequently, by~\citet[Theorem 4 and Lemma 25]{ma2024high}, we have
    \begin{align}\label{ineq:arb-lb-epsilon-term}
        \mathcal{M}\bigl(\delta, \mathcal{P}_{\Theta}, \| \cdot \|_2^2\bigr) \geq \frac{\bigl\| \mathbb{E}(X^{(1)}) - \mathbb{E}(X^{(2)}) \bigr\|_2^2}{4} = \frac{\epsilon \Sigma_{11}^{\mathrm{IPW}}}{24}.
    \end{align}
    Combining~\eqref{ineq:mcar-lb1} and~\eqref{ineq:arb-lb-epsilon-term} yields the desired result.  
    
    Next, we consider the case where $\epsilon \geq \frac{q_{\min}}{1+q_{\min}}$. Without loss of generality, assume that $q_1 = q_{\min}$.  Let $\theta^{(1)} \coloneqq (2t^{1/2}, 0, \ldots, 0)^\top \in \mathbb{R}^d$ for some $t>0$ and let $\theta^{(2)} \coloneqq 0 \in \mathbb{R}^d$.  Writing $P^{(1)} \coloneqq \mathsf{N}(\theta^{(1)}, \Sigma)$ and $P^{(2)} \coloneqq \mathsf{N}(\theta^{(2)}, \Sigma)$, we have by Lemma~\ref{lem:equivalence-of-f-af} that
    \begin{align*}
        \mathrm{TV}\bigl(\mathsf{MCAR}_{(\pi, P^{(1)})},\mathsf{MCAR}_{(\pi, P^{(2)})}\bigl) &= \mathrm{ATV}(P^{(1)}, P^{(2)}; \pi) = \sum_{S: 1\in S} \pi(S) \cdot \mathrm{TV}\bigl(P^{(1)}_S, P_S^{(2)}\bigr)\\
        &= q_1 \mathrm{TV}\bigl(P^{(1)}_{\{1\}}, P^{(2)}_{\{1\}}\bigr) \leq q_1 \leq \frac{\epsilon}{1-\epsilon}.
    \end{align*}
    Hence, by~\citet[Theorem 4 and Lemma 25]{ma2024high}, we see that $\mathcal{M}\bigl(\delta, \mathcal{P}_{\Theta}, \| \cdot \|_2^2\bigr) \geq t$.  Since $t > 0$ was arbitrary, the result follows.
\end{proof}

\subsection{Univariate arbitrary contamination lower bounds} \label{sec:univariate-arbitrary-contamination-lb}

The lower bounds in Proposition~\ref{Prop:Univariate-arb-contam-lb} are presented primarily to ensure the completeness of Table~\ref{table:summary}.  Corresponding upper bounds are attained by the median in the Gaussian case \citep[][Theorem~2.1]{chen2018robust}, and a trimmed mean \citep[][Theorem~1 and the subsequent remark]{lugosi21robust} in the sub-Gaussian case, in both cases applied to the observed data.
\begin{prop}
\label{Prop:Univariate-arb-contam-lb}
    Let $\epsilon\in[0,1)$, $q\in(0,1]$, $\sigma>0$, $\delta\in(0,1/4]$ and $\kappa \coloneqq \frac{\epsilon}{q(1-\epsilon)}$. 
    \begin{enumerate}
        \item[(a)] Let $\Theta\coloneqq\mathbb{R}$ and let $\mathcal{P}_{\theta} \coloneqq \bigl\{ P_0^{\otimes n} : P_0\in\mathcal{P}^{\mathrm{arb}}\bigl(\mathsf{N}(\theta,\sigma^2),\epsilon,q\bigr)\bigr\}$ for $\theta\in\Theta$. Then
        \begin{align*}
            \mathcal{M}(\delta,\mathcal{P}_{\Theta},|\cdot|^2) \begin{cases}
                \gtrsim \dfrac{\sigma^2\log(1/\delta)}{nq(1-\epsilon)} + \sigma^2 \kappa^2 \quad&\text{if } \epsilon < \frac{q}{1+q}\\
                =\infty &\text{if }\epsilon \geq \frac{q}{1+q}.
            \end{cases}
        \end{align*}
        \item[(b)] Let $\Theta\coloneqq\mathbb{R}$ and let $\mathcal{P}_{\theta} \coloneqq \bigl\{ P_0^{\otimes n} : P_0\in\mathcal{P}^{\mathrm{arb}}(P,\epsilon,q),\, P\in\mathcal{P}_{\psi_2}(\theta,\sigma^2) \bigr\}$ for $\theta\in\Theta$. Then
        \begin{align*}
            \mathcal{M}(\delta,\mathcal{P}_{\Theta},|\cdot|^2) \begin{cases}
                \gtrsim \dfrac{\sigma^2\log(1/\delta)}{nq(1-\epsilon)} + \sigma^2 \kappa^2 \log(1/\kappa) \quad&\text{if } \epsilon < \frac{q}{1+q}\\
                =\infty &\text{if }\epsilon \geq \frac{q}{1+q}.
            \end{cases}
        \end{align*}
    \end{enumerate}    
\end{prop}
\begin{proof}
    (a) First consider the case where $\epsilon < \frac{q}{1+q}$. Let $X_1\sim \mathsf{N}(0,\sigma^2) \eqqcolon P_1$ and $X_2 \sim \mathsf{N}(2\sigma\kappa,\sigma^2) \eqqcolon P_2$. By Pinsker's inequality, $\mathrm{TV}(P_1,P_2) \leq \sqrt{\frac{1}{2}\mathrm{KL}(P_1,P_2)} = \kappa$, so that by Lemma~\ref{lem:equivalence-of-f-af},
    \begin{align*}
        \mathrm{TV}\bigl( \mathsf{MCAR}_{(q,P_1)}, \mathsf{MCAR}_{(q,P_2)} \bigr) = q\cdot\mathrm{TV}(P_1,P_2) \leq \frac{\epsilon}{1-\epsilon}.
    \end{align*}
    Hence, by \citet[Lemma~25]{ma2024high}, we have
    \begin{align}
        \mathcal{M}(\delta,\mathcal{P}_{\Theta},|\cdot|^2) \geq \frac{(\mathbb{E}X_1 - \mathbb{E}X_2)^2}{4} = \sigma^2\kappa^2. \label{eq:uni-arb-gaussian-lb-1}
    \end{align}
    Further, by choosing the contamination distribution $Q\in\mathcal{P}(\mathbb{R}_{\star})$ such that $Q\bigl( \{\star\} \bigr)=1$ and applying Proposition~\ref{prop:univariate-mcar-lb}(a), we deduce that
    \begin{align}
        \mathcal{M}(\delta,\mathcal{P}_{\Theta},|\cdot|^2) \gtrsim \frac{\sigma^2\log(1/\delta)}{nq(1-\epsilon)}. \label{eq:uni-arb-gaussian-lb-2}
    \end{align}
    Combining~\eqref{eq:uni-arb-gaussian-lb-1} and~\eqref{eq:uni-arb-gaussian-lb-2} yields the lower bound for $\epsilon < \frac{q}{1+q}$.

    Next consider the case where $\epsilon \geq \frac{q}{1+q}$. Let $a>0$, $X_1\sim P_1 \coloneqq \mathsf{N}(0,\sigma^2)$ and $X_2 \sim P_2 \coloneqq \mathsf{N}(a\sigma,\sigma^2)$. By Lemma~\ref{lem:equivalence-of-f-af},
    \begin{align*}
        \mathrm{TV}\bigl( \mathsf{MCAR}_{(q,P_1)}, \mathsf{MCAR}_{(q,P_2)} \bigr) = q\cdot\mathrm{TV}(P_1,P_2) \leq q \leq \frac{\epsilon}{1-\epsilon}.
    \end{align*}
    Hence, by \citet[Lemma~25]{ma2024high}, we have
    \begin{align*}
        \mathcal{M}(\delta,\mathcal{P}_{\Theta},|\cdot|^2) \geq \frac{(\mathbb{E}X_1 - \mathbb{E}X_2)^2}{4} = \frac{\sigma^2 a^2}{4}.
    \end{align*}
    Since this holds for all $a>0$, we deduce that $\mathcal{M}(\delta,\mathcal{P}_{\Theta},|\cdot|^2) = \infty$ in this case.
    
    \medskip
    (b) Let $c_1>0$ be a universal constant that will be specified later. Define $P_1 \in \mathcal{P}(\mathbb{R})$ by $P_1\bigl((t,\infty)\bigr) \coloneqq e^{-t^2/(c_1\sigma)^2}$ for $t\geq 0$. Define $P_2 \in \mathcal{P}(\mathbb{R})$ by
    \begin{align*}
        P_2(\{0\}) \coloneqq \kappa, \quad P_2\bigl((t,\infty)\bigr) \coloneqq 
        \begin{cases}
            e^{-t^2/(c_1\sigma)^2} \;&\text{if } 0\leq t \leq c_1\sigma\sqrt{\log(1/\kappa)}\\
            0 \;&\text{if } t> c_1\sigma\sqrt{\log(1/\kappa)}.
        \end{cases}
    \end{align*}
    Let $X_1\sim P_1$ and $X_2\sim P_2$. Since $\mathbb{P}(|X_\ell| \geq t) \leq e^{-t^2/(c_1\sigma)^2}$ for $t\geq 0$ and $\ell\in\{1,2\}$, we have by \citet[Proposition~2.5.2]{vershynin2018high} that $\|X_\ell\|_{\psi_2} \leq c_1C_2\sigma$ for $\ell\in\{1,2\}$, where $C_2>0$ is a universal constant. Thus by \citet[Lemma~2.6.8]{vershynin2018high}, there exists a universal constant $C_3>0$ such that $\|X_\ell - \mathbb{E}X_\ell\|_{\psi_2} \leq c_1C_2C_3\sigma$. Hence, taking $c_1\coloneqq (C_2C_3)^{-1}$, we have $P_\ell \in \mathcal{P}_{\psi_2}\bigl(\mathbb{E}(X_\ell), \sigma^2\bigr)$ for $\ell\in\{1,2\}$.
    Moreover, $\mathrm{TV}(P_1,P_2) = P_2(\{0\}) = \kappa$, so that by Lemma~\ref{lem:equivalence-of-f-af},
    \begin{align*}
        \mathrm{TV}\bigl( \mathsf{MCAR}_{(q,P_1)}, \mathsf{MCAR}_{(q,P_2)} \bigr) = q\cdot\mathrm{TV}(P_1,P_2) = \frac{\epsilon}{1-\epsilon}.
    \end{align*}
    Now, integrating by parts yields
    \begin{align*}
        \mathbb{E}X_1 - \mathbb{E}X_2 &= \int_{c_1\sigma\sqrt{\log(1/\kappa)}}^\infty x\cdot \frac{2x}{(c_1\sigma)^2}e^{-x^2/(c_1\sigma)^2} \,\mathrm{d}x\\
        &= \kappa \cdot c_1\sigma\sqrt{\log(1/\kappa)} + \int_{c_1\sigma\sqrt{\log(1/\kappa)}}^\infty e^{-x^2/(c_1\sigma)^2} \,\mathrm{d}x \gtrsim \sigma\kappa\sqrt{\log(1/\kappa)}. 
    \end{align*}
    Hence, by \citet[Lemma~25]{ma2024high}, 
    \begin{align}
        \mathcal{M}(\delta,\mathcal{P}_{\Theta},|\cdot|^2) \geq \frac{(\mathbb{E}X_1 - \mathbb{E}X_2)^2}{4} \gtrsim \sigma^2\kappa^2\log(1/\kappa). \label{eq:uni-arb-sub-gaussian-lb-1}
    \end{align}
    By~\eqref{eq:uni-arb-sub-gaussian-lb-1} and applying Proposition~\ref{prop:univariate-mcar-lb}(a) with contamination distribution $Q\in\mathcal{P}(\mathbb{R}_{\star})$ satisfying $Q(\{\star\}) = 1$ as in~(a), we obtain the desired lower bound for $\epsilon < \frac{q}{1+q}$. Finally, the lower bound for $\epsilon \geq \frac{q}{1+q}$ follows from part~(a).
\end{proof}

\section{Proofs from Section~\ref{sec:realisable-mean-est}}\label{sec:proofs-realisable-mean-est}

\subsection{Proofs from Section~\ref{sec:gaussian-realisable-model}} \label{sec:proofs-one-dim-gaussian}

\subsubsection{Proof of Theorem~\ref{thm:univariate-gaussian-realisable-maxmin}}
\begin{proof}[Proof of Theorem~\ref{thm:univariate-gaussian-realisable-maxmin}]
Given $R\in \mathcal{R}(\theta_0)$, we can simulate from $R$ as follows: let $(X_0,\Omega_0^{(1)},\Omega_0^{(2)}, W_0)$ denote a random vector taking values in $\mathbb{R} \times \{0,1\}^{3}$ such that $W_0 \indep (X_0, \Omega_0^{(1)}, \Omega_0^{(2)})$, $W_0 \sim \mathsf{Ber}(\epsilon)$, $\Omega_0^{(1)} \indep (X_0, \Omega_0^{(2)})$, $\Omega_0^{(1)} \sim \mathsf{Ber}(q)$, $X_0 \sim \mathsf{N}(\theta_0, \sigma^2)$ and 
\[
R = \mathsf{Law}\bigl((1 - W_0) \cdot X_0 \ostar \Omega_{0}^{(1)} + W_0 \cdot X_0 \ostar \Omega_{0}^{(2)} \bigr).
\]
Note that if $Z_0 \coloneqq (1 - W_0) \cdot X_0 \ostar \Omega_{0}^{(1)} + W_0 \cdot X_0 \ostar \Omega_{0}^{(2)}$, then $Z_0 \mid \{W_0 = 0\} \sim \mathsf{MCAR}_{(\mathsf{N}(\theta_0, \sigma^2), q)}$.  We then generate $(X_i, \Omega_i^{(1)}, \Omega_i^{(2)}, W_i)_{i=1}^{n} \overset{\mathrm{iid}}{\sim} \mathsf{Law}(X_0, \Omega_0^{(1)}, \Omega_0^{(2)}, W_0)$, and set $Z_i \coloneqq (1 - W_i) \cdot X_i \ostar \Omega_{i}^{(1)} + W_i \cdot X_i \ostar \Omega_{i}^{(2)}$ for $i \in [n]$, so that $Z_1,\ldots,Z_n \stackrel{\mathrm{iid}}{\sim} R$.

    Now define the inliers as $\mathcal{I} \coloneqq \{i \in [n]: W_i = 0\}$, the outliers as $\mathcal{O} \coloneqq \{i \in [n] : W_i = 1\}$, and the observed indices as $\mathcal{D} \coloneqq \{i \in [n]: Z_i \neq \star\}$.  Equipped with this notation, we note the following pair of structural properties
    \[
    \max_{i \in \mathcal{I} \cap \mathcal{D}}\, X_i \leq \max_{i \in \mathcal{D}}\, Z_i \leq \max_{i \in ( \mathcal{I} \cap \mathcal{D}) \cup \mathcal{O}}\, X_i \quad \text{ and } \quad \min_{i \in ( \mathcal{I} \cap \mathcal{D}) \cup \mathcal{O}}\, X_i \leq \min_{i \in \mathcal{D}}\, Z_i \leq \min_{i \in \mathcal{I} \cap \mathcal{D}}\, X_i.
    \]
    We deduce the sandwich relation
    \begin{align} \label{ineq:sandwich-max-min}
    \frac{1}{2} \cdot \Bigl(\max_{i \in \mathcal{I} \cap \mathcal{D}}\, X_i + \min_{i \in ( \mathcal{I} \cap \mathcal{D}) \cup \mathcal{O}}\, X_i\Bigr) \leq \hat{\theta}^{\mathrm{AE}} \leq \frac{1}{2} \cdot \Bigl(\max_{i \in ( \mathcal{I} \cap \mathcal{D}) \cup \mathcal{O}}\, X_i + \min_{i \in \mathcal{I} \cap \mathcal{D}}\, X_i\Bigr).
    \end{align}
    Now $X_1,\ldots,X_n$ and $\mathcal{I} \cap \mathcal{D}$ are independent, and similarly $X_1,\ldots,X_n$ and $\mathcal{O}$ are independent, so $(X_i)_{i \in \mathcal{I} \cap \mathcal{D}}|(\mathcal{I} \cap \mathcal{D}) \overset{\mathrm{iid}}{\sim} \mathsf{N}(\theta_0, \sigma^2)$ and $(X_i)_{i \in (\mathcal{I} \cap \mathcal{D}) \cup \mathcal{O}}|\bigl((\mathcal{I} \cap \mathcal{D}) \cup \mathcal{O}\bigr) \overset{\mathrm{iid}}{\sim} \mathsf{N}(\theta_0, \sigma^2)$.  We let $N_1 \coloneqq \lvert \mathcal{I} \cap \mathcal{D} \rvert$ and $N_2 \coloneqq \lvert (\mathcal{I} \cap \mathcal{D}) \cup \mathcal{O} \rvert$, and define 
    \[
    B_{\ell} \coloneqq \sigma \sqrt{2\log{N_{\ell}}} + \frac{\sigma}{2} \cdot \frac{\log{\log{N_{\ell}}} + \log(4\pi)}{\sqrt{2 \log{N_{\ell}}}},
    \]
    for $\ell \in \{1,2\}$.  Let $\mathcal{E}_{1}\coloneqq \{ N_{1} \geq nq(1-\epsilon)/2\}$ and $\mathcal{E}_2 \coloneqq \{|\mathcal{O}|\leq 3n\epsilon\}$. By~\citet[Theorem 3]{tanguy2015some}, there exists a universal constant $C_1' > 0$ such that for $\ell \in \{1, 2\}$ and $\delta > 0$, 
    \begin{align*}
    &\mathbb{P}\biggl(\biggl\{\Bigl|\max_{i \in [N_{\ell}]} X_i - \theta - B_{\ell} \Bigr| \geq \frac{C_1'\sigma \log(8/\delta)}{\log^{1/2}N_{\ell}} \biggr\} \bigcap \mathcal{E}_1 \; \bigg \vert \; N_{\ell}\biggr) \leq \frac{\delta}{4}
    \end{align*}
    and
    \begin{align*}
    &\mathbb{P}\biggl( \biggl\{\Bigl|\min_{i \in [N_{\ell}]} X_i - \theta +  B_{\ell}\Bigr| \geq \frac{C_1' \sigma \log(8/\delta)}{\log^{1/2} N_{\ell}}\biggr\} \bigcap \mathcal{E}_1 \;\bigg \vert \; N_{\ell}\biggr) \leq \frac{\delta}{4}.
    \end{align*}
    Combining these inequalities with the sandwich relation~\eqref{ineq:sandwich-max-min} yields
    \begin{align}\label{ineq:deviation-bound-thetaAE}
    \mathbb{P}\biggl(\biggl\{\bigl \lvert \hat{\theta}^{\mathrm{AE}}_n - \theta_0 \bigr \rvert \geq B_2 - B_1 +  \frac{2C_1'\sigma \log(8/\delta)}{\log^{1/2}N_1}\biggr\} \bigcap \mathcal{E}_1 \; \bigg \vert \; N_1, N_2 \biggr) \leq \frac{2\delta}{3}.
    \end{align}
    Using the inequality $\sqrt{a} - \sqrt{b} \leq (a - b)/\sqrt{b}$ for $0 < b < a$ and the fact that $x \mapsto \frac{\log \log x + \log(4\pi)}{\log^{1/2} x}$ is decreasing for $x\geq \exp\bigl(\frac{e^2}{4\pi}\bigr)$, we deduce that on $\mathcal{E}_1 \cap \mathcal{E}_2$,
    \begin{align} \label{ineq:bound-diff-B1-B2}
    B_2 - B_1 \leq \frac{2\sigma \log(N_2/N_1)}{\log^{1/2}N_1} &\leq \frac{2 \sigma \log\bigl(1 + 3n\epsilon/N_1\bigr)}{\log^{1/2}N_1} \nonumber\\
    &\leq \frac{2 \sigma \log\bigl(1 + \frac{6\epsilon}{q(1 - \epsilon)}\bigr)}{\log^{1/2}{\bigl(nq(1 - \epsilon)/2\bigr)}} \leq \frac{3 \sigma \log\bigl(1 + \frac{6\epsilon}{q(1 - \epsilon)}\bigr)}{\log^{1/2}\bigl(nq(1-\epsilon)\bigr)}.
    \end{align}
    Now, we first assume that $\epsilon \geq n^{-1}\log(4/\delta)$. By Lemma~\ref{lemma:binomial-tail}, we have $\mathbb{P}(\mathcal{E}_1 \cap \mathcal{E}_2) \geq 1 - \delta/2$, since by assumption, $nq(1 -\epsilon) \geq 8 \log(4/\delta)$.  Moreover, combining the inequalities~\eqref{ineq:deviation-bound-thetaAE} and~\eqref{ineq:bound-diff-B1-B2} yields that on $\mathcal{E}_1 \cap \mathcal{E}_2$,
    \begin{align}
    \bigl \lvert \hat{\theta}^{\mathrm{AE}}_n - \theta_0 \bigr \rvert \leq \frac{3 \sigma \log\bigl(1 + \frac{6\epsilon}{q(1 - \epsilon)}\bigr)}{\log^{1/2}\bigl(nq(1-\epsilon)\bigr)} + \frac{2C_1' \sigma \log(8/\delta)}{\log^{1/2} N_1} \leq  C_1 \sigma \cdot \frac{ \log\bigl(1 + \frac{6\epsilon}{q(1 - \epsilon)}\bigr) + \log(8/\delta)}{\log^{1/2}\bigl(nq(1-\epsilon)\bigr)}, \label{eq:average-min-max}
    \end{align}
    where $C_1 \coloneqq 3(1+C_1')$. Hence, \eqref{eq:average-min-max} holds with probability at least $1-\delta$ when $\epsilon\geq n^{-1}\log(4/\delta)$. Finally, consider the case in which $\epsilon < n^{-1}\log(4/\delta)$. Then, since we have $\mathcal{R}\bigl(\mathsf{N}(\theta_0,\sigma^2),\epsilon, q\bigr) \subseteq \mathcal{R}\bigl(\mathsf{N}(\theta_0,\sigma^2),n^{-1}\log(4/\delta), q\bigr)$, it follows by~\eqref{eq:average-min-max} that
    \begin{align*}
        \bigl \lvert \hat{\theta}^{\mathrm{AE}}_n - \theta_0 \bigr \rvert &\leq C_1 \sigma \cdot \frac{ \log\bigl(1 + \frac{6\log(4/\delta)}{nq(1 - \epsilon)}\bigr) + \log(8/\delta)}{\log^{1/2}\bigl(nq(1-\epsilon)\bigr)} \leq C_1 \sigma \cdot \frac{ \frac{6\log(4/\delta)}{nq(1 - \epsilon)} + \log(8/\delta)}{\log^{1/2}\bigl(nq(1-\epsilon)\bigr)} \\
        &\leq C_1 \sigma \cdot \frac{2\log(8/\delta)}{\log^{1/2}\bigl(nq(1-\epsilon)\bigr)} \leq 2C_1 \sigma \cdot \frac{ \log\bigl(1 + \frac{6\epsilon}{q(1 - \epsilon)}\bigr) + \log(8/\delta)}{\log^{1/2}\bigl(nq(1-\epsilon)\bigr)},
    \end{align*}
    with probability at least $1-\delta$.
\end{proof}

\subsubsection{Proof of Theorem~\ref{thm:univariate-realisable-lb}}

\begin{figure}[ht]
    \centering
    \subfigure[\centering The two black curves are $\{q(1-\epsilon) + \epsilon\}\phi_{(-a,\sigma)}$ and $\{q(1-\epsilon) + \epsilon\}\phi_{(a,\sigma)}$ respectively, as labelled in the figure. The blue curve is $q(1-\epsilon)\phi_{(-a,\sigma)}$, and the orange curve is $q(1-\epsilon)\phi_{(a,\sigma)}$.]{{\includegraphics[width=0.9\textwidth]{plots/gaussian-realisable-lower-bound-1.pdf}}}
    
    \subfigure[\centering The curve above the blue region illustrates the function $f_1$ in~\eqref{eq:f1-definition-univariate-lb}.]{{\includegraphics[width=0.49\textwidth]{plots/gaussian-realisable-lower-bound-2.pdf}}}
    \subfigure[\centering The curve above the orange region illustrates the function $f_2$ in~\eqref{eq:f2-definition-univariate-lb}.]{{\includegraphics[width=0.49\textwidth]{plots/gaussian-realisable-lower-bound-3.pdf}}}
    \caption{Construction of the lower bound in Theorem \ref{thm:univariate-realisable-lb}.}\label{fig:gaussian-realisable-lb}
\end{figure}

\begin{proof}[Proof of Theorem~\ref{thm:univariate-realisable-lb}]
Consider the construction illustrated in Figure~\ref{fig:gaussian-realisable-lb}.  For $a>0$ to be specified later, let 
\begin{align} \label{eq:def-t-eps-q-a}
    \tau \coloneqq \frac{\sigma^2}{2a} \cdot  \log \bigg( 1+\frac{\epsilon}{q(1-\epsilon)} \bigg)
\end{align}
denote the unique point in $\mathbb{R}$ where $\{q(1-\epsilon) + \epsilon\}\phi_{(-a,\sigma)}(\tau) = q(1 - \epsilon) \phi_{(a,\sigma)}(\tau)$.  Next, define the function $f_1: \mathbb{R} \rightarrow \mathbb{R}$ as
    \begin{align} \label{eq:f1-definition-univariate-lb}
    f_1(x) \coloneqq \begin{cases}
        q(1 - \epsilon)  \phi_{(-a,\sigma)}(x) & \text{ if } x \leq 0\\
        q(1 - \epsilon)  \phi_{(a,\sigma)}(x) & \text{ if } 0 < x \leq \tau\\
        \bigl\{q ( 1- \epsilon) + \epsilon\bigr\} \cdot \phi_{(-a,\sigma)}(x) & \text{ if } x > \tau.
    \end{cases} 
    \end{align}
    Similarly, we note that $-\tau$ is the unique point satisfying $\{q(1-\epsilon) + \epsilon\} \phi_{(a,\sigma)}(-\tau) = q (1 - \epsilon) \phi_{(-a,\sigma)}(-\tau)$ and define the function $f_2: \mathbb{R} \rightarrow \mathbb{R}$ as 
    \begin{align} \label{eq:f2-definition-univariate-lb}
    f_2(x) \coloneqq \begin{cases}
        \bigl\{q ( 1- \epsilon) + \epsilon\bigr\} \cdot\phi_{(a,\sigma)}(x) & \text{ if } x \leq -\tau\\
        q(1 - \epsilon)  \phi_{(-a,\sigma)}(x) & \text{ if } -\tau < x \leq 0\\
        q (1 - \epsilon)\phi_{(a,\sigma)}(x) & \text{ if } x > 0.
    \end{cases}
    \end{align}
    Note that $\int_{\mathbb{R}} f_\ell(x) \, \mathrm{d}x \leq q(1-\epsilon) + \epsilon \leq 1$ for $\ell \in \{1,2\}$, so we may construct $P_1, P_2 \in \mathcal{P}(\mathbb{R}_{\star})$ with Radon--Nikodym derivatives
    \[
    \frac{\mathrm{d}P_{\ell}}{\mathrm{d} \lambda_{\star}}(z) \coloneqq f_{\ell}(z) \mathbbm{1}_{\{z \in \mathbb{R}\}} + \biggl(1 - \int_{\mathbb{R}} f_{\ell}(x)\, \mathrm{d}x\biggr) \mathbbm{1}_{\{z = \star\}} \quad \text{for} \quad \ell \in \{1, 2\},
    \]
    where $\lambda_{\star}$ denotes the extension of the Lebesgue measure to $\mathbb{R}_{\star}$ as defined in Section~\ref{sec:notation}.  Then, by Proposition~\ref{prop:univariate-realisability}, $P_{1} \in \mathcal{R}\bigl(\mathsf{N}(-a, \sigma^2), \epsilon, q\bigr)$ and $P_{2} \in \mathcal{R}\bigl(\mathsf{N}(a, \sigma^2), \epsilon, q\bigr)$.  Since $P_{1}(\{\star\}) = P_{2}(\{\star\})$ and $f_1(x) = f_2(x)$ for $x \in [-\tau, \tau]$, we compute
    \begin{align*} 
        &\mathrm{KL}(P_1, P_2) = \int_{-\infty}^{-\tau} q(1 - \epsilon) \phi_{(-a,\sigma)}(x) \log\biggl(\frac{q(1 - \epsilon) \phi_{(-a,\sigma)}(x)}{\bigl\{q ( 1- \epsilon) + \epsilon\bigr\}\phi_{(a,\sigma)}(x)}\biggr)\, \mathrm{d}x  \\
        & \qquad \qquad+ \int_{\tau}^{\infty} \bigl\{q ( 1- \epsilon) + \epsilon\bigr\} \phi_{(-a,\sigma)}(x) \log\biggl(\frac{\bigl\{q ( 1- \epsilon) + \epsilon\bigr\} \phi_{(-a,\sigma)}(x)}{q(1 - \epsilon) \phi_{(a,\sigma)}(x)}\biggr)\, \mathrm{d} x \\
        &= q (1 - \epsilon) \biggl\{ \frac{2a^2}{\sigma^2} - \log\biggl(1+ \frac{\epsilon}{q(1-\epsilon)} \biggr) \biggr\} \bigl\{1 - \Phi_{(0,\sigma)}(\tau-a)\bigr\}\\
        &\qquad\qquad + \bigl\{q (1 - \epsilon) + \epsilon\bigr\} \biggl\{ \frac{2a^2}{\sigma^2} + \log\biggl(1+ \frac{\epsilon}{q(1-\epsilon)} \biggr) \biggr\}\bigl\{1-\Phi_{(0,\sigma)}(\tau+a)\bigr\} \nonumber\\
        &\qquad\qquad + 2a\bigl[ q(1-\epsilon) \phi_{(0,\sigma)}(\tau - a) - \bigl\{q(1-\epsilon) + \epsilon\bigr\} \phi_{(0,\sigma)}(\tau+a) \bigr] \\
        &= \frac{2aq(1-\epsilon)}{\sigma^2}(a-\tau)\bigl\{1 - \Phi_{(0,\sigma)}(\tau-a)\bigr\} + \frac{2a\{q(1-\epsilon)+\epsilon\}}{\sigma^2}(a+\tau)\bigl\{1-\Phi_{(0,\sigma)}(\tau+a)\bigr\} \\
        &\qquad\qquad + 2a\bigl[ q(1-\epsilon) \phi_{(0,\sigma)}(\tau - a) - \bigl\{q(1-\epsilon) + \epsilon\bigr\} \phi_{(0,\sigma)}(\tau+a) \bigr].
    \end{align*}
    Next, set 
    \begin{align*} 
    a \coloneqq \frac{\sigma}{4} \cdot \log\biggl(1 + \frac{\epsilon}{q (1 - \epsilon)}\biggr) \cdot \log^{-1/2}{\bigl(nq(1-\epsilon)\bigr)} > 0,
    \end{align*}
    so that by substituting this definition into~\eqref{eq:def-t-eps-q-a}, we obtain
    \[
    \tau=2\sigma\log^{1/2}\bigl( nq(1-\epsilon) \bigr) \quad\text{and}\quad a\leq \frac{\tau}{8},
    \]
    where the inequality follows from our assumption~\eqref{eq:asm-eps-gaussian-realisable-lb}.  Hence, by the Mills ratio bound $1 - \Phi_{(0,\sigma)}(x) \leq \sigma^2 \phi_{(0,\sigma)}(x)/x$ for $x > 0$, we have
    \begin{align*}
    \mathrm{KL}(P_1, P_2)
    &\leq 2a\{q (1 - \epsilon) + \epsilon \} \phi_{(0,\sigma)}(\tau+a) \\
    & \qquad \qquad + 2a\bigl[ q(1-\epsilon) \phi_{(0,\sigma)}(\tau - a) - \bigl\{q(1-\epsilon) + \epsilon\bigr\} \phi_{(0,\sigma)}(\tau+a) \bigr]\\
    &= 2aq (1 - \epsilon) \phi_{(0,\sigma)}(\tau-a) \\
    &= \frac{\sigma}{2} \cdot \log\biggl(1 + \frac{\epsilon}{q (1 - \epsilon)}\biggr) \cdot \log^{-1/2}\bigl(nq(1-\epsilon)\bigr) \cdot q (1 - \epsilon) \cdot \phi_{(0,\sigma)}(\tau-a)\\
    &\leq \frac{\sigma}{2} \cdot \log^{1/2}\bigl(nq(1-\epsilon)\bigr) \cdot q (1 - \epsilon) \cdot \phi_{(0,\sigma)}(7\tau/8) \\
    &= \frac{q(1-\epsilon)}{2\sqrt{2\pi}} \cdot \log^{1/2}\bigl(nq(1-\epsilon)\bigr) \cdot \exp\biggl\{-\frac{1}{2}\cdot \Bigl(\frac{7}{8}\Bigr)^2 \cdot 4\log\bigl(nq(1-\epsilon)\bigr)\biggr\} \\
    &\leq \frac{q(1-\epsilon)}{2\sqrt{2\pi}} \cdot \log^{1/2}\bigl(nq(1-\epsilon)\bigr) \cdot \bigl\{nq(1-\epsilon)\bigr\}^{-3/2} \leq \frac{1}{5n}.
    \end{align*} 
    Thus, $\mathrm{KL}(P_1^{\otimes n}, P_2^{\otimes n}) \leq 1/5 < \log\bigl( \frac{1}{4\delta(1-\delta)} \bigr)$ for $\delta\in(0,1/4]$, so by \citet[Theorem~4 and Corollary~6]{ma2024high}, we deduce that for $\delta\in(0,1/4]$,
    \begin{align} \label{eq:gaussian-realisable-proof-lb1}
        \mathcal{M}\bigl(\delta, \mathcal{P}_{\Theta}, | \cdot |^2\bigr) \geq a^2 = \frac{\sigma^2 \log^2\bigl(1 + \frac{\epsilon}{q (1 - \epsilon)}\bigr)}{16 \log \bigl(nq(1-\epsilon)\bigr)}.
    \end{align}
   Finally, note that $\mathsf{MCAR}_{(q(1-\epsilon), \mathsf{N}(\theta,\sigma^2))} \in \mathcal{R}(\mathsf{N}(\theta, \sigma^2), \epsilon, q)$ for all $\theta \in \mathbb{R}$, since we can choose the contamination distribution $Q$ such that $Q(\{\star\})=1$. Therefore, by Proposition~\ref{prop:univariate-mcar-lb}(a), we have that for $\delta\in(0,1/4]$,
   \begin{align}\label{eq:gaussian-realisable-proof-lb2}
       \mathcal{M}\bigl(\delta, \mathcal{P}_{\Theta}, | \cdot |^2\bigr) 
       \begin{cases}
         \geq \dfrac{\sigma^2 \log(1/\delta)}{20nq(1-\epsilon)} \quad&\text{if }\delta\geq \dfrac{\{1-q(1-\epsilon)\}^n}{2}\\
         = \infty \quad&\text{if }\delta< \dfrac{\{1-q(1-\epsilon)\}^n}{2}.
    \end{cases}
   \end{align}
   Combining \eqref{eq:gaussian-realisable-proof-lb1} and \eqref{eq:gaussian-realisable-proof-lb2} yields the desired result.
\end{proof}

\subsubsection{Proof of Theorem~\ref{thm:one-dim-kolmogorov-estimator}}
In order to prove Theorem~\ref{thm:one-dim-kolmogorov-estimator}, we require a preliminary lemma.
\begin{lemma}\label{lemma:one-dim-kolmogorov-distance-realisable-sets}
    Let $\theta_1, \theta_2 \in \mathbb{R}$ be distinct, and set $a \coloneqq \lvert \theta_1 - \theta_2 \rvert/2$. Then, writing $b \coloneqq \frac{1}{2}\log\bigl( 1+ \frac{4\epsilon}{q(1-\epsilon)}\bigr)$, there exists a continuous and strictly increasing function $f_{\mathrm{K},b}:(0,\infty) \to (0,1]$ such that
    \begin{align*}
        d_{\mathrm{K}}\bigl(\mathcal{R}(\theta_1),\mathcal{R}(\theta_2)\bigr) \geq f_{\mathrm{K},b}(a).
    \end{align*}    
    Moreover, 
    \begin{align*}
        f_{\mathrm{K},b}(a) \geq q(1-\epsilon) \cdot \frac{a}{\sigma} \cdot \phi\biggl(\frac{a}{\sigma}+\frac{\sigma b}{a}\biggr) \quad\text{when }b\leq 1/2,
    \end{align*}
    and
    \begin{align*}
        f_{\mathrm{K},b}(a) \geq q(1-\epsilon) \cdot \Phi\biggl(\frac{a}{\sigma}-\frac{2\sigma b}{a}\biggr) - \{q(1-\epsilon) + \epsilon\}\cdot \Phi\biggl(-\frac{a}{\sigma}-\frac{2\sigma b}{a}\biggr) \quad\text{when }b> 1/2.
    \end{align*}
\end{lemma}
\begin{proof}
    Since $d_\mathrm{K}$ is translation invariant, we may assume without loss of generality that $\theta_1 = -a$ and $\theta_2 = a$. By Proposition~\ref{prop:univariate-realisability}, if $R_\ell \in \mathcal{R}(\theta_\ell)$ for $\ell\in\{1,2\}$, then each admits a density $h_{\ell} :\mathbb{R}_\star \to \mathbb{R}$ with respect to the extended Lebesgue measure $\lambda_\star$ such that $h_\ell(x) / \phi_{(\theta_\ell,\sigma)}(x) \in [q(1-\epsilon),\, q(1-\epsilon) + \epsilon]$ for all $x\in\mathbb{R}$. Let $\tau \coloneqq \frac{\sigma^2}{2a} \cdot \log\bigl( 1 + \frac{\epsilon}{q(1-\epsilon)} \bigr) \leq  \frac{\sigma^2 b}{a}$,
    so that $q(1-\epsilon)\phi_{(-a,\sigma)}(-\tau) = \{q(1-\epsilon)+\epsilon\} \phi_{(a,\sigma)}(-\tau)$, see Figure~\ref{fig:gaussian-realisable-lb}.
    
    When $b\leq 1/2$,
    \begin{align*}
        d_{\mathrm{K}}\bigl(\mathcal{R}(\theta_1),\mathcal{R}(\theta_2)\bigr) &= \inf_{R_1\in\mathcal{R}(\theta_1),\, R_2\in\mathcal{R}(\theta_2)} \sup_{A\in\mathcal{A}} |R_1(A)-R_2(A)|\\
        &\geq
        \inf_{R_1\in\mathcal{R}(\theta_1),\, R_2\in\mathcal{R}(\theta_2)} \bigl\{R_1\bigl((-\infty,-\sigma^2b/a]\bigr) - R_2\bigl((-\infty,-\sigma^2b/a]\bigr)\bigr\}\\
        &\geq q(1-\epsilon) \cdot \Phi_{(\theta_1,\sigma)}(-\sigma^2b/a) - \{q(1-\epsilon) + \epsilon\}\cdot \Phi_{(\theta_2,\sigma)}(-\sigma^2b/a)\\
        &= q(1-\epsilon) \cdot \Phi\biggl(\frac{a}{\sigma}-\frac{\sigma b}{a}\biggr) - \{q(1-\epsilon) + \epsilon\}\cdot \Phi\biggl(-\frac{a}{\sigma}-\frac{\sigma b}{a}\biggr) \eqqcolon f_{\mathrm{K},b}(a).
    \end{align*}
    Now $f_{\mathrm{K},b}$ is continuously differentiable, with
    \begin{align*}
    f_{\mathrm{K},b}'(a) &= q(1 \!-\! \epsilon) \cdot \biggl(\frac{1}{\sigma} \!+\! \frac{\sigma b}{a^2}\biggr) \phi\biggl(\frac{a}{\sigma} \!-\! \frac{\sigma b}{a}\biggr) - \bigl\{q(1 - \epsilon) + \epsilon \bigr\}\cdot \Bigl(-\frac{1}{\sigma} + \frac{\sigma b}{a^2}\Bigr) \phi\biggl(-\frac{a}{\sigma}-\frac{\sigma b}{a}\biggr)\\
    &> \biggl(\frac{1}{\sigma} + \frac{\sigma b}{a^2}\biggr) \cdot \sigma \biggl\{ q(1 \!-\! \epsilon)  \cdot \phi_{(-a,\sigma)}\biggl(-\frac{\sigma^2 b}{a}\biggr) - \bigl\{q(1 - \epsilon) + \epsilon \bigr\}\cdot \phi_{(a,\sigma)}\biggl(-\frac{\sigma^2 b}{a}\biggr) \biggr\} \\
    &\geq \biggl(\frac{1}{\sigma} + \frac{\sigma b}{a^2}\biggr) \cdot \sigma \Bigl( q(1 - \epsilon)  \cdot \phi_{(-a,\sigma)}(-\tau) - \bigl\{q(1 - \epsilon) + \epsilon \bigr\}\cdot \phi_{(a,\sigma)}(-\tau) \Bigr) = 0,
    \end{align*}
    so that $f_{\mathrm{K},b}$ is strictly increasing as well.
    Moreover,
    \begin{align*}
        f_{\mathrm{K},b}(a) &= q(1-\epsilon) \cdot \biggl\{ \Phi\biggl(\frac{a}{\sigma}-\frac{\sigma b}{a}\biggr) - \Phi\biggl(-\frac{a}{\sigma}-\frac{\sigma b}{a}\biggr) \biggr\} - \epsilon\cdot \Phi\biggl(-\frac{a}{\sigma}-\frac{\sigma b}{a}\biggr)\\
        &\geq q(1-\epsilon) \cdot \frac{2a}{\sigma} \cdot \phi\biggl(\frac{a}{\sigma}+\frac{\sigma b}{a}\biggr) - \epsilon\cdot \Phi\biggl(-\frac{a}{\sigma}-\frac{\sigma b}{a}\biggr), \numberthis \label{eq:kolmogorov-distance-lb2}
    \end{align*}
    where the final inequality follows from the mean value theorem $\Phi\bigl(\frac{a}{\sigma}-\frac{\sigma b}{a}\bigr) - \Phi\bigl(-\frac{a}{\sigma}-\frac{\sigma b}{a}\bigr) = \frac{2a}{\sigma}\cdot\phi(x') \geq \frac{2a}{\sigma} \cdot \phi\bigl(\frac{a}{\sigma}+\frac{\sigma b}{a}\bigr)$, where $x'\in \bigl[-\frac{a}{\sigma}-\frac{\sigma b}{a},\, \frac{a}{\sigma}-\frac{\sigma b}{a}\bigr]$. Next notice that \begin{align*}
        \epsilon\cdot \Phi\biggl(-\frac{a}{\sigma}-\frac{\sigma b}{a}\biggr) \leq \frac{\epsilon}{a/\sigma+ \sigma b/a} \cdot \phi\biggl(\frac{a}{\sigma}+\frac{\sigma b}{a}\biggr) &\leq \frac{\epsilon a}{\sigma b} \cdot \phi\biggl(\frac{a}{\sigma}+\frac{\sigma b}{a}\biggr)\\
        &\leq q(1-\epsilon) \cdot \frac{a}{\sigma} \cdot \phi\biggl(\frac{a}{\sigma}+\frac{\sigma b}{a}\biggr),\numberthis \label{eq:kolmogorov-distance-lb3}
    \end{align*}
    where the first inequality follows from the Mills ratio bound $\Phi(-x) \leq \phi(x)/x$ for $x > 0$, and the final inequality follows from the fact that $\log(1+x) \geq x/2$ for $x \in [0,2]$, so that $b=\frac{1}{2}\log\bigl(1+\frac{4\epsilon}{q(1-\epsilon)}\bigr) \geq \frac{\epsilon}{q(1-\epsilon)}$. Therefore, by~\eqref{eq:kolmogorov-distance-lb2} and~\eqref{eq:kolmogorov-distance-lb3} we deduce that \begin{align*}
         f_{\mathrm{K},b}(a) \geq q(1-\epsilon) \cdot \frac{a}{\sigma} \cdot \phi\biggl(\frac{a}{\sigma}+\frac{\sigma b}{a}\biggr),
    \end{align*}
    when $b\leq 1/2$.

    On the other hand, when $b> 1/2$,
    \begin{align*}
        d_{\mathrm{K}}\bigl(\mathcal{R}(\theta_1),&\mathcal{R}(\theta_2)\bigr)    \geq
        \inf_{R_1\in\mathcal{R}(\theta_1),\, R_2\in\mathcal{R}(\theta_2)} \bigl\{R_1\bigl((-\infty,-2\sigma^2b/a]\bigr) - R_2\bigl((-\infty,-2\sigma^2b/a]\bigr)\bigr\}\\
        &\geq q(1-\epsilon) \cdot \Phi_{(\theta_1,\sigma)}(-2\sigma^2b/a) - \{q(1-\epsilon) + \epsilon\}\cdot \Phi_{(\theta_2,\sigma)}(-2\sigma^2b/a)\\
        &= q(1-\epsilon) \cdot \Phi\biggl(\frac{a}{\sigma}-\frac{2\sigma b}{a}\biggr) - \{q(1-\epsilon) + \epsilon\}\cdot \Phi\biggl(-\frac{a}{\sigma}-\frac{2\sigma b}{a}\biggr) \eqqcolon f_{\mathrm{K},b}(a).
    \end{align*}
    Similarly to the previous case, $f_{\mathrm{K},b}$ is continuously differentiable and strictly increasing.
\end{proof}

\begin{proof}[Proof of Theorem~\ref{thm:one-dim-kolmogorov-estimator}]
    We first derive an upper bound on $d_{\mathrm{K}} \bigl(\hat{R}_n, \mathcal{R}(\theta_0)\bigr)$. Let $\mathcal{D} \coloneqq \{i \in [n] : Z_i \neq \star\}$ and $\bar{q} \coloneqq \mathbb{P}(Z_1 \neq \star)$, so that with the convention that $0/0 \coloneqq 0$,
    \begin{align*}
        \sup_{A\in\mathcal{A}} |\hat{R}_n(A) - R(A)| &= \sup_{A\in\mathcal{A}} \biggl| \frac{|\mathcal{D}|}{n} \cdot \frac{1}{\lvert \mathcal{D} \rvert}\sum_{i\in\mathcal{D}} \mathbbm{1}_{\{Z_i \in A\}}- \bar{q} \cdot \mathbb{P}(Z_1 \in A | Z_1 \neq \star) \biggr|\\
        &\leq \frac{|\mathcal{D}|}{n} \cdot \sup_{A\in\mathcal{A}} \biggl| \frac{1}{\lvert \mathcal{D} \rvert}\sum_{i\in\mathcal{D}} \mathbbm{1}_{\{Z_i \in A\}} - \mathbb{P}(Z_1 \in A | Z_1 \neq \star) \biggr| + \biggl| \frac{|\mathcal{D}|}{n} - \bar{q} \biggr|. \numberthis \label{eq:dkw-decomposition}
    \end{align*}
Now, since $\bar{q} \geq q(1-\epsilon)$, we have by our lower bound on $\delta$ that
\[
\log\Bigl(\frac{4}{\delta}\Bigr) \leq \frac{\bigl\{ nq(1-\epsilon) \bigr\}^{1-\xi}}{6400} \leq \frac{nq(1-\epsilon)}{6400} \leq \frac{n\bar{q}}{6400}.
\]
Hence, by Bernstein's inequality~\citep[Theorem 2.8.4]{vershynin2018high}, with probability at least $1 - \delta/2$ that
    \begin{align}
        \biggl| \frac{|\mathcal{D}|}{n} - \bar{q} \biggr| 
        \leq \sqrt{\frac{4\bar{q}\log(4/\delta)}{n}} <\bar{q}. \label{eq:upper-bound-observed-proportion}
    \end{align}
    Furthermore, by the Dvoretzky--Kiefer--Wolfowitz--Massart--Reeve inequality \citep{massart1990tight, reeve2024short}, 
    \begin{align}
        \sup_{A\in\mathcal{A}} \biggl| \frac{1}{\lvert \mathcal{D} \rvert} \sum_{i\in\mathcal{D}} \mathbbm{1}_{\{Z_i \in A\}}- \mathbb{P}(Z_1 \in A \,\vert\, Z_1 \neq \star) \biggr| \leq \sqrt{\frac{\log(4/\delta)}{2|\mathcal{D}|}}, \label{eq:dkw-observed-part}
    \end{align}
    with probability at least $1 - \delta/2$.  Combining~\eqref{eq:dkw-decomposition}, \eqref{eq:upper-bound-observed-proportion} and~\eqref{eq:dkw-observed-part} we deduce that, with probability at least $1 - \delta$, 
    \begin{align}
        d_{\mathrm{K}} \bigl(\hat{R}_n, \mathcal{R}(\theta_0)\bigr) \leq \sup_{A\in\mathcal{A}} |\hat{R}_n(A) - R(A)| &\leq \sqrt{\frac{\lvert \mathcal{D} \rvert}{n}} \cdot \sqrt{\frac{\log(4/\delta)}{2 n}} + \sqrt{\frac{4\bar{q}\log(4/\delta)}{n}}\nonumber\\
        &\leq \sqrt{\frac{\bar{q} \log(4/\delta)}{n}} + \sqrt{\frac{4\bar{q}\log(4/\delta)}{n}}\nonumber\\
        &\leq 3\sqrt{\frac{\{q(1-\epsilon)+\epsilon\}\log(4/\delta)}{n}} \eqqcolon r_n. \label{eq:kolmogorov-distance-1-dim}
    \end{align}

    We now work on the event $\mathcal{E} \coloneqq \bigl\{ d_{\mathrm{K}} \bigl(\hat{R}_n, \mathcal{R}(\theta_0)\bigr) \leq r_n \bigr\}$, which occurs with probability at least $1-\delta$ by~\eqref{eq:kolmogorov-distance-1-dim}.
    If $\theta\in\mathbb{R}$ satisfies $d_{\mathrm{K}}\bigl(\mathcal{R}(\theta), \mathcal{R}(\theta_0)\bigr) > 2r_n$, then on the event $\mathcal{E}$, 
    \begin{align*}
        d_{\mathrm{K}}\bigl(\hat{R}_n, \mathcal{R}(\theta)\bigr) \geq d_{\mathrm{K}}\bigl(\mathcal{R}(\theta), \mathcal{R}(\theta_0)\bigr) - d_{\mathrm{K}} \bigl(\hat{R}_n, \mathcal{R}(\theta_0)\bigr) > r_n \geq d_{\mathrm{K}} \bigl(\hat{R}_n, \mathcal{R}(\theta_0)\bigr),
    \end{align*}
    so $\hat{\theta}_n^{\mathrm{K}} \neq \theta$.  Therefore, with $f_{\mathrm{K}, b}$ as defined in Lemma~\ref{lemma:one-dim-kolmogorov-distance-realisable-sets} and $b \coloneqq \frac{1}{2}\log\bigl( 1+ \frac{4\epsilon}{q(1-\epsilon)}\bigr)$, we deduce that on $\mathcal{E}$,
    \begin{align}
        |\hat{\theta}_n^{\mathrm{K}} - \theta_0| &\leq \sup \bigl\{ |\theta-\theta_0| : \theta\in\mathbb{R},\, d_{\mathrm{K}}\bigl(\mathcal{R}(\theta), \mathcal{R}(\theta_0)\bigr) \leq 2r_n \bigr\} \nonumber \\
        &\leq 2\sup \bigl\{ a\geq 0 : f_{\mathrm{K},b}(a) \leq 2r_n \bigr\} 
        = 2\inf \bigl\{ a\geq 0 : f_{\mathrm{K},b}(a) \geq 2r_n \bigr\}, \label{eq:one-dim-gaussian-realisable-proof-ub1}
    \end{align}
    where the second inequality follows since by Lemma~\ref{lemma:one-dim-kolmogorov-distance-realisable-sets}, $d_{\mathrm{K}}\bigl(\mathcal{R}(\theta), \mathcal{R}(\theta_0)\bigr) \geq f_{\mathrm{K},b}\bigl( \frac{|\theta-\theta_0|}{2} \bigr)$, and the final equality follows since $f_{\mathrm{K},b}$ is a strictly increasing and continuous function.
    
    When $b\leq 1/2$, we have by~\eqref{eq:one-dim-gaussian-realisable-proof-ub1} and Lemma~\ref{lemma:one-dim-kolmogorov-distance-realisable-sets} that on $\mathcal{E}$,
    \begin{align*}
        |\hat{\theta}_n^{\mathrm{K}} - \theta_0| 
        &\leq 2\inf \bigl\{a\geq 0 : f_{\mathrm{K},b}(a) \geq 2r_n \bigr\}\\
        &\leq 2\inf \biggl\{a\geq 0: q(1-\epsilon) \cdot \frac{a}{\sigma} \cdot \phi\biggl(\frac{a}{\sigma}+\frac{\sigma b}{a}\biggr) \geq 6\sqrt{\frac{\{q(1-\epsilon)+\epsilon\}\log(4/\delta)}{n}} \biggr\}\\
        &= 2\sigma \inf \Biggl\{ a\geq 0 : a \cdot \phi\biggl(a+\frac{b}{a}\biggr) \geq \sqrt{\biggl(1+\frac{\epsilon}{q(1-\epsilon)}\biggr) \cdot \frac{36\log(4/\delta)}{nq(1-\epsilon)}} \Biggr\}. \numberthis \label{eq:one-dim-gaussian-realisable-proof-ub2}
    \end{align*}

    Now suppose further that $b\leq \sqrt{\frac{\log(4/\delta)}{nq(1-\epsilon)}}$. The assumption on $\delta$ means that $b\leq \sqrt{\frac{\log(4/\delta)}{nq(1-\epsilon)}} \leq 1/80$ and thus $1+\frac{4\epsilon}{q(1-\epsilon)} < 5/4$. Let $a \coloneqq 20 \sqrt{\frac{\log(4/\delta)}{nq(1-\epsilon)}}$, so that $a\leq 1/4$.  Moreover, $b/a \leq 1/20$,  so $a + b/a \leq 3/10$.  Therefore, 
    \begin{align*}
        a\cdot \phi(a+b/a) \geq 20 \sqrt{\frac{\log(4/\delta)}{nq(1-\epsilon)}} \cdot \phi(3/10)
        &\geq \sqrt{\frac{5}{4} \cdot \frac{36\log(4/\delta)}{nq(1-\epsilon)}}\\
        &\geq \sqrt{\biggl(1+\frac{\epsilon}{q(1-\epsilon)}\biggr) \cdot \frac{36\log(4/\delta)}{nq(1-\epsilon)}}.
    \end{align*}
    Hence, by~\eqref{eq:one-dim-gaussian-realisable-proof-ub2}, we have on $\mathcal{E}$ that $|\hat{\theta}_n^{\mathrm{K}} - \theta_0| \leq 40\sigma \sqrt{\frac{\log(4/\delta)}{nq(1-\epsilon)}}$ when $b\leq \sqrt{\frac{\log(4/\delta)}{nq(1-\epsilon)}}$.

    Next, we consider the case $\sqrt{\frac{\log(4/\delta)}{nq(1-\epsilon)}} < b \leq 2\sqrt{\frac{\log(4/\delta)}{(nq(1-\epsilon))^{1-\xi}}}$.  Then $b\leq 1/40$ and we again have $1+\frac{4\epsilon}{q(1-\epsilon)} < 5/4$.  Let $a \coloneqq 20b$, so that $a \leq 1/2$. Then
    \begin{align*}
        a\cdot \phi(a+b/a) > 20\sqrt{\frac{\log(4/\delta)}{nq(1-\epsilon)}} \cdot \phi\biggl( \frac{1}{2} + \frac{1}{20} \biggr)
        &\geq \sqrt{\frac{5}{4} \cdot \frac{36\log(4/\delta)}{nq(1-\epsilon)}}\\
        &\geq \sqrt{\biggl(1+\frac{\epsilon}{q(1-\epsilon)}\biggr) \cdot \frac{36\log(4/\delta)}{nq(1-\epsilon)}}.
    \end{align*}
    Hence, by~\eqref{eq:one-dim-gaussian-realisable-proof-ub2}, when $\sqrt{\frac{\log(4/\delta)}{nq(1-\epsilon)}} < b \leq 2\sqrt{\frac{\log(4/\delta)}{(nq(1-\epsilon))^{1-\xi}}}$ we have on $\mathcal{E}$ that $|\hat{\theta}_n^{\mathrm{K}} - \theta_0| \leq 40\sigma b$. 

    As our third case, assume that $2\sqrt{\frac{ \log(4/\delta)}{\{nq(1-\epsilon)\}^{1-\xi}}} < b \leq 1/2$.  We define $a \coloneqq \frac{16b}{\sqrt{\xi \log(nq(1-\epsilon))}}$, so that $a\geq 32\sqrt{\frac{\log(4/\delta)}{(nq(1-\epsilon))^{1-\xi/2}}}$ using the fact that $x^{\xi/2} \geq \xi\log x$ for $x\in(0,\infty)$. By the assumption~\eqref{Eq:bupperbound}, we have $b\leq \frac{7\xi}{256}\log\bigl(nq(1-\epsilon)\bigr)$, so $a\leq 7b/a$. Therefore, 
    \begin{align*}
        a\cdot \phi(a+b/a) &\geq 32 \sqrt{\frac{\log(4/\delta)}{\bigl\{nq(1-\epsilon)\bigr\}^{1-\xi/2}}} \cdot \phi(8b/a)\\
        &= 32\sqrt{\frac{\log(4/\delta)}{ \bigl\{nq(1-\epsilon)\bigr\}^{1-\xi/2}}} \cdot \frac{1}{\sqrt{2\pi}} \cdot \exp\biggl\{ -\frac{\xi\log\bigl(nq(1-\epsilon)\bigr)}{8} \biggr\}\\
        &= \sqrt{\frac{512\log(4/\delta)}{\pi \bigl\{nq(1-\epsilon)\bigr\}^{1-\xi/4}}} \geq \sqrt{\biggl(1+\frac{\epsilon}{q(1-\epsilon)}\biggr) \cdot \frac{36\log(4/\delta)}{nq(1-\epsilon)}},
    \end{align*}
    where the final inequality holds since $\log\bigl(1+\frac{\epsilon}{q(1-\epsilon)}\bigr)\leq \frac{7\xi\log(nq(1-\epsilon))}{128}$. 
    Hence, by~\eqref{eq:one-dim-gaussian-realisable-proof-ub2}, we have on $\mathcal{E}$ that $|\hat{\theta}_n^{\mathrm{K}} - \theta_0| \leq \frac{32\sigma b}{\sqrt{\xi \log(nq(1-\epsilon))}}$ when $2\sqrt{\frac{\log(4/\delta)}{(nq(1-\epsilon))^{1-\xi}}} < b \leq 1/2$.

    Finally, consider the case where $b > 1/2$ (when this interval is not vacuous).  Then by~\eqref{eq:one-dim-gaussian-realisable-proof-ub1} and Lemma~\ref{lemma:one-dim-kolmogorov-distance-realisable-sets} we have that on $\mathcal{E}$,
    \begin{align*}
        |\hat{\theta}_n^{\mathrm{K}} - \theta_0| 
        &\leq 2\inf \bigl\{ a \geq 0 : f_{\mathrm{K},b}(a) \geq 2r_n \bigr\}\\
        &\leq 2\inf \biggl\{ a \geq 0: q(1-\epsilon) \cdot \Phi\biggl(\frac{a}{\sigma}-\frac{2\sigma b}{a}\biggr) - \{q(1-\epsilon) + \epsilon\}\cdot \Phi\biggl(-\frac{a}{\sigma}-\frac{2\sigma b}{a}\biggr)\\
        &\hspace{7cm}\geq \sqrt{\frac{36\{q(1-\epsilon)+\epsilon\}\log(4/\delta)}{n}} \biggr\}. \numberthis \label{eq:one-dim-gaussian-realisable-proof-ub3}
    \end{align*}
    Letting $a \coloneqq \frac{3\sigma b}{\sqrt{\xi\log(nq(1-\epsilon))}}$, we have
    \begin{align*}
        q(1 &-\epsilon) \cdot \Phi\biggl(\frac{a}{\sigma}-\frac{2\sigma b}{a}\biggr) - \{q(1-\epsilon) + \epsilon\}\cdot \Phi\biggl(-\frac{a}{\sigma}-\frac{2\sigma b}{a}\biggr)\\
        \overset{(i)}&{\geq} \frac{q(1-\epsilon)}{\bigl(-\frac{a}{\sigma}+\frac{2\sigma b}{a}\bigr) + \bigl(-\frac{a}{\sigma}+\frac{2\sigma b}{a}\bigr)^{-1}} \cdot \phi\biggl(-\frac{a}{\sigma}+\frac{2\sigma b}{a}\biggr) - \frac{q(1-\epsilon)+\epsilon}{\frac{a}{\sigma}+\frac{2\sigma b}{a}} \cdot \phi\biggl(\frac{a}{\sigma}+\frac{2\sigma b}{a}\biggr)\\
        \overset{(ii)}&{\geq} \biggl( \frac{a}{\sigma}+\frac{2\sigma b}{a} \biggr)^{-1} \frac{1}{\sqrt{2\pi}} \cdot \biggl\{ q(1-\epsilon) \exp\biggl( -\frac{a^2}{2\sigma^2} - \frac{2\sigma^2 b^2}{a^2} + 2b \biggr) \\
        &\hspace{6cm}  - \bigl\{q(1-\epsilon)+\epsilon\bigr\} \exp\biggl( -\frac{a^2}{2\sigma^2} - \frac{2\sigma^2 b^2}{a^2} - 2b \biggr) \biggr\}\\
        \overset{(iii)}&{\geq} \biggl( \frac{a}{\sigma}+\frac{2\sigma b}{a} \biggr)^{-1} \frac{4\epsilon}{\sqrt{2\pi}} \cdot \exp\biggl( -\frac{a^2}{2\sigma^2} - \frac{2\sigma^2 b^2}{a^2}\biggr)\\
        \overset{(iv)}&{\geq} \frac{1}{\sqrt{\xi\log\bigl( nq(1-\epsilon) \bigr)}} \cdot \frac{4\epsilon}{\sqrt{2\pi}} \cdot \bigl( nq(1-\epsilon) \bigr)^{-\xi/4}\\
        \overset{(v)}&{\geq} \sqrt{\frac{36\{q(1-\epsilon)+\epsilon\}\log(4/\delta)}{n}} = 2r_n.
    \end{align*}
    Here, $(i)$ follows from the Mills ratio bound $\phi(x)/(x + x^{-1}) \leq \Phi(-x) \leq \phi(x)/x$ for $x > 0$; $(ii)$ follows since  $\bigl(-\frac{a}{\sigma}+\frac{2\sigma b}{a}\bigr) + \bigl(-\frac{a}{\sigma}+\frac{2\sigma b}{a}\bigr)^{-1} \leq \frac{a}{\sigma}+\frac{2\sigma b}{a}$ whenever $1/2 < b \leq \frac{\xi\log(nq(1-\epsilon))}{9}$; $(iii)$ follows by substituting the definition of $b$; $(iv)$ follows since, by assumption $b\leq \frac{7\xi\log(nq(1-\epsilon))}{256}$, so $\frac{a^2}{\sigma^2} \leq \xi\log\bigl(nq(1-\epsilon)\bigr)/100$; and $(v)$ follows from the assumptions that $b > 1/2$ so $q(1-\epsilon) < 3\epsilon$, the fact that $x^{\xi/2} \geq \xi\log x$ for $x\in(0,\infty)$ and the assumption~\eqref{Eq:deltalowerbound}. Hence, by~\eqref{eq:one-dim-gaussian-realisable-proof-ub3}, we have on $\mathcal{E}$ that $|\hat{\theta}_n^{\mathrm{K}} - \theta_0| \leq \frac{6\sigma b}{\sqrt{\xi\log(nq(1-\epsilon))}}$ when $b > 1/2$. Combining all four cases yields the desired result.
\end{proof}

\subsubsection{Proof of Theorem~\ref{thm:multivariate-kolmogorov-estimator}}

\begin{lemma} \label{lemma:realisability-of-projection}
    Let $\epsilon\in[0,1)$, $\pi\in\mathcal{P}\bigl(\{\emptyset,[d]\}\bigr)$, $P\in\mathcal{P}(\mathbb{R}^d)$, $R\in\mathcal{R}_{\emptyset,[d]}(P,\epsilon,\pi)$ and $v\in\mathbb{R}^d$. Suppose that $X\sim P$, $Z\sim R$ and define $Z^{(v)} \coloneqq v^\top Z \cdot \mathbbm{1}_{\{Z\in\mathbb{R}^d\}} + \star \cdot \mathbbm{1}_{\{Z\notin\mathbb{R}^d\}}$ for $v\in\mathbb{R}^d$. Then, writing $P^{(v)} \coloneqq \mathsf{Law}(v^\top X)$ and $R^{(v)}\coloneqq \mathsf{Law}(Z^{(v)})$, we have $R^{(v)} \in \mathcal{R}\bigl( P^{(v)}, \epsilon, \pi([d]) \bigr)$.
\end{lemma}
\begin{proof}
    Let $q \coloneqq \pi([d])$.  We have $\mathsf{Law}(Z) = (1-\epsilon)\mathsf{Law}(X \ostar \Omega^{(1)}) + \epsilon \mathsf{Law}(X \ostar \Omega^{(2)})$ where $\Omega^{(1)} \indep X$ and $\mathbb{P}(\Omega^{(1)} = \bm{1}_{[d]}) = q = 1 - \mathbb{P}(\Omega^{(1)} = 0)$ and where $\Omega^{(2)}$ takes values in $\{0,\bm{1}_{[d]}\}$.  By properties of disintegrations (see Section~\ref{sec:disintegration}), we may define $m^{(v)}:\mathbb{R} \rightarrow [0,1]$ by $m^{(v)}(y) \coloneqq \mathbb{P}(\Omega^{(2)} = \bm{1}_{[d]} \,|\, v^\top X=y)$.  We also let $\mu^{(v)}$ be a $\sigma$-finite measure on $\mathbb{R}$ such that $P^{(v)}\ll\mu^{(v)}$ and let $p^{(v)}\coloneqq \mathrm{d}P^{(v)}/\mathrm{d}\mu^{(v)}$.  Finally, define $g:\mathbb{R}_\star \rightarrow [0,\infty)$ by
    \[
    g(z) \coloneqq \begin{cases}
    q(1 - \epsilon) p^{(v)}(z) + \epsilon m^{(v)}(z) p^{(v)}(z) & \text{ if } z \in \mathbb{R}\\
    1 - q(1-\epsilon) - \epsilon \int_{\mathbb{R}} m^{(v)}(y) p^{(v)}(y)\, \mathrm{d}\mu^{(v)}(y) & \text{ if } z = \star. 
    \end{cases}
    \]
    Then, for $A\in\mathcal{B}(\mathbb{R})$, we have
    \begin{align*}
    \int_A g(z) \, \mathrm{d}\mu^{(v)}_\star(z) &= q(1-\epsilon)\mathbb{P}(v^\top X \in A) + \epsilon \mathbb{P}\bigl(\{\Omega^{(2)} = \bm{1}_{[d]}\} \cap \{ v^\top X \in A\}\bigr)\\
    &= (1-\epsilon)\mathbb{P}\bigl( v^\top(X\ostar\Omega^{(1)}) \in A \bigr) + \epsilon\mathbb{P}\bigl( v^\top(X\ostar\Omega^{(2)}) \in A \bigr)\\
    &= \mathbb{P}(Z^{(v)} \in A) = R^{(v)}(A).
    \end{align*}
    It follows that $R^{(v)} \ll \mu^{(v)}_\star$, with Radon--Nikodym derivative $g$.  Hence, by Proposition~\ref{prop:univariate-realisability}, we have $R^{(v)} \in \mathcal{R}(P^{(v)}, \epsilon, q)$.
\end{proof}

\begin{proof}[Proof of Theorem~\ref{thm:multivariate-kolmogorov-estimator}]
    By Lemma~\ref{lemma:realisability-of-projection}, Theorem \ref{thm:one-dim-kolmogorov-estimator} and a union bound,
    \begin{align} \label{eq:union-bound-for-one-dim-Kolmogorov-estimators}
        \max_{v\in\mathcal{N}} \bigl(\hat{\theta}_n^{\mathrm{K}}(v) - v^\top \theta_0 \bigr)^2 \lesssim C_{n,q,\epsilon,\xi,\delta/9^d} \biggl\{ \frac{\|\Sigma\|_{\mathrm{op}}\bigl(d + \log(4/\delta)\bigr)}{nq(1-\epsilon)} +  \frac{\|\Sigma\|_{\mathrm{op}} \log^2\bigl( 1+\frac{4\epsilon}{q(1-\epsilon)} \bigr)}{\log\bigl(nq(1-\epsilon)\bigr)} \biggr\}
    \end{align}
    with probability at least $1-\delta$.  Next, since any $v \in \mathbb{S}^{d-1}$ can be written as $v = v_1 + v_2$, where $v_1 \in \mathcal{N}$ and $\|v_2\|_2 \leq 1/4$, we have 
    \begin{align*}
        \|\hat{\theta}_n^{\mathrm{MK}} - \theta_0\|_2 = \sup_{v\in\mathbb{S}^{d-1}} |v^\top\hat{\theta}_n^{\mathrm{MK}} - v^\top\theta_0 | \leq \max_{v\in\mathcal{N}} |v^\top\hat{\theta}_n^{\mathrm{MK}} - v^\top\theta_0 | + \frac{1}{4} \cdot \|\hat{\theta}_n^{\mathrm{MK}} - \theta_0\|_2,
    \end{align*}
    so
    \begin{align*}
         \|\hat{\theta}_n^{\mathrm{MK}} - \theta_0\|_2 \leq \frac{4}{3} \cdot \max_{v\in\mathcal{N}} |v^\top\hat{\theta}_n^{\mathrm{MK}} - v^\top\theta_0|.
    \end{align*}
    Hence, 
    \begin{align*}
        \|\hat{\theta}_n^{\mathrm{MK}} - \theta_0\|_2^2 &\leq 2 \max_{v\in\mathcal{N}}\, \bigl( v^\top \hat{\theta}_n^{\mathrm{MK}} -  \hat{\theta}_n^{\mathrm{K}}(v) +  \hat{\theta}_n^{\mathrm{K}}(v) - v^\top \theta_0 \bigr)^2\\
        &\leq 4\max_{v\in\mathcal{N}}\, \bigl( v^\top \hat{\theta}_n^{\mathrm{MK}} -  \hat{\theta}_n^{\mathrm{K}}(v) \bigr)^2 + 4\max_{v\in\mathcal{N}}\, \big( v^\top \theta_0 -  \hat{\theta}_n^{\mathrm{K}}(v) \big)^2\\
        &\leq 8\max_{v\in\mathcal{N}}\, \big( v^\top \theta_0 -  \hat{\theta}_n^{\mathrm{K}}(v) \big)^2\\
        &\lesssim C_{n,q,\epsilon,\xi,\delta/9^d} \biggl\{ \frac{\|\Sigma\|_{\mathrm{op}}\bigl(d+\log(4/\delta)\bigr)}{nq(1-\epsilon)} +  \frac{\|\Sigma\|_{\mathrm{op}} \log^2\bigl( 1+\frac{4\epsilon}{q(1-\epsilon)} \bigr)}{\log\bigl(nq(1-\epsilon)\bigr)} \biggr\},
    \end{align*}
    with probability at least $1-\delta$, where the third inequality follows from the definition of $\hat{\theta}_n^{\mathrm{MK}}$, and the last inequality follows from~\eqref{eq:union-bound-for-one-dim-Kolmogorov-estimators}.
\end{proof}

\subsubsection{Proof of Lemma~\ref{lemma:compute-kolmogorov-distance}}

\begin{proof}[Proof of Lemma~\ref{lemma:compute-kolmogorov-distance}]
    We first show that, for any $R\in\mathcal{R}(P, \epsilon, q)$, we have
    \begin{align*}
        d_{\mathrm{K}}(\hat{R}_n,R) = \max_{i\in \{0\} \cup [m]}\; \Bigl\{\Bigl|\frac{i}{n} - R\bigl((-\infty,Z_{(i)})\bigr) \Bigr| \vee \Bigl|\frac{i}{n} - R\bigl((-\infty,Z_{(i+1)})\bigr)\Bigr| \Bigr\}.
    \end{align*}
    To this end, fix $i \in \{0\} \cup [m]$. Then, since $\hat{R}_n\big((-\infty,t)\bigr) = i/n$ for $t \in [Z_{(i)},Z_{(i+1)}) \cap \mathbb{R}$, $t \mapsto R\bigl((-\infty,t]\bigr)$ is increasing on this interval and since $R \ll \lambda_{\star}$ by Proposition~\ref{prop:univariate-realisability}, we have
    \begin{align*}
    \sup_{t \in [Z_{(i)},Z_{(i+1)}) \cap \mathbb{R}} \bigl|\hat{R}_n\bigl((-\infty,t]\bigr) &- R\bigl((-\infty,t]\bigr)\bigr| \\
    &= \Bigl|\frac{i}{n} - R\bigl((-\infty,Z_{(i)})\bigr)\Bigr| \vee \lim_{t \nearrow Z_{(i+1)}} \Bigl|\frac{i}{n} \!-\! R\bigl((-\infty,t)\bigr)\Bigr| \\
    &= \Bigl|\frac{i}{n} - R\bigl((-\infty,Z_{(i)})\bigr)\Bigr| \vee \Bigl|\frac{i}{n} - R\bigl((-\infty,Z_{(i+1)})\bigr)\Bigr|.
    \end{align*}
    Hence
    \begin{align*}
    \sup_{t\in\mathbb{R}} \bigl|\hat{R}_n\bigl((-\infty, t]\bigr) &- R\bigl((-\infty,t]\bigr) \bigr|\\
    &= \max_{i\in \{0\} \cup [m]} \Bigl\{\Bigl |\frac{i}{n} - R\bigl((-\infty,Z_{(i)})\bigr) \Bigr| \vee \Bigl|\frac{i}{n} - R\bigl((-\infty, Z_{(i+1)})\bigr) \Bigr| \Bigr\}.
    \end{align*}
    Now, by Proposition~\ref{prop:univariate-realisability}, for $0 \leq V_1 \leq \ldots \leq V_{m+1} \leq 1$, there exists $R \in \mathcal{R}(P, \epsilon, q)$ such that $V_i = R\bigl( (-\infty,Z_{(i)}] \bigr)$ for $i\in[m]$ and $V_{m+1}= R\bigl((-\infty,\infty)\bigr)$ if and only if $(V_1,\ldots,V_{m+1})^\top \in \mathcal{V}$. The claim then follows.
\end{proof}

\subsection{Proofs from Section~\ref{sec:nonparametric-realisable}}

\subsubsection{Proof of Theorem~\ref{thm:one-dim-realisable-sample-mean-ub}}

The proof of Theorem~\ref{thm:one-dim-realisable-sample-mean-ub} relies on the following preliminary result, which controls the bias.
\begin{prop}\label{thm:bias-of-mean-one-dim-realisable-case}
    Let $\theta_0\in\mathbb{R}$, $\epsilon\in[0,1)$, $q\in(0,1]$ and $\sigma>0$.
    \begin{itemize}
        \item[(a)] Let $r \geq 2$, $P \in \mathcal{P}_{L^r}(\theta_0, \sigma^2)$ and $Z \sim R \in \mathcal{R}(P, \epsilon, q)$. Then 
        \begin{align*}
        \bigl\{ \mathbb{E}( Z \,|\, Z \neq \star) - \theta_0 \bigr\}^2 \leq \sigma^2 \cdot \biggl\{ \biggl(\frac{\epsilon}{q(1-\epsilon)}\biggr)^2 \wedge \biggl(\frac{\epsilon}{q(1-\epsilon)}\biggr)^{2/r} \biggr\}.
        \end{align*}
        \item[(b)] Let $r\geq 1$, $P \in \mathcal{P}_{\psi_r}(\theta_0, \sigma^2)$ and $Z\sim R \in \mathcal{R}(P, \epsilon, q)$. Then 
        \begin{align*}
        \bigl\{ \mathbb{E}( Z \,|\, Z \neq \star) - \theta_0 \bigr\}^2 \leq \sigma^2 \cdot \biggl\{ 4\biggl(\frac{\epsilon}{q(1-\epsilon)}\biggr)^2 \;\wedge\;  \log^{2/r} \biggl( 2 + \frac{2\epsilon}{q(1-\epsilon)} \biggr) \biggr\}.
        \end{align*}
        
    \end{itemize}
\end{prop}
\begin{proof}
    Let $\kappa \coloneqq \frac{\epsilon}{q(1-\epsilon)}$. By translation invariance, we may assume without loss of generality that $\theta_0 = 0$ throughout the proof.

    (a) Let $\mu$ be a measure on $\mathbb{R}$ such that $P \ll \mu$ and let $p\coloneqq \frac{\mathrm{d}P}{\mathrm{d}\mu}$, then by Proposition~\ref{prop:univariate-realisability}, we have \begin{align}
    \label{Eq:RRNDeriv}
        \frac{\mathrm{d}R}{\mathrm{d}\mu_\star}(z) = \begin{cases}
            q(1-\epsilon) \cdot p(z) + \epsilon\cdot  m(z)p(z) \quad&\text{if }z\in\mathbb{R}\\
            1- q(1-\epsilon) - \epsilon\int_{\mathbb{R}} m(x)p(x) \,\mathrm{d}\mu(x) &\text{if }z=\star,
        \end{cases}
    \end{align}
    for some Borel measurable function $m:\mathbb{R} \to [0,1]$. Therefore,
    \begin{align*}
        \big| \mathbb{E} (Z \,|\, Z \neq \star) \big| &= \frac{\bigl| q(1-\epsilon) \cdot \int_{\mathbb{R}} xp(x) \,\mathrm{d}\mu(x) + \epsilon\cdot \int_{\mathbb{R}} xm(x)p(x) \,\mathrm{d}\mu(x) \bigr|}{q(1-\epsilon) + \epsilon\int_{\mathbb{R}} m(x)p(x) \,\mathrm{d}\mu(x)}\\
        &= \frac{\epsilon \cdot \bigl|  \mathbb{E}_{P}\{Xm(X)\} \bigr|}{q(1-\epsilon) + \epsilon \cdot \mathbb{E}_{P}\{m(X)\}} \leq \frac{\epsilon \cdot \sigma \cdot \bigl\{\mathbb{E}_{P} \bigl(m^{r/(r-1)}(X)\bigr)\bigr\}^{1-1/r}}{q(1-\epsilon) + \epsilon \cdot \mathbb{E}_{P}\{m(X)\}},
    \end{align*}
    where the second equality follows from the assumption that $\theta_0=0$, and where the inequality follows from H\"{o}lder's inequality and the fact that $\mathbb{E}_{P}(|X|^r)^{1/r} \leq \sigma$. On the one hand, since $\bigl\{\mathbb{E}_{P} \bigl(m^{r/(r-1)}(X)\bigr)\bigr\}^{1-1/r} \leq 1$ and $\mathbb{E}_{P}\{m(X)\} \geq 0$, we have
    \begin{align} \label{ineq:holder-m-ineq1}
        \frac{\epsilon \cdot \bigl\{\mathbb{E}_{P} \bigl(m^{r/(r-1)}(X)\bigr)\bigr\}^{1-1/r}}{q(1-\epsilon) + \epsilon \cdot \mathbb{E}_{P}\{m(X)\}} \leq \kappa.
    \end{align}
    On the other hand, since $m(x)\in[0,1]$, we have $m^{r/(r-1)}(x) \leq m(x)$ for all $x\in\mathbb{R}$ and thus $\bigl\{\mathbb{E}_{P} \bigl(m^{r/(r-1)}(X)\bigr)\bigr\}^{1-1/r} \leq \bigl\{\mathbb{E}_{P}\bigl(m(X)\bigr)\bigr\}^{1-1/r} \eqqcolon t$. Therefore,
    \begin{align*}
        &\frac{\epsilon  \cdot \bigl\{\mathbb{E}_{P} \bigl(m^{r/(r-1)}(X)\bigr)\bigr\}^{1-1/r}}{q(1-\epsilon) + \epsilon \cdot \mathbb{E}_{P}\{m(X)\}} \leq \frac{\epsilon t}{q(1-\epsilon) + \epsilon t^{r/(r-1)}}\\
        &\qquad\qquad \leq \sup_{t'\geq 0} \frac{\epsilon t'}{q(1-\epsilon) + \epsilon (t')^{r/(r-1)}} \overset{(i)}{=} \frac{\epsilon \cdot \{(r-1)q(1-\epsilon)/\epsilon \}^{1-1/r}}{q(1-\epsilon)+(r-1)q(1-\epsilon)}\\
        &\qquad\qquad \leq (r-1)^{-1/r} \kappa^{1/r} \leq \kappa^{1/r}, \numberthis \label{ineq:holder-m-ineq2}
    \end{align*}
where $(i)$ follows from the fact that the function $t' \mapsto \frac{\epsilon t'}{q(1-\epsilon) + \epsilon (t')^{r/(r-1)}}$ is maximised when $t'= \{(r-1)q(1-\epsilon)/\epsilon \}^{1-1/r}$. Combining~\eqref{ineq:holder-m-ineq1} and~\eqref{ineq:holder-m-ineq2}, we deduce that
    \begin{align*}
        \big| \mathbb{E} (Z \,|\, Z \neq \star) \big| \leq \frac{\epsilon \cdot \sigma \cdot \bigl\{\mathbb{E}_{P} \bigl(m^{r/(r-1)}(X)\bigr)\bigr\}^{1-1/r}}{q(1-\epsilon) + \epsilon \cdot \mathbb{E}_{P}\{m(X)\}} \leq \sigma (\kappa \wedge \kappa^{1/r}),
    \end{align*}
    as desired.

    (b) Let $Q \in \mathcal{P}(\mathbb{R})$ such that $Q\ll P$. 
 By the variational characterisation of Kullback--Leibler divergence \citep[e.g.][Corollary 4.15]{boucheron2003concentration},
 \begin{align}\label{eq:variational-principle-KL}
        \mathbb{E}_{X\sim Q}\bigl( g(X) \bigr) \leq \mathrm{KL}(Q,P) + \log \mathbb{E}_{X\sim P}\bigl( e^{g(X)} \bigr),
    \end{align}
    for all Borel measurable functions $g:\mathbb{R} \to [0,\infty)$.  Now take $Q$ to be the conditional distribution of $Z$ given $\{Z\neq\star\}$.  Let $\mu$ and $p$ be as in the proof of~(a), so that~\eqref{Eq:RRNDeriv} holds for some Borel measurable function $m:\mathbb{R} \to [0,1]$.
    Therefore, for all $x \in \mathbb{R}$,
    \begin{align*}
        \frac{\mathrm{d}Q}{\mathrm{d}\mu}(x) = 
        \frac{q(1-\epsilon) \cdot p(x) + \epsilon\cdot  m(x)p(x)}{q(1-\epsilon) + \epsilon \cdot \int_{\mathbb{R}}m(y)p(y)\,\mathrm{d}\mu(y)}.
    \end{align*}
    Hence $Q\ll P$ and
    \begin{align}
        \frac{\mathrm{d}Q}{\mathrm{d}P}(x) \in \biggl[ 1-\frac{\epsilon}{q(1-\epsilon)+\epsilon},\, 1+\frac{\epsilon}{q(1-\epsilon)} \biggr], \label{Eq:RRNDerivBound}
    \end{align}
    for all $x \in \mathbb{R}$, from which we deduce that 
    \begin{align}
    \label{eq:KLbound}
        \mathrm{KL}(Q,P) = \int_{\mathbb{R}} \log\biggl( \frac{\mathrm{d}Q}{\mathrm{d}P}\biggr) \, \mathrm{d}Q \leq \log ( 1 + \kappa).
    \end{align}
    Taking $g(\cdot) = |\cdot|^r/\sigma^r$ and combining~\eqref{eq:variational-principle-KL} and~\eqref{eq:KLbound} yields
    \begin{align}
        \mathbb{E} \bigl( |Z|^r/\sigma^r \,\big|\, Z \neq \star \bigr) &\leq \log( 1+\kappa) + \log \mathbb{E}_{X\sim P} \bigl\{ \exp\bigl( |X|^r/\sigma^r \bigr) \bigr\} \nonumber\\
        &\leq \log( 1+\kappa) + \log2 = \log( 2+2\kappa), \label{eq:expectation-of-g(Z)}
    \end{align}
    where the second inequality follows since $P \in \mathcal{P}_{\psi_r}(\theta_0, \sigma^2)$ and since $\theta_0 = 0$ by assumption. Thus, 
    \begin{align}
        \bigl| \mathbb{E} (Z \,|\, Z \neq \star) \bigr| \leq \mathbb{E} \bigl( |Z| \,\big|\, Z \neq \star \bigr) \leq \mathbb{E} \bigl( |Z|^r \,\big|\, Z \neq \star \bigr)^{1/r} \leq \sigma\log^{1/r}( 2+2\kappa), \label{eq:psi-r-bias-1}
    \end{align}
    where the second inequality follows from the conditional version of Jensen's inequality and the third inequality follows from \eqref{eq:expectation-of-g(Z)}. Moreover, by \citet[Lemma A.2]{gotze2021concentration}, we have $\Var_{X\sim P}(X)^{1/2} \leq 2\bigl(\frac{2}{re}\bigr)^{1/r}\sigma \leq 2\sigma$ for $r\geq 1$. Hence $P \in \mathcal{P}_{L^2}(0,4\sigma^2)$, so we can apply part (a) of the theorem to obtain 
    \begin{align}
        \big| \mathbb{E} \bigl( Z \,\big|\, Z \neq \star \bigr) \big| \leq 2\sigma\kappa. \label{eq:psi-r-bias-2}
    \end{align}
    Combining~\eqref{eq:psi-r-bias-1} and~\eqref{eq:psi-r-bias-2} proves part (b). \medskip
\end{proof}
\begin{proof}[Proof of Theorem~\ref{thm:one-dim-realisable-sample-mean-ub}]
    Let $\kappa \coloneqq \frac{\epsilon}{q(1-\epsilon)}$.

    (a) Let $\mu$ and $p$ be as in the proof of Proposition~\ref{thm:bias-of-mean-one-dim-realisable-case}(a), so that~\eqref{Eq:RRNDeriv} holds for some Borel measurable function $m:\mathbb{R} \to [0,1]$.  
    On the one hand, since $m(X) \in [0,1]$, we have
    \begin{align*}
        \Var(Z_1 \,|\, Z_1 \neq \star) &= \Var(Z_1 - \theta_0 \,|\, Z_1 \neq \star) \leq \mathbb{E}\bigl\{ (Z_1 - \theta_0)^2 \,|\, Z_1 \neq \star \bigr\} \nonumber \\
        &= \frac{\int_{\mathbb{R}} (x-\theta_0)^2 \{q(1-\epsilon)p(x) + \epsilon m(x)p(x)\} \, \mathrm{d}\mu(x)}{q(1-\epsilon) + \epsilon \int_{\mathbb{R}} m(x)p(x) \, \mathrm{d}\mu(x)} \\
        &\leq \sigma^2 + \frac{\epsilon \cdot \mathbb{E}_P\{(X - \theta_0)^2 m(X)\}}{q(1 - \epsilon) + \epsilon \cdot \mathbb{E}_P\{m(X)\}} \leq (1+\kappa) \sigma^2. \numberthis \label{ineq:holder-m-ineq3}
    \end{align*}
    On the other hand, for $r > 2$, we have by H{\"o}lder's inequality that
    \begin{align*}
        \Var(Z_1 \,&|\, Z_1 \neq \star) \\
        &\leq \sigma^2 + \frac{\epsilon \cdot \mathbb{E}_P\{(X - \theta_0)^2 m(X)\}}{q(1 - \epsilon) + \epsilon \cdot \mathbb{E}_P\{m(X)\}} \leq \biggl[1 + \frac{\epsilon \cdot \bigl\{\mathbb{E}_P\bigr(m^{r/(r-2)}(X)\bigl)\bigr\}^{1 - 2/r}}{q(1 - \epsilon) + \epsilon \cdot \mathbb{E}_P\{m(X)\}}\biggr] \sigma^2\\
        &\leq \biggl[1 + \frac{\epsilon \cdot \bigl\{\mathbb{E}_P\bigr(m(X)\bigl)\bigr\}^{1 - 2/r}}{q(1 - \epsilon) + \epsilon \cdot \mathbb{E}_P\{m(X)\}}\biggr] \sigma^2 \leq \sup_{t'\geq 0} \biggl(1 + \frac{\epsilon t'}{q(1 - \epsilon) + \epsilon (t')^{r/(r-2)}}\biggr) \sigma^2\\
        &= \biggl\{1 + \frac{2}{r} \biggl( \frac{r-2}{2} \biggr)^{1-2/r} \cdot \kappa^{2/r}\biggr\}\sigma^2 \leq \biggl\{1 + \biggl( \frac{2}{r} \biggr)^{2/r} \kappa^{2/r}\biggr\}\sigma^2 \leq (1 + \kappa^{2/r}) \sigma^2, \numberthis \label{ineq:holder-m-ineq4}
    \end{align*}
    where the equality follows since the supremum is attained when $t' = \bigl( \frac{(r-2)q(1-\epsilon)}{2\epsilon}\bigr)^{1-2/r}$. Combining~\eqref{ineq:holder-m-ineq3} and~\eqref{ineq:holder-m-ineq4} yields that for $r\geq 2$,
    \begin{align}
        \Var(Z_1 \,|\, Z_1 \neq \star) \leq (1+\kappa^{2/r})\sigma^2. \label{Eq:CondVarBound}
    \end{align}
    By Lemma~\ref{lemma:binomial-tail}(b), the event $\mathcal{E}_0 \coloneqq \{|\mathcal{D}| \geq nq(1-\epsilon)/2\}$ has $\mathbb{P}(\mathcal{E}_0) \geq 1-\delta/2$, since $nq(1-\epsilon)\geq (8/a)\log(2c/\delta) \geq 8\log(2/\delta)$.  By~\eqref{eq:assumption-on-alg-heavy-tail-univariate} and~\eqref{Eq:CondVarBound},
    \begin{align*}
        \mathbb{P}\biggl(\bigl(\hat{\theta}_n - \mathbb{E}(Z_1 \, | \, Z_1\neq \star)\bigr)^2 \leq 2C(1+\kappa^{2/r}) \frac{\sigma^2 \log(2e/\delta) }{nq(1-\epsilon)} \,\bigg|\, \mathcal{E}_0 \biggr) \geq 1-\frac{\delta}{2}.
    \end{align*}
    Thus, 
    \begin{align}
        \mathbb{P}\biggl(\bigl(\hat{\theta}_n &- \mathbb{E}(Z_1|Z_1\neq \star)\bigr)^2 \leq 2C(1+\kappa^{2/r}) \frac{\sigma^2 \log(2e/\delta) }{nq(1-\epsilon)} \biggr)\nonumber\\
        &\geq \mathbb{P}\biggl(\bigl(\hat{\theta}_n - \mathbb{E}(Z_1|Z_1\neq \star)\bigr)^2 \leq 2C(1+\kappa^{2/r}) \frac{\sigma^2 \log(2e/\delta) }{nq(1-\epsilon)} \,\bigg|\, \mathcal{E}_0 \biggr)\mathbb{P}(\mathcal{E}_0) \geq 1-\delta. \label{Eq:E0E1bound}
    \end{align}
    On the event that $\bigl\{\bigl(\hat{\theta}_n - \mathbb{E}(Z_1|Z_1\neq \star)\bigr)^2 \leq 2C(1+\kappa^{2/r}) \frac{\sigma^2 \log(2e/\delta) }{nq(1-\epsilon)}\bigr\}$, we have by Proposition~\ref{thm:bias-of-mean-one-dim-realisable-case}(a) that
    \begin{align*}
        \bigl(\hat{\theta}_n - \theta_0 \bigr)^2 &\leq 2\bigl\{ \hat{\theta}_n - \mathbb{E}(Z_1 \,|\, Z_1\neq\star)\bigr\}^2 + 2\bigl\{ 
        \mathbb{E}(Z_1 \,|\, Z_1\neq\star) - \theta_0\bigr\}^2 \nonumber\\
        &\leq 4C\cdot \frac{\sigma^2 \log(2e/\delta) }{nq(1-\epsilon)} + 4C\kappa^{2/r}\cdot \frac{\sigma^2 \log(2e/\delta) }{nq(1-\epsilon)} + 2\sigma^2(\kappa^2 \wedge \kappa^{2/r}) \nonumber\\
        &\leq 8C\cdot \frac{\sigma^2 \log(2e/\delta) }{nq(1-\epsilon)} + (2C+2)\sigma^2(\kappa^2 \wedge \kappa^{2/r}),
    \end{align*}
    where the final inequality follows by considering separately the cases $\kappa\leq 1$ and $\kappa>1$, and in the second case noting that $\frac{\log(2e/\delta)}{nq(1-\epsilon)} \leq \frac{\{1+\log^{-1}(2)\} \log(2/\delta)}{nq(1-\epsilon)} \leq 1/2$ by assumption.

    
    \medskip
    (b) Let $\mu$ and $p$ be as in the proof of Proposition~\ref{thm:bias-of-mean-one-dim-realisable-case}(a), so that~\eqref{Eq:RRNDeriv} holds for some Borel measurable function $m:\mathbb{R} \to [0,1]$.  Then, for integers $\ell \geq 2$, we have 
    \begin{align*}
        \bigl\{ \mathbb{E}\bigl( |Z_1 - \theta_0|^{\ell} \,|\, Z_1\neq\star \bigr) \bigr\}^{1/\ell} &= \biggl(\frac{\int_{\mathbb{R}} |x-\theta_0|^\ell \{q(1-\epsilon)p(x) + \epsilon m(x)p(x)\} \, \mathrm{d}\mu(x)}{q(1-\epsilon) + \epsilon \int_{\mathbb{R}} m(x)p(x) \, \mathrm{d}\mu(x)} \biggr)^{1/\ell}\\
        &\leq \biggl( \frac{\{q(1-\epsilon) + \epsilon\}\mathbb{E}_{X\sim P}(|X-\theta_0|^\ell)}{q(1-\epsilon)} \biggr)^{1/\ell} 
        \overset{(i)}{\lesssim} \sqrt{1 + \kappa} \cdot \sigma \ell,
    \end{align*}
    where $(i)$ is true since $X\sim P \in \mathcal{P}_{\psi_r}(\theta_0, \sigma^2) \subseteq \mathcal{P}_{\psi_1}(\theta_0, \sigma^2)$ by Lemma~\ref{lemma:inclusion-of-psi-r-class}, so $\bigl(\mathbb{E}_{X\sim P} |X-\theta_0|^\ell\bigr)^{1/\ell} \lesssim \sigma\ell$ by \citet[Proposition 2.7.1]{vershynin2018high}. Moreover, by the Cauchy--Schwarz inequality and~\eqref{ineq:holder-m-ineq3}, $\mathbb{E}\bigl( |Z_1 - \theta_0| \,|\, Z_1\neq\star \bigr) \leq \bigl\{ \mathbb{E}\bigl( |Z_1 - \theta_0|^{2} \,|\, Z_1\neq\star \bigr) \bigr\}^{1/2} \lesssim \sigma\sqrt{1+\kappa}$.  Hence, by \citet[Proposition 2.7.1]{vershynin2018high} again, conditional on $\{Z_1\neq\star\}$, we have $\|Z_1 - \theta_0\|_{\psi_1} \lesssim \sigma\sqrt{1+\kappa}$. Then, by \citet[Lemma 2.7.10]{vershynin2018high}, we have, conditional on $\{Z_1\neq\star\}$, that
    \begin{align}
        \bigl\|Z_1 - \mathbb{E}(Z_1\,|\, Z_1\neq\star)\bigr\|_{\psi_1} \lesssim \sigma\sqrt{1+\kappa}. \label{eq:sub-exponential-norm-bound}
    \end{align}  
    Recall the definition of the event $\mathcal{E}_0$ from the proof of~(a), and observe that $\mathbb{P}(\mathcal{E}_0) \geq 1- \delta/4$ by Lemma~\ref{lemma:binomial-tail}(b).  Now, similarly to~\eqref{Eq:E0E1bound}, by Bernstein's inequality \citep[][Corollary~2.8.3]{vershynin2018high} and since $\frac{\log(8/\delta)}{nq(1-\epsilon)} \leq 1/8$, there exists a universal constant $C_2 > 0$ such that the event 
    \[
    \mathcal{E}_2 \coloneqq \biggl\{ \biggl( \frac{\sum_{i\in\mathcal{D}} \{Z_i - \mathbb{E}(Z_1\,|\, Z_1\neq\star)\}}{|\mathcal{D}|} \biggr)^2 \leq C_2 ( 1 + \kappa) \frac{\sigma^2\log(8/\delta)}{nq(1-\epsilon)} \biggr\}
    \]
    satisfies $\mathbb{P}(\mathcal{E}_0 \cap \mathcal{E}_2) \geq 1 - \delta/2$.  Moreover, on $\mathcal{E}_0 \cap \mathcal{E}_2$, by Proposition~\ref{thm:bias-of-mean-one-dim-realisable-case}(b),
   \begin{align*}
        \bigl( \hat{\theta}_n - \theta_0 \bigr)^2 &\lesssim \biggl( \frac{\sum_{i\in\mathcal{D}} \{Z_i - \mathbb{E}(Z_1\,|\, Z_1\neq\star)\}}{|\mathcal{D}|} \biggr)^2 + \bigl\{ \mathbb{E}(Z_1\,|\, Z_1\neq\star) - \theta_0 \bigr\}^2\\
        &\lesssim ( 1 + \kappa) \cdot \frac{\sigma^2\log(8/\delta)}{nq(1-\epsilon)} + \sigma^2 \kappa^2 \lesssim \frac{\sigma^2\log(8/\delta)}{nq(1-\epsilon)} + \sigma^2\biggl( \frac{\log(8/\delta)}{nq(1-\epsilon)} \biggr)^2 + \sigma^2 \kappa^2\\
        &\lesssim \frac{\sigma^2\log(8/\delta)}{nq(1-\epsilon)} + \sigma^2 \kappa^2, \numberthis \label{eq:exponential-tail-bound-1}
    \end{align*}
    where the penultimate inequality follows from the inequality $ab \leq \frac{a^2+b^2}{2}$ for $a,b\in\mathbb{R}$, and the final inequality follows from the assumption $\frac{\log(8/\delta)}{nq(1-\epsilon)} \leq 1/8$.
    
    Next, let $Q\in\mathcal{P}(\mathbb{R})$ be such that $Q\ll P$. Then the variational characterisation of $\chi^2$-divergence \citep[e.g.,][Example~7.4]{polyanskiy2024information} yields that
    \begin{align}
        2\mathbb{E}_{X\sim Q}\{g(X)\} \leq 1 + \chi^2(Q,P) + \mathbb{E}_{X\sim P}\{g^2(X)\}, \label{eq:variational-form-chi-squared-divergence}
    \end{align}
    for all Borel measurable $g:\mathbb{R} \to [0,\infty)$. We first consider the case $r>1$.  Now take $Q$ to be the conditional distribution of $Z_1$ given $\{Z_1\neq\star\}$, so that, by the representation in~\eqref{Eq:RRNDerivBound}, we have
    \[
    \chi^2(Q,P) = \int_{\mathbb{R}} \biggl(\frac{\mathrm{d}Q}{\mathrm{d}P} - 1\biggr)^2 \, \mathrm{d}P \leq \kappa^2.
    \]
    Thus, taking $g : x \mapsto \exp\bigl\{\lambda (x-\theta_0)\bigr\}$ in~\eqref{eq:variational-form-chi-squared-divergence} and applying Lemma~\ref{lemma:MGF-bound} yields that
    \begin{align*}
        2\mathbb{E}\bigl[ \exp\bigl\{\lambda (Z_1-\theta_0)\bigr\} \bigm| Z_1\neq\star \bigr] \leq 1 + \kappa^2 + 2\exp\bigl\{(2\sigma\lambda)^{r/(r-1)}\bigr\},
    \end{align*}
    for all $\lambda>0$. Hence, for $s \in [n]$,
    \begin{align*}
        \log\mathbb{E}\biggl\{ \exp\biggl(\frac{\lambda}{|\mathcal{D}|}\sum_{i\in\mathcal{D}}(Z_i -\theta_0)\biggr) \,&\biggm|\, |\mathcal{D}| = s \biggr\} \\
        &\leq s\log\Biggl\{ \frac{1}{2} \biggl[ 1 + \kappa^2 + 2\exp\biggl\{\biggl(\frac{2\sigma\lambda}{s}\biggr)^{r/(r-1)}\biggr\} \biggr] \Biggr\}\\
        &\leq s \biggl\{ \log( 1 + \kappa^2) + \log 2 + \biggl(\frac{2\sigma\lambda}{s}\biggr)^{r/(r-1)}  \biggr\}, \numberthis\label{eq:log-mgf-bound}
    \end{align*}
    where the final inequality follows from the fact that $\log\bigl(\frac{a+b}{2}\bigr) \leq \log a + \log b$ for all $a,b\geq 1$.
    Then, applying a Chernoff bound gives that for every $t \geq 0$ and $s \in [n]$, 
    \begin{align*}
        \mathbb{P}\bigl(\hat{\theta}_n - \theta_0 \geq t \bigm| |\mathcal{D}| = s\bigr) = \mathbb{P}\biggl( \frac{1}{|\mathcal{D}|}\sum_{i\in\mathcal{D}} (Z_i-\theta_0) \geq t \biggm| |\mathcal{D}| = s\biggr) \leq \exp\bigl(-\psi^*(t)\bigr),
    \end{align*}
    where
    \begin{align*}
        \psi^*(t) &\coloneqq \sup_{\lambda>0}\, \biggl\{\lambda t - s\log( 1 + \kappa^2) - s\log 2 - \frac{(2\sigma\lambda)^{r/(r-1)}}{s^{1/(r-1)}} \biggr\}\\
        &= \frac{st^r}{(2\sigma)^r} \cdot \biggl(\frac{r-1}{r}\biggr)^{r-1} \cdot \frac{1}{r} - s\log( 2 + 2\kappa^2) \geq \frac{st^r}{(2\sigma)^r} \cdot \frac{1}{er} - s\log( 2 + 2\kappa^2),
    \end{align*}
    since the supremum over $\lambda \in (0,\infty)$ is attained at $\lambda^* \coloneqq \bigl( \frac{r-1}{r} \cdot \frac{ts^{1/(r-1)}}{(2\sigma)^{r/(r-1)}} \bigr)^{r-1}$.
    By replacing $Z_i - \theta_0$ with $-(Z_i - \theta_0)$ for $i \in [n]$, we deduce that for every $t \geq 0$,
    \begin{align*}
        \mathbb{P}\bigl(|\hat{\theta}_n - \theta_0| \geq t \bigm| |\mathcal{D}|=s\bigr) \leq 2\exp\biggl\{ -\frac{st^r}{(2\sigma)^r} \cdot \frac{1}{er} + s\log( 2 + 2\kappa^2) \biggr\}.
    \end{align*}
    Hence, defining the event
    \[
    \mathcal{E}_3 \coloneqq \biggl\{ (\hat{\theta}_n - \theta_0)^2 \leq \biggl( \frac{(2\sigma)^r er\log(8/\delta)}{nq(1-\epsilon)/2} + (2\sigma)^r er \log( 2 + 2\kappa^2) \biggr)^{2/r}\biggr\},
    \]
    and proceeding in a similar fashion to~\eqref{Eq:E0E1bound}, we deduce that $\mathbb{P}(\mathcal{E}_0 \cap \mathcal{E}_3) \geq 1- \delta/2$.  Moreover, on $\mathcal{E}_0 \cap \mathcal{E}_3$, we have
    \begin{align*}
        (\hat{\theta}_n - \theta_0)^2 &\leq \biggl\{ \frac{(2\sigma)^r er\log(8/\delta)}{nq(1-\epsilon)/2} + (2\sigma)^r er \log( 2 + 2\kappa^2) \biggr\}^{2/r}\\
        &\leq \biggl\{ \frac{(2\sigma)^r er}{4} + (2\sigma)^r er \log( 2 + 2\kappa^2) \biggr\}^{2/r}\leq \biggl\{ \frac{3}{2} \cdot (2\sigma)^r er \log(2 + 2\kappa^2) \biggr\}^{2/r}\\
        &\leq \bigl\{ 3 \cdot (2\sigma)^r er \log( 2 + 2\kappa) \bigr\}^{2/r} \leq (9e\sigma)^2 \log^{2/r}( 2 + 2\kappa). \numberthis \label{eq:exponential-tail-bound-2}
    \end{align*}
    Thus, on $\mathcal{E}_0 \cap \mathcal{E}_2 \cap \mathcal{E}_3$, which satisfies $\mathbb{P}(\mathcal{E}_0 \cap \mathcal{E}_2 \cap \mathcal{E}_3) \geq 1 - \delta$, we combine~\eqref{eq:exponential-tail-bound-1} and~\eqref{eq:exponential-tail-bound-2} to obtain the desired result for $r>1$. 

    Finally, we consider the case where $r=1$. By \citet[Lemma~2.5]{zhivotovskiy2024dimension}, \eqref{eq:variational-form-chi-squared-divergence} yields, with the same choice of $Q$ and $g$, that
    \begin{align*}
        2\mathbb{E}\bigl[ \exp\bigl\{\lambda (Z_1-\theta_0)\bigr\} \bigm| Z_1\neq\star \bigr] \leq 1 + \kappa^2 + \exp\bigl\{(2\sigma\lambda)^2\bigr\},
    \end{align*}
    for all $|\lambda|\leq \frac{1}{2\sigma}$. Hence, by a similar argument to the $r > 1$ case,
    \begin{align}
        \log\mathbb{E}\biggl\{ \exp\biggl(\frac{\lambda}{|\mathcal{D}|}\sum_{i\in\mathcal{D}}(Z_i-\theta_0)\biggr) \,\biggm|\, |\mathcal{D}| = s \biggr\} \leq s \biggl\{ \log(1 + \kappa^2) + \biggl(\frac{2\sigma\lambda}{s}\biggr)^2 \biggr\}, \label{eq:log-mgf-bound-r=1}
    \end{align}
    for $s\in[n]$ and $|\lambda|\leq \frac{s}{2\sigma}$. Then, applying a Chernoff bound yields
    \begin{align*}
        \mathbb{P}\bigl(|\hat{\theta}_n - \theta_0| \geq t \bigm| |\mathcal{D}|=s\bigr) = \mathbb{P}\biggl( \Bigl| \frac{1}{|\mathcal{D}|} \sum_{i\in\mathcal{D}} (Z_i-\theta_0) \Bigr| \geq t \biggm| |\mathcal{D}| = s \biggr) \leq 2\exp\bigl(-\psi^*(t)\bigr),
    \end{align*}
    where
    \begin{align*}
        \psi^*(t) &\coloneqq \sup_{0<\lambda\leq \frac{s}{2\sigma}}\, \biggl\{\lambda t - s\log( 1 + \kappa^2) - \frac{(2\sigma\lambda)^2}{s}\biggr\}.
    \end{align*}
    Taking $t \coloneqq 8\sigma \log( 2 + 2\kappa^2)$ and $\lambda = s/(2\sigma)$ yields, for $s\geq nq(1-\epsilon)/2$, that
    \begin{align*}
        \mathbb{P}\bigl(|\hat{\theta}_n - \theta_0| \geq t \bigm| |\mathcal{D}| = s\bigr) &\leq 2\exp\bigl\{ -3s \log( 2 + 2\kappa^2) + s \bigr\} \leq 2\exp(-s) \leq \frac{\delta}{4},
    \end{align*}
    where the final inequality follows from the assumption $nq(1-\epsilon)\geq 8\log(8/\delta)$. Therefore, letting $\mathcal{E}_4 \coloneqq \bigl\{ |\hat{\theta}_n - \theta_0| < 8\sigma \log( 2 + 2\kappa^2) \bigr\}$, we have $\mathbb{P}(\mathcal{E}_0 \cap \mathcal{E}_4) \geq 1-\delta/2$. Moreover, on $\mathcal{E}_0 \cap \mathcal{E}_4$,
    \begin{align}
        (\hat{\theta}_n - \theta_0)^2 < 64\sigma^2 \log^2(2 + 2\kappa^2) \leq 256\sigma^2 \log^2( 2 + 2\kappa). \label{eq:exponential-tail-bound-3}
    \end{align}
    Thus, on $\mathcal{E}_0 \cap \mathcal{E}_2 \cap \mathcal{E}_4$, which satisfies $\mathbb{P}(\mathcal{E}_0 \cap \mathcal{E}_2 \cap \mathcal{E}_4) \geq 1 - \delta$, we combine~\eqref{eq:exponential-tail-bound-1} and~\eqref{eq:exponential-tail-bound-3} to obtain the desired result for $r=1$. 
\end{proof}

\subsubsection{Proof of Theorem~\ref{thm:nonparametric-realisable-model-lb}}
For $\theta\in\mathbb{R}$ and $K>0$, define
\begin{align}\label{eq:distributions-with-bounded-support}
    \mathcal{P}_{\mathrm{b}}(\theta,K) \coloneqq \Bigl\{ P \in \mathcal{P}(\mathbb{R}) : \mathbb{E}_P(X) = \theta,\text{$P$ is supported on an interval of length at most $K$} \Bigr\}.
\end{align}
\begin{proof}[Proof of Theorem~\ref{thm:nonparametric-realisable-model-lb}]    
    (a) Define $a \coloneqq \frac{q(1-\epsilon)}{q(1-\epsilon)+\epsilon} \in (0,1]$ and $b\coloneqq \frac{\sigma}{2} \cdot a^{-1/r} > 0$.  Let $X_1 \sim P_1$ and $X_2 \sim P_2$ be random variables satisfying
    \begin{align*}
        X_1 = \begin{cases}
            -b \quad&\text{with probability }\frac{1}{a+1}\\
            b &\text{with probability }\frac{a}{a+1}
        \end{cases}
        \quad\text{and}\quad X_2 = \begin{cases}
            -b \quad&\text{with probability }\frac{a}{a+1}\\
            b &\text{with probability }\frac{1}{a+1}.
        \end{cases}
    \end{align*}
    Then $\theta_1 \coloneqq \mathbb{E}(X_1) = -\frac{(1-a)b}{a+1}$ and $\theta_2 \coloneqq \mathbb{E}(X_2) = \frac{(1-a)b}{a+1}$. Moreover,
    \begin{align*}
        \mathbb{E}\bigl( |X_1 - \theta_1|^r \bigr) &= \frac{(2ab)^r + a(2b)^r}{(a+1)^{r+1}} \leq a \cdot (2b)^r = \sigma^r,
    \end{align*}
    where we have used the fact that $a^r + a \leq a(a+1) \leq a(a+1)^{r+1}$; by symmetry, $\mathbb{E}\bigl( |X_2 - \theta_2|^r \bigr) \leq \sigma^r$.  Consequently, $P_1 \in \mathcal{P}_{L^r}(\theta_1, \sigma^2)$ and $P_2 \in \mathcal{P}_{L^r}(\theta_2, \sigma^2)$. Now define $R_0 \in \mathcal{P}(\mathbb{R}_{\star})$ by
    \begin{align*}
        R_0(\{-b\}) \coloneqq \frac{q(1-\epsilon)}{a+1} \eqqcolon R_0(\{b\}) \quad \text{ and } \quad R_0(\{\star\}) \coloneqq 1- R_0(\{-b\}) - R_0(\{b\}) \in [0,1).
    \end{align*}
    By Proposition~\ref{prop:univariate-realisability}, $R_0 \in \mathcal{R}(P_1,\epsilon,q) \cap \mathcal{R}(P_2,\epsilon,q)$.  Therefore, by \citet[][Theorem~4 and Lemma~5]{ma2024high},
    \begin{align*}
        \mathcal{M}(\delta,\mathcal{P}_{\Theta},|\cdot|^2) \geq \frac{(\theta_2 - \theta_1)^2}{4} = \biggl\{ \frac{(1-a)b}{a+1} \biggr\}^2
        &= \frac{\sigma^2}{4} \biggl(\frac{\epsilon}{2q(1-\epsilon)+\epsilon}\biggr)^2  \biggl(\frac{q(1-\epsilon) + \epsilon}{q(1-\epsilon)}\biggr)^{2/r} \\
        &\geq \frac{\sigma^2}{36} \cdot \biggl\{ \biggl(\frac{\epsilon}{q(1-\epsilon)}\biggr)^2 \wedge \biggl(\frac{\epsilon}{q(1-\epsilon)}\biggr)^{2/r} \biggr\},
    \end{align*}
    where the final bound is obtained by considering separately the cases $\epsilon \leq q(1-\epsilon)$ and $\epsilon > q(1-\epsilon)$.  This proves the second term in the lower bound.

    For the first term in the lower bound, we observe that $\mathcal{P}_{\mathrm{b}}(\theta,\sigma) \subseteq \mathcal{P}_{L^r}(\theta,\sigma^2)$ for all $r \geq 2$.  We therefore obtain the desired conclusion by choosing the contamination distribution $Q\in\mathcal{P}(\mathbb{R}_{\star})$ such that $Q\bigl( \{\star\} \bigr)=1$ and applying Proposition~\ref{prop:univariate-mcar-lb}(b).

    \medskip(b) Define $P_1, P_2 \in \mathcal{P}(\mathbb{R})$ with Lebesgue densities $p_1, p_2$ respectively as in Lemma~\ref{lemma:psi-r-orlicz-norm-of-mixture}, so that $P_1 \in \mathcal{P}_{\psi_r}\bigl(\mathbb{E}_{P_1}(X_1),\sigma^2\bigr)$ and $P_2 \in \mathcal{P}_{\psi_r}\bigl(\mathbb{E}_{P_2}(X_2),\sigma^2\bigr)$.  Further, with $b > 0$ defined as in Lemma~\ref{lemma:psi-r-orlicz-norm-of-mixture}, define $R_1 \in \mathcal{P}(\mathbb{R}_{\star})$ through its Radon--Nikodym derivative
    \begin{align*}
        \frac{\mathrm{d}R_1}{\mathrm{d}\lambda_{\star}}(z) \coloneqq \begin{cases}
            q(1-\epsilon) \cdot p_1(z) \quad&\text{if }z\in(-\infty,b)\\
            \bigl\{q(1-\epsilon) + \epsilon \bigr\} \cdot p_1(z) \quad&\text{if }z\in[b,\infty)\\
            1- q(1-\epsilon) \cdot \int_{-\infty}^b p_1(x)\,\mathrm{d}x - \bigl\{q(1-\epsilon) + \epsilon \bigr\} \cdot \int_{b}^{\infty} p_1(x)\,\mathrm{d}x \quad&\text{if }z=\star,
        \end{cases}
    \end{align*}
    so that, by Proposition~\ref{prop:univariate-realisability}, $R_1 \in\mathcal{R}(P_1,\epsilon,q) \cap \mathcal{R}(P_2,\epsilon,q)$. Therefore, by \citet[][Theorem~4 and Lemma~5]{ma2024high},
    \begin{align}
        \mathcal{M}(\delta,\mathcal{P}_{\Theta},|\cdot|^2) \geq \frac{\bigl\{ \mathbb{E}_{P_2}(X_2) - \mathbb{E}_{P_1}(X_1) \bigr\}^2}{4}. \label{eq:minimax-quantile-psi-r-lb}
    \end{align}
    Now, writing $\sigma_0 \coloneqq \sigma/C_0$,
    \begin{align*}
        \mathbb{E}_{P_2}(X_2) &- \mathbb{E}_{P_1}(X_1) = \frac{\epsilon}{q(1-\epsilon)} \int_b^{\infty} xp_1(x)\,\mathrm{d}x - \frac{\epsilon}{q(1-\epsilon)+\epsilon} \int_0^b xp_1(x)\,\mathrm{d}x\\
        \overset{(i)}&{=} \frac{\epsilon}{q(1-\epsilon)} \biggl\{ be^{-(b/\sigma_0)^r} + \int_b^{\infty} e^{-(x/\sigma_0)^{r}} \,\mathrm{d}x \biggr\} \\
        &\qquad - \frac{\epsilon}{q(1-\epsilon)+\epsilon} \biggl\{ -be^{-(b/\sigma_0)^r} + \int_0^b e^{-(x/\sigma_0)^r} \,\mathrm{d}x \biggr\}\\
        \overset{(ii)}&{=} \biggl( \frac{\epsilon}{q(1-\epsilon)} + \frac{\epsilon}{q(1-\epsilon)+\epsilon} \biggr) \cdot \frac{q(1-\epsilon)}{2q(1-\epsilon) + \epsilon} \cdot \sigma_0\log^{1/r}\biggl( 2 + \frac{\epsilon}{q(1-\epsilon)} \biggr)\\
        &\qquad +  \frac{\epsilon}{q(1-\epsilon)} \int_b^{\infty} e^{-(x/\sigma_0)^{r}} \,\mathrm{d}x - \frac{\epsilon}{q(1-\epsilon)+\epsilon} \int_0^b e^{-(x/\sigma_0)^{r}}\,\mathrm{d}x, \numberthis \label{eq:difference-of-mean}
    \end{align*}
    where $(i)$ follows from integration by parts, $(ii)$ follows by substituting the definition of~$b$.
    Now let $h(t) \coloneqq \frac{1}{t} \int_0^t e^{-(x/\sigma_0)^{r}}\,\mathrm{d}x$. Then 
    \begin{align*}
        h'(t) = \frac{te^{-(t/\sigma_0)^r} - \int_0^t e^{-(x/\sigma_0)^{r}}\,\mathrm{d}x}{t^2} \leq 0,
    \end{align*}
    so $h$ is a decreasing function.   

    First consider the case where $\frac{\epsilon}{q(1-\epsilon)} \geq e^{2^r} - 2$ or equivalently $\epsilon \geq \frac{\{\exp(2^r)-2\}q}{1+\{\exp(2^r)-2\}q}$, so that $\log^{1/r}\bigl( 2+\frac{\epsilon}{q(1-\epsilon)} \bigr) \geq 2$ and
    \begin{align*}
        h(b) \leq h(2\sigma_0) = \frac{\int_0^2 e^{-x^r} \,\mathrm{d}x}{2} = \frac{\int_0^1 e^{-x^r} \,\mathrm{d}x + \int_1^2 e^{-x^r} \,\mathrm{d}x}{2} \leq \frac{1+e^{-1}}{2}.
    \end{align*}
    Hence, by~\eqref{eq:difference-of-mean},
    \begin{align*}
        \mathbb{E}_{P_2}(X_2) \! - \! \mathbb{E}_{P_1}(X_1) &\geq \biggl( \frac{\epsilon}{q(1\!-\!\epsilon)} + \frac{\epsilon}{q(1\!-\!\epsilon)\!+\!\epsilon} \biggr) \cdot \frac{q(1\!-\!\epsilon)}{2q(1\!-\!\epsilon) \!+\! \epsilon} \cdot \sigma_0\log^{1/r}\biggl( 2 \!+\! \frac{\epsilon}{q(1\!-\!\epsilon)} \biggr)\\
        &\qquad - \frac{\epsilon}{q(1-\epsilon)+\epsilon} \cdot \frac{1+e^{-1}}{2} \cdot \sigma_0\log^{1/r}\biggl( 2 + \frac{\epsilon}{q(1-\epsilon)} \biggr)\\
        &= \frac{1-e^{-1}}{2} \cdot \frac{\epsilon}{q(1-\epsilon)+\epsilon} \cdot \sigma_0\log^{1/r}\biggl( 2 + \frac{\epsilon}{q(1-\epsilon)} \biggr)\\
        &\geq \frac{1-e^{-1}}{8} \cdot \frac{\sigma}{C_0} \cdot \log^{1/r}\biggl( 2 + \frac{2\epsilon}{q(1-\epsilon)} \biggr).
    \end{align*}
    Therefore, by~\eqref{eq:minimax-quantile-psi-r-lb}, when $\epsilon \geq \frac{\{\exp(2^r)-2\}q}{1+\{\exp(2^r)-2\}q}$,
    \begin{align}
        \mathcal{M}(\delta,\mathcal{P}_{\Theta},|\cdot|^2) \gtrsim \sigma^2 \log^{2/r}\biggl( 2 + \frac{2\epsilon}{q(1-\epsilon)} \biggr). \label{eq:minimax-quantile-psi-r-lb-1}
    \end{align}
    Next consider the case where $\frac{\epsilon}{q(1-\epsilon)} \leq 1$, or equivalently $\epsilon \leq q/(1+q)$. Define $P_3,P_4 \in \mathcal{P}(\mathbb{R})$ by
    \begin{align*}
        P_3\biggl(\biggl\{ -\frac{\sigma}{4} \biggr\}\biggr) \coloneqq \frac{q(1-\epsilon)}{2q(1-\epsilon)+\epsilon} &\eqqcolon P_4\biggl(\biggl\{ \frac{\sigma}{4} \biggr\}\biggr), \quad \text{ and }\\
        P_3\biggl(\biggl\{ \frac{\sigma}{4} \biggr\}\biggr) \coloneqq \frac{q(1-\epsilon) + \epsilon}{2q(1-\epsilon)+\epsilon} &\eqqcolon P_4\biggl(\biggl\{ -\frac{\sigma}{4} \biggr\}\biggr).
    \end{align*}
    Thus $P_3\in\mathcal{P}_{\psi_r}\bigl(\mathbb{E}_{P_3}(X_3), \sigma^2\bigr)$ and $P_4\in\mathcal{P}_{\psi_r}\bigl(\mathbb{E}_{P_4}(X_4), \sigma^2\bigr)$. Further define $R_2 \in\mathcal{P}(\mathbb{R}_{\star})$ by
    \begin{align*}
        R_2\biggl(\biggl\{ -\frac{\sigma}{4} \biggr\}\biggr) &\coloneqq \frac{q(1-\epsilon)\{q(1-\epsilon) + \epsilon\}}{2q(1-\epsilon)+\epsilon} \eqqcolon R_2\biggl(\biggl\{\frac{\sigma}{4} \biggr\}\biggr) , \\
        R_2(\{\star\}) &\coloneqq 1- \frac{2q(1-\epsilon)\{q(1-\epsilon) + \epsilon\}}{2q(1-\epsilon)+\epsilon}.
    \end{align*}
    By Proposition~\ref{prop:univariate-realisability}, $R_2 \in \mathcal{R}(P_3,\epsilon,q) \cap \mathcal{R}(P_4,\epsilon,q)$. Therefore, by \citet[][Theorem~4 and Lemma~5]{ma2024high}, when $\epsilon \leq \frac{q}{1+q}$,
    \begin{align}
        \mathcal{M}(\delta,\mathcal{P}_{\Theta},|\cdot|^2) \geq \frac{\bigl\{ \mathbb{E}_{P_3}(X_3) - \mathbb{E}_{P_4}(X_4) \bigr\}^2}{4} \geq \frac{\sigma^2}{16}\biggl( \frac{\epsilon}{2q(1-\epsilon)+\epsilon} \biggr)^2 \geq \frac{\sigma^2}{144} \biggl( \frac{\epsilon}{q(1-\epsilon)} \biggr)^2. \label{eq:minimax-quantile-psi-r-lb-2}
    \end{align}
    Combining~\eqref{eq:minimax-quantile-psi-r-lb-1} and~\eqref{eq:minimax-quantile-psi-r-lb-2} yields that when $\epsilon \leq \frac{q}{1+q}$ or $\epsilon \geq \frac{\{\exp(2^r)-2\}q}{1+\{\exp(2^r)-2\}q}$,
    \begin{align}
        \mathcal{M}(\delta,\mathcal{P}_{\Theta},|\cdot|^2) \gtrsim \sigma^2\cdot \biggl\{ \biggl( \frac{\epsilon}{q(1-\epsilon)} \biggr)^2 \wedge \log^{2/r}\biggl( 2 + \frac{2\epsilon}{q(1-\epsilon)} \biggr) \biggr\}. \label{eq:minimax-quantile-psi-r-lb-3}
    \end{align}
    Further observe that $\mathcal{R}(P,\frac{q}{1+q},q) \subseteq \mathcal{R}(P,\epsilon,q)$ for all $P\in\mathcal{P}(\mathbb{R})$ when $\epsilon > \frac{q}{1+q}$. Thus, by~\eqref{eq:minimax-quantile-psi-r-lb-3}, we deduce that when $\frac{q}{1+q} < \epsilon < \frac{\{\exp(2^r)-2\}q}{1+\{\exp(2^r)-2\}q}$,
    \begin{align*}
        \mathcal{M}(\delta,\mathcal{P}_{\Theta},|\cdot|^2) &\gtrsim \sigma^2\cdot \biggl\{ \biggl( \frac{\frac{q}{1+q}}{q(1-\frac{q}{1+q})} \biggr)^2 \wedge \log^{2/r}\biggl( 2 + \frac{\frac{2q}{1+q}}{q(1-\frac{q}{1+q})} \biggr) \biggr\}\\
        &= \sigma^2 \gtrsim \sigma^2\cdot \biggl\{ \biggl( \frac{\epsilon}{q(1-\epsilon)} \biggr)^2 \wedge \log^{2/r}\biggl( 2 + \frac{2\epsilon}{q(1-\epsilon)} \biggr) \biggr\}, \numberthis \label{eq:minimax-quantile-psi-r-lb-4}
    \end{align*}
    where the last inequality follows from the fact that when $\epsilon < \frac{\{\exp(2^r)-2\}q}{1+\{\exp(2^r)-2\}q}$, 
    \begin{align*}
    \log^{2/r}\biggl( 2 + \frac{2\epsilon}{q(1-\epsilon)} \biggr) < \log^{2/r}\bigl(2+2(e^{2^r}-2)\bigr) &\leq \log^{2/r}\bigl(e^{2^r + \log 2}\bigr)\\
    &\leq (2 \cdot 2^r)^{2/r} = 2^{2/r}\cdot 2 \leq 8.
    \end{align*}
    Combining~\eqref{eq:minimax-quantile-psi-r-lb-3} and~\eqref{eq:minimax-quantile-psi-r-lb-4} yields the second term in the lower bound.

    For the first term in the lower bound, we observe that $\mathcal{P}_{\mathrm{b}}(\theta,\sigma/2) \subseteq \mathcal{P}_{\psi_r}(\theta,\sigma^2)$ for all $r \geq 1$.  We therefore obtain the desired conclusion by choosing the contamination distribution $Q\in\mathcal{P}(\mathbb{R}_{\star})$ such that $Q\bigl( \{\star\} \bigr)=1$ and applying Proposition~\ref{prop:univariate-mcar-lb}(b).
\end{proof}

\begin{lemma}\label{lemma:psi-r-orlicz-norm-of-mixture}
    Let $\epsilon \in [0,1)$, $q \in (0,1]$, $\sigma > 0$ and $r \geq 1$.  There exists a universal constant $C_0 > 0$ such that if $X_1 \sim P_{1} \in \mathcal{P}(\mathbb{R})$ and $X_2 \sim P_{2} \in \mathcal{P}(\mathbb{R})$ have Lebesgue densities $p_1$ and~$p_2$ respectively, where $p_1(x) \coloneqq \frac{rx^{r-1}}{(\sigma/C_0)^r}e^{-(C_0x/\sigma)^r}\mathbbm{1}_{\{x \geq 0\}}$ and 
    \begin{align*}
        p_2(x) \coloneqq \begin{cases}
            \frac{q(1-\epsilon)}{q(1-\epsilon) + \epsilon} \cdot p_1(x) \quad&\text{if }x<b\\
            \frac{q(1-\epsilon)+\epsilon}{q(1-\epsilon)} \cdot p_1(x) \quad&\text{if }x\geq b
        \end{cases}
        \quad\text{with}\quad b\coloneqq \frac{\sigma}{C_0}\log^{1/r}\biggl( 2 + \frac{\epsilon}{q(1-\epsilon)} \biggr),
    \end{align*}
    then $\|X_1 - \mathbb{E}X_1\|_{\psi_r} \vee \|X_2 - \mathbb{E}X_2\|_{\psi_r} \leq \sigma$.  
\end{lemma}
\begin{proof}
    Since $\mathbb{P}(|X_1| \geq x) = e^{-(C_0 x/\sigma)^{r}}$ for all $x\geq 0$, we have by \citet[Proposition 2.7.1]{vershynin2018high} that $\|X_1^r\|_{\psi_1} \leq C_1(\sigma/C_0)^r$ for some universal constant $C_1 > 0$, so $\|X_1\|_{\psi_r} \leq C_1^{1/r} \sigma/C_0 \leq (C_1 \vee 1)\sigma/C_0$. Then, by \citet[Lemma A.3]{gotze2021concentration}, we have 
    \[
    \|X_1-\mathbb{E}X_1\|_{\psi_r} \leq \biggl\{ 1+ \biggl(\frac{2}{(re)^{1/r}\log 2} \biggr)^{1/r} \biggr\}(C_1 \vee 1)\frac{\sigma}{C_0} \leq 4(C_1 \vee 1)\frac{\sigma}{C_0}.
    \]
    Turning to $X_2$, first observe that $p_2$ is a Lebesgue density, since
    \[
    \int_{\mathbb{R}} p_2(x) \, \mathrm{d}x = \frac{q(1-\epsilon)}{q(1-\epsilon) + \epsilon}\{1 - e^{-(C_0b/\sigma)^r}\} + \frac{q(1-\epsilon)+\epsilon}{q(1-\epsilon)} e^{-(C_0b/\sigma)^r} = 1.
    \]
    Now, for $x \geq 0$, we have
    \begin{align*}
        \mathbb{P}(X_2-b \geq x) = \frac{q(1-\epsilon)+\epsilon}{q(1-\epsilon)} \cdot \mathbb{P}(X_1 \geq b+x) &\leq \frac{q(1-\epsilon)+\epsilon}{q(1-\epsilon)} \cdot e^{-(C_0 b/\sigma)^r- (C_0 x/\sigma)^r} \\
        &= \frac{q(1-\epsilon)+\epsilon}{2q(1-\epsilon) + \epsilon} \cdot e^{-(C_0 x/\sigma)^r} \leq e^{-(C_0x/\sigma)^r}.
    \end{align*}
    Define $a \coloneqq \frac{q(1-\epsilon)}{q(1-\epsilon)+\epsilon} \in (0,1]$, so that $b = (\sigma/C_0) \log^{1/r}\bigl( \frac{1+a}{a} \bigr)$.
    For $x\in[0,b]$, we have 
    \begin{align*}
        \mathbb{P}(X_2-b \leq -x) &= a \cdot \mathbb{P}(X_1\leq b-x) \leq a \leq \frac{2a}{1+a} = 2e^{-(C_0b/\sigma)^r} \leq 2e^{-(C_0x/\sigma)^r}.
    \end{align*}
    For $x>b$, we have $\mathbb{P}(X_2-b \leq -x) = 0$.
    Combining these inequalities, we obtain $\mathbb{P}(|X_2-b| \geq x) \leq 3e^{-(C_0x/\sigma)^r}$ for all $x\geq 0$. Therefore, by~\citet[Proposition 2.7.1]{vershynin2018high}, we deduce\footnote{Note that in \citet[Proposition 2.7.1(a)]{vershynin2018high}, the condition is that $\mathbb{P}(|X|\geq t) \leq 2\exp(-t/K_1)$ for all $t \geq 0$. However, the result is still true if we replace the factor $2$ by $3$. See for example, \citet[Proposition 2.5.2]{vershynin2018high} for the proof strategy.} that $\|(X_2-b)^r\|_{\psi_1} \leq C_2(\sigma/C_0)^r$ for some universal constant $C_2>0$, so 
    \[
    \|X_2-b\|_{\psi_r} \leq C_2^{1/r}\frac{\sigma}{C_0} \leq (C_2 \vee 1)\frac{\sigma}{C_0}.
    \]
    By \citet[Lemma A.3]{gotze2021concentration} again, $\|X_2-\mathbb{E}X_2\|_{\psi_r} \leq 4(C_2 \vee 1)\sigma/C_0$. Finally, taking $C_0 \coloneqq 4C_1 \vee 4C_2 \vee 4$ completes the proof.
\end{proof}

\subsubsection{Proof of Theorem~\ref{thm:nonparametric-multivariate-realisable-mean-ub}}

The following proposition, which is analogous to Proposition~\ref{thm:bias-of-mean-one-dim-realisable-case} in the univariate case, will be used in the proof of Theorem~\ref{thm:nonparametric-multivariate-realisable-mean-ub}.
\begin{prop} \label{prop:bias-of-multivariate-mean}
    Let $\theta_0 \in \mathbb{R}^d$, $\Sigma \in \mathcal{S}_{++}^{d\times d}$, $\epsilon\in[0,1)$, $\delta\in(0,1]$, $\pi\in\mathcal{P}\bigl(\{\emptyset,[d]\}\bigr)$ and $q\coloneqq \pi([d])$.
    \begin{itemize}
        \item[(a)] Let $r\geq 2$, $P\in \mathcal{P}_{L^r}(\theta_0, \Sigma)$ and $Z \sim R \in \mathcal{R}_{\emptyset,[d]}(P, \epsilon, \pi)$. Then
        \begin{align*}
            \bigl\| \mathbb{E}(Z\,|\,Z\in\mathbb{R}^d) - \theta_0 \bigr\|_2^2 \leq \|\Sigma\|_{\mathrm{op}} \biggl\{ \biggl(\frac{\epsilon}{q(1-\epsilon)}\biggr)^2 \wedge \biggl(\frac{\epsilon}{q(1-\epsilon)}\biggr)^{2/r} \biggr\}.
        \end{align*}
        \item[(b)] Let $r\geq 1$, $P\in \mathcal{P}_{\psi_r}(\theta_0, \Sigma)$, and $Z \sim R \in \mathcal{R}_{\emptyset,[d]}(P, \epsilon, \pi)$. Then
        \begin{align*}
            \bigl\| \mathbb{E}(Z\,|\,Z\in\mathbb{R}^d) - \theta_0 \bigr\|_2^2 \leq \|\Sigma\|_{\mathrm{op}} \biggl\{ 4\biggl(\frac{\epsilon}{q(1-\epsilon)}\biggr)^2 \;\wedge\;  \log^{2/r} \biggl( 2 + \frac{2\epsilon}{q(1-\epsilon)} \biggr) \biggr\}.
        \end{align*}
    \end{itemize}
\end{prop}
\begin{proof}
    Let $\kappa \coloneqq \frac{\epsilon}{q(1-\epsilon)}$, $X\sim P$, $v \in \mathbb{S}^{d-1}$, $Z^{(v)} \coloneqq v^\top Z \cdot \mathbbm{1}_{\{Z\in\mathbb{R}^d\}} + \star \cdot \mathbbm{1}_{\{Z\notin\mathbb{R}^d\}}$, $R^{(v)} \coloneqq \mathsf{Law}(Z^{(v)})$ and $P^{(v)} \coloneqq \mathsf{Law}(v^\top X)$. By Lemma~\ref{lemma:realisability-of-projection}, we have $R^{(v)} \in \mathcal{R}(P^{(v)},\epsilon,q)$.

    (a) Since $P^{(v)} \in \mathcal{P}_{L^r}(v^\top\theta_0, v^\top\Sigma v)$ we have by Proposition~\ref{thm:bias-of-mean-one-dim-realisable-case}(a) that
    \begin{align*}
        \bigl\| \mathbb{E}(Z\,|\,Z\in\mathbb{R}^d) - \theta_0 \bigr\|_2^2 &= \sup_{v\in\mathbb{S}^{d-1}} \bigl\{ v^\top \mathbb{E}(Z | Z\in\mathbb{R}^d) - v^\top\theta_0 \bigr\}^2 \\
        &= \sup_{v\in\mathbb{S}^{d-1}} \bigl\{ \mathbb{E}(Z^{(v)} | Z^{(v)} \neq \star) - v^\top\theta_0 \bigr\}^2\\
        &\leq \sup_{v\in\mathbb{S}^{d-1}} v^\top\Sigma v \cdot (\kappa^2 \wedge \kappa^{2/r})\\
        &= \|\Sigma\|_{\mathrm{op}} (\kappa^2 \wedge \kappa^{2/r}),
    \end{align*}
    as required.
    
    (b) We now have $P^{(v)} \in \mathcal{P}_{\psi_r}(v^\top\theta_0, v^\top\Sigma v)$, so the proof is the same as part (a), except that we use Proposition~\ref{thm:bias-of-mean-one-dim-realisable-case}(b) instead.
\end{proof}

\begin{proof}[Proof of Theorem~\ref{thm:nonparametric-multivariate-realisable-mean-ub}]
    Let $\kappa \coloneqq \frac{\epsilon}{q(1-\epsilon)}$.

    (a) For $v \in \mathbb{S}^{d-1}$, we have by the same argument as the proof of~\eqref{Eq:CondVarBound} that
    \begin{align*}
        \Var(v^\top Z_1 \,|\, Z_1 \in \mathbb{R}^d) \leq (1 + \kappa^{2/r})v^\top\Sigma v.
    \end{align*}
    Therefore, writing $\Gamma \coloneqq \Cov(Z_1 \,|\, Z_1 \in \mathbb{R}^d) \in \mathcal{S}_{+}^{d \times d}$, we have 
    \begin{align}
        \| \Gamma\|_{\mathrm{op}} \leq (1 + \kappa^{2/r}) \|\Sigma\|_{\mathrm{op}} 
        \quad\text{and}\quad
        \tr(\Gamma) \leq (1 + \kappa^{2/r})\tr(\Sigma). \label{eq:trace-bound}
    \end{align}
    By Lemma~\ref{lemma:binomial-tail}(b), the event $\mathcal{E}_0 \coloneqq \{|\mathcal{D}| \geq nq(1-\epsilon)/2\}$ has $\mathbb{P}(\mathcal{E}_0) \geq 1-\delta/2$, since $nq(1-\epsilon)\geq 8\log(2/\delta)$.  Moreover, by~\eqref{eq:assumption-on-alg-heavy-tail-multivariate} and~\eqref{eq:trace-bound},
    \begin{align*}
        \mathbb{P}\biggl\{\bigl\|\hat{\theta}_n - \mathbb{E}(Z_1|Z_1\in\mathbb{R}^d)\bigr\|_2^2 > 2C(1+\kappa^{2/r}) \biggl(\frac{\tr(\Sigma)}{nq(1-\epsilon)} + \frac{\|\Sigma\|_{\mathrm{op}}\log(2/\delta)}{nq(1-\epsilon)}\biggr) \,\bigg|\, \mathcal{E}_0 \biggr\} \leq \frac{\delta}{2}.
    \end{align*}
    Thus, 
    \begin{align*}
        &\mathbb{P}\biggl\{\bigl\|\hat{\theta}_n - \mathbb{E}(Z_1|Z_1\in\mathbb{R}^d)\bigr\|_2^2 \leq 2C(1+\kappa^{2/r}) \biggl(\frac{\tr(\Sigma)}{nq(1-\epsilon)} + \frac{\|\Sigma\|_{\mathrm{op}}\log(2/\delta)}{nq(1-\epsilon)}\biggr) \biggr\}\\
        &\geq \mathbb{P}\biggl\{\bigl\|\hat{\theta}_n \!-\! \mathbb{E}(Z_1|Z_1\in\mathbb{R}^d)\bigr\|_2^2 \leq 2C(1\!+\!\kappa^{2/r}) \biggl(\frac{\tr(\Sigma)}{nq(1-\epsilon)} + \frac{\|\Sigma\|_{\mathrm{op}}\log(2/\delta)}{nq(1-\epsilon)}\biggr) \,\bigg|\, \mathcal{E}_0 \biggr\}\mathbb{P}(\mathcal{E}_0)\\
        &\geq 1-\delta.
    \end{align*}
    On the event that $\bigl\{\bigl\|\hat{\theta}_n - \mathbb{E}(Z_1|Z_1\in\mathbb{R}^d)\bigr\|_2^2 \leq 2C(1+\kappa^{2/r}) \bigl(\frac{\tr(\Sigma)}{nq(1-\epsilon)} + \frac{\|\Sigma\|_{\mathrm{op}}\log(2/\delta)}{nq(1-\epsilon)}\bigr)\bigr\}$, we have by Proposition~\ref{prop:bias-of-multivariate-mean}(a) that
    \begin{align*}
        \|\hat{\theta}_n &- \theta_0\|_2^2 \leq 2\bigl\|\hat{\theta}_n - \mathbb{E}(Z_1\,|\,Z_1\in\mathbb{R}^d)\bigr\|_2^2 + 2\bigl\|\mathbb{E}(Z_1\,|\,Z_1\in\mathbb{R}^d) - \theta_0\bigr\|_2^2\\
        &\leq 4C\frac{\tr(\Sigma) + \|\Sigma\|_{\mathrm{op}}\log(2/\delta)}{nq(1-\epsilon)} + 4C\frac{\mathbf{r}(\Sigma) + \log(2/\delta)}{nq(1-\epsilon)} \|\Sigma\|_{\mathrm{op}}\kappa^{2/r} \!+\! 2\|\Sigma\|_{\mathrm{op}} (\kappa^2 \wedge \kappa^{2/r})\\
        &\leq 8C\frac{\tr(\Sigma) + \|\Sigma\|_{\mathrm{op}}\log(2/\delta)}{nq(1-\epsilon)} + (5C+2)\|\Sigma\|_{\mathrm{op}} (\kappa^2 \wedge \kappa^{2/r}),
    \end{align*}
    where the final inequality follows by considering separately the cases $\kappa \leq 1$ and $\kappa > 1$, and in the second case noting that $\mathbf{r}(\Sigma) \leq nq(1-\epsilon)$ and $\log(2/\delta) \leq nq(1-\epsilon)/8$.
    
    \medskip(b) For $v \in \mathbb{S}^{d-1}$, we have by the same argument as in the proof of~\eqref{eq:sub-exponential-norm-bound} that conditional on $\{Z_1 \in \mathbb{R}^d\}$,
    \begin{align*}
        \bigl\| v^\top Z_1 - \mathbb{E}(v^\top Z_1 \,|\, Z_1 \in \mathbb{R}^d) \bigr\|_{\psi_1} \leq \sqrt{(1+\kappa)v^\top\Sigma v},
    \end{align*}
    so that $Z_1 \,|\, \{Z_1 \in \mathbb{R}^d\} \in \mathcal{P}_{d,\psi_1}\bigl( \mathbb{E}(v^\top Z_1 \,|\, Z_1 \in \mathbb{R}^d), (1+\kappa)\Sigma \bigr)$. By Lemma~\ref{lemma:binomial-tail}(b), the event $\mathcal{E}_0 \coloneqq \{|\mathcal{D}| \geq nq(1-\epsilon)/2\}$ satisfies $\mathbb{P}(\mathcal{E}_0) \geq 1-\delta/4$ since $nq(1-\epsilon) \geq 8\log(8/\delta)$. Moreover, writing
    \begin{align*}
        \mathcal{E}_2 \coloneqq \biggl\{ \bigl\|\hat{\theta}_n - \mathbb{E}(Z_1\,|\,Z_1\in\mathbb{R}^d)\bigr\|_2^2 \leq 48(1+\kappa) \cdot \frac{\tr(\Sigma) + \|\Sigma\|_{\mathrm{op}} \log(8/\delta)}{nq(1-\epsilon)}\biggr\},
    \end{align*}
    we have by Lemma~\ref{lemma:concentration-of-sample-mean-sub-exponential-vector} (a consequence of the PAC--Bayes lemma) that $\mathbb{P}\bigl(\mathcal{E}_2 \,\big|\, |\mathcal{D}| = s\bigr) \geq 1-\delta/4$ for $s\geq nq(1-\epsilon)/2$, so $\mathbb{P}(\mathcal{E}_0 \cap \mathcal{E}_2) \geq 1-\delta/2$. On $\mathcal{E}_0 \cap \mathcal{E}_2$, we have by Proposition~\ref{prop:bias-of-multivariate-mean}(b) that
    \begin{align*}
        \|\hat{\theta}_n - \theta_0\|_2^2 &\leq 2\bigl\|\hat{\theta}_n - \mathbb{E}(Z_1\,|\,Z_1\in\mathbb{R}^d)\bigr\|_2^2 + 2\bigl\|\mathbb{E}(Z_1\,|\,Z_1\in\mathbb{R}^d) - \theta_0\bigr\|_2^2\\
        &\lesssim \frac{\tr(\Sigma) + \|\Sigma\|_{\mathrm{op}} \log(8/\delta)}{nq(1-\epsilon)} + \|\Sigma\|_{\mathrm{op}}\kappa\cdot \frac{\mathbf{r}(\Sigma) + \log(8/\delta)}{nq(1-\epsilon)} + \|\Sigma\|_{\mathrm{op}}\kappa^2\\
        &\lesssim \frac{\tr(\Sigma) + \|\Sigma\|_{\mathrm{op}} \log(8/\delta)}{nq(1-\epsilon)} + \|\Sigma\|_{\mathrm{op}}\kappa^2, \numberthis \label{eq:E0-cap-E2}
    \end{align*}
    where the final inequality follows by considering separately the cases $\kappa \leq 1$ and $\kappa > 1$, and in the second case noting that $\mathbf{r}(\Sigma) \leq nq(1-\epsilon)$ and $\log(8/\delta) \leq nq(1-\epsilon)/8$.
    
    For the last term in the upper bound, we first consider the case where $r>1$. For $w \in \mathbb{R}^d$, we have by the same argument as in the proof of~\eqref{eq:log-mgf-bound} that
    \begin{align}
        \log\mathbb{E}\bigl\{ \exp\bigl(\lambda w^\top(Z_1 -\theta_0)\bigr) \bigm| Z_1 \in \mathbb{R}^d \bigr\} \leq \log(1+\kappa^2) + \log 2 + \bigl(2\lambda \sqrt{w^\top\Sigma w}\bigr)^{r/(r-1)},  \label{eq:log-mgf-f-lambda}
    \end{align}
    for all $\lambda>0$. Let $\beta\coloneqq \mathbf{r}(\Sigma)$, let $\mu$ denote the distribution of $\mathsf{N}_d(0,\beta^{-1}\Sigma)$ and for $u\in\Sigma^{1/2}\mathbb{S}^{d-1}$, let $\rho_u$ denote the conditional distribution of $Y$ given $\bigl\{\|Y-u\|_2 \leq 2\|\Sigma\|_{\mathrm{op}}^{1/2}\bigr\}$, where $Y\sim\mathsf{N}_d(u,\beta^{-1}\Sigma)$.  By Chebychev's inequality, 
    \[
    \mathbb{P}\bigl(\|Y-u\|_2 \geq 2\|\Sigma\|_{\mathrm{op}}^{1/2}\bigr) \leq \frac{\tr(\Sigma)}{4\beta \|\Sigma\|_{\mathrm{op}}} = \frac{1}{4}.
    \]
    Hence, by the third displayed equation of \citet[][p.~11]{zhivotovskiy2024dimension}, we have 
    \begin{align*}
        \mathrm{KL}(\rho_u,\mu) = \log \biggl( \frac{1}{\mathbb{P}\bigl(\|Y-u\|_2 \leq 2\|\Sigma\|_{\mathrm{op}}^{1/2}\bigr)} \biggr) + \frac{\beta}{2} \leq 2\log 2 + \frac{\mathbf{r}(\Sigma)}{2}.
    \end{align*}
    Fix $u \in \Sigma^{1/2}\mathbb{S}^{d-1}$, let $v\in\mathbb{R}^d$ be such that $\|v-u\|_2 \leq 2\|\Sigma\|_{\mathrm{op}}^{1/2}$, and for $\lambda>0$, define $f_{\lambda}:\mathbb{R}^d\times\mathbb{R}^d \to \mathbb{R}$ by $f_{\lambda}(x,y) \coloneqq \lambda y^\top\Sigma^{-1/2}(x-\theta_0)$. Then, since $\|v\|_2 \leq 3\|\Sigma\|_{\mathrm{op}}^{1/2}$, we have by~\eqref{eq:log-mgf-f-lambda} that 
    \begin{align*}
        \log \mathbb{E}_{Z\sim R}\bigl(e^{f_\lambda(Z,v)} \bigm| Z\in\mathbb{R}^d \bigr) \leq \log(2+2\kappa^2) + \bigl(6\lambda \|\Sigma\|_{\mathrm{op}}^{1/2}\bigr)^{r/(r-1)},
    \end{align*}
    so $\mathbb{E}_{\xi_u \sim \rho_u} \bigl\{\log \mathbb{E}_{Z\sim R}\bigl(e^{f_{\lambda}(Z,\xi_u)} \bigm| Z\in\mathbb{R}^d \bigr) \bigr\} \leq \log(2+2\kappa^2) + \bigl(6\lambda \|\Sigma\|_{\mathrm{op}}^{1/2}\bigr)^{r/(r-1)}$.
    Therefore, for $s\geq nq(1-\epsilon)/2$, by the PAC--Bayes lemma (Lemma~\ref{lemma:PAC-Bayes}), conditional on $|\mathcal{D}| = s$, we have with probability at least $1-\delta/4$ that
    \begin{align*}
        \biggl\| \frac{1}{|\mathcal{D}|}\sum_{i\in\mathcal{D}} Z_i &- \theta_0 \biggr\|_2 = \sup_{u\in\Sigma^{1/2}\mathbb{S}^{d-1}} \frac{1}{\lambda|\mathcal{D}|}\sum_{i\in\mathcal{D}} \mathbb{E}_{\xi_u \sim \rho_u} f_{\lambda}(Z_i, \xi_u)\\
        &\leq \inf_{\lambda>0} \biggl\{ \frac{\log(2+2\kappa^2)}{\lambda} + \bigl(6\|\Sigma\|_{\mathrm{op}}^{1/2}\bigr)^{r/(r-1)} \lambda^{1/(r-1)} + \frac{\mathbf{r}(\Sigma)/2 + 2\log(4/\delta)}{s\lambda} \biggr\}\\
        \overset{(i)}&{\leq} 12\|\Sigma\|_{\mathrm{op}}^{1/2} \biggl\{ \log(2+2\kappa^2) + \frac{\mathbf{r}(\Sigma)/2 + 2\log(4/\delta)}{s} \biggr\}^{1/r}\\
        \overset{(ii)}&{\leq} 12\|\Sigma\|_{\mathrm{op}}^{1/2} \bigl\{ \log(2+2\kappa^2) + 2\bigr\}^{1/r} \lesssim \|\Sigma\|_{\mathrm{op}}^{1/2} \log^{1/r}(2+2\kappa),
    \end{align*}
    where $(i)$ follows by choosing $\lambda = \frac{1}{6\|\Sigma\|_{\mathrm{op}}^{1/2}} \bigl\{ \log(2+2\kappa^2) + \frac{\mathbf{r}(\Sigma)/2 + 2\log(4/\delta)}{s} \bigr\}^{(r-1)/r}$ and $(ii)$ follows from the assumptions that $nq(1-\epsilon) \geq \mathbf{r}(\Sigma)$ and $\delta \geq 8\exp\bigl( -nq(1-\epsilon)/8 \bigr)$. Hence, there exists a universal constant $C_1 > 0$ such that the event 
    \begin{align}
        \mathcal{E}_3 \coloneqq \biggl\{ \biggl\| \frac{1}{|\mathcal{D}|}\sum_{i\in\mathcal{D}} Z_i - \theta_0 \biggr\|_2^2 \leq C_1\|\Sigma\|_{\mathrm{op}} \log^{2/r}(2+2\kappa) \biggr\}, \label{eq:def-E3}
    \end{align}
    satisfies $\mathbb{P}(\mathcal{E}_0 \cap \mathcal{E}_3) \geq 1-\delta/2$. Thus, on the event $\mathcal{E}_0 \cap \mathcal{E}_2 \cap \mathcal{E}_3$, which has probability at least $1-\delta$, we combine~\eqref{eq:E0-cap-E2} and~\eqref{eq:def-E3} to obtain the desired result for $r>1$.

    Finally, we consider the case where $r=1$. For $w \in \mathbb{R}^d$, we have by the same argument as the proof of~\eqref{eq:log-mgf-bound-r=1} that
    \begin{align*}
        \log\mathbb{E}\bigl\{ \exp\bigl(\lambda w^\top(Z_1 -\theta_0)\bigr) \, \big| \, Z_1 \in \mathbb{R}^d \bigr\} \leq \log(1+\kappa^2) + \log 2 + \bigl(2\lambda \sqrt{w^\top\Sigma w}\bigr)^2, 
    \end{align*}
    for $|\lambda| \leq \frac{1}{2}\|\Sigma\|_{\mathrm{op}}^{-1/2} \leq \frac{1}{2\sqrt{w^\top\Sigma w}}$. Hence, for $s\geq nq(1-\epsilon)/2$, by following the same proof as the $r>1$ case above, we deduce that, conditional on $|\mathcal{D}|=s$, we have with probability at least $1-\delta/4$ that
    \begin{align*}
        \biggl\| \frac{1}{|\mathcal{D}|}\sum_{i\in\mathcal{D}} Z_i \!-\! \theta_0 \biggr\|_2
        &\leq \inf_{\lambda \in (0,\frac{1}{2}\|\Sigma\|_{\mathrm{op}}^{-1/2}]} \biggl\{ \frac{\log(2\!+\!2\kappa^2)}{\lambda} \!+\! \bigl(6\|\Sigma\|_{\mathrm{op}}^{1/2}\bigr)^2 \lambda \!+\! \frac{\mathbf{r}(\Sigma)/2 \!+\! 2\log(4/\delta)}{s\lambda} \biggr\}\\
        \overset{(i)}&{\leq} 2\|\Sigma\|_{\mathrm{op}}^{1/2} \biggl\{ \log(2+2\kappa^2) + 9 + \frac{\mathbf{r}(\Sigma)/2 + 2\log(4/\delta)}{s} \biggr\}\\
        \overset{(ii)}&{\leq} 2\|\Sigma\|_{\mathrm{op}}^{1/2} \bigl\{ \log(2+2\kappa^2) + 11 \bigr\} \lesssim \|\Sigma\|_{\mathrm{op}}^{1/2} \log(2+2\kappa),
    \end{align*}
    where $(i)$ follows by choosing $\lambda = \frac{1}{2}\|\Sigma\|_{\mathrm{op}}^{-1/2}$, and $(ii)$ follows from the assumptions that $nq(1-\epsilon) \geq \mathbf{r}(\Sigma)$ and $\delta \geq 8\exp\bigl( -nq(1-\epsilon)/8 \bigr)$. Hence, there exists a universal constant $C_2 > 0$ such that the event 
    \begin{align}
        \mathcal{E}_4 \coloneqq \biggl\{ \biggl\| \frac{1}{|\mathcal{D}|}\sum_{i\in\mathcal{D}} Z_i - \theta_0 \biggr\|_2^2 \leq C_2 \|\Sigma\|_{\mathrm{op}} \log^2(2+2\kappa) \biggr\}, \label{eq:def-E4}
    \end{align}
    satisfies $\mathbb{P}(\mathcal{E}_0 \cap \mathcal{E}_4) \geq 1 - \delta/2$.  Thus, on the event $\mathcal{E}_0 \cap \mathcal{E}_2 \cap \mathcal{E}_4$, which has probability at least $1-\delta$, we combine~\eqref{eq:E0-cap-E2} and~\eqref{eq:def-E4} to obtain the desired result for $r=1$.
\end{proof}

\section{Proofs from Section~\ref{sec:regression-missing-response}}

\subsection{Proof of Lemma~\ref{lemma:beta-gamma-regular}}
\begin{proof}[Proof of Lemma~\ref{lemma:beta-gamma-regular}]
(a) Let $(v_m)$ be a sequence in $\mathbb{S}^{d-1}$ with 
\[
\mathbb{P}\bigl(|X_1^\top v_m| > \gamma\bigr) \searrow \inf_{v \in \mathbb{S}^{d-1}} \mathbb{P}\bigl(|X_1^\top v| > \gamma\bigr)
\]
as $m \rightarrow \infty$.  Then by compactness of $\mathbb{S}^{d-1}$, there exists a subsequence $(v_{m_k})$, as well as $v_* \in \mathbb{S}^{d-1}$, for which $v_{m_k} \rightarrow v_*$ as $k \rightarrow \infty$.  But then $|X_1^\top v_{m_k}| \stackrel{d}{\rightarrow} |X_1^\top v_*|$ as $k \rightarrow \infty$, so by, e.g., \citet[][Lemma~2.2]{van1998asymptotic},
\[
\mathbb{P}\bigl(|X_1^\top v_*| > \gamma\bigr) \leq \liminf_{k \rightarrow \infty} \mathbb{P}\bigl(|X_1^\top v_{m_k}| > \gamma\bigr) = \inf_{v \in \mathbb{S}^{d-1}} \mathbb{P}\bigl(|X_1^\top v| > \gamma\bigr).
\]
It follows that the infimum in the definition of $\beta$ is attained.  

If $\beta = 0$ for all $\gamma > 0$, then for every $\gamma > 0$ we can find $v_*(\gamma) \in \mathbb{S}^{d-1}$ with $\mathbb{P}\bigl(|X_1^\top v_*(\gamma)| > \gamma\bigr) = 0$.  Writing $v_m \coloneqq v_*(1/m)$, there exist integers $1 \leq m_1 < m_2 < \ldots$ and $v_{**} \in \mathbb{S}^{d-1}$ with $v_{m_k} \rightarrow v_{**}$ as $k \rightarrow \infty$.  Since $|X_1^\top v_{m_k}| - 1/m_k \stackrel{d}{\rightarrow} |X_1^\top v_{**}|$ as $k \rightarrow \infty$ we have by \citet[][Lemma~2.2]{van1998asymptotic} again that
\[
\mathbb{P}\bigl(|X_1^\top v_{**}| > 0\bigr) \leq \liminf_{k \rightarrow \infty} \mathbb{P}\biggl(|X_1^\top v_{m_k}| > \frac{1}{m_k}\biggr) = 0.
\]
But then, defining the hyperplane $H \coloneqq \{x \in \mathbb{R}^d:x^\top v_{**} = 0\}$, we have $P(H) = 1$.

\medskip

(b) The claim is equivalent to showing that there exists a universal constant $c>0$ such that if $\frac{d + \log(1/\delta)}{n} \leq c\beta^2$, then,  with probability at least $1-\delta$,
\begin{align*}
    \sup_{v\in\mathbb{S}^{d-1}} -\frac{1}{n} \sum_{i=1}^n \mathbbm{1}_{\{|X_i^\top v| > \gamma\}} \leq -2\beta.
\end{align*}
To establish this, let $\mathcal{H} \coloneqq \{x\mapsto -\mathbbm{1}_{\{|x^\top v| > \gamma\}} : v\in \mathbb{S}^{d-1}\}$.  Then 
\begin{align*}
    \sup_{v\in\mathbb{S}^{d-1}} -\frac{1}{n} \sum_{i=1}^n \mathbbm{1}_{\{|X_i^\top v| > \gamma\}} + 3\beta &\leq \sup_{v\in\mathbb{S}^{d-1}} \frac{1}{n} \sum_{i=1}^n \bigl\{-\mathbbm{1}_{\{|X_i^\top v| > \gamma\}} + \mathbb{P}(|X_i^\top v| > \gamma) \bigr\}\\
    &= \sup_{h\in\mathcal{H}} \frac{1}{n}\sum_{i=1}^n \bigl\{h(X_i) - \mathbb{E}h(X_i)\bigr\}\eqqcolon V, \numberthis \label{eq:beta-gamma-regular-bound-1}
\end{align*}
where the first inequality follows since $\mathbb{P}(|X_i^\top v| > \gamma) \geq 3\beta$ for all $v\in\mathbb{S}^{d-1}$. By the bounded differences inequality~\citep[e.g.,][Theorem 6.2]{boucheron2003concentration}, with probability at least $1 - \delta$,
\begin{align} \label{eq:beta-gamma-regular-bounded-diff}
     V \leq \mathbb{E}(V) + \sqrt{\frac{\log(1/\delta)}{2n}}.
\end{align}
For a collection $\mathcal{H}_1$ of binary-valued functions, we let $\mathrm{VC}(\mathcal{H}_1)$ denote its Vapnik--Chervonenkis dimension.  For $v \in \mathbb{R}^d$ and $b \in \mathbb{R}$, define $g_{v,b}:\mathbb{R}^d \rightarrow \mathbb{R}$ by $g_{v,b}(x) \coloneqq x^\top v + b$, and define the vector space $\mathcal{G} \coloneqq \{g_{v,b}: v\in\mathbb{R}^d,b\in\mathbb{R}\}$.  Now let $\mathcal{H'} \coloneqq \{x\mapsto -\mathbbm{1}_{\{g(x)>0\}} : g\in \mathcal{G}\}$, which by \citet[Proposition~4.20]{wainwright2019high} satisfies $\mathrm{VC}(\mathcal{H}') \leq \mathrm{dim}(\mathcal{G}) = d+1$. Then
\begin{align*}
    \mathcal{H} = \bigl\{x\mapsto -\mathbbm{1}_{\{g_{v,-\gamma}(x) > 0\} \cup \{g_{-v,-\gamma}(x) > 0\}}& : v\in\mathbb{S}^{d-1}\bigr\} \\
    &\subseteq \bigl\{x\mapsto -\mathbbm{1}_{\{g_1(x)>0\} \cup \{g_2(x)>0\}} : g_1,g_2\in\mathcal{G} \bigr\}.
\end{align*}
Hence, by \citet[Lemma~3.2.3]{blumer1989learnability}, we have $\mathrm{VC}(\mathcal{H}) \leq 4\log_2(6)\mathrm{VC}(\mathcal{H}') \leq 11d+11$.
We deduce by \citet[Theorem~8.3.23]{vershynin2018high} that there exists a universal constant $C_1 > 0$ such that $\mathbb{E}(V) \leq C_1\sqrt{\frac{d+1}{n}}$.  Thus, by~\eqref{eq:beta-gamma-regular-bound-1} and~\eqref{eq:beta-gamma-regular-bounded-diff} we conclude that there exists a universal constant $C_2>0$ such that with probability at least $1-\delta$,
\begin{align*}
    \sup_{v\in\mathbb{S}^{d-1}} -\frac{1}{n} \sum_{i=1}^n \mathbbm{1}_{\{|X_i^\top v| > \gamma\}} + 3\beta \leq \mathbb{E}(V) + \sqrt{\frac{\log(1/\delta)}{2n}} \leq C_2\sqrt{\frac{d+\log(1/\delta)}{n}} \leq \beta,
\end{align*}
where the final inequality follows by choosing $c \coloneqq 1/C_2^2$ and using the assumption that $\frac{d+\log(1/\delta)}{n} \leq c\beta^2$. This proves the claim.
\end{proof}

\subsection{Proof of Theorem~\ref{thm:gaussian-realisable-response}} \label{sec:proofs-regression-missing-response}
We begin with some preliminary lemmas. 
\begin{lemma}\label{lemma:uniform-dkw}
    Consider the setting of Theorem~\ref{thm:gaussian-realisable-response}. There exists a universal constant $C>0$ such that for $\delta\in(0,1]$, we have with probability at least $1-\delta$ conditional on $X_1=x_1,\ldots,X_n=x_n$ that
    \begin{align*}
        \sup_{\theta\in\mathbb{R}^d} d_{\mathrm{K}}^{\mathrm{sym}}(\hat{R}_{n,\theta}, R_{n,\theta}) \leq C\sqrt{\frac{d+\log(1/\delta)}{n}}.
    \end{align*}
\end{lemma}
\begin{proof}
First define
    \begin{align*}
        V \coloneqq \sup_{\theta\in\mathbb{R}^d} d_{\mathrm{K}}^{\mathrm{sym}}(\hat{R}_{n,\theta}, R_{n,\theta}) = \sup_{\theta\in\mathbb{R}^d} \sup_{A \in \mathcal{A}^{\mathrm{sym}}} \biggl| \frac{1}{n} \sum_{i=1}^n \mathbbm{1}_{\{Z_i - x_i^\top \theta \in A\}} - \frac{1}{n} \sum_{i=1}^n \tilde{R}_{i, \theta}(A) \biggr|.
    \end{align*}
    Then, by the bounded differences inequality~\citep[e.g.,][Theorem 6.2]{boucheron2003concentration}, with probability at least $1 - \delta$,
    \begin{align} \label{eq:uniform-dkw-bounded-diff}
        V \leq \mathbb{E}(V) + \sqrt{\frac{\log(1/\delta)}{2n}}.
    \end{align}
    Now define
    \begin{align*}
        \mathcal{G} \!\coloneqq \!\Bigl\{ g:\mathbb{R}^d \times \mathbb{R}_{\star} \to \mathbb{R} \text{ s.t. } g(x,z) \!=\! (z \!-\! x^\top\theta \!-\! t)\mathbbm{1}_{\{z\neq \star\}}\!\! +  \!\!\mathbbm{1}_{\{z= \star\}} \text{ for some } \theta\in\mathbb{R}^d,\, t\in\mathbb{R} \Bigr\},
    \end{align*}
    and define $\mathcal{H}_+ \coloneqq \bigl\{ (x,z) \mapsto \mathbbm{1}_{\{g(x,z) \leq 0\}} : g \in \mathcal{G}\bigr\}$ and $\mathcal{H}_- \coloneqq \bigl\{ (x,z) \mapsto \mathbbm{1}_{\{g(x,z) \leq 0\}} : g \in -\mathcal{G}\bigr\}$. Then 
    \begin{align*}
        V = \sup_{h \in \mathcal{H}_+ \cup \mathcal{H}_-} \biggl| \frac{1}{n} \sum_{i=1}^n \bigl\{h(x_i,Z_i) - \mathbb{E} h(x_i,Z_i)\bigr\} \biggr|.
    \end{align*}
    Since $\mathcal{G}$ is a vector space of functions with $\dim(\mathcal{G}) = d+1$, by \citet[Exercise~3.24(b)]{mohri2018foundations} and \citet[Proposition~4.20]{wainwright2019high}, we deduce that $\mathrm{VC}(\mathcal{H}_+ \cup \mathcal{H}_-) \leq \mathrm{VC}(\mathcal{H}_+) + \mathrm{VC}(\mathcal{H}_-) + 1 \leq 2\dim(\mathcal{G}) + 1 \leq 2d+3$. Therefore, applying~\citet[Theorem~8.3.23]{vershynin2018high} yields that $\mathbb{E}(V) \leq C'\sqrt{\frac{2d+3}{n}}$ for some universal constant $C'>0$. Combining this with~\eqref{eq:uniform-dkw-bounded-diff} proves the desired result.
\end{proof}

\begin{lemma} \label{lem:fixed-KS-linear-regression}
    Consider the setting of Theorem~\ref{thm:gaussian-realisable-response}, and assume that $\theta \neq \theta_0$.  Then, writing $a\coloneqq \frac{1}{2}\|\theta_0 - \theta\|_2 > 0$ and $b \coloneqq \frac{1}{2}\log\bigl( 1+ \frac{4(1-\beta q(1-\epsilon))}{\beta q(1-\epsilon)} \bigr)$, we have
    \begin{align*}
        d_{\mathrm{K}}^{\mathrm{sym}}\bigl(R_{n,\theta},\mathcal{R}_0^{\mathrm{Lin}}\bigr) \geq \beta q(1-\epsilon)\Phi\biggl( \frac{a\gamma}{\sigma} - \frac{2\sigma b}{a\gamma} \biggr) - \Phi\biggl( -\frac{a\gamma}{\sigma} - \frac{2\sigma b}{a\gamma} \biggr) \eqqcolon f_{\mathrm{K},b}(a),
    \end{align*}
 where $f_{\mathrm{K},b} : (0,\infty) \to (0,\infty)$ is strictly increasing and continuous.
\end{lemma}
\begin{proof}
    By Assumption~\ref{asm:fixed-design-regularity}, we may assume without loss of generality that there exists $\mathcal{T}_+ \subseteq [n]$ such that $|\mathcal{T}_+| \geq \beta n$ and $-x_i^\top (\theta_0 - \theta) \geq 2a\gamma$.  By Proposition~\ref{prop:univariate-realisability}, for $i\in\mathcal{T}_+$ and $t\in\mathbb{R}$, we have
    \begin{align*}
        \tilde{R}_{i,\theta}\bigl((-\infty,t]\bigr) \geq q(1-\epsilon)\Phi_{(0,\sigma)}\bigl( t - x_i^\top(\theta_0 - \theta) \bigr) &\geq q(1-\epsilon)\Phi_{(0,\sigma)}(t+2a\gamma) \\
        &= q(1-\epsilon)\Phi_{(-2a\gamma,\sigma)}(t).
    \end{align*}
    Moreover, by Proposition~\ref{prop:univariate-realisability} again for $R_0 \in \mathcal{R}_0^{\mathrm{Lin}}$ and $t\in\mathbb{R}$, we have $R_0\bigl( (-\infty,t] \bigr) \leq \Phi_{(0,\sigma)}(t)$. Therefore,
    \begin{align*}
        d_{\mathrm{K}}^{\mathrm{sym}}\bigl(R_{n,\theta} , \mathcal{R}_0^{\mathrm{Lin}}\bigr)
        &\geq \inf_{R_0\in \mathcal{R}_0^{\mathrm{Lin}}} \sup_{t \in \mathbb{R}} \biggl\{\frac{1}{n}\sum_{i=1}^n \tilde{R}_{i,\theta}\bigl( (-\infty,t] \bigr) - R_0\bigl( (-\infty,t] \bigr) \biggr\}\\
        &\geq \inf_{R_0\in \mathcal{R}_0^{\mathrm{Lin}}} \sup_{t \in \mathbb{R}}\, \biggl\{\frac{1}{n}\sum_{i\in\mathcal{T}_+} \tilde{R}_{i,\theta}\bigl( (-\infty,t] \bigr) - R_0\bigl( (-\infty,t] \bigr) \biggr\}\\
        &\geq \sup_{t\in\mathbb{R}}\, \bigl\{ \beta q(1-\epsilon)\Phi_{(-2a\gamma,\sigma)}(t) - \Phi_{(0,\sigma)}(t) \bigr\}\\
        &\geq \beta q(1-\epsilon)\Phi\biggl( \frac{a\gamma}{\sigma} - \frac{2\sigma b}{a\gamma} \biggr) - \Phi\biggl( -\frac{a\gamma}{\sigma} - \frac{2\sigma b}{a\gamma} \biggr) = f_{\mathrm{K},b}(a),
    \end{align*}
    where the final inequality follows by choosing $t = -\frac{2\sigma^2b}{a\gamma} - a\gamma$.  The function $f_{\mathrm{K},b}$ is continuous as a composition of continuous functions, and the fact that it is strictly increasing follows as in the proof of Lemma~\ref{lemma:one-dim-kolmogorov-distance-realisable-sets}, setting $(\epsilon, q)$ therein as $(\bar{\epsilon}, \bar{q})$, with $\bar{\epsilon} \coloneqq 1 - \beta q(1 - \epsilon)$ and $\bar{q} \coloneqq 1$.
\end{proof}

\begin{proof}[Proof of Theorem~\ref{thm:gaussian-realisable-response}]
    Let $\delta\in(0,1]$ and for the universal constant $C > 0$ from Lemma~\ref{lemma:uniform-dkw}, define the event
    \[
    \mathcal{E} \coloneqq \biggl\{\sup_{\theta\in\mathbb{R}^d} d_{\mathrm{K}}^{\mathrm{sym}}(\hat{R}_{n,\theta}, R_{n,\theta}) \leq C\sqrt{\frac{d+\log(1/\delta)}{n}}\biggr\}. 
    \]
    By Lemma~\ref{lemma:uniform-dkw}, satisfies $\mathbb{P}(\mathcal{E}\,|\,X_1=x_1,\ldots,X_n=x_n) \geq 1-\delta$, and from now on, we will work on the event $\mathcal{E}$.  Recalling that $R_{n,\theta_0}\in \mathcal{R}_0^{\mathrm{Lin}}$, we have
    \begin{align*}
        d_{\mathrm{K}}^{\mathrm{sym}}(\hat{R}_{n,\theta_0}, \mathcal{R}_0^{\mathrm{Lin}}) \leq d_{\mathrm{K}}^{\mathrm{sym}}(\hat{R}_{n,\theta_0}, R_{n,\theta_0}) \leq C\sqrt{\frac{d+\log(1/\delta)}{n}}.
    \end{align*}
    Moreover, if $\theta\in\mathbb{R}^d$ satisfies $d_{\mathrm{K}}^{\mathrm{sym}}(R_{n,\theta}, \mathcal{R}_0^{\mathrm{Lin}}) > 2C\sqrt{\frac{d+\log(1/\delta)}{n}}$, then 
    \begin{align*}
        d_{\mathrm{K}}^{\mathrm{sym}}(\hat{R}_{n,\theta}, \mathcal{R}_0^{\mathrm{Lin}}) &\geq d_{\mathrm{K}}^{\mathrm{sym}}(R_{n,\theta}, \mathcal{R}_0^{\mathrm{Lin}}) - d_{\mathrm{K}}^{\mathrm{sym}}(\hat{R}_{n,\theta}, R_{n,\theta})\\
        &> C\sqrt{\frac{d+\log(1/\delta)}{n}} \geq d_{\mathrm{K}}^{\mathrm{sym}}(\hat{R}_{n,\theta_0}, \mathcal{R}_0^{\mathrm{Lin}}),
    \end{align*}
   so $\hat{\theta}_n^{\mathrm{K}} \neq \theta$. Therefore, with $b$ and $f_{\mathrm{K},b}$ as defined in Lemma~\ref{lem:fixed-KS-linear-regression}, we deduce that with probability at least $1-\delta$,
    \begin{align*}
        \|\hat{\theta}^{\mathrm{K}}_n - \theta_0\|_2 &\leq \sup \biggl\{ \|\theta - \theta_0\|_2 : \theta\in\mathbb{R}^d ,\, d_{\mathrm{K}}^{\mathrm{sym}}(R_{n,\theta}, \mathcal{R}_0^{\mathrm{Lin}}) \leq 2C\sqrt{\frac{d+\log(1/\delta)}{n}} \biggr\}\\
        &\leq 2\inf \biggl\{ a>0 : \beta q(1-\epsilon)\Phi\biggl( \frac{a\gamma}{\sigma} - \frac{2\sigma b}{a\gamma} \biggr) - \Phi\biggl( -\frac{a\gamma}{\sigma} - \frac{2\sigma b}{a\gamma} \biggr)\\
        &\hspace{7cm}\geq 2C\sqrt{\frac{d+\log(1/\delta)}{n}} \biggr\}, \numberthis \label{eq:gaussian-realisable-response-eq1}
    \end{align*}
    where to obtain the second inequality, we note that by Lemma~\ref{lem:fixed-KS-linear-regression}, $d_{\mathrm{K}}^{\mathrm{sym}}(R_{n,\theta}, \mathcal{R}_0^{\mathrm{Lin}}) \geq f_{\mathrm{K},b}\bigl( \frac{\|\theta-\theta_0\|_2}{2} \bigr)$ and $f_{\mathrm{K},b}$ is a strictly increasing and continuous function.  Letting $a = \frac{3\sigma b}{\gamma\sqrt{\xi\log n}}$, we have by our assumption on $b$ that $2\sigma b/(a\gamma) - a\gamma/\sigma = \frac{2}{3}\sqrt{\xi\log n} - \frac{3b}{\sqrt{\xi\log n}} > 0$, so
    \begin{align*}
        \beta q(1&-\epsilon)\Phi\biggl( \frac{a\gamma}{\sigma} - \frac{2\sigma b}{a\gamma} \biggr) - \Phi\biggl( -\frac{a\gamma}{\sigma} - \frac{2\sigma b}{a\gamma} \biggr)\\
        \overset{(i)}&{\geq} \frac{\beta q(1-\epsilon)}{\bigl(-\frac{a\gamma}{\sigma} + \frac{2\sigma b}{a\gamma}\bigr) + \bigl(-\frac{a\gamma}{\sigma} + \frac{2\sigma b}{a\gamma}\bigr)^{-1}} \cdot \phi\biggl(-\frac{a\gamma}{\sigma} + \frac{2\sigma b}{a\gamma}\biggr) - \frac{1}{\frac{a\gamma}{\sigma} + \frac{2\sigma b}{a\gamma}} \cdot \phi\biggl( \frac{a\gamma}{\sigma} + \frac{2\sigma b}{a\gamma} \biggr)\\
        \overset{(ii)}&{\geq} \biggl( \frac{a\gamma}{\sigma} + \frac{2\sigma b}{a\gamma} \biggr)^{-1} \frac{1}{\sqrt{2\pi}} \biggl\{ \beta q(1-\epsilon) \exp\biggl( -\frac{a^2\gamma^2}{2\sigma^2} - \frac{2\sigma^2b^2}{a^2\gamma^2} + 2b \biggr) \\
        &\hspace{8cm} - \exp\biggl( -\frac{a^2\gamma^2}{2\sigma^2} -\frac{2\sigma^2b^2}{a^2\gamma^2} - 2b \biggr) \biggr\}\\
        \overset{(iii)}&{\geq} \biggl( \frac{a\gamma}{\sigma} + \frac{2\sigma b}{a\gamma} \biggr)^{-1} \frac{\sqrt{2}}{\sqrt{\pi}} \cdot \exp\biggl( -\frac{a^2\gamma^2}{2\sigma^2} - \frac{2\sigma^2b^2}{a^2\gamma^2}\biggr)\\
        \overset{(iv)}&{\geq} \frac{1}{\sqrt{\xi\log n}} \cdot n^{-\xi/4} \overset{(v)}{\geq} 2C\sqrt{\frac{d+\log(1/\delta)}{n}}
    \end{align*}
    where $(i)$ follows from the Mills ratio bound $\phi(x)/(x + x^{-1}) \leq \Phi(-x) \leq \phi(x)/x$ for $x > 0$; $(ii)$ follows since $\frac{1}{2} \leq b \leq \frac{\xi\log n}{9}$ implies $\bigl(-\frac{a\gamma}{\sigma} + \frac{2\sigma b}{a\gamma}\bigr) + \bigl(-\frac{a\gamma}{\sigma} + \frac{2\sigma b}{a\gamma}\bigr)^{-1} \leq \frac{a\gamma}{\sigma} + \frac{2\sigma b}{a\gamma}$; $(iii)$ follows by substituting the definition of $b$ and using the fact that $\beta q(1-\epsilon)\leq 1/2$; $(iv)$ follows since $b \leq \frac{\xi\log n}{30}$ implies $\frac{a^2\gamma^2}{\sigma^2} \leq \frac{\xi\log n}{100}$; and $(v)$ follows from the assumption that $n^{1-\xi} \geq C_1\bigl\{d+\log(1/\delta)\bigr\}$ with $C_1 \coloneqq 4C^2$ and using the fact that $x^{\xi/2} \geq \xi\log x$ for $x\in(0,\infty)$. Therefore, with probability at least $1-\delta$, we have by~\eqref{eq:gaussian-realisable-response-eq1} that
    \begin{align*}
        \|\hat{\theta}_n^{\mathrm{K}} - \theta_0\|_2^2 \leq \frac{36\sigma^2 b^2}{\gamma^2\xi\log n}\leq \frac{9\sigma^2 \log^2\bigl( 1+ \frac{4(1-\beta q(1-\epsilon))}{\beta q(1-\epsilon)} \bigr)}{\gamma^2 \xi \log \bigl(nq(1-\epsilon)\bigr)},
    \end{align*}
    as required.
\end{proof}

\section{Proof of Proposition~\ref{prop:adaptation}}
\begin{proof}[Proof of Proposition~\ref{prop:adaptation}]
    Let $\epsilon_1 \coloneqq \min \{\epsilon\in\mathcal{C} : \epsilon\geq\epsilon_{\star}\}$. For $\epsilon\in\mathcal{C}$, define the event $\mathcal{E}_{\epsilon} \coloneqq \bigl\{ \|\hat{\theta}_n(\epsilon,\delta') - \theta_0\|_2 \leq \phi(\epsilon,\delta') \bigr\}$ and let $\mathcal{E} \coloneqq \bigcap_{\epsilon\in\mathcal{C}: \epsilon\geq\epsilon_1} \mathcal{E}_{\epsilon}$. By~\eqref{eq:phi-def} and a union bound, we have $\mathbb{P}(\mathcal{E}) \geq 1-\delta$. On the event $\mathcal{E}$, we have
    \begin{align*}
        \theta_0 \in \bigcap_{\epsilon\in\mathcal{C}: \epsilon\geq\epsilon_1} B_2\bigl(\hat{\theta}_n(\epsilon,\delta'), \phi(\epsilon,\delta')\bigr),
    \end{align*}
    so by definition of $\epsilon_0$, we have $\epsilon_0 \leq \epsilon_1$.  Moreover, on $\mathcal{E}$, we have 
    \[
    B_2\bigl(\hat{\theta}_n(\epsilon_0,\delta'), \phi(\epsilon_0,\delta')\bigr) \bigcap B_2\bigl(\hat{\theta}_n(\epsilon_1,\delta'), \phi(\epsilon_1,\delta')\bigr) \neq \emptyset,
    \]
    so that
    \begin{align}
        \|\tilde{\theta}_n(\delta) - \theta_0\|_2 = \|\hat{\theta}_n(\epsilon_0,\delta') - \theta_0\|_2 &\leq \|\hat{\theta}_n(\epsilon_0,\delta') - \hat{\theta}_n(\epsilon_1,\delta')\|_2 + \|\hat{\theta}_n(\epsilon_1,\delta') - \theta_0\|_2\nonumber\\
        &\leq \bigl(\phi(\epsilon_0,\delta') + \phi(\epsilon_1,\delta')\bigr) + \phi(\epsilon_1,\delta') \leq 3\phi(\epsilon_1,\delta'), \label{eq:lepski-triangle-ineq}
    \end{align}
    where the final inequality follows since $\epsilon_0 \leq \epsilon_1$. If $\epsilon_{\star} < \epsilon_{\max} \cdot 2^{-\lceil\log_2 n\rceil}$, then $\epsilon_1 = \epsilon_{\max} \cdot 2^{-\lceil\log_2 n\rceil}\leq \epsilon_{\max}/n$, so by the assumption on $\phi$ and~\eqref{eq:lepski-triangle-ineq}, we have with probability at least $1-\delta$ that
    \begin{align*}
        \|\hat{\theta}_n(\epsilon_0,\delta') - \theta_0\|_2 \leq 3\phi(\epsilon_{\max}/n,\delta') \leq 3C \phi(0,\delta') \leq 3C \phi(2\epsilon_{\star}\wedge\epsilon_{\max},\delta').
    \end{align*}
    On the other hand, if $\epsilon_{\star} \geq \epsilon_{\max} \cdot 2^{-\lceil\log_2 n\rceil}$, then $\epsilon_1\leq 2\epsilon_{\star}\wedge\epsilon_{\max}$, so by the assumption on $\phi$ and~\eqref{eq:lepski-triangle-ineq}, we have with probability at least $1-\delta$ that
    \begin{align*}
        \|\hat{\theta}_n(\epsilon_0,\delta') - \theta_0\|_2 \leq 3\phi(2\epsilon_{\star}\wedge\epsilon_{\max},\delta').
    \end{align*}
    This completes the proof.
\end{proof}

\section{Auxiliary lemmas}
\label{sec:auxiliary}

If $\mathcal{Z}$ is a topological space, then we define the embedding $\phi_{\mathcal{Z}}: C_{\mathrm{b}}(\mathcal{Z}) \to \mathcal{M}(\mathcal{Z})^*$ by $\phi_{\mathcal{Z}}(f)(\mu) \coloneqq \mu(f)$.  If $\mathcal{Z}$ is a locally compact Hausdorff space, then a Borel measure $\mu$ on $\mathcal{Z}$ is \emph{regular} if $\mu(E) = \inf\{\mu(U): U \supseteq E, U \ \text{open}\}$ and $\mu(E) = \sup\{\mu(K): K \subseteq E, K \ \text{compact}\}$ for every Borel subset $E$ of $\mathcal{Z}$.  
\begin{lemma}
\label{Lemma:DualPair}
Let $\mathcal{Z}$ be a locally compact Hausdorff space in which every open set is $\sigma$-compact.  Then $\phi_\mathcal{Z}$ embeds $C_{\mathrm{b}}(\mathcal{Z})$ into a subspace of $\mathcal{M}(\mathcal{Z})^*$ that separates points. 
\end{lemma}
\begin{proof}
If $f, g \in C_{\mathrm{b}}(\mathcal{Z})$ and $\lambda_1,\lambda_2 \in \mathbb{R}$, then $\phi_{\mathcal{Z}}(\lambda_1 f + \lambda_2 g) = \lambda_1 \phi_{\mathcal{Z}}(f) + \lambda_2\phi_{\mathcal{Z}}(g)$, so $\phi_{\mathcal{Z}}$ embeds $C_{\mathrm{b}}(\mathcal{Z})$ into a subspace of $\mathcal{M}(\mathcal{Z})^*$.  

Let $\mu$ and $\mu'$ be two distinct measures in $\mathcal{M}(\mathcal{Z})$ and define $\nu \coloneqq \mu-\mu' \in \mathcal{M}(\mathcal{Z})$. By the the Jordan decomposition theorem \citep[Theorem~3.3]{folland1999real}, we can write $\nu = \nu_+ - \nu_-$ where $\nu_+, \nu_- \in \mathcal{M}_+(\mathcal{Z})$ are supported on disjoint measurable sets $P, N \subseteq \mathcal{Z}$ respectively.  Since $\nu\neq 0$, there exists a Borel set $B \subseteq \mathcal{Z}$ and $\epsilon > 0$ such that either $\nu_+(B\cap P) \geq \epsilon$ or $\nu_-(B\cap N) \geq \epsilon$. Without loss of generality, we assume the former.  By \citet[][Theorem~7.8]{folland1999real}, $\nu_+$ and $\nu_-$ are regular measures, so there exists a compact set $K \subseteq \mathcal{Z}$ and an open set $U \subseteq \mathcal{Z}$ such that $K\subseteq B\cap P\subseteq U$ and $\nu_+(U\setminus K) + \nu_-(U\setminus K) \leq \epsilon/2$. By Urysohn's lemma for locally compact Hausdorff spaces \citep[Lemma~4.32]{folland1999real}, there exists a continuous function $f:\mathcal{Z}\to [0,1]$ such that $f(K) = \{1\}$, $f(U^c) = \{0\}$. Observe that
\begin{align*}
\nu(f) & \geq \nu_+(K) - \nu_-(U\setminus P) \geq \nu_+(B\cap P) - \bigl(\nu_+(U\setminus K) + \nu_-(U\setminus K)\bigr) \geq \epsilon/2.
\end{align*}
Consequently, $f\in C_{\mathrm{b}}(\mathcal{Z})$ separates $\mu$ and $\mu'$ as desired.
\end{proof}
If $(X,\tau)$ and $(Y,\sigma)$ are topological spaces, we write $\tau \otimes \sigma$ for the product topology on the Cartesian product $X \times Y$, i.e.~$\tau \otimes \sigma$ is the coarsest topology for which the projections $(x,y) \mapsto x$ and $(x,y) \mapsto y$ are continuous. 
\begin{lemma}
\label{Lemma:ProductWeakTopology}
If $X$ and $Y$ are real vector spaces and  $X'$ and $Y'$ are subspaces of $X^*$ and $Y^*$, then the map $\iota:X'\times Y' \rightarrow (X\times Y)^*$ given by $\iota(f,g)(x,y) := f(x) + g(y)$ embeds $X' \times Y'$ as a subspace of $(X\times Y)^*$.  Furthermore, if $\tau(X; X')$, $\tau(Y; Y')$ and $\tau\bigl(X\times Y; \iota(X'\times Y')\bigr)$ denote the weak topologies generated by $X'$, $Y'$ and $\iota(X'\times Y')$ on $X$, $Y$ and $X\times Y$ respectively, then $\tau(X; X')\otimes \tau(Y; Y') = \tau\bigl(X\times Y; \iota(X'\times Y')\bigr)$.
\end{lemma}
\begin{proof}
To check that $\iota$ embeds $X'\times Y'$ as a subspace of $(X\times Y)^*$,  we only need to verify the bilinearity of the map $((x,y),(f,g))\mapsto f(x)+g(y)$ on $(X \times Y) \times (X' \times Y')$, which is true since $X,Y,X',Y'$ are vector spaces and $(X,X')$, $(Y,Y')$ are dual pairs.

For the second claim, let $\pi_X:X\times Y\to X$ and $\pi_Y:X\times Y\to Y$ be projection maps defined by $\pi_X(x,y) \coloneqq x$ and $\pi_Y(x,y) \coloneqq y$.  By the definition of the product topology, $\tau(X;X')\otimes \tau(Y;Y')$ is the coarsest topology on $X\times Y$ under which both $\pi_X$ and $\pi_Y$ are continuous. Also, $\tau\bigl(X\times Y;\iota(X'\times Y')\bigr)$ is the coarsest topology on $X\times Y$ under which $\iota(f,g)$ is continuous for all $f\in X'$ and $g\in Y'$. Hence the desired result is equivalent to the statement that for any topology $\mathcal{T}$ on $X\times Y$, the functions $\pi_X:(X\times Y, \mathcal{T})\to(X,\tau(X;X'))$ and $\pi_Y:(X\times Y, \mathcal{T})\to(Y,\tau(Y;Y'))$ are continuous if and only if $\iota(f,g):(X\times Y, \mathcal{T}) \to \mathbb{R}$ is continuous for all $(f,g)\in X'\times Y'$.

The `only if' direction is true since for any $(f,g)\in X'\times Y'$,  $\iota(f,g) = f\circ \pi_X + g\circ \pi_Y$ is the sum of compositions of continuous functions, and hence continuous.    For the `if' direction, we assume that $\iota(f,g)$ is continuous for all $(f,g)\in X'\times Y'$; by symmetry we only need to check that $\pi_X$ is continuous. Taking $g$ to be the zero map, we have $\iota(f,0)(x,y) = f(\pi_X(x,y))$, so $f\circ\pi_X$ is continuous for every $f$. Open sets in $(X,\tau(X;X'))$ are unions of sets in $\{f^{-1}(U): f\in X', \text{$U$ open in $\mathbb{R}$}\}$. Since $f\circ \pi_X$ is continuous, we have
    \[
    \pi_X^{-1}\bigl(f^{-1}(U)\bigr) = (f\circ \pi_X)^{-1}(U)
    \]
    is open for every $f\in X'$ and $U$ open in $\mathbb{R}$. Therefore, $\pi_X$ is continuous as desired, and this establishes the lemma.
\end{proof}

\begin{lemma}
\label{Lemma:ProductContinuity}
    Let $X,Y,Z$ be topological spaces and equip $Y\times Z$ with the product topology. Then $f:X\to Y$ and $g:X\to Z$ are continuous if and only if $h:x\mapsto \bigl(f(x),g(x)\bigr)$ is a continuous function from $X$ to $Y\times Z$. 
\end{lemma}
\begin{proof}
    By definition of the product topology, the projection maps $\pi_Y:Y\times Z \to Y$ and $\pi_Z:Y\times Z \to Z$ defined by $\pi_Y(y,z) \coloneqq y$ and $\pi_Z(y,z) \coloneqq z$ are continuous. This proves the `if' direction since $f = \pi_Y\circ h$ and $g = \pi_Z\circ h$. For the `only if' direction, we observe that open sets in $Y\times Z$ are unions of sets of the form $U\times V$ for $U$ open in $Y$ and $V$ open in $Z$. Since $f$ and $g$ are continuous, $h^{-1}(U\times V) = f^{-1}(U)\cap g^{-1}(V)$ is open in $X$, so $h$ is continuous as desired.
\end{proof}

Recall that if $A_1$ and $A_2$ are sets, then the \emph{disjoint union} of $A_1$ and $A_2$ is defined by $A_1 \sqcup A_2:=\{(a,1):a\in A_1\}\cup \{(a,2):a\in A_2\}$.  Moreover, if $(A_1,\tau_1)$ and $(A_2,\tau_2)$ are topological spaces, then $A_1 \sqcup A_2$ can be endowed with the \emph{disjoint union topology}, given by $\bigl\{(U_1\times\{1\}) \cup (U_2\times \{2\}):U_1\in\tau_1, \, U_2\in\tau_2\bigr\}$.  In the special case where $A_1$ and $A_2$ are disjoint subsets of a topological space $(\mathcal{X},\tau)$, the second argument of elements in $A_1 \sqcup A_2$ becomes redundant, so we can identify $A_1\sqcup A_2$ with $A_1 \cup A_2$, and we may write the disjoint union topology simply as $\{U_1 \cup U_2:U_1 \in A_1 \cap \tau,U_2 \in A_2 \cap \tau\}$.

\begin{lemma}\label{lem:projection-of-open-set-compact-set}
    Let $\mathcal{Z}_1, \ldots, \mathcal{Z}_d$ be topological spaces, and let $\mathcal{Z} \coloneqq \prod_{j=1}^d \mathcal{Z}_j$ be the product space equipped with the product topology. Let $S \subseteq [d]$ be non-empty and let $\mathcal{Z}_S \coloneqq \prod_{j\in S} \mathcal{Z}_j$ be the product space equipped with the product topology. 
    \begin{itemize}
        \item[(a)] If $U\subseteq \mathcal{Z}$ is open, then the set $U_S\coloneqq \{x_S : x\in U\}$ is open in $\mathcal{Z}_S$.
        \item[(b)] If $K\subseteq \mathcal{Z}$ is compact, then the set $K_S\coloneqq \{x_S : x\in K\}$ is compact in $\mathcal{Z}_S$.
    \end{itemize}
\end{lemma}
\begin{proof}
    (a) We can write $U = \bigcup_{i\in I} U^{(i)}$ for some index set $I$, where $U^{(i)} = \prod_{j=1}^d U^{(i)}_j$ and $U^{(i)}_j$ is open in $\mathcal{Z}_j$ for all $i\in I$, $j\in[d]$. Hence $U_S = \bigcup_{i\in I} U^{(i)}_S$ where $U^{(i)}_S = \prod_{j\in S} U^{(i)}_j$, so $U_S$ is open in $\mathcal{Z}_S$.

    \vspace{1em}
    \noindent (b) For any open cover $\{U^{(i)}_S\}_{i\in I}$ of $K_S$, define $U^{(i)} \coloneqq \{x\in\mathcal{Z} : x_S \in U^{(i)}_S\}$ for $i\in I$. Note that $U^{(i)}$ is open in $\mathcal{Z}$ for $i\in I$, as it is the pre-image of an open set under a projection map (which is continuous, by definition of the product topology). Thus, $\{U^{(i)}\}_{i\in I}$ is an open cover of $K$, which has a finite subcover $I_0 \subseteq I$ since $K$ is compact. Therefore, $\{U^{(i)}_S\}_{i\in I_0}$ is also a finite subcover of $K_S$, so $K_S$ is compact in $\mathcal{Z}_S$.
\end{proof}

\begin{lemma}
\label{Lemma:Preservation}
Let $\mathcal{X}_1$ and $\mathcal{X}_2$ be topological spaces.  
\begin{enumerate}[(a)]
\item If $\mathcal{X}_1$ and $\mathcal{X}_2$ are Hausdorff, then $\mathcal{X}_1 \times \mathcal{X}_2$ is Hausdorff in the product topology and $\mathcal{X}_1 \sqcup \mathcal{X}_2$ is Hausdorff in the disjoint union topology.
\item If $\mathcal{X}_1$ and $\mathcal{X}_2$ are locally compact, then $\mathcal{X} \times \mathcal{X}_2$ is locally compact in the product topology and $\mathcal{X}_1 \sqcup \mathcal{X}_2$ is locally compact in the disjoint union topology.
\end{enumerate}
\end{lemma}
\begin{proof}
(a) The first statement follows from \citet[Theorem~19.4]{munkrestopology}. 
 For the second statement, let $(x_1,j_1),(x_2,j_2) \in \mathcal{X}_1 \sqcup \mathcal{X}_2$ be distinct, where $j_1,j_2 \in \{1,2\}$, and for $\ell \in \{1,2\}$, we have $x_\ell \in \mathcal{X}_\ell$.  If $j_1 = j_2$, then the result follows from the fact that $\mathcal{X}$ and $\mathcal{X}_2$ are Hausdorff.  Otherwise, we can separate the two points using the open sets $\mathcal{X}_1\times \{1\}$ and $\mathcal{X}_2\times \{2\}$.

\medskip

\noindent (b) For the first statement, if $x_1 \in \mathcal{X}_1$ and $x_2 \in \mathcal{X}_2$, then we can find compact neighbourhoods $K_j \subseteq \mathcal{X}_j$ of $x_j$ for $j\in\{1,2\}$.  Then by Tychonov's theorem \citep[Theorem~37.3]{munkrestopology}, $K_1 \times K_2$ is a compact neighbourhood of $(x_1,x_2)$.  For the second statement, if $(x,j)\in \mathcal{X}_1 \sqcup\mathcal{X}_2$, then we can find a compact subset $K\subseteq \mathcal{X}_j$ containing $x$. Then $K\times \{j\}$ is a compact subset of $\mathcal{X}_1\sqcup\mathcal{X}_2$ containing $(x,j)$.
\end{proof}

\begin{lemma}
Let $\mathcal{X}_1,\ldots,\mathcal{X}_d$ be locally compact Hausdorff spaces, and let $\mathcal{X} \coloneqq \prod_{j=1}^d \mathcal{X}_j$.  Then $\mathcal{X}$, $\mathcal{X}_\star$ and $\mathcal{X} \times 2^{[d]}$ are locally compact Hausdorff spaces.  Moreover, if every open set in~$\mathcal{X}$ is $\sigma$-compact, then $\mathcal{X}_\star$ and $\mathcal{X} \times 2^{[d]}$ also have this property.   
\end{lemma}
\begin{proof}
The fact that $\mathcal{X}$ is a locally compact Hausdorff space follows from Lemma~\ref{Lemma:Preservation}.  Moreover, the singleton space $\{\star\}$ as well as the space $2^{[d]}$ endowed with the discrete topology are both locally compact Hausdorff spaces.  We observe that $\mathcal{X}$, $\mathcal{X}_\star$ and $\mathcal{X} \times 2^{[d]}$ can be generated from $\mathcal{X}_1,\ldots,\mathcal{X}_d$, $\{\star\}$, $2^{[d]}$ via a combination of product space and disjoint union operations. Hence the first result follows from Lemma~\ref{Lemma:Preservation}. 

    To check that every open set in $\mathcal{X}_{\star} = \bigsqcup_{S\in 2^{[d]}} \mathcal{X}^{(S)}$ is $\sigma$-compact, observe that for any open set $U \subseteq \mathcal{X}_{\star}$, we can write $U = \bigcup_{S \subseteq [d]} U^{(S)}$ where $U^{(S)} \coloneqq U \cap \mathcal{X}^{(S)}$. Therefore, it suffices to show that for every $S \subseteq [d]$, any open set $U\subseteq \mathcal{X}^{(S)}$ is $\sigma$-compact. Let $U_S \coloneqq \{a_S : a\in U\}$ and let $V \coloneqq \{x\in \mathcal{X} : x_S \in U_S\}$. Then $V$ is open in $\mathcal{X}$ since it is the pre-image of a projection of an open set, so we can write $V = \bigcup_{i=1}^\infty K(i)$, where $K(i)$ is compact in $\mathcal{X}$ for each $i$.  Moreover, $U_S = \bigcup_{i=1}^\infty K(i)_S$, and $K(i)_S$ is compact in $\mathcal{X}_S$ by Lemma~\ref{lem:projection-of-open-set-compact-set}(b). We claim that $K(i)^{(S)} \coloneqq \{z\in\mathcal{X}_\star : z_S\in K(i)_S,\, z_j=\star\; \forall j\notin S\}$ is compact.  To see this, consider any open cover $\{U(j)\}_{j\in J}$ of $K(i)^{(S)}$, where, without loss of generality, we assume that $U(j) \subseteq \mathcal{X}^{(S)}$ for all $j\in J$.  Writing $U(j)_{S} \coloneqq \{a_S : a\in U(j)\}$ for $j\in J$, we have by Lemma~\ref{lem:projection-of-open-set-compact-set}(a) that $\{U(j)_{S}\}_{j \in J}$ forms an open cover of $K(i)_S$.  We can therefore find a finite subcover $\{U(j)_S\}_{j\in J_0}$ of $K(i)_S$, so that $\{U(j)\}_{j\in J_0}$ forms a finite subcover of $K(i)^{(S)}$.  We deduce that $U = \bigcup_{i=1}^\infty K(i)^{(S)}$ is a countable union of compact sets. 

    To show that every open set in $\mathcal{X} \times 2^{[d]}$ is $\sigma$-compact, observe that since $2^{[d]}$ is finite, any open set in $\mathcal{X} \times 2^{[d]}$ is of the form $\bigcup_{S \subseteq [d]} \bigl\{U(S) \times S\bigr\}$, where $U(S)$ is open in $\mathcal{X}$.  Since each $U(S)$ is $\sigma$-compact and $S$ is finite (hence compact), it follows that each set $U(S) \times S$ is $\sigma$-compact, and hence $\bigcup_{S \subseteq [d]} \bigl\{U(S) \times S\bigr\}$ is $\sigma$-compact.
\end{proof}

\begin{lemma}\label{lemma:X-star-is-polish}
    If $(\mathcal{X}_1,\tau_1),\ldots,(\mathcal{X}_d,\tau_d)$ are Polish spaces, then the Cartesian product space $\mathcal{X}_{\star} \coloneqq \prod_{j=1}^d \mathcal{X}_{j,\star}$ equipped with the product topology is also a Polish space.
\end{lemma}
\begin{proof}
    A finite (or even countable) Cartesian product of Polish spaces is Polish \citep[][Proposition~3.3]{kechris2012classical}, so it suffices to show that $(\mathcal{X}_{j,\star},\tau_{j,\star})$ is Polish for each $j\in[d]$, where $\tau_{j,\star} \coloneqq \tau_j \cup \{A \cup \{\star\} : A\in \tau_j\}$. Now fix $j\in[d]$, and find a countable dense subset $\{x_n\}_{n=1}^\infty$ of $\mathcal{X}_j$. Then $\{\star\} \cup \{x_n\}_{n=1}^\infty$ is a countable dense subset of $\mathcal{X}_{j,\star}$, so $\mathcal{X}_{j,\star}$ is separable. Now find a metric $d$ on $\mathcal{X}_j$ such that $d$ generates the topology $\tau_j$ and $(\mathcal{X}_j,d)$ is complete. Define the standard bounded metric $\bar{d}$ by $\bar{d}(x,y) \coloneqq d(x,y) \wedge 1$ for $x,y\in\mathcal{X}_j$.  Then, by \citet[][Theorem~20.1]{munkrestopology}, $\bar{d}$ also generates the topology $\tau_j$. Define a metric $d'$ on $\mathcal{X}_{j,\star}$ by $d'(x,y)\coloneqq \bar{d}(x,y)$ for $x,y\in\mathcal{X}_j$, $d'(x,\star)\coloneqq 2$ for $x\in\mathcal{X}_j$, and $d'(\star,\star) \coloneqq 0$. Letting~$\tau_{j,\star}'$ denote the topology on $\mathcal{X}_{j,\star}$ generated by $d'$, we first show that $\tau_{j,\star}' = \tau_{j,\star}$. On the one hand, since $\{\star\} \in \tau_{j,\star}'$ and $\tau_j \subseteq \tau_{j,\star}'$, we have $\tau_{j,\star} \subseteq \tau_{j,\star}'$. On the other hand, if $x_0 \in \mathcal{X}_{j,\star}$, $r \geq 0$ and $A \coloneqq \{x\in\mathcal{X}_{j,\star} : d'(x,x_0) < r\}$ denotes an open ball in $\tau_{j,\star}'$, then
    \[
    A= \left\{ \begin{array}{ll} \mathcal{X}_{j,\star} & \mbox{if $r > 2$} \\ 
    \{x\in\mathcal{X}_j : \bar{d}(x,x_0) < r\} & \mbox{if $r \leq 2$ and $x_0 \in \mathcal{X}_j$}\\
    \{\star\} & \mbox{if $r \leq 2$ and $x_0 = \star$.}
    \end{array} \right.
    \]
    We deduce that $A \in \tau_{j,\star}$, so since such open balls generate $\tau_{j,\star}'$, we have $\tau_{j,\star}' \subseteq \tau_{j,\star}$. Hence, $d'$ generates the topology $\tau_{j,\star}$. Next, we show that $(\mathcal{X}_{j,\star}, d')$ is complete. Let $(z_n)_{n=1}^{\infty}$ be a Cauchy sequence in $\mathcal{X}_{j,\star}$, so there exists $N\in\mathbb{N}$ such that $d'(z_{n_1},z_{n_2}) \leq 1/2$ for all $n_1,n_2 \geq N$. Therefore, either $z_n = \star$ for all $n\geq N$ or $z_n \in \mathcal{X}_j$ for all $n\geq N$.  In the former case,  $z_n \to \star$ as $n \rightarrow \infty$.  In the latter case, $(z_n)_{n=N}^\infty$ is also a Cauchy sequence in $(\mathcal{X}_j, d)$ and hence by completeness of $(\mathcal{X}_j, d)$, it has a limit in $\mathcal{X}_j$. This shows that $(\mathcal{X}_{j,\star}, d')$ is complete and $d'$ generates the topology $\tau_{j,\star}$, so $(\mathcal{X}_{j,\star}, \tau_{j,\star})$ is completely metrisable. Therefore, $(\mathcal{X}_{j,\star}, \tau_{j,\star})$ is a Polish space, as required.
\end{proof}

The following lemma can be deduced from~\citet[][3.1(e), p.~95]{horn1990hadamard}, but for the convenience of the reader, we provide a short proof here.  
\begin{lemma}\label{lemma:operator-norm-of-hadamard-product}
    Let $A,B\in\mathcal{S}^{d\times d}$ and further suppose that $A$ is positive semi-definite.  Then $\|A\odot B\|_{\mathrm{op}} \leq \|A\|_{\infty} \|B\|_{\mathrm{op}}$.
\end{lemma}
\begin{proof}
The proof largely follows that of~\citet{horn1990hadamard}.  Since \begin{align*}
    \begin{pmatrix}
        A & A\\
        A & A
    \end{pmatrix} \in \mathcal{S}^{2d \times 2d} \quad\text{and}\quad \begin{pmatrix}
        \|B\|_{\mathrm{op}}I_d & B\\
        B & \|B\|_{\mathrm{op}}I_d
    \end{pmatrix} \in \mathcal{S}^{2d \times 2d}
\end{align*}
are both positive semi-definite, by the Schur product theorem \citep[Theorem 7.5.3(a)]{horn2012matrix}, their Hadamard product \begin{align*}
    \begin{pmatrix}
        \|B\|_{\mathrm{op}}(I_d \odot A) & A \odot B\\
        A \odot B & \|B\|_{\mathrm{op}}(I_d \odot A)
    \end{pmatrix}
\end{align*}
is also positive semi-definite. Hence, for any $v\in\mathbb{R}^d$, we have \begin{align*}
    0 &\leq \begin{pmatrix}
        v^\top & -v^\top
    \end{pmatrix} \begin{pmatrix}
        \|B\|_{\mathrm{op}}(I_d \odot A) & A \odot B\\
        A \odot B & \|B\|_{\mathrm{op}}(I_d \odot A)
    \end{pmatrix} \begin{pmatrix}
        v \\ -v
    \end{pmatrix}\\
    &= 2 \|B\|_{\mathrm{op}} v^\top(I \odot A) v - 2v^\top (A\odot B) v,
\end{align*}
so $\|A\odot B\|_{\mathrm{op}} \leq \|B\|_{\mathrm{op}} \|I \odot A\|_{\mathrm{op}} \leq \|A\|_{\infty} \|B\|_{\mathrm{op}}$.
\end{proof}

\begin{lemma}\label{lem:inverse-binomial-bounds}
        Let $Y \sim \mathsf{Bin}(n, q)$ for some $n \in \mathbb{N}$ and $q \in (0,1]$. Then 
    \begin{align*}
        \mathbb{E} \bigl( Y^{-1} \cdot \mathbbm{1}_{\{Y>0\}} \bigr) \leq \frac{2}{n q} \quad\text{and}\quad \mathbb{E}\bigl\{(Y+1)^{-2} \bigr\} \leq \frac{2}{n^2 q^2}.
    \end{align*}
\end{lemma}
\begin{proof}
    We have 
    \begin{align*}
        \mathbb{E}\bigl\{ (Y+1)^{-1} \bigr\} &= \sum_{y=0}^n (y+1)^{-1}\binom{n}{y} q^y (1-q)^{n-y}\\
        &= \sum_{y=0}^n \frac{1}{q(n+1)} \binom{n+1}{y+1} q^{y+1}(1-q)^{n-y}\\
        &= \frac{1}{q(n+1)} \sum_{y=1}^{n+1} \binom{n+1}{y} q^{y}(1-q)^{n+1-y}\leq \frac{1}{nq}.
    \end{align*}
    The first inequality in the statement then follows as $y^{-1} \leq 2(y + 1)^{-1}$ for all $y \geq 1$.  Similarly, we have 
    \begin{align*}
        \mathbb{E}\bigl\{ (Y+1)^{-1}(Y+2)^{-1} \bigr\} &= \sum_{y=0}^n (y+1)^{-1}(y+2)^{-1}\binom{n}{y} q^y (1-q)^{n-y}\\
        &= \sum_{y=0}^n \frac{1}{q^2(n+1)(n+2)} \binom{n+2}{y+2} q^{y+2}(1-q)^{n-y} \leq \frac{1}{n^2q^2}.
    \end{align*}
    The second inequality in the statement then follows since $(y+1)^{-2} \leq 2(y+1)^{-1}(y+2)^{-1}$ for all $y \geq 0$.  
\end{proof}

\begin{lemma}\label{lemma:binomial-tail}
    Suppose that $(B_i)_{i \in [n]} \overset{\mathrm{iid}}{\sim} \mathsf{Ber}(q)$. 
    \begin{itemize}
        \item[(a)] With probability at least $1 - \delta$, 
        \[
        \frac{1}{n} \sum_{i=1}^{n} B_i \leq 2q + \frac{\log(1/\delta)}{n}.
        \]
        \item[(b)] If $q\geq \frac{8\log(1/\delta)}{n}$, then with probability at least $1 - \delta$, 
        \[
        \frac{1}{n} \sum_{i=1}^{n} B_i \geq \frac{q}{2}.
        \]
    \end{itemize}
\end{lemma}
\begin{proof}
(a) By Bernstein's inequality \citep[][Theorem 2.10]{boucheron2003concentration}, we have with probability at least $1-\delta$ that
\begin{align*}
\frac{1}{n} \sum_{i=1}^{n} B_i &\leq q + \sqrt{\frac{2q(1-q)}{n}}\log^{1/2}(1/\delta) + \frac{1}{3n}\log(1/\delta) \\
&\leq \biggl(q^{1/2} + \frac{1}{\sqrt{2n}}\log^{1/2}(1/\delta)\biggr)^2 \leq 2q + \frac{1}{n}\log(1/\delta).
\end{align*}

(b) By the multiplicative Chernoff bound~\citep[][Theorem 2.3(c)]{McDiarmid1998} for the sum of independent Bernoulli random variables, we have
\begin{align*}
    \mathbb{P}\biggl( \frac{1}{n} \sum_{i=1}^{n} B_i \leq \frac{q}{2} \biggr) \leq \exp(-nq/8) \leq \delta,
\end{align*}
where the final inequality follows from the assumption that $q\geq \frac{8\log(1/\delta)}{n}$.
\end{proof}

\begin{lemma}\label{lemma:inclusion-of-psi-r-class}
    Let $0<r_1\leq r_2$.  Then $\mathcal{P}_{\psi_{r_2}}(\theta_0, \sigma^2) \subseteq \mathcal{P}_{\psi_{r_1}}(\theta_0, \sigma^2)$.
\end{lemma}
\begin{proof}
    Let $X\sim P \in \mathcal{P}_{\psi_{r_2}}(\theta_0, \sigma^2)$.  Then 
    \begin{align*}
        2\geq \mathbb{E}\biggl\{ \exp\biggl( \frac{|X - \theta_0|^{r_2}}{\sigma^{r_2}} \biggr) \biggr\} \geq \biggl[\mathbb{E}\biggl\{ \exp\biggl( \frac{|X - \theta_0|^{r_1}}{\sigma^{r_1}} \biggr) \biggr\}\biggr]^{r_2/r_1},
    \end{align*}
    by Jensen's inequality. Thus $\mathbb{E}\exp\bigl( |X - \theta_0|^{r_1} / \sigma^{r_1} \bigr) \leq 2$, so $P\in \mathcal{P}_{\psi_{r_1}}(\theta_0, \sigma^2)$.
\end{proof}

\begin{lemma} \label{lemma:MGF-bound}
    Let $r>1$, $\sigma>0$ and $X\sim P\in\mathcal{P}_{\psi_r}(0,\sigma^2)$. Then
    \begin{align*}
        \mathbb{E} \exp(\lambda X) \leq 2\exp\bigl\{ (\sigma\lambda)^{r/(r-1)} \bigr\},
    \end{align*}
    for all $\lambda>0$.
\end{lemma}
\begin{proof}
Young's inequality states that whenever $p,q > 1$ are such that $1/p + 1/q  =1$, we have $ab \leq a^p/p + b^q/q$ for all $a,b \geq 0$.  Hence
    \begin{align*}
        \lambda X \leq \lambda|X| \leq \frac{|X|^r}{r\sigma^r} + \frac{(\sigma\lambda)^{r/(r-1)}}{r/(r-1)} \leq \frac{|X|^r}{\sigma^r} + (\sigma\lambda)^{r/(r-1)}.
    \end{align*}
    Therefore,
    \begin{align*}
        \mathbb{E} \exp(\lambda X) \leq \mathbb{E} \bigl\{ \exp(|X|^r/\sigma^r) \bigr\} \cdot \exp\bigl\{ (\sigma\lambda)^{r/(r-1)} \bigr\} \leq 2\exp\bigl\{ (\sigma\lambda)^{r/(r-1)} \bigr\},
    \end{align*}
    as required.
\end{proof}

\begin{lemma}[PAC--Bayes lemma] \label{lemma:PAC-Bayes}
    Let $\mathcal{X}$ be a measurable space and let $X_1,\ldots,X_n \overset{\mathrm{iid}}{\sim} P \in \mathcal{P}(\mathcal{X})$. Let $\Xi \subseteq \mathbb{R}^d$ and $\mu \in \mathcal{P}(\Xi)$. Further let $f : \mathcal{X} \times \Xi \to \mathbb{R}$ be such that $\mathbb{E}_{X \sim P} (e^{f(X,\xi)}) < \infty$ for $\mu$-almost all $\xi\in\Xi$. Then, for every $\delta\in(0,1]$, we have with probability at least $1-\delta$ that
    \begin{align*}
        \sup_{\rho \in \mathcal{P}(\Xi):\rho \ll \mu} \biggl\{ \frac{1}{n} \sum_{i=1}^n \mathbb{E}_{\xi\sim\rho} f(X_i,\xi) - \mathbb{E}_{\xi\sim\rho} \log\bigl\{ \mathbb{E}_{X \sim P} &(e^{f(X,\xi)})\bigr\} \\
        &- \frac{\mathrm{KL}(\rho,\mu) + \log(1/\delta)}{n} \biggr\} \leq 0,
    \end{align*}
    where, for instance, $\mathbb{E}_{\xi\sim\rho} f(X_i,\xi) \coloneqq \int_{\Xi}  f(X_i,v) \, \mathrm{d}\rho(v)$.
    \end{lemma}
\begin{proof}
    See, for example, \citet[][Lemma~2.1]{zhivotovskiy2024dimension}.
\end{proof}

The following lemma provides a concentration result for the sample mean of independent and identically distributed sub-exponential random vectors. The proof strategy follows that of \citet[][Proposition~3.1]{zhivotovskiy2024dimension}, who considered the case $n=1$.
\begin{lemma} \label{lemma:concentration-of-sample-mean-sub-exponential-vector}
    Let $\theta_0 \in \mathbb{R}^d$, $\Sigma\in\mathcal{S}_{++}^{d\times d}$, $\delta\in(0,1]$ and $X_1,\ldots,X_n \overset{\mathrm{iid}}{\sim} P \in \mathcal{P}_{d,\psi_1}(\theta_0,\Sigma)$. Assume further that $\delta \geq 2e^{-n/3}$. Then with probability at least $1-\delta$,
    \begin{align*}
        \biggl\| \frac{1}{n} \sum_{i=1}^n X_i - \theta_0 \biggr\|_2^2 \leq 24 \cdot \frac{\tr(\Sigma) + \|\Sigma\|_{\mathrm{op}} \log(2/\delta)}{n}.
    \end{align*}
\end{lemma}
\begin{proof}
    Let $\beta\coloneqq 2\log(2/\delta)$, let $\mu$ denote the distribution of $\mathsf{N}_d(0,\beta^{-1}\Sigma)$ and for $u\in\Sigma^{1/2}\mathbb{S}^{d-1}$, let $\rho_u$ denote the conditional distribution of $Y$ given $\bigl\{\|Y-u\|_2 \leq \sqrt{2\beta^{-1}\tr(\Sigma)}\bigr\}$, where $Y\sim\mathsf{N}_d(u,\beta^{-1}\Sigma)$. By the computation of \citet[][p.~11]{zhivotovskiy2024dimension}, we have 
    \begin{align*}
        \mathrm{KL}(\rho_u,\mu) \leq \frac{\beta}{2} + \log 2 \leq 2\log\Bigl(\frac{2}{\delta}\Bigr).
    \end{align*} 
    Now, let $v \in \mathbb{R}^d$ be such that $\|v - u\|_2 \leq \sqrt{2\beta^{-1}\tr(\Sigma)}$, and for $\lambda \in \mathbb{R}$, define $f_\lambda:\mathbb{R}^d\times\mathbb{R}^d \to \mathbb{R}$ by $f_\lambda(x,y) \coloneqq \lambda y^\top\Sigma^{-1/2}(x-\theta_0)$. Then, for $X\sim P$ and  $\lambda \in \mathbb{R}$, we have 
    \[
    \bigl\| v^\top \Sigma^{-1/2}(X - \theta_0) \bigr\|_{\psi_1} \leq \|v\|_2 \leq \|\Sigma\|_{\mathrm{op}}^{1/2} + \sqrt{2\beta^{-1}\tr(\Sigma)} \eqqcolon R.
    \]
    It follows by \citet[][Lemma~2.5]{zhivotovskiy2024dimension} that
    \begin{align*}
        \log \mathbb{E}_{X\sim P} ( e^{f_{\lambda}(X,v)}) = \log \mathbb{E}_{X\sim P} ( e^{\lambda \cdot v^\top \Sigma^{-1/2}(X - \theta_0)}) \leq 4\lambda^2 R^2,
    \end{align*}
    for all $|\lambda| \leq \frac{1}{2R}$, so $\mathbb{E}_{\xi_u \sim \rho_u} \bigl\{ \log \mathbb{E}_{X\sim P} ( e^{f_{\lambda}(X,\xi_u)}) \bigr\} \leq 4\lambda^2 R^2$ for all $|\lambda| \leq \frac{1}{2R}$.  The PAC--Bayes lemma~(Lemma~\ref{lemma:PAC-Bayes}) then yields that with probability at least $1-\delta$,
    \begin{align*}
        \sup_{u \in \Sigma^{1/2}\mathbb{S}^{d-1}} \biggl\{ \frac{1}{n} \sum_{i=1}^n \mathbb{E}_{\xi_u \sim \rho_u} f_{\lambda}(X_i,\xi_u) - \mathbb{E}_{\xi_u \sim \rho_u} \bigl\{ \log &\mathbb{E}_{X\sim P} (e^{f_{\lambda}(X,\xi_u)}) \bigr\} \\
        &- \frac{\mathrm{KL}(\rho_u,\mu) + \log(1/\delta)}{n} \biggr\} \leq 0.
    \end{align*}
    Therefore, we deduce that with probability at least $1-\delta$,
    \begin{align*}
        \biggl\| \frac{1}{n} \sum_{i=1}^n X_i - \theta_0 \biggr\|_2 &= \sup_{u \in \Sigma^{1/2}\mathbb{S}^{d-1}} \frac{1}{n} \sum_{i=1}^n u^\top \Sigma^{-1/2} (X_i - \theta_0)\\
        &= \sup_{u \in \Sigma^{1/2}\mathbb{S}^{d-1}} \frac{1}{n\lambda} \sum_{i=1}^n \mathbb{E}_{\xi_u \sim \rho_u} f_{\lambda}(X_i,\xi_u) \\
        &\leq \inf_{\lambda \in [0,\frac{1}{2R}]} \biggl\{4\lambda R^2 + \frac{3\log(2/\delta)}{n\lambda}\biggr\}\\
        \overset{(i)}&{=} 2R\sqrt{\frac{3\log(2/\delta)}{n}} = 2\sqrt{\frac{3}{n}} \cdot \Bigl\{\sqrt{\tr(\Sigma)} + \sqrt{\|\Sigma\|_{\mathrm{op}} \log(2/\delta)}\Bigr\}.
    \end{align*}
    where $(i)$ follows by choosing $\lambda = \frac{1}{2R}\sqrt{\frac{3\log(2/\delta)}{n}}$, which is at most $\frac{1}{2R}$ since $\frac{3\log(2/\delta)}{n} \leq 1$ by assumption. The final conclusion follows by squaring both sides of the inequality above and using the inequality $(a+b)^2 \leq 2a^2 + 2b^2$ for $a,b \in \mathbb{R}$.
\end{proof}

\section{Background on disintegrations}\label{sec:disintegration}

Our definition of MAR relies on the decomposition of a probability measure on a product space into the marginal distribution on one coordinate and a family of conditional distributions on the other.  This can be achieved via the notion of disintegration.  Let $(\mathcal{X},\mathcal{A})$ and $(\mathcal{Y},\mathcal{B})$ be measurable spaces, and let $P$ be a probability measure on the product space $(\mathcal{X}\times \mathcal{Y}, \mathcal{A} \otimes \mathcal{B})$.  Further, let $\mu$ denote the marginal distribution of $P$ on $(\mathcal{X},\mathcal{A})$. We say that $(P_x)_{x \in \mathcal{X}}$ is a \emph{disintegration\index{disintegration} of~$P$ into conditional distributions on $\mathcal{Y}$} if 
\begin{enumerate}[(a)]
    \item $P_x$ is a probability measure on $(\mathcal{Y},\mathcal{B})$, for each $x \in \mathcal{X}$;
    \item $x \mapsto P_x(B)$ is an $\mathcal{A}$-measurable function, for every $B \in \mathcal{B}$;
    \item $P(A\times B) = \int_A P_x(B) \, d\mu(x)$ for all $A \in \mathcal{A}$ and $B \in \mathcal{B}$.
\end{enumerate}
In our setting, $P$ denotes the joint distribution of a random pair $(X,Y)$, taking values in $\mathcal{X}$ and $\mathcal{Y}$ respectively.  We interpret $P_x$ as the conditional distribution of $Y$ given $X=x$, even though it may be the case that the conditioning event has probability zero.  Going further, we also interpret $P_X$ as the conditional distribution of $Y$ given~$X$.  Indeed, we then have for all $A \in \mathcal{A}$ and $B \in \mathcal{B}$ that	\begin{align*}
	\mathbb{E} \bigl(P_X(B) \mathbbm{1}_A(X) \bigr) = \int_A P_x(B) \,d\mu(x) = P(A \times B) &= \mathbb{P}(X \in A,Y \in  B) \\
 &= \mathbb{E}\bigl(\mathbbm{1}_A(X)\mathbbm{1}_{B}(Y)\bigr),
	\end{align*}
so $\mathbb{P}(Y \in B \, | \, X) = \mathbb{E}\bigl(\mathbbm{1}_B(Y) \, | \, X) = P_X(B)$ almost surely.
The following result, which follows from \citet[][Theorems~10.2.1 and~10.2.2]{dudley2018real}, provides a sufficient condition for the existence of a disintegration and may be regarded as a generalisation of Fubini's theorem for probability measures on the product of Polish spaces.
\begin{theorem}\label{lemma:existenceOfDisintegrationFromDudley} Suppose that $(\mathcal{X},\mathcal{A})$ and $(\mathcal{Y},\mathcal{B})$ are Polish spaces with their corresponding Borel $\sigma$-algebras.  Let $P$ be a probability distribution on $(\mathcal{X}\times \mathcal{Y}, \mathcal{A} \otimes \mathcal{B})$, with $\mu$ denoting the marginal distribution of $P$ on $(\mathcal{X},\mathcal{A})$. Then there exists a disintegration $(P_x)_{x \in \mathcal{X}}$ of $P$ into conditional distributions on $\mathcal{Y}$ with the property that 
\begin{align*}
\int_{\mathcal{X}\times \mathcal{Y}}g(x,y)\,dP(x,y) = \int_{\mathcal{X}} \biggl(\int_{\mathcal{Y}} g(x,y) \,dP_x(y)\biggr) \,d\mu(x),
\end{align*}
for every $P$-integrable function $g: \mathcal{X}\times \mathcal{Y} \rightarrow \mathbb{R}$. Moreover, the disintegration $(P_x)_{x \in \mathcal{X}}$ of $P$  is unique in the sense that if there exists another disintegration $(\tilde{P}_x)_{x \in \mathcal{X}}$ of $P$ into conditional distributions on $\mathcal{Y}$, then $\tilde{P}_x = P_x$ for $\mu$-almost every $x \in \mathcal{X}$.
\end{theorem}
In order to apply this result in our missing data context, recall the random pair $(X,\Omega')$ taking values in $\mathcal{X} \times \{0,1\}^d$ from~\eqref{eq:MAR-law}.  For each $\omega \in \{0,1\}^d$, we assume the existence of disintegrations $(P_{x\ostar\omega})_{x \in \mathcal{X}}$ of the joint distribution of $(X \ostar \omega,\Omega')$ into conditional distributions on $\{0,1\}^d$ as well as $(P_x)_{x \in \mathcal{X}}$ of the joint distribution of $(X, \Omega')$ into conditional distributions on $\{0,1\}^d$.  The existence of these disintegrations is guaranteed by Theorem~\ref{lemma:existenceOfDisintegrationFromDudley} when $\mathcal{X}_j$ is a Polish space for each $j \in [d]$, because it then follows from Lemma~\ref{lemma:X-star-is-polish} and its proof that $\mathcal{X} \coloneqq \prod_{j=1}^d \mathcal{X}_j$ and $\mathcal{X}_\star \coloneqq \prod_{j=1}^d \mathcal{X}_{j,\star}$ are Polish.  Formally then, the condition $\mathbb{P}(\Omega' = \omega\,|\, X=x) = \mathbb{P}(\Omega' = \omega\,|\, X\ostar\omega=x\ostar\omega)$ in~\eqref{eq:MAR-law} means that $P_x(\omega) = P_{x\ostar\omega}(\omega)$.  In fact, since the MAR definition refers to a family of distributions of $X \ostar \Omega'$, we need these disintegrations for each possible joint distribution of $(X,\Omega')$ with $X \sim P$ and $\mathbb{P}(\Omega' = \bm{1}_S) = \pi(S)$ for all $S \subseteq[d]$ (such disintegrations are again guaranteed to exist by Theorem~\ref{lemma:existenceOfDisintegrationFromDudley} when $\mathcal{X}_j$ is a Polish space for each $j \in [d]$).  

\section{MCAR lower bounds for mean estimation} \label{sec:heterogeneous-mean}

Recall the definition of an $f$-divergence $\mathrm{Div}_f(\cdot,\cdot)$ from~\eqref{eq:f-divergence}.  Lemma~\ref{lem:equivalence-of-f-af} below relates the $f$-divergence of two MCAR distributions on $\mathcal{X}_\star$ to a notion of average $f$-divergence given in Definition~\ref{def:ADiv} below.  For probability measures $P, Q \in \mathcal{P}(\mathcal{X})$, we let, for $S \subseteq [d]$, $P_S$ and $Q_S$ denote their respective marginal distributions on $\mathcal{X}_S$.  
\begin{defn} \label{def:ADiv}
    Let $P, Q \in \mathcal{P}(\mathcal{X})$, let $\pi\in \mathcal{P}(2^{[d]})$ and let $f: (0, \infty) \rightarrow \mathbb{R}$ be a convex function with $f(1) = 0$. We define the \emph{average $f$-divergence} between $P$ and $Q$ with respect to $\pi$ to be
    \begin{align*}
        \mathrm{ADiv}_f(P, Q;\pi) \coloneqq \sum_{S \subseteq [d]} \pi(S) \cdot \mathrm{Div}_f\bigl(P_S, Q_S\bigr),
    \end{align*}
    where we adopt the convention that $\mathrm{Div}_f(P_S, Q_S) \coloneqq 0$ if $S = \emptyset$.
\end{defn} 
We will write $\mathrm{ATV}(\cdot,\cdot;\pi)$ and $\mathrm{AKL}(\cdot,\cdot;\pi)$ respectively for the average total variation distance and average Kullback--Leibler divergence with respect to $\pi$.  It is worth noting that the average total variation distance is a pseudo-metric but not necessarily a metric on $\mathcal{P}(\mathcal{X})$; indeed, we have $\mathrm{ATV}(P,Q;\pi) = 0$ whenever $P$ and $Q$ have the same marginal distributions on the support of $\pi$.  

The following lemma shows how an $f$-divergence between two MCAR distributions on $\mathcal{P}(\mathcal{X}_{\star})$ can be computed as an average $f$-divergence on $\mathcal{P}(\mathcal{X})$ in the sense of Definition~\ref{def:ADiv}.  
\begin{lemma}\label{lem:equivalence-of-f-af}
    Let $P, Q \in \mathcal{P}(\mathcal{X})$ and let $\pi \in \mathcal{P}(2^{[d]})$. Then
    \[
    \mathrm{Div}_f\bigl(\mathsf{MCAR}_{(\pi, P)}, \mathsf{MCAR}_{(\pi, Q)}\bigr) = \mathrm{ADiv}_f(P, Q; \pi).
    \]
\end{lemma}
\begin{proof}[Proof of Lemma~\ref{lem:equivalence-of-f-af}]
Recall the definition of $\mathcal{X}^{(S)}$ and $\mathcal{X}_S$ from Section~\ref{sec:notation-proofs}.  For $A \in\mathcal{B}(\mathcal{X}_\star)$, note that $A \cap \mathcal{X}^{(S)} \in \mathcal{B}(\mathcal{X}^{(S)})$ and define $(A \cap \mathcal{X}^{(S)})_S \coloneqq \bigl\{ x_S : x\in A \cap \mathcal{X}^{(S)} \bigr\} \in \mathcal{B}(\mathcal{X}_S)$.  Let $P^{(S)} \in \mathcal{P}(\mathcal{X}_{\star})$ be defined as $P^{(S)}(A) \coloneqq P_S \bigl((A \cap \mathcal{X}^{(S)})_S \bigr)$ for $A \in \mathcal{B}(\mathcal{X}_{\star})$, so that $P^{(S)}$ is supported on $\mathcal{X}^{(S)}$. 
For each $S \subseteq [d]$, we can apply the Lebesgue decomposition theorem to obtain the decomposition $P^{(S)} = P^{(S)}_{\mathrm{ac}} + P^{(S)}_{\mathrm{sing}}$ (with respect to $Q^{(S)}$).  Then, with respect to $\mathsf{MCAR}_{(\pi, Q)}$, 
\[
\bigl(\mathsf{MCAR}_{(\pi, P)}\bigr)_{\mathrm{ac}} = \biggl(\sum_{S \subseteq [d]} \pi(S) \cdot P^{(S)}\biggr)_{\mathrm{ac}} = \sum_{S \subseteq [d]} \pi(S) \cdot P^{(S)}_{\mathrm{ac}} 
\]
and
\[
\bigl(\mathsf{MCAR}_{(\pi, P)}\bigr)_{\mathrm{sing}} = \biggl(\sum_{S \subseteq [d]} \pi(S) \cdot P^{(S)}\biggr)_{\mathrm{sing}} = \sum_{S \subseteq [d]} \pi(S) \cdot P^{(S)}_{\mathrm{sing}}.
\]
Hence, since $\mathcal{X}_{\star} = \sqcup_{S \subseteq [d]} \mathcal{X}^{(S)}$,
\begin{align*}
&\mathrm{Div}_f\bigl(\mathsf{MCAR}_{(\pi, P)}, \mathsf{MCAR}_{(\pi, Q)}\bigr)\\
&= \int_{\mathcal{X}_{\star}} f\biggl(\frac{\mathrm{d}\sum_{S \subseteq [d]} \pi(S) P^{(S)}_{\mathrm{ac}}}{\mathrm{d}\sum_{S \subseteq [d]} \pi(S) Q^{(S)}}\biggr)  \sum_{S \subseteq [d]} \pi(S) \, \mathrm{d}Q^{(S)} + M_f \cdot \sum_{S \subseteq [d]} \pi(S) P^{(S)}_{\mathrm{sing}}(\mathcal{X}_\star) \\
&= \sum_{S \subseteq [d]}\pi(S) \int_{\mathcal{X}^{(S)}} f\biggl(\frac{\mathrm{d}P^{(S)}_{\mathrm{ac}}}{\mathrm{d} Q^{(S)}}\biggr) \, \mathrm{d}Q^{(S)} +  M_f \cdot \sum_{S \subseteq [d]}\pi(S) P^{(S)}_{\mathrm{sing}}(\mathcal{X}^{(S)}) = \mathrm{ADiv}_f(P,Q;\pi),
\end{align*}
as desired.
\end{proof}

Very often, it is convenient to apply Pinsker's inequality to total variation distances, in order to control them via (more tractable) Kullback--Leibler divergences.  We remark that in doing so directly to the left-hand side of Lemma~\ref{lem:equivalence-of-f-af}, we obtain 
\begin{align*} 
\mathrm{TV}\bigl(\mathsf{MCAR}_{(\pi, P)}, &\mathsf{MCAR}_{(\pi, Q)}\bigr) \leq \frac{1}{2^{1/2}} \cdot \mathrm{KL}^{1/2}\bigl(\mathsf{MCAR}_{(\pi, P)}, \mathsf{MCAR}_{(\pi, Q)}\bigr)\\
&= \frac{1}{2^{1/2}} \mathrm{AKL}^{1/2}(P, Q;\pi) = \frac{1}{2^{1/2}} \biggl\{\sum_{S \subseteq [d]} \pi(S) \cdot \mathrm{KL}\bigl(P_S, Q_S\bigr)\biggr\}^{1/2}.
\end{align*}
On the other hand, applying Pinsker's inequality to the right-hand side of Lemma~\ref{lem:equivalence-of-f-af} yields the bound
\begin{align*}
\mathrm{ATV}(P, Q;\pi) = \sum_{S \subseteq [d]} \pi(S) \cdot \mathrm{TV}\bigl(P_S, Q_S\bigr) \leq \frac{1}{2^{1/2}} \sum_{S \subseteq [d]} \pi(S) \cdot \mathrm{KL}^{1/2}\bigl(P_S, Q_S\bigr),
\end{align*}
which is an improvement, by Jensen's inequality.

We now state two lower bounds in the MCAR setting, beginning with the univariate setting.

\begin{prop}\label{prop:univariate-mcar-lb}
    Let $n \in \mathbb{N}$, $q \in (0, 1]$ and $\Theta \coloneqq \mathbb{R}$. 
    \begin{enumerate}
    \item[(a)] Let $\sigma>0$ and $\delta\in(0, 1/4]$. Then, writing $\mathcal{P}_{\theta} \coloneqq \bigl\{\mathsf{MCAR}_{(q,\mathsf{N}(\theta,\sigma^2))}^{\otimes n}\bigr\}$, we have
    \begin{align*}
    \mathcal{M}\bigl(\delta,\mathcal{P}_{\Theta},|\cdot|^2\bigr) 
    \begin{cases}
         \geq \dfrac{\sigma^2 \log(1/\delta)}{20nq} \quad&\text{if }\delta\geq \dfrac{(1-q)^n}{2}\\
         = \infty \quad&\text{if }\delta< \dfrac{(1-q)^n}{2}.
    \end{cases}
    \end{align*}
    \item[(b)] Let $K>0$ and $\delta\in(0,1/4]$. Then, with $\mathcal{P}_{\mathrm{b}}(\theta, K)$ as in~\eqref{eq:distributions-with-bounded-support} and writing $\mathcal{P}_{\theta} \coloneqq \bigl\{\mathsf{MCAR}_{(q,P)}^{\otimes n} : P\in\mathcal{P}_{\mathrm{b}}(\theta,K)\bigr\}$, we have
    \begin{align*}
        \mathcal{M}\bigl(\delta, \mathcal{P}_{\Theta}, | \cdot |^2\bigr) \begin{cases}
            \geq \dfrac{K^2 \log(1/\delta)}{80nq} \quad&\text{if } \delta\geq \exp(-nq/2)\\
            = \infty \quad&\text{if } \delta < \dfrac{(1-q)^n}{2}.
        \end{cases} 
    \end{align*}
    \end{enumerate}
\end{prop}
\begin{proof}
    (a) Let $\theta_1 \coloneqq 0$ and $\theta_2 \coloneqq \sigma \sqrt{\frac{1}{nq} \log\bigl( \frac{1}{4\delta(1-\delta)} \bigr)}$. By Lemma~\ref{lem:equivalence-of-f-af}, we have
    \begin{align*}
        \mathrm{KL}\bigl( \mathsf{MCAR}^{\otimes n}_{(\pi, \mathsf{N}(\theta_1,\sigma^2))}, \mathsf{MCAR}^{\otimes n}_{(\pi, \mathsf{N}(\theta_2,\sigma^2))}\bigr) &= nq \cdot \mathrm{KL}\bigl( \mathsf{N}(\theta_1,\sigma^2), \mathsf{N}(\theta_2,\sigma^2)\bigr)\\
        &= \frac{1}{2} \log\biggl( \frac{1}{4\delta(1-\delta)} \biggr) < \log\biggl( \frac{1}{4\delta(1-\delta)} \biggr).
    \end{align*}
    Therefore, by \citet[Corollary~6 and Theorem~4]{ma2024high}, we deduce that for $\delta\in(0,1/4]$,
    \begin{align*}
        \mathcal{M}\bigl(\delta, \mathcal{P}_{\Theta}, | \cdot |^2\bigr) \geq \biggl( \frac{\theta_1 - \theta_2}{2} \biggr)^2 = \frac{\sigma^2\log\bigl(\frac{1}{4\delta(1-\delta)}\bigr)}{4nq} \geq \frac{\sigma^2 \log(1/\delta)}{20nq}.
    \end{align*}
    Moreover, for any $\theta_1,\theta_2\in\mathbb{R}$, we have
    \begin{align*}
        \mathrm{TV}\bigl(&\mathsf{MCAR}_{(q,\mathsf{N}(\theta_1,\sigma^2))}^{\otimes n}, \mathsf{MCAR}_{(q,\mathsf{N}(\theta_2,\sigma^2))}^{\otimes n}\bigr)\\
        &= \sup_{A \in \mathcal{B}(\mathbb{R}_{\star}^n) \setminus \{\star\}^n} \Bigl\{\mathsf{MCAR}_{(q,\mathsf{N}(\theta_1,\sigma^2))}^{\otimes n}(A) - \mathsf{MCAR}_{(q,\mathsf{N}(\theta_2,\sigma^2))}^{\otimes n}(A)\Bigr\} \leq 1-(1-q)^n,
    \end{align*}
    where both steps follow since $\mathsf{MCAR}_{(q,\mathsf{N}(\theta_1,\sigma^2))}^{\otimes n}(\{\star\}^n) = \mathsf{MCAR}_{(q,\mathsf{N}(\theta_2,\sigma^2))}^{\otimes n}(\{\star\}^n) = (1-q)^n$.  Therefore, by \citet[Lemma~5]{ma2024high}, we have that $\mathcal{M}(\delta,\mathcal{P}_{\Theta},|\cdot|^2) \geq (\theta_1-\theta_2)^2/4$ for $\delta< \frac{(1-q)^n}{2}$. The claim follows since $\theta_1,\theta_2$ were arbitrary.

    \medskip
    (b) Define $P_1,P_2 \in \mathcal{P}(\mathbb{R})$ by 
    \[
    P_1(\{x\}) \coloneqq \begin{cases} \frac{1}{2} \quad &\text{ if } x = 0\\
    \frac{1}{2} \quad &\text{ if } x = K
    \end{cases} \quad \text{and} \quad P_2(\{x\}) \coloneqq \begin{cases}
    \frac{1-a}{2} \quad &\text{ if } x = 0\\
    \frac{1 + a}{2} \quad &\text{ if } x=K,
    \end{cases}
    \]
    where $a\coloneqq \sqrt{\frac{1}{nq}\log\bigl(\frac{1}{4\delta(1-\delta)}\bigr)} \leq \sqrt{\frac{\log(1/\delta)}{nq}} \leq 1/\sqrt{2}$ for $\delta \in [e^{-nq/2}, 1/4]$. 
    Let $\theta_1 \coloneqq \mathbb{E}_{P_1}(X) = K/2$ and $\theta_2 \coloneqq \mathbb{E}_{P_2}(X) = (1+a)K/2$, so that $P_{\ell} \in \mathcal{P}_{\mathrm{b}}(\theta_\ell, K)$ for $\ell\in\{1,2\}$. Moreover, by Lemma~\ref{lem:equivalence-of-f-af},
    \begin{align*}
        \mathrm{KL}\bigl(\mathsf{MCAR}^{\otimes n}_{(q,P_1)}, \mathsf{MCAR}^{\otimes n}_{(q,P_2)}\bigr) = nq \mathrm{KL}(P_1,P_2) &= \frac{nq}{2} \log\biggl( \frac{1}{1-a^2} \biggr)\\
        &< nqa^2 = \log\biggl( \frac{1}{4\delta(1-\delta)} \biggr),
    \end{align*}
    where the inequality follows because 
    $\log\bigl(\frac{1}{1-x^2}\bigr) < 2x^2$ for $x \in (0,1/\sqrt{2}]$. Hence, by \citet[Corollary~6 and Theorem~4]{ma2024high}, we deduce that for $\delta\in[e^{-nq/2},1/4]$,
    \begin{align*}
        \mathcal{M}\bigl(\delta, \mathcal{P}_{\Theta}, | \cdot |^2\bigr) \geq \biggl( \frac{\theta_1 - \theta_2}{2} \biggr)^2 \geq \frac{K^2 \log(1/\delta)}{80nq}.
    \end{align*}
    Now let $\theta\in\mathbb{R}$, $P_1' \coloneqq \mathsf{Unif}[0,K]$ and $P_2' \coloneqq \mathsf{Unif}[\theta, \theta + K]$. Then by the same argument as in part (a), we have
    \begin{align*}
        \mathrm{TV}\bigl( \mathsf{MCAR}_{(q,P_1')}^{\otimes n}, \mathsf{MCAR}_{(q,P_2')}^{\otimes n} \bigr) \leq 1-(1-q)^n.
    \end{align*}
    Therefore, by \citet[Lemma~5]{ma2024high}, we have that $\mathcal{M}(\delta,\mathcal{P}_{\Theta},|\cdot|^2) \geq \theta^2/4$ for $\delta< \frac{(1-q)^n}{2}$. The claim follows since $\theta_1,\theta_2$ were arbitrary.
\end{proof}

Our next proposition lower bounds the minimax quantile for mean estimation in the multivariate Gaussian setting when the covariance matrix is diagonal. 
\begin{prop} \label{prop:arb-mean-MCAR-lb}
    Let $\delta \in (0, 1/4]$, $\Sigma = (\Sigma_{jk})_{j,k \in [d]} \in \mathcal{S}^{d \times d}_{++}$ be diagonal, $\pi\in\mathcal{P}(2^{[d]})$, and let $P_{\theta} \coloneqq \mathsf{N}(\theta,\Sigma)$ for $\theta\in\mathbb{R}^d$. Then, writing $\mathcal{P}_{\theta} \coloneqq \bigl\{ \mathsf{MCAR}_{(\pi,P_{\theta})}^{\otimes n} \bigr\}$, we have
    \begin{align*}
        \mathcal{M}\bigl(\delta, \mathcal{P}_{\Theta}, \| \cdot \|_2^2\bigr) \gtrsim \frac{\tr\bigl(\Sigma^{\mathrm{IPW}}\bigr)}{n} +\frac{\| \Sigma^{\mathrm{IPW}} \|_{\mathrm{op}} \log(1/\delta)}{n}.
    \end{align*}
\end{prop}

\begin{proof}
We consider two separate constructions to capture each of the terms in the lower bound.  For the first, let $\mathcal{V} \coloneqq \{0, 1\}^{d}$ and for each $v = (v_1,\ldots,v_d)^\top \in \mathcal{V}$, set $\theta_v = (\theta_{v,1},\ldots,\theta_{v,d})^\top \coloneqq a \odot v$, where $a = (a_1,\ldots,a_d)^\top \in \mathbb{R}^d$ is given by $a_j \coloneqq \frac{4}{3}\sqrt{\Sigma_{jj}/(n q_j)}$ for $j\in[d]$. Define $\Theta_0 \coloneqq \{\theta_v : v\in\mathcal{V}\}$, which has diameter $D \coloneqq \frac{4}{3}\sqrt{\tr(\Sigma^{\mathrm{IPW}})/n}$.  For any $v,v' \in \mathcal{V}$ that differ only in their $j$th coordinates, we have by Pinsker's inequality and Lemma~\ref{lem:equivalence-of-f-af} that
    \begin{align*}
    \mathrm{TV}\bigl( \mathsf{MCAR}_{(\pi, P_{\theta_v})}^{\otimes n}, \mathsf{MCAR}_{(\pi, P_{\theta_{v'}})}^{\otimes n}\bigr) &\leq \biggl\{\frac{n}{2} \cdot \mathrm{KL}\bigl( \mathsf{MCAR}_{(\pi, P_{\theta_v})}, \mathsf{MCAR}_{(\pi, P_{\theta_{v'}})}\bigr)\biggr\}^{1/2}\\
    &= \biggl\{\frac{n}{2} \sum_{S\subseteq [d]} \pi(S) \cdot \mathrm{KL}\bigl((P_{\theta_v})_S, (P_{\theta_{v'}})_S \bigr)\biggr\}^{1/2}\\
    &= \biggl\{\frac{n}{2} \sum_{S\subseteq [d]} \pi(S) \cdot \sum_{k\in S} \frac{(\theta_{v,k} - \theta_{v',k})^2}{2\Sigma_{kk}} \biggr\}^{1/2}\\
    &= \biggl\{\frac{n}{4} \sum_{S\subseteq [d] : j\in S} \pi(S) \cdot \frac{a_j^2}{\Sigma_{jj}} \biggr\}^{1/2} = \frac{2}{3}.
    \end{align*}
    Therefore, by Assouad's Lemma \citep[e.g.,][Lemma 23]{ma2024high}, 
    \begin{align*}
        \inf_{\hat{\theta}_n \in \hat{\Theta}_{n}} \sup_{\theta_0\in\Theta_0} \mathbb{E}_{\mathsf{MCAR}_{(\pi,P_{\theta_0})}^{\otimes n}} \bigl( \|\hat{\theta}_n - \theta\|_2^2 \bigr) \geq \frac{4\tr(\Sigma^{\mathrm{IPW}})}{27n}.
    \end{align*}
    Applying \citet[Theorem~8]{ma2024high}, with $\epsilon=3/40$ therein, we deduce that for $\delta\in(0,1/15]$,
    \begin{align*}
        \mathcal{M}_-\bigl(\delta,\mathcal{P}_{\Theta},\|\cdot\|_2^2\bigr) \geq \frac{\tr(\Sigma^{\mathrm{IPW}})}{100n}.
    \end{align*}
    We then apply \citet[Theorem~4 and Proposition~9]{ma2024high}, with $A=k=2$ therein, to deduce that for $\delta\in(0,1/4]$,
    \begin{align} \label{eq:mcar-minimax-quantile-term1}
        \mathcal{M}\bigl(\delta,\mathcal{P}_{\Theta},\|\cdot\|_2^2\bigr) \geq \frac{\tr(\Sigma^{\mathrm{IPW}})}{2^6 \cdot 3^2 \cdot 5^2 \cdot n}.
    \end{align}
    Our second construction involves just two distributions.  Let $j_0 \coloneqq \sargmax_{j \in [d]} \Sigma_{jj}/q_j$ and set $\theta_1 \coloneqq 0$, $\theta_2 \coloneqq \sqrt{\frac{\Sigma_{j_0 j_0}}{n q_{j_0}} \log\bigl(\frac{1}{4\delta(1-\delta)}\bigr)}\, e_{j_0}$.  Then by Lemma~\ref{lem:equivalence-of-f-af}, 
\[
\mathrm{KL}\bigl(\mathsf{MCAR}_{(\pi, P_{\theta_1})}^{\otimes n}, \mathsf{MCAR}_{(\pi, P_{\theta_{2}})}^{\otimes n}\bigr) = n \cdot \mathrm{AKL}\bigl(P_{\theta_1}, P_{\theta_2}; \pi \bigr) = \frac{1}{2} \log\biggl(\frac{1}{4\delta(1-\delta)}\biggr).
\]
By~\citet[][Theorem 4 and Corollary 6]{ma2024high}, we have for $\delta \in (0, 1/4]$ that 
\begin{align} \label{eq:mcar-minimax-quantile-term2}
\mathcal{M}\bigl(\delta, \mathcal{P}_{\Theta}, \| \cdot \|_2^2\bigr) \geq \frac{\| \Sigma^{\mathrm{IPW}} \|_{\mathrm{op}} \log(1/\delta)}{20n}.
\end{align}
    Finally, combining~\eqref{eq:mcar-minimax-quantile-term1} and~\eqref{eq:mcar-minimax-quantile-term2} yields the desired result.
\end{proof}










{
\bibliographystyle{imsart-nameyear.bst}
\bibliography{bibliography}
}